\numberwithin{equation}{section}
\newcommand{\N}{\mathbb N}
\newcommand{\assreg}{\hyperlink{ass:reg}{Assumption {\rm ({\bf Regularity})}}}
\newcommand{\assdis}{\hyperlink{ass:dis}{Assumption {\rm ({\bf Discriminating Property})}}}
\newcommand{\R}{\mathbb R}
\def\E{\mathbb E}
\def\XXint#1#2#3{{\setbox0=\hbox{$#1{#2#3}{\int}$}
\vcenter{\hbox{$#2#3$}}\kern-.5\wd0}}
\numberwithin{equation}{section}
\newtheorem{thm}{Theorem}[section]
\newtheorem{lem}[thm]{Lemma}
\newtheorem{meta-thm}[thm]{Meta-Theorem}
\newtheorem{cor}[thm]{Corollary}
\newtheorem{prop}[thm]{Proposition}
\theoremstyle{definition}
\newtheorem{defn}[thm]{Definition}
\newtheorem{rmk}[thm]{Remark}
\newtheorem{ex}[thm]{Example}
\def\smallnegint{\mathop{\int\mkern-13mu
        \raise.5ex\hbox{${\scriptscriptstyle\diagup}$}}\nolimits}
\def\div{\operatorname{div}}
\def\ssetminus{\,\raise.4ex\hbox{$\scriptstyle\setminus$}\,}
\newcommand{\norm}[1]{\left\Vert#1\right\Vert}
\renewcommand{\bar}{\overline}
\renewcommand{\tilde}{\widetilde}
\renewcommand{\hat}{\widehat}
\renewcommand{\ln}{\log}
\def \bd{\boldsymbol}
\title[Genericity of Polyak--Lojasiewicz Inequalities]{Genericity of Polyak--Lojasiewicz Inequalities for Entropic Mean-Field Neural ODEs.}
\author[S. Daudin]{Samuel Daudin}
\address{(S. Daudin) Universit\'e Paris Cit\'e, CNRS, Sorbonne Universit\'e, Laboratoire Jacques-Louis Lions (LJLL), F-75006, Paris, France
}\email{samuel.daudinATu-paris.fr
}
\author[F. Delarue]{ Fran\c{c}ois Delarue}
\address{(F. Delarue) Universit\'e C\^ote d'Azur, CNRS, Laboratoire J.A. Dieudonn\'e, 06108 Nice, France
}\email{francois.delarue@univ-cotedazur.fr}
\date{\today}
\begin{document}

\thanks{S. Daudin and F. Delarue  acknowledge the financial support of the European Research Council (ERC) under the European Union's Horizon Europe research and innovation program (ELISA project, Grant agreement No. 101054746). Views and opinions expressed are however
those of the author(s) only and do not necessarily reflect those of the European Union or the
European Research Council Executive Agency. Neither the European Union nor the granting
authority can be held responsible for them.}

\maketitle

\begin{abstract}
We address the behavior of idealized deep residual neural networks (ResNets), modeled via an optimal control problem set over continuity (or adjoint transport) equations. The continuity equations describe the statistical evolution of the features in the asymptotic regime where the layers of the network form a continuum. The velocity field is expressed through the network activation function, which is itself viewed as a function of the statistical distribution of the network parameters (weights and biases). From a mathematical standpoint, the control is interpreted in a relaxed sense, taking values in the space of probability measures over the set of parameters. We investigate the optimal behavior of the network when the cost functional arises from a regression problem and includes an additional entropic regularization term on the distribution of the parameters.
In this framework, we focus in particular on the existence of stable optimizers --that is, optimizers at which the Hessian of the cost is non-degenerate. We show that, for an open and dense set of initial data, understood here as probability distributions over features and associated labels, there exists a unique stable global minimizer of the control problem.
Moreover, we show that such minimizers satisfy a local Polyak--Lojasiewicz inequality, which can lead to
exponential convergence of the corresponding gradient descent  when the initialization lies sufficiently close to the optimal parameters. This result thus demonstrates the genericity (with respect to the distribution of features and labels) of the Polyak--Lojasiewicz condition in ResNets with a continuum of layers and under entropic penalization.

\end{abstract}

\setcounter{tocdepth}{1}

\tableofcontents

\section{Introduction}

This paper is dedicated to a control problem of mean-field type modeled on the training of certain deep neural networks. Mathematically it takes the following form:
\begin{equation}    \mbox{ minimize } J \bigl( (t_0,\gamma_0), \bd{\nu} \bigr) \mbox{ with respect to } \bd{\nu},
\label{eq:OriginalControlPb24/06}
\end{equation}
where the total cost $J$ is defined by
\begin{equation} 
J \bigl( (t_0, \gamma_0) , {\boldsymbol \nu} \bigr) := \int_{\R^{d_1} \times \R^{d_2}} L(x,y) d\gamma_T(x,y) +  \epsilon \int_{t_0}^T \mathcal{E} \bigl( \nu_t | \nu^{\infty} \bigr) dt,
\label{eq:deftotalcost}
\end{equation}
and the infimum is taken over flows of probability measures $\boldsymbol{\nu} = (\nu_t)_{t \in [t_0,T]}$  on a \textit{parameter} space $A$. In the expression above, $\gamma_T$ is an element of $\mathcal{P}(\R^{d_1} \times \R^{d_2})$, the set of Borel probability measures on the product space $\R^{d_1} \times \R^{d_2}$ (for two integers $d_1,d_2$), and is determined from $\boldsymbol{\nu}$ via the continuity equation
\begin{equation}
\partial_t \gamma_t + \div_x \Bigl( \int_A b( \cdot,a) d\nu_t(a) \gamma_t \Bigr) = 0 \quad \mbox{ in } (t_0,T) \times \R^{d_1} \times \R^{d_2},
\label{eq:Continuitygammanu}
\end{equation}
with initial condition $\gamma_0 \in \mathcal{P}(\R^{d_1} \times \R^{d_2})$ at $t_0 \in [0,T]$ and final horizon $T >0$. Importantly, the vector field $b : \R^{d_1} \times A \rightarrow \R^{d_1}$ driving the solution in \eqref{eq:Continuitygammanu} is assumed to satisfy  some type of universal approximation property. In the machine learning interpretation, $(\gamma_t)_{t \in [t_0,T]}$ describes the evolution of the joint distribution  of the \textit{features} and \textit{labels} along the layers of the neural network whose depth is indexed by the parameter $t \in [t_0,T]$.  

In the control problem \eqref{eq:deftotalcost}, the terminal cost $L: \R^{d_1} \times \R^{d_2} \rightarrow \R$ is determined by the underlying regression task while  $\mathcal{E}(\nu_t | \nu^{\infty})$, in the running cost, denotes the relative entropy of the parameter distribution $\nu_t$ at layer-time $t$ with respect to some prior measure $\nu^{\infty}$ and serves as a regularization. The intensity of this entropic penalization is given by the (small) parameter $\epsilon >0$.

\subsection{From Deep \textit{ResNets} to Mean-Field Optimal Control.}
The cost functionnal
\eqref{eq:deftotalcost}
together with the continuity 
equation 
\eqref{eq:Continuitygammanu}
should indeed be regarded as a
mathematical idealization of 
some regression tasks achieved by 
a certain 
type of deep neural
networks in machine learning. 
Those networks are usually referred to as 
`Residual Neural Networks'
(\textit{ResNets} for short)
and were primarily 
introduced by 
He et al. \cite{he2016deep}
in the analysis of learning procedures for image recognition. 
The very purpose of \textit{ResNets} is 
to overcome the degradation in accuracy that may occur when the number of layers increases. 
To explain such issues, the main hypothesis in 
 \cite{he2016deep}, which is supported by numerical evidences, 
is that mappings like the identity may not be well approximated by iterating non-linear ones. 
In turn, the principle of 
\textit{ResNets}
is to precondition the problem by decomposing each layer of the network as the sum of the identity mapping and of a residual  
standard parametrized activation function. 

Mathematically, this decomposition can be easily reformulated as the elementary step in the discretization of a continuous dynamical system. This is the starting point of a series of works in the literature, initiated 
by E in \cite{E2017}
and Haber and Ruthotto in 
\cite{HaberRuthotto}, in which regression methods based on a \textit{ResNets} architecture 
are studied with tools from optimal control theory in continuous time. 
In this approach, the 
time parameter in the controlled system represents the layer index in the network. 
See for instance 
Agrachev and Sarychev
\cite{AgrachevSarychev},
Li et al. 
\cite{LiLinShen2023} and Scagliotti \cite{AlessandroScagliotti2023}
for further examples. 

A first idealization in this framework is therefore to replace the 
\textit{a priori} discrete in time structure of the network by a continuous in time
architecture, as done here. Formally, this amounts to say that the number of layers is infinite. The resulting controlled system is sometimes called a \textit{Neural ODE}. 

A second idealization is to assume that there are infinitely many neurons per layer 
and that only their common statistical distribution   enters the instantaneous dynamics of the network. This is exactly what we do here through the formalism of relaxed controls, see in particular 
\eqref{eq:Continuitygammanu} where $\nu_t$ is  interpreted as a control. This idea is consistent with the approach introduced 
by
Mei et al.  
\cite{meiMontanariNguyen2018MFNN},
Bach and Chizat \cite{BachChizat}, 
and 
Rotskoff and Van den Eijnden 
\cite{RotskoffVanDenEijnden}
to study mathematically the gradient descent in one-layer neural networks.
Intuitively, the passage from 
a finite to an infinite number 
of neurons relies on an averaging 
principle that is typical of mean-field models (but which is not studied here). 
In the one-layer case, the very benefit of it is to convexify the loss function. 

Let us stress again that in this framework, the probability measure $\gamma_t$ in \eqref{eq:Continuitygammanu} can be viewed as the joint law of the features after layer-time $t$ and the labels (that remain fixed along the layers of the network). In other words, 
if one writes $X_0$
for the random variable modelling the feature in entry of the neural network and 
$Y_0$ for the random variable 
modeling the label that has to be regressed on $X_0$, 
$\gamma_t$ is the law of 
$(X_t,Y_0)$, i.e.,
\begin{equation}
\label{eq:Lagrangian:X0,XT,Y0}
\gamma_t = {\mathbb P} \circ (X_t,Y_0)^{-1},
\end{equation}
where $X_t$ is the value at time $t$ of the ODE 
\begin{equation} 
\dot{X}_t = \int_A b(X_t,a) d\nu_t(a)
\label{eq:ODE:intro}
\end{equation}
initialized  from $X_0$. 
In turn, if $L(x,y)$ is thought as a cost between $x$ and $y$, then the integral $\int_{{\mathbb R}^d \times {\mathbb R}^d} L(x,y) 
d \gamma_T(x,y)$
is nothing but the mean cost between 
the outputs of the network and the labels. 
As for the entropic term, it forces a form of strict convexity of the Hamiltonian 
associated with the 
control problem and also guarantees that optimal solutions inherit some properties (regularity, concentration and functional inequalities) from the measure $\nu^{\infty}$. The same idea can be found  in some of the aforementioned references on the one-layer case: therein, 
the underlying optimization problem becomes strictly 
convex in presence of an additional entropy. 
That said, it is worth emphasizing that, in comparison with the one-layer case,  our problem is not at all convex
because of the successive iterations of the nonlinear activation function.
This makes the analysis much more challenging. 
In particular, this is one of our objective here to show that, despite the lack of convexity, important local stability results 
remain, at least 
for \textit{generic} (that is `many' in a suitable sense) initial conditions.

\subsection{Main Assumptions and Example.}
Throughout this work, the following assumptions are in order. The parameter space $A$ is the euclidean space $\R^{d'}$ for some $d' \geq 1$.
\vskip 4pt

\hypertarget{ass:reg}{\noindent {\bf Assumption (Regularity).}}
The functions $b$, $\ell$ and $L$ introduced above 
satisfy the conditions below:
\begin{enumerate}[(i)]

\item The function 
$b : \R^{d_1} \times A \rightarrow \R^{d_1}$ is smooth in the sense that all the derivatives $\nabla_x^k \nabla^l_ab$ with $0 \leq k+l \leq 4$ exist and are (jointly) continuous. Moreover, there exists a constant $C\geq 0$ such that for all $(x,a) \in \R^d \times A$ and all integers $k,l$ with $0 \leq k+l \leq 4$, it holds
$$ |\nabla_x^k \nabla_a^lb(x,a)| \leq C(1+|a|^{k+1})(1+|x|^l). $$
We also assume that $b(x,0) = 0$ for all $x \in \R^{d_1}$.  
\label{ass:b}
\item The prior measure $\nu^{\infty} \in \mathcal{P}(A)$ takes the form 
\begin{equation} \nu^{\infty}(da) := \frac{1}{z^{\infty}} \exp{\bigl(-\ell(a)\bigr)} da, \quad  \quad z^{\infty} \textcolor{blue}{:}= \int_A e^{-\ell(a)}da, 
\label{eq:defnuinfty03/07}
\end{equation}
for some twice continuously differentiable convex potential $\ell: A \rightarrow \R$ satisfying, for a constant $c>0$ and for all $a \in A$, 
\begin{equation} 
\nabla^2_{aa} \ell (a) \geq c(1+|a|^2) I_{d'}. 
\label{eq:convexityassumptionL}
\end{equation}
In particular this means that $\ell$ grows at least 
as $\vert a \vert^4$. \label{ass:L}

\item The final cost $L: \R^{d_1} \times \R^{d_2} \rightarrow \R$ is bounded from below
and three times differentiable and, together with its derivatives,  satisfies the growth assumption
$$ \norm{L}_{\mathcal{C}^3_{2,1}} := \sup_{(x,y) \in \R^{d_1} \times \R^{d_2}}  \frac{|L(x,y)|}{1+|x|^2+|y|^2} + \frac{|\nabla L(x,y)|}{1+|x|+|y|} + \frac{|\nabla^2L(x,y)|}{1+|x|+|y|} + \frac{|\nabla^3L(x,y)|}{1+|x|+|y|}   < +\infty.$$
\label{ass:g}
\end{enumerate}
\vskip 2pt

We also assume $b$ to be discriminating in the following sense:
\vskip 4pt

\hypertarget{ass:dis}{\noindent \textbf{Assumption (Discriminating Property).}}
For any probability space $(\Omega, \mathcal{F}, \mathbb{P})$ and any two $\R^{d_1}$-valued random variables $X,Z$ on this probability space with $Z \in L^1(\mathbb{P})$,
the following implication holds true:
$$\biggl( 
\forall a \in A, \quad 
\E \bigl[ b(X,a) \cdot Z \bigr] = 0 
\biggr) 
\quad \Rightarrow 
\Bigl( 
\mathbb{P}\mbox{-almost-surely}, 
\quad 
\E \bigl[ Z | X \bigr] = 0 \Bigr),$$
where $\E$ denotes the expectation under $\mathbb{P}$.

\vskip 4pt

\begin{ex}
Here is the prototypical example for the vector field $b$. 
The condition (i) in 
\hyperlink{ass:reg}{Assumption ({\bf Regularity})} is satisfied if 
$A$ is taken as
${\mathbb R}^{d_1} \times {\mathbb R}^{d_1} \times 
{\mathbb R}$ and 
$b$
has the following structure: 
\begin{equation}
\label{eq:prototype:example}
b(x,a) = \sigma (a_1 \cdot x+a_2) a_0 , \quad 
\bigl(x,a=(a_0,a_1,a_2) \bigr) \in {\mathbb R}^{d_1} \times A,
\end{equation}
for a so-called \textit{activation} function $\sigma : {\mathbb R} \rightarrow {\mathbb R}$ that is bounded and smooth with bounded derivatives. 

Moreover, $b$ satisfies \assdis if $\sigma$ verifies a type of universal approximation property typical of activation functions in machine learning. In particular, all our assumptions are satisfied if $\sigma$ is the hyperbolic tangent or the logistic function. See Section \ref{subse:DPandUAT} for the details. 
\label{ex:protoex}
\end{ex}

Our assumptions on $L$ are satisfied if $d_1=d_2$ and $L$ is the quadratic loss ie, $ L(x,y) = \frac{1}{2}|x-y|^2$. The assumptions on $\ell$ are satisfied if $\ell(a) = c_1 |a|^4 + c_2|a|^2 $ for some $c_1,c_2 >0$.

\begin{rmk}
    Some classical sets of data $(\ell,L,b)$ do not fit our assumptions. Within our prototypical example for the vector field $b$, the assumptions are not satisfied if $\sigma$ is the rectified linear unit because it is neither $\mathcal{C}^1$ 
 nor bounded. The convexity condition on $\ell$ is not satisfied if $\ell(a) = \frac{1}{2}|a|^2$. In our setting, we need some stronger form of convexity and coercivity for $\ell$ to cope with the growth of $b$ and its derivatives when we derive some log-Sobolev inequalities for optimal solutions of the control problem, see Lemma \ref{lem:LSI} and Remark \ref{rmk:thirdmoments}. If 
 $b$ had the form $b(x,a) = \sigma(a_1x + a_2)$ with $a_1 \in M_{d_1}(\R), a_2 \in \R^{d_1}$, for $\sigma$ a smooth and bounded activation function acting component-wise, 
then our analysis could go through with $\ell = \frac{1}{2}|a|^2$. In such a case, \assdis  would still be satisfied for classical activation functions verifying a universal approximation property.
Lastly, 
the assumption that $b(x,0) = 0$ for all $x \in \R^{d_1}$ 
(see item (i) of \assreg) 
is used to prove Propositions \ref{prop:uniquenessNOC} 
and \ref{prop:uniquenessLinSyst}, see Step 3 in each case. This assumption is always satisfied in the framework of Example \ref{ex:protoex}.
\end{rmk}

\subsection{Main Results}
\label{subse:2.4}
We now expose the main results of the paper. To ease the understanding and in particular to lighten the notation, we have chosen to provide informal statements at this stage of the document.
For this reason, we call them 
`meta-statements'. 
Full results are given  
in the following sections,  
once the mathematical objects  
supporting our analysis
have been introduced.

In a nutshell, our objective is to understand the 
properties of the minimizers 
of 
\eqref{eq:OriginalControlPb24/06}
together with the behavior 
of the related gradient descent initialized near these minimizers, and this for 
\textit{generic} initial conditions. 
Here, 
the word \textit{generic} 
has a rather vague meaning. 
Below, we give a topological definition to it. 
That said, the philosophy is quite simple to explain. 
Due to the lack of convexity,  
the control problem 
\eqref{eq:OriginalControlPb24/06}
cannot be expected to 
have a unique minimizer
for any initial condition. To wit, it is well-know that non-convex  
finite dimensional control problems may develop singularities
in finite time. 
However, 
it is also known 
that, still in  finite dimension,
singularities
cannot be in fact too numerous. 
In particular, 
from a topological point of view,  they have an empty interior.
The first main result of this paper is to show that,
thanks to \assdis, this picture remains true in our setting. In other words, 
the Universal Approximation Theorem of neural networks translates, in the mean-field \textit{ResNets} setting, in
the form of a robustness property that is true for many initial conditions. 

\begin{meta-thm}
\label{meta-thm:1}
There exists an open dense subset 
${\mathcal O}$ of $[0,T] \times {\mathcal P}_3({\mathbb R}^{d_1} \times {\mathbb R}^{d_2})$ (with 
${\mathcal P}_3({\mathbb R}^{d_1} \times {\mathbb R}^{d_2})$
denoting the space of probability measures on 
${\mathbb R}^{d_1} \times {\mathbb R}^{d_2}$ with a finite third moment)
such that, for any initial condition 
$(t_0,\gamma_0) \in {\mathcal O}$, 
the control problem \eqref{eq:OriginalControlPb24/06}
has a unique minimizer and this minimizer is \emph{stable}.
\end{meta-thm}

We do not provide a precise  definition of stability  at this stage and only mention that it is related to the non-degeneracy of the Hessian of the cost \eqref{eq:deftotalcost}. The exact definition is given in Section \ref{se:3} and the complete version of 
Meta-Theorem \ref{meta-thm:1} is  stated in 
Theorem \ref{thm:main:stability}. Instead, we explain here
two main consequences of these stability properties, which we believe are relevant to machine learning.

To give the reader a clearer picture, it is worth recalling that, in practice, the challenge is to numerically find the optimal parameters of the neural network. 
Usually, this is done by means of a gradient descent algorithm (or a variant thereof).

In the multi-layer setting, 
the algorithm takes  the form of a collection of intertwined gradient descents
indexed by the successive layers of the network. Each of these
descents returns a (possibly inaccurate) approximation of the 
optimal state of the 
neurons 
at the corresponding layer. 
Mathematically, this principle 
can be formalized at follows: 
at any time
$t \in [t_0,T]$, we can construct 
a flow $(\nu_t^s)_{s \geq 0}$, 
depending on a new time parameter
$s$
and following a gradient flow in the space 
${\mathcal P}(A)$ (in the sense of Ambrosio et al. \cite{AGS}). 
In the mean-field approach to \textit{ResNets}, this formalism  
was introduced 
in Jabir et al. \cite{jabir2021meanfieldneuralodesrelaxed}. 
Of course, one relevant objective is to obtain guarantees under which 
$(\nu_t^s)_{s \geq 0}$ converges
to $\nu_t^*$, the optimal state of the network
at time $t$. 
This is precisely where the stability properties of the minimizers become especially useful.

The analysis of the descent goes through 
an explicit formula for 
the $s$-derivative of the cost along the descent:
\begin{equation}
\label{eq:intro:gradient:descent}
\frac{d}{ds} J \bigl( (t_0,\gamma_0,{\boldsymbol \nu}^s \bigr)  =- \mathcal{I} \bigl( (t_0,\gamma_0) ,{\boldsymbol \nu}^s \bigr), 
\end{equation}
for some (non-negative) functional $\mathcal{I}$ that is made explicit in Section \ref{se:5}. Above the symbol
${\boldsymbol \nu}^s$ denotes the entire curve $(\nu^s_t)_{t_0 \le t \le T}$ at descent-time $s$. 
Thanks to the aforementioned stability properties on the minimizers of 
$J((t_0,\gamma_0),\cdot)$  we manage to prove the following functional inequality, which is sometimes referred to as 
a (here local) \textit{Polyak--Lojasiewicz condition}:

\begin{meta-thm}
\label{meta-thm:2}
For every compact subset ${\mathcal K}$  of $\mathcal{O}$, 
there exists a constant $c>0$ such that, for 
any 
initial condition $(t_0,\gamma_0) \in {\mathcal K}$
and any curve ${\boldsymbol \nu} = (\nu_t)_{t \in [t_0,T]}$ that is close enough to the unique 
minimizer $ \boldsymbol{\nu}^*$ of $J((t_0,\gamma_0),\cdot)$, 
\begin{equation*}
{\mathcal I} \bigl( (t_0,\gamma_0),{\boldsymbol \nu} \bigr) \geq 
c \Bigl( J\bigl( (t_0,\gamma_0),{\boldsymbol \nu} \bigr) 
- 
 J \bigl( (t_0,\gamma_0), \boldsymbol{\nu}^* \bigr)
 \Bigr). 
\end{equation*}
\end{meta-thm}

The rigorous version of 
Meta-Theorem 
\ref{meta-thm:2}
is stated in 
Section
\ref{se:5}, see in particular 
Theorem
\ref{prop:main:local:polyak:lojasiewicz}. Therein, 
we clarify in which sense the curve 
${\boldsymbol \nu}$
has to be close to 
the 
minimizer 
${\boldsymbol \nu}^*$.

As expected, 
Meta-Theorem 
\ref{meta-thm:2}
becomes especially relevant in the analysis of the aforementioned gradient descent, provided the latter one be 
initialized in the neighborhood of the optimal control $\bd \nu^*$. 
To make this precise, a reasonable conjecture is that, for any initial condition \( (t_0, \gamma_0) \in \mathcal{O} \) of the optimal control problem~\eqref{eq:OriginalControlPb24/06}, and for any initial condition \( \boldsymbol{\nu}^0 = (\nu_t^0)_{t \in [t_0,T]} \) of the gradient descent flow \( \boldsymbol{\nu}^s = ((\nu_t^s)_{t \in [t_0,T]})_{s \geq 0} \), provided \( \boldsymbol{\nu}^0 \) is close enough to the unique minimizer \( \boldsymbol{\nu}^* \) of \( J((t_0, \gamma_0), \cdot) \), the cost \( J((t_0, \gamma_0), \boldsymbol{\nu}^s) \) converges exponentially fast to the optimal value \( J((t_0, \gamma_0), \boldsymbol{\nu}^*) \) as \( s \to \infty \).
We emphasize that the result would follow from a standard argument once Meta-Theorem \ref{meta-thm:2} has been established, and is omitted here for reasons of length. Here is the underlying principle: Proposition
\ref{prop:isolated} establishes that the unique minimizer $\bd{\nu}^*$ is an isolated critical point of the cost $J ( (t_0,\gamma_0), \cdot )$; combined with the identity 
\eqref{eq:intro:gradient:descent} (the proof of which would however deserve to be expanded), it follows from standard results in dynamical systems (usually referred to as LaSalle's principle) that the gradient descent flow remains in a neighborhood of the minimizer when initialized sufficiently close to it. The conclusion is then obtained by invoking Meta-Theorem
\ref{meta-thm:2}, which is the central result in this context, to deduce the exponential convergence of the cost.

\subsection{Comparison with Existing Literature}

\label{subse:comparisonlit}

The control problem \eqref{eq:OriginalControlPb24/06}
is a mean-field control problem -- see Lions \cite{Lionscollege2}, Chapter $6$ in Carmona and Delarue \cite{CarmonaDelarue_book_I}, and Section 3.7 in Cardaliaguet et al. \cite{CardaliaguetDelarueLasryLions} for introductory material on the subject together with Lacker \cite{Lacker_2016_convergence} for the use of relaxed controls in this context. Indeed,   
the controlled dynamics given by 
the continuity 
equation 
\eqref{eq:Continuitygammanu} 
take values in the space 
of probability measures over ${\mathbb R}^{d_1} \times 
{\mathbb R}^{d_2}$. However, it is non-standard in the sense that the control $(\nu_t)_{0 \le t \le T}$ (even taken in a 
relaxed form) is not a vector field taking values 
in ${\mathcal P}(A)$ --in which case it would take the form $[0,T] \times \R^{d_1} \ni (t,x) \mapsto \nu_t(x,da) \in \mathcal{P}(A)$--, but just an element of 
${\mathcal P}(A)$.

Stability properties of optimal solutions for more \textit{classical} mean-field optimal control problems have recently drawn significant attention, see Briani and Cardaliaguet \cite{BrianiCardaliaguet},
Cardaliaguet and Souganidis \cite{cardaliaguet-souganidis:2}
and Cardaliaguet et al. \cite{cjms2023}.  Part of the analysis relies on a classical phenomenon in  optimal control theory and calculus of variations: the so-called \textit{Jacobi necessary optimality condition} (see Cannarsa and Sinestrari \cite{cannarsa} Chapter 6, and Remark 6.3.7 therein for the terminology). Generally speaking, this condition asserts that, for some optimal control problems with linear dynamics and strictly convex Hamiltonians, there is no \textit{conjugate}  point along optimal trajectories. In our framework, this is exactly the statement that, given an optimal trajectory $(t,\gamma^*_t)_{t \in [t_0,T]}$ starting from $(t_0,\gamma_0)$, then $(t_1, \gamma_{t_1}^*)$ belongs to the set $\mathcal{O}$ of Meta-Theorem \ref{meta-thm:1} for any later time $t_1 >t_0$. 
In the setting of Problem \eqref{eq:OriginalControlPb24/06} and for general vector fields $b$ satisfying \assreg,  we do not expect to recover this property. However, 
  one of the main contributions of our work is to prove that
 this is indeed the case when $b$ satisfies \assdis (see the next sub-section for some details).

To gain further intuition about the difference with standard mean-field control, it is helpful to revisit the particle interpretation of the classical setting. In the latter, the controlled trajectory $(\gamma_t)_{0 \le t \le T}$
describes
the statistical (or macroscopic) evolution of a continuum of agents. At the microscopic (or individual) level, 
agents evolve according to ODEs that are driven by a common velocity field but are initialized from possibly different initial conditions. 
This picture remains true in our framework: the velocity field is 
$b(x,\nu_t)$ and the initial conditions are statistically distributed according to $\gamma_0$. 
What changes here is that two agents, in the same continuum but at different individual locations, play  the control $\nu_t$ at time $t$ in exactly  the same manner. This is consistent from a machine learning perspective and contrasts with the standard rule in mean-field control, where the control applied by each agent explicitly depends on its own state. The reader may have noticed another subtlety in our 
control problem: the `variable $y$' in 
\eqref{eq:deftotalcost}
is not impacted by the dynamics 
\eqref{eq:Continuitygammanu}. From a microscopic 
point of view, it says that an agent has two main features, in ${\mathbb R}^{d_1}$ and $\R^{d_2}$: whilst the first one evolves according to the ODE driven by the field $(t,x) \mapsto b(x,\nu_t)$, the second remains constant with time.

Control problems of the same kind as  Problem \eqref{eq:OriginalControlPb24/06} (with possibly different types of regularizations) have already been studied in the machine learning literature as an idealized model for the training phase of deep neural networks.  To the best of our knowledge, 
this model goes back to the pioneering works
of
E et al. 
\cite{EHanLi2019} (including the derivation of the Pontryagin maximum principle). The use of entropic penalization can be found in Hu et al, \cite{hu2019meanfieldlangevinsystemoptimal}  
Jabir et al. 
\cite{jabir2021meanfieldneuralodesrelaxed}. We also refer to Lu et al. \cite{lu2020mean}, Isobe \cite{isobe2023convergence}
Bonnet et al. \cite{BonnetCiprianiFornasierHuang}, Barboni et al. \cite{barboni2024understandingtraininginfinitelydeep}
and Ding et al. 
\cite{ding2022overparameterization}
for a more recent works in the same vein. In essence, what sets our approach apart from other works involving entropic regularization is that none of our results requires the penalization intensity $\epsilon$ to be large. 
 
Establishing a Polyak–Lojasiewicz inequality (PL inequality) is regarded as a key step in the analysis of gradient descent for neural networks. In the context of mean-field Deep ResNets, the following results have been established. In the work \cite{isobe2023convergence}, a PL inequality is shown
under a strong enough moment (instead of entropic) regularization.  The authors  of \cite{barboni2024understandingtraininginfinitelydeep} establish a PL inequality without regularization, but degenerating as the number of features (assumed finite) used during training increases. This latter condition can be interpreted as a structural smallness assumption. 
The convergence of the gradient descent (which, in our case, could be derived using Meta-Theorem \ref{meta-thm:2}, as previously explained, the descent taking the same form as in \cite{jabir2021meanfieldneuralodesrelaxed}) and 
of the generalization error (i.e., the mean field limit) have also been studied in~\cite{jabir2021meanfieldneuralodesrelaxed,BonnetCiprianiFornasierHuang}, in the presence of a penalization—by entropy in the former, and by moments in the latter—both assumed to be sufficiently strong.
We emphasize again that, by contrast, our approach imposes no minimal threshold on the intensity $\epsilon$ (of the entropic penalty). 
The interested reader may find further recent developments on PL inequalities in (among others) the following works: in the context of ODE control, Gassiat and Suciu \cite{gassiat2024gradientflowcontrolspace} establish a randomized PL inequality for gradient descent under highly oscillatory initialization; furthermore, Monmarché and Reygner \cite{monmarché2024localconvergencerateswasserstein} investigate the long-time behavior of (uncontrolled) mean-field diffusions using local PL inequalities. 

In these models, the passage from discrete to continuous time raises particularly subtle questions, which we do not address here. We refer the reader to Chizat and Netrapalli
\cite{NEURIPS2024_720e7ebc}, Cont et al. \cite{cont2023asymptoticanalysisdeepresidual} and Gassiat and Suciu \cite{gassiat2024gradientflowcontrolspace}.

Another meaningful aspect of the problem, which we leave for future investigation, concerns the \textit{expressivity} of the network: given an initial distribution $\gamma_0$, what is the minimal loss achievable in the associated control problem? This question is related to the controllability of the system. In the context of deep neural networks—albeit in more or less idealized settings—, this issue has been addressed, among others, by Agrachev and Sarychev~\cite{AgrachevSarychev}, Cuchiero et al.\cite{Cruchieroetal}, Li et al.\cite{LiLinShen2023}, and Ruiz-Balet and Zuazua~\cite{Ruiz-Balet-Zuazua}. Although we do not address the controllability properties of our system, it is worth stressing that
 none of our results depend on the \textit{efficiency} of the network. In other words, it might very well be that the labels (represented by the variable 
 $Y_0$ in \eqref{eq:Lagrangian:X0,XT,Y0}) are far from any function of the features (represented by the variable $X_0$ in \eqref{eq:Lagrangian:X0,XT,Y0}). Equivalently, the initial distribution $\gamma_0$ might be \textit{far} from any measure of 
 \textit{diagonal} form $\mu_0
 \circ (i_d,F)^{-1}$ for some $\mu_0 \in \mathcal{P}_3(\R^d)$ describing the law of $X_0$ and some measurable function $F : \R^{d_1} \rightarrow \R^{d_2}$. In this framework,  the optimal cost (i.e., the optimal loss)
is not small but our result remains relevant. This is in contrast with some known convergence results, see \cite{barboni2024understandingtraininginfinitelydeep,jabir2021meanfieldneuralodesrelaxed}.

\subsection{Method of Proof}


\subsubsection{First order condition and Jacobi principle.}

Much of our analysis revolves around first and second order optimality conditions for the control problem \eqref{eq:OriginalControlPb24/06}. The first order conditions state that an optimal control $\bd{\nu}$ necessarily takes the form   
\begin{equation} 
\nu_t(a) \propto  \exp \Bigl( - \ell(a) - \frac{1}{\epsilon} \int_{\R^{d_1} \times \R^{d_2}}  b(x,a) \cdot \nabla_x u_t(x,y)  d\gamma_t(x,y) \Bigr), \quad t \in [t_0,T],
\label{eq:18/06/2025-14:44}
\end{equation} 
where "$\propto$" means "proportional to" and  $(\bd{\gamma} , \bd{u}) = (\gamma_t,u_t)_{t \in [t_0,T]}$ solves the forward-backward system
\begin{equation}
\label{eq:forwardbarckward03/05}
\left \{
\begin{array}{ll}
\displaystyle -\partial_t u_t(x,y) -  b\bigl(x,\nu_t\bigr)  \cdot \nabla_x u_t(x,y) = 0,
&\quad (t,x,y)
\in [t_0,T] \times \R^{d_1} \times \R^{d_2},
\\
\displaystyle
\qquad 
 u_T(x,y) = L(x,y), &\quad (x,y) \in \R^{d_1} \times \R^{d_2};
\\
\displaystyle \partial_t \gamma_t
+ \div_x \bigl(  b(x,\nu_t) \gamma_t  \bigr) = 0, &\quad
 \textrm{\rm in}
 \ [t_0,T] \times  \R^{d_1} \times \R^{d_2}, 
\\
\displaystyle \qquad \gamma_{t_0} = \gamma_0
&\quad
 \textrm{\rm in} 
 \ 
\R^{d_1} \times \R^{d_2}.
\end{array}
\right.
\end{equation}

If this system had a unique solution, it would characterize the (necessarily unique) optimal solution to the control problem. However, as previously mentioned, such uniqueness is not to be expected for arbitrary initial conditions $(t_0, \gamma_0)$, due to the lack of any global convexity condition ensuring the uniqueness of critical points in the control problem. From a technical perspective, establishing uniqueness for the system satisfied by $(\boldsymbol{\nu}, \boldsymbol{\gamma}, \boldsymbol{u})$ is delicate, owing to the forward--backward structure of the continuity and transport equations: the equation for $\boldsymbol{\gamma}$ is equipped with an initial condition, while the equation for $\boldsymbol{u}$ is subject to a terminal condition.

Meta-Theorem \ref{meta-thm:1} then relies on the aforementioned Jacobi  principle: for any initial condition $(t_0,\gamma_0)$, any (between the possibly many) optimal solution $\bd{\nu}^* = (\nu^*)_{t \in [t_0,T]}$ for $J ( (t_0,\gamma_0), \cdot) $ with associated optimal trajectory $\bd{\gamma}^* = (\gamma^*_t)_{t \in [t_0,T]}$, and any later time $t_1 >t_0$, $(\nu_t^*)_{ t \in [t_1,T]}$ is the unique minimizer of  $J ( (t_1, \gamma_{t_1}^*), \cdot )$ and, furthermore, this solution is \textit{stable}. In particular, for $t_1$ close to $t_0$, we have found an initial condition $(t_1, \gamma^*_{t_1})$, nearby $(t_0,\gamma_0)$, for which the control problem has a unique (and stable) optimal solution. To show that $(\nu_t^*)_{t \in [t_1,T]}$ is the only optimal solution, we prove that optimal solutions for $J ( (t_0,\gamma_0), \cdot )$ cannot \textit{bifurcate}, in the sense that any two optimal solutions $(\nu^{*,i}_t)_{t\in [t_0,T]}$, $i=1,2$, that coincide at the initial time  must 
remain identical over the entire time horizon.
 This implies the uniqueness of optimal controls for $J ( (t_1, \gamma^*_{t_1}), \cdot )$ since, otherwise, by dynamic programming, we could construct two solutions for $(t_0,\gamma_0)$  that coincide up to time $t_1$ but bifurcate afterward.

Importantly, the fact that optimal controls cannot bifurcate relies in a subtle way on the discriminating property \assdis. To explain this, let us assume for simplicity, that the state dynamic and the terminal cost do not depend on the $y$-variable. In this case, the controlled dynamic is simply a trajectory in $\mathcal{P}(\R^{d_1})$.  Consider two optimal solutions $(\nu_t^{*,i})_{t \in [t_0,T]}$, $i=1,2$, for
the initial condition $(t_0,\gamma_0)$, such that $\nu_{t_0}^{*,1} = \nu_{t_0}^{*,2}$.
If we forget about the normalizing constant in \eqref{eq:18/06/2025-14:44}, the first order conditions imply that 
\begin{equation} 
\int_{\R^{d_1}} b(x,a) \cdot \nabla_x u_{t_0}^1(x) d\gamma_{0}(x) = \int_{\R^{d_1}} b(x,a) \cdot \nabla_x u_{t_0}^2 (x) d\gamma_0(x), \quad \forall a \in A,
\label{eq:tobeproperlyreformulated}
\end{equation}
where $\bd{u}^1$ and $\bd{u}^2$ are the solutions to the backward transport equation in \eqref{eq:forwardbarckward03/05}  associated to $\bd{\nu}^{*,1}$ and $\bd{\nu}^{*,2}$ respectively. This is where  \assdis $ $ comes into play. After properly reformulating \eqref{eq:tobeproperlyreformulated} as an equality between expectations of random variables, it tells us that $\nabla_xu^1_{t_0}$ and $ \nabla_xu^2_{t_0}$ must coincide on the support of $\gamma_0$. Therefore we end up with two solutions $(\bd{\nu}^{*,i}, \bd{\gamma}^{*,i}, \bd{u}^{*,i})$, $i=1,2$ to the system of optimality  conditions \eqref{eq:18/06/2025-14:44}-\eqref{eq:forwardbarckward03/05} such that $\gamma_{t_0}^1 = \gamma_{t_0}^2 = \gamma_0$ and $\nabla_x u_{t_0}^1 = \nabla_x u^2_{t_0}$ in the support of $\gamma_0$. 
This transforms 
\eqref{eq:forwardbarckward03/05} into a forward-forward system, from which we can infer 
that 
$(\bd{\nu}^1, \bd{\gamma}^1,\bd{u}^1) =(\bd{\nu}^2, \bd{\gamma}^2,\bd{u}^2) $.

The optimality conditions \eqref{eq:18/06/2025-14:44}-\eqref{eq:forwardbarckward03/05} are stated in Theorem \eqref{thm:OCThm27/06} of Section \ref{se:3} and proved in Section \ref{sec:ExistenceandFOC}. We emphasize that similar forms of the Pontryagin maximum principle were already known see e.g. \cite{BonnetCiprianiFornasierHuang,hu2019meanfieldlangevinsystemoptimal, jabir2021meanfieldneuralodesrelaxed}. The proof of the Jacobi condition and the rigorous version of Meta-Theorem \ref{meta-thm:1} are given in Section \ref{sec:TheUniversalAppox}.

\subsubsection{Stable solutions} 
\label{subsubsec:stable}
Before we explain the proof of the local Polyak--Lojasiewicz condition of  Meta-Theorem \ref{meta-thm:2} we need to clarify what we mean by \textit{stable} solution. Let  $\bd{\nu}^*$ be an optimal solution for the initial condition $(t_0,\gamma_0)$ with associated curve and multiplier $(\bd{\gamma}^*,\bd{u}^*)$. For a sequence of admissible controls $(\bd\nu^n)_{n \in \mathbb{N}}$ with associated curves and multipliers $(\bd{\gamma}^{n},\bd{u}^n)_{n \in \mathbb{N}}$ satisfying the optimality conditions \eqref{eq:18/06/2025-14:44}-\eqref{eq:forwardbarckward03/05}, we can define the integrated relative entropy between the measures $\bd{\nu}^n$ and $\bd{\nu}^*$:
\begin{equation} 
\lambda^2_n :=  \int_{t_0}^T \mathcal{E} \bigl( \nu_t^n | \nu_t^* \bigr) dt,
\label{eq:defnlambdanintro13/01}
\end{equation}
where, for all $t \in [t_0,T]$, $\mathcal{E}(\nu^n_t|\nu_t^*)$ is the usual relative entropy between two probability measures, see \eqref{eq:def:relative:entropy}.
Let us assume that $\lambda_n >0$ for all $n \in \mathbb{N}$ and $\lim_{n \rightarrow +\infty} \lambda_n=0$. Then, by means of a compactness argument, we can show that weak limit points $(\bd{\eta}, \bd{\rho}, \bd{v})$ of $ \lambda_n^{-1} ( \bd{\nu}^n - \bd{\nu}, \bd{\gamma}^n - \bd{\gamma}, \bd{u}^n - \bd{u})$ solve the linearized equations
\begin{equation}
    {\eta}_t(a) = -   \frac{\nu^*_t(a)}{\epsilon} \Bigl[  \bigl \langle b(\cdot,a) \cdot \nabla_x u_t^* ;\rho_t \bigr \rangle  + \int_{\R^{d_1} \times \R^{d_2}}b(x,a) \cdot \nabla_x v_t(x,y)  d\gamma^*_t \bigr (x,y) -c_t\Bigr] \quad \mbox{in } [t_0,T] \times A, 
\label{eq:linSysEtaIntro}
\end{equation}
and
\begin{equation}
    \left \{
    \begin{array}{ll}
\displaystyle -\partial_t  v_t  -  b(x,\nu^*_t) \cdot \nabla_x v_t =  b(x,\eta_t)  \cdot \nabla_x u^*_t &\textrm{\rm in } [t_0,T] \times \R^{d_1} \times 
\R^{d_2}, 
\\
\qquad v_T = 0 \quad 
\textrm{\rm in } \R^{d_1} \times 
\R^{d_2},
\\
\displaystyle \partial_t  \rho_t  + \div_x \bigl(  b(x,\nu^*_t)  \rho_t \bigr) = - \div_x \bigl(  b(x,\eta_t )  \gamma^*_t \bigr) &\textrm{\rm in } (t_0,T) \times \R^{d_1} \times \R^{d_2},
\\
\qquad \rho_{t_0} =0,
    \end{array}
    \right.
\label{eq:LinSysRhoVIntro}
\end{equation}
where $c_t$, in \eqref{eq:linSysEtaIntro}, is a normalizing constant to ensure that $\eta_t$ integrates to $0$. In the equations above, $\rho_t$ lies in the dual of a space of differentiable functions with some growth at infinity and $\langle \cdot ; \cdot\rangle$ denotes the corresponding duality bracket.  A  stable solution $\bd{\nu}^*$ is precisely a solution to the control problem \eqref{eq:OriginalControlPb24/06} for which $(0,0,0)$ is the only solution to the system \eqref{eq:linSysEtaIntro}--\eqref{eq:LinSysRhoVIntro}. 
Of course, stability is related to second-order optimality conditions. Indeed, given a minimizer $\bd{\nu}^*$ for $J \bigl( (t_0,\gamma_0), \cdot)$, we always have 
\begin{equation} 
\frac{d^2}{dh^2} \Big|_{h= 0} J \bigl( (t_0,\gamma_0), \bd{\nu}^* + h \bd{\eta} \bigr) \geq 0 
\label{eq:inequalityforequality18/06}
\end{equation}
for all admissible perturbations $\bd{\eta}$. For a fixed $\bd{\eta}$, we can show that  equality holds in \eqref{eq:inequalityforequality18/06}  if and only if there is $(\bd \rho, \bd v)$ such that $(\bd{\eta}, \bd{\rho}, \bd{v})$ solves the linearized system \eqref{eq:linSysEtaIntro}-\eqref{eq:LinSysRhoVIntro}. Stable solutions are then precisely those for which 
$$ \frac{d^2}{dh^2} \Big|_{h= 0} J \bigl( (t_0,\gamma_0), \bd{\nu}^* + h \bd{\eta} \bigr) > 0 $$
for any non-trivial (ie non-zero) admissible perturbation $\bd{\eta}$.

We introduce stable solutions in Section \ref{se:3}, where we properly define $\mathcal{O}$ as the set of initial conditions $(t_0,\gamma_0)$ for which there is a unique and stable global minimizer. In the same section, we state the second-order optimality conditions associated to Problem \eqref{eq:OriginalControlPb24/06} in Theorem \ref{prop:SecondOrderConditions3Avril}. The proof of these conditions is postponed to  Section \ref{sec:SOC} after we give an exhaustive analysis of the linearized equations appearing in \eqref{eq:LinSysRhoVIntro} in Section \ref{sec:LinearizedEquations}.

\subsubsection{PL inequality.}

Let us now provide a sketch of proof for Meta-Theorem \ref{meta-thm:2} when the compact set $\mathcal{K}$ is just taken as a single element $\left \{ (t_0,\gamma_0) \right \} \subset \mathcal{O}$. The detailed proof is given in Section \ref{se:5}. We argue by contradiction and assume that we can find a positive sequence $(c_n)_{n \in \mathbb{N}}$ converging to $0$ together with a sequence of admissible controls $(\bd{\nu}^n)_{ n \in \mathbb{N}}$ such that 
\begin{equation} 
\lim_{ n \rightarrow +\infty} \lambda_n^2 =0, \quad \mbox{ and} \quad \mathcal{I} ((t_0,\gamma_0), \bd{\nu}^n) < c_n \Bigl( J \bigl((t_0,\gamma_0), \bd{\nu}^n \bigr) - J \bigl( (t_0,\gamma_0), \bd{\nu}^* \bigr) \Bigr),  
\label{eq:towardcontradictionintro13/01}
\end{equation}
where we used the notations introduced in \eqref{eq:defnlambdanintro13/01} and \eqref{eq:intro:gradient:descent} for $\lambda_n$ and $\mathcal{I}$ respectively.

Notice 
 by non-negativity of ${\mathcal I}$
that the second equation in \eqref{eq:towardcontradictionintro13/01} implies that 
${\boldsymbol \nu}^n \neq \boldsymbol \nu^*$ and therefore $\lambda_n >0$, which
makes it possible to let
\begin{equation} 
\eta_t^n := \frac{\nu_t^n - \nu^*_t}{\lambda_n}, \quad \rho_t^n := \frac{\gamma^n_t - \gamma^*_t}{\lambda_n}, \quad v_t^n := \frac{u_t^n - u^*_t}{\lambda_n}, \quad t \in [t_0,T],  
\label{eq:normalizednewvariablesintro}
\end{equation}
where $(\bd{\gamma}^n,\bd{u}^n)$ and $(\bd{u}^*, \bd{\gamma}^*)$ are the solutions to \eqref{eq:forwardbarckward03/05} associated to $\bd{\nu}^n$ and $\bd{\nu}^*$ (the optimal control for 
 the initial condition
$(t_0,\gamma_0)$) respectively.
By making the difference between  the equations satisfied by $(\bd{\gamma}^n,\bd{u}^n)$ and $(\bd{\gamma}^*, \bd{u}^*)$, we get 
\begin{equation}
\left \{
\begin{array}{ll}
\displaystyle -\partial_t {v}_t^n -  b\bigl(x,{\nu}_t^n\bigr)  \cdot \nabla_x {v}_t^n = b (x, \eta_t^n) \cdot \nabla_x u^*_t
\ &\textrm{\rm in}
\  [t_0,T] \times \R^{d_1} \times \R^{d_2},
\\
\displaystyle
\qquad 
 {v}_T^n(x,y) = 0 \ &\textrm{\rm in} \ \R^{d_1} \times \R^{d_2};
\\
\displaystyle \partial_t {\rho}_t^n
+ \div_x \bigl(  b(x,{\nu}_t^n)  {\rho}_t^n  \bigr) = 
 - \div_x \bigl( b(x,\eta_t^n) \gamma^*_t \bigr) \
 & \textrm{\rm in}
 \ [t_0,T] \times  \R^{d_1} \times \R^{d_2}, 
\\
\displaystyle \qquad  {\rho}_{t_0}^n = 0
\ &\textrm{\rm in} 
 \ 
\R^{d_1} \times \R^{d_2}.
\end{array}
\right.
\label{eq:systemrhonvn29MaiIntro}
\end{equation}

The following two observations are in order.
 On the one hand, it is not too difficult to see that the right-hand side of the second equation in \eqref{eq:towardcontradictionintro13/01} is (at most) of order $\lambda_n^2$ (see Lemma \ref{lem:expensioncost02/03}). That is, there exists $C>0$ independent of $n$ such that
\begin{equation}
J \bigl( (t_0,\gamma_0),{\boldsymbol \nu}^n \bigr) - J \bigl( ( t_0, \gamma_0), \boldsymbol \nu^* \bigr) \leq C \lambda_n^2.
\label{eq:02/07-20:23:intro}
\end{equation}
On the other hand, thanks to an explicit formula for the functionnal $\mathcal{I}$ given in Section \ref{se:5} and to the equation satisfied by $\bd\nu^*$ we can rewrite $\mathcal{I}\bigl( (t_0,\gamma_0),{\boldsymbol \nu}^n \bigr)$ as
\begin{equation} 
\begin{split}
&\mathcal{I} \bigl( (t_0,\gamma_0),{\boldsymbol \nu}^n \bigr)
\\
&= \int_{t_0}^T \int_A \Bigl| \epsilon \nabla_a \log \frac{\nu_t^n}{\nu^*_t}(a) + \lambda_n \nabla_a \int_{\R^{d_1}\times \R^{d_2}}b(x,a) \cdot d (\nabla_x v_t^n \gamma_t^n + \nabla_x u_t^* \rho_t^n)(x,y) \Bigr|^2 d\nu_t^n(a)dt,
\label{eq:rewritingFischerintro}
\end{split}
\end{equation}
and get, by triangular inequality
\begin{align*}
 \mathcal{I} \bigl( (t_0,\gamma_0) ,{\boldsymbol \nu}^n \bigr) & \geq \frac{1}{2} \int_{t_0}^T  \int_A \Bigl| \epsilon \nabla_a \log \frac{\nu_t^n}{\nu^*_t} (a) \Bigr|^2 d\nu_t^n(a) dt  \\
 &-\lambda_n^2 \int_{t_0}^T \int_A \Bigl| \nabla_a \int_{\R^{d_1} \times \R^{d_2}} b(x,a) \cdot d(\nabla_x v_t^n \gamma_t^n + \nabla_x u^*_t \rho_t^n)(x,y) \Bigr|^2 d \nu^n_t(a)dt.
\end{align*}
Using log-Sobolev inequality, which we show in Lemma \ref{lem:LSI} is satisfied by $\nu_t^*$,
we can handle the first term on the right-hand side to obtain 
\begin{equation}
\begin{split}
&\mathcal{I} \bigl( (t_0,\gamma_0),\bd\nu^n \bigr) 
\\
&\hspace{15pt} \geq C\lambda_n^2 \Bigl( 1 -\int_{t_0}^T \int_A \bigl| \nabla_a \int_{\R^{d_1} \times \R^{d_2}} b(x,a) \cdot d(\nabla_x v_t^n \gamma_t^n + \nabla_x u^*_t \rho_t^n)(x,y) \bigr|^2 d \nu^n_t(a)dt \Bigr). 
\label{eq:02/07/20:03:intro}
\end{split}
\end{equation}
The result will follow if we  justify that $(\bd v^n,\bd\rho^n)_{n \in \mathbb{N}}$ vanishes to $(0,0)$ (in an appropriate sense) as $n \rightarrow +\infty$. Indeed, in this case, we can hope that the integral term in the right-hand side of \eqref{eq:02/07/20:03:intro} vanishes with $n$ large, thanks to \eqref{eq:02/07-20:23:intro} and \eqref{eq:towardcontradictionintro13/01},
 which would contradict the fact that $c_n$ tends to $0$ and obtain 
the desired contradiction. Therefore, the next step is to
prove that $(\bd\eta^n, \bd\rho^n, \bd v^n)_{n \in \mathbb{N}}$ converges toward a solution to the linearized system \eqref{eq:linSysEtaIntro}--\eqref{eq:LinSysRhoVIntro} and conclude by stability of $\bd\nu^*$ that the limit 
 is necessarily  $(0,0,0)$. While it is not too hard to see from \eqref{eq:systemrhonvn29MaiIntro} that $(\bd\rho^n,\bd v^n)_{n \in \mathbb{N}}$ converges toward a solution to \eqref{eq:LinSysRhoVIntro} (see Proposition \ref{prop:preliminaryworkprop1-28/02}), it is more difficult to prove that $\bd \eta^n$ converges  toward a solution to \eqref{eq:linSysEtaIntro}. Indeed, the challenge is that we have no equation for $\bd \nu^n - \bd{\nu}^*$. Our idea is first to justify from the second equation of \eqref{eq:towardcontradictionintro13/01} together with \eqref{eq:02/07-20:23:intro} that 
\begin{equation}
\label{eq:02/07-20:26:intro}
    \lim_{n \rightarrow +\infty} \frac{1}{\lambda_n^2} \mathcal{I} \bigl( (t_0,\gamma_0), \bd\nu^n \bigr) =0,
\end{equation}
and then 
to deduce that the right-side of \eqref{eq:rewritingFischerintro}, normalized by $\lambda_n^2$, tends to $0$. By passing to the limit inside the terms on the right-hand side of \eqref{eq:rewritingFischerintro}, we would be led to
\begin{align} 
&0= \int_{t_0}^T \int_A \Bigl| \epsilon \nabla_a \frac{\eta_t}{\nu^*_t}(a) + \nabla_a \int_{\R^{d_1} \times \R^{d_2}} b(x,a) \cdot  \nabla_x v_t(x,y) d\gamma^*_t(x,y) + \nabla_a \bigl \langle b(\cdot,a) \cdot \nabla_x u_t^*; \rho_t \bigr \rangle \Bigr|^2 d\nu^*_t(a)dt.  \notag
\end{align}
Therefore, 
$\bd\eta$ would solve \eqref{eq:linSysEtaIntro} and then
$(\bd \eta, \bd \rho , \bd v)$ would solve the linearized system
\eqref{eq:linSysEtaIntro}--\eqref{eq:LinSysRhoVIntro}. This would complete the proof. Unfortunately, passing to the limit in \eqref{eq:rewritingFischerintro} (after normalizing by $\lambda_n^2$) is not so straightforward, and we have to proceed a little bit  differently to conclude (see Section \ref{se:5} for the details).

\subsection{Limitations of the Methods and Possible Extensions}

All our main results rely, in one way or another, on the entropic regularization. Among other things, the resulting Gibbs form of the optimal control, see \eqref{eq:18/06/2025-14:44}, is essential to prove the \textit{injectivity} property used to derive the Jacobi condition and therefore to obtain Meta-Theorem \ref{meta-thm:1}. We also heavily rely on this penalization to
establish the PL 
inequality, the proof of Meta-Theorem \ref{meta-thm:2} stemming from a perturbative argument for a log-Sobolev inequality. We also stress that all the quantitative results of the paper depend implicitly on the regularization parameter $\epsilon$,
 though certainly not uniformly as $\epsilon \to 0^+$. Moreover, the set $\mathcal{O}$ of \textit{good} initial conditions also depends on this parameter. A perspective of research would be to understand the limit $\epsilon \rightarrow 0^+$ at various places in the argument. This could be done while possibly keeping a moment penalization. We also mention that the control problem exhibits some intriguing features when there is no regularization, see 
Lu et al. \cite{lu2020mean}: in that case, any local minimum --whose existence is not clear in the absence of penalization-- is global. This makes the regime $\epsilon \rightarrow 0^+$ all the more interesting.

As we already mentioned, Meta-Theorem \ref{meta-thm:1} is especially relevant to understand the convergence of the gradient descent associated to the control problem \eqref{eq:OriginalControlPb24/06}. The convergence was obtained in a similar setting by \cite{jabir2021meanfieldneuralodesrelaxed} and  \cite{barboni2024understandingtraininginfinitelydeep} but under certain additional structural  conditions detailed in 
Subsection \ref{subse:comparisonlit}. As explained following the statement of Meta-Theorem \ref{meta-thm:2}, the descent is anticipated to converge when  initialized in a neighborhood of the optimal control $\bd{\nu}^*$, provided the initial condition $(t_0,\gamma_0)$ belongs to the set $\mathcal{O}$ introduced in Meta-Theorem \ref{meta-thm:1}. In light of the PL inequality, the costs are expected to decrease at an exponential rate along the descent. Actually, the recent results of \cite{monmarché2024localconvergencerateswasserstein}, concerning a static optimization problem over the space of probability measures, suggest that exponential convergence of the control trajectories could also be established. We leave the verification of these conjectures for future research.

We do not address generalization bounds, that is, the improvement in accuracy achieved by training the network with larger feature samples in the training set. This is very much connected to the convergence of the optimal value and of the optimal distribution of parameters when the initial data consists of an empirical measure of the form
$ N^{-1} \sum_{i=1}^N \delta_{(X_0^{i},Y_0^{i})}$, 
where $(X_0^{i},Y_0^{i})_{1 \leq i \leq N}$ are independent random variables sampled from some $\gamma_0 \in \mathcal{P}(\R^{d_1} \times \R^{d_2})$, and $N \to \infty$.
 In the mean-field optimal control literature, this corresponds to looking for mean-field limits and this is sometimes referred to as the \textit{convergence problem}.  When the initial condition $(t_0,\gamma_0)$ belongs to the set $\mathcal{O}$, we expect to obtain sharper rates of convergence for the minimal values as well as convergence of the control distributions. In the context of \textit{classical} mean-field control, this problem is addressed in \cite{cjms2023}. In our setting, we intend to tackle this problem in a future contribution.

\subsection{Organization of the Paper}
The paper is organized as follows. Section 
\ref{se:3}
reviews the first- and second-order conditions associated with problem 
\eqref{eq:OriginalControlPb24/06}--\eqref{eq:deftotalcost}. This, in particular, allows us to specify the functional spaces in which the various equations are posed. We also provide a complete definition of the notion of stable solution. However, the proofs of the results are postponed to the second part of the paper. This structure allows the reader to directly access the proofs of the main results in Sections \ref{sec:preparatorywork}, \ref{sec:TheUniversalAppox}, and 
\ref{se:5}.
Section \ref{sec:preparatorywork}  formalizes the compactness argument presented in paragraph \ref{subsubsec:stable}. The proof of Meta-Theorem \ref{meta-thm:1} is given in Section \ref{sec:TheUniversalAppox} (including a discussion on the role of the discriminating property), with Theorem \ref{thm:main:stability} providing a more precise version of the result. The proof of Meta-Theorem \ref{meta-thm:2} is given in Section \ref{se:5}, where the PL inequality is stated in Theorem \ref{prop:main:local:polyak:lojasiewicz}. Sections \ref{sec:ExistenceandFOC}, \ref{sec:LinearizedEquations}, and \ref{sec:SOC} return to the material introduced in Section \ref{se:3}. Section \ref{sec:ExistenceandFOC} focuses primarily on the first-order conditions for the control problem \eqref{eq:OriginalControlPb24/06}--\eqref{eq:deftotalcost}. The analysis of second-order conditions is split in two parts: the linearized equations, which serve as a preliminary step, are studied in Section \ref{sec:LinearizedEquations}, while the second-order conditions themselves are derived in Section \ref{sec:SOC}.
In Section \ref{sec:ProofsofSection3}, we elaborate on some of the auxiliary compactness arguments used in Section \ref{sec:preparatorywork}. Finally, a number of auxiliary results are stated in Appendix  \ref{sec:AppendixA}.

\subsection{Notation}

Throughout the text we use the following set of notations.

\subsubsection*{Spaces of regular functions and their duals}  
For some integers $d_1,d_2 \geq 1$ and some $ k \in \mathbb{N}$, we denote by $\mathcal{C}^k(\R^{d_1} \times \R^{d_2})$ the space of $k$-times continuously differentiable functions on $\R^{d_1} \times \R^{d_2}$. Variables in $\R^{d_1} \times \R^{d_2}$ will be denoted by $(x,y)$. The gradient and Hessian of $\varphi: \R^{d_1} \times \R^{d_2} \rightarrow \R$ are denoted by $\nabla \varphi$ and $\nabla^2 \varphi$, while the partial gradient and Hessian with respect to the $x$ variable are denoted by $\nabla_x \varphi$ and $\nabla^2_x \varphi$. For $p \geq 1$ and $k \geq 0$, we denote by $\mathcal{C}^k_{p}(\R^{d_1} \times \R^{d_2})$ the subset of $\mathcal{C}^k(\R^{d_1} \times \R^{d_2})$ 
consisting of functions whose growth, as well as that of their derivatives up to order $k$, is at most polynomial of order $p$, endowed with the norm
$$ \norm{ \varphi}_{\mathcal{C}^k_p(\R^{d_1} \times \R^{d_2})} := \sup_{(x,y) \in \R^{d_1} \times \R^{d_2}}  \sum_{ | ( \bd{\alpha}, \bd{\beta})| \leq k}   \frac{ \Bigl| \partial^{|(\bd{\alpha}, \bd{\beta})|}_{ x_1^{\alpha_1} \cdots x_{d_1}^{\alpha_{d_1}} y_1^{\beta_1} \cdots y_{d_2}^{\beta_{d_2}}   }  \varphi (x,y) \Bigr| }{1+|x|^p + |y|^p},  $$
where $(\bd{\alpha}, \bd{\beta}) = (\alpha_1, \dots,\alpha_{d_1},\beta_1,\dots,\beta_{d_1}) \in \mathbb{N}^{d_1 +d_2} $, and $|(\bd{\alpha}, \bd{\beta})| = \alpha_1 +  \cdots +\alpha_{d_1} + \beta_1+ \cdots +\beta_{d_2}$.
It will be convenient to distinguish between the growth of $\varphi$ itself and that of its derivatives: for $p,q \geq 1$ and $ k \geq 1$, we define $\mathcal{C}^k_{p,q}(\R^{d_1} \times \R^{d_2})$ as the set of functions such that $\varphi$ belongs to $\mathcal{C}^0_p(\R^{d_1} \times \R^{d_2})$ and  $\nabla \varphi$ belongs to $\mathcal{C}^{k-1}_q(\R^{d_1} \times \R^{d_2})$, endowed with the norm
$$ \norm{ \varphi}_{\mathcal{C}^k_{p,q}(\R^{d_1} \times \R^{d_2})} := \norm{ \varphi}_{\mathcal{C}^0_p(\R^{d_1} \times \R^{d_2})} + \norm{ \nabla \varphi}_{\mathcal{C}_q^{k-1}(\R^{d_1} \times \R^{d_2})}.$$

The space of $k$-times continuously differentiable functions on $\R^{d_1} \times \R^{d_2}$ with bounded derivatives is denoted by $\mathcal{C}^k_b(\R^{d_1} \times \R^{d_2})$ (not to be confused with $\mathcal{C}^k_0(\R^{d_1} \times \R^{d_2})$ defined later on) and it is endowed with the norm 
$$\norm{ \varphi}_{\mathcal{C}^k_{p,q}(\R^{d_1} \times \R^{d_2})} := \sup_{(x,y) \in \R^{d_1} \times \R^{d_2}}  \sum_{ | ( \bd{\alpha}, \bd{\beta})| \leq k}    \Bigl| \partial^{|(\bd{\alpha}, \bd{\beta})|}_{ x_1^{\alpha_1} \cdots x_{d_1}^{\alpha_{d_1}} y_1^{\beta_1} \cdots y_{d_2}^{\beta_{d_2}}   }  \varphi (x,y) \Bigr|.  $$
We also use the notation $\mathcal{C}^k_{1,b}(\R^{d_1} \times \R^{d_2})$ for
the space of functions of linear growth with bounded derivatives of order $1 \leq j \leq k$. The associated norm is denoted by $\norm{ \cdot }_{\mathcal{C}^k_{1,b}}$.
The subset of $\mathcal{C}^k_b(\R^{d_1} \times \R^{d_2})$ consisting of functions that, together with all their partial derivatives up to order $k$, vanish at infinity is denoted by $\mathcal{C}^k_0(\R^{d_1} \times \R^{d_2})$. It is endowed with the norm $\norm{ \cdot }_{\mathcal{C}^k_0(\R^{d_1} \times \R^{d_2})} := \norm{ \cdot }_{\mathcal{C}^k_b(\R^{d_1} \times \R^{d_2})}. $ 
The topological dual spaces are denoted with a ``$*$'' as in $(\mathcal{C}_b(\R^{d_1} \times \R^{d_2}))^*$, $(\mathcal{C}^k_{p,q}(\R^{d_1} \times \R^{d_2}))^*$, etc... They are implicitly endowed with the dual norms and the corresponding duality brackets are generaly denoted by $\langle \cdot  ;  \cdot\rangle$.

When the underlying space $\R^{d_1} \times \R^{d_2}$ is clear from context, we omit it in notations such as $\mathcal{C}^k_p$, $\mathcal{C}^k_{p,q}$, $(\mathcal{C}^k_p)^*$, $(\mathcal{C}^k_{p,q})^*$... When $k=0$, we omit the superscript and simply write $\mathcal{C}_b$, $\mathcal{C}_0$, $\mathcal{C}_p$...

The control space $ \R^{d'}$ is denoted by $A$, and  similarly, we define $\mathcal{C}^k(A)$, $\mathcal{C}^k_0(A)$, $\mathcal{C}^k_b(A)$, $\mathcal{C}^k_p(A)$ and $\mathcal{C}^k_{p,q}(A)$, along with their corresponding norms and dual spaces.

The following lemma is used regularly throughout the paper. Its understanding at this stage is not essential. The proof is left as an exercise to the reader and relies essentially on the definitions of the various spaces and duality brackets introduced above. In this respect, the statement itself illustrates the repeated use of these notations throughout the rest of the paper. Thanks to \assreg (for the vector field $b: \R^{d_1} \times A \rightarrow \R^{d_1}$)
and with $\vee$
being the usual notation for the maximum of two real numbers, we have
\begin{lem}
\label{lem:convenientexpress}
For $k \geq 0$ and $p \geq q \geq 0$ with $k+ q \leq 4$, there exists a constant $C_b$, depending on the vector field $b$ and $k,p$ and $q$, such that, for $\rho^{i} \in (\mathcal{C}^k_p)^*$ and $ \varphi^{i} : \R^{d_1} \times \R^{d_2} \rightarrow \R$ with $\nabla_x \varphi^{i} \in \mathcal{C}^k_{p-q}$, for $i=1,2$, and with the notation $f^{i}(a) := \langle b(\cdot,a) \cdot \nabla_x \varphi^{i}; \rho^{i} \rangle$ for all $a \in A$, it holds
\begin{equation} 
\norm{f^{i}}_{\mathcal{C}^q_{k+1}(A)} \leq C_b \norm{ \nabla_x \varphi^{i}}_{\mathcal{C}^k_{p-q}} \norm{ \rho^{i}}_{(\mathcal{C}^k_p)^*}. 
\label{eq:firstconvenientexpress02/03}
\end{equation}
Now, for $k_1,k_2,q \geq 0$ and $p_1,p_2 \geq q$ with $k_1 \vee k_2 +q \leq 4$, there exists a constant $C_b >0$, depending on $b$ and $k_1,k_2,q,p_1,p_2$, such that, for  $\nabla_x \varphi^{i} \in \mathcal{C}^{k_2}_{p_2-q} \cap \mathcal{C}^{k_1}_{p_1-q}$ and $\rho^{i} \in (\mathcal{C}^{k_1}_{p_1})^* \cap (\mathcal{C}^{k_2}_{p_2})^*$, for $i=1,2$,
\begin{equation} 
\begin{split}
&\norm{f^{2} - f^{1}}_{\mathcal{C}^q_{k_1 \vee k_2+1}(A)} 
\\
&\hspace{15pt} \leq C_b \Bigl( \norm{ \nabla_x\varphi^2}_{\mathcal{C}^{k_2}_{p_2-q}} + \norm{\rho^1}_{(\mathcal{C}^{k_1}_{p_1})^*} \Bigr) \Bigl( \norm{ \rho^2-\rho^1}_{(\mathcal{C}^{k_2}_{p_2})^*} + \norm{ \nabla_x \varphi^2 - \nabla_x \varphi^1}_{\mathcal{C}^{k_1}_{p_1-q}} \Bigr).  \end{split}
\label{eq:secondconvenientexpress02/02}
\end{equation}
\end{lem}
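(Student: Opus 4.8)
The plan is to differentiate $f^{i}$ under the duality bracket and then reduce everything, via the dual-norm inequality, to a $\mathcal{C}^k_p$-estimate (in the $(x,y)$-variables) of the functions $\nabla_a^{j}b(\cdot,a)\cdot\nabla_x\varphi^{i}$ for $0\le j\le q$. One first notes that for each fixed $a$ the map $(x,y)\mapsto b(x,a)\cdot\nabla_x\varphi^{i}(x,y)$ lies in $\mathcal{C}^k_{p-q}\subset\mathcal{C}^k_p$: by \assreg (i) with $l=0$, $b(\cdot,a)$ and its $x$-derivatives up to order $k$ are bounded in $x$, so the product inherits the growth of $\nabla_x\varphi^{i}$; hence $f^{i}(a)=\langle b(\cdot,a)\cdot\nabla_x\varphi^{i};\rho^{i}\rangle$ is well defined. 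Then, using the Taylor expansion of $a\mapsto b(\cdot,a)$ with integral remainder together with the bounds $|\nabla_x^{m}\nabla_a^{j}b(x,a)|\le C(1+|a|^{m+1})(1+|x|^{j})$ for $m\le k$, $j\le q$ (which is where $k+q\le 4$ enters), one checks that $a\mapsto b(\cdot,a)\cdot\nabla_x\varphi^{i}$ is $q$ times continuously differentiable as a map into $\mathcal{C}^k_p$, with $j$-th derivative $\nabla_a^{j}b(\cdot,a)\cdot\nabla_x\varphi^{i}$; composing with the continuous linear form $\rho^{i}$ yields $\nabla_a^{j}f^{i}(a)=\langle\nabla_a^{j}b(\cdot,a)\cdot\nabla_x\varphi^{i};\rho^{i}\rangle$ for $0\le j\le q$.

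For the core estimate, fix $j\le q$ and a $(x,y)$-derivative of order $m\le k$. By Leibniz' rule $\nabla^{m}_{(x,y)}\bigl[\nabla_a^{j}b(x,a)\cdot\nabla_x\varphi^{i}\bigr]$ is a finite sum of terms $\nabla_x^{m_1}\nabla_a^{j}b(x,a)$ contracted against $\nabla^{m-m_1}_{(x,y)}\nabla_x\varphi^{i}$, $0\le m_1\le m$ (only $x$-derivatives reach $b$). Bounding the first factor by $C(1+|a|^{m_1+1})(1+|x|^{j})\le C(1+|a|^{k+1})(1+|x|^{j})$ and the second by $\norm{\nabla_x\varphi^{i}}_{\mathcal{C}^k_{p-q}}(1+|x|^{p-q}+|y|^{p-q})$, and using that $j\le q$ forces $(1+|x|^{j})(1+|x|^{p-q}+|y|^{p-q})\le C(1+|x|^{p}+|y|^{p})$ (all exponents occurring are $\le p$, since $j+(p-q)\le p$), one obtains $\norm{\nabla_a^{j}b(\cdot,a)\cdot\nabla_x\varphi^{i}}_{\mathcal{C}^k_p}\le C_b(1+|a|^{k+1})\norm{\nabla_x\varphi^{i}}_{\mathcal{C}^k_{p-q}}$. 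The dual-norm inequality then gives $|\nabla_a^{j}f^{i}(a)|\le C_b(1+|a|^{k+1})\norm{\nabla_x\varphi^{i}}_{\mathcal{C}^k_{p-q}}\norm{\rho^{i}}_{(\mathcal{C}^k_p)^*}$, and dividing by $1+|a|^{k+1}$, taking the supremum over $a$, and summing over $j\le q$ yields \eqref{eq:firstconvenientexpress02/03}.

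For \eqref{eq:secondconvenientexpress02/02} the plan is to split, by bilinearity, $f^{2}(a)-f^{1}(a)=\langle b(\cdot,a)\cdot\nabla_x\varphi^{2};\rho^{2}-\rho^{1}\rangle+\langle b(\cdot,a)\cdot(\nabla_x\varphi^{2}-\nabla_x\varphi^{1});\rho^{1}\rangle$ and to apply \eqref{eq:firstconvenientexpress02/03} to each piece -- to the first with indices $(k_2,p_2)$ and data $(\varphi^{2},\rho^{2}-\rho^{1})$, to the second with indices $(k_1,p_1)$ and data $(\varphi^{2}-\varphi^{1},\rho^{1})$; all hypotheses hold because $k_1\vee k_2+q\le 4$ and because of the assumed integrability of the $\rho^{i}$ and $\nabla_x\varphi^{i}$. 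Since $k_1\vee k_2+1\ge k_i+1$, the $\mathcal{C}^q_{k_1\vee k_2+1}(A)$-norm is dominated by the $\mathcal{C}^q_{k_i+1}(A)$-norm, so both bounds hold with $\norm{\cdot}_{\mathcal{C}^q_{k_1\vee k_2+1}(A)}$ on the left; adding them and using $\alpha\beta+\gamma\delta\le(\alpha+\gamma)(\beta+\delta)$ for nonnegative reals -- with $\alpha=\norm{\nabla_x\varphi^{2}}_{\mathcal{C}^{k_2}_{p_2-q}}$, $\beta=\norm{\rho^{2}-\rho^{1}}_{(\mathcal{C}^{k_2}_{p_2})^*}$, $\gamma=\norm{\rho^{1}}_{(\mathcal{C}^{k_1}_{p_1})^*}$, $\delta=\norm{\nabla_x\varphi^{2}-\nabla_x\varphi^{1}}_{\mathcal{C}^{k_1}_{p_1-q}}$ -- gives exactly \eqref{eq:secondconvenientexpress02/02}. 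I expect no serious obstacle: the only delicate points are the (routine) justification of differentiation under the bracket and the bookkeeping of polynomial exponents in the Leibniz expansion, where one checks that after $j\le q$ derivatives in $a$ and $m\le k$ derivatives in $(x,y)$ the $a$-growth never exceeds $k+1$ and the $(x,y)$-growth never exceeds $p$ -- which is precisely what the constraints $p\ge q$ and $k+q\le 4$ guarantee.
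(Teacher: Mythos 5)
The paper does not spell out a proof of this lemma (it is explicitly left as an exercise, with the remark that it ``relies essentially on the definitions''), so there is no argument to compare with line by line; your proposal is precisely the kind of direct computation the authors had in mind. The two building blocks are exactly right: (a) for the norm bound, combine the Leibniz rule in $(x,y)$ (only $x$-derivatives hit $b$), the growth bounds $|\nabla_x^{m_1}\nabla_a^{j}b(x,a)|\le C(1+|a|^{m_1+1})(1+|x|^{j})$ for $m_1\le k$, $j\le q$, the dual-norm inequality, and the bookkeeping $j+(p-q)\le p$; and (b) for the second estimate, split by bilinearity, apply the first estimate to each piece with the appropriate indices, observe $\|\cdot\|_{\mathcal{C}^q_{k_1\vee k_2+1}(A)}\le C\|\cdot\|_{\mathcal{C}^q_{k_i+1}(A)}$, and finish with $\alpha\beta+\gamma\delta\le(\alpha+\gamma)(\beta+\delta)$. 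All of this is sound.

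One caveat on the point you yourself flag as ``routine'': the bound $\sup_a|\nabla_a^{j}f^i(a)|/(1+|a|^{k+1})\le C_b\|\nabla_x\varphi^i\|\|\rho^i\|$ is indeed a clean consequence of the dual-norm inequality once one \emph{grants} $\nabla_a^{j}f^{i}(a)=\langle\nabla_a^{j}b(\cdot,a)\cdot\nabla_x\varphi^{i};\rho^{i}\rangle$; but proving that $f^i$ is genuinely $q$-times differentiable with this formula is not quite as painless as you suggest. A Taylor-with-remainder argument to order $q+1$ requires $\nabla_a^{q+1}b$ and hence $k+q+1\le 4$, so it does not cover the endpoint $k+q=4$; and a first-order mean-value argument controlling the difference $[\nabla_a^{j}b(\cdot,a')-\nabla_a^{j}b(\cdot,a)]\cdot\nabla_x\varphi^{i}$ in $\mathcal{C}^k_p$-norm runs into the difficulty that the far-field (large $|x|$) contribution is merely $O(1)$ after division by the weight, so joint continuity of the $b$-derivatives alone does not give the needed weighted-uniform convergence. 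The paper is aware of this delicacy: its Lemma~\ref{lem:differentiabilitydualitybracket31/01}, which proves a single $a$-derivative, deliberately tests $\rho$ against the \emph{weaker} $\mathcal{C}^1_3$-norm (rather than $\mathcal{C}^2_2$) precisely so that the Taylor remainder can be seen to vanish. In practice this gap is harmless --- the paper only ever invokes Lemma~\ref{lem:convenientexpress} with $k+q\le 2$, for which a second-order Taylor expansion in $a$ is available and your argument goes through cleanly --- but if one wants the lemma in full generality one should either restrict the statement to $k+q\le 3$, or phrase it as an a priori estimate on the formal derivatives of $f^i$, or add a weighted uniform-continuity hypothesis on $\nabla_x^k\nabla_a^q b$.
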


\subsubsection*{Measure theory} 
We identify $(\mathcal{C}_0(A))^*$ with the set of finite Radon measures on $A$, denoted by $\mathcal{M}(A)$. The subset of positive Radon measures is denoted by $\mathcal{M}^+(A)$, and the subspace of finite Radon measures $\nu \in \mathcal{M}(A)$ such that $(1+|a|^k) \nu$ belongs to $\mathcal{M}(A)$ 
is denoted by $\mathcal{M}_{(1+|\cdot|^k)} (A)$ and endowed with the norm 
$$ \norm{ \nu}_{\mathcal{M}_{(1+|\cdot|^k)} := \sup_{\varphi \in \mathcal{C}(A), |\varphi(a)| \leq (1+|a|^k) }} \int_{A} \varphi(a) d\nu(a) = \int_{A} (1+|a|^k) d|\nu|(a),$$
where $\vert \nu \vert$
is the total variation of the (finite) measure 
$\nu$. 

For $p \geq 1$, we denote by $\mathcal{P}_p(A)$ the space of Borel probability measures on $A$ with finite moment of order $p$ (so that 
${\mathcal P}_p(A) \subset {\mathcal M}_{(1+ \vert \cdot \vert^p)}(A)$), endowed with the Monge-Kantorovich-Rubinstein distance of order $p$
$$ d_p(\mu, \nu)^p := \inf_{ \pi \in \Gamma(\mu,\nu) } \int_{\R^n} |x-y|^p d\pi(x,y),$$
where $\Gamma(\mu,\nu) \subset \mathcal{P}_p(A^2)$ is the subset of couplings between $\mu$ and $\nu$. 
As usual, the space of probability measures on $A$ (without any integrability condition) is denoted by ${\mathcal P}(A)$. 
We define similarly $\mathcal{P}_p(\R^{d_1} \times \R^{d_2})$ (and $\mathcal{P}(\R^{d_1} \times \R^{d_2})$).

For two probability measures $\nu, \tilde{\nu}$ on some Polish space $E$, we define the relative entropy of $\nu$ with respect to $\tilde{\nu}$ by
\begin{equation}
\label{eq:def:relative:entropy}
\mathcal{E} \bigl( \nu | \tilde{\nu} \bigr) := \left \{ 
\begin{array}{ll}
\displaystyle \int_E \log  \frac{d \nu}{d \tilde{\nu}} d\nu & \mbox{ if } \nu \mbox{ is absolutely continuous with respect to } \tilde{\nu}, \\
 +\infty & \mbox{ otherwise,} 
 \end{array}
\right.
\end{equation}
where $\frac{d \nu}{d\tilde{\nu}}$ denotes the Radon-Nikodym derivative of $\nu$ with respect to $\tilde{\nu}$. When a probability measure $\nu \in \mathcal{P}(A)$ is absolutely continuous with respect to the Lebesgue measure over $A$, we denote its density by the same symbol $\nu$ and  we write $\nu(a)da$ for $d\nu(a)$. 

In the same spirit of simplifying notation, we will often write $b(x, \nu)$ instead of the full expression $\int_A b(x,a)\, d\nu(a)$.

\subsubsection*{Families indexed by a time parameter.}

For some time interval $[t_0,T]$ and some metric space $(E,d_E)$, we denote by $\mathcal{C}([t_0,T], E)$ the space of continuous curves from $[t_0,T]$ to $E$ endowed with the metric 
$$ \sup_{t \in [t_0,T]} d_E(e^1_t, e^2_t), \quad \quad \mbox{for} \hspace{5pt} \bd{e}^1 = (e^1_t)_{t \in [t_0,T]}, \ \bd{e}^2 = (e^2_t)_{t \in [t_0,T]} \in \mathcal{C}([t_0,T], E).$$
Throughout, we use $\bd{bold}$ characters to denote family of functions parametrized by the time parameter $t$. We also define $\mathcal{C}^{\alpha}([t_0,T],E)$, for $\alpha \in (0,1)$, the subset of $\alpha$-Hölder continuous curves. We denote by $\mathcal{L}^{\infty}([t_0,T], E)$ the set of bounded functions from $[t_0,T]$ to $E$ (without any measurability condition).

\subsubsection*{Additional functional spaces.}

Throughout, the text we introduce several additional functional spaces. For some $t_0 \in [0,T]$, the spaces of controls and control perturbations $\mathcal{A}(t_0), \mathcal{D}(t_0)$ and $\mathcal{A}^{\ell}(t_0)$ are given respectively in Definitions \ref{def:admissible:control}, \ref{defn:D(t_0)} and \ref{def:admissible:perturbation}. Similarly, the space $\mathcal{R}(t_0)$ of solutions to the linearized continuity equation in \eqref{eq:LinSysRhoVIntro} is given in Definition \ref{def:R(t_0)}.

\section{Optimality Conditions and Stable Solutions}
\label{se:3}

In this section, we introduce the first  and second-order necessary conditions 
satisfied by any minimizer 
of 
\eqref{eq:OriginalControlPb24/06}
and then define properly the notion of 
stable minimizer. Even though these 
preliminary 
results play a fundamental role in the rest of the paper, we feel it sufficient to postpone their proofs 
to Sections \ref{sec:ExistenceandFOC}, \ref{sec:LinearizedEquations} and \ref{sec:SOC}.

\subsection{Admissible Controls and Existence of a Minimizer}
\label{subse:3.1}
We begin by clarifying the definition of the control problem 
\eqref{eq:OriginalControlPb24/06}--\eqref{eq:deftotalcost}
and in particular the choice of the admissible controls. 
We consider controls ${\boldsymbol \nu}$ as trajectories taking values in ${\mathcal P}(A)$, with finite mean relative entropy with respect to the Gibbs density associated with the potential $\ell$: 

\begin{defn}
\label{def:admissible:control}
For a fixed initial time $t_0 \in [0,T]$, 
${\mathcal A}(t_0)$ is defined as the collection of 
measurable mappings ${\boldsymbol \nu} : t \in [t_0,T] \mapsto \nu_t \in {\mathcal P}(A)$ such that 
\begin{equation}
\label{eq:definitionadmissiblecontrols}
\int_{t_0}^T \mathcal{E} \bigl( \nu_t | \nu^\infty \bigr) dt < +\infty, 
\end{equation}
where $\nu^\infty$ is the log-concave probability defined in \eqref{eq:defnuinfty03/07}.
\end{defn}

The notion of measurability used for the mapping ${\boldsymbol \nu}$ is explained in Remark \ref{rmk:measurability}. 
Moreover, 
observe that the entropy penalization can be rewritten as the sum of a, somehow standard, moment penalization and the entropy with respect to the Lebesgue measure
\begin{equation}
\label{eq:Lepsilon:reformulation}
\mathcal{E}(\nu|\nu^{\infty}) = \int_A \ell(a) d\nu(a) + \int_A \log \nu(a) d\nu(a) - \log \int_A e^{-\ell(a)}da.
\end{equation}
In fact, we will prove in Lemma \ref{lem:cost:L4:moments}
that there 
exists a constant $C>0$ such that, for any ${\boldsymbol \nu} \in {\mathcal A}(t_0)$, 
\begin{equation} 
\int_{t_0}^T \int_A |a|^4 d\nu_t(a)dt + \sup_{t_0 \leq t_1 < t_2 \leq T} 
\biggl\{ \frac{1}{\sqrt{t_2-t_1}} \int_{t_1}^{t_2} \int_A |a|^2 d\nu_t(a)dt
\biggr\} \leq 
C
\biggl( 
1+
\int_{t_0}^T {\mathcal E}\bigl( \nu_t \vert 
\nu^\infty \bigr) dt\biggr).
\label{eq:boundfromPinsker16Sept:sec:3}
\end{equation}
This motivates the introduction of the following bigger vector space that will contain any linear combinations of elements of $\mathcal{A}(t_0)$. This will become especially relevant to state the second-order optimality conditions of Subsection \ref{subse:SOC04/03} and to define the stable solutions in  Subsection \ref{sec:subseStableSol}.

\begin{defn}
\label{defn:D(t_0)}
    We call ${\mathcal D}(t_0)$ the set of measurable maps $\bd{\nu} : t \in [t_0,T] \mapsto 
    \nu_t \in \mathcal{M}(A)$ (where $\mathcal{M}(A)$ is the set of finite measures over $A$) such that
\begin{equation} 
\norm{ \bd{\nu} }_{\mathcal{D}(t_0)} := \int_{t_0}^T \int_A(1+|a|^4) d|\nu_t|(a)dt + \sup_{t_2 > t_1 \in [t_0,T]} \frac{1}{\sqrt{t_2-t_1}} \int_{t_1}^{t_2} \int_A (1+|a|^2) d|\nu_t|(a)dt <+\infty. 
\label{eq:definitionadmissiblecontrols:aux}
\end{equation}
\end{defn}

\begin{rmk}
\label{rmk:measurability}
By contrast with elements of $\mathcal{A}(t_0)$, elements of $\mathcal{D}(t_0)$ are neither required to be positive nor to have unit mass for all $t$.
Measurability of the mapping ${\boldsymbol \nu}$
is understood in the sense that the map 
$t \in [t_0,T] \mapsto \int_A f d \nu_t$
is measurable for every $f \in {\mathcal C}_0(A)$. Using the representation
of the positive part
\begin{equation*}
\nu_t^+(E) = 
\sup_{f \in {\mathcal C}_0(A)} \Bigl\{ \int_A f d\nu_t ; \quad 0\leq f \leq  {\mathrm 1}_E \Bigr\}, \quad E 
\in {\mathcal B}(A), 
\end{equation*}
together with the separability of ${\mathcal C}_0(A)$, this implies that, for any  Borel subset $E$ of $A$, the mapping $t \in [t_0,T] \mapsto \nu_t^+(E)$ is measurable, and similarly for $\nu_t^-$. The converse is true in the sense that any mapping ${\bd \nu}$, such that $t \in [t_0,T] \mapsto \nu_t^{\pm}(E)$
is measurable 
for any  Borel subset $E$ of $A$, is
measurable in the former sense. 
When the arrival space of ${\boldsymbol \nu}$ is
restricted to ${\mathcal P}(A)$ (as in the case in Definition \eqref{def:admissible:control}), 
measurability can be viewed either by equipping 
${\mathcal P}(A)$ with
the Borel $\sigma$-field induced by the weak convergence topology (i.e., when probability measures are tested with respect to continuous and bounded functions on $A$)
or by requiring 
$t \in [t_0,T] \mapsto \nu_t(E)$ to be measurable for any $E \in {\mathcal B}(A)$. 
Both are equivalent, 
see \cite[Proposition 7.25]{BertsekasShreve}.

Back to the setting of Definition \ref{defn:D(t_0)},
the mapping $t \in [t_0,T] \mapsto \vert \nu_t \vert = \nu_t^+ + \nu_t^- \in {\mathcal M}(A)$ is  measurable. This guarantees that 
the norm in  
\eqref{eq:definitionadmissiblecontrols:aux} is well-defined. Equivalently, 
${\boldsymbol \nu}$ can be regarded as a finite measure on 
$[t_0,T] \times A$ defined by
\begin{equation*}
{\boldsymbol \nu}(I \times B) = \int_I \nu_t(B) dt,
\end{equation*}
for any two Borel subsets $I$ and $B$ of $[t_0,T]$ and $A$ respectively. 
The measure ${\boldsymbol \nu}$ has the Lebesgue measure as first marginal on $[t_0,T]$
and 
the collection 
$(\nu_t)_{t_0 \le t \le T}$ can be regarded as the kernel 
resulting from the disintegration of ${\boldsymbol \nu}$
with respect to ${\rm Leb}_{[t_0,T]}$. 
We sometimes write 
${\boldsymbol \nu} = {\rm Leb}_{[t_0,T]} \otimes (\nu_t)_{t_0 \le t \le T}$, 
or $d\nu(t,a) = d\nu_t(a) dt$.
In the framework of Definition \ref{def:admissible:control}, 
this point of view permits to 
regard 
$\bd \nu$ as an element of 
$(T-t_0) \cdot {\mathcal P}([t_0,T] \times A)$
and then to 
 interpret
\eqref{eq:definitionadmissiblecontrols} as a relative entropy on the wider space 
$[t_0,T] \times A$. Indeed, by \cite[Corollary 2.7]{BudhirajaDupuisbook} (with the definition of the relative entropy being extended in a trivial way to positive measures with the same mass), 
\begin{equation}
\label{eq:interpretation:wider:space}
\int_{t_0}^T {\mathcal E}\bigl( \nu_t \vert 
\nu^\infty \bigr) dt
= {\mathcal E}\bigl( {\boldsymbol \nu} \vert {\rm Leb}_{[t_0,T]}
\times \nu^\infty \bigr), 
\end{equation}
which provides another interpretation of the cost $J ((t_0,\gamma_0), \cdot  )$
as the sum of the original regression cost and a time-space entropic penalty.

Lastly, note for future reference that, up to a redefinition of the mapping $t \mapsto \nu_t$ on a Lebesgue-negligible subset of $[t_0, T]$, we may always assume that $\nu_t$ belongs to $\mathcal{M}_{1+|a|^4}(A)$ for all $t \in [t_0, T]$ --that is, the measure $(1 + |a|^4)\nu_t$ is finite on $A$.
\label{rmk:defnD(t_0)}
\end{rmk}

For any element $\bd{\nu} \in \mathcal{D}(t_0)$ (so, in particular, for any admissible control in $\mathcal{A}(t_0)$), it follows from \assreg \ that the velocity field $(t,x) \mapsto b(x,\nu_t)$ in 
the continuity equation 
\eqref{eq:Continuitygammanu} is well-defined and three times continuously 
differentiable in the variable $x$, with 
\begin{equation} 
 \int_{t_0}^T \norm{ b(\cdot , \nu_t)}_{\mathcal{C}^3_b} dt + \sup_{t_1 < t_2 \in [t_0,T]} \frac{1}{\sqrt{t_2-t_1}} \int_{t_1}^{t_2} \norm{b(\cdot,\nu_t)}_{\mathcal{C}^1_b}dt \leq C \norm{\bd{\nu}}_{\mathcal{D}(t_0)}, 
\label{eq:estimateb29/01}
\end{equation}
where we recall the convenient notation $b(x,\nu) = \int_A b(x,a)d\nu(a)$ for any $\nu \in \mathcal{M}(A)$.
Estimate \eqref{eq:estimateb29/01} makes it possible to solve  the continuity equation \eqref{eq:Continuitygammanu}. For $(t_0,\gamma_0) \in [0,T] \times \mathcal{P}_2(\R^{d_1} \times \R^{d_2})$ and $\bd{\nu} \in \mathcal{D}(t_0)$, we say that $\bd{\gamma} \in \mathcal{C}([t_0,T], \mathcal{P}_2(\R^{d_1} \times \R^{d_2}))$ is a solution to \eqref{eq:Continuitygammanu} if, for all test function $\varphi \in \mathcal{C}^1_b([t_0,T] \times \R^{d_1} \times \R^{d_2})$ (i.e., the function and its first time and space derivatives are continuous and bounded) and all $t_1\in [t_0,T]$, 
\begin{equation}
\label{eq:CE:def}
\begin{split}
\int_{\R^{d_1} \times \R^{d_2}} \varphi_{t_1}(x,y) d\gamma_{t_1}(x,y) &= \int_{\R^{d_1} \times \R^{d_2}} \varphi_{t_0}(x,y) d\gamma_0(x,y) \\
&+ \int_{t_0}^{t_1} \int_{\R^{d_1} \times \R^{d_2}} \bigl \{ \partial_t \varphi_t(x,y) + b(x, \nu_t) \cdot \nabla_x \varphi_t(x,y) \bigr \} d\gamma_t(x,y)dt. 
\end{split}
\end{equation}

Thanks to the regularity of the vector field driving the equation, well-posedness of the continuity equation is standard. The solution is obtained by pushing the initial condition $\gamma_0$ along the flow of the ODE \eqref{eq:ODE:intro}. 
In Appendix \ref{subse:A:1}, we gather a sequence of statements leading to the following result:

\begin{prop}
\label{lem:compactnessContinuityEquation24/06}
    Take ${\boldsymbol \nu}=(\nu_t)_{t_0 \le t \le T} \in \mathcal{D}(t_0)$. Then, for any given $\gamma_0 \in \mathcal{P}_2(\R^{d_1} \times \R^{d_2})$, there is a unique distributional solution ${\boldsymbol \gamma} \in \mathcal{C}([t_0,T], \mathcal{P}_2(\R^{d_1} \times \R^{d_2}))$ to the continuity equation
\begin{equation}
\label{eq:lem:compactnessContinuityEquation24/06:statement}
 \partial_t \gamma_t + \div_x( b(x,\nu_t) \gamma_t) = 0 \quad \mbox{ in } (t_0,T) \times \R^{d_1} \times \R^{d_2}, \quad \gamma(t_0)= \gamma_0.
 \end{equation}
It is given by 
$(\gamma_t = (X_t^{t_0,\cdot}, i_d) \# \gamma_0)_{t_0 \le t \le T}$
where $(X_t^{t_0,\cdot})_{t_0 \le t \le T}$
is the flow of 
\eqref{eq:ODE:intro}, solution to 
$$ \dot{X}^{t_0,x}_t = b(X_t^{t_0,x}, \nu_t) \quad t \in [t_0,T], \quad X_{t_0}^{t_0,x} = x. $$
Moreover, there exists a non-decreasing function $\Lambda : {\mathbb R}_+ \rightarrow {\mathbb R}_+$, independent of $(t_0,\gamma_0)$ and 
${\boldsymbol \nu}$, such that, for each $p \in [1,3]$ such that $\gamma_0$ belongs to $\mathcal{P}_p(\R^{d_1} \times \R^{d_2})$, 
 \begin{equation}
\label{eq:lem:compactnessContinuityEquation24/06:statement:48}
\sup_{t \in [t_0,T]}\frac{\int_{\R^{d_1} \times \R^{d_2}} \bigl( |x|^2 + |y|^2 \bigr)^{p/2}  d\gamma_t(x,y)}{1 + \int_{\R^{d_1} \times \R^{d_2}} \bigl( |x|^2 + |y|^2 \bigr)^{p/2}  d\gamma_0(x,y)}  + \sup_{t \neq s \in [t_0,T]}  \frac{d_p(\gamma_t,\gamma_s)}{\sqrt{|t-s|}} \leq \Lambda
 \bigl( \norm{\bd{\nu}}_{\mathcal{D}(t_0)} \bigr).
 \end{equation}
 \end{prop}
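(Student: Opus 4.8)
The statement is a standard well-posedness and a priori estimate result for the continuity equation with a velocity field that, while not globally Lipschitz, has controlled growth. The plan is to proceed via the Lagrangian (characteristics) representation. First I would show that the ODE $\dot X_t = b(X_t,\nu_t)$ is well-posed for every starting point. Write the velocity field as $\beta(t,x) := b(x,\nu_t) = \int_A b(x,a)\,d\nu_t(a)$. By \assreg (i), $|\nabla_x b(x,a)| \le C(1+|a|^2)$, so $\beta(t,\cdot)$ is $\bigl(C\int_A(1+|a|^2)\,d|\nu_t|(a)\bigr)$-Lipschitz in $x$, and $t \mapsto C\int_A(1+|a|^2)\,d|\nu_t|(a)$ is integrable on $[t_0,T]$ thanks to $\bd\nu \in \mathcal{D}(t_0)$ (indeed it is bounded by a constant times $\|\bd\nu\|_{\mathcal{D}(t_0)}$ via \eqref{eq:estimateb29/01}, since $\|b(\cdot,\nu_t)\|_{\mathcal{C}^1_b}$ controls the Lipschitz constant). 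Likewise $|b(x,a)| \le C(1+|a|)$ gives linear-in-time-integrable growth $|\beta(t,x)| \le g(t)(1+|x|)$ with $g \in L^1(t_0,T)$. The Cauchy–Lipschitz theorem in this time-measurable (Carathéodory) setting then gives a unique absolutely continuous flow $(X^{t_0,x}_t)_{t_0 \le t \le T}$ for each $x$, jointly measurable in $(x,t)$ and, by Gronwall against $g$, satisfying $|X^{t_0,x}_t| \le e^{\int g}(1+|x|)$. Higher $x$-regularity of the flow (up to $\mathcal{C}^3$ in $x$) follows by differentiating the ODE and using the $\mathcal{C}^3_b$ bound on $\beta$ from \eqref{eq:estimateb29/01}.

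Second, I would define $\gamma_t := (X^{t_0,\cdot}_t, i_d)\#\gamma_0$ and verify it solves \eqref{eq:CE:def}: plug a test function $\varphi \in \mathcal{C}^1_b([t_0,T]\times\R^{d_1}\times\R^{d_2})$ into $t \mapsto \int \varphi_t(X^{t_0,x}_t,y)\,d\gamma_0(x,y)$ and differentiate under the integral sign — legitimate because $\dot X^{t_0,x}_t = \beta(t,X^{t_0,x}_t)$ is dominated by $g(t)(1+|x|)$ and $\gamma_0 \in \mathcal{P}_2$ — which produces exactly the right-hand side of \eqref{eq:CE:def}. Continuity of $t \mapsto \gamma_t$ in $d_2$ follows from the flow estimate and dominated convergence. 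For uniqueness of distributional solutions, I would use the duality (adjoint) argument: given any distributional solution $\bd\gamma$, test it against the solution $w_t$ of the backward transport equation $-\partial_t w_t - \beta(t,\cdot)\cdot\nabla_x w_t = 0$, $w_{t_1} = \psi$ for arbitrary smooth compactly supported $\psi$; the $\mathcal{C}^3_b$ regularity of $\beta$ makes $w$ an admissible $\mathcal{C}^1_b$ test function (after a truncation/mollification argument to handle the non-compact support and the merely $L^1$-in-time regularity), and this forces $\int\psi\,d\gamma_{t_1}$ to equal $\int\psi\circ X^{t_0,\cdot}_{t_1}\,d\gamma_0$. Alternatively one invokes the superposition principle of Ambrosio to represent any solution by the (unique) flow. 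Either way, all the pieces are assembled in Appendix \ref{subse:A:1}, so here I would mainly cite those lemmas.

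Third, the quantitative estimate \eqref{eq:lem:compactnessContinuityEquation24/06:statement:48}. For the moment bound: from $|b(x,a)| \le C(1+|a|)(1+|x|)$ one gets $\frac{d}{dt}(1+|X^{t_0,x}_t|^2)^{p/2} \le p\,C\bigl(\int_A(1+|a|)\,d|\nu_t|(a)\bigr)(1+|X^{t_0,x}_t|^2)^{p/2}$, so Gronwall gives $(1+|X^{t_0,x}_t|^2)^{p/2} \le \exp\bigl(pC\int_{t_0}^T\int_A(1+|a|)\,d|\nu_t|\bigr)(1+|x|^2)^{p/2}$; integrating against $\gamma_0$ and bounding $\int_{t_0}^T\int_A(1+|a|)\,d|\nu_t| \le \|\bd\nu\|_{\mathcal{D}(t_0)}$ yields the first term with $\Lambda(r) = C\,e^{pCr}$ (absorbing the $|y|$ part trivially since $y$ is frozen). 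For the Hölder-in-time bound: for $s < t$, couple $\gamma_s$ and $\gamma_t$ through the flow, so $d_p(\gamma_t,\gamma_s)^p \le \int |X^{t_0,x}_t - X^{t_0,x}_s|^p\,d\gamma_0 \le \int\bigl(\int_s^t |b(X^{t_0,x}_r,\nu_r)|\,dr\bigr)^p d\gamma_0$; bound $|b(X^{t_0,x}_r,\nu_r)| \le C(1+|X^{t_0,x}_r|)\int_A(1+|a|)\,d|\nu_r|(a)$, pull out the (already controlled) sup of $(1+|X^{t_0,x}_r|)$, and use the defining property of $\|\bd\nu\|_{\mathcal{D}(t_0)}$ that $\int_s^t\int_A(1+|a|^2)\,d|\nu_r|\,dr \le \|\bd\nu\|_{\mathcal{D}(t_0)}\sqrt{t-s}$ — wait, that controls the $|a|^2$-weighted mass, not the $|a|$-weighted one, but $(1+|a|) \le (1+|a|^2)$ so it is still fine — to get $d_p(\gamma_t,\gamma_s)^p \le \Lambda(\|\bd\nu\|_{\mathcal{D}(t_0)})^p\,(t-s)^{p/2}$, hence the $\sqrt{|t-s|}$ modulus after taking $p$-th roots.

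\textbf{Main obstacle.} The genuinely delicate point is not any single estimate but the uniqueness of \emph{distributional} solutions in $\mathcal{C}([t_0,T],\mathcal{P}_2)$: the natural duality test function $w$ solving the backward transport equation has only $L^1$-in-time regularity and non-compact support, so one cannot directly plug it into \eqref{eq:CE:def}, which is stated for $\mathcal{C}^1_b$ test functions. One must either (i) carefully regularize $\beta$ in time and truncate $w$ in space, controlling the errors using the moment bounds just established, or (ii) appeal to Ambrosio's superposition principle to lift an arbitrary solution to a measure on absolutely continuous paths concentrated on solutions of the ODE, and then invoke pathwise uniqueness of the ODE. I would take route (ii) if the appendix already provides the superposition machinery, and route (i) otherwise; in both cases the growth bounds on $b$ together with $\gamma_0 \in \mathcal{P}_2$ are exactly what makes the integrability work. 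Everything else is a routine Gronwall-and-coupling exercise.
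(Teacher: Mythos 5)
Your proof plan is correct and follows the same route as the paper (existence and estimates via the Lagrangian flow, uniqueness via time-mollification of the control and duality with the resulting smooth backward transport solution — this is exactly Lemma~\ref{lem:A.4} in the appendix, relying on Proposition~\ref{prop:ODE16Sept} and the mollification Lemma~\ref{lem:newregularizationb23/12}). One small inaccuracy worth flagging: item~(i) of \assreg\ with $k=l=0$ gives $|b(x,a)|\leq C(1+|a|)$, i.e.\ $b$ is in fact \emph{bounded} in $x$, so your multiplicative Gronwall factor $e^{C\|\bd\nu\|_{\mathcal{D}(t_0)}}$ can be replaced by the additive translation bound $|X_t^{t_0,x}|\leq |x|+\Lambda(\|\bd\nu\|_{\mathcal{D}(t_0)})$ that the paper actually uses, and the spatial truncation you contemplate for the uniqueness step is unnecessary once you take terminal data $\phi\in\mathcal{C}^2_b$, since the transport solution driven by the mollified control then has bounded derivatives automatically and is directly an admissible test function.
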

By an approximation argument, we can prove that, whenever $\gamma_0$ belongs to $\mathcal{P}_p(\R^{d_1} \times \R^{d_2})$ for some $p \geq 2$, then equation \eqref{eq:CE:def} is satisfied for any test function $\varphi \in \mathcal{C}^1([t_0,T] \times \R^{d_1} \times \R^{d_2})$ such that $\varphi, \partial_t \varphi$ and $\nabla_x \varphi$ have growth of order $p$ in $(x,y)$ at infinity.

Notice that, since $\gamma_0$ lies in $\mathcal{P}_2(\R^{d_1} \times \R^{d_2})$ then, for all $\bd{\nu} \in \mathcal{A}(t_0)$,  $\gamma_T$ belongs to 
${\mathcal P}_2({\mathbb R}^{d_1} 
\times {\mathbb R}^{d_2})$ which guarantees in the end that the total cost $J((t_0,\gamma_0),{\boldsymbol \nu})$ is finite.

The control problem 
\eqref{eq:OriginalControlPb24/06}--\eqref{eq:deftotalcost} 
is studied in an exhaustive manner in
Section \ref{sec:ExistenceandFOC}.
Results can be summarized as follows: 

\begin{prop}
Let $(t_0,\gamma_0) \in [0,T] \times {\mathcal P}_2({\mathbb R}^{d_1} 
\times {\mathbb R}^{d_2})$. 
Then, for any $M \in {\mathbb R}$, the sub-level set 
$\{ {\boldsymbol \nu} \in {\mathcal A}(t_0) : J((t_0,\gamma_0),{\boldsymbol \nu})
\leq M \}$ is relatively compact for the weak topology on 
$(T-t_0) \cdot {\mathcal P}([t_0,T] \times A)$
and the functional ${\boldsymbol \nu} \in {\mathcal A}(t_0) \mapsto 
J((t_0,\gamma_0),{\boldsymbol \nu})$ is lower-semicontinuous for 
the weak topology (cf. Remark \ref{rmk:measurability} for the definition of the latter). In particular, the optimal control problem \eqref{eq:OriginalControlPb24/06}--\eqref{eq:deftotalcost} has at least one solution (i.e., one minimizer) in 
${\mathcal A}(t_0)$. 
\end{prop}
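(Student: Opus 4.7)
The strategy is the classical direct method: extract a weakly convergent minimizing sequence (compactness), identify the limit of the associated state trajectories (stability of the continuity equation), and pass to the inferior limit in the cost (lower semicontinuity). I separate the argument into three paragraphs matching these steps.

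\textbf{Coercivity and weak compactness.} Because $L$ is bounded below by \assreg(iii), any ${\boldsymbol \nu}$ with $J((t_0,\gamma_0),{\boldsymbol \nu}) \le M$ satisfies $\int_{t_0}^T {\mathcal E}(\nu_t \vert \nu^\infty) dt \le (M - \inf L)/\epsilon$. Through \eqref{eq:boundfromPinsker16Sept:sec:3}, this produces a uniform bound $\|{\boldsymbol \nu}\|_{\mathcal{D}(t_0)} \le C_M$, in particular uniform control of the space--time fourth moment of $|a|$. Viewing ${\boldsymbol \nu}$ via the identification \eqref{eq:interpretation:wider:space} as an element of $(T-t_0) \cdot {\mathcal P}([t_0,T] \times A)$, the sub-level set is a subset of probability measures with uniformly bounded relative entropy with respect to the reference $\mathrm{Leb}_{[t_0,T]} \otimes \nu^\infty$. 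Since $\nu^\infty$ has finite exponential moments (log-concave with superquadratic $\ell$), the Donsker--Varadhan variational formula implies that bounded-entropy sub-level sets are tight, hence weakly relatively compact by Prokhorov's theorem.

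\textbf{Stability of the state equation.} Let $({\boldsymbol \nu}^n)_n$ be a sequence in the sub-level set converging weakly to some limit ${\boldsymbol \nu}$. Applying Proposition \ref{lem:compactnessContinuityEquation24/06} with $p=2$, the associated trajectories $({\boldsymbol \gamma}^n)_n$ are uniformly $\tfrac12$-H\"older in ${\mathcal C}([t_0,T],{\mathcal P}_2(\R^{d_1}\times\R^{d_2}))$ with uniformly bounded second moments; by a standard Arzel\`a--Ascoli argument, a subsequence converges to some $\tilde{\boldsymbol \gamma}$ uniformly in $t$ in ${\mathcal P}_2$. To identify $\tilde{\boldsymbol \gamma}$ with the solution ${\boldsymbol \gamma}$ of \eqref{eq:lem:compactnessContinuityEquation24/06:statement} driven by the limit ${\boldsymbol \nu}$, I would pass to the limit in the weak formulation \eqref{eq:CE:def} for smooth compactly supported test functions $\varphi$: the only delicate term is the drift integral $\int_{t_0}^{t_1}\int b(x,a)\cdot\nabla_x\varphi_t(x,y) d\gamma_t^n(x,y) d\nu_t^n(a) dt$, which one handles by combining the weak convergence of ${\boldsymbol \nu}^n$ on $[t_0,T]\times A$ with the uniform fourth-$a$-moment bound from the previous step (needed because $b$ has polynomial growth in $a$ by \assreg(i)), together with the uniform narrow convergence $\gamma_t^n \to \gamma_t$.

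\textbf{Lower semicontinuity and conclusion.} The entropic part is lower semicontinuous for the weak topology on $(T-t_0)\cdot {\mathcal P}([t_0,T]\times A)$ by the classical lower semicontinuity of relative entropy. The terminal part is lower semicontinuous because $L - \inf L \geq 0$ is continuous and bounded below, so Fatou's lemma applied to narrow convergence $\gamma_T^n \to \gamma_T$ (inherited from the $\mathcal{P}_2$-convergence of the previous paragraph) yields $\int L d\gamma_T \le \liminf \int L d\gamma_T^n$. Applying these two facts to a minimizing sequence produces a limit ${\boldsymbol \nu}^* \in (T-t_0) \cdot {\mathcal P}([t_0,T]\times A)$ whose cost is no larger than the infimum; its first marginal is still $\mathrm{Leb}_{[t_0,T]}$ by weak continuity of marginals, so that ${\boldsymbol \nu}^* = \mathrm{Leb}_{[t_0,T]} \otimes (\nu_t^*)_{t_0\le t \le T}$ with $\nu_t^* \in {\mathcal P}(A)$ for a.e.\ $t$, and membership in $\mathcal{A}(t_0)$ follows from the finiteness of the limiting entropy. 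The main technical point in the whole argument is the joint passage to the limit in the nonlinear drift $b(\cdot,\nu_t^n)\gamma_t^n$ under only weak convergence of ${\boldsymbol \nu}^n$ on $[t_0,T]\times A$; the polynomial growth of $b$ in $a$ makes this step non-trivial and is exactly where the uniform $|a|^4$-moment bound supplied by \eqref{eq:boundfromPinsker16Sept:sec:3} is used.
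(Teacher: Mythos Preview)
Your proof is correct and follows the same direct-method approach as the paper (Lemmas \ref{lem:cost:L4:moments}, \ref{lem:cost:convergence}, Proposition \ref{prop:existenceprop}): entropy coercivity, weak compactness of bounded-entropy sets via \cite[Lemma 2.4]{BudhirajaDupuisbook}, stability of the continuity equation under weak convergence of controls, and lower semicontinuity of both cost terms. One minor over-claim: with only uniform second moments, Arzel\`a--Ascoli yields compactness of $(\boldsymbol{\gamma}^n)$ in $\mathcal{C}([t_0,T],\mathcal{P}_{2-\delta})$ rather than $\mathcal{P}_2$ (as the paper states, one loses a $\delta$ in the moment), but this is harmless since narrow convergence already suffices for passing to the limit in \eqref{eq:CE:def} and for the Fatou step on $\int L\,d\gamma_T^n$.
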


\subsection{First-Order Condition and Consequences}

We now address the first-order condition associated with the control problem 
\eqref{eq:OriginalControlPb24/06}--\eqref{eq:deftotalcost}.
We start with the following definition: For $t_0\in [0,T]$ and ${\boldsymbol \nu} \in {\mathcal D}(t_0)$, we call equation adjoint to ${\boldsymbol \gamma}$ (at least, when ${\bd \nu}$ is the control driving $\bd \gamma$) 
    the (backward) transport
    equation
\begin{equation}
\label{eq:adjoint:equation}
\left \{
\begin{array}{ll}
\displaystyle -\partial_t  {u}_t(x,y) -  b\bigl(x, {\nu}_t\bigr)  \cdot \nabla_x  {u}_t(x,y) = 0,
&\quad (t,x,y)
\in [t_0,T] \times \R^{d_1} \times \R^{d_2},
\\
\displaystyle
\qquad 
 {u}_T(x,y) = L(x,y), &\quad (x,y) \in \R^{d_1} \times \R^{d_2}.
\end{array}
\right.
\end{equation}
Solvability of equation 
\eqref{eq:adjoint:equation} is
addressed
in the space 
$\mathcal{C}^{1/2}([t_0,T], \mathcal{C}^1_{2,1})$. We say that ${\boldsymbol u}=(u_t)_{t_0 \le t \le T} \in \mathcal{C}^{1/2}([t_0,T], \mathcal{C}^1_{2,1})$ is a solution to the transport equation if, for all $t \in [t_0,T]$ and all $(x,y) \in \R^{d_1} \times \R^{d_2}$,
$$ u_{t}(x,y) = L(x,y) + \int_{t}^{T} b(x,\nu_s) \cdot \nabla_x u_s(x,y)ds.$$

Similarly to Proposition \ref{lem:compactnessContinuityEquation24/06}, the following result
is presented in 
Appendix \ref{subse:A:1}:
\begin{prop}
\label{prop:SolutionBackxard16Sept}
    Take $t_0 \in [0,T]$ and ${\boldsymbol \nu} \in \mathcal{D}(t_0)$. 
    Then, there exists a unique solution ${\boldsymbol u}=(u_t)_{t_0 \le t \le T} \in \mathcal{C}^{1/2}([t_0,T], \mathcal{C}^1_{2,1})$ to the transport equation \eqref{eq:adjoint:equation}. Moreover, for every $t \in [t_0,T]$, 
    the mapping $(x,y) \mapsto u_t(x,y)$ belongs to $\mathcal{C}^3(\R^{d_1} \times \R^{d_2})$ and there exists a non-decreasing function $\Lambda : {\mathbb R}_+ \rightarrow 
    {\mathbb R}$, independent of $t_0$ and ${\boldsymbol \nu}$, such that 
    $$ \sup_{t \in [t_0,T]} \norm{u_t}_{\mathcal{C}^3_{2,1}} + \sup_{ t_2 > t_1 \in [t_0,T]} \frac{\norm{u_{t_2}-u_{t_1}}_{\mathcal{C}^1_{1}}}{\sqrt{t_2-t_1}} \leq \Lambda \bigl(\norm{\bd{\nu}}_{\mathcal{D}(t_0)} \bigr).$$
\end{prop}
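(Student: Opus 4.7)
The natural approach is the method of characteristics. For any $(t,x) \in [t_0,T] \times \R^{d_1}$, introduce the flow
\begin{equation*}
\dot{X}_s^{t,x} = b\bigl(X_s^{t,x},\nu_s\bigr), \quad s \in [t,T], \qquad X_t^{t,x} = x,
\end{equation*}
and set $u_t(x,y) := L(X_T^{t,x},y)$. Since along this flow the equation in \eqref{eq:adjoint:equation} reads $\frac{d}{ds}[u_s(X_s^{t,x},y)]=0$, this candidate is consistent with the terminal condition $u_T = L$. The strategy is: (i) prove that the flow is well-defined, smooth in $x$, and Hölder-$1/2$ in $t$, with quantitative bounds driven by $\norm{\bd{\nu}}_{\mathcal{D}(t_0)}$; (ii) deduce by composition with $L$ the required bounds for $\bd{u}$; (iii) check that the integral formulation is satisfied; (iv) establish uniqueness.

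First I would use Proposition \ref{lem:compactnessContinuityEquation24/06} (or rather the ODE estimates underlying it, cf.\ \eqref{eq:estimateb29/01}) to obtain existence, uniqueness and growth bounds for $X_s^{t,x}$. From \assreg\ (i), $|b(x,a)| \leq C(1+|a|)(1+|x|)$, so Grönwall's lemma yields $|X_s^{t,x}| \leq (1+|x|)\exp(C\int_{t_0}^T\!\int_A (1+|a|) d\nu_r(a)dr) \leq C_1(1+|x|)$ with $C_1$ depending only on $\norm{\bd{\nu}}_{\mathcal{D}(t_0)}$. Differentiating the ODE in $x$ up to three times gives linear systems whose coefficients are controlled, uniformly in time, by $\int_{t_0}^T\!\int_A (1+|a|^{k+1}) d\nu_t(a)dt$ for $k\le 3$, i.e.\ by $\norm{\bd{\nu}}_{\mathcal{D}(t_0)}$. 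Grönwall then provides bounds on $\nabla_x^k X_s^{t,x}$ for $k=1,2,3$, \emph{without} additional growth in $x$ for $k \geq 1$.

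Composing with $L$ and using \assreg\ (iii), I get $|u_t(x,y)| \leq C(1+|X_T^{t,x}|^2+|y|^2) \leq C'(1+|x|^2+|y|^2)$ (quadratic growth) and, via the chain rule together with $|\nabla L(x,y)| \leq C(1+|x|+|y|)$, linear growth for $\nabla_x u_t$ and $\nabla_y u_t$, and uniform bounds for second and third derivatives. This delivers $\sup_t \norm{u_t}_{\mathcal{C}^3_{2,1}} \leq \Lambda(\norm{\bd{\nu}}_{\mathcal{D}(t_0)})$. For the Hölder regularity in time, write
\begin{equation*}
X_T^{t_2,x} - X_T^{t_1,x} = -\int_{t_1}^{t_2} b(X_s^{t_1,x},\nu_s)ds + \int_{t_2}^T \bigl[b(X_s^{t_2,x},\nu_s)-b(X_s^{t_1,x},\nu_s)\bigr]ds;
\end{equation*}
the first integral is bounded by $(1+|x|)\int_{t_1}^{t_2}\!\int_A(1+|a|) d\nu_s(a)ds \leq C(1+|x|)\sqrt{t_2-t_1}\,\norm{\bd{\nu}}_{\mathcal{D}(t_0)}$ thanks precisely to the second term in the $\mathcal{D}(t_0)$-norm \eqref{eq:definitionadmissiblecontrols:aux}, and the second is handled by Grönwall. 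Differentiating once in $x$ and repeating this estimate gives a $\sqrt{|t_2-t_1|}$ bound on $\nabla_x X_T^{t_2,x}-\nabla_x X_T^{t_1,x}$ (uniformly in $x$), whence the required Hölder bound on $\norm{u_{t_2}-u_{t_1}}_{\mathcal{C}^1_1}$.

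It remains to verify that the function built in this way satisfies the integral formulation of \eqref{eq:adjoint:equation}, and that any such solution is unique. For the first point, the identity $u_t(x,y) = L(X_T^{t,x},y)=u_r(X_r^{t,x},y)$ for $t\leq r\leq T$, together with the smoothness in $x$ and the fact that $s \mapsto X_s^{t,x}$ is absolutely continuous, yields after differentiation in $r$ and integration in $s \in [t,T]$ exactly the fixed-point identity $u_t(x,y) = L(x,y) + \int_t^T b(x,\nu_s)\cdot \nabla_x u_s(x,y)ds$ (up to a Fubini argument using the growth \eqref{eq:estimateb29/01} of $b(\cdot,\nu_s)$ and the bounds on $\nabla_x u_s$). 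For uniqueness, if $\bd{u}^1,\bd{u}^2$ are two solutions in $\mathcal{C}^{1/2}([t_0,T],\mathcal{C}^1_{2,1})$, their difference $w=\bd{u}^1-\bd{u}^2$ satisfies the same integral equation with zero terminal data; evaluating $w_t(X_t^{t_0,x},y)$ and noting that $s \mapsto w_s(X_s^{t_0,x},y)$ has vanishing derivative (in the a.e.\ sense, by the integral equation and the $\mathcal{C}^1_1$-regularity of $\nabla_x w$) and vanishes at $s=T$ forces $w \equiv 0$.

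The main obstacle will be bookkeeping the growth in $x$ across the three orders of differentiation while simultaneously extracting the $\sqrt{\cdot}$ modulus of continuity in $t$: both rely on using the $\mathcal{D}(t_0)$-norm in its full strength, namely the $L^1_t$-control of the $(1+|a|^4)$-moments for derivatives up to order three, and the $\sqrt{t_2-t_1}$-control of the $(1+|a|^2)$-moments for the time regularity.
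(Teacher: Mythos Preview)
Your approach is essentially the paper's: define $u_t(x,y)=L(X_T^{t,x},y)$ via the characteristic flow, read off the $\mathcal{C}^3_{2,1}$ and $\mathcal{C}^{1/2}$-in-time estimates from the flow bounds (Proposition~\ref{prop:ODE16Sept}), and check the integral formulation plus uniqueness. One slip: under \assreg\ (iii), $\nabla^2 L$ and $\nabla^3 L$ have \emph{linear} growth, so the second and third derivatives of $u_t$ inherit linear (not uniform) growth---which is exactly what the $\mathcal{C}^3_{2,1}$-norm demands, so your final estimate is unaffected.

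The paper's execution differs in two places. To verify that the candidate satisfies the integral equation, the paper regularises $\bd{\nu}$ in time (Lemma~\ref{lem:newregularizationb23/12}) so that Lemma~\ref{lem:backwardtransportsmoothcontrol23/01} yields a classical solution $\varphi^n$, and then passes to the limit via Corollary~\ref{cor:approximation:nu}; this sidesteps the a.e.\ differentiability of $s\mapsto X_T^{s,x}$ that your direct argument needs (which is fine but requires justifying absolute continuity of this map and identifying its a.e.\ derivative as $-\nabla_x X_T^{s,x}\,b(x,\nu_s)$). For uniqueness, the paper does not evaluate the difference along characteristics but instead tests it against a smooth, compactly supported solution of the continuity equation driven by $\bd{\nu}^n$ and lets $n\to\infty$ (the same duality trick used in Lemma~\ref{lem:A.4}). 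Your characteristics argument works too, provided you make explicit that $s\mapsto w_s(X_s^{t,x},y)$ is absolutely continuous with a.e.\ zero derivative---your parenthetical acknowledges this, and the required ingredients (integral equation for $w$, $\mathcal{C}^1$-in-$x$ regularity, absolute continuity of the flow) are all available.
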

By 
\eqref{eq:boundfromPinsker16Sept:sec:3}, the   above bound remains true for $\bd{\nu} \in \mathcal{A}(t_0)$ if one replaces the right-hand side by 
$\Lambda ( \int_{t_0}^T
    \mathcal{E}  ( \nu_t |  \nu^\infty )dt)$
    (for a possibly different choice of the function $\Lambda$).

The following 
result provides the form 
of the 
forward-backward
system satisfied at optimality. 
A similar result can be found in 
\cite{hu2019meanfieldlangevinsystemoptimal,jabir2021meanfieldneuralodesrelaxed}:

\begin{thm}
\label{thm:OCThm27/06}
    Let $(t_0,\gamma_0) \in [0,T] \times \mathcal{P}_3(\R^{d_1} \times \R^{d_2})$.
    Then, 
    any minimizer $ \boldsymbol{\nu}^* \in {\mathcal A}(t_0)$
    to 
    $J( (t_0, \gamma_0) , \cdot )$ admits a jointly continuous density (still denoted $(t,a) \mapsto \nu^*_t(a)$) given, for all $(t,a) \in [t_0,T] \times A$, by
\begin{equation} 
\nu^*_t(a) = \frac{1}{z^*_t} \exp \Bigl( - \ell(a) - \frac{1}{\epsilon} \int_{\R^{d_1} \times \R^{d_2}}  b(x,a) \cdot \nabla_x u^*_t(x,y)  d\gamma^*_t(x,y) \Bigr),
\label{eq:NOCfornut2Avril}
\end{equation}    
where $z^*_t$ is a normalizing constant, $\boldsymbol{\gamma}^* = (\gamma^*_t)_{t \in [t_0,T]}$ 
is the solution to the continuity equation 
\eqref{eq:Continuitygammanu} starting from $(t_0,\gamma_0)$
and driven by 
$\boldsymbol \nu^*$, 
and $\boldsymbol{u}^* = (u^*_t)_{t \in [t_0,T]}$
solves the transport equation 
\eqref{eq:adjoint:equation}
driven by 
$\boldsymbol \nu^*$. 
 
 In particular, the pair  $(\boldsymbol{u}^*,\boldsymbol{\gamma}^*)$ solves the forward-backward system 
\begin{equation}
\label{eq:NOCforutgammat2Avril}
\left \{
\begin{array}{ll}
\displaystyle -\partial_t u^*_t(x,y) -  b\bigl(x,\nu^*_t\bigr)  \cdot \nabla_x u^*_t(x,y) = 0,
&\quad (t,x,y)
\in [t_0,T] \times \R^{d_1} \times \R^{d_2},
\\
\displaystyle
\qquad 
 u^*_T(x,y) = L(x,y), &\quad (x,y) \in \R^{d_1} \times \R^{d_2};
\\
\displaystyle \partial_t \gamma^*_t
+ \div_x \bigl(  b(x,\nu^*_t) \gamma^*_t  \bigr) = 0, &\quad
 \textrm{\rm in}
 \ [t_0,T] \times  \R^{d_1} \times \R^{d_2}, 
\\
\displaystyle \qquad \gamma^*_{t_0} = \gamma_0
&\quad
 \textrm{\rm in} 
 \ 
\R^{d_1} \times \R^{d_2}.
\end{array}
\right.
\end{equation}
\end{thm}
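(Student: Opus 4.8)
The plan is to derive the first-order optimality condition \eqref{eq:NOCfornut2Avril} by a standard variational argument, exploiting that the only nonconvex coupling is through the terminal cost and that the entropic term is strictly convex with good coercivity. First I would fix a minimizer $\boldsymbol{\nu}^*\in\mathcal{A}(t_0)$ and an admissible perturbation: given another $\boldsymbol{\eta}\in\mathcal{A}(t_0)$, set $\boldsymbol{\nu}^h := (1-h)\boldsymbol{\nu}^* + h\boldsymbol{\eta}$ for $h\in[0,1]$, which lies in $\mathcal{A}(t_0)$ by convexity of relative entropy. Let $\boldsymbol{\gamma}^h$ be the associated solution of the continuity equation. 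The key computation is the derivative at $h=0^+$ of $h\mapsto J((t_0,\gamma_0),\boldsymbol{\nu}^h)$. For the terminal term, I would differentiate $\int L\,d\gamma^h_T$ using the adjoint (transport) equation \eqref{eq:adjoint:equation}: writing $\boldsymbol{u}^*$ for the solution driven by $\boldsymbol{\nu}^*$, a duality/Gronwall argument (controlled by \eqref{eq:estimateb29/01}, Propositions \ref{lem:compactnessContinuityEquation24/06} and \ref{prop:SolutionBackxard16Sept}) gives
\begin{equation}
\frac{d}{dh}\Big|_{h=0^+}\int L\,d\gamma^h_T = \int_{t_0}^T\!\!\int_A \Big(\int_{\R^{d_1}\times\R^{d_2}} b(x,a)\cdot\nabla_x u^*_t(x,y)\,d\gamma^*_t(x,y)\Big)\,d(\eta_t-\nu^*_t)(a)\,dt.
\label{eq:planterminalderiv}
\end{equation}
For the entropic term, using the reformulation \eqref{eq:Lepsilon:reformulation} and convexity of $s\mapsto s\log s$, the one-sided derivative of $\epsilon\int_{t_0}^T\mathcal{E}(\nu^h_t|\nu^\infty)\,dt$ at $h=0^+$ equals $\epsilon\int_{t_0}^T\!\int_A (\ell(a)+\log\nu^*_t(a))\,d(\eta_t-\nu^*_t)(a)\,dt$, at least once one knows $\nu^*_t$ has a density bounded below on compacts; I would first establish this qualitative regularity, or circumvent it by working with the right-derivative inequality directly.

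Combining these and using that $h=0$ minimizes, I would get, for every $\boldsymbol{\eta}\in\mathcal{A}(t_0)$,
\begin{equation}
\int_{t_0}^T\!\!\int_A \Big(\epsilon\ell(a)+\epsilon\log\nu^*_t(a) + \int b(x,a)\cdot\nabla_x u^*_t(x,y)\,d\gamma^*_t(x,y)\Big)\,d(\eta_t-\nu^*_t)(a)\,dt \ge 0.
\label{eq:planfirstorderineq}
\end{equation}
A localization/measurable-selection argument (testing against $\eta_t$ supported near a Lebesgue point $t$ and using that perturbations in the $a$-variable are essentially free, modulo the mass-one constraint which produces the Lagrange multiplier $-\log z^*_t$) then turns \eqref{eq:planfirstorderineq} into the pointwise identity $\epsilon\log\nu^*_t(a) = -\epsilon\ell(a) - \int b(x,a)\cdot\nabla_x u^*_t\,d\gamma^*_t + \mathrm{const}_t$ for a.e.\ $t$ and a.e.\ $a$, which is exactly \eqref{eq:NOCfornut2Avril}. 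The joint continuity of $(t,a)\mapsto\nu^*_t(a)$ then follows a posteriori: the right-hand side of \eqref{eq:NOCfornut2Avril} is jointly continuous because $\boldsymbol{u}^*\in\mathcal{C}^{1/2}([t_0,T],\mathcal{C}^1_{2,1})$ with $u^*_t\in\mathcal{C}^3$, $\boldsymbol{\gamma}^*\in\mathcal{C}([t_0,T],\mathcal{P}_2)$, and $b$ is smooth with the polynomial growth of \assreg, so the integral against $\gamma^*_t$ is continuous in $(t,a)$ — here the coercivity $\nabla^2\ell\ge c(1+|a|^2)I$ guarantees the normalizing constant $z^*_t$ is finite and continuous. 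The forward-backward system \eqref{eq:NOCforutgammat2Avril} is then immediate: it just records that $\boldsymbol{u}^*$ solves \eqref{eq:adjoint:equation} and $\boldsymbol{\gamma}^*$ solves \eqref{eq:Continuitygammanu}, both driven by the same $\boldsymbol{\nu}^*$.

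The main obstacle is making the differentiation of the entropic term rigorous near $h=0$: $\mathcal{E}(\nu^h_t|\nu^\infty)$ is only convex, not smooth, in $h$, and its right-derivative at $0$ involves $\log\nu^*_t$, which requires knowing that $\nu^*_t\ll\nu^\infty$ with a density that is not pathological. I would handle this in two stages — first prove $\int_{t_0}^T\mathcal{E}(\nu^*_t|\nu^\infty)\,dt<\infty$ forces $\nu^*_t\ll\nu^\infty$ for a.e.\ $t$ (immediate from the definition of $\mathcal{A}(t_0)$), then use the convexity inequality $\mathcal{E}(\nu^h_t|\nu^\infty)\le (1-h)\mathcal{E}(\nu^*_t|\nu^\infty)+h\mathcal{E}(\eta_t|\nu^\infty)$ to get an upper bound on the incremental ratio, and the reverse one-sided bound from Fatou applied to the difference quotient of $s\mapsto s\log s$. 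A secondary technical point is justifying the dominated convergence in \eqref{eq:planterminalderiv} and the continuity of $h\mapsto\boldsymbol{\gamma}^h$, both of which are controlled uniformly in $h$ via \eqref{eq:lem:compactnessContinuityEquation24/06:statement:48} since $\|\boldsymbol{\nu}^h\|_{\mathcal{D}(t_0)}\le\|\boldsymbol{\nu}^*\|_{\mathcal{D}(t_0)}+\|\boldsymbol{\eta}\|_{\mathcal{D}(t_0)}$. These are the standard but somewhat delicate estimates referred to in the excerpt as deserving to be expanded; none presents a conceptual difficulty once the entropic differentiation is in place.
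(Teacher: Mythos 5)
Your overall architecture --- convex combination $\boldsymbol\nu^h = (1-h)\boldsymbol\nu^* + h\boldsymbol\eta$, differentiation of the terminal cost via the adjoint equation, identification of a Lagrange multiplier for the unit-mass constraint --- is sound, and if carried out carefully would reach \eqref{eq:NOCfornut2Avril}. But you take a substantially harder route than the paper, and the step you flag as the ``main obstacle'' is precisely what the paper's argument circumvents entirely.

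The paper never differentiates the entropy term. Instead, it applies the convexity inequality $\int\mathcal{E}(\nu^\lambda_t|\nu^\infty)\,dt \le (1-\lambda)\int\mathcal{E}(\nu^*_t|\nu^\infty)\,dt + \lambda\int\mathcal{E}(\nu_t|\nu^\infty)\,dt$ \emph{as an inequality, not as a bound on a difference quotient}, then differentiates only the (smooth) terminal cost, and arrives at
$$\epsilon\int_{t_0}^T\mathcal{E}(\nu^*_t|\nu^\infty)\,dt + \int_{t_0}^T\!\!\int b(x,\nu^*_t)\cdot\nabla_x u^*_t\,d\gamma^*_t\,dt \le \epsilon\int_{t_0}^T\mathcal{E}(\nu_t|\nu^\infty)\,dt + \int_{t_0}^T\!\!\int b(x,\nu_t)\cdot\nabla_x u^*_t\,d\gamma^*_t\,dt.$$
It then absorbs the linear term into the entropy (``completing the square''): both sides equal $\epsilon\int\mathcal{E}(\cdot_t|\nu^{*,\infty}_t)\,dt$ plus a constant independent of the control, where $\nu^{*,\infty}_t$ is the Gibbs density in \eqref{eq:NOCfornut2Avril}. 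So $\boldsymbol\nu^*$ minimizes $\boldsymbol\nu\mapsto\int\mathcal{E}(\nu_t|\nu^{*,\infty}_t)\,dt$, and by strict convexity of relative entropy that minimizer is uniquely $\boldsymbol\nu^{*,\infty}$. This one observation replaces both of your hard steps: there is no need to justify the right-derivative of $h\mapsto\mathcal{E}(\nu^h_t|\nu^\infty)$ (which, as you note, involves $\log\nu^*_t$ and hence an a priori regularity statement), and there is no localization/measurable-selection step to pass from an integral inequality to a pointwise identity. Your two-stage plan has a genuine chicken-and-egg tension --- you need $\nu^*_t$ to have a decent density to make sense of $\log\nu^*_t$ in \eqref{eq:planfirstorderineq}, but the density formula is what you are trying to prove --- and while this can presumably be untangled (e.g.\ by working with the convexity inequality and approximation, as you sketch), the Gibbs-variational-principle route avoids it at essentially zero cost. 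The joint continuity and the forward-backward system then follow a posteriori from the explicit formula, exactly as you say.
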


\color{black}

\begin{rmk}
\label{rmk:3.2}
Theorem \ref{thm:OCThm27/06} is proven in 
Subsection \ref{subse:7:2}. At this stage, 
the following remarks are in order: 
\begin{enumerate}[(i)]
\item Although the optimization problem is taken over the space of measures $\mathcal{A}(t_0)$ we will often identify an optimal solution $\boldsymbol{\nu}^*$ with its continuous density given by \eqref{eq:NOCfornut2Avril}.
\item
Notice that $z^*_t$
in \eqref{eq:NOCfornut2Avril}
is a normalization constant that 
guarantees that $\nu^*_t(A)=1$, namely 
$z^*_t$
is given, for all $t \in [t_0,T]$, by
    \begin{equation}
    \label{eq:F:formule:pour:tilde:z}
    z^*_t = \int_{A} 
    \exp \Bigl(  - \ell(a) - \frac{1}{\epsilon} \int_{\R^{d_1} \times \R^{d_2}}b(x,a) \cdot \nabla_x u^*_t(x,y)d \gamma^*_t(x,y)  \Bigr)  da.
    \end{equation}
By (i) in \assreg \, and because $ \gamma^*_t$ belongs to ${\mathcal P}_2({\mathbb R}^{d_1} 
\times {\mathbb R}^{d_2})$
and 
$\nabla_x  u^*_t$ has linear growth in $(x,y)$, 
the integral inside the exponential is finite
and less than $C(1+\vert a \vert)$
for a constant $C$ independent of $t$. 
\item 
The probability measure $\nu^*_t$
is defined as a Gibbs measure (which we may compare to 
$\nu^\infty$ in \eqref{eq:definitionadmissiblecontrols}). Equivalently, 
it is the unique 
density solving the stationary Fokker-Planck equation
\begin{equation} 
\displaystyle - \epsilon \Delta_a \nu_t(a) - \div_a \Bigl[ \Bigl( \epsilon \nabla_a \ell(a) + \nabla_a \Bigl( \int_{\R^{d_1} \times \R^{d_2}} b(x,a) \cdot \nabla_x u^*_t(x,y) d \gamma^*_t(x,y) \Bigr) \Bigr) \nu_t(a)  \Bigr] =0, \ a \in A. 
\label{eq:stationnaryFPe24Sept}
\end{equation}
\item
The reader will easily recognize that 
$\boldsymbol \gamma^*=( \gamma^*_t)_{t_0 \le t \leq T}$
in 
\eqref{thm:OCThm27/06}
is the optimal curve associated 
with the optimal control $ {\boldsymbol \nu}^*$. The function
$\boldsymbol u^*=( u^*_t)_{t_0 \le t \le T}$ is referred to as the adjoint trajectory or as the multiplier. 
\end{enumerate}
\end{rmk}

From the study of the system \eqref{eq:NOCfornut2Avril}-
\eqref{eq:NOCforutgammat2Avril}, we are able to establish further regularity properties 
of the optimal control
$\boldsymbol \nu^*$ and the   multiplier $\boldsymbol u^*$. In particular optimal solutions $\bd{\nu}^*$ are continuous in time (by contrast with generic elements of $\mathcal{A}(t_0)$).
The following result is proven in Subsection 
\ref{sec:additionalregandfirststability}:

\begin{prop}
    \label{prop:regularityfromOC}
Let $(t_0,\gamma_0) \in [0,T] \times {\mathcal P}_2({\mathbb R}^{d_1} \times 
{\mathbb R}^{d_2})$ and assume that $({\boldsymbol \nu}, {\boldsymbol \gamma}, {\boldsymbol u})$ is a solution to the system \eqref{eq:NOCfornut2Avril}--\eqref{eq:NOCforutgammat2Avril}. Then, $u$ has the following regularity 
\begin{align} 
 &t \mapsto u_t \in \mathcal{C}^2_2 \mbox{ is  bounded and } t \mapsto u_t \in \mathcal{C}^1_2 \mbox{ is continuous},
\label{eq:testfn1foru}
\\
&t \mapsto \partial_t u , \nabla_x u \in \mathcal{C}^2_1 \mbox{ are bounded and }  t \mapsto \partial_t u\textcolor{blue}{_t} , \nabla_x u\textcolor{blue}{_t} \in \mathcal{C}^1_2 \mbox{ are continuous. }  
\label{eq:testfn2foru}
\end{align}
 Moreover, there exists a non-decreasing function $\Lambda : {\mathbb R}_+
 \rightarrow [1,+\infty)$,  independent of $(t_0,\gamma_0)$ and 
 $({\boldsymbol \nu}, {\boldsymbol \gamma}, {\boldsymbol u})$, such that, for all $(t,a)$ in $[t_0,T] \times A$, we have the exponential bounds
\begin{equation}
\label{eq:bound:nu(a):exp(-ell(a))}
 \Lambda^{-1}_{\bd{\nu},\gamma_0}
 e^{-2\ell(a)}  \leq \nu_t(a) \leq \Lambda_{\bd \nu,\gamma_0}  e^{-\ell(a)/2},
\end{equation}
as well as the time regularity
\begin{equation} 
\sup_{t \in [t_0,T]} \mathcal{E} \bigl(\nu_t | \nu^{\infty} \bigr) + \sup_{t_1 \neq t_2 \in [t_0,T]} \frac{\bigl\| e^{3 \ell/4}  ( \nu_{t_2} - \nu_{t_1} ) \bigr \|_{L^{\infty}}}{|t_2-t_1]} +  \sup_{t_1 \neq t_2 \in [t_0,T]} \frac{\mathcal{E} \bigl( \nu_{t_2}| \nu_{t_1} \bigr)}{|t_2- t_1|} \leq \Lambda_{\bd{\nu}, \gamma_0}, 
\label{eq:estimaterelativeentropy25/12}
\end{equation}
where we wrote for simplicity
$$ \Lambda_{\bd{\nu}, \gamma_0} :=\Lambda\biggl(
\int_{t_0}^T \int_A |a|^4 d\nu_t(a)dt + \int_{\R^{d_1} \times \R^{d_2}} (|x|+|y|)d\gamma_0(x,y) \biggr).$$ 
\end{prop}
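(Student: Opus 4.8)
The plan is to run a one-directional bootstrap. Writing $\psi_t(a):=\tfrac{1}{\epsilon}\int_{\R^{d_1}\times\R^{d_2}}b(x,a)\cdot\nabla_x u_t(x,y)\,d\gamma_t(x,y)$, the Gibbs form \eqref{eq:NOCfornut2Avril} reads $\nu_t(a)=z_t^{-1}e^{-\ell(a)-\psi_t(a)}$ with $z_t$ given by the analogue of \eqref{eq:F:formule:pour:tilde:z}. Set $\Theta:=\int_{t_0}^T\int_A|a|^4\,d\nu_t(a)\,dt+\int_{\R^{d_1}\times\R^{d_2}}(|x|+|y|)\,d\gamma_0$ (finite, since $\bd\nu\in\mathcal D(t_0)$ is implicit in ``solution to the system'' and $\gamma_0\in\mathcal P_2$); throughout, $\Lambda$ denotes a non-decreasing function of $\Theta$, changing from line to line, into which the fixed data $T,\epsilon,b,\ell,L$ are absorbed. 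The first step is to note, since each $\nu_t$ is a probability measure, that Jensen's and Cauchy--Schwarz's inequalities give $\bigl(\int_A(1+|a|^2)d\nu_t\bigr)^2\le 2\int_A(1+|a|^4)d\nu_t$, whence $\|\bd\nu\|_{\mathcal D(t_0)}\le\Lambda$. Then Proposition~\ref{lem:compactnessContinuityEquation24/06} (with $p=1$) gives $\sup_t\int(|x|+|y|)d\gamma_t\le\Lambda$ and Proposition~\ref{prop:SolutionBackxard16Sept} gives $\sup_t\|u_t\|_{\mathcal C^3_{2,1}}\le\Lambda$ together with the time-Hölder-$\tfrac12$ bound; combined with $|b(x,a)|\le C(1+|a|)$ from \assreg, this yields $|\psi_t(a)|\le\Lambda(1+|a|)$ for all $(t,a)$.

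Next I would establish the exponential bounds \eqref{eq:bound:nu(a):exp(-ell(a))}. Since $\nabla^2_{aa}\ell\ge c(1+|a|^2)I_{d'}$ forces $\ell$ to be bounded below and to grow at least quartically, the linear bound $|\psi_t(a)|\le\Lambda(1+|a|)$ can be absorbed: $-\ell(a)/2+\Lambda(1+|a|)\le C$ and $\ell(a)-\Lambda(1+|a|)\ge -C$ for all $a$. Inserting this into $\nu_t(a)=z_t^{-1}e^{-\ell(a)-\psi_t(a)}$ and into the formula for $z_t$ controls $z_t$ two-sidedly by positive constants $\Lambda^{\pm1}$ (the integrals $\int e^{-\ell/2}$ and $\int e^{-2\ell}$ being finite), which gives both inequalities of \eqref{eq:bound:nu(a):exp(-ell(a))}. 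In particular $\sup_t\int_A(1+|a|^4)\,d\nu_t\le\Lambda$, which upgrades the integrated estimate \eqref{eq:estimateb29/01} into the uniform-in-$t$ bound $\sup_t\|b(\cdot,\nu_t)\|_{\mathcal C^2_b}\le\Lambda$. Finally, the identity $\mathcal E(\nu_t|\nu^\infty)=\log(z^\infty/z_t)-\int_A\psi_t\,d\nu_t$, together with the two-sided control of $z_t$ and $|\int_A\psi_t\,d\nu_t|\le\Lambda\int_A(1+|a|)d\nu_t\le\Lambda$, gives $\sup_t\mathcal E(\nu_t|\nu^\infty)\le\Lambda$, the first term in \eqref{eq:estimaterelativeentropy25/12}.

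For the time-regularity claims I would proceed in three moves. (i) Because $\sup_t\|b(\cdot,\nu_t)\|_{\mathcal C^2_b}\le\Lambda$ is now \emph{uniform in $t$} (not merely integrated), differentiating the transport equation \eqref{eq:adjoint:equation} once and twice in the full variable $(x,y)$ and inserting $\sup_t\|u_t\|_{\mathcal C^3_{2,1}}\le\Lambda$ shows that $\partial_t u_t$, $\partial_t\nabla u_t$, $\partial_t\nabla^2 u_t$ all have linear growth in $(x,y)$, uniformly in $t$; hence $\|u_{t_2}-u_{t_1}\|_{\mathcal C^1_1}\le\Lambda|t_2-t_1|$ and $\|\nabla_x u_{t_2}-\nabla_x u_{t_1}\|_{\mathcal C^1_1}\le\Lambda|t_2-t_1|$, and boundedness of $t\mapsto u_t$ in $\mathcal C^2_2$ is immediate from $\mathcal C^3_{2,1}\hookrightarrow\mathcal C^2_2$, giving \eqref{eq:testfn1foru} (with Lipschitz rather than merely continuous dependence). (ii) For $\bd\nu$, I would differentiate $\psi_t$ in time through the continuity equation, obtaining $\partial_t\psi_t(a)=\tfrac1\epsilon\int\bigl(b(x,a)\cdot\partial_t\nabla_x u_t+b(x,\nu_t)\cdot\nabla_x[b(x,a)\cdot\nabla_x u_t]\bigr)d\gamma_t$; using $|b(x,a)|\le C(1+|a|)$, $|\nabla_x b(x,a)|\le C(1+|a|^2)$, the bounds already obtained, and $\sup_t\int(|x|+|y|)d\gamma_t\le\Lambda$, this gives $|\partial_t\psi_t(a)|\le\Lambda(1+|a|^2)$, hence $|\psi_{t_2}(a)-\psi_{t_1}(a)|\le\Lambda(1+|a|^2)|t_2-t_1|$. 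A mean-value estimate on $s\mapsto z_s^{-1}e^{-\psi_s(a)}$ combined with $|\psi_t(a)|\le\Lambda(1+|a|)$ and the two-sided bounds on $z_t$ then yields $|\nu_{t_2}(a)-\nu_{t_1}(a)|\le\Lambda e^{-\ell(a)}e^{\Lambda(1+|a|)}(1+|a|^2)|t_2-t_1|$; multiplying by $e^{3\ell(a)/4}$ and using the quartic growth of $\ell$ to absorb $e^{-\ell(a)/4+\Lambda(1+|a|)}(1+|a|^2)\le\Lambda$ gives the second term in \eqref{eq:estimaterelativeentropy25/12}, while $\mathcal E(\nu_{t_2}|\nu_{t_1})=\log(z_{t_1}/z_{t_2})+\int_A(\psi_{t_1}-\psi_{t_2})d\nu_{t_2}$ gives the third. (iii) Then $\nabla_x u_t\in\mathcal C^2_1$ and $\partial_t u_t=-b(x,\nu_t)\cdot\nabla_x u_t\in\mathcal C^2_1$ are bounded from the spatial bounds, time-continuity of $\nabla_x u$ in $\mathcal C^1_2$ follows from move (i), and time-continuity of $\partial_t u$ follows from $\|b(\cdot,\nu_{t_2})-b(\cdot,\nu_{t_1})\|_{\mathcal C^1_b}\le\Lambda\|e^{3\ell/4}(\nu_{t_2}-\nu_{t_1})\|_{L^\infty}$ together with move (ii), which establishes \eqref{eq:testfn2foru}.

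The main obstacle is the passage from the time-Hölder-$\tfrac12$ estimates that Propositions~\ref{lem:compactnessContinuityEquation24/06}--\ref{prop:SolutionBackxard16Sept} supply for a general element of $\mathcal D(t_0)$ to the Lipschitz scale $|t_2-t_1|$ demanded by \eqref{eq:testfn1foru}--\eqref{eq:estimaterelativeentropy25/12}. This is precisely where the Gibbs structure is exploited: the uniform-in-$t$ moment bound $\sup_t\int_A(1+|a|^4)d\nu_t\le\Lambda$ (a consequence of the exponential bounds, which in turn rely on the quartic growth of $\ell$) makes $b(\cdot,\nu_t)$ bounded uniformly in time, rendering the transport equation Lipschitz-in-time. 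A secondary difficulty is the differentiation of $\psi_t$ in time against the continuity equation in move (ii), where one must keep careful track of the polynomial $a$-weights so that, after multiplication by $e^{3\ell/4}$, the quartic growth of $\ell$ still dominates. Finally, one must check that the chain $\Theta<\infty\Rightarrow\|\bd\nu\|_{\mathcal D(t_0)}\le\Lambda\Rightarrow$ bounds on $(\bd\gamma,\bd u)\Rightarrow$ bounds on $\psi_t$, hence on $\nu_t\Rightarrow$ uniform moment bounds on $\nu_t\Rightarrow$ Lipschitz-in-time bounds is genuinely one-directional, so that no circularity creeps in.
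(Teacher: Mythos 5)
Your proof is correct, and its architecture matches the paper's proof of this proposition step by step: first the \emph{a priori} bounds $\sup_t\|u_t\|_{\mathcal C^3_{2,1}}\le\Lambda$ and $\sup_t\int(|x|+|y|)\,d\gamma_t\le\Lambda$ from Propositions \ref{prop:SolutionBackxard16Sept} and \ref{lem:compactnessContinuityEquation24/06}; then the two-sided exponential bounds from the Gibbs form, absorbing the linear-in-$|a|$ exponent into the quartic growth of $\ell$; then the resulting uniform-in-$t$ moment bounds on $\nu_t$, which promote the merely time-integrated estimate on $b(\cdot,\nu_t)$ into $\sup_t\|b(\cdot,\nu_t)\|_{\mathcal C^2_b}\le\Lambda$; and finally the time-Lipschitz estimates on $\psi_t$, $z_t$, $\nu_t$ and $u_t$. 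You also correctly flag the one genuinely delicate point --- the passage from the H\"older-$1/2$ scale supplied by the a priori lemmas to the Lipschitz scale demanded in \eqref{eq:testfn1foru}--\eqref{eq:estimaterelativeentropy25/12} --- and locate it exactly where the Gibbs structure enters.

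The one substantive divergence from the paper is in how the Lipschitz bound $|\psi_{t_2}(a)-\psi_{t_1}(a)|\le\Lambda(1+|a|^2)|t_2-t_1|$ is obtained. The paper works Lagrangian: it writes $\psi_t(a)=\epsilon^{-1}\E[b(X_t,a)\cdot Z_t]$ with $(X_t,Z_t)$ solving the forward--backward ODE pair from Proposition \ref{prop:FBODErepresentation}, proves $\sup_t|X_t|+\sup_t|Z_t|\le\Lambda(1+|X_0|+|Y_0|)$ together with a $|t_2-t_1|$ modulus for each, and feeds these into the Gibbs formula. You differentiate $\psi_t$ directly in the Eulerian picture, using $\partial_t\gamma_t=-\div_x(b(\cdot,\nu_t)\gamma_t)$ and $\partial_t\nabla_x u_t=-\nabla_x(b(\cdot,\nu_t)\cdot\nabla_x u_t)$. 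The two routes are equivalent --- in fact in the Eulerian expression the two Hessian terms $b(x,a)\cdot\nabla_x^2u_t\,b(x,\nu_t)$ and $b(x,\nu_t)\cdot\nabla_x^2u_t\,b(x,a)$ cancel, recovering exactly the Lagrangian-rate formula, though your argument does not need this cancellation since each term is estimated directly. The Lagrangian version is marginally tidier because the cancellation is automatic and there is no need to justify the time-differentiability of $\nabla_x u_t$ by hand; your version has the advantage of avoiding the probabilistic representation entirely. You also re-derive by direct differentiation of the transport equation what the paper delegates to Lemma \ref{lem:backwardtransportsmoothcontrol23/01} (with $l=3$, $p=2$, $q=1$) in Step~3 --- this is legitimate given the spatial regularity already in hand, but a fully rigorous write-up should still justify the interchange of $\partial_t$ and $\nabla_x^k$ (e.g.\ via the representation formula for the flow, as that lemma does).
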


\begin{rmk}
    The proofs leading to our main results will rely on some compactness arguments, requiring careful tracking of how  the quantitative estimates depend on the initial point $(t_0,\gamma_0)$ and on the control $\bd{\nu}$. This is the purpose of these additional functions $\Lambda$ appearing throughout the text. Since all the 
    estimates
    used in these compactness arguments will be uniform in 
    the parameter $t_0$, we consistently omit it in notations such as $\Lambda_{\boldsymbol{\nu}, \gamma_0}$.
\end{rmk}

We recall 
from \eqref{eq:boundfromPinsker16Sept:sec:3}
that 
$\int_{t_0}^T \int_A |a|^4 d\nu_t(a)dt$ can be bounded by
$C( 1+\int_{t_0}^T {\mathcal E}( \nu_t \vert 
\nu^\infty) dt)$.
Notice also that we have not put 
a \textit{star} on 
$({\boldsymbol \nu}, {\boldsymbol \gamma}, {\boldsymbol u})$ because, at this stage, 
${\boldsymbol \nu}$ may 
not be a minimizer of 
$J((t_0,\gamma_0),\cdot)$. We show in Lemma \ref{lem:7.4-04/03} that, 
when $\bd{\nu}^*$ is optimal is  for $J ( (t_0,\gamma_0), \cdot )$,   $\int_{t_0}^T \mathcal{E}(\nu^*_t|\nu^{\infty})dt$ can be bounded by $C(1+\int_{\R^{d_1} \times \R^{d_2}} (|x|^2 + |y|^2)d\gamma_0(x,y))$ for some $C>0$ independent of $(t_0,\gamma_0)$. Then, the non-decreasing function $\Lambda_{\bd{\nu}, \gamma_0}$ provided by Proposition \ref{prop:regularityfromOC} can be replaced by a non-decreasing function of $\int_{\R^{d_1} \times \R^{d_2}} (|x|^2 + |y|^2) d\gamma_0(x,y)$ only.

From Theorem \ref{thm:OCThm27/06}, we can also deduce that the measure $\nu_t^*$ satisfies,  for all $t$ in $[t_0,T]$, some functional inequalities. More generally, for any element $\bd{\nu}$ of $\mathcal{A}(t_0)$, let us define the  associated element $\Gamma[\bd\nu]$ in $\mathcal{A}(t_0)$ by, for all $t$ in $[t_0,T]$, 
\begin{equation}
\Gamma_t[\bd{\nu}] \propto \exp \Bigl( -\ell(a) - \frac{1}{\epsilon} \int_{\R^{d_1} \times \R^{d_2}} b(x,a) \cdot \nabla_x u_t(x,y)  d\gamma_t(x,y) \Bigr), 
\label{eq:defnGammaNut09/01}
\end{equation}
where $(\bd{\gamma},\bd{u})$ is the solution to 
\begin{equation}
\left \{
\begin{array}{ll}
\displaystyle -\partial_t u_t(x,y) -  b\bigl(x,\nu_t\bigr)  \cdot \nabla_x u_t(x,y) = 0,
&\quad (t,x,y)
\in [t_0,T] \times \R^{d_1} \times \R^{d_2},
\\
\displaystyle
\qquad 
 u_T(x,y) = L(x,y), &\quad (x,y) \in \R^{d_1} \times \R^{d_2};
\\
\displaystyle \partial_t \gamma_t
+ \div_x \bigl(  b(x,\nu_t) \gamma_t  \bigr) = 0, &\quad
 \textrm{\rm in}
 \ [t_0,T] \times  \R^{d_1} \times \R^{d_2}, 
\\
\displaystyle \qquad \gamma_{t_0} = \gamma_0
&\quad
 \textrm{\rm in} 
 \ 
\R^{d_1} \times \R^{d_2}.
\end{array}
\right.
\label{eq:curvemulti28/02}
\end{equation}
Because the two equations are completely decoupled, there is no difficulty in solving each of them, see Propositions \ref{lem:compactnessContinuityEquation24/06} and \ref{prop:SolutionBackxard16Sept}. The next result, proven in Section \ref{sec:additionalregandfirststability}, states that $\Gamma_t[\bd{\nu}]$ satisfies a log-Sobolev inequality. Notice that the result applies in particular to any solution of \eqref{eq:NOCfornut2Avril}--\eqref{eq:NOCforutgammat2Avril} since, in this case, $ \Gamma [\bd{\nu}]$ is equal to  $\bd{\nu}.$

\begin{lem}
Take $(t_0,\gamma_0) \in [0,T] \times \mathcal{P}_3(\R^{d_1} \times \R^{d_2})$
and $ \bd{\nu} \in \mathcal{A}(t_0)$. Then, there exists a constant
$C_{\rm LSI}(\Gamma[\bd \nu]) >0 $ such that, for all $t \in [t_0,T]$ and every smooth enough $f : A \rightarrow \R^+$ with $\int_A f(a) d \Gamma_t[\bd\nu](a) =1$, it holds
\begin{equation} 
\int_A f(a) \log \bigl(f(a)\bigr)d \Gamma_t[ \bd\nu](a) \leq C_{\rm LSI}(\Gamma[\bd \nu]) \int_A | \nabla_a \log f(a)|^2 f(a) d\Gamma_t[ \bd\nu](a).
\label{eq:defLSI}
\end{equation}
Moreover, there exists a non-decreasing function $\Lambda : \R_+ \rightarrow \R_+$ independent from $(t_0,\gamma_0)$ and $\bd{\nu}$ such that 
$$ C_{\rm LSI} ( \Gamma[ \bd \nu]) \leq \Lambda \Bigl( \int_{\R^{d_1} \times \R^{d_2}} \bigl( |x|^2 +|y|^2 \bigr)^{3/2} d \gamma_0(x,y) + \int_{t_0}^T \int_A |a|^4 d\nu_t(a) dt \Bigr).  $$
\label{lem:LSI}
\end{lem}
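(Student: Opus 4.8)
\emph{Approach.} The plan is to realise $\Gamma_t[\bd\nu]$ as a bounded perturbation of a uniformly log‑concave probability measure, uniformly in $t\in[t_0,T]$, and then combine the Bakry--Émery curvature criterion with the Holley--Stroock perturbation lemma. Write $\Gamma_t[\bd\nu]=z_t^{-1}e^{-W_t}$ with
\[
W_t(a)=\ell(a)+V_t(a),\qquad V_t(a):=\frac1\epsilon\int_{\R^{d_1}\times\R^{d_2}} b(x,a)\cdot\nabla_x u_t(x,y)\,d\gamma_t(x,y),
\]
where $(\bd\gamma,\bd u)$ solves \eqref{eq:curvemulti28/02}; this is just the definition \eqref{eq:defnGammaNut09/01}.

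\emph{Step 1: a quadratic lower bound for $\nabla_a^2 W_t$.} Using the growth estimate $|\nabla_a^2 b(x,a)|\le C(1+|a|)(1+|x|^2)$ from \assreg(i), the bound $\sup_t\|u_t\|_{\mathcal{C}^3_{2,1}}\le\Lambda(\|\bd\nu\|_{\mathcal{D}(t_0)})$ from Proposition \ref{prop:SolutionBackxard16Sept} (so $|\nabla_x u_t(x,y)|\le\Lambda(\|\bd\nu\|_{\mathcal{D}(t_0)})(1+|x|+|y|)$), and the third‑moment propagation
\[
\sup_{t\in[t_0,T]}\int_{\R^{d_1}\times\R^{d_2}}\bigl(|x|^2+|y|^2\bigr)^{3/2}d\gamma_t\le\Lambda\bigl(\|\bd\nu\|_{\mathcal{D}(t_0)}\bigr)\Bigl(1+\int_{\R^{d_1}\times\R^{d_2}}\bigl(|x|^2+|y|^2\bigr)^{3/2}d\gamma_0\Bigr)
\]
from Proposition \ref{lem:compactnessContinuityEquation24/06}, one differentiates twice under the integral sign to get $\sup_t|\nabla_a^2 V_t(a)|\le K(1+|a|)$ for all $a\in A$, with $K$ a non‑decreasing function of $\epsilon^{-1}$, $\|\bd\nu\|_{\mathcal{D}(t_0)}$ and of the third moment of $\gamma_0$. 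Combined with the strong convexity assumption \eqref{eq:convexityassumptionL} this yields, uniformly in $t$,
\[
\nabla_a^2 W_t(a)\ \ge\ \bigl(c(1+|a|^2)-K(1+|a|)\bigr)I_{d'}\ \ge\ \Bigl(\tfrac c2|a|^2-\kappa\Bigr)I_{d'},\qquad a\in A,
\]
for some $\kappa=\kappa(c,K)\ge0$; in particular $W_t$ is $\kappa$-semiconvex and $\nabla_a^2 W_t\to+\infty$ as $|a|\to\infty$.

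\emph{Step 2: perturbation to a uniformly convex potential.} Fix $R=R(c,K)$ large enough that $c(1+|a|^2)-K(1+|a|)\ge\tfrac c2|a|^2$ for $|a|\ge R$, and let $-M=-K(1+2R)$ be the corresponding lower bound for $\nabla_a^2 W_t$ on $B_{2R}$. A standard smooth radial truncation of $\tfrac{1+M}{2}|a|^2$ produces $\phi=\phi_{c,K}:A\to\R$, depending on $\bd\nu$ only through the scalar $K$, with $\phi\equiv0$ outside $B_{3R}$, $\nabla^2\phi\ge(1+M)I_{d'}$ on $B_{2R}$, $\nabla^2\phi\ge-C_0 M\,I_{d'}$ on the annulus $B_{3R}\setminus B_{2R}$ (the curvature cost of flattening a quadratic of opening $M$ over a band of width $R$), and $\mathrm{osc}(\phi)\le\Lambda(c,K)$. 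Enlarging $R$ so that $\tfrac c2(2R)^2\ge C_0 M+1$, the function $U_t:=W_t+\phi$ satisfies $\nabla_a^2 U_t\ge I_{d'}$ everywhere; hence by the Bakry--Émery theorem the probability $\propto e^{-U_t}$ satisfies \eqref{eq:defLSI} with a universal constant. Since $\Gamma_t[\bd\nu]\propto e^{\phi}e^{-U_t}$ with $\phi$ bounded, Holley--Stroock gives \eqref{eq:defLSI} for $\Gamma_t[\bd\nu]$ with a constant $\lesssim e^{\mathrm{osc}(\phi)}\le\Lambda(c,K)$. Taking $\sup_{t\in[t_0,T]}$ yields the claim for $\Gamma[\bd\nu]$; the case of a solution of \eqref{eq:NOCfornut2Avril}--\eqref{eq:NOCforutgammat2Avril} follows since then $\Gamma[\bd\nu]=\bd\nu$.

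\emph{Step 3: the stated dependence, and the main difficulty.} For $\bd\nu\in\mathcal{A}(t_0)$ each $\nu_t$ is a probability, so Jensen gives $(\int|a|^2 d\nu_t)^2\le\int|a|^4 d\nu_t$ and Cauchy--Schwarz bounds the Hölder‑type seminorm in \eqref{eq:definitionadmissiblecontrols:aux} by $\sqrt T+(\int_{t_0}^T\int|a|^4 d\nu_t\,dt)^{1/2}$; hence $\|\bd\nu\|_{\mathcal{D}(t_0)}\le C(1+\int_{t_0}^T\int|a|^4 d\nu_t\,dt)$. Chaining this through Steps 1--2 and absorbing the fixed data ($\epsilon$, $c$, $T$, $b$, $L$, the dimensions) into a single non‑decreasing $\Lambda$ gives the asserted bound on $C_{\rm LSI}(\Gamma[\bd\nu])$. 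The main obstacle is the bookkeeping in Step 2: one must manufacture $\phi$ with oscillation controlled \emph{solely} by the scalars $c$ and $K$ while keeping $U_t$ uniformly convex for \emph{all} $t$ at once. This explicit surgery could be replaced by a known perturbation of the Bakry--Émery criterion tolerating a Hessian defect of at most linear growth against a super‑quadratic confining term (e.g.\ of Cattiaux--Guillin--Wu type), but the hand‑made construction has the advantage of keeping the dependence on $\epsilon$ transparent.
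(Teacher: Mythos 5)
Your proof is correct and rests on the same two pillars as the paper's: the Bakry--\'Emery criterion for the log-concave bulk and Holley--Stroock for the bounded perturbation. Where you differ is in how the splitting is organised. The paper writes $\ell(a)+V_t(a)=\ell_t^1(a)+\ell_t^2(a)$ with $\ell_t^1:=\ell+V_t(1-\varphi^R)$ and $\ell_t^2:=V_t\varphi^R$, i.e. it \emph{cuts $V_t$ off near the origin}: inside $B(0,R)$ one has $\ell_t^1=\ell$ exactly, and for $|a|>R$ the linear-in-$a$ defect $|\nabla_a^2(\ell_t^1-\ell)|\lesssim (1+|a|)$ is swamped by the quadratic lower bound $\nabla^2\ell\ge c(1+|a|^2)I$, so $\nabla^2\ell_t^1\ge\tfrac12\nabla^2\ell\ge\mu I$ without any auxiliary function; the Holley--Stroock perturbation $\ell_t^2=V_t\varphi^R$ is compactly supported, hence bounded uniformly in $t$ (and uniformly in $\bd\nu$ on bounded sets of $\mathcal{D}(t_0)$). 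You instead keep $W_t=\ell+V_t$ intact and repair its Hessian defect on $B(0,2R)$ by \emph{adding} a truncated quadratic $\phi$ that is compactly supported; the roles of ``uniformly convex reference'' and ``bounded perturbation'' are then carried by $U_t=W_t+\phi$ and $\phi$ respectively. Both routes are sound. The paper's is slightly leaner---no surgery, no secondary enlargement of $R$ to absorb the curvature cost of the truncation, and the crude bounds on $\varphi^R$ suffice---while yours has the small aesthetic advantage that the Holley--Stroock perturbation $\phi$ is $t$-independent and depends on $\bd\nu$ only through the scalar $K$. Two cosmetic points in your write-up: the lower bound for $\nabla_a^2 W_t$ on $B_{2R}$ is $c-K(1+2R)$ rather than $-K(1+2R)$ (harmless, as you may take the cruder constant), and the annulus estimate needs $\tfrac c2(2R)^2\ge 1+\kappa+C_0M$, i.e. the semiconvexity constant $\kappa$ from Step~1 should also be absorbed when you enlarge $R$.
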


\begin{rmk}
\label{rmk:thirdmoments}
It is important in the statement of Lemma \ref{lem:LSI} that the initial measure has a bounded third-order moment. In short, our strategy to obtain the log-Sobolev inequality relies on a perturbation argument, deriving it from the inequality satisfied by the reference measure \(\nu^\infty \propto e^{-\ell}\). Due to the linear growth of \(b(x,a)\) in \(a\), simpler proofs based on the Holley-Stroock lemma do not apply.
In particular, we need to control the Hessian of the mapping
$a \mapsto \int_{\mathbb{R}^{d_1} \times \mathbb{R}^{d_2}} b(x,a) \cdot \nabla_x u_t(x,y) d\gamma_t(x,y)$
which ultimately requires ensuring that the term
$\int_{\mathbb{R}^{d_1} \times \mathbb{R}^{d_2}} (1 + |x|^2) |\nabla_x L(x,y)| \, d\gamma_0(x,y)$
is bounded.
Because the terminal cost \(L\) is allowed to have quadratic growth, we thus require \(\gamma_0\) to belong to \(\mathcal{P}_3(\mathbb{R}^{d_1} \times \mathbb{R}^{d_2})\).
\end{rmk}

In the same vein, we use the exponential bound \eqref{eq:bound:nu(a):exp(-ell(a))} to prove, in Section \ref{sec:additionalregandfirststability}, the following inequality, which relies on a generalization of Pinsker's inequality, see \cite{BolleyVillani}.

\begin{lem}
    Take $(t_0,\gamma_0) \in [0,T] \times \mathcal{P}_2(\R^{d_1} \times \R^{d_2})$. Then, there exists a non-decreasing function $\Lambda : \R_+ \rightarrow \R_+$ such that, for any minimizer  $\bd{\nu}^*$ of $J \bigl( (t_0,\gamma_0), \cdot \bigr)$ and any $\bd{\nu} \in \mathcal{A}(t_0)$,
    $$ \norm{ \bd{\nu} - \bd{\nu}^*}_{\mathcal{D}(t_0)} \leq \Lambda_{\gamma_0}  \biggl( \sqrt{ \int_{t_0}^T \mathcal{E}(\nu_t|\nu_t^*) dt} + \int_{t_0}^T \mathcal{E}(\nu_t|\nu_t^*) dt \biggr),$$
where $\Lambda_{\gamma_0}$ is short-hand notation for 
$$  \Lambda_{\gamma_0} = \Lambda \Bigl( \int_{\R^{d_1} \times \R^{d_2}} (|x|^2 + |y|^2) d\gamma_0(x,y) \Bigr).$$
\label{lem:PinskerSecFOC28/02}
\end{lem}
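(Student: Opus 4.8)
The plan is to estimate the two pieces of $\norm{\bd\nu-\bd\nu^*}_{\mathcal D(t_0)}$ separately, namely $I_1:=\int_{t_0}^T\int_A(1+|a|^4)\,d|\nu_t-\nu_t^*|\,dt$ and $I_2:=\sup_{t_0\le t_1<t_2\le T}(t_2-t_1)^{-1/2}\int_{t_1}^{t_2}\int_A(1+|a|^2)\,d|\nu_t-\nu_t^*|\,dt$, by first bounding the inner $a$-integrals pointwise in $t$ and then integrating in $t$. One may assume $\int_{t_0}^T\mathcal E(\nu_t|\nu_t^*)\,dt<+\infty$, otherwise there is nothing to prove; note that $\norm{\bd\nu}_{\mathcal D(t_0)}$ and $\norm{\bd\nu^*}_{\mathcal D(t_0)}$ are already finite by \eqref{eq:boundfromPinsker16Sept:sec:3}, so $\bd\nu-\bd\nu^*\in\mathcal D(t_0)$.

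\textbf{Pointwise ingredients.} First I would collect pointwise-in-$t$ estimates, with all constants depending only on $\int_{\R^{d_1}\times\R^{d_2}}(|x|^2+|y|^2)\,d\gamma_0$, uniformly in $t\in[t_0,T]$ and over minimizers $\bd\nu^*$. Using Proposition \ref{prop:regularityfromOC} together with the observation recalled just after it (Lemma \ref{lem:7.4-04/03} and \eqref{eq:boundfromPinsker16Sept:sec:3}), the upper bound in \eqref{eq:bound:nu(a):exp(-ell(a))} takes the form $\nu_t^*(a)\le\Lambda_{\gamma_0}e^{-\ell(a)/2}$; combined with $\ell(a)\ge c'|a|^4-C'$ (integrate \eqref{eq:convexityassumptionL} twice), this gives, for some fixed $\lambda_2>0$ and $\lambda_4>0$, the uniform bounds $\sup_t\int_A(1+|a|^k)^2e^{\lambda_k(1+|a|^k)}\,d\nu_t^*\le\Lambda_{\gamma_0}$ for $k\in\{2,4\}$ (in particular $\sup_t\int_A(1+|a|^2)\,d\nu_t^*\le\Lambda_{\gamma_0}$). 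It also yields a moment bound for $\nu_t$: from $\mathcal E(\nu_t|\nu_t^*)=\int_A\log(\nu_t/\nu_t^*)\,d\nu_t\ge\int_A\log\nu_t\,d\nu_t+\tfrac12\int_A\ell\,d\nu_t-\log\Lambda_{\gamma_0}$ and the Gibbs variational inequality $-\int_A\log\nu_t\,d\nu_t\le\tfrac14\int_A\ell\,d\nu_t+\log\int_Ae^{-\ell(a)/4}\,da$ (the last integral being finite since $\ell$ grows at least like $|a|^4$), one deduces $\int_A\ell\,d\nu_t\le 4\mathcal E(\nu_t|\nu_t^*)+C_{\gamma_0}$, hence $\int_A|a|^4\,d\nu_t\le C_{\gamma_0}(1+\mathcal E(\nu_t|\nu_t^*))$ and, by Cauchy--Schwarz, $\int_A(1+|a|^2)\,d\nu_t\le C_{\gamma_0}(1+\sqrt{\mathcal E(\nu_t|\nu_t^*)})$ for a.e.\ $t$.

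\textbf{Weighted Pinsker and pointwise total-variation bounds.} Next I would use a weighted Csisz\'ar--Kullback--Pinsker inequality, in the spirit of \cite{BolleyVillani}: for probability measures $\mu,\nu$, a nonnegative measurable weight $\phi$, and any $\lambda>0$ with $\int\phi^2e^{\lambda\phi}\,d\nu<\infty$,
\begin{equation*}
\int\phi\,d|\mu-\nu|\ \le\ \frac1\lambda\,\mathcal E(\mu|\nu)+\frac\lambda2\int\phi^2e^{\lambda\phi}\,d\nu ,
\end{equation*}
which follows by applying the Donsker--Varadhan formula $\int g\,d\mu\le\mathcal E(\mu|\nu)+\log\int e^g\,d\nu$ to $g=\lambda\,\phi\,\operatorname{sgn}(d\mu-d\nu)$ and using $e^x\le1+x+\tfrac12x^2e^{|x|}$ and $\log(1+x)\le x$. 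Applied with $\mu=\nu_t$, $\nu=\nu_t^*$, $\phi=1+|a|^k$, and optimizing $\lambda$ over $(0,\lambda_k]$ using the uniform bounds above, this gives $\int_A(1+|a|^k)\,d|\nu_t-\nu_t^*|\le C_{\gamma_0}\sqrt{\mathcal E(\nu_t|\nu_t^*)}$ whenever $\mathcal E(\nu_t|\nu_t^*)\le1$, and hence $\int_A(1+|a|^4)\,d|\nu_t-\nu_t^*|\le C_{\gamma_0}\bigl(\sqrt{\mathcal E(\nu_t|\nu_t^*)}+\mathcal E(\nu_t|\nu_t^*)\bigr)$ for all $t$ (for $\mathcal E(\nu_t|\nu_t^*)>1$ use the crude bound $\int_A(1+|a|^4)d|\nu_t-\nu_t^*|\le\int_A(1+|a|^4)\,d(\nu_t+\nu_t^*)\le C_{\gamma_0}(1+\mathcal E(\nu_t|\nu_t^*))$ from the previous step). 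For the weight $1+|a|^2$ one gets the sharper, additive-free bound $\int_A(1+|a|^2)\,d|\nu_t-\nu_t^*|\le C_{\gamma_0}\sqrt{\mathcal E(\nu_t|\nu_t^*)}$ for all $t$: when $\mathcal E(\nu_t|\nu_t^*)>1$ the crude bound reads $\int_A(1+|a|^2)\,d(\nu_t+\nu_t^*)\le C_{\gamma_0}(1+\sqrt{\mathcal E(\nu_t|\nu_t^*)})\le C_{\gamma_0}\sqrt{\mathcal E(\nu_t|\nu_t^*)}$, the gain coming precisely from the fact that $1+|a|^2$ grows only half as fast as the confining potential $\ell\sim|a|^4$, so that its $\nu_t$-moment is controlled by $\sqrt{\mathcal E(\nu_t|\nu_t^*)}$ rather than $\mathcal E(\nu_t|\nu_t^*)$.

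\textbf{Integration in time and the main obstacle.} For $I_1$, $\int_{t_0}^T\bigl(\sqrt{\mathcal E(\nu_t|\nu_t^*)}+\mathcal E(\nu_t|\nu_t^*)\bigr)dt\le(1+\sqrt{T-t_0})\bigl(\sqrt{\int_{t_0}^T\mathcal E(\nu_t|\nu_t^*)\,dt}+\int_{t_0}^T\mathcal E(\nu_t|\nu_t^*)\,dt\bigr)$ by Cauchy--Schwarz on the first summand, so $I_1\le\Lambda_{\gamma_0}\bigl(\sqrt{\int\mathcal E}+\int\mathcal E\bigr)$. For $I_2$, for every $t_1<t_2$, $(t_2-t_1)^{-1/2}\int_{t_1}^{t_2}\int_A(1+|a|^2)\,d|\nu_t-\nu_t^*|\,dt\le C_{\gamma_0}(t_2-t_1)^{-1/2}\int_{t_1}^{t_2}\sqrt{\mathcal E(\nu_t|\nu_t^*)}\,dt\le C_{\gamma_0}\bigl(\int_{t_1}^{t_2}\mathcal E(\nu_t|\nu_t^*)\,dt\bigr)^{1/2}\le C_{\gamma_0}\sqrt{\int_{t_0}^T\mathcal E(\nu_t|\nu_t^*)\,dt}$, again by Cauchy--Schwarz; taking the supremum and adding to the bound on $I_1$ gives the claim. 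The hard point is $I_2$: because of the factor $(t_2-t_1)^{-1/2}$, the pointwise estimate for the weight $1+|a|^2$ must genuinely be of order $\sqrt{\mathcal E(\nu_t|\nu_t^*)}$ with no additive $\mathcal E(\nu_t|\nu_t^*)$ term — an $\mathcal E(\nu_t|\nu_t^*)^{1/4}$ contribution (as a direct application of the weighted Pinsker inequality, or of the triangle inequality, would produce) would make $I_2$ blow up when the entropy concentrates on short time intervals. This is what forces the dichotomy used above: the entropic estimate for small entropy, the moment estimate for large entropy.
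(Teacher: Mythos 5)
Your proof is correct, but it takes a genuinely different route from the paper's. The paper handles the two halves of $\norm{\bd\nu-\bd\nu^*}_{\mathcal D(t_0)}$ by citing, verbatim, the two forms of the weighted Csisz\'ar--Kullback--Pinsker inequality from \cite[Theorem 2.1]{BolleyVillani} (recalled in the paper as Theorem \ref{thm:pinsker}): the first form \eqref{eq:pinsker:i}, which produces the mixed $\sqrt{\mathcal E}+\mathcal E$ bound, is applied to the weight $1+|a|^4$ for the full time integral, while the second form \eqref{eq:pinsker:ii} — which is purely of order $\sqrt{\mathcal E}$ at the cost of requiring $\int e^{2\varphi^2}\,d\nu_t^*<\infty$ — is applied to $\varphi=c(1+|a|^2)$ (so that $\varphi^2\sim c^2|a|^4$ is still exponentially integrable against $\nu_t^*\lesssim e^{-\ell/2}$ with $\ell\gtrsim|a|^4$) for the H\"older piece, giving the additive-free bound in one stroke. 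You instead derive your own parametrized Pinsker-type inequality $\int\phi\,d|\mu-\nu|\le\frac1\lambda\mathcal E+\frac\lambda2\int\phi^2e^{\lambda\phi}\,d\nu$ via Donsker--Varadhan, optimize $\lambda$ when $\mathcal E(\nu_t|\nu_t^*)\le1$, and for $\mathcal E(\nu_t|\nu_t^*)>1$ supply the missing sharp bound by a direct moment estimate $\int\ell\,d\nu_t\lesssim1+\mathcal E(\nu_t|\nu_t^*)$ obtained from the nonnegativity of $\mathcal E(\nu_t\,|\,Z^{-1}e^{-\ell/4})$ and the exponential upper bound for $\nu_t^*$. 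Both routes correctly home in on the real constraint here, namely that the $(1+|a|^2)$-weighted total variation must be of order $\sqrt{\mathcal E(\nu_t|\nu_t^*)}$ with no additive term, since the $(t_2-t_1)^{-1/2}$ prefactor in $\norm{\cdot}_{\mathcal D(t_0)}$ exactly eats one Cauchy--Schwarz and any slower-than-$\sqrt{\mathcal E}$ remainder would blow it up. The trade-off: the paper's argument is shorter because \eqref{eq:pinsker:ii} already comes in the additive-free form, while yours is more self-contained (you rebuild the inequality from Donsker--Varadhan and only need the qualitative exponential bound on $\nu_t^*$), at the price of the small/large entropy dichotomy and the auxiliary moment estimate for $\nu_t$.
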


Another consequence of Theorem \ref{thm:OCThm27/06} is that we can derive some stability properties for the minimal value in the control problem as a function of the initial point $(t_0,\gamma_0) \in [0,T] \times \mathcal{P}_2(\R^{d_1} \times \R^{d_2})$. If we define the value function $U: [0,T] \times \mathcal{P}_2(\R^{d_1} \times \R^{d_2}) \rightarrow \R$ 
by
\begin{equation}
\label{eq:value function:U}
U(t_0,\gamma_0) := \inf_{ \bd{\nu} \in \mathcal{A}(t_0) } J \bigl( (t_0,\gamma_0), \bd{\nu} \bigr),
\end{equation}
then we have the following result, also proven in Section \ref{sec:additionalregandfirststability}.
\begin{prop}
    $U$ is locally Lipschitz continuous over $[0,T] \times \mathcal{P}_2(\R^{d_1} \times \R^{d_2})$. More precisely, there is a non decreasing function $\Lambda : \R_+ \rightarrow \R_+$ such that, for any $(t_1,\gamma_1), (t_2,\gamma_2) \in [0,T] \times \mathcal{P}_2(\R^{d_1} \times \R^{d_2})$, it holds
    $$ \bigl| U(t_2,\gamma_2) - U(t_1,\gamma_1) \bigr| \leq \sup_{i=1,2} \Lambda \biggl( \int_{\R^{d_1} \times \R^{d_2}} ( |x|^2 + |y|^2 ) d\gamma_{i}(x,y) \biggr) \bigl( |t_2- t_1| + d_2( \gamma_1,\gamma_2) \bigr).$$
\label{prop:LipRegValueFuncion25Nov}
\end{prop}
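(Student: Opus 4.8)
The plan is to bound $U$ from both sides by running the optimal control of one initial datum as an (in general sub-optimal) control for a nearby one, using two structural features of the problem: the velocity field $(t,x)\mapsto b(x,\nu_t)$ in \eqref{eq:Continuitygammanu} depends on $\bd\nu$ only (not on $\bd\gamma$), so two curves driven by the \emph{same} control but started from different measures are pushforwards of those measures by the \emph{same} flow map; and the reference measure $\nu^\infty$ is an admissible control with zero running cost. As preliminaries I would record: (i) a \emph{terminal-cost stability estimate}: for $\mu,\mu'\in\mathcal P_2(\R^{d_1}\times\R^{d_2})$, from $|\nabla L|\le \norm{L}_{\mathcal C^3_{2,1}}(1+|x|+|y|)$ and Cauchy--Schwarz against an optimal $d_2$-coupling,
$$\Bigl|\int L\, d\mu-\int L\, d\mu'\Bigr|\le C\bigl(1+M_2(\mu)^{1/2}+M_2(\mu')^{1/2}\bigr)\, d_2(\mu,\mu'),$$
with $M_2$ the second moment; and (ii) a \emph{uniform-in-time moment bound for optimal controls}: if $\bd\nu^*$ minimizes $J((t_0,\gamma_0),\cdot)$, then the Gibbs form \eqref{eq:NOCfornut2Avril}, the pointwise bound $\nu^*_t(a)\le\Lambda_{\bd\nu^*,\gamma_0}e^{-\ell(a)/2}$ of Proposition \ref{prop:regularityfromOC} (with $\ell$ of quartic growth), and the a priori estimate $\int_{t_0}^T\mathcal E(\nu^*_t|\nu^\infty)\, dt\le C(1+M_2(\gamma_0))$ of Lemma \ref{lem:7.4-04/03} give $\sup_{t\in[t_0,T]}\int_A(1+|a|^k)\, d\nu^*_t(a)\le\Lambda(M_2(\gamma_0))$ for every fixed $k$, uniformly in $t_0$; hence $\sup_t\norm{b(\cdot,\nu^*_t)}_{\mathcal C^3_b}\le\Lambda(M_2(\gamma_0))$ and $\norm{\bd\nu^*}_{\mathcal D(t_0)}\le\Lambda(M_2(\gamma_0))$, so by Proposition \ref{lem:compactnessContinuityEquation24/06} the associated flow $\Phi^*$ is globally $\Lambda(M_2(\gamma_0))$-Lipschitz on $[t_0,T]$ and the transported measures have second moment $\le\Lambda(M_2(\gamma_0))(1+M_2(\gamma_0))$.

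\emph{Lipschitz dependence on $\gamma_0$ at fixed $t_0$.} Let $\gamma_1,\gamma_2\in\mathcal P_2$ and let $\bd\nu^1$ minimize $J((t_0,\gamma_1),\cdot)$; it lies in $\mathcal A(t_0)$, hence is admissible also for $\gamma_2$. Running $\bd\nu^1$ from $\gamma_1$ resp. $\gamma_2$ produces terminal marginals $\Phi^1_{t_0\to T}\#\gamma_1$ and $\Phi^1_{t_0\to T}\#\gamma_2$, \emph{with the same flow map}, so their $d_2$-distance is $\le\mathrm{Lip}(\Phi^1)\, d_2(\gamma_1,\gamma_2)\le\Lambda(M_2(\gamma_1))\, d_2(\gamma_1,\gamma_2)$. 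The running costs coincide, so by (i), $U(t_0,\gamma_2)-U(t_0,\gamma_1)\le J((t_0,\gamma_2),\bd\nu^1)-J((t_0,\gamma_1),\bd\nu^1)\le\Lambda\bigl(M_2(\gamma_1)\vee M_2(\gamma_2)\bigr)\, d_2(\gamma_1,\gamma_2)$, and exchanging $\gamma_1,\gamma_2$ gives the two-sided bound.

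\emph{Lipschitz dependence on $t_0$ at fixed $\gamma_0$; say $t_1<t_2$.} For $U(t_1,\gamma_0)\le U(t_2,\gamma_0)+\Lambda(M_2(\gamma_0))|t_2-t_1|$, take a minimizer $\bd\nu^2$ for $(t_2,\gamma_0)$ and extend it to $[t_1,T]$ by the filler $\nu^\infty$ on $[t_1,t_2)$; the running cost is unchanged, and starting from $\gamma_0$ at $t_1$ the state reached at $t_2$ differs from $\gamma_0$ by at most $\norm{b(\cdot,\nu^\infty)}_{\infty}|t_2-t_1|=C|t_2-t_1|$ in $d_2$ (the field is bounded since $\nu^\infty$ is fixed with finite moments), after which $\Phi^2$ is applied; (i) closes the estimate. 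For the reverse bound $U(t_2,\gamma_0)\le U(t_1,\gamma_0)+\Lambda(M_2(\gamma_0))|t_2-t_1|$, take a minimizer $\bd\nu^1$ for $(t_1,\gamma_0)$ and use its restriction to $[t_2,T]$ as a control for the problem started at $(t_2,\gamma_0)$: the running cost only decreases, since $\int_{t_2}^T\mathcal E(\nu^1_t|\nu^\infty)\, dt\le\int_{t_1}^T\mathcal E(\nu^1_t|\nu^\infty)\, dt$ (relative entropy is nonnegative), while the terminal marginal is $\Phi^1_{t_2\to T}\#\gamma_0$ versus $\gamma^1_T=\Phi^1_{t_2\to T}\#\gamma^1_{t_2}$ with $d_2(\gamma_0,\gamma^1_{t_2})\le\sup_s\norm{b(\cdot,\nu^1_s)}_{\infty}|t_2-t_1|\le\Lambda(M_2(\gamma_0))|t_2-t_1|$ by the uniform-in-time moment bound (ii); again (i) concludes. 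Finally, combining the two temporal bounds with the $\gamma_0$-estimate via the triangle inequality, and replacing the finitely many functions $\Lambda$ by their maximum, yields the stated inequality.

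The main obstacle is getting a genuinely \emph{linear} (not merely $\tfrac12$-H\"older) modulus in time for the reverse bound: a naive comparison through the displacement $d_2(\gamma_0,\gamma^1_{t_2})$ of the optimal trajectory only gives $O(\sqrt{t_2-t_1})$, because a general admissible control has $\int_{t_1}^{t_2}\!\int_A|a|\, d\nu_s\, ds$ controlled only on the $\sqrt{\,\cdot\,}$-scale built into $\norm{\cdot}_{\mathcal D(t_0)}$. The resolution is exactly preliminary (ii): an \emph{optimal} control is of Gibbs form and obeys the exponential pointwise bound of Proposition \ref{prop:regularityfromOC}, which together with the quartic growth of $\ell$ forces $\sup_t\int_A|a|\, d\nu^*_t(a)<\infty$ with a bound depending only on $M_2(\gamma_0)$; hence the optimal velocity field is uniformly bounded in time and the optimal trajectory is Lipschitz, not just H\"older, in $t$. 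The rest is bookkeeping of how constants depend on $M_2(\gamma_0)$, which is uniform in $t_0$ by the estimates quoted above.
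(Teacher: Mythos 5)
Your proof is correct and shares the paper's overall strategy (run the same control from two starting points; use the Gibbs form of optimal controls, via Proposition \ref{prop:regularityfromOC} and Lemma \ref{lem:7.4-04/03}, to upgrade the naive $1/2$-H\"older modulus in time to a genuine Lipschitz one), but it implements the two halves differently. For the $\gamma$-dependence, where the paper invokes the duality relation of Lemma \ref{lem:duality:1,2:F} to rewrite the terminal-cost gap as $\int u_{t_0}\,d(\gamma^1-\gamma^2)$ and then bounds it via $\|u_{t_0}\|_{\mathcal C^1_{2,1}}\|\gamma^1-\gamma^2\|_{(\mathcal C^1_{2,1})^*}$ together with Lemma \ref{lem:comparisonNorms}, you push both measures through the same flow $\Phi^1$ and estimate the terminal-cost gap directly by an optimal coupling and Cauchy--Schwarz against the linear growth of $\nabla L$; the moment dependence in the constants comes out identical. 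For the $t$-dependence, where the paper uses the dynamic-programming identity $U(t_1,\gamma_0)-U(t_0,\gamma_0)=U(t_1,\gamma_0)-U(t_1,\gamma_{t_1})-\epsilon\int_{t_0}^{t_1}\mathcal E(\nu_t|\nu^\infty)\,dt$ to close both one-sided bounds at once (using the $\gamma$-Lipschitz result plus the uniform-in-time entropy bound \eqref{eq:estimaterelativeentropy25/12}), you prove the two directions separately with two explicit comparison controls: a $\nu^\infty$ filler on $[t_1,t_2)$ for one side, and the restriction of the optimal control together with nonnegativity of the discarded entropy for the other. Both routes hinge on exactly the same hard ingredient, the uniform-in-time moment (hence drift) bound for optimal controls forced by the Gibbs form, and you correctly flag this as the crux. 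The paper's route is somewhat more economical (one identity rather than two ad hoc comparisons, and the adjoint $u$ absorbs the moment bookkeeping), while yours is more elementary and makes the Lipschitz-versus-H\"older obstruction very transparent.

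One small misattribution: the global Lipschitz bound on the flow $\Phi^*$ is furnished by Proposition \ref{prop:ODE16Sept} (which controls $\sup_{t,s}\|X^{t,\cdot}_s\|_{\mathcal C^3_{1,b}}$ in terms of $\|\bd\nu\|_{\mathcal D(t_0)}$), not by Proposition \ref{lem:compactnessContinuityEquation24/06} as you cite; the latter only yields Lipschitz continuity of the \emph{curve of measures} in $d_p$, not of the pointwise flow. This does not affect the correctness of the argument.
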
 
The estimates from Proposition \ref{prop:regularityfromOC} together with Proposition \ref{prop:LipRegValueFuncion25Nov}  also make it 
possible to obtain the following stability 
result by means of a compactness argument
(whose proof is given in Subsection 
\ref{sec:additionalregandfirststability}). 

\begin{lem}
\label{lem:convergenceifuniqueness}
 Take $(t_0,\gamma_0) \in [0,T] \times \mathcal{P}_{3}(\R^{d_1} \times \R^{d_2})$ such that
    $J((t_0, \gamma_0),\cdot)$
    has a unique minimizer 
    $\boldsymbol \nu^*$. Denote by $\boldsymbol \gamma^*$ and $\boldsymbol u^*$ the corresponding curve and multiplier. Assume that $(t^n_0, \gamma^n_0)_{ n\geq 1}$ 
    converges in $[0,T] \times \mathcal{P}_2(\R^{d_1} \times \R^{d_2})$ toward $(t_0,\gamma_0)$ and take, for all $n \geq 1$, a minimizer $\boldsymbol \nu^{*,n}$ of $J((t_0^n , \gamma_0^n),\cdot )$ with corresponding curve and multiplier $\boldsymbol \gamma^{*,n}$ and $\boldsymbol u^{*,n}$. Then $(\boldsymbol \nu^{*,n}, \boldsymbol \gamma^{*,n},\boldsymbol u^{*,n})_{ n \geq 1}$ converges toward $(\boldsymbol \nu^{*}, \boldsymbol \gamma^*, \boldsymbol u^*)$ in the following sense (with the standard notation $t \vee t'$  for $\max(t,t')$):
$$\lim_{n \rightarrow \infty} \biggl[  \sup_{t \in [t^n_0 \vee t_0,T]} \Bigl \{ \norm{ e^{\ell/2}(\nu^{*,n}_t - \nu^{*}_t) }_{L^{\infty}} +\mathcal{E} \bigl (\nu_t^{*,n} | \nu^*_t \bigr) + \bigl\| u_t^{*,n} - u^{*}_t \bigr\|_{\mathcal{C}_1^2}  +  d_2\bigl( \gamma_t^{*,n}, \gamma^*_t\bigr) \Bigr \} \biggr]  =0.$$
\end{lem}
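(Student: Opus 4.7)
My plan is to proceed in three steps: derive uniform a priori estimates, extract a convergent subsequence and identify its limit as $(\boldsymbol{\nu}^*,\boldsymbol{\gamma}^*,\boldsymbol{u}^*)$, then upgrade the convergence to the four modes in the statement.

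\emph{Step 1 (uniform a priori bounds).} Since $(t_0^n, \gamma_0^n) \to (t_0,\gamma_0)$ in $[0,T]\times \mathcal{P}_2$, the second moments of $\gamma_0^n$ are bounded. Proposition \ref{prop:LipRegValueFuncion25Nov} then yields a uniform bound on $J((t_0^n,\gamma_0^n),\boldsymbol{\nu}^{*,n}) = U(t_0^n,\gamma_0^n)$, hence on $\int_{t_0^n}^T \mathcal{E}(\nu_t^{*,n}|\nu^\infty)\,dt$, and via \eqref{eq:boundfromPinsker16Sept:sec:3} on $\int_{t_0^n}^T\int_A |a|^4\,d\nu_t^{*,n}(a)\,dt$. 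Feeding this into Proposition \ref{prop:regularityfromOC}, I obtain, uniformly in $n$: the exponential bounds $\Lambda^{-1}e^{-2\ell(a)} \leq \nu_t^{*,n}(a) \leq \Lambda e^{-\ell(a)/2}$; the Lipschitz-in-time control of $e^{3\ell/4}\nu_t^{*,n}$ in $L^\infty$; and boundedness of $\boldsymbol{u}^{*,n}$ in $\mathcal{C}^{1/2}([t_0^n,T],\mathcal{C}^1_{2,1})$ with extra spatial $\mathcal{C}^2_2$-regularity. Proposition \ref{lem:compactnessContinuityEquation24/06} also gives uniform $\sqrt{|t-s|}$-equicontinuity of $\boldsymbol{\gamma}^{*,n}$ in $d_2$ and uniform second (in fact third, after a short moment-propagation argument from the initial bound) moment.

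\emph{Step 2 (compactness and identification of the limit).} Using Arzelà--Ascoli together with tightness from the uniform $L^4$-moment bound (for $\boldsymbol{\nu}^{*,n}$), Prokhorov's theorem (for $\boldsymbol{\gamma}^{*,n}$ in $d_2$), and the uniform $\mathcal{C}^{1/2}$ time-regularity plus $\mathcal{C}^3_{2,1}$ spatial bound (for $\boldsymbol{u}^{*,n}$), I extract a subsequence along which $(\boldsymbol{\nu}^{*,n},\boldsymbol{\gamma}^{*,n},\boldsymbol{u}^{*,n})$ converges to a triple $(\tilde{\boldsymbol{\nu}},\tilde{\boldsymbol{\gamma}},\tilde{\boldsymbol{u}})$: locally uniformly in $(t,a)$ for $\tilde{\boldsymbol{\nu}}$, uniformly in $t$ for $d_2(\gamma_t^{*,n},\tilde{\gamma}_t)$, and in $\mathcal{C}([t_0,T], \mathcal{C}^1_{1,\mathrm{loc}})$ for $\tilde{\boldsymbol{u}}$. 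Thanks to the exponential bounds on $\nu_t^{*,n}$ and to the polynomial growth bounds on $b$ and $\nabla_x u^{*,n}$, I can pass to the limit in the optimality system \eqref{eq:NOCfornut2Avril}--\eqref{eq:NOCforutgammat2Avril} with initial data $(t_0^n,\gamma_0^n)$ to get that $(\tilde{\boldsymbol{\nu}},\tilde{\boldsymbol{\gamma}},\tilde{\boldsymbol{u}})$ solves the same system at $(t_0,\gamma_0)$. By lower semi-continuity of $J((t_0,\gamma_0),\cdot)$ together with Proposition \ref{prop:LipRegValueFuncion25Nov},
\begin{equation*}
J\bigl((t_0,\gamma_0),\tilde{\boldsymbol{\nu}}\bigr) \leq \liminf_n J\bigl((t_0^n,\gamma_0^n), \boldsymbol{\nu}^{*,n}\bigr) = \lim_n U(t_0^n,\gamma_0^n) = U(t_0,\gamma_0),
\end{equation*}
so $\tilde{\boldsymbol{\nu}}$ is a minimizer. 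The uniqueness assumption gives $\tilde{\boldsymbol{\nu}} = \boldsymbol{\nu}^*$, and then $(\tilde{\boldsymbol{\gamma}},\tilde{\boldsymbol{u}}) = (\boldsymbol{\gamma}^*,\boldsymbol{u}^*)$. A standard subsequence-of-a-subsequence argument then forces the whole sequence to converge to the same limit.

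\emph{Step 3 (strengthening the convergence).} The $d_2$-convergence uniformly in $t$ is already secured in Step 2 from the uniform $\sqrt{|t-s|}$-Hölder continuity. For the $\mathcal{C}^2_1$-convergence of $u_t^{*,n}$, I interpolate between the local uniform convergence of $\nabla^2 u_t^{*,n}$ and the uniform $\mathcal{C}^3_{2,1}$-bound: the extra derivative gives the decay needed to upgrade local convergence to convergence in the weighted $\mathcal{C}^2_1$-norm, uniformly in $t$ (here the uniform $\mathcal{C}^{1/2}$ time regularity of $\boldsymbol{u}^{*,n}$ in $\mathcal{C}^1_1$ is used to rule out loss of uniformity at the boundary $t_0^n \vee t_0$). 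The pointwise convergence of the densities is then promoted using their Gibbs form \eqref{eq:NOCfornut2Avril}: writing
\begin{equation*}
h_t^n(a) := \int_{\R^{d_1}\times\R^{d_2}} b(x,a)\cdot \nabla_x u_t^{*,n}(x,y)\, d\gamma_t^{*,n}(x,y),
\end{equation*}
one has $|h_t^n(a)| \leq C(1+|a|)$ uniformly in $(n,t)$ by \assreg\ and the uniform $\mathcal{C}^1_{2,1}$-bound on $u_t^{*,n}$; and, by Lemma \ref{lem:convenientexpress} together with the convergences of Step 2, $h_t^n \to h_t^*$ uniformly in $t$ and locally uniformly in $a$. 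Since $\ell$ has quartic growth, the weight $e^{\ell/2}$ absorbs the linear growth in $a$ of $h_t^n - h_t^*$ and of the exponent $-\ell - h/\epsilon$, yielding the claimed uniform $L^\infty$-bound for $e^{\ell/2}(\nu_t^{*,n}-\nu_t^*)$. Finally, the uniform entropy convergence $\mathcal{E}(\nu_t^{*,n}|\nu_t^*) \to 0$ follows from the pointwise convergence of the densities and the uniform two-sided exponential bounds by dominated convergence.

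\emph{Main obstacle.} The delicate point is Step 3: Arzelà--Ascoli only delivers local uniform convergence, while the statement demands global, weighted estimates uniformly in time. The explicit Gibbs form of the optimal density from Theorem \ref{thm:OCThm27/06}, combined with the quantitative exponential control of Proposition \ref{prop:regularityfromOC} and the superlinear growth of $\ell$, is what makes this upgrade possible.
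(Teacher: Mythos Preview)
Your overall strategy --- direct compactness via Arzelà--Ascoli, identification via lower semi-continuity and uniqueness, then upgrading --- is sound and genuinely different from the paper's. The paper instead extends each $\boldsymbol{\nu}^{*,n}$ to $[t_0\wedge t_0^n,T]$, runs the associated curve $\bar{\boldsymbol{\gamma}}^n$ from the \emph{fixed} initial point $(t_0,\gamma_0)$, and shows $(\bar{\boldsymbol{\nu}}^n)$ is a minimizing sequence for $J((t_0,\gamma_0),\cdot)$, thereby reducing to the standard compactness Lemma~\ref{lem:cost:convergence}. After a short comparison $d_2(\bar{\gamma}_t^n,\gamma_t^{*,n})\to 0$, the remaining three convergences follow in one stroke from the stability estimate of Lemma~\ref{lem:stability3-7/3-8}, which bounds $\|e^{3\ell/4}(\nu^2-\nu^1)\|_{L^\infty}$, $\mathcal{E}(\nu^1|\nu^2)$ and $\|u^2-u^1\|_{\mathcal{C}^2_1}$ directly by $\sup_t\|\gamma_t^2-\gamma_t^1\|_{(\mathcal{C}^2_1)^*}$. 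The paper's route is more modular; yours reproves pieces of that stability lemma inline.

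There is, however, a concrete gap in your Step~3. Your upgrade of $u_t^{*,n}\to u_t^*$ to $\mathcal{C}^2_1$ via ``interpolation between local uniform convergence of $\nabla^2 u$ and the uniform $\mathcal{C}^3_{2,1}$-bound'' does not work as stated: a $\mathcal{C}^3_{2,1}$-bound only says $\nabla^2 u,\nabla^3 u$ grow at most linearly, not that they decay, so local convergence plus this bound gives $\mathcal{C}^2_q$-convergence only for $q>1$, not $q=1$. The fix within your framework is to reverse the order: first obtain $\|e^{\ell/2}(\nu^{*,n}-\nu^*)\|_{L^\infty}\to 0$ from the Gibbs form using only \emph{local} $\mathcal{C}^1$-convergence of $u$ (your $h_t^n\to h_t^*$ argument does go through from local convergence, since the integral against $\gamma_t^*\in\mathcal{P}_2$ provides the uniform integrability at infinity); this yields $\int(1+|a|^k)|\nu_t^{*,n}-\nu_t^*|\,da\to 0$ uniformly in $t$ via the exponential tails, hence $\|b(\cdot,\nu_t^{*,n})-b(\cdot,\nu_t^*)\|_{\mathcal{C}^3_b}\to 0$, and only then $\|u^{*,n}-u^*\|_{\mathcal{C}^2_1}\to 0$ via the flow representation. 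Two smaller points: your third-moment claim in Step~1 is incorrect (only $\mathcal{P}_2$-convergence of $\gamma_0^n$ is assumed) though you do not actually need it; and the l.s.c.\ inequality in Step~2 mixes varying initial data with $J((t_0,\gamma_0),\cdot)$, so it is not a direct application of the stated l.s.c.\ and the Fatou/entropy argument should be made explicit --- this is precisely what the paper's extension trick is designed to avoid.
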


\subsection{Lagrangian Representation}
\label{subse:3:2}
We now introduce an alternative formulation of the optimality conditions
stated in Theorem 
\ref{thm:OCThm27/06}, but in terms of 
a forward-backward system of two Ordinary Differential Equations (ODEs). This representation is easily derived from Lemmas 
\ref{lem:compactnessContinuityEquation24/06}
and \ref{lem:ApproxBODE21Sept}:
\begin{prop}
\label{prop:FBODErepresentation}
Let $(t_0,\gamma_0) \in [0,T] \times {\mathcal P}_2({\mathbb R}^{d_1} \times {\mathbb R}^{d_2})$
and   consider also a probability space $(\Omega, \mathcal{F}, \mathbb{P})$ equipped with a pair  $(X_0, Y_0)$
    of random variables with values in 
    ${\mathbb R}^{d_1} \times {\mathbb R}^{d_2}$
    such that ${\mathbb P} \circ (X_0,Y_0)^{-1}= \gamma_0$. Then, 
    any triple 
    $({\boldsymbol \nu}, {\boldsymbol \gamma}, {\boldsymbol u})$ solving the system \eqref{eq:NOCfornut2Avril}--\eqref{eq:NOCforutgammat2Avril}
can be represented by means of 
the forward-backward system of ODEs
\begin{equation}
\label{eq:OC-ODEformulation}
    \left \{
    \begin{array}{ll}
    \dot{X}_t = b(X_t,\nu_t), \ &t \in [t_0,T];
 \qquad X_{t_0} =X_0;
    \\
    \dot{Z}_t = - \nabla_x b(X_t,\nu_t) Z_t, \  &t \in [t_0,T]; \qquad Z_T = \nabla_x L(X_T, Y_0);
    \end{array}
    \right.
\end{equation}
with $\E$ denoting the expectation under $\mathbb{P}$, in the sense that  
\begin{equation} 
\forall (t,a) \in [t_0,T] \times A, \quad
\nu_t(a) = \frac{1}{z_t} 
\exp \Bigl(  -\ell(a) - \frac{1}{\epsilon} \E \bigl[ b(X_t,a) \cdot Z_t \bigr] \Bigr), 
\label{eq:OC-ODEtildenu}
\end{equation}
and
\begin{equation}
\label{eq:representation:FB:F}
\begin{split}
&\forall t \in [t_0,T],
\quad
{\gamma}_t = {\mathbb P} \circ (X_t,Y_0)^{-1}, 
\\
&{\mathbb P}\bigl(\bigl\{ \forall t \in [t_0,T], \quad Z_t = \nabla_x{u}_t(X_t,Y_0) \bigr\} \bigr) = 1. 
\end{split}
\end{equation}
\end{prop}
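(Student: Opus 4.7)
The plan is to construct the pair of processes $(X_t, Z_t)_{t_0 \leq t \leq T}$ explicitly from the triple $(\boldsymbol{\nu}, \boldsymbol{\gamma}, \boldsymbol{u})$ via the method of characteristics, and then to verify the three identities by direct differentiation and a change-of-variables.

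First, I would define $(X_t)_{t_0 \leq t \leq T}$ as the unique solution of the forward ODE $\dot{X}_t = b(X_t, \nu_t)$ with initial condition $X_{t_0} = X_0$. Existence and uniqueness hold because, by \eqref{eq:estimateb29/01}, the vector field $(t,x) \mapsto b(x, \nu_t)$ lies in $L^1_t \mathcal{C}^1_b$. Proposition \ref{lem:compactnessContinuityEquation24/06} then yields $\gamma_t = (X_t^{t_0, \cdot}, i_d) \# \gamma_0 = \mathbb{P} \circ (X_t, Y_0)^{-1}$, which is the first identity in \eqref{eq:representation:FB:F}.

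Second, I would set $Z_t := \nabla_x u_t(X_t, Y_0)$ and show that this process satisfies the backward ODE in \eqref{eq:OC-ODEformulation}. The regularity supplied by Proposition \ref{prop:regularityfromOC}, namely $\nabla_x u \in \mathcal{C}^2_1$ with controlled time continuity, combined with the absolute continuity of $t \mapsto X_t$, makes a pathwise chain rule available. Differentiating the composition gives the sum $\partial_t \nabla_x u_t(X_t, Y_0) + \nabla^2_x u_t(X_t, Y_0) \dot X_t$; on the other hand, taking the gradient in $x$ of the backward transport equation in \eqref{eq:NOCforutgammat2Avril} produces an expression for $\partial_t \nabla_x u_t$ in which the Hessian term $\nabla^2_x u_t \cdot b(\cdot, \nu_t)$ will exactly cancel the contribution from $\dot X_t = b(X_t, \nu_t)$. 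What survives is precisely $\dot{Z}_t = -\nabla_x b(X_t, \nu_t) Z_t$, and the terminal condition $Z_T = \nabla_x L(X_T, Y_0)$ follows from $u_T = L$. This simultaneously delivers the second identity in \eqref{eq:representation:FB:F}.

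Third, for the Gibbs formula \eqref{eq:OC-ODEtildenu}, I would simply rewrite the integral appearing in the first-order condition \eqref{eq:NOCfornut2Avril} as an expectation. Since $\gamma_t$ is the law of $(X_t, Y_0)$ and $Z_t = \nabla_x u_t(X_t, Y_0)$, one has
\begin{equation*}
\int_{\mathbb{R}^{d_1} \times \mathbb{R}^{d_2}} b(x,a) \cdot \nabla_x u_t(x,y)\, d\gamma_t(x,y) = \mathbb{E}\bigl[ b(X_t, a) \cdot Z_t \bigr],
\end{equation*}
uniformly in $a \in A$. Substituting into \eqref{eq:NOCfornut2Avril} yields \eqref{eq:OC-ODEtildenu} with $z_t = z^*_t$.

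The main technical obstacle is the rigorous application of the chain rule in the second step: the time regularity of $\nabla_x u_t$ is only Hölder, while $X_t$ is merely absolutely continuous, so a direct Leibniz-rule argument is not entirely immediate. This is precisely where Lemma \ref{lem:ApproxBODE21Sept} intervenes, providing well-posedness (and a pathwise approximation scheme) for the backward linear ODE driven by $-\nabla_x b(X_t, \nu_t)$ with terminal value $\nabla_x L(X_T, Y_0)$. One then identifies the unique solution of this backward ODE with $\nabla_x u_t(X_t, Y_0)$ either by passing to the limit along a time-mollification of $\boldsymbol{\nu}$ (for which both $u$ and $X$ become classically smooth and the chain rule is elementary) or by testing against a dense family of observables. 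Once this identification is in place, the remaining verifications are purely computational.
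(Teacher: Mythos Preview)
Your proposal is correct and follows essentially the same approach as the paper, which simply notes that the representation is derived from Proposition~\ref{lem:compactnessContinuityEquation24/06} (for the identification $\gamma_t = \mathbb{P}\circ(X_t,Y_0)^{-1}$) and Lemma~\ref{lem:ApproxBODE21Sept} (for the backward ODE satisfied by $Z_t := \nabla_x u_t(X_t,Y_0)$, established via the time-mollification of $\boldsymbol{\nu}$ that you describe). Your third step, rewriting the Gibbs integral as an expectation, is the obvious change of variables and is implicit in the paper.
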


Notice that \eqref{eq:OC-ODEtildenu}--\eqref{eq:OC-ODEformulation} read as the optimality conditions when the cost functional is formulated over controlled ODEs instead of 
controlled continuity equations. In this case, the problem \eqref{eq:OriginalControlPb24/06} is replaced by
$$ \inf_{\nu \in \mathcal{A}(t_0)} 
\biggl\{ \E \bigl[L(X_T,Y_0) \bigr] + \epsilon \int_{t_0}^T \mathcal{E}(\nu_t|\nu^{\infty})dt \Bigr \}, $$
where $X_t$ solves the ODE
$$\dot{X}_t = b(X_t,\nu_t), \quad t \in [t_0,T]; \quad {\mathbb P} \circ (X_0,Y_0)^{-1} = \gamma_0.$$

\subsection{Second-Order Optimality Conditions}

\label{subse:SOC04/03}

We now address second-order optimality conditions, the analysis of which requires an appropriate set of control perturbations:

\begin{defn}
\label{def:admissible:perturbation}
For an 
initial time $t_0 \in [0,T]$, we let $\mathcal{A}^{\ell}(t_0)$ (the superscript $\ell$ standing for \textit{linearized}) be the subspace of $\mathcal{D}(t_0)$ consisting of elements $\bd{\eta}$ such that, for all $t \in [t_0,T]$, $\eta_t(A) = 0$.
\end{defn}

The key idea in the analysis of the second-order conditions is to associate, with any optimal control, a linearized problem. For an initial condition $(t_0,\gamma_0) \in [0,T] \times {\mathcal P}_3({\mathbb R}^{d_1} \times {\mathbb R}^{d_2})$, an element $\bd{\nu}$ of $\mathcal{D}(t_0)$  with $(\boldsymbol \gamma, \boldsymbol u)$ as associated curve and multiplier, and another element  ${\boldsymbol \eta} \in {\mathcal D}(t_0)$, we consider the advection equation 
\begin{equation}
\left \{
\begin{array}{ll}
\displaystyle \partial_t \rho_t  + \div_x \bigl(  b(x,\nu_t) \rho_t \bigr) =- \div_x \bigl(  b(x,\eta_t) \gamma_t \bigr) &\quad \mbox{in } (t_0,T) \times \R^{d_1} \times\R^{d_2},
\\
\displaystyle \rho_{t_0}=0, &\quad \mbox{in } \R^{d_1} \times \R^{d_2}.
\end{array}
\right.
\label{eq:rhotProp3.2}
\end{equation}
Equation \eqref{eq:rhotProp3.2} is obtained by differentiating with respect to $\lambda$ the solution $\bd{\gamma}^{\lambda}$ to the continuity equation \eqref{eq:Continuitygammanu} with control $\bd{\nu} + \lambda \bd{\eta}$.
To make sense of equation \eqref{eq:rhotProp3.2}, we will use test functions $\varphi : [t_0,T] \times \R^{d_1} \times \R^{d_2} \rightarrow \R$ satisfying
\begin{align} 
&t \mapsto \varphi_t \in \mathcal{C}^2_2 \mbox{ is bounded and } r \mapsto \varphi_t \in \mathcal{C}^1_2 \mbox{ is continuous},
\label{eq:testfn1-09/03}
\\ 
&t\mapsto \partial_t \varphi_t , \nabla_x \varphi_t \in \mathcal{C}^2_1 \mbox{ are bounded and } t \mapsto \partial_t \varphi_t, \nabla_x \varphi_t \in \mathcal{C}^1_2 \mbox{ are continuous. }
\label{eq:testfn2-09/03}
\end{align} 
We show in Lemma \ref{lem:BackThenLemA.17} that, for any such test function $\bd{\varphi}$ and any $\bd{\rho} \in \mathcal{C}([t_0,T], (\mathcal{C}^2_2)^*)  $ satisfying $\sup_{t \in [t_0,T]} \sup_{\varphi \in \mathcal{C}^2_2, \norm{ \varphi}_{\mathcal{C}^1_2} \leq 1} \langle \varphi; \rho_t \rangle <+\infty$, the maps $t \mapsto \langle \varphi_t ; \rho_t \rangle$ and $t \mapsto \langle \partial_t \varphi_t ; \rho_t \rangle$ are continuous. Moreover, by Proposition \ref{prop:measurable+fubini15/01}, the map $ t\mapsto \langle b(\cdot,\nu_t) \cdot \nabla_x \varphi_t ; \rho_t \rangle$ is integrable and coincides with $t \mapsto \int_A \langle b(\cdot,a) \cdot \nabla_x \varphi_t; \rho_t \rangle d\nu_t(a) $. 

Then, we say that 
\begin{equation}
\bd{\rho} \in \mathcal{C}([t_0,T], (\mathcal{C}^2_2)^*) \quad \mbox{ with } \quad \sup_{t \in [t_0,T]} \sup_{\varphi \in \mathcal{C}^2_2, \norm{ \varphi}_{\mathcal{C}^1_2} \leq 1} \langle \varphi; \rho_t \rangle < +\infty, 
\label{eq:spaceforrho12/01}
\end{equation}
is solution to \eqref{eq:rhotProp3.2} if, for every test function $\varphi : [t_0,T] \times \R^{d_1} \times \R^{d_2} \rightarrow \R$ satisfying \eqref{eq:testfn1-09/03}--\eqref{eq:testfn2-09/03}, it holds, for any $t_1 \in [t_0,T]$, 
\begin{equation} 
\langle \varphi_{t_1}; \rho_{t_1} \rangle = \int_{t_0}^{t_1} \langle \partial_t\varphi_t + b(\cdot, \nu_t) \cdot \nabla_x \varphi_t; \rho_t \rangle dt + \int_{t_0}^{t_1} \int_{\R^{d_1} \times \R^{d_2}} b(x,\eta_t) \cdot \nabla_x \varphi_t(x,y)d \gamma_t(x,y) dt. 
\label{eq:weakeqrho12/01Section2}
\end{equation}
Solvability of this equation is studied in the smaller space
${\mathcal R}(t_0)$ defined below:
\begin{defn}
\label{def:R(t_0)}
    For $t_0 \in [0,T]$, we define $\mathcal{R}(t_0)$ as the subset of $ \mathcal{C}([t_0,T], ( \mathcal{C}^2_2)^*)$ consisting of elements $\bd{\rho}$ such that 
 $$ \norm{ \bd{\rho}}_{\mathcal{R}(t_0)} := \sup_{t \in [t_0,T]}\sup_{ \phi \in \mathcal{C}^2_2, \norm{\phi}_{\mathcal{C}^1_3} \leq 1} | \langle \phi, \rho_t \rangle | + \sup_{t_1 < t_2 \in [t_0,T]} \frac{\norm{ \rho_{t_2} - \rho_{t_1}}_{(\mathcal{C}^2_2)^*}}{\sqrt{t_2-t_1}} < +\infty.$$   
\end{defn}    

The following result is proven in Section \ref{sec:LinearizedEquations}.

\begin{prop}
\label{prop:differentiatinggammalambda09/02}
Let $(t_0, \gamma_0) \in [0,T] \times \mathcal{P}_3(\R^{d_1} \times \R^{d_2})$ and $\bd{\nu}, \bd{\eta} \in \mathcal{D}(t_0)$. Let $\bd{\gamma}$ be the solution to the continuity equation \eqref{eq:lem:compactnessContinuityEquation24/06:statement} starting from $(t_0,\gamma_0)$ with control $\bd{\nu}$. Then, there is a unique solution $\bd{\rho} \in \mathcal{R}(t_0)$ to the linearized equation \eqref{eq:rhotProp3.2}. It is given by 
\begin{equation} 
\frac{d}{d\lambda} \Big|_{\lambda =0} \bd{\gamma}^{\lambda} = \bd{\rho} \quad \mbox{\rm in } \mathcal{C}([t_0,T], (\mathcal{C}^2_2)^*), 
\label{eq:derivative19/02}
\end{equation}
where $\bd{\gamma}^{\lambda}$ is the solution to the continuity equation starting from $(t_0,\gamma_0)$ with control $\bd{\nu} + \lambda \bd{\eta}$. It satisfies the estimate 
\begin{equation}
\label{eq:bound:for:rho:useful}
\norm{\bd{\rho}}_{\mathcal{R}(t_0)} \leq  \Lambda \biggl( \norm{ \bd{\nu}}_{\mathcal{D}(t_0)}  + \int_{\R^{d_1} \times \R^{d_2}} (|x|^2 + |y|^2)^{3/2} d\gamma_0(x,y) \biggr) \norm{\bd\eta}_{\mathcal{D}(t_0)}, 
\end{equation}
for some non-decreasing function $\Lambda : \R^+ \rightarrow \R^+$ independent from $(t_0,\gamma_0, \bd{\nu}, \bd{\eta})$. Moreover, for all $t_1 \in [t_0,T]$, $\rho_{t_1}$ extends uniquely to $\mathcal{C}^1_2$ and it is given, for any $\phi \in \mathcal{C}^1_2$ by 
\begin{equation} 
\langle \phi; \rho_{t_1} \rangle = \frac{d}{d\lambda} \Big|_{\lambda = 0} \int_{\R^{d_1} \times \R^{d_2}} \phi(x,y) d\gamma_{t_1}^{\lambda}(x,y) =  \int_{t_0}^{t_1} \int_{\R^{d_1} \times \R^{d_2}} b(x,\eta_t) \cdot \nabla_x \varphi_t(x,y) d\gamma_t(x,y) dt, 
\label{eq:derivativeinlambda31/01}
\end{equation}
where $\varphi : [t_0,t_1] \times \R^{d_1} \times \R^{d_2} \rightarrow \R$ is the solution to 
$$ - \partial_t \varphi_t - b(x, \nu_t) \cdot \nabla_x \varphi_t = 0 \quad \mbox{\rm in } [t_0,t_1] \times \R^{d_1} \times \R^{d_2}; \quad \varphi_{t_1} = \phi \quad \mbox{\rm in } \R^{d_1} \times \R^{d_2}.$$
\end{prop}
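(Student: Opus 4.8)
The plan is to construct the solution $\bd\rho$ explicitly via the representation formula \eqref{eq:derivativeinlambda31/01}, establish the a priori bound \eqref{eq:bound:for:rho:useful} first, and then prove uniqueness and the differentiability claim \eqref{eq:derivative19/02}. The starting point is Proposition \ref{lem:compactnessContinuityEquation24/06}: let $(X_t^{t_0,x})$ be the flow of the ODE $\dot X_t = b(X_t,\nu_t)$, so that $\gamma_t = (X_t^{t_0,\cdot},i_d)\#\gamma_0$. First I would \emph{define} a candidate solution: for each $t_1\in[t_0,T]$ and $\phi\in\mathcal{C}^1_2$, set $\langle\phi;\rho_{t_1}\rangle := \int_{t_0}^{t_1}\int_{\R^{d_1}\times\R^{d_2}} b(x,\eta_t)\cdot\nabla_x\varphi_t(x,y)\,d\gamma_t(x,y)\,dt$, where $\varphi$ is the solution of the backward transport equation with terminal datum $\phi$ at time $t_1$ (which exists and enjoys the $\mathcal{C}^3_{2,1}$-type bounds of Proposition \ref{prop:SolutionBackxard16Sept}, here applied with a terminal cost of lower growth). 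The key point is that, since $\nabla_x\varphi_t$ has growth controlled by $\norm{\phi}_{\mathcal{C}^1_2}$ uniformly in $t$, and $b(x,\eta_t)$ is controlled via Lemma \ref{lem:convenientexpress} (or directly by \assreg), this pairing is well-defined and linear in $\phi$; bounding it gives exactly the $(\mathcal{C}^2_2)^*$-type estimate in the definition of $\mathcal{R}(t_0)$, with the constant depending on $\norm{\bd\eta}_{\mathcal{D}(t_0)}$ linearly and on $\norm{\bd\nu}_{\mathcal{D}(t_0)}$ and the third moment of $\gamma_0$ through $\Lambda$. The time-Hölder bound in $\norm{\bd\rho}_{\mathcal{R}(t_0)}$ follows by comparing $\varphi$ at two terminal times and using the $\tfrac12$-Hölder regularity of the transport semigroup from Proposition \ref{prop:SolutionBackxard16Sept}, together with the $\sqrt{t_2-t_1}$-type control on $\int_{t_1}^{t_2}\norm{b(\cdot,\nu_t)}_{\mathcal{C}^1_b}dt$ and on $\int_{t_1}^{t_2}\int_A(1+|a|^2)d|\eta_t|dt$ from \eqref{eq:definitionadmissiblecontrols:aux}.

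Next I would verify that this $\bd\rho$ is indeed a distributional solution of \eqref{eq:rhotProp3.2} in the sense of \eqref{eq:weakeqrho12/01Section2}. Given a time-dependent test function $\bd\varphi$ satisfying \eqref{eq:testfn1-09/03}--\eqref{eq:testfn2-09/03}, one writes $\frac{d}{dt}\langle\varphi_t;\rho_t\rangle$ using the definition of $\rho_t$ and the backward equation satisfied by the auxiliary functions; the continuity and integrability statements needed to justify differentiating under the pairing are exactly those quoted just before Definition \ref{def:R(t_0)} (Lemma \ref{lem:BackThenLemA.17} and Proposition \ref{prop:measurable+fubini15/01}). Concretely, a Duhamel/variation-of-constants computation along the flow shows $\frac{d}{dt}\langle\varphi_t;\rho_t\rangle = \langle\partial_t\varphi_t + b(\cdot,\nu_t)\cdot\nabla_x\varphi_t;\rho_t\rangle + \int b(x,\eta_t)\cdot\nabla_x\varphi_t\,d\gamma_t$, and integrating from $t_0$ to $t_1$ with $\rho_{t_0}=0$ gives \eqref{eq:weakeqrho12/01Section2}.

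For uniqueness in $\mathcal{R}(t_0)$: if $\bd\rho^1,\bd\rho^2$ are two solutions, their difference $\bd\delta$ solves the homogeneous equation ($\eta\equiv0$ on the right-hand side) with $\delta_{t_0}=0$. Testing against the solution $\varphi$ of the backward transport equation with an arbitrary terminal datum $\phi\in\mathcal{C}^1_2$ at an arbitrary time $t_1$, formula \eqref{eq:weakeqrho12/01Section2} collapses to $\langle\phi;\delta_{t_1}\rangle = 0$; since such $\phi$ are dense enough to separate points of $(\mathcal{C}^2_2)^*$ restricted as in \eqref{eq:spaceforrho12/01}, we get $\delta_{t_1}=0$ for all $t_1$. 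Finally, for the identification \eqref{eq:derivative19/02} with the $\lambda$-derivative of $\bd\gamma^\lambda$: writing $\gamma_t^\lambda$ via its own flow $X_t^{\lambda}$, one differentiates the ODE flow in $\lambda$ at $\lambda=0$ (the flow is $\mathcal{C}^1$ in the control direction by Gronwall, using the $\mathcal{C}^3_b$-bounds on $b(\cdot,\nu_t)$), obtaining a tangent flow $\dot W_t = \nabla_x b(X_t,\nu_t)W_t + b(X_t,\eta_t)$, $W_{t_0}=0$; then for $\phi\in\mathcal{C}^1_2$ one has $\frac{d}{d\lambda}\big|_0\int\phi\,d\gamma_t^\lambda = \E[\nabla_x\phi(X_t,Y_0)\cdot W_t]$, and a standard duality identity (pairing $W$ against the backward transport flow of $\nabla_x\phi$, exactly as in Proposition \ref{prop:FBODErepresentation}) shows this equals the right-hand side of \eqref{eq:derivativeinlambda31/01}. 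The difference quotients converge in $\mathcal{C}([t_0,T],(\mathcal{C}^2_2)^*)$ by a uniform-in-$\lambda$ version of the same Gronwall estimates.

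The main obstacle I expect is purely technical but delicate: making the difference-quotient argument for \eqref{eq:derivative19/02} rigorous in the weak-$*$ topology on $(\mathcal{C}^2_2)^*$ while keeping all growth exponents consistent — in particular checking that $\nabla_x\phi\in\mathcal{C}^1_2$ propagates to the needed growth of $\nabla_x\varphi_t$ under the backward flow driven by a merely $\mathcal{D}(t_0)$-control $\bd\nu$ (whose $b(\cdot,\nu_t)$ is only $\mathcal{C}^1_b$-integrable in time with a $\sqrt{\cdot}$ modulus, not bounded in time), and that the remainder in the first-order Taylor expansion of the flow in $\lambda$ is $o(\lambda)$ uniformly in $t$ and in the relevant test functions. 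All the ingredients for this are already packaged in Propositions \ref{lem:compactnessContinuityEquation24/06} and \ref{prop:SolutionBackxard16Sept} and Lemma \ref{lem:convenientexpress}; the work is in bookkeeping the dependence on $\norm{\bd\eta}_{\mathcal{D}(t_0)}$ to obtain the clean linear bound \eqref{eq:bound:for:rho:useful}.
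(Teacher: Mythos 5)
Your approach is genuinely different from the paper's. You propose to \emph{define} $\bd\rho$ directly by the representation formula \eqref{eq:derivativeinlambda31/01}, then verify it solves the equation and identify it with the derivative of $\lambda \mapsto \bd{\gamma}^\lambda$ through a Lagrangian tangent flow $W_t$. The paper instead starts from the difference quotients $\delta\gamma^\lambda_t := \lambda^{-1}(\gamma^\lambda_t - \gamma_t)$, shows (Lemma \ref{lem:backthenA.21-09/02}) that $\lambda \mapsto \bd{\delta\gamma}^\lambda$ is Lipschitz in $\mathcal{C}([t_0,T],(\mathcal{C}^2_2)^*)$, extracts $\bd\rho$ as the $\lambda \to 0$ limit by completeness (Proposition \ref{prop:differentiatinggammalamnda25/01}), and only afterwards derives \eqref{eq:derivativeinlambda31/01}. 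Your route is more concrete, but it trades the completeness argument for the harder task of verifying directly that the representation-formula object satisfies the weak equation \eqref{eq:weakeqrho12/01Section2} against time-dependent test functions; that verification is only sketched in your plan, whereas in the paper it drops out of passing to the limit in the weak formulations for $\delta\gamma^\lambda$.

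There is a genuine gap in your uniqueness step. You propose to test the difference $\bd\delta$ of two solutions against the solution $\bd\varphi$ of the backward transport equation driven by $\bd\nu$ with terminal datum $\phi \in \mathcal{C}^1_2$. This fails on two counts. First, admissible test functions must satisfy \eqref{eq:testfn1-09/03}--\eqref{eq:testfn2-09/03}, which in particular require $\varphi_t \in \mathcal{C}^2_2$ and $\nabla_x\varphi_t \in \mathcal{C}^2_1$; this forces the terminal datum $\phi$ to lie in $\mathcal{C}^3_{2,1}$, not merely $\mathcal{C}^1_2$ (see Remark \ref{rmk:A.9}), after which one still needs an extension/density argument (the paper's Lemma \ref{lem:uniquenessextension25/01}) to pass from $\mathcal{C}^3_{2,1}$ to the full dual pairing. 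Second — and more seriously — for a general $\bd\nu \in \mathcal{D}(t_0)$, which is only time-measurable, the backward transport solution does \emph{not} have $\partial_t\varphi_t$ continuous in time (Lemma \ref{lem:backwardtransportsmoothcontrol23/01} requires $t \mapsto \nu_t$ to be continuous into $\mathcal{M}_{1+|a|^3}(A)$), so it is not an admissible test function at all. The paper's proof of uniqueness (Proposition \ref{prop:A.22}) circumvents this by first regularizing $\bd\nu$ into $\bd\nu^n$ via Lemma \ref{lem:newregularizationb23/12}, testing against the associated smooth transport solution $\bd\varphi^n$, and then controlling the commutator $b(\cdot,\nu_t-\nu_t^n)\cdot\nabla_x\varphi_t^n$ as $n \to \infty$. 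Without this regularization, your argument that ``formula \eqref{eq:weakeqrho12/01Section2} collapses to $\langle\phi;\delta_{t_1}\rangle = 0$'' does not go through. The rest of the plan (the a priori bound, the bookkeeping of $\norm{\bd\eta}_{\mathcal{D}(t_0)}$, and the tangent-flow identification) is consistent with the estimates available in Propositions \ref{lem:compactnessContinuityEquation24/06} and \ref{prop:SolutionBackxard16Sept}, and would be expected to work once the above two gaps are repaired.
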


\begin{rmk}
In the proposition above, the meaning of condition \eqref{eq:derivative19/02} is
$$ \lim_{\lambda \rightarrow 0} \sup_{t \in [t_0,T]} \norm{ \frac{\gamma_t^{\lambda} - \gamma_t}{\lambda} - \rho_t }_{(\mathcal{C}^2_2)^*} =0.  $$
\end{rmk}

If $\bd{\nu}^*$ is an optimal control for $J ( (t_0,\gamma_0), \cdot )$ and $\bd{\eta}$ is a perturbation in $\mathcal{A}^{\ell}(t_0)$, we can now define the  
auxiliary cost functional 
\begin{align} 
\notag &\mathcal{J} \bigl( (t_0,\gamma_0), \bd{\nu}^* , \bd{\eta} \bigr)
\\
 &:=
\left \{
\begin{array}{ll}
\displaystyle \epsilon \int_{t_0}^T  \int_{A} \frac{|\eta_t(a)|^2}{\nu^*_t(a)} da  dt 
+ 2 \int_{t_0}^T  \Bigl \langle b( \cdot,\eta_t) \cdot \nabla_x u^*_t ; \rho_t \Bigr \rangle dt & \mbox{if }  \bd{\eta} \in L^2 \bigl( (\boldsymbol{\nu}^*)^{-1} \bigr),
\\
+ \infty & \mbox{otherwise, }
\label{eq:minformathcalJ1Avril}
\end{array}
\right.
\end{align}
where, in the first line, 
$\bd \rho$
solves
\eqref{eq:rhotProp3.2}
and 
 the condition 
 $\bd{\eta} \in L^2  ( (\boldsymbol{\nu}^*)^{-1})$ means that $\boldsymbol{\eta}$ is absolutely continuous with respect to the Lebesgue measure on  $[t_0,T] \times A$ with the Radon-Nikodym derivative
$(t,a) \mapsto \eta_t(a)$ belonging to $L^2((\boldsymbol{ \nu}^*)^{-1})$. Notice that, for any $\bd{\eta} \in \mathcal{A}^\ell(t_0)$, the second term in the  right-hand side of \eqref{eq:minformathcalJ1Avril} is finite, see Proposition \ref{prop:measurable+fubini15/01}, and therefore the total cost $\mathcal{J}$ is defined without ambiguity with values in  $\R \cup \{+\infty \}$.
The relation of the functional  $\mathcal{J}$ to the control problem will become apparent in Section \ref{sec:SOC}. In brief, for any perturbation $\boldsymbol{\eta} \in \mathcal{A}^\ell(t_0)$ such that $\bd{\nu} + \lambda \bd{\eta}^*$ belongs to $\mathcal{A}(t_0)$ for $\lambda$ small enough, we have
$$ \frac{d^2}{d\lambda^2} \Big|_{\lambda = 0} J \bigl( (t_0 , \gamma_{0}), \boldsymbol{\nu}^* + \lambda \bd\eta \bigr) =  \mathcal{J} \bigl( (t_0, \gamma_0), \boldsymbol{\nu}^*, \boldsymbol{\eta} \bigr).$$
Optimality conditions for minimizers of \eqref{eq:minformathcalJ1Avril} will involve an adjoint variable,  solution to the linearized backward transport equation
\begin{equation}
    \left \{
    \begin{array}{ll}
    -\partial_t v_t(x,y) - b(x, \nu_t^*) \cdot \nabla_x v_t(x,y) = b(x,\eta_t) \cdot \nabla_x u_t^*(x,y) \quad \mbox{ in } [t_0,T] \times \R^{d_1} \times \R^{d_2}, \\
    v_T(x,y) = 0 \quad \mbox{ in } \R^{d_1} \times \R^{d_2}.
    \end{array}
    \right.
\label{eq:linearizedeqvt04/03}
\end{equation}
We say that  $v \in \mathcal{C}([t_0,T], \mathcal{C}^1_{1})$ is a solution to \eqref{eq:linearizedeqvt04/03} if, for all $t_1 < t_2 \in [t_0,T]$ and all $(x,y) \in \R^{d_1} \times \R^{d_2}$,
$$ v_{t_1}(x,y) - v_{t_2}(x,y) = \int_{t_1}^{t_2} \bigl \{ b(x, \nu^*_t) \cdot \nabla_xv_t(x,y) + b(x, \eta_t) \cdot \nabla_x u^*_t(x,y)\bigr \}dt.  $$
Section \ref{sec:LinTransEq} is devoted to the analysis of Equation \eqref{eq:linearizedeqvt04/03}, where we state in particular the next result:
\begin{prop}
Take $t_0 \in [0,T]$, $\bd{\nu}^* \in \mathcal{A}(t_0)$ with associated solution $\bd{u}^*$ to the backward transport equation \eqref{eq:adjoint:equation}. Take  $\bd{\eta} \in \mathcal{D}(t_0)$.  Then, there is a unique solution to the linearized transport equation \eqref{eq:linearizedeqvt04/03}. For all $t \in [t_0,T]$, $v_t$ belongs to $\mathcal{C}^2$ and we have the estimate
    $$ \sup_{t \in [t_0,T]} \norm{ v_t }_{\mathcal{C}^{2}_{1}} + \sup_{t_1 < t_2 \in [t_0,T]}  \frac{\norm{v_{t_2} - v_{t_1} }_{\mathcal{C}^1_{1}}}{\sqrt{t_2-t_1}} \leq \Lambda \bigl(  \norm{\bd{\nu}^*}_{\mathcal{D}(t_0)} \bigr)  \norm{ \bd{\eta}}_{\mathcal{D}(t_0)},   $$
for some non-decreasing function $\Lambda : \R_+ \rightarrow \R_+$ independent from $(t_0, \bd{\nu}^*, \bd{\eta})$. Moreover, for all $t \in [t_0,T] $ and all $(x,y) \in \R^{d_1} \times \R^{d_2}$, the solution is given by 
\begin{equation} 
v_t(x,y) = \int_{t}^T b \bigl( X_s^{*,t,x},\eta_s) \cdot \nabla_x u_s \bigl( X_s^{*,t,x},y \bigr) ds, 
\label{eq:representationformulavt}
\end{equation}
where $(X_s^{t,x})_{s \in [t,T]}$ is the flow of the ODE, solution to 
$$ \dot{X}^{*,t,x}_s = b(X_s^{*,t,x},\nu^*_s) \quad s \in  [t,T], \quad X_t^{*,t,x} = x. $$
\label{prop:SolnVt19/02}
\end{prop}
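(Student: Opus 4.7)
The plan is to build the solution via the explicit representation formula \eqref{eq:representationformulavt}, derive the stated estimates from the smoothness of the characteristic flow combined with the already-known regularity of $\bd{u}^*$, and obtain uniqueness by following the same characteristics. First, thanks to \hyperlink{ass:reg}{Assumption ({\bf Regularity})} and the bound \eqref{eq:estimateb29/01}, the velocity field $(s,x) \mapsto b(x,\nu^*_s)$ lies in $L^1([t_0,T],\mathcal{C}^3_b)$, so the flow $(X^{*,t,x}_s)_{s \in [t,T]}$ is well defined, three times continuously differentiable in $x$, and satisfies $|X_s^{*,t,x}| \le C(1+|x|)$ as well as $\sup_s \|\nabla_x^k X^{*,t,\cdot}_s\|_{\infty} \le \Lambda(\|\bd{\nu}^*\|_{\mathcal{D}(t_0)})$ for $k=1,2$. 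Defining $\bd{v}$ by \eqref{eq:representationformulavt} is then legitimate: the pointwise bound $|b(X^{*,t,x}_s,\eta_s)| \le C \int_A (1+|a|) d|\eta_s|(a)$ combined with the finiteness of $\|\bd{\eta}\|_{\mathcal{D}(t_0)}$ makes the $ds$-integral convergent, and the semigroup identity $X_s^{*,t_1,X^{*,t,x}_{t_1}} = X^{*,t,x}_s$ together with $v_T \equiv 0$ yields the integral form of \eqref{eq:linearizedeqvt04/03}.

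Spatial regularity will be obtained by differentiating \eqref{eq:representationformulavt} under the integral sign. The resulting derivatives of the integrand involve the flow derivatives $\nabla_x X^{*,t,x}_s$ and $\nabla^2_{xx} X^{*,t,x}_s$, the $x$-derivatives of $b(\cdot,a)$ of order up to two (which by \hyperlink{ass:reg}{Assumption ({\bf Regularity})} are bounded in $x$ with growth at most $1+|a|^3$ in $a$, integrable against $d|\eta_s|(a) ds$ through the fourth-moment piece of $\|\bd{\eta}\|_{\mathcal{D}(t_0)}$), and the derivatives of $u^*_s$ up to order three controlled by Proposition \ref{prop:SolutionBackxard16Sept} (so $\nabla_x u^*_s \in \mathcal{C}^2_1$). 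Combining these ingredients delivers $\sup_{t} \|v_t\|_{\mathcal{C}^2_1} \le \Lambda(\|\bd{\nu}^*\|_{\mathcal{D}(t_0)}) \|\bd{\eta}\|_{\mathcal{D}(t_0)}$, the linear growth in $(x,y)$ being preserved because only $\nabla_x u^*_s$ carries growth in the spatial variables.

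For the Hölder regularity in time, I will use the semigroup identity to write, for $t_1 < t_2$,
\[
v_{t_1}(x,y) - v_{t_2}(x,y) = \int_{t_1}^{t_2} b(X_s^{*,t_1,x},\eta_s) \cdot \nabla_x u^*_s(X_s^{*,t_1,x},y) ds + \bigl[ v_{t_2}(X_{t_2}^{*,t_1,x},y) - v_{t_2}(x,y) \bigr].
\]
The first term is bounded by $C(1+|x|+|y|) \int_{t_1}^{t_2} \int_A (1+|a|) d|\eta_s|(a) ds \le C(1+|x|+|y|) \|\bd{\eta}\|_{\mathcal{D}(t_0)} \sqrt{t_2-t_1}$, thanks precisely to the $\sqrt{\cdot}$ modulus of continuity built into definition \eqref{eq:definitionadmissiblecontrols:aux}. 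The bracketed term is handled via the $\mathcal{C}^1_1$ bound on $v_{t_2}$ from the previous step together with $|X^{*,t_1,x}_{t_2} - x| \le C(1+|x|)(t_2-t_1)$. Differentiating once in $x$ and repeating the same decomposition gives the analogous estimate for $\nabla_x(v_{t_2}-v_{t_1})$. Uniqueness in $\mathcal{C}([t_0,T],\mathcal{C}^1_1)$ then follows by linearity: the difference $\bd{w}$ of two solutions solves the homogeneous equation with $w_T \equiv 0$, and its value along the characteristics is constant, so $w_t(x,y) = w_T(X^{*,t,x}_T,y) = 0$.

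The main obstacle I expect is ensuring that the linear growth of $v_t$ in $(x,y)$ is preserved through all differentiations, since each $x$-derivative couples derivatives of the flow, derivatives of $b$ in $x$, and derivatives of $u^*_s$, and the polynomial growth in $a$ of $\nabla^k_x b$ (of order $1+|a|^{k+1}$) forces a careful appeal to the fourth-moment piece of the $\mathcal{D}(t_0)$ norm. The Hölder-in-time estimate is also tied tightly to the $\sqrt{t_2-t_1}$ scaling encoded in $\|\cdot\|_{\mathcal{D}(t_0)}$: without this factor being built into the norm, one could not hope for a uniform $1/2$-Hölder bound on $\bd{v}$.
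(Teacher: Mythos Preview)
Your proposal is correct and follows the same underlying strategy as the paper, namely building $\bd v$ from the characteristic representation \eqref{eq:representationformulavt} and reading off the estimates from the known regularity of the flow (Proposition \ref{prop:ODE16Sept}) and of $\bd u^*$ (Proposition \ref{prop:SolutionBackxard16Sept}); the paper in fact omits the proof, pointing to Proposition \ref{prop:SolutionBackxard16SeptAppendix} whose argument is identical. Two minor points: first, the flow increment satisfies $|X_{t_2}^{*,t_1,x}-x|\le \Lambda(\|\bd\nu^*\|_{\mathcal{D}(t_0)})\sqrt{t_2-t_1}$ rather than $C(1+|x|)(t_2-t_1)$ (see \eqref{eq:estimateODEAppendix19Aout}), which is harmless since $\sqrt{t_2-t_1}$ is exactly what you need; second, your one-line uniqueness argument (``constant along characteristics'') is morally right but, since $t\mapsto b(x,\nu^*_t)$ need not be continuous, making it rigorous requires a telescoping/Riemann-sum argument in the spirit of \eqref{eq:IBP:F:1}, or alternatively the duality-with-regularization route that the paper uses in Proposition \ref{prop:SolutionBackxard16SeptAppendix}.
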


Within this framework, the second-order optimality conditions, whose proof is given in Section \ref{sec:SOC}, take the following form:
\begin{thm} Given 
$(t_0, \gamma_0) \in [0,T] \times \mathcal{P}_3(\R^{d_1} \times \R^{d_2})$, 
let $(\boldsymbol \nu^*, \boldsymbol \gamma^*, \boldsymbol u^*)$ be a minimizer of $J ( (t_0,\gamma_0),\cdot )$ (completed with its optimal curve and 
    multiplier) and $t_1 \in [t_0,T)$. Then, for any  ${\boldsymbol \eta} \in \mathcal{A}^\ell(t_1)$ with associated solution ${\boldsymbol \rho} \in \mathcal{R}(t_1)$ to the linearized continuity equation  \eqref{eq:rhotProp3.2}, 
$$ \mathcal{J} \bigl( (t_1, \gamma_{t_1}), \bd{\nu}^*,{\boldsymbol \eta} \bigr) \geq 0.$$
Moreover,  $\mathcal{J} ( (t_1, \gamma_{t_1}) , \boldsymbol \nu^*,{\boldsymbol \eta} ) =0$ if and only if there exists ${\boldsymbol v} \in \mathcal{C}([t_1,T], \mathcal{C}^1_{1})$ such that 
\begin{equation}
    {\eta}_t(a) = -   \frac{\nu^*_t(a)}{\epsilon} \Bigl[  \bigl \langle  b(\cdot,a)  \cdot \nabla_x u^*_t ;  \rho_t \bigr \rangle + \int_{\R^{d_1} \times \R^{d_2}} b(x,a) \cdot \nabla_x v_t (x,y) d \gamma^*_t  (x,y) -c_t\Bigr] 
\label{eq:nutProp3.2}
\end{equation}
for almost all 
$(t,a) \in [t_1,T] \times A$, where $c_t$ is a normalizing constant to ensure that $\eta_t$ integrates to $0$, 
and $({\boldsymbol v},{\boldsymbol \rho})$ solves the linearized system
\begin{equation}
    \left \{
    \begin{array}{ll}
\displaystyle -\partial_t  v_t  -  b(x,\nu^*_t) \cdot \nabla_x v_t =  b(x,\eta_t)  \cdot \nabla_x u^*_t &\textrm{\rm in } [t_1,T] \times \R^{d_1} \times 
\R^{d_2}, 
\\
\qquad v_T = 0 \quad 
\textrm{\rm in } \R^{d_1} \times 
\R^{d_2};
\\
\displaystyle \partial_t  \rho_t  + \div_x \bigl(  b(x,\nu^*_t)  \rho_t \bigr) = - \div_x \bigl(  b(x,\eta_t )  \gamma^*_t \bigr) &\textrm{\rm in } (t_1,T) \times \R^{d_1} \times \R^{d_2},
\\
\qquad \rho_{t_1} =0.
    \end{array}
    \right.
\label{eq:vtrhotProp3.2}
\end{equation}
\label{prop:SecondOrderConditions3Avril}
\end{thm}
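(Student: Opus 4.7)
The plan hinges on combining dynamic programming with a second-order expansion of the cost along admissible variations of the optimal control. By the dynamic programming principle, the restriction of $\bd{\nu}^*$ to $[t_1,T]$ is itself optimal for $J((t_1,\gamma^*_{t_1}),\cdot)$, so it suffices to prove the statement at $t_1=t_0$. Fix $\bd{\eta} \in \mathcal{A}^{\ell}(t_1)$ with $\mathcal{J}((t_1,\gamma^*_{t_1}),\bd{\nu}^*,\bd{\eta}) < +\infty$, so that $\bd{\eta} \in L^2((\bd{\nu}^*)^{-1})$. Truncating the ratio $\bd{\eta}/\bd{\nu}^*$ and exploiting the pointwise lower bound $\nu^*_t(a) \geq \Lambda^{-1} e^{-2\ell(a)}$ from \eqref{eq:bound:nu(a):exp(-ell(a))}, I may reduce to the case $|\eta_t(a)/\nu^*_t(a)| \leq N$, for which $\bd{\nu}^{\lambda} := \bd{\nu}^* + \lambda \bd{\eta}$ lies in $\mathcal{A}(t_1)$ for all $|\lambda| < 1/N$, and then extend the final identities by density once they have been established.

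The key step is to expand $J((t_1,\gamma^*_{t_1}),\bd{\nu}^{\lambda})$ to second order in $\lambda$. Writing $\gamma^{\lambda}_t = \gamma^*_t + \lambda \rho_t + (\lambda^2/2)\sigma_t + o(\lambda^2)$ with $\bd{\rho}$ as in Proposition \ref{prop:differentiatinggammalambda09/02} and $\bd{\sigma}$ solving $\partial_t \sigma_t + \div_x(b(x,\nu^*_t)\sigma_t) = -2\div_x(b(x,\eta_t)\rho_t)$ with $\sigma_{t_1}=0$ (whose well-posedness mirrors that of $\bd{\rho}$ from Section \ref{sec:LinearizedEquations}), the convexity of the relative entropy yields a second-order entropic contribution equal to $\epsilon \int_{t_1}^T \int_A (\eta_t^2/\nu^*_t)\, da\, dt$. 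For the terminal cost, $\langle L;\sigma_T\rangle = \langle u^*_T;\sigma_T\rangle$ combined with $\partial_t u^*_t + b(\cdot,\nu^*_t)\cdot \nabla_x u^*_t = 0$ gives, after one integration by parts in time, $\langle L;\sigma_T\rangle = 2\int_{t_1}^T \langle b(\cdot,\eta_t)\cdot \nabla_x u^*_t;\rho_t\rangle dt$. Summing identifies $(d^2/d\lambda^2)|_{\lambda=0} J$ with $\mathcal{J}((t_1,\gamma^*_{t_1}),\bd{\nu}^*,\bd{\eta})$, which must be non-negative by optimality.

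For the equality characterization, if $\mathcal{J}(\bd{\eta})=0$ then $\bd{\eta}$ is a global minimizer of the non-negative functional $\bd{\eta} \mapsto \mathcal{J}(\bd{\eta})$ on $\mathcal{A}^{\ell}(t_1) \cap L^2((\bd{\nu}^*)^{-1})$. Testing its Euler--Lagrange condition against $\tilde{\bd{\eta}} \in \mathcal{A}^{\ell}(t_1)$ yields
\begin{equation*}
0 = \epsilon \int_{t_1}^T \int_A \frac{\eta_t \tilde{\eta}_t}{\nu^*_t}\, da\, dt + \int_{t_1}^T \langle b(\cdot,\tilde{\eta}_t)\cdot \nabla_x u^*_t;\rho_t\rangle dt + \int_{t_1}^T \langle b(\cdot,\eta_t)\cdot \nabla_x u^*_t;\tilde{\rho}_t\rangle dt,
\end{equation*}
where $\tilde{\bd{\rho}}$ is the linearized curve driven by $\tilde{\bd{\eta}}$. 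To eliminate $\tilde{\bd{\rho}}$, I introduce the adjoint $\bd{v}$ of \eqref{eq:linearizedeqvt04/03} (existence from Proposition \ref{prop:SolnVt19/02}) and compute $(d/dt)\langle v_t;\tilde{\rho}_t\rangle$ from both evolution equations; the boundary conditions $v_T=0$ and $\tilde{\rho}_{t_1}=0$ then produce the duality identity $\int_{t_1}^T \langle b(\cdot,\eta_t)\cdot \nabla_x u^*_t;\tilde{\rho}_t\rangle dt = \int_{t_1}^T \int b(x,\tilde{\eta}_t)\cdot \nabla_x v_t\, d\gamma^*_t\, dt$. Since $\tilde{\bd{\eta}}$ is arbitrary subject only to $\tilde{\eta}_t(A)=0$, a Lagrange multiplier $c_t$ accounts for the constraint and the stationarity condition reduces pointwise to the Gibbs-type formula \eqref{eq:nutProp3.2}. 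The converse follows by the same integration by parts: plugging the Gibbs formula into $\mathcal{J}$ and applying the adjoint identity with $\tilde{\bd{\eta}} = \bd{\eta}$ cancels the two contributions and returns $\mathcal{J}(\bd{\eta})=0$.

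The main technical obstacle is the rigorous justification of every duality bracket and time-integration by parts in the correct functional spaces: the curves $\bd{\rho},\tilde{\bd{\rho}}$ live in the dual $\mathcal{R}(t_1) \subset \mathcal{C}([t_1,T],(\mathcal{C}^2_2)^*)$, the multipliers $\bd{u}^*$ and $\bd{v}$ only enjoy the regularity granted by Propositions \ref{prop:regularityfromOC}--\ref{prop:SolnVt19/02}, and $b(x,\cdot)$ has linear growth in $a$. Controlling the resulting pairings will rely systematically on the norm bookkeeping of Lemma \ref{lem:convenientexpress}. In addition, the second-order correction $\bd{\sigma}$ requires its own well-posedness and $\lambda$-uniform estimates to ensure the informal Taylor expansion of $\bd{\gamma}^{\lambda}$ is genuinely second-order accurate in a topology strong enough to integrate $L$ against.
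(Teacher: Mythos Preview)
Your proposal is correct and follows the same overall architecture as the paper's proof (split there into Propositions~\ref{prop:mathcalJnonnegative26/06} and~\ref{prop:NSCond26/06}). The equality characterization via the Euler--Lagrange condition and the adjoint identity you derive by differentiating $\langle v_t;\tilde\rho_t\rangle$ in time is exactly the paper's Proposition~\ref{prop:IPPLinearized27/01}, and the converse direction is handled identically.

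The one substantive difference is in how the second derivative of the terminal cost is obtained. You introduce an explicit second-order corrector $\bd{\sigma}$ solving $\partial_t\sigma_t + \div_x(b(\cdot,\nu^*_t)\sigma_t) = -2\div_x(b(\cdot,\eta_t)\rho_t)$ and integrate against $\bd{u}^*$. The paper avoids $\bd{\sigma}$ altogether: Lemma~\ref{lem:secondderivativeterminalcost29/01} differentiates $\lambda \mapsto \langle \phi;\rho_T^{\lambda}\rangle$ at $0$ directly, using the explicit representation of $\rho^{\lambda}$ from Proposition~\ref{prop:Explicitformularholambda25/01} together with the Lipschitz regularity of $\lambda \mapsto \bd{\rho}^{\lambda}$ in $(\mathcal{C}^2_2)^*$ established in Lemma~\ref{lem:25/01/15:27}. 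This is technically lighter: since the source of your equation for $\bd{\sigma}$ involves $\bd{\rho}$, which already lives only in $(\mathcal{C}^2_2)^*$, making sense of $\bd{\sigma}$ and of the pairing $\langle u^*_t;\sigma_t\rangle$ would force you to redo the well-posedness and duality analysis of Section~\ref{sec:LinearizedEquations} one regularity level down --- precisely the obstacle you flag at the end. The paper's route sidesteps that entirely.

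A minor second difference: your truncation bounds $|\eta_t/\nu^*_t|$, whereas the paper cuts off both the value of $\eta_t$ and its support in $a$ (the explicit $\eta^R_t$ in Step~2 of Proposition~\ref{prop:mathcalJnonnegative26/06}), recentring to preserve $\eta^R_t(A)=0$. Both approximations converge in $\mathcal{D}(t_1)$ and in $L^2((\bd{\nu}^*)^{-1})$, so either works.
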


\begin{rmk}

(i) Notice that ${\boldsymbol \eta}$ solves
\begin{equation}
\label{rmk:formulact}
\begin{split}
-\epsilon \Delta_a &\eta_t - \div_a \Bigl[ \Bigl( \epsilon \nabla_a \ell + \nabla_a \int_{\R^{d_1} \times \R^{d_2}} b(x,a) \cdot \nabla_x u^*_t(x,y) d \gamma^*_t(x,y) \Bigr)  \eta_t  \Bigr] 
    \\
&= \div_a \Bigl[ \Bigl( \nabla_a \Bigl[ \bigl \langle b(\cdot,a) \cdot \nabla_x u_t^*; \rho_t \bigr \rangle + \int_{\R^{d_1} \times \R^{d_2}} b(x,a) \cdot  \nabla_x v_t(x,y) d \gamma^*_t (x,y) \Bigr]  \Bigr) \nu^*_t \Bigr], 
\end{split}
\end{equation}
in $A$ for all $t \in [t_0,T]$, which is the linearized version of \eqref{eq:stationnaryFPe24Sept}.
Indeed, computing the derivative of the log and then inserting the expression for
$\eta_t$, one has 
\begin{align*} 
&\epsilon \nabla_a \log \frac{\eta_t}{\nu^*_t}(a) = - \frac{\nu^*_t(a)}{\eta_t(a)} \nabla_a \Bigl[ \bigl \langle b(\cdot,a) \cdot \nabla_x u_t^*;\rho_t \bigr \rangle +  \int_{\R^{d_1} \times \R^{d_2}} b(x,a) \cdot \nabla_x v_t(x,y) d\gamma^*_t(x,y) \Bigr]. 
\end{align*}
Moreover, using the expression for $\nu_t^*$, one can rewrite the left-hand side in the form
\begin{align*} 
& \epsilon \nabla_a \log \frac{\eta_t}{\nu^*_t}(a) 
=\epsilon \nabla_a \log \eta_t(a) - \epsilon \nabla_a \log \nu^*_t(a) 
\\
&= \epsilon \nabla_a \log \eta_t(a) + \epsilon \nabla_a \ell(a) + \nabla_a \int_{\R^{d_1} \times \R^{d_2}} b(x,a) \cdot \nabla_x u^*_t(x,y) d\gamma^*_t(x,y). 
\end{align*}
Identifying the right-hand sides of the last two displays, 
multiplying both sides by $\eta_t$ and taking the divergence in $a$, we conclude that $\bd{\eta}$ solves \eqref{rmk:formulact}.

(ii) In Lemma \ref{lem:differentiabilitydualitybracket31/01} we prove that, for $t \in [t_0,T]$,  $a \mapsto \langle b(\cdot,a) \cdot \nabla_x u_t^*; \rho_t \rangle $ is continuously differentiable and its gradient with respect to $a$  is given by $\langle \nabla_a[ b(\cdot,a)\cdot \nabla_x u_t^*]; \rho_t \rangle$.

(iii) We also notice that the constant $c_t$ is given by
    $$c_t = \bigl \langle  b(\cdot,\nu^*_t)  \cdot \nabla_x u^*_t ;  \rho_t \bigr \rangle + \int_{\R^{d_1} \times \R^{d_2}} b(x,\nu^*_t) \cdot \nabla_x v_t (x,y) d \gamma^*_t  (x,y).$$
\end{rmk}

Similarly to Proposition \ref{prop:regularityfromOC}, we can use the second order conditions to infer more regularity on $\boldsymbol{(\eta, \rho, v)}$. The proof is also given in Section \ref{sec:SOC}.

\begin{prop}
\label{prop:reg:v,rho}
   Assume that $(\bd{\eta}, \bd{v}, \bd{\rho})$ is a solution to \eqref{eq:nutProp3.2}--\eqref{eq:vtrhotProp3.2} around a solution $(\bd{\nu}, \bd{\gamma},\bd{u})$ of \eqref{eq:NOCfornut2Avril}-\eqref{eq:NOCforutgammat2Avril}. Then, $t \mapsto \eta_t \in \mathcal{M}_{1+|a|^3}(A)$ is continuous and $\bd{v}$ and $\nabla_x \bd{v}$ are jointly (in $(t,x,y)$) $\mathcal{C}^1$.
\end{prop}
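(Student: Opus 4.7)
The strategy closely mirrors Proposition \ref{prop:regularityfromOC}: I would use the explicit formula \eqref{eq:nutProp3.2} for $\bd\eta$ together with the linear transport equation \eqref{eq:linearizedeqvt04/03} for $\bd v$, and exploit the regularity already established for the reference triple $(\bd\nu,\bd\gamma,\bd u)$ (via Proposition \ref{prop:regularityfromOC}) and for the auxiliary variables $(\bd\rho,\bd v)$ (via Propositions \ref{prop:differentiatinggammalambda09/02} and \ref{prop:SolnVt19/02}). The key analytical tool throughout is the bilinear estimate of Lemma \ref{lem:convenientexpress}.

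The first step is to establish the continuity of $t \mapsto \eta_t$ in $\mathcal{M}_{1+|a|^3}(A)$. Writing
\begin{equation*}
\eta_t(a) = -\tfrac{1}{\epsilon}\nu_t(a) \bigl[ F_t(a) - c_t \bigr], \qquad F_t(a) := \bigl\langle b(\cdot,a) \cdot \nabla_x u_t ; \rho_t \bigr\rangle + \int_{\R^{d_1}\times\R^{d_2}} b(x,a) \cdot \nabla_x v_t(x,y)\, d\gamma_t(x,y),
\end{equation*}
the second part of Lemma \ref{lem:convenientexpress} yields a bound of the form $\norm{F_{t'}-F_t}_{\mathcal{C}^0_2(A)} \lesssim |t'-t|^{1/2}$, after inserting the uniform-in-$t$ bounds on $\nabla_x u_t$, $\nabla_x v_t$ in $\mathcal{C}^1_1$ and on $\rho_t$ in $(\mathcal{C}^1_2)^*$, together with their $1/2$--Hölder continuity in time. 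On the other hand, the pointwise estimates \eqref{eq:bound:nu(a):exp(-ell(a))} and \eqref{eq:estimaterelativeentropy25/12} give $\nu_t(a) \leq \Lambda e^{-\ell(a)/2}$ and $|\nu_{t'}(a)-\nu_t(a)| \leq \Lambda e^{-3\ell(a)/4}|t'-t|$. Decomposing
\begin{equation*}
\eta_{t'}(a)-\eta_t(a) = -\tfrac{1}{\epsilon}\bigl[(\nu_{t'}(a)-\nu_t(a))(F_{t'}(a)-c_{t'}) + \nu_t(a)(F_{t'}(a)-F_t(a)) - \nu_t(a)(c_{t'}-c_t)\bigr],
\end{equation*}
multiplying by $(1+|a|^3)$ and integrating, the super-quartic growth of $\ell$ forces $\nu_t$ to dominate all polynomial factors and yields a Hölder-in-$t$ bound in $\mathcal{M}_{1+|a|^3}(A)$. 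The continuity of the scalar $c_t$ is obtained from the identity $\int \eta_t(a)\,da=0$, which gives $c_t = \int F_t(a)\nu_t(a)\,da$ and reduces to the same estimates.

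For the joint $\mathcal{C}^1$ regularity of $\bd v$ and $\nabla_x \bd v$, I would exploit the PDE \eqref{eq:linearizedeqvt04/03}. The spatial $\mathcal{C}^2$ regularity of $v_t$, uniform in $t$, is given by Proposition \ref{prop:SolnVt19/02}; combined with the representation formula \eqref{eq:representationformulavt} and the $\mathcal{C}^2$ regularity in $x$ of the flow $(t,x) \mapsto X^{t,x}_s$, this gives joint continuity of $\nabla_x^j \nabla_y^k v$ in $(t,x,y)$ for $j+k \leq 2$, once the continuity of $\eta_s$ in $\mathcal{M}_{1+|a|^3}(A)$ is used together with the growth bounds $|\nabla_x^k b(\cdot,a)| \lesssim (1+|a|^{k+1})$. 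Time derivatives are then recovered directly from the PDE: the identity
\begin{equation*}
\partial_t v_t(x,y) = -b(x,\nu_t)\cdot \nabla_x v_t(x,y) - b(x,\eta_t)\cdot \nabla_x u_t(x,y),
\end{equation*}
combined with the continuity in $t$ of $\nu_t$ and $\eta_t$ in the appropriate weighted measure spaces, shows that $\partial_t v$ is jointly continuous in $(t,x,y)$; differentiating this identity once more in $x$ produces the analogous conclusion for $\partial_t \nabla_x v$, since each resulting factor is either $\mathcal{C}^1$ spatially with norm continuous in $t$, or pulled back from $\nu_t,\eta_t$ whose continuity has already been established.

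The main technical obstacle is the careful bookkeeping of polynomial weights in the first step: the function $F_t$ grows like $|a|^2$, the weight $1+|a|^3$ adds a further factor, and the differences $F_{t'}-F_t$ involve four independent sources (the variations of $u$, $\rho$, $v$, $\gamma$), each requiring the appropriate instance of Lemma \ref{lem:convenientexpress} with different parameters $(k_1,k_2,p_1,p_2,q)$. It is precisely here that the strong coercivity assumption \eqref{eq:convexityassumptionL} on $\ell$---forcing $\nu_t$ to decay faster than any polynomial---plays the essential role, by absorbing all polynomial growth and allowing the estimate to close uniformly in $t$.
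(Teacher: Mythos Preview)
Your approach is essentially the same as the paper's: establish continuity of $t\mapsto\eta_t$ in $\mathcal{M}_{1+|a|^3}(A)$ from the explicit formula \eqref{eq:nutProp3.2} by combining the exponential decay of $\nu_t$ with continuity estimates on the bracket terms, then feed this into Proposition \ref{prop:additionalregularityvt} (or its proof via the representation formula and the PDE) to obtain the joint $\mathcal{C}^1$ regularity of $\bd v$ and $\nabla_x\bd v$. One small imprecision: the clean $1/2$--H\"older bound $\|F_{t'}-F_t\|_{\mathcal{C}^0_2(A)}\lesssim|t'-t|^{1/2}$ is optimistic---the available time regularity of $\nabla_x u_t$ in $\mathcal{C}^1_2$ (from Proposition \ref{prop:regularityfromOC}) is only continuity, not H\"older, and the growth in $a$ comes out cubic rather than quadratic for the term involving $\|\rho_{t'}-\rho_t\|_{(\mathcal{C}^2_2)^*}$; but since continuity in $\mathcal{M}_{1+|a|^3}(A)$ is all that is needed and $\nu_t$ decays super-polynomially, this does not affect the argument.
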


\subsection{Stability conditions}

\label{sec:subseStableSol}
Based on these second-order conditions, it is natural to introduce the notion of stable solutions characterized by the unique solvability of the linearized system. The interpretation is that, for a stable solution $\boldsymbol \nu^*$ for $J  ( (t_0,\gamma_0), \cdot  )$, we have 
$$ \frac{d^2}{d\lambda^2}\Big|_{\lambda =0} J \bigl ( (t_0,\gamma_0), \bd\nu^* + \lambda \boldsymbol{\eta} \bigr) >0$$
for any non trivial (i.e., $\boldsymbol{\eta} \neq 0$) perturbation $\boldsymbol{\eta} \in \mathcal{A}^\ell(t_0)$ such that $\bd{\nu}^* + \lambda \bd{\eta}$ belongs to $\mathcal{A}(t_0)$ for small enough $\lambda$.

\begin{defn}
    For $\gamma_0 \in \mathcal{P}_3(\R^{d_1} \times \R^{d_2})$ and $t_0 \in [0,T)$, 
    we say that $\boldsymbol \nu^* = (\nu^*_t)_{t_0 \leq t \leq T}$ is a stable minimizer of $J((t_0,\gamma_0),\cdot)$ if $(0,0,0)$ is the only solution to the linearized system \eqref{eq:nutProp3.2}--\eqref{eq:vtrhotProp3.2} (starting from $t_0$, around the solution $(\bd{\nu}^*, \bd{\gamma}^*, \bd{u}^*)$ solution to \eqref{eq:NOCfornut2Avril}--\eqref{eq:NOCforutgammat2Avril}), with uniqueness being understood 
    among the triples $({\boldsymbol \eta},{\boldsymbol \rho},{\boldsymbol v})$
    in 
    $\mathcal{A}^\ell(t_0) \times \mathcal{R}(t_0) \times \mathcal{C}([t_0,T] , \mathcal{C}^1_{1})$.
\label{defn:stablesolutions}
\end{defn}

We can now define properly the set 
${\mathcal O}$ introduced in Subsection 
\ref{subse:2.4}, on which all our main
results are constructed. In words,
${\mathcal O}$
is the set of pairs $(t_0,\gamma_0)$ 
of initial time $t_0$ and initial distribution of  features 
$\gamma_0$ for which there is a unique stable optimal solution, namely
\begin{equation} 
\mathcal{O} := \Bigl \{ (t_0,\gamma_0) \in [0,T] \times \mathcal{P}_{3}(\R^{d_1} \times \R^{d_2});  \mbox{  there is a unique stable minimum for } J\bigl((t_0,\gamma_0),\cdot\bigr) \Bigr \}.
\label{eq:elfamososetO}
\end{equation}
We emphasise that by `unique stable', we mean that $J((t_0,\gamma_0),\cdot)$ admits a unique solution and that this solution is stable. At this stage, it is not clear that the set $\mathcal{O}$ is not empty (which property will be proven in Section \ref{sec:TheUniversalAppox}).

\section{Perturbation Analysis around Minimizers}

\label{sec:preparatorywork}

The goal of this Section is to provide quantitative and qualitative stability  properties of the cost $J \bigl( (t_0,\gamma_0), \bd{\nu})$ as well as the couple $(\bd{\gamma}, \bd{u})$ solution to \eqref{eq:curvemulti28/02} and the measure $\Gamma[\bd{\nu}]$ solution to \eqref{eq:defnGammaNut09/01}  with respect to perturbations of the control $\bd{\nu}$ around  an optimal control $\bd{\nu}^*$. The results will be necessary to  prove that the set $\mathcal{O}$ is open in Section \ref{sec:TheUniversalAppox} and to prove the PL inequality in Section \ref{se:5}. The technical proof of the compactness argument of Proposition \ref{prop:preliminaryworkprop1-28/02} and the subsequential convergence statements of Propositions \ref{prop:convergencektn03/03} and  \ref{prop:cvgcesumofIs03/03} are postponed to Section \ref{sec:ProofsofSection3}.

Throughout this section we consider the following situation. We take $(t_0,\gamma_0) \in [0,T] \times \mathcal{P}_3(\R^{d_1} \times \R^{d_2})$ and $\bd{\nu}^*$ an optimal control for $J \bigl( (t_0,\gamma_0), \cdot \bigr)$ with associated optimal trajectory and multiplier $ (\bd{\gamma}^*, \bd{u}^*)$. We also consider a sequence of tuples $(t_0^n,\gamma_0^n, \bd{\nu}^{*,n}, \bd{\nu}^n)_{n \in \mathbb{N}}$ satisfying the following properties.

\noindent {\bf Property (${\mathcal Q_0}$).}
\begin{enumerate}[(i)]
\item The sequence $(t_0^n,\gamma_0^n)_{n \in \mathbb{N}}$ converges to $(t_0,\gamma_0)$ in $[0,T] \times \mathcal{P}_3(\R^{d_1} \times \R^{d_2})$;
\item For each $n \in {\mathbb N}$, $\bd{\nu}^{*,n}$ is an optimal solution for $J \bigl( (t_0^n,\gamma_0^n), \cdot \bigr)$ with associated trajectory and multiplier $(\bd{\gamma}^{*,n}, \bd{u}^{*,n})$;
\item The sequence $(\bd{\nu}^{*,n}, \bd{\gamma}^{*,n}, \bd{u}^{*,n})_{n \in \mathbb{N}}$ converges to $(\bd{\nu}^*,\bd{\gamma}^*, \bd{v}^*)$ in the sense of Lemma \ref{lem:convergenceifuniqueness};
\item For each $n \in \mathbb{N}$, $\bd{\nu}^n$ is an element of $\mathcal{A}(t_0^n)$ with associated curve and multiplier $(\bd{\gamma}^n, \bd{u}^n)$ solution to \eqref{eq:curvemulti28/02}  starting from $(t_0^n,\gamma_0^n)$ and \textbf{distinct} from $\bd{\nu}^{*,n}$;
\item The following convergence holds
$$ \lim_{n \rightarrow +\infty} \int_{t_0^n}^T \mathcal{E}(\nu_t^n| \nu_t^{*,n})dt =0. $$ 
\end{enumerate}

\subsection{ Quantitative Properties}
Thanks to items $i)$ and $ii)$, by Proposition \ref{prop:regularityfromOC}, Proposition \ref{prop:SolutionBackxard16Sept}
and Lemma \ref{lem:compactnessContinuityEquation24/06} together with the growth assumption on $\ell$,
we obtain 
\begin{lem}
\label{lem:Q2-28/02}
In the setting of {\bf Property $(\mathcal{Q}_0)$}, 
There exist constants $C,c>0$ such that, for any $n \in \mathbb{N}$,
\begin{align}
\notag \sup_{t \in [t_0^n,T]} \int_A e^{c(1+|a|^4)} &d\nu_t^{*,n}(a) +  \norm{ \bd{\nu}^{*,n}}_{\mathcal{D}(t_0^n)} \\
&+ \sup_{t \in [t_0^n,T]} \norm{ \nabla_x u_t^{*,n}}_{\mathcal{C}^2_1} + \sup_{t \in [t_0^n,T]} \int_{\R^{d_1} \times \R^{d_2}} (|x|^2 +|y|^2)^{3/2} d\gamma_t^{*,n}(x,y) \leq C.
\label{eq:uniformestimatemiddleoftheproof}
\end{align}
\end{lem}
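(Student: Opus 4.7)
The plan is to unfold the claim into a chain of four uniform bounds, starting from the $\mathcal{P}_3$-convergence of the initial data and propagating through optimality. First, from convergence of $(\gamma_0^n)_n$ in $\mathcal{P}_3$, the third-order moments $\int(|x|^2+|y|^2)^{3/2}d\gamma_0^n$ (and a fortiori the second-order ones) are uniformly bounded in $n$. Next, optimality of $\bd{\nu}^{*,n}$ for $J((t_0^n,\gamma_0^n),\cdot)$ is exploited through the quantitative refinement of Proposition \ref{prop:regularityfromOC} mentioned immediately after its statement (and deferred to Lemma \ref{lem:7.4-04/03}): on optimal controls, $\int_{t_0^n}^T \mathcal{E}(\nu_t^{*,n}|\nu^\infty)dt \leq C(1+\int(|x|^2+|y|^2)d\gamma_0^n)$, which is uniformly bounded by the first step. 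Combined with \eqref{eq:boundfromPinsker16Sept:sec:3}, this yields uniform bounds on $\norm{\bd{\nu}^{*,n}}_{\mathcal{D}(t_0^n)}$ and on $\int_{t_0^n}^T \int_A |a|^4 d\nu_t^{*,n}(a)dt$.

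For the exponential moment of $\nu^{*,n}_t$, I would invoke the pointwise upper bound \eqref{eq:bound:nu(a):exp(-ell(a))}, namely $\nu_t^{*,n}(a) \leq \Lambda_{\bd{\nu}^{*,n},\gamma_0^n}\, e^{-\ell(a)/2}$, whose multiplicative constant is a non-decreasing function of $\int_{t_0^n}^T \int_A |a|^4 d\nu_t^{*,n}dt + \int(|x|+|y|)d\gamma_0^n$, now uniformly controlled. Since the coercivity condition \eqref{eq:convexityassumptionL} on $\ell$ implies $\ell(a) \geq c'|a|^4 - C'$ for some $c',C'>0$, choosing any $c < c'/4$ makes the integral $\int_A e^{c(1+|a|^4)-\ell(a)/2}da$ finite, which delivers the desired uniform exponential moment. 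The remaining two terms then follow directly by plugging the uniform bound on $\norm{\bd{\nu}^{*,n}}_{\mathcal{D}(t_0^n)}$ into Proposition \ref{prop:SolutionBackxard16Sept}, which controls $\sup_t\norm{u_t^{*,n}}_{\mathcal{C}^3_{2,1}}$ and a fortiori $\sup_t\norm{\nabla_x u_t^{*,n}}_{\mathcal{C}^2_1}$, and into \eqref{eq:lem:compactnessContinuityEquation24/06:statement:48} with $p=3$, which, combined with the uniform $\mathcal{P}_3$-moment from the first step, bounds $\sup_t \int(|x|^2+|y|^2)^{3/2}d\gamma_t^{*,n}$.

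No substantial obstacle is anticipated; the proof is essentially a bookkeeping exercise assembling the quantitative estimates already isolated in Section \ref{se:3}. The only point requiring some vigilance is the choice of the exponential rate $c$, which must be strictly smaller than the coefficient governing the $|a|^4$-growth of $\ell$ in order to preserve integrability in the exponential-moment step.
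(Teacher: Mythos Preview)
Your proposal is correct and follows essentially the same route as the paper, which gives only a one-line justification citing Proposition \ref{prop:regularityfromOC}, Proposition \ref{prop:SolutionBackxard16Sept}, Lemma \ref{lem:compactnessContinuityEquation24/06}, and the growth assumption on $\ell$. Your write-up simply unpacks these citations in the natural order, and your handling of the exponential moment via \eqref{eq:bound:nu(a):exp(-ell(a))} and the quartic lower bound on $\ell$ is exactly what the paper intends by ``the growth assumption on $\ell$''.
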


Define $(\lambda_n , \bd{\eta}^n,\bd{\rho}^n, \bd{v}^n)_{n \in \mathbb{N}} $ by
\begin{equation} 
\lambda_n^2 := \int_{t_0^n}^T \mathcal{E}(\nu_t^n|\nu_t^{*,n})dt, \quad \bd{\eta}^n := \frac{\bd{\nu}^n - \bd{\nu}^{*,n}}{\lambda_n}, \quad  \bd{\rho}^n := \frac{\bd{\gamma}^n - \bd{\gamma}^{*,n}}{\lambda_n}, \quad \bd{v}^n := \frac{\bd{u}^n - \bd{u}^{*,n}}{\lambda_n}. 
\label{eq:normalizedvariables03/03}
\end{equation} 
Notice that, since $\bd{\nu}^n \neq \bd{\nu}^{*,n}$, $\lambda_n >0$ and the quantities above are well-defined.

It is convenient to introduce the auxiliary variable $\bd{k}^n : [t_0^n,T] \times  A \rightarrow \R$ defined, for all $n \in \mathbb{N}$ and all $(t,a) \in [t_0^n,T] \times A$ by 
\begin{equation}
    k_t^n(a) :=  \frac{1}{\epsilon} \int_{\R^{d_1} \times \R^{d_2}} b(x,a) \cdot d(\nabla_x v_t^n \gamma_t^n + \nabla_x u^{*,n}_t \rho_t^n)(x,y).
\label{eq:ktn28/02}
\end{equation}
Notice that $k_t^n $ is nothing but the normalized difference between the arguments appearing in the exponentials defining $\bd{\nu}^{*,n}$ and $\Gamma[\bd{\nu}^{n}]$, ie,
$$ k_t^n (a) = \frac{1}{\lambda_n}\Bigl[  \frac{1}{\epsilon}\int_{\R^{d_1} \times \R^{d_2}} b(x,a) \cdot \nabla_xu_t^n(x,y) d\gamma_t^n(x,y) - \frac{1}{\epsilon} \int_{\R^{d_1} \times \R^{d_2}} b(x,a) \cdot \nabla_xu_t^{*,n}(x,y) d\gamma_t^{*,n}(x,y) \Bigr]. $$
We can express the cost of $\bd{\nu}^n$ as well as the Gibbs measure $\Gamma_t[\bd \nu^n]$ defined in \eqref{eq:defnGammaNut09/01} with respect to these new variables and we have
\begin{lem}
    In the setting of {\bf Property $(\mathcal{Q}_0)$} and with the variables introduced in  \eqref{eq:normalizedvariables03/03}, it holds, for all $n \in \mathbb{N}$,
\begin{equation}
 J \bigl( (t_0^n,\gamma_0^n),{\boldsymbol \nu}^n \bigr) - J \bigl( (t_0^n,\gamma_0^n), \boldsymbol \nu^{*,n} \bigr) = \epsilon \lambda_n^2  + \lambda_n^2 \int_{t_0^n}^T \int_{\R^{d_1} \times \R^{d_2}} b(x,\eta_t^n) \cdot \nabla_x u^{*,n}_t (x,y) d\rho_t^n(x,y)dt,  \label{eq:rewritingdeltaJsSe3}
\end{equation}
and, for all $n \in \mathbb{N}$ and all $t \in [t^n_0,T]$,
\begin{equation}
\Gamma_t[\bd{\nu}^n] \propto \nu_t^{*,n}  e^{ -\lambda_n k_t^n}.
\label{eq:rewritinggammanutn28/02}
\end{equation}
\label{lem:rewriting02/02}
\end{lem}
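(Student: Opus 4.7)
Both identities are essentially algebraic consequences of the Gibbs form of $\nu_t^{*,n}$ from Theorem~\ref{thm:OCThm27/06} and of the adjoint duality between $\bd u^{*,n}$ and $\bd\gamma^n$. I would start with \eqref{eq:rewritinggammanutn28/02}, which is almost a tautology: by \eqref{eq:defnGammaNut09/01} and \eqref{eq:NOCfornut2Avril}, the ratio $\Gamma_t[\bd\nu^n]/\nu_t^{*,n}$ equals, up to a normalization depending on $t$ but not on $a$,
$$\exp\Bigl(-\frac{1}{\epsilon}\int_{\R^{d_1}\times\R^{d_2}} b(x,a)\cdot \bigl[\nabla_x u_t^n\,d\gamma_t^n - \nabla_x u_t^{*,n}\,d\gamma_t^{*,n}\bigr](x,y)\Bigr).$$
Telescoping the bracketed difference as $\lambda_n(\nabla_x v_t^n\,d\gamma_t^n + \nabla_x u_t^{*,n}\,d\rho_t^n)$ and recognizing \eqref{eq:ktn28/02} yields the claim.

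For \eqref{eq:rewritingdeltaJsSe3}, I would split the cost difference into a terminal and an entropic part. For the terminal part, I would differentiate $t\mapsto \int u_t^{*,n}\,d\gamma_t^n$ in time: the equations for $\bd u^{*,n}$ (backward, \eqref{eq:adjoint:equation}, driven by $\bd\nu^{*,n}$) and $\bd\gamma^n$ (forward continuity, driven by $\bd\nu^n$) combine so that $\partial_t u_t^{*,n}$ and the $b(\cdot,\nu_t^{*,n})\cdot\nabla_x u_t^{*,n}$ contribution cancel, leaving $\lambda_n\int b(x,\eta_t^n)\cdot\nabla_x u_t^{*,n}\,d\gamma_t^n$. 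Comparing with the analogous vanishing derivative of $\int u_t^{*,n}\,d\gamma_t^{*,n}$ and using $u_T^{*,n}=L$, one obtains
$$\int L\,d(\gamma_T^n - \gamma_T^{*,n}) = \lambda_n\int_{t_0^n}^T\!\!\int_{\R^{d_1}\times\R^{d_2}} b(x,\eta_t^n)\cdot\nabla_x u_t^{*,n}(x,y)\,d\gamma_t^n(x,y)\,dt.$$

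For the entropic part, I would apply the change-of-reference identity $\mathcal{E}(\mu|\nu^\infty) = \mathcal{E}(\mu|\nu_t^{*,n}) + \int\log(\nu_t^{*,n}/\nu^\infty)\,d\mu$ to $\mu=\nu_t^n$ and $\mu=\nu_t^{*,n}$. By \eqref{eq:NOCfornut2Avril}, $\log(\nu_t^{*,n}/\nu^\infty)(a)$ equals $-\epsilon^{-1}\int b(x,a)\cdot\nabla_x u_t^{*,n}\,d\gamma_t^{*,n}$ up to an $a$-independent constant whose integral against the zero-mass signed measure $\nu_t^n - \nu_t^{*,n}$ vanishes, so
$$\mathcal{E}(\nu_t^n|\nu^\infty) - \mathcal{E}(\nu_t^{*,n}|\nu^\infty) = \mathcal{E}(\nu_t^n|\nu_t^{*,n}) - \frac{\lambda_n}{\epsilon}\int_{\R^{d_1}\times\R^{d_2}} b(x,\eta_t^n)\cdot\nabla_x u_t^{*,n}(x,y)\,d\gamma_t^{*,n}(x,y).$$
Summing the terminal contribution with $\epsilon$ times the $t$-integral of the above, the two $b(\cdot,\eta_t^n)\cdot\nabla_x u_t^{*,n}$ integrals fuse into one against $d\gamma_t^n - d\gamma_t^{*,n} = \lambda_n\,d\rho_t^n$, while the surviving $\epsilon\int_{t_0^n}^T\mathcal{E}(\nu_t^n|\nu_t^{*,n})\,dt$ equals $\epsilon\lambda_n^2$ by the very definition \eqref{eq:normalizedvariables03/03} of $\lambda_n$; this is precisely \eqref{eq:rewritingdeltaJsSe3}.

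The only technical point to secure is the differentiation of $t\mapsto\int u_t^{*,n}\,d\gamma_t^n$ (the starred analog is easier). This rests on Proposition~\ref{prop:regularityfromOC}, which provides continuity of $t\mapsto u_t^{*,n}\in\mathcal{C}^1_2$ together with boundedness of $\partial_t u_t^{*,n},\nabla_x u_t^{*,n}\in\mathcal{C}^2_1$, combined with the polynomially-growing test-function extension of \eqref{eq:CE:def} mentioned just after Proposition~\ref{lem:compactnessContinuityEquation24/06}, which is applicable since $\gamma_0^n\in\mathcal{P}_3$. Integrability of the various $b$-weighted integrals is then handled by Lemma~\ref{lem:convenientexpress} together with the uniform moment bounds of Lemma~\ref{lem:Q2-28/02}.
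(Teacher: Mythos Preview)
Your proposal is correct and follows essentially the same approach as the paper: the change-of-reference identity for the entropy is exactly the paper's elementary relation $a\log\tfrac{a}{c}-b\log\tfrac{b}{c}=a\log\tfrac{a}{b}+(a-b)\log\tfrac{b}{c}$, your time-differentiation of $\int u_t^{*,n}\,d\gamma_t^n$ is precisely the content of the duality Lemma~\ref{lem:duality:1,2:F} that the paper invokes, and your derivation of \eqref{eq:rewritinggammanutn28/02} matches the paper's Step~2 verbatim.
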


\begin{proof}[Proof of Lemma \ref{lem:rewriting02/02}]
\textit{Step 1. Expression for the costs.} We start with the proof of \eqref{eq:rewritingdeltaJsSe3}. We first use the elementary relation 
$$ a \log \frac{a}{c} - b \log \frac{b}{c} = a \log \frac{a}{b} + (a-b) \log \frac{b}{c} \quad \mbox{for all }  a,b,c \in \R^*_+  $$
to obtain
\begin{align}
\notag     J \bigl( (t_0^n,\gamma_0^n),{\boldsymbol \nu}^n \bigr) &- J \bigl( (t_0^n,\gamma_0^n), \boldsymbol \nu^{*,n} \bigr) = \epsilon \int_{t_0^n}^T \int_{A} \log \frac{\nu_t^n(a)}{\nu^{*,n}_t(a)} d\nu_t^n(a)dt \\
 &+ \epsilon \int_{t_0^n}^T \int_A\log \frac{\nu_t^{*,n}(a) }{\nu^{\infty}(a)}d(\nu_t^n - \nu_t^{*,n})(a) dt + \int_{\R^{d_1} \times \R^{d_2}} L(x,y) d(\gamma_T^n - \gamma_T^{*,n})(x,y). \label{eq:27juilletNumber0-02/03}
\end{align}
On the one hand, the explicit expression
\eqref{eq:NOCfornut2Avril}
 for $\nu_t^{*,n}$, for $t \in [t_0^n,T]$, leads to
\begin{equation}
 \epsilon \log \frac{\nu_t^{*,n}(a)}{\nu^{\infty}(a)}  = - \int_{\R^{d_1} \times \R^{d_2}} b(x, a) \cdot \nabla_x u_t^{*,n}(x,y) d \gamma_t^{*,n}(x,y) + c_t^n, \quad a \in A,  
\label{eq:27juilletNumber1-02/03}
\end{equation}
where $c_t^n$ is independent from $a$. On the other hand, using the equations satisfied by $\boldsymbol u^{*,n}$, $\boldsymbol \gamma^{*,n}$ and ${\boldsymbol \gamma}^n$, 
see 
\eqref{eq:curvemulti28/02}, we have, by Lemma \ref{lem:duality:1,2:F},
\begin{equation} 
\int_{\R^{d_1} \times \R^{d_2}} L(x,y) d(\gamma_T^n - \gamma_T^{*,n})(x,y) =  \int_{t_0^n}^T \int_{\R^{d_1} \times \R^{d_2}} b(x,\nu_t^n-\nu_t^{*,n}) \cdot \nabla_x u_t^{*,n}(x,y) d\gamma_t^n(x,y) dt. 
\label{eq:27juilletNumber2-02/03}
\end{equation}

Inserting \eqref{eq:27juilletNumber1-02/03} and \eqref{eq:27juilletNumber2-02/03} in \eqref{eq:27juilletNumber0-02/03} 
and recalling 
that $\nu^n_t(A) = \nu^{*,n}_t(A)=1$ for  every 
$t \in [t_0^n,T]$, we obtain
\begin{align} 
\notag J \bigl( (t_0^n,\gamma_0^n),{\boldsymbol \nu}^n \bigr) - J \bigl( (t_0^n,\gamma_0^n), &\boldsymbol \nu^{*,n} \bigr) = \epsilon \int_{t_0^n}^T \int_{A} \log \frac{\nu_t^n(a)}{\nu^{*,n}_t(a)} d\nu_t^n(a)dt \\
& + \int_{t_0^n}^T \int_{\R^{d_1} \times \R^{d_2}} b(x,\nu^n_t - \nu^{*,n}_t) \cdot \nabla_x u^{*,n}_t (x,y) d(\gamma^n_t - \gamma^{*,n}_t)(x,y)dt.  \label{eq:rewritingdeltaJs}
\end{align}
Recalling the definition of $\lambda_n, \bd{\eta}^n$ and $\bd{\rho}^n$ we obtain \eqref{eq:rewritingdeltaJsSe3}.

\textit{Step 2. Expression for $\Gamma[\bd{\nu}^n]$.} We go on with the expression for the probability measure $\Gamma_t[\bd{\nu}^n]$ defined in \eqref{eq:defnGammaNut09/01} for some $t$ in $[t_0^n,T]$. Recalling the explicit expression \eqref{eq:NOCfornut2Avril} for $\nu_t^{*,n}$ we know that there are two constants $z_t^n, z_t^{*,n}$ such that
$$ \Gamma_t[\bd{\nu}^n](a) = \frac{1}{z_t^n} \exp \Bigl(-\ell(a) -\frac{1}{\epsilon} \int_{\R^{d_1} \times \R^{d_2}} b(x,a) \cdot \nabla_x u_t^n (x,y) d\gamma_t^n(x,y) \Bigr),  $$
$$ \nu_t^{*,n}(a) = \frac{1}{z_t^{*,n}} \Bigl(-\ell(a) -\frac{1}{\epsilon} \int_{\R^{d_1} \times \R^{d_2}} b(x,a) \cdot \nabla_x u_t^{*,n} (x,y) d\gamma_t^{*,n}(x,y) \Bigr),    $$
and, as a consequence, combining these two expressions
\begin{align*}
    \Gamma_t[\bd{\nu}^n](a) &= \frac{z_t^{*,n}}{z_t^n} \nu_t^{*,n}(a) \exp \Bigl( -\frac{1}{\epsilon} \int_{\R^{d_1} \times \R^{d_2}} b(x,a) \cdot d \bigl( \nabla_x u_t^n \gamma_t^n - \nabla_x u_t^{*,n} \gamma_t^{*,n} \bigr) (x,y) \Bigr). 
\end{align*}
Rewriting 
$$ \nabla_x u_t^n \gamma_t^n - \nabla_x u_t^{*,n} \gamma_t^{*,n} = (\nabla_x u_t^n - \nabla_x u_t^{*,n}) \gamma_t^n + \nabla_x u_t^{*,n} (\gamma_t^n - \gamma_t^{*,n}) $$
and recalling that $\rho_t^n = \lambda_n^{-1}( \gamma_t^n - \gamma_t^{*,n}) $, $v_t^n = \lambda_n^{-1}(u_t^n - u_t^{*,n})$ we obtain
$$ \Gamma_t[\bd{\nu}^n] (a) = \frac{z_t^{*,n}}{z_t^n} \nu_t^{*,n}(a) e^{-\lambda_n k_t^n(a)}, $$
with $k_t^n $ defined by  \eqref{eq:ktn28/02}.
\end{proof}

By making the difference between the system
\eqref{eq:curvemulti28/02}
(which is satisfied by 
$({\boldsymbol \gamma}^n,{\boldsymbol u}^n)$)
and 
the system \eqref{eq:NOCforutgammat2Avril}
(which is satisfied by
$(\boldsymbol \gamma^{*,n},\boldsymbol u^{*,n})$)
and then by dividing by $\lambda_n$, we get 
\begin{equation}
\left \{
\begin{array}{ll}
\displaystyle -\partial_t {v}_t^n(x,y) -  b\bigl(x,{\nu}_t^n\bigr)  \cdot \nabla_x {v}_t^n(x,y) = b (x, \eta_t^n) \cdot \nabla_x u^{*,n}_t(x,y)
\ &\textrm{\rm in}
\  [t_0^n,T] \times \R^{d_1} \times \R^{d_2},
\\
\displaystyle
\qquad 
 {v}_T^n(x,y) = 0 \ &\textrm{\rm in} \ \R^{d_1} \times \R^{d_2};
\\
\displaystyle \partial_t {\rho}_t^n
+ \div_x \bigl(  b(x,{\nu}_t^n)  {\rho}_t^n  \bigr) = 
 - \div_x \bigl( b(x,\eta_t^n) \gamma^{*,n}_t \bigr) \
 & \textrm{\rm in}
 \ (t_0^n,T) \times  \R^{d_1} \times \R^{d_2}, 
\\
\displaystyle \qquad  {\rho}_{t_0^n}^n = 0
\ &\textrm{\rm in} 
 \ 
\R^{d_1} \times \R^{d_2}.
\end{array}
\right.
\label{eq:systemrhonvn29Mai-28/02}
\end{equation}
Invoking Lemma  \ref{lem:PinskerSecFOC28/02} and recalling the definition of $\lambda_n$ we can control $\bd{\nu}^n$ in $\mathcal{D}(t_0^n)$. Then, applying Lemma \ref{lem:backthenA.21-09/02} for $\bd{\rho}^n$ and Lemma \ref{lem:diffulambda24/02:13:23} for $\bd{v}^n$   
together with Lemma \ref{lem:Q2-28/02}, we get the following uniform estimate:
\begin{lem}
In the setting of {\bf Property $(\mathcal{Q}_0)$}, there is $C >0$ such that, for all  $n \in \mathbb{N}$,
    $$ \norm{ \bd{\eta}^n}_{\mathcal{D}(t_0^n)} + \norm{ \bd{\rho}^n}_{\mathcal{R}(t_0^n)} + \sup_{t \in [t_0^n,T]} \norm{ v_t^n}_{\mathcal{C}^2_1} + \sup_{t_1 <t_2 \in [t_0^n,T]} \frac{\norm{v_{t_2}^n - v_{t_1}^n}_{\mathcal{C}^1_1}}{\sqrt{t_2-t_1}} + \sup_{t \in [t_0^n,T]} \norm{ k_t^n}_{\mathcal{C}^1_2(A)} \leq C.  $$
\label{lem:timetogotoGallia02/03}
\end{lem}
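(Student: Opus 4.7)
The argument is a direct concatenation of the quantitative stability estimates already recorded in the paper, with Property~$(\mathcal{Q}_0)$ providing the uniformity in $n$ at every stage. Since $\lambda_n \to 0$, I may assume $\lambda_n \leq 1$ for $n$ large, so that $\lambda_n + \lambda_n^2 \leq 2\lambda_n$. I would proceed in three steps, treating in turn $\bd{\eta}^n$, then the pair $(\bd{\rho}^n,\bd{v}^n)$, and finally $\bd{k}^n$.

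\emph{Step 1: control of $\bd{\eta}^n$.} The first step is to bound $\bd{\nu}^n - \bd{\nu}^{*,n}$ in $\mathcal{D}(t_0^n)$ via Lemma~\ref{lem:PinskerSecFOC28/02}, applied at the initial condition $(t_0^n,\gamma_0^n)$ for which $\bd{\nu}^{*,n}$ is optimal:
\begin{equation*}
\norm{\bd{\nu}^n - \bd{\nu}^{*,n}}_{\mathcal{D}(t_0^n)} \leq \Lambda_{\gamma_0^n}\bigl(\lambda_n + \lambda_n^2\bigr),
\end{equation*}
where $\Lambda_{\gamma_0^n}$ depends only on $\int (|x|^2+|y|^2)\,d\gamma_0^n$, which is uniformly bounded by item~$(i)$ of Property~$(\mathcal{Q}_0)$. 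Recalling the definition of $\lambda_n$ in \eqref{eq:normalizedvariables03/03} and dividing by $\lambda_n$ yields a uniform bound on $\norm{\bd{\eta}^n}_{\mathcal{D}(t_0^n)}$; combined with the bound on $\bd{\nu}^{*,n}$ from Lemma~\ref{lem:Q2-28/02} this gives a uniform bound on $\norm{\bd{\nu}^n}_{\mathcal{D}(t_0^n)}$ as well.

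\emph{Step 2: control of $\bd{\rho}^n$ and $\bd{v}^n$.} The pair $(\bd{\rho}^n,\bd{v}^n)$ solves the linearized forward--backward system \eqref{eq:systemrhonvn29Mai-28/02}, with source terms driven by $\bd{\eta}^n$ (against $\bd{\gamma}^{*,n}$ and $\nabla_x \bd{u}^{*,n}$ respectively). Applying Lemma~\ref{lem:backthenA.21-09/02} to the continuity equation for $\bd{\rho}^n$ (in the spirit of Proposition~\ref{prop:differentiatinggammalambda09/02}) and Lemma~\ref{lem:diffulambda24/02:13:23} to the backward transport equation for $\bd{v}^n$ (in the spirit of Proposition~\ref{prop:SolnVt19/02}) produces estimates of the form
\begin{equation*}
\norm{\bd{\rho}^n}_{\mathcal{R}(t_0^n)} + \sup_{t \in [t_0^n,T]} \norm{v_t^n}_{\mathcal{C}^2_1} + \sup_{t_1 < t_2} \frac{\norm{v_{t_2}^n - v_{t_1}^n}_{\mathcal{C}^1_1}}{\sqrt{t_2-t_1}} \leq \Lambda\bigl(\norm{\bd{\nu}^n}_{\mathcal{D}(t_0^n)} + m_3(\gamma_0^n)\bigr) \cdot \norm{\bd{\eta}^n}_{\mathcal{D}(t_0^n)},
\end{equation*}
with the proportionality constants involving also $\sup_t \norm{\nabla_x u_t^{*,n}}_{\mathcal{C}^2_1}$ and the moments $m_3(\gamma_0^n)=\int(|x|^2+|y|^2)^{3/2}d\gamma_0^n$. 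By Step~1, Lemma~\ref{lem:Q2-28/02}, and the $\mathcal{P}_3$-convergence of $\gamma_0^n$, every quantity on the right-hand side is uniformly bounded.

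\emph{Step 3: control of $k_t^n$.} Starting from the definition \eqref{eq:ktn28/02} I would split $k_t^n$ into its two natural pieces and apply the first inequality of Lemma~\ref{lem:convenientexpress} with $(k,q,p)=(1,1,2)$ to each:
\begin{equation*}
\norm{k_t^n}_{\mathcal{C}^1_2(A)} \leq \frac{C_b}{\epsilon}\Bigl(\norm{\nabla_x v_t^n}_{\mathcal{C}^1_1} \norm{\gamma_t^n}_{(\mathcal{C}^1_2)^*} + \norm{\nabla_x u_t^{*,n}}_{\mathcal{C}^1_1} \norm{\rho_t^n}_{(\mathcal{C}^1_2)^*}\Bigr).
\end{equation*}
Each of the four factors is uniformly bounded: $\norm{\nabla_x v_t^n}_{\mathcal{C}^1_1}$ and $\norm{\rho_t^n}_{(\mathcal{C}^1_2)^*}$ by Step~2, $\norm{\nabla_x u_t^{*,n}}_{\mathcal{C}^1_1}$ by Lemma~\ref{lem:Q2-28/02}, and $\norm{\gamma_t^n}_{(\mathcal{C}^1_2)^*}$ by the uniform second-moment bound on $\bd{\gamma}^n$ deduced from Proposition~\ref{lem:compactnessContinuityEquation24/06} (together with the uniform control on $\norm{\bd{\nu}^n}_{\mathcal{D}(t_0^n)}$ and $m_2(\gamma_0^n)$). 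This closes the proof. No serious obstacle is expected here: the only real care required is to verify at each stage that the non-decreasing functions $\Lambda(\cdot)$ returned by the cited stability lemmas are evaluated at arguments that are bounded uniformly in $n$, which in each case follows from Property~$(\mathcal{Q}_0)$ combined with the preceding steps.
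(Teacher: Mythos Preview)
Your proof is correct and follows essentially the same route as the paper: Lemma~\ref{lem:PinskerSecFOC28/02} for $\bd{\eta}^n$, then Lemmas~\ref{lem:backthenA.21-09/02} and~\ref{lem:diffulambda24/02:13:23} for $(\bd{\rho}^n,\bd{v}^n)$, with Lemma~\ref{lem:Q2-28/02} and Property~$(\mathcal{Q}_0)$ supplying the uniform constants. The only cosmetic difference is in Step~3: you apply the first inequality \eqref{eq:firstconvenientexpress02/03} of Lemma~\ref{lem:convenientexpress} separately to the two summands in \eqref{eq:ktn28/02}, whereas the paper applies the second inequality \eqref{eq:secondconvenientexpress02/02} to $k_t^n$ viewed as a normalized difference (with $q=1$, $k_1=0$, $k_2=1$, $p_1=p_2=3$); both routes yield equivalent bounds.
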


\begin{proof}[Proof of Lemma \ref{lem:timetogotoGallia02/03}]
    By Lemma \ref{lem:PinskerSecFOC28/02} and recalling the definition of $\lambda_n$ we can control $\bd{\eta}^n$ in $\mathcal{D}(t_0^n)$. The estimates on $\bd{\rho}^n$ and $\bd{v}^n$ then follow from Lemmas \ref{lem:backthenA.21-09/02} and \ref{lem:diffulambda24/02:13:23} respectively together  with Lemma \ref{lem:Q2-28/02} which guarantees that $\norm{\bd{\nu}^{*,n}}_{\mathcal{D}(t_0^n)}$ and $\int_{\R^{d_1} \times \R^{d_2}} (|x|^2 + |y|^2)^{3/2}d\gamma_0^n(x,y)$ are bounded independently from $n \in \mathbb{N}$. The estimate on $\bd{k}^n$ then follows from the regularity assumptions on $b$ through the formula \eqref{eq:secondconvenientexpress02/02} with $q=1$, $k_1=0$, $p_1=3$, $p_2=3$, $k_2=1$ since it gives
    $$ \norm{k_t^n}_{\mathcal{C}^1_2(A)}  \leq C_b \bigl( \norm{\nabla_x u_t^{*,n} }_{\mathcal{C}^1_2} + \int_{\R^{d_1} \times \R^{d_2}} (|x|^2+|y|^2)^{3/2} d\gamma_t^n(x,y) \bigr) \bigl( \norm{ \rho_t^n}_{(\mathcal{C}^1_3)^*} + \norm{ \nabla_x v_t^n}_{\mathcal{C}^0_3} \bigr). 
    $$
Notice that the term involving $\bd{\gamma^n}$ is bounded independently from $n$ by Lemma \ref{lem:compactnessContinuityEquation24/06}.
\end{proof}

A direct consequence is 
\begin{lem}
    In the setting of {\bf Property $(\mathcal{Q}_0)$}, we have the following expansion of the costs, as $ n \rightarrow +\infty$
    $$ J\bigl( (t_0^n,\gamma_0^n) , \bd{\nu}^n \bigr) = J \bigl( (t_0^n,\gamma_0^n), \bd{\nu}^{*,n} \bigr) + O(\lambda_n^2).  $$
\label{lem:expensioncost02/03}
\end{lem}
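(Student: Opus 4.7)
The plan is to start from the exact identity \eqref{eq:rewritingdeltaJsSe3} provided by Lemma \ref{lem:rewriting02/02}, namely
\begin{equation*}
J\bigl((t_0^n,\gamma_0^n),\bd{\nu}^n\bigr) - J\bigl((t_0^n,\gamma_0^n),\bd{\nu}^{*,n}\bigr)
= \epsilon \lambda_n^2 + \lambda_n^2 \, I_n,
\end{equation*}
where
\begin{equation*}
I_n := \int_{t_0^n}^T \int_{\R^{d_1} \times \R^{d_2}} b(x,\eta_t^n) \cdot \nabla_x u^{*,n}_t(x,y) \, d\rho_t^n(x,y)\,dt.
\end{equation*}
The lemma then follows as soon as $(I_n)_{n \in \mathbb{N}}$ is bounded, so the entire work reduces to establishing a uniform-in-$n$ estimate on $I_n$.

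The key step will be to reverse the order of integration and regard the integrand as a pairing on the $a$-variable. By Fubini (applied pointwise in $t$ and justified through the regularity of the integrand together with the finiteness of $\int_A(1+|a|)\,d|\eta_t^n|(a)$ coming from $\bd\eta^n \in \mathcal{D}(t_0^n)$), we rewrite
\begin{equation*}
I_n = \int_{t_0^n}^T \int_A f_t^n(a)\,d\eta_t^n(a)\,dt, \qquad
f_t^n(a) := \bigl\langle b(\cdot,a) \cdot \nabla_x u^{*,n}_t ; \rho_t^n \bigr\rangle.
\end{equation*}
Applying Lemma \ref{lem:convenientexpress} (in the form \eqref{eq:firstconvenientexpress02/03}) with appropriate indices --- for instance $k=2$, $p=2$, $q=0$, so that $\nabla_x u^{*,n}_t \in \mathcal{C}^2_2$ and $\rho_t^n \in (\mathcal{C}^2_2)^*$ with the duality consistent with Definition \ref{def:R(t_0)} --- we obtain
\begin{equation*}
\norm{f_t^n}_{\mathcal{C}^0_3(A)} \leq C_b \, \norm{\nabla_x u_t^{*,n}}_{\mathcal{C}^2_2} \, \norm{\rho_t^n}_{(\mathcal{C}^2_2)^*}.
\end{equation*}

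Both factors on the right-hand side are uniformly bounded in $n$: the regularity of $\bd{u}^{*,n}$ is ensured by Lemma \ref{lem:Q2-28/02} (combined with Proposition \ref{prop:regularityfromOC}), while the bound on $\bd\rho^n \in \mathcal{R}(t_0^n)$ comes from Lemma \ref{lem:timetogotoGallia02/03}. Hence $\sup_n \sup_{t \in [t_0^n,T]} \norm{f_t^n}_{\mathcal{C}^0_3(A)} \leq C$, and therefore
\begin{equation*}
|I_n| \leq C \int_{t_0^n}^T \int_A (1+|a|^3)\,d|\eta_t^n|(a)\,dt \leq C \norm{\bd\eta^n}_{\mathcal{D}(t_0^n)},
\end{equation*}
where the last inequality uses $(1+|a|^3) \leq (1+|a|^4)$ and the definition of $\norm{\cdot}_{\mathcal{D}(t_0^n)}$. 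Since $\sup_n \norm{\bd\eta^n}_{\mathcal{D}(t_0^n)}$ is finite by Lemma \ref{lem:timetogotoGallia02/03}, we conclude $|I_n| \leq C$, and the desired expansion follows. The only mildly subtle point is the bookkeeping needed to match the functional spaces so that Lemma \ref{lem:convenientexpress} applies with constants that are truly uniform in $n$; once the indices are picked correctly, the rest is a direct chain of the previously established estimates.
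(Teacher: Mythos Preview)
Your proof is correct and follows essentially the same approach as the paper's: both start from the identity \eqref{eq:rewritingdeltaJsSe3}, apply Fubini to rewrite the cross term as an integral in $a$, invoke Lemma \ref{lem:convenientexpress}, and conclude via the uniform bounds of Lemmas \ref{lem:Q2-28/02} and \ref{lem:timetogotoGallia02/03}. The only cosmetic difference is your choice of indices $(k,p,q)=(2,2,0)$ in \eqref{eq:firstconvenientexpress02/03}, whereas the paper uses $(k,p,q)=(1,3,0)$ (yielding growth $(1+|a|^2)$ rather than $(1+|a|^3)$ and using $\norm{\rho_t^n}_{(\mathcal{C}^1_3)^*}$ directly from the $\mathcal{R}(t_0^n)$ norm); both choices are valid.
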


\begin{proof}[Proof of Lemma \ref{lem:expensioncost02/03}]
    We use the expression of the cost from Lemma \ref{lem:rewriting02/02} and then formula \eqref{eq:firstconvenientexpress02/03} with $q=0,k=1,p=3$ which gives, after an application of Fubini's theorem,
\begin{align*} 
\bigl| \int_{\R^{d_1} \times \R^{d_2}} b(x,\eta_t^n) \cdot \nabla_x u_t^{*,n}(x,y) d\rho_t^n(x,y) \Bigr| & \leq \int_{A} \Bigl| \int_{\R^{d_1} \times \R^{d_2}} b(x,a) \cdot \nabla_x u_t^{*,n}(x,y) d\rho_t^n(x,y) \Bigr| d|\eta_t^n|(a) \\
&\leq C_b \norm{ \nabla_x u_t^{*,n}}_{\mathcal{C}^1_3} \norm{ \rho^n_t}_{(\mathcal{C}^1_3)^*} \int_A (1+|a|^2) d|\eta_t^n|(a).
\end{align*}
Integrating in time and recalling the estimates from Lemma \ref{lem:timetogotoGallia02/03} and Lemma \ref{lem:Q2-28/02} we obtain the result.
\end{proof}

\subsection{Weak Limits}

Thanks to the estimates of Lemma \ref{lem:timetogotoGallia02/03} we can also  find weak limit points for the sequence $(\bd{\eta}^n, \bd{\rho}^n,\bd{v}^n)_{n \in \mathbb{N}}$ and  pass to the limit in the system \eqref{eq:systemrhonvn29Mai-28/02}. The proof is given in Section \ref{sec:ProofsofSection3}.
\begin{prop} 
\label{prop:preliminaryworkprop1-28/02}
In the setting of {\bf Property $(\mathcal{Q}_0)$}, there is $(\bd{\eta}, \bd{\rho}, \bd{v}) \in \mathcal{A}^l(t_0) \times \mathcal{R}(t_0) \times \mathcal{C}([t_0,T], \mathcal{C}^1_1)$  such that $(\bd{\eta}^n, \bd{\rho}^n, \bd{v}^n)_{n \in \mathbb{N}}$ converges, up to a sub-sequence, to $(\bd{\eta}, \bd{\rho}, \bd{v})$ in the following sense:
\begin{enumerate}
\item[i)] \textbf{(convergence of $(\bd{\eta}^n)_{n \in \mathbb{N}}$)} 
 For any $[t_1,t_2] \subset (t_0,T]$ and for any $f \in \mathcal{C}([t_1,t_2] \times A)$ 
satisfying $|f_t(a)| \leq C(1+|a|^{3+\delta})$ for some  $(C,\delta) \in \R^+ \times (0,1)$ and all  $ (t,a ) \in [t_1,t_2] \times A$, 
\begin{equation}  
\lim_{ n \rightarrow +\infty} \int_{t_1}^{t_2} \int_A f_t(a) d(\eta_t^n - \eta_t)(a)dt =0;
\label{eq:testfnnutn03/03}
\end{equation}
\item[ii)] \textbf{(convergence of $(\bd{\rho}^n,\bd{v}^n)_{n \in \mathbb{N}}$)} 
\begin{equation}
\lim_{n \rightarrow +\infty} \Bigl \{ \sup_{t \in [t_0 \vee t_0^n,T]} \norm{ \rho_t^n - \rho_t}_{(\mathcal{C}_{2}^2)^*} + \sup_{t \in [t_0 \vee t_0^n,T] } \norm{v_t^n -v_t}_{\mathcal{C}^1_2} \Bigr \} =0.
\label{eq:cvgcerhonvn04/03}
\end{equation}
\end{enumerate}
Moreover, for any such limit point, $(\bd{\rho}, \bd{v})$ is solution to 
\begin{equation}
\label{eq:v:rho:proof:Meta2.2-02/03}
    \left \{
    \begin{array}{ll}
    -\partial_t v_t - b(x,\nu^*_t) \cdot \nabla_x v_t = b (x, \eta_t) \cdot \nabla_x u^*_t \quad &\mbox{\rm in }  [t_0,T] \times \R^{d_1} \times \R^{d_2}, 
    \\
     \qquad v_T = 0 \quad &\mbox{\rm in } \R^{d_1} \times \R^{d_2};
    \\
     \partial_t \rho_t + \div_x \bigl( b(x,\nu^*_t) \rho_t \bigr) = - \div_x ( b(x,\eta_t) \gamma^*_t ) \quad &\mbox{\rm in }  (t_0,T) \times \R^{d_1} \times \R^{d_2}, 
     \\
     \qquad \rho_{t_0} = 0 \quad &\mbox{\rm in } \R^{d_1} \times \R^{d_2}.
    \end{array}
    \right.
\end{equation}
\end{prop}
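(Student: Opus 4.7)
My plan is to proceed in three main steps: (1) extract subsequential limits via compactness from the uniform estimates of Lemma \ref{lem:timetogotoGallia02/03}; (2) pass to the limit in the linearized forward--backward system \eqref{eq:systemrhonvn29Mai-28/02}; and (3) verify that the limit $\bd\eta$ lies in $\mathcal{A}^\ell(t_0)$. Throughout, I extend $\bd\eta^n$, $\bd\rho^n$, $\bd v^n$ to all of $[t_0,T]$ (by a trivial extension when $t_0^n > t_0$, or by restriction otherwise), which is harmless because $t_0^n \to t_0$ and the objects are controlled in the appropriate norms.

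For step (1), for $\bd\eta^n$ the bound $\int_{t_0^n}^T\int_A (1+|a|^4)\,d|\eta_t^n|(a)\,dt \leq C$ furnishes a tight family of finite signed measures on $[t_0,T]\times A$ (with weight $(1+|a|^4)$). Prokhorov yields a weakly convergent subsequence against continuous bounded compactly supported test functions, and the uniform bound on the $(1+|a|^4)$-weighted total variation gives uniform integrability of $(1+|a|^{3+\delta})$, which via a standard truncation/Vitali argument upgrades the convergence to test functions of polynomial growth of order $3+\delta$, as claimed in \eqref{eq:testfnnutn03/03}. For $\bd v^n$, the uniform $\mathcal{C}^2_1$ bound together with the $\sqrt{t_2-t_1}$-Hölder continuity in $\mathcal{C}^1_1$ yields, by an Ascoli--Arzelà argument on compacts combined with the linear-growth tail control, uniform convergence in $\mathcal{C}([t_0,T],\mathcal{C}^1_2)$. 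For $\bd\rho^n$, the uniform $(\mathcal{C}^1_3)^*$-bound at each time, combined with the $\sqrt{t_2-t_1}$-Hölder continuity in $(\mathcal{C}^2_2)^*$, gives equicontinuity in $(\mathcal{C}^2_2)^*$; pointwise-in-$t$ relative compactness comes from the compact inclusion of the unit ball of $(\mathcal{C}^1_3)^*$ into $(\mathcal{C}^2_2)^*$ (which follows from the denseness of $\mathcal{C}^2_2$ in the unit ball of $\mathcal{C}^1_3$ and a standard diagonal argument on a countable dense family of test functions in $\mathcal{C}^2_2$). Ascoli--Arzelà then delivers uniform convergence in $\mathcal{C}([t_0,T],(\mathcal{C}^2_2)^*)$.

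For step (2), I pass to the limit in the weak/integrated formulation of \eqref{eq:systemrhonvn29Mai-28/02}. From $\lambda_n\to 0$ and Lemma \ref{lem:PinskerSecFOC28/02} it follows that $\bd\nu^n\to\bd\nu^*$ in $\mathcal{D}(t_0)$, so $b(x,\nu_t^n) = b(x,\nu_t^{*,n}) + \lambda_n b(x,\eta_t^n)$ converges to $b(x,\nu_t^*)$ in the relevant topology by combining Property~$(\mathcal{Q}_0)(iii)$ with the uniform $\mathcal{D}(t_0^n)$-bound on $\bd\eta^n$. The backward transport equation for $\bd v^n$ is passed to the limit against smooth compactly supported test functions, using the $\mathcal{C}^1_2$-convergence of $\bd v^n$ for the transport term and the extended weak convergence \eqref{eq:testfnnutn03/03} applied to $(t,a)\mapsto b(x,a)\cdot\nabla_x u_t^{*,n}(x,y)$ (at fixed $x,y$, then integrated) for the source term. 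For the $\bd\rho^n$ equation, I test against smooth $\varphi$ satisfying \eqref{eq:testfn1-09/03}--\eqref{eq:testfn2-09/03}, and use the uniform $(\mathcal{C}^2_2)^*$-convergence of $\bd\rho^n$ together with the weak convergence of $\bd\eta^n$ applied to $a\mapsto\int b(x,a)\cdot\nabla_x\varphi_t(x,y)\,d\gamma_t^{*}(x,y)$, which lies in the admissible growth class by Lemma \ref{lem:convenientexpress}.

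Step (3) is straightforward: $\eta_t^n(A)=0$ for each $t$ since $\nu_t^n$ and $\nu_t^{*,n}$ are both probability measures, so testing \eqref{eq:testfnnutn03/03} against $f\equiv 1$ on $[t_1,t_2]$ gives $\int_{t_1}^{t_2}\eta_t(A)\,dt = 0$ for every $[t_1,t_2]\subset(t_0,T]$, hence $\eta_t(A)=0$ for a.e.\ $t$. I expect the main obstacle to be the rigorous justification of the compact embedding $(\mathcal{C}^1_3)^*\hookrightarrow(\mathcal{C}^2_2)^*$ used in the Ascoli--Arzelà argument for $\bd\rho^n$, since these dual spaces are non-reflexive and the compactness must be extracted by hand via a countable dense family of test functions and a diagonal extraction, checked to be consistent with the $\sqrt{t_2-t_1}$-equicontinuity. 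A secondary but delicate point is controlling the passage to the limit in $b(x,\eta_t^n)\cdot\nabla_x u_t^{*,n}$: because $b$ has linear growth in $a$ while $\nabla_x u_t^{*,n}$ has linear growth in $(x,y)$, we need the $(1+|a|^{3+\delta})$-extension of the weak convergence, which is exactly why the uniform $(1+|a|^4)$-moment bound (rather than merely $(1+|a|^2)$) was accumulated in Lemma \ref{lem:timetogotoGallia02/03}.
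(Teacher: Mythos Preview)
Your overall strategy matches the paper's, but there is a genuine gap in Step (1) for $\bd\eta$. Prokhorov gives you a limit signed measure $\bd\eta$ on $[t_0,T]\times A$, but membership in $\mathcal{A}^\ell(t_0)\subset\mathcal{D}(t_0)$ requires that $\bd\eta$ disintegrates as $d\eta_t(a)\,dt$, i.e.\ that the time marginal of $|\bd\eta|$ is absolutely continuous with respect to Lebesgue. This does \emph{not} follow from the $(1+|a|^4)$-moment bound alone: the positive and negative parts $\bd\eta^{n,\pm}$ could in principle concentrate mass on time-slices in the limit, even though their difference has zero time marginal. The paper closes this by invoking the second half of the $\mathcal{D}(t_0^n)$-norm bound in Lemma~\ref{lem:timetogotoGallia02/03}, namely
\[
\sup_{t_1<t_2}\frac{1}{\sqrt{t_2-t_1}}\int_{t_1}^{t_2}\!\int_A(1+|a|^2)\,d|\eta_t^n|(a)\,dt\leq C,
\]
which gives $\bd\eta^{n,\pm}(I\times A)\leq C\sqrt{|I|}$ for every Borel $I\subset[t_0,T]$. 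This bound passes to the weak limits $\bd\eta^{\pm}$ and forces their time marginals to be absolutely continuous, after which one disintegrates and sets $\eta_t:=\eta_t^+-\eta_t^-$. The same $\sqrt{|I|}$-bound is also what lets you pass from weak convergence on all of $[0,T]\times A$ to the convergence \eqref{eq:testfnnutn03/03} on sub-intervals $[t_1,t_2]\subset(t_0,T]$: you need to approximate $\mathbf 1_{[t_1,t_2]}$ by continuous functions and control the error, which requires knowing that the limit does not charge $\{t_1\}\times A$ or $\{t_2\}\times A$.

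Two smaller remarks. First, for $\bd v$, the paper does not test the transport equation against smooth functions (the equation is pointwise, not distributional); instead it uses the explicit representation $v_t^n(x,y)=\int_t^T b(X_s^{n,t,x},\eta_s^n)\cdot\nabla_x u_s^{*,n}(X_s^{n,t,x},y)\,ds$ and passes to the limit via convergence of the ODE flow. Your weak-formulation approach can be made to work, but note that the source term $b(x,\eta_t^n)\cdot\nabla_x u_t^{*,n}$ involves the $n$-dependent function $\nabla_x u_t^{*,n}$, so you must first replace it by $\nabla_x u_t^*$ using item (iii) of Property $(\mathcal{Q}_0)$ before applying \eqref{eq:testfnnutn03/03}. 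Second, for $\bd\rho^n$, your sketch of the compact embedding argument is correct in spirit; the paper packages exactly this into Lemma~\ref{lem:StrongCompactnessCurves}, whose proof is the diagonal extraction over a countable dense family plus the cutoff argument you anticipate.
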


The convergence of $(\bd{\nu}^n, \bd{\rho}^n,\bd{v}^n)_{n \in \mathbb{N}}$ translates into a convergence for $(\bd{k}^n)_{n \in \mathbb{N}}$ defined through \eqref{eq:ktn28/02}. For a given limit point $(\bd{\eta}, \bd{\rho}, \bd{v})$ we define, for all $(t,a) \in [t_0,T] \times A$ 
\begin{equation} 
k_t(a) := \frac{1}{\epsilon}\int_{\R^{d_1} \times \R^{d_2}} b(x,a) \cdot \nabla_x v_t(x,y) d\gamma^*_t(x,y)  + \frac{1}{\epsilon}\langle b(\cdot,a) \cdot \nabla_x u^*_t; \rho_t \rangle. 
\label{eq:defnkt03/03}
\end{equation}
The next convergence statement for $(\bd{k}^n)_{n \in \mathbb{N}}$ together with the result of Proposition \ref{prop:cvgcesumofIs03/03} follow from Proposition \ref{prop:preliminaryworkprop1-28/02}, the uniform estimates from Lemma \ref{lem:Q2-28/02} and the growth and regularity assumptions on the vector field $b$. The detailed proofs are given in Section \ref{sec:ProofsofSection3}.
\begin{prop}
\label{prop:convergencektn03/03}
In the setting of {\bf Property $(\mathcal{Q}_0)$} and with the limit point $(\bd{\eta}, \bd{\rho}, \bd{v})$ given by Proposition \ref{prop:preliminaryworkprop1-28/02},  the map $t \mapsto k_t \in \mathcal{C}^1_2(A)$ is bounded and $\bd{k}^n$ converges to $\bd{k}$ in the following ways:
$$ \lim_{n \rightarrow +\infty} \sup_{t \in [t_0^n \vee t_0,T]} \norm{ k_t^n -k_t}_{\mathcal{C}^0_3(A)} = \lim_{n \rightarrow +\infty} \sup_{t \in [t_0^n \vee t_0,T]} \int_A |k_t^n(a) - k_t(a) | d\nu_t^{*,n}(a)  =0. $$
If the limit point $(\bd{\eta}, \bd{\rho}, \bd{v})$ is the triple $(0,0,0)$, then the convergence can be strengthened to 
$$ \lim_{n \rightarrow +\infty} \sup_{t \in [t_0^n \vee t_0,T] } \norm{k^n_t }_{\mathcal{C}^1_3(A)} = \lim_{n \rightarrow +\infty} \int_{t_0^n}^T \int_A | \nabla_a k_t^n(a) |^2 d\nu_t^n(a)dt =0. $$
\end{prop}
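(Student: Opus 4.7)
My plan is to derive every claim from the bilinear estimate of Lemma \ref{lem:convenientexpress}, which reduces bounds on $k_t^n$ and on $k_t^n - k_t$ to bounds on the pairs $(\nabla_x v_t^n, \gamma_t^n)$ and $(\nabla_x u^{*,n}_t, \rho_t^n)$. The relevant convergences are then supplied by Proposition \ref{prop:preliminaryworkprop1-28/02}, the uniform a priori controls of Lemmas \ref{lem:Q2-28/02} and \ref{lem:timetogotoGallia02/03}, and the convergence of the optimal objects from Lemma \ref{lem:convergenceifuniqueness}. For the boundedness of $t \mapsto k_t \in \mathcal{C}^1_2(A)$, I would apply \eqref{eq:firstconvenientexpress02/03} to each of the two terms in \eqref{eq:defnkt03/03} with $q=1$ and appropriate choices of $k,p$, using the boundedness of $\bd{v} \in \mathcal{C}([t_0,T], \mathcal{C}^1_2)$ (inherited from the uniform estimates on $\bd{v}^n$ through the passage to the limit), of $\nabla_x \bd{u}^* \in \mathcal{C}^2_1$ from Proposition \ref{prop:regularityfromOC}, of $\bd{\rho} \in \mathcal{R}(t_0)$, and of $\bd{\gamma}^* \in \mathcal{P}_3$.

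For the $\mathcal{C}^0_3$-convergence, I would decompose
\begin{equation*}
\epsilon \bigl(k_t^n(a) - k_t(a)\bigr)  = \Bigl[\int b(x,a) \cdot \nabla_x v_t^n\, d\gamma_t^n - \int b(x,a) \cdot \nabla_x v_t\, d\gamma^*_t\Bigr] + \Bigl[\bigl\langle b(\cdot,a) \cdot \nabla_x u^{*,n}_t; \rho_t^n\bigr\rangle - \bigl\langle b(\cdot,a) \cdot \nabla_x u^{*}_t; \rho_t\bigr\rangle\Bigr],
\end{equation*}
and apply \eqref{eq:secondconvenientexpress02/02} with $q=0$ to each bracket. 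The required convergences $\bd{v}^n \to \bd{v}$ in $\mathcal{C}^1_2$ and $\bd{\rho}^n \to \bd{\rho}$ in $(\mathcal{C}^2_2)^*$ come from Proposition \ref{prop:preliminaryworkprop1-28/02}, $\bd{u}^{*,n} \to \bd{u}^*$ in $\mathcal{C}^2_1$ and $\bd{\gamma}^{*,n} \to \bd{\gamma}^*$ in $d_2$ come from Lemma \ref{lem:convergenceifuniqueness}, and $\bd{\gamma}^n \to \bd{\gamma}^*$ in $(\mathcal{C}^2_2)^*$ follows from the identity $\bd{\gamma}^n = \bd{\gamma}^{*,n} + \lambda_n \bd{\rho}^n$ combined with $\lambda_n \to 0$ and the uniform bound $\sup_n \|\bd{\rho}^n\|_{\mathcal{R}(t_0^n)} < \infty$. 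The $\nu^{*,n}$-integrated convergence is then immediate from the uniform exponential moments $\sup_{n,t} \int e^{c(1+|a|^4)}\, d\nu_t^{*,n}(a) < \infty$ of Lemma \ref{lem:Q2-28/02}, which bounds $\int(1+|a|^3)\, d\nu_t^{*,n}$ and lets us dominate $\int |k_t^n - k_t|\, d\nu_t^{*,n}$ by a constant times $\|k_t^n - k_t\|_{\mathcal{C}^0_3(A)}$.

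When the limit point is $(0,0,0)$, we have $k_t = 0$, and the same decomposition plus \eqref{eq:secondconvenientexpress02/02} with $q=1$ and $k_1 \vee k_2 = 2$ yields $\sup_t \|k_t^n\|_{\mathcal{C}^1_3(A)} \to 0$. The \textbf{main obstacle} lies in the final limit. The naive bound $|\nabla_a k_t^n(a)|^2 \leq (1+|a|^3)^2 \|k_t^n\|^2_{\mathcal{C}^1_3}$ would require control of $\int(1+|a|^6)\, d\nu_t^n$, which is \emph{not} provided by the $\mathcal{D}(t_0^n)$-bound on $\bd{\nu}^n$ from Lemma \ref{lem:timetogotoGallia02/03}. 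The key observation is that the explicit linear growth of $\nabla_a b(x,a)$ in $a$ from \assreg (i), namely $|\nabla_a b(x,a)| \leq C(1+|a|)(1+|x|)$, together with the uniform polynomial controls on $\nabla_x v_t^n$, $\nabla_x u^{*,n}_t$, $\gamma_t^n$ and $\rho_t^n$ (the first two tending to zero in the case at hand), yields the finer statement $\sup_t \|\nabla_a k_t^n\|_{\mathcal{C}^0_1(A)} \to 0$. Hence $|\nabla_a k_t^n(a)|^2 \leq C(1+|a|^2) \sup_t \|\nabla_a k_t^n\|^2_{\mathcal{C}^0_1}$, and integrating against $d\nu_t^n(a)\, dt$ using the uniform bound $\int_{t_0^n}^T \int(1+|a|^2)\, d\nu_t^n(a)\, dt \leq C \|\bd{\nu}^n\|_{\mathcal{D}(t_0^n)}$ from Lemma \ref{lem:timetogotoGallia02/03} closes the argument.
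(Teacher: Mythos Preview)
Your treatment of the first four claims---the boundedness of $t\mapsto k_t$ in $\mathcal{C}^1_2(A)$, the $\mathcal{C}^0_3(A)$-convergence, the $\nu_t^{*,n}$-integrated convergence, and the $\mathcal{C}^1_3(A)$-convergence when the limit is trivial---is correct and matches the paper's argument, which proceeds through the same bilinear decomposition and the same cascade of convergences.

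The gap lies in your handling of the final limit $\int_{t_0^n}^T\int_A|\nabla_a k_t^n|^2\,d\nu_t^n\,dt\to 0$. Your claim that $\sup_t\|\nabla_a k_t^n\|_{\mathcal{C}^0_1(A)}\to 0$ does not go through for the $\rho$-term. Indeed,
\[
\nabla_a\bigl\langle b(\cdot,a)\cdot\nabla_x u_t^{*,n};\rho_t^n\bigr\rangle = \bigl\langle \nabla_a b(\cdot,a)\cdot\nabla_x u_t^{*,n};\rho_t^n\bigr\rangle,
\]
and the only convergence available for $\rho_t^n$ is in $(\mathcal{C}^2_2)^*$. To exploit it you must bound the $\mathcal{C}^2_2$-norm of the test function $(x,y)\mapsto\nabla_a b(x,a)\cdot\nabla_x u_t^{*,n}(x,y)$, which involves $\nabla_x^2\nabla_a b$; by \assreg(i) this derivative carries a factor $(1+|a|^3)$, not $(1+|a|)$. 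So the estimate you actually obtain is
\[
|\nabla_a k_t^n(a)|\leq C\Bigl((1+|a|)\,\|\nabla_x v_t^n\|_{\mathcal{C}^0_2}\!\int(|x|^2+|y|^2)^{3/2}d\gamma_t^n+(1+|a|^3)\,\|\nabla_x u_t^{*,n}\|_{\mathcal{C}^2_1}\|\rho_t^n\|_{(\mathcal{C}^2_2)^*}\Bigr),
\]
which is consistent with the $\mathcal{C}^1_3(A)$-convergence but leaves you needing sixth moments of $\nu_t^n$ for the squared integral---and those are not supplied by the $\mathcal{D}(t_0^n)$-bound. (As an aside, $\nabla_x u_t^{*,n}$ does \emph{not} tend to zero; it tends to $\nabla_x u_t^*$.)

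The paper circumvents this by writing $\nu_t^n=\nu_t^{*,n}+\lambda_n\eta_t^n$. For the $\nu_t^{*,n}$-piece one pairs the $\mathcal{C}^1_3(A)$-convergence of $k_t^n$ with the \emph{exponential} moments of $\nu_t^{*,n}$ from Lemma~\ref{lem:Q2-28/02}, which absorb any polynomial weight. For the $\lambda_n\eta_t^n$-piece one does not use the convergence of $k_t^n$ at all: instead the \emph{uniform} bound $\sup_{n,t}\|k_t^n\|_{\mathcal{C}^1_2(A)}\leq C$ from Lemma~\ref{lem:timetogotoGallia02/03} gives $|\nabla_a k_t^n(a)|^2\leq C(1+|a|^4)$, and then $\int_{t_0^n}^T\int_A(1+|a|^4)\,d|\eta_t^n|\,dt\leq C$ (part of the $\mathcal{D}(t_0^n)$-bound on $\bd\eta^n$) together with $\lambda_n\to 0$ disposes of this term.
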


As a consequence, and recalling the expression \eqref{eq:rewritinggammanutn28/02} for $\Gamma_t[\bd{\nu}^n]$, we get
\begin{prop}
\label{prop:cvgcesumofIs03/03}
In the setting of {\bf Property $(\mathcal{Q}_0)$} and for the limit point $(\bd{\eta}, \bd{\rho}, \bd{v})$ given by Proposition \ref{prop:preliminaryworkprop1-28/02}, we have 
\begin{equation} 
\lim_{ n \rightarrow +\infty} \int_{t_0^n \vee t_0}^T \Bigl( \int_A \Bigl|  \frac{\Gamma_t[\bd{\nu}^n](a) - \nu_t^{*,n}(a)}{\lambda_n} + \nu_t^*(a) \bigl( k_t(a) - \int_A k_t(a') d\nu_t^*(a') \bigr) \Bigr|da \Bigr)^2 dt =0,  
\label{eq:04/05-22:10}
\end{equation}
where $\bd{k}$ is defined in \eqref{eq:defnkt03/03}.
\end{prop}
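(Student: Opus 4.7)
The plan is to start from the Gibbs expression $\Gamma_t[\bd{\nu}^n] = (Z_t^n)^{-1}\nu_t^{*,n} e^{-\lambda_n k_t^n}$, with $Z_t^n := \int_A e^{-\lambda_n k_t^n(a)} d\nu_t^{*,n}(a)$, provided by \eqref{eq:rewritinggammanutn28/02}, and split
\begin{equation*}
\frac{\Gamma_t[\bd{\nu}^n](a) - \nu_t^{*,n}(a)}{\lambda_n} = \frac{\nu_t^{*,n}(a)}{Z_t^n}\cdot \frac{e^{-\lambda_n k_t^n(a)}-1}{\lambda_n} + \nu_t^{*,n}(a) \cdot \frac{1-Z_t^n}{\lambda_n Z_t^n}.
\end{equation*}
Using the pointwise Taylor estimate $|e^{-\lambda_n k_t^n(a)} - 1 + \lambda_n k_t^n(a)| \leq \tfrac12 \lambda_n^2 (k_t^n(a))^2 e^{\lambda_n |k_t^n(a)|}$, I would rewrite the right-hand side as
\begin{equation*}
-\nu_t^{*,n}(a)\Bigl(k_t^n(a) - \int_A k_t^n(a')\, d\nu_t^{*,n}(a')\Bigr) + \mathcal{R}_t^n(a),
\end{equation*}
where $\mathcal{R}_t^n$ gathers the two Taylor remainders and the factor $(1-Z_t^n)/Z_t^n$.

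Next, I would control $\mathcal{R}_t^n$ uniformly in $(t,n)$. Lemma~\ref{lem:timetogotoGallia02/03} gives $\sup_{t,n}\|k_t^n\|_{\mathcal{C}^1_2(A)} < \infty$, hence $|k_t^n(a)| \leq C(1+|a|^2)$, and Lemma~\ref{lem:Q2-28/02} yields the uniform exponential moment $\sup_{t,n} \int_A e^{c(1+|a|^4)} d\nu_t^{*,n}(a) < \infty$. For $n$ large enough, the two combine into
\begin{equation*}
\sup_{t} \int_A \nu_t^{*,n}(a)(k_t^n(a))^2 e^{\lambda_n |k_t^n(a)|}\, da \leq C, \qquad \sup_t|1-Z_t^n| \leq C\lambda_n,
\end{equation*}
from which $\sup_t \int_A |\mathcal{R}_t^n(a)|\,da \leq C\lambda_n \to 0$, so the contribution of $\mathcal{R}_t^n$ to the $L^2_t L^1_a$ quantity in \eqref{eq:04/05-22:10} vanishes.

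It then remains to establish, in the same $L^2_t L^1_a$ sense,
\begin{equation*}
\nu_t^{*,n}\Bigl(k_t^n - \int_A k_t^n\, d\nu_t^{*,n}\Bigr) \longrightarrow \nu_t^*\Bigl(k_t - \int_A k_t\, d\nu_t^*\Bigr).
\end{equation*}
I would decompose the difference into terms of the form $(\nu_t^{*,n}-\nu_t^*)\,k_t^n$ and $\nu_t^*\,(k_t^n-k_t)$ (plus analogous pieces arising from the mean-correction terms), and bound each via: (i) the convergence $\sup_t \|e^{\ell/2}(\nu_t^{*,n}-\nu_t^*)\|_{L^\infty}\to 0$ from Lemma~\ref{lem:convergenceifuniqueness}, which, together with $|k_t^n(a)|\leq C(1+|a|^2)$ and the integrability of $e^{-\ell(a)/2}(1+|a|^3)$ (ensured by the quartic growth of $\ell$), handles the pieces involving $\nu_t^{*,n}-\nu_t^*$; (ii) the uniform convergences $\sup_t\|k_t^n-k_t\|_{\mathcal{C}^0_3(A)}\to 0$ and $\sup_t\int_A|k_t^n-k_t|\, d\nu_t^{*,n}\to 0$ from Proposition~\ref{prop:convergencektn03/03}, combined with the uniform third-moment bound on $\nu_t^*$ coming from \eqref{eq:uniformestimatemiddleoftheproof}, to deal with the pieces involving $k_t^n-k_t$. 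Since all estimates are uniform in $t\in[t_0\vee t_0^n,T]$, which is a bounded interval, the $L^2_t L^1_a$ conclusion follows from dominated convergence.

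The main obstacle I expect is the joint-in-$(t,a)$ control of the Taylor remainder: because $k_t^n$ has quadratic growth in $a$, the multiplicative factor $e^{\lambda_n|k_t^n(a)|}$ cannot be bounded pointwise uniformly in $a$, and one must absorb it into the quartic-exponential tail of $\nu_t^{*,n}$ inherited from the growth $\ell(a)\gtrsim |a|^4$ required in \assreg. This is exactly where the strong coercivity assumption on $\ell$ is needed, and is the reason why the uniform exponential moment estimate of Lemma~\ref{lem:Q2-28/02} is indispensable in the argument.
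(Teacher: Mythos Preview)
Your proposal is correct and follows essentially the same strategy as the paper. The paper's proof is organized via an explicit decomposition into six terms $I_t^{1,n},\dots,I_t^{6,n}$ (Lemma~\ref{lem:sumofIs03/03}), with $I^{2,n},I^{5,n},I^{6,n}$ corresponding to your Taylor/normalizing-constant remainder $\mathcal{R}_t^n$ and $I^{1,n},I^{3,n},I^{4,n}$ to your ``leading term convergence'' step; the key inputs are identical (the uniform $\mathcal{C}^1_2(A)$ bound on $k_t^n$, the quartic-exponential moments of $\nu_t^{*,n}$, Proposition~\ref{prop:convergencektn03/03}, and the strong convergence of $\nu_t^{*,n}$ to $\nu_t^*$ assumed in item (iii) of Property~$(\mathcal{Q}_0)$).
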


In particular, when $(\bd{\nu}^n,\bd{\gamma}^n,\bd{u}^n)$ is a solution to \eqref{eq:NOCfornut2Avril}-\eqref{eq:NOCforutgammat2Avril} we can replace $\Gamma[\bd{\nu}^n]$ by $\bd{\nu}^n$ in \eqref{eq:04/05-22:10} and we deduce the following result.
\begin{prop}
In the setting of {\bf Property $(\mathcal{Q}_0)$}, if we assume as well that the triple $(\bd{\nu}^n,\bd{\gamma}^n, \bd{u}^n)$ is solution to \eqref{eq:NOCfornut2Avril}-\eqref{eq:NOCforutgammat2Avril} for all $n \in \mathbb{N}$, then any weak limit point $(\bd{\eta}, \bd{\rho}, \bd{v})$ of $(\bd{\eta}^n,\bd{\rho}^n, \bd{v}^n)_{n \in \mathbb{N}}$ in the sense of Proposition \ref{prop:preliminaryworkprop1-28/02} solves the linearized system \eqref{eq:nutProp3.2}-\eqref{eq:vtrhotProp3.2}.
\label{prop:preliminayworkProp2-28/02}
\end{prop}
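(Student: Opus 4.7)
The strategy is to combine the compactness of Proposition \ref{prop:preliminaryworkprop1-28/02} with the quantitative convergence of Proposition \ref{prop:cvgcesumofIs03/03}, exploiting the fact that $\bd\nu^n$ now satisfies the full optimality system. Indeed, \eqref{eq:NOCfornut2Avril} says precisely that $\bd\nu^n$ is a fixed point of the map $\Gamma$ introduced in \eqref{eq:defnGammaNut09/01}: for every $t \in [t_0^n,T]$, $\Gamma_t[\bd\nu^n] = \nu^n_t$. Consequently, the normalized difference can be rewritten as
$$
\eta^n_t(a) \;=\; \frac{\nu^n_t(a)-\nu^{*,n}_t(a)}{\lambda_n} \;=\; \frac{\Gamma_t[\bd\nu^n](a)-\nu^{*,n}_t(a)}{\lambda_n}.
$$
Since Proposition \ref{prop:preliminaryworkprop1-28/02} already produces a limit pair $(\bd\rho,\bd v)$ solving the second part of the linearized system \eqref{eq:vtrhotProp3.2} (this is exactly \eqref{eq:v:rho:proof:Meta2.2-02/03}), the task reduces to identifying $\bd\eta$ through the fixed-point relation \eqref{eq:nutProp3.2}.

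Applying Proposition \ref{prop:cvgcesumofIs03/03} along the subsequence extracted in Proposition \ref{prop:preliminaryworkprop1-28/02}, together with the identity above, yields
$$
\lim_{n\to\infty} \int_{t_0^n \vee t_0}^T \Bigl(\int_A \bigl|\eta^n_t(a) + \nu^*_t(a)\bigl(k_t(a) - \bar k_t\bigr)\bigr|\, da \Bigr)^{\!2} dt = 0,
$$
with $\bar k_t := \int_A k_t(a')\, d\nu^*_t(a')$ and $\bd k$ defined from $(\bd\eta,\bd\rho,\bd v)$ by \eqref{eq:defnkt03/03}. In other words, $(\eta^n_t)$ converges strongly in $L^2_t L^1_a$ to the absolutely continuous density $a\mapsto -\nu^*_t(a)(k_t(a)-\bar k_t)$. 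On the other hand, by Proposition \ref{prop:preliminaryworkprop1-28/02} (i), for any $\delta>0$ and any bounded continuous function $f : [t_0+\delta,T]\times A \to \R$,
$$
\lim_{n\to\infty}\int_{t_0+\delta}^T \!\int_A f_t(a)\, d\eta^n_t(a)\, dt \;=\; \int_{t_0+\delta}^T \!\int_A f_t(a)\, d\eta_t(a)\, dt.
$$
The strong convergence displayed above forces the same limit to equal $-\int_{t_0+\delta}^T\!\int_A f_t(a)\,\nu^*_t(a)(k_t(a)-\bar k_t)\, da\, dt$. Varying $f$ in a dense subclass and then letting $\delta\downarrow 0$, we conclude that, for a.e. $t\in(t_0,T]$, the (signed) measure $\eta_t$ is absolutely continuous with density $-\nu^*_t(k_t-\bar k_t)$. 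Expanding $k_t$ via \eqref{eq:defnkt03/03} and setting $c_t:=\epsilon\bar k_t$ gives exactly \eqref{eq:nutProp3.2}, so $(\bd\eta,\bd\rho,\bd v)$ solves the full linearized system \eqref{eq:nutProp3.2}--\eqref{eq:vtrhotProp3.2}.

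\textbf{Main obstacle.} The core technical point is bridging the two modes of convergence: the duality convergence of measures from Proposition \ref{prop:preliminaryworkprop1-28/02} (restricted to sub-intervals of $(t_0,T]$ and tested against continuous polynomial-growth functions) versus the strong $L^2_t L^1_a$ convergence of densities from Proposition \ref{prop:cvgcesumofIs03/03} (global in time). Since bounded continuous test functions belong to both dual pairings, the two limits must agree against a sufficiently rich class of test functions; this matching is what promotes $\eta_t$ from an abstract signed measure to an absolutely continuous measure with the explicit $L^1$-density predicted by the strong limit. Once this identification is achieved, the form \eqref{eq:nutProp3.2} is merely a matter of relabelling the normalizing constant.
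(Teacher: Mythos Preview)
Your proposal is correct and follows essentially the same approach as the paper. The paper's own proof of this proposition is just the one-line observation preceding the statement: since $(\bd\nu^n,\bd\gamma^n,\bd u^n)$ solves \eqref{eq:NOCfornut2Avril}--\eqref{eq:NOCforutgammat2Avril}, one has $\Gamma[\bd\nu^n]=\bd\nu^n$, so one may replace $\Gamma_t[\bd\nu^n]$ by $\nu^n_t$ in \eqref{eq:04/05-22:10} and conclude; the detailed matching of the $L^2_tL^1_a$ limit with the weak limit from Proposition \ref{prop:preliminaryworkprop1-28/02}---which you spell out carefully---is carried out verbatim by the paper in the proof of Proposition \ref{prop:step:3} (see \eqref{eq:abovedisplay25/02} and the paragraph that follows).
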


\section{Discriminating Property and the Jacobi Condition}
\label{sec:TheUniversalAppox}

This section is dedicated to the 
proof of 
Meta-Theorem 
\ref{meta-thm:1}, a more precise and rigorous version of which is given below. Recall that stable solutions were introduced in Definition \ref{defn:stablesolutions} and the set $\mathcal{O}$ in \eqref{eq:elfamososetO}.

\begin{thm}
\label{thm:main:stability}
    Let $(t_0,\gamma_0) \in [0,T) \times \mathcal{P}_3(\R^{d_1} \times \R^{d_2})$ and  $\boldsymbol \nu^*=(\nu^*_t)_{t_0 \le t \le T}$ 
    be a (non-necessarily unique) minimizer of $J ((t_0,\gamma_0),\cdot )$ with 
    corresponding curve 
    $\boldsymbol \gamma^* =
    (\gamma^*_t)_{t_0 \leq t \leq T}$. Then, for all $t_1 \in (t_0,T]$,  
    $(t_1, \gamma^*_{t_1}) \in \mathcal{O}$.
        Moreover, the set $\mathcal{O}$ is open and dense in $[0,T] \times \mathcal{P}_3(\R^{d_1} \times \R^{d_2})$.
\end{thm}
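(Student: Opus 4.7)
The plan is to split the theorem into three successive claims---the Jacobi-type principle $(t_1,\gamma^*_{t_1})\in\mathcal{O}$, openness, and density of $\mathcal{O}$---and attack them in that order, since openness will be obtained via a compactness-plus-stability argument built on Section~\ref{sec:preparatorywork}, while density follows almost immediately from the Jacobi claim.

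For the Jacobi claim, dynamic programming first gives that $(\nu^*_t)_{t_1\leq t\leq T}$ is \emph{a} minimizer of $J((t_1,\gamma^*_{t_1}),\cdot)$; the content is to show it is both unique and stable. Uniqueness I would prove by contradiction: a second minimizer $\bar{\bd{\nu}}^*$ starting from $(t_1,\gamma^*_{t_1})$, glued to $\bd{\nu}^*|_{[t_0,t_1]}$, gives a second critical point $\tilde{\bd{\nu}}^*$ of $J((t_0,\gamma_0),\cdot)$ agreeing with $\bd{\nu}^*$ on $[t_0,t_1]$. Writing the Gibbs formula \eqref{eq:NOCfornut2Avril} at $t=t_0$ for both, the equality $\nu^*_{t_0}=\tilde\nu^*_{t_0}$ translates, up to an additive $a$-constant, into
\[
\int_{\R^{d_1}\times\R^{d_2}} b(x,a)\cdot\bigl(\nabla_x u^*_{t_0}-\nabla_x\tilde u^*_{t_0}\bigr)(x,y)\,d\gamma_0(x,y)=c,\qquad a\in A,
\]
and evaluation at $a=0$ together with $b(x,0)=0$ forces $c=0$. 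On a probability space carrying $(X_0,Y_0)\sim\gamma_0$, \assdis{} yields $\E\bigl[(\nabla_x u^*_{t_0}-\nabla_x\tilde u^*_{t_0})(X_0,Y_0)\mid X_0\bigr]=0$ almost surely, and the Lagrangian representation of Proposition~\ref{prop:FBODErepresentation} lets me recast \eqref{eq:NOCforutgammat2Avril} as a forward-forward pair of ODEs along which this conditional identity propagates by Gr\"onwall, yielding $\bd{\nu}^*=\tilde{\bd{\nu}}^*$, hence $\bar{\bd{\nu}}^*=\bd{\nu}^*|_{[t_1,T]}$. Stability at $(t_1,\gamma^*_{t_1})$ is then obtained by the linearized analog of the same mechanism applied to a non-trivial solution of \eqref{eq:nutProp3.2}--\eqref{eq:vtrhotProp3.2} with $\rho_{t_1}=0$: the first-order linearized identity at $t_1$ produces an $a$-constant integral $\int b(x,a)\cdot\nabla_x v_{t_1}(x,y)\,d\gamma^*_{t_1}(x,y)$, whose invariance together with \assdis{} applied to $Z:=\nabla_x v_{t_1}(X,Y)$ with $(X,Y)\sim\gamma^*_{t_1}$ forces, after a forward-forward linearized propagation, the triple $(\bd{\eta},\bd{\rho},\bd{v})$ to vanish identically.

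For openness I would argue by contradiction: pick a sequence $(t_0^n,\gamma_0^n)\to(t_0,\gamma_0)\in\mathcal{O}$ with $(t_0^n,\gamma_0^n)\notin\mathcal{O}$. Failure of membership means that either $J((t_0^n,\gamma_0^n),\cdot)$ admits two distinct minimizers, or it admits a minimizer whose linearized system has a non-trivial solution. In both scenarios I pick minimizers $\bd{\nu}^{*,n}$ which, by Lemma~\ref{lem:convergenceifuniqueness}, converge to the unique stable minimizer $\bd{\nu}^*$, placing me within Property~$(\mathcal{Q}_0)$ of Section~\ref{sec:preparatorywork}. In the non-uniqueness case, normalizing the difference of two distinct minimizers by $\lambda_n:=\bigl(\int_{t_0^n}^T\mathcal{E}(\nu^{*,n,2}_t\,|\,\nu^{*,n,1}_t)\,dt\bigr)^{1/2}>0$, Propositions~\ref{prop:preliminaryworkprop1-28/02} and~\ref{prop:preliminayworkProp2-28/02} give, along a subsequence, a limit triple $(\bd{\eta},\bd{\rho},\bd{v})$ solving the linearized system around $\bd{\nu}^*$; the normalization together with Pinsker's inequality (Lemma~\ref{lem:PinskerSecFOC28/02}) prevents $\bd{\eta}$ from collapsing to zero, contradicting the stability of $\bd{\nu}^*$. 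The instability scenario is handled by an entirely analogous normalization of the offending linearized triples.

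Density is then straightforward: for $(t_0,\gamma_0)$ with $t_0<T$, pick any minimizer $\bd{\nu}^*$ whose curve $\bd{\gamma}^*$ is $1/2$-H\"older by Proposition~\ref{lem:compactnessContinuityEquation24/06}; the Jacobi claim places $(t_0+\tau,\gamma^*_{t_0+\tau})$ in $\mathcal{O}$ for every $\tau\in(0,T-t_0]$, within $O(\sqrt\tau)$ of $(t_0,\gamma_0)$, and the boundary case $t_0=T$ follows after a small backward time shift. The main obstacle is the non-linear non-bifurcation step: correctly identifying the random variables to which \assdis{} applies and propagating the resulting conditional-expectation identity forward through the coupled Lagrangian ODEs of Proposition~\ref{prop:FBODErepresentation} is the technical heart of the whole argument; once this is carried out, the linearized analog needed for stability mirrors it, and the remaining steps of the theorem reduce to the compactness machinery already assembled in Section~\ref{sec:preparatorywork}.
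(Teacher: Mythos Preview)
Your overall decomposition (Jacobi claim $\Rightarrow$ density, plus openness by contradiction via Section~\ref{sec:preparatorywork}) matches the paper, and your uniqueness argument for the Jacobi claim is essentially the content of Proposition~\ref{prop:uniquenessNOC}, inlined correctly.

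The stability part of the Jacobi claim, however, has a genuine gap. You assert that the linearized identity \eqref{eq:nutProp3.2} at $t=t_1$, combined with $\rho_{t_1}=0$, ``produces an $a$-constant integral $\int b(x,a)\cdot\nabla_x v_{t_1}\,d\gamma^*_{t_1}$''. But \eqref{eq:nutProp3.2} at $t_1$ with $\rho_{t_1}=0$ only gives
\[
\eta_{t_1}(a)=-\frac{\nu^*_{t_1}(a)}{\epsilon}\Bigl[\int_{\R^{d_1}\times\R^{d_2}} b(x,a)\cdot\nabla_x v_{t_1}(x,y)\,d\gamma^*_{t_1}(x,y)-c_{t_1}\Bigr],
\]
and the bracket is $a$-constant precisely when $\eta_{t_1}=0$---which is what you are trying to prove, not what you are given. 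Without $\eta_{t_1}=0$ there is no hook for \assdis{} and no way to launch the forward-forward propagation at $t_1$. The paper's fix is to \emph{transport the problem back to $t_0$}: extend $(\bd\eta,\bd\rho)$ by $(0,0)$ on $[t_0,t_1)$, observe that the extended $\bar{\bd\eta}$ still satisfies $\mathcal{J}((t_0,\gamma_0),\bd\nu^*,\bar{\bd\eta})=0$, and invoke the full strength of Theorem~\ref{prop:SecondOrderConditions3Avril} (the ``if and only if'' direction) to produce a multiplier $\bar{\bd v}$ on $[t_0,T]$ such that $(\bar{\bd\eta},\bar{\bd\rho},\bar{\bd v})$ solves the linearized system from $t_0$. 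Now $\bar\eta_{t_0}=0$ \emph{by construction}, and Proposition~\ref{prop:uniquenessLinSyst} (your forward-forward argument, but started at $t_0$) applies. The new multiplier $\bar{\bd v}$ need not coincide with your $\bd v$ on $[t_1,T]$, and its existence is not automatic---it requires the sufficiency direction of Theorem~\ref{prop:SecondOrderConditions3Avril}.

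A smaller imprecision in your openness argument: you claim that Lemma~\ref{lem:PinskerSecFOC28/02} ``prevents $\bd\eta$ from collapsing to zero'', but that lemma only gives an \emph{upper} bound on $\|\bd\eta^n\|_{\mathcal D(t_0^n)}$, and in any case the convergence $\bd\eta^n\to\bd\eta$ provided by Proposition~\ref{prop:preliminaryworkprop1-28/02} is only weak, so a uniform lower bound on $\bd\eta^n$ would not survive the limit. The paper instead uses Lemma~\ref{lem:relativeentropyasasqureddistance28/02} (log-Sobolev for $\nu^{*,n}$) to get $\lambda_n^2\leq C\sup_t\|\gamma^{n}_t-\gamma^{*,n}_t\|^2_{(\mathcal C^2_2)^*}$, hence $\sup_t\|\rho^n_t\|_{(\mathcal C^2_2)^*}\geq 1/\sqrt C$; since $\bd\rho^n\to\bd\rho$ \emph{strongly} in $\mathcal C([t_0,T],(\mathcal C^2_2)^*)$, this lower bound does pass to the limit and contradicts $\bd\rho=0$.
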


The first part of Theorem
\ref{thm:main:stability}
can be referred to as a \textit{Jacobi condition}, or \textit{Jacobi no conjugate point optimality condition}, (see \cite{cannarsa} Chapter 6 for similar results in finite dimension and Remark 6.3.7 therein for the terminology). The proof is
given in Subsection
\ref{subse:4:2}, except for the (more expected) proof of the openness of the set $\mathcal{O}$, which is given in Section \ref{sec:Oisopen04/03}. The main ingredients are the dynamic programming principle and 
the following two 
Propositions \ref{prop:uniquenessNOC} 
and \ref{prop:uniquenessLinSyst}, which 
follow themselves from \assdis.

\begin{prop}
Let $(t_0,\gamma_0) \in [0,T) \times {\mathcal P}_3({\mathbb R}^{d_1} \times {\mathbb R}^{d_2})$. 
    Assume that $({\boldsymbol \nu}^1,{\boldsymbol \gamma}^1,{\boldsymbol u}^1)=(\nu^1_t, \gamma^1_t, u_t^1)_{t_0 \leq t \leq T}$ and $({\boldsymbol \nu}^2,{\boldsymbol \gamma}^2,{\boldsymbol u}^2)=(\nu^2_t, \gamma^2_t, u_t^2)_{t_0 \le t \le T}$ are two solutions of the system \eqref{eq:NOCfornut2Avril}--\eqref{eq:NOCforutgammat2Avril} with $(t_0,\gamma_0)$ as initial condition. Then,
    $$ \nu^1_{t_0} = \nu^2_{t_0} \Rightarrow 
    \Bigl( \forall t \in [t_0,T], \quad \nu^1_t = \nu^2_t \Bigr).$$
    In particular, when $\nu_{t_0}^1 = \nu_{t_0}^2$, then $(\gamma_t^1,u_t^1) = (\gamma_t^2,u_t^2)$ for all $t \geq t_0$.
    \label{prop:uniquenessNOC}
\end{prop}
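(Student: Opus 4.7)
My plan is to derive, from the equality $\nu^1_{t_0} = \nu^2_{t_0}$, an identity that, once passed through \assdis, pins down the only \emph{effective} initial data needed to run the optimality system purely forward in time, and then to obtain uniqueness of the resulting forward fixed-point problem by a Picard contraction iterated on small sub-intervals.

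First, taking logarithms in the Gibbs formula \eqref{eq:NOCfornut2Avril} at $t=t_0$ and using that both densities are strictly positive, I obtain
$$\int_{\R^{d_1} \times \R^{d_2}} b(x,a) \cdot \bigl[\nabla_x u^1_{t_0}(x,y) - \nabla_x u^2_{t_0}(x,y) \bigr] d\gamma_0(x,y) = C_{t_0}, \quad \forall a \in A,$$
for a constant $C_{t_0} \in \R$ coming from $\epsilon \log(z^2_{t_0}/z^1_{t_0})$; the integrand is $\gamma_0$-integrable by Proposition \ref{prop:regularityfromOC} together with $\gamma_0 \in \mathcal{P}_3(\R^{d_1}\times\R^{d_2})$. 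Evaluating at $a=0$ and invoking $b(x,0)=0$ from \assreg gives $C_{t_0} = 0$. Applying \assdis on the probability space $(\R^{d_1} \times \R^{d_2}, \gamma_0)$ with $X(x,y) := x$ and $Z(x,y) := \nabla_x u^1_{t_0}(x,y) - \nabla_x u^2_{t_0}(x,y)$, I conclude $\mathbb{E}_{\gamma_0}[Z \, | \, X] = 0$, i.e.\ $h^1 \equiv h^2 =: h$ where
$$h^i(x) := \int_{\R^{d_2}} \nabla_x u^i_{t_0}(x,y)\, \gamma_0(dy \, | \, x),$$
for $\gamma_0^X$-a.e.\ $x$, with $\gamma_0^X$ the first marginal of $\gamma_0$.

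Next, I move to the Lagrangian representation of Proposition \ref{prop:FBODErepresentation}. For $i=1,2$ and $(X_0, Y_0) \sim \gamma_0$, the linear ODE for $Z^i_t$ can be integrated as $Z^i_t = M^i_{t_0,t}(X_0)\, Z^i_{t_0}$, where $M^i_{t_0,t}(x_0)$ is the deterministic resolvent of $\dot{M} = -\nabla_x b(X^{i,t_0,x_0}_t, \nu^i_t)^\top M$ started at the identity; both $X^{i,t_0,x_0}_t$ and $M^i_{t_0,t}(x_0)$ are deterministic functionals of $x_0$ and of the restriction $\bd{\nu}^i|_{[t_0,t]}$. Conditioning on $X_0$ inside the Gibbs expression \eqref{eq:OC-ODEtildenu} and using $h^1 = h^2 = h$, the argument of the exponential becomes
$$\mathbb{E}\bigl[b(X^i_t,a) \cdot Z^i_t\bigr] = \int_{\R^{d_1}} b\bigl(X^{i,t_0,x_0}_t, a\bigr) \cdot M^i_{t_0,t}(x_0)\, h(x_0)\, d\gamma_0^X(x_0) =: F\bigl[\bd{\nu}^i|_{[t_0,t]}\bigr](t,a),$$
which depends on $\bd{\nu}^i$ only through its values on $[t_0,t]$. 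Hence both $\bd{\nu}^1$ and $\bd{\nu}^2$ are fixed points of the closed, purely forward self-consistent relation $\nu_t(a) \propto \exp\bigl(-\ell(a) - \tfrac{1}{\epsilon} F[\bd{\nu}|_{[t_0,t]}](t,a)\bigr)$.

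The hard part is uniqueness for this self-consistent equation. I plan to establish it by a Picard contraction on a short sub-interval $[t_0, t_0 + \delta]$: the regularity of $b$ and $\nabla_a b$ from \assreg, together with the uniform a priori bounds on $h$ and on $(X^i, M^i)$ coming from Proposition \ref{prop:regularityfromOC}, should imply that the map $\bd{\nu} \mapsto \bar{\bd{\nu}}$ defined by the above right-hand side is Lipschitz with constant $C\delta$ in a suitable moment-weighted space of measure-valued curves (for instance the natural one induced by $\norm{\cdot}_{\mathcal{D}(t_0)}$). For $\delta$ small enough (and independent of $t_0$), this forces $\bd{\nu}^1 \equiv \bd{\nu}^2$ on $[t_0, t_0 + \delta]$. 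Once equality holds on $[t_0, t_0 + \delta]$, the pair $(X^i_{t_0+\delta}, \mathbb{E}[Z^i_{t_0+\delta} \, | \, X^i_{t_0+\delta}])$ is the same for $i=1,2$, so the whole argument restarts at $t_0 + \delta$; finitely many iterations cover $[t_0,T]$ and yield $\bd{\nu}^1 \equiv \bd{\nu}^2$ on the full interval. The equalities $\bd{\gamma}^1 \equiv \bd{\gamma}^2$ and $\bd{u}^1 \equiv \bd{u}^2$ then follow from the well-posedness of the continuity and backward transport equations recorded in Propositions \ref{lem:compactnessContinuityEquation24/06} and \ref{prop:SolutionBackxard16Sept}.
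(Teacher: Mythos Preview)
Your proposal is correct and follows essentially the same route as the paper: use $b(\cdot,0)=0$ together with \assdis\ to identify the conditional data $\hat Z^i_{t_0}=\E[Z^i_{t_0}\mid X_0]$ (your $h$), then exploit that the optimality system becomes purely forward and admits a unique solution. The only difference is the packaging of the forward-uniqueness step---you sketch a Picard contraction on short sub-intervals, whereas the paper runs a single global Gr\"onwall argument on the pair $\bigl(\E[|X^2_t-X^1_t|^2]^{1/2},\,\E[|\hat Z^2_t-\hat Z^1_t|]\bigr)$, which is slightly more economical since it avoids the restart machinery.
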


In words, the result above can be described as a \textit{no bifurcation} property: if two optimal controls coincide at the initial time they must coincide throughout the whole time horizon. We have an analogous result for the linearized system:

\begin{prop}
    Assume that $(\eta_t, \rho_t,v_t)_{t \geq t_0}$ is a solution to the linearized system \eqref{eq:nutProp3.2}-\eqref{eq:vtrhotProp3.2} for some given triple $(\nu^*_t, \gamma^*_t, u^*_t)_{t \geq t_0}$ solution to \eqref{eq:NOCfornut2Avril}-\eqref{eq:NOCforutgammat2Avril} . Then,
    $$ \eta_{t_0} = 0 \Rightarrow \Bigl( \forall t \in [t_0,T], \quad \eta_t = 0 \Bigr).$$
    In particular, when $\eta_{t_0} = 0$, then $(\rho_t, v_t) = (0,0)$ for all $t \geq t_0$.
    \label{prop:uniquenessLinSyst}
\end{prop}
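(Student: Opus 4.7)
The plan is to mimic the three-step argument used for Proposition \ref{prop:uniquenessNOC}: pass to the Lagrangian formulation of Proposition \ref{prop:FBODErepresentation}, exploit \assdis\ at time $t_0$, and recast the linearized forward--backward system as a closed forward McKean--Vlasov ODE to which Cauchy--Lipschitz applies. Fix a probability space carrying $(X_0,Y_0)\sim\gamma_0$ and let $(X_t,Z_t)_{t_0\leq t\leq T}$ be the Lagrangian pair of Proposition \ref{prop:FBODErepresentation} associated with $(\boldsymbol{\nu}^*,\boldsymbol{\gamma}^*,\boldsymbol{u}^*)$. Formally linearizing \eqref{eq:OC-ODEformulation} in the direction $\boldsymbol{\eta}$ produces variables $(\tilde X_t,\tilde Z_t)$ obeying a linear forward--backward ODE with $\tilde X_{t_0}=0$ and $\tilde Z_T=\nabla^2_x L(X_T,Y_0)\tilde X_T$, tied to the PDE variables through $\langle\varphi;\rho_t\rangle=\mathbb{E}[\nabla_x\varphi(X_t,Y_0)\cdot\tilde X_t]$ and $\tilde Z_t=\nabla^2_x u^*_t(X_t,Y_0)\tilde X_t+\nabla_x v_t(X_t,Y_0)$. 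Substituting these identifications into \eqref{eq:nutProp3.2} and using the symmetry of $\nabla^2_x u^*_t$ rewrites the formula for $\eta_t$ in the compact form
\[
\eta_t(a)=-\frac{\nu^*_t(a)}{\epsilon}\Bigl(\mathbb{E}\bigl[\bigl(\nabla_x b(X_t,a)\tilde X_t\bigr)\cdot Z_t+b(X_t,a)\cdot\tilde Z_t\bigr]-c_t\Bigr).
\]

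For the second step, $\rho_{t_0}=0$ forces $\tilde X_{t_0}=0$, so the previous identity evaluated at $t=t_0$ together with $\eta_{t_0}=0$ and the positivity of $\nu^*_{t_0}$ (Proposition \ref{prop:regularityfromOC}) gives $\mathbb{E}[b(X_0,a)\cdot\tilde Z_{t_0}]=c_{t_0}$ for every $a\in A$. Setting $a=0$ and invoking the normalization $b(x,0)=0$ forces $c_{t_0}=0$. Integrability of $\tilde Z_{t_0}=\nabla_x v_{t_0}(X_0,Y_0)$ follows from Proposition \ref{prop:SolnVt19/02} together with $\gamma_0\in\mathcal{P}_3$, so \assdis\ yields $\mathbb{E}[\tilde Z_{t_0}\mid X_0]=0$ almost surely.

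The third step converts the forward--backward system into a pure forward McKean--Vlasov one. Set $\hat Z_t:=\mathbb{E}[\tilde Z_t\mid X_0]$. Because the optimal flow $X_0\mapsto X_t$ is a diffeomorphism, conditioning on $X_0$ and on $X_t$ coincide; since $\tilde X_t$ is already $\sigma(X_0)$-measurable, taking conditional expectations in the linearized ODE for $\tilde Z_t$ produces a forward ODE for $\hat Z_t$ driven by $\tilde X_t$, $\eta_t$ and $\bar Z_t:=\mathbb{E}[Z_t\mid X_0]$ (which is a known function of $X_0$). Crucially, the displayed formula for $\eta_t$ depends on $(\tilde X_t,\tilde Z_t)$ only through $(\tilde X_t,\hat Z_t)$, so the coupled system for $(\tilde X_t,\hat Z_t)$ closes as a linear McKean--Vlasov ODE with Lipschitz coefficients (by \assreg\ combined with Propositions \ref{prop:regularityfromOC} and \ref{prop:reg:v,rho}) and with zero initial data coming from Step~2. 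Cauchy--Lipschitz then forces $(\tilde X_t,\hat Z_t)\equiv(0,0)$ on $[t_0,T]$, whence $\eta_t\equiv 0$; the remaining identities $\rho_t\equiv 0$ and $v_t\equiv 0$ follow from \eqref{eq:vtrhotProp3.2} by the now-vanishing source terms together with the zero initial and terminal data.

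The main obstacle is to rigorously justify the conditional reduction performed in the third step: one must verify that $\eta_t$ can genuinely be written as a function of $(\tilde X_t,\hat Z_t)$ alone (and not the full $\tilde Z_t$), that the interchange between the conditional expectation and the linearized Lagrangian flow is licensed by the regularity of $\nabla_x v_t$ supplied by Proposition \ref{prop:reg:v,rho}, and that the resulting McKean--Vlasov coefficients are Lipschitz in the norms dictated by Definitions \ref{def:admissible:perturbation} and \ref{def:R(t_0)} so that Cauchy--Lipschitz (or a Gronwall argument tailored to this linear setting) indeed singles out the trivial solution.
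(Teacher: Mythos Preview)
Your proposal is correct and follows the same three-step strategy as the paper: Lagrangian reformulation, application of \assdis\ at $t_0$ to obtain a zero initial condition for the conditioned backward variable, and a forward Gr\"onwall argument. The one bookkeeping difference is that the paper avoids your linearized-flow variable $\tilde X_t$ altogether by working directly with $\zeta_t:=\nabla_x v_t(X^*_t,Y_0)$ (i.e.\ your $\tilde Z_t$ minus the $\nabla^2_xu^*_t\,\tilde X_t$ term, which vanishes at $t_0$ anyway) and keeping $\rho_t$ in its dual form $\|\rho_t\|_{(\mathcal C^1_3)^*}$; the Gr\"onwall then closes on the pair $(\mathbb E[|\hat\zeta_t|],\|\rho_t\|_{(\mathcal C^1_3)^*})$ via the explicit representation of $\rho$ from Proposition~\ref{prop:differentiatinggammalambda09/02}, which sidesteps precisely the Lagrangian-representation and closure issues you flag as the main obstacle.
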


Subsections \ref{subse:4:2}, \ref{subse:4:uniquenessNOC} and \ref{subse:4:uniquenessLinSyst} are dedicated to the proofs of Theorem \ref{thm:main:stability}, Proposition \ref{prop:uniquenessNOC} and Proposition \ref{prop:uniquenessLinSyst} respectively. In Subsection \ref{sec:Oisopen04/03} we prove that $\mathcal{O}$ is open and that stable solutions are \textit{isolated}. In Subsection \ref{subse:DPandUAT} we explain the link between the discriminating property imposed on the vector field $b$ and the universal approximation property of typical activation functions in machine learning.

\subsection{Proof of Theorem 
\ref{thm:main:stability}}
\label{subse:4:2}

Taking for granted Propositions 
\ref{prop:uniquenessNOC}
and 
\ref{prop:uniquenessLinSyst}, we prove 
Theorem 
\ref{thm:main:stability}.

\begin{proof}
    \textit{Step 1.} Take $(t_0,\gamma_0) \in [0,T) \times \mathcal{P}_3(\R^{d_1} \times \R^{d_2})$ and an arbitrary minimizer $\boldsymbol \nu^* = (\nu^*_t)_{t_0 \le t \le T}$ of $J((t_0,\gamma_0),\cdot)$ with corresponding curve $\boldsymbol \gamma^* = (\gamma^*_t)_{t_0 \le t \le T}$. 
    We claim that for any $t_1 \in (t_0,T]$, 
    $(\nu^*_t)_{t_1 \le t \le T}$ is the unique minimizer of $J((t_1, \gamma^*_{t_1}),\cdot).$
By dynamic programming, it is indeed 
 a minimizer of 
$J ((t_1, \gamma^*_{t_1}),\cdot )$. 
Uniqueness is shown as follows. 

Let ${\boldsymbol \nu}=(\nu_t)_{t_1 \le t \le T}$ be an (arbitrary) optimal control for $J((t_1, \gamma^*_{t_1}),\cdot)$ with 
corresponding curve ${\boldsymbol \gamma}=(\gamma_t)_{t_1 \le t \le T}$. We extend $(\nu_t , \gamma_t)_{t_1 \le t \le T}$ to $[t_0,t_1)$ by setting
\begin{equation} 
(\bar{\nu}_t, \bar{\gamma}_t) :=
\left \{
\begin{array}{ll}
(\nu^*_t, \gamma^*_t) &\mbox{ if } t \in [t_0,t_1), 
\\
(\nu_t , \gamma_t) &\mbox{ if } t \in [t_1,T].
\end{array}
\right.
\end{equation}
Since $\gamma_{t_1} = \gamma^*_{t_1}$, 
we can easily recombine the two 
continuity equations 
(i.e., the second equation in \eqref{eq:NOCforutgammat2Avril})
solved by ${\boldsymbol \gamma}$
and $\boldsymbol \gamma^*$
respectively and then deduce that 
$\bar{\boldsymbol \gamma} = (\bar{\gamma}_t)_{t_0 \le t \le T}$ solves the continuity equation associated with the control 
$\bar{\boldsymbol \nu} = (\bar{\nu}_t)_{t_0 \leq t \le T}$. Moreover, by dynamic programming, $\bar{\boldsymbol \nu} =
(\bar{\nu}_t)_{t_0 \le t \le T}$ is an optimal control for $J((t_0,\gamma_0),\cdot)$. 
By construction, $\bar{\nu}_{t_0} = \nu^*_{t_0}$. Therefore, we deduce from Proposition \ref{prop:uniquenessNOC} that $\bar{\nu}_t = \nu^*_t $ for all $t \in [ t_0,T]$. In particular, we have $\nu_t = \nu^*_t$ for all $t \in [ t_1,T]$, which proves the claim.
\vskip 4pt

\textit{Step 2.} Take $(t_0,\gamma_0)$
and $(\boldsymbol \nu^*,\boldsymbol \gamma^*)$
as in the first step. 
We claim that for any $t_1 \in (t_0,T]$,  $(\nu^*_t)_{t_1 \le t \le T}$ is a stable solution of $J((t_1, \gamma^*_{t_1}),\cdot).$

Indeed, call 
$\boldsymbol u^*=( u^*_t)_{t_0 \le t \le T}$
the second component of the 
solution
 $(\gamma^*_t, u^*_t)_{t_0 \leq t \le T}$ to the system \eqref{eq:NOCforutgammat2Avril} starting from $(t_0,\gamma_0)$ and associated with $\boldsymbol \nu^* = (\nu^*_t)_{t_0 \le t \le T}$. Take now a solution $({\boldsymbol \eta},{\boldsymbol \rho}, {\boldsymbol v})=(\eta_t, \rho_t, v_t)_{t_1 \leq t \le T}$ to the linearized system \eqref{eq:nutProp3.2}-- \eqref{eq:vtrhotProp3.2} starting from $t_1$, around the triple 
 $(\boldsymbol \nu^*,\boldsymbol \gamma^*,\boldsymbol u^*)$. 
By Theorem  \ref{prop:SecondOrderConditions3Avril}, $({\boldsymbol \eta}, {\boldsymbol \rho})$ is a minimum of $\mathcal{J}  ( (t_1, \gamma^*_{t_1}), \boldsymbol \nu^*, \cdot )$ and the corresponding value is $0$. We extend $({\boldsymbol \eta},{\boldsymbol \rho})$ to $[t_0,T]$ by setting 
\begin{equation} 
(\bar{\eta}_t, \bar{\rho}_t) :=
\left \{
\begin{array}{ll}
(0, 0) &\mbox{ if } t \in [t_0,t_1), 
\\
(\eta_t , \rho_t) &\mbox{ if } t \in [t_1,T].
\end{array}
\right.
\end{equation}
We easily check that
$$ \mathcal{J} \bigl( (t_0,\gamma_0), \boldsymbol \nu^*, \bar{\boldsymbol  \eta}  \bigr) = 0 +  \mathcal{J} \bigl( (t_1, \gamma_{t_1}^*), \boldsymbol  \nu^*, {\boldsymbol \eta} \bigr) =0. $$
By Theorem  \ref{prop:SecondOrderConditions3Avril} again, we deduce that $(\bar{\eta}_t, \bar{\rho}_t)_{t_0 \le t \le T}$ is a minimum of $\mathcal{J}  (t_0, (\boldsymbol \nu^*, \boldsymbol \gamma^*, \boldsymbol u^*), \cdot  )$. In particular, there is a multiplier $\bar{\boldsymbol v}=(\bar{v}_t)_{ t_0 \leq t \le T}$ such that the triple $(\bar{\eta}_t, \bar{\rho}_t, \bar{v}_t)_{t_0 \le t  \le T}$ solves the linearized system \eqref{eq:nutProp3.2}--\eqref{eq:vtrhotProp3.2} starting from $t_0$, around the solution $(\boldsymbol \nu^*, \boldsymbol \gamma^*, \boldsymbol u^*)$. By construction, $\bar{\eta}_{t_0}=0$. Therefore, we can use Proposition \ref{prop:uniquenessLinSyst} to infer that $(\bar{\eta}_t, \bar{\rho}_t, \bar{u}_t) = (0, 0,0)$ for all $t \in [t_0,T]$. In particular $(\eta_t, \rho_t) = (0, 0)$ for all $t \in [t_1,T]$, from which we easily deduce that $(\eta_t, \rho_t, v_t) = (0, 0,0)$ for all $t \in  [t_1,T]$.
This shows that $(t_1, \gamma^*_{t_1}) \in {\mathcal O}.$
\vskip 4pt

\textit{Step 3.}
As a consequence, we can prove that $\mathcal{O}$ is dense. 
    Take indeed $(t_0,\gamma_0) \in [0,T) \times \mathcal{P}_3(\R^{d_1} \times \R^{d_2})$
    and an optimal control $(\nu^*_t)_{t_0 \le t \le T}$ for $J((t_0,\gamma_0),\cdot)$ with associated curve $(\gamma^*_t)_{t_0 \leq t \le T}$. By Steps 1 and 2, we know that $(t, \gamma^*_t)$ belongs to $\mathcal{O}$ for any $t \in (t_0,T]$. Since $t \mapsto \gamma^*_t \in \mathcal{P}_3(\R^{d_1} \times \R^{d_2})$ is continuous, we easily get the result. The proof that $\mathcal{O}$ is open is the object of Subsection \ref{sec:Oisopen04/03}. 
\end{proof}

\subsection{Proof of Proposition 
\ref{prop:uniquenessNOC}}
\label{subse:4:uniquenessNOC}

\begin{proof}
Following the statement, we consider  
 $(t_0,\gamma_0) \in [0,T) \times {\mathcal P}_3({\mathbb R}^{d_1} \times {\mathbb R}^{d_2})$
 and $({\boldsymbol \nu}^1,{\boldsymbol \gamma}^1,{\boldsymbol u}^1)$ and $({\boldsymbol \nu}^2,{\boldsymbol \gamma}^2,{\boldsymbol u}^2)$ two solutions of  \eqref{eq:NOCfornut2Avril}--\eqref{eq:NOCforutgammat2Avril} with $(t_0,\gamma_0)$ as initial condition. 
 \vskip 4pt

    \textit{Step 1. ODE representation.} 
Following Proposition \ref{prop:FBODErepresentation}, we represent $( \bd{\gamma}^{i}, \bd{u}^{i})$ for $i=1,2$ by means of a system of ordinary differential equations with random initial conditions.  On some probability space $(\Omega, \mathcal{F}, \mathbb{P})$, 
    consider a pair $(X_0,Y_0)$
    of ${\mathbb R}^{d_1} \times \R^{d_2}$-valued random variables with ${\mathbb P} \circ (X_0,Y_0)^{-1} = \gamma_0$. For $i =1,2$, 
    we call 
    ${\boldsymbol X}^i$
    the solution to the ODE 
    \begin{equation*} 
    \dot{X}_t^i = b(X_t^i,\nu_t^i), \quad 
    t \in [t_0,T]; \quad X_{t_0}^i = X_0,
    \end{equation*}
    which is indeed uniquely solvable thanks
    to \assreg $ $
    and to Proposition 
    \ref{prop:regularityfromOC}. We then let 
    \begin{equation}
Z_t^i = \nabla_x u_t^i(X_t^i, Y_0), \quad 
t \in [t_0,T]. 
\label{eq:Zti:representation:nabla_xuti}
    \end{equation}
    By 
    differentiating in $x$ 
    the first equation 
    in 
    \eqref{eq:NOCfornut2Avril}
    and then
    expanding (in time) the right-hand side
    in \eqref{eq:Zti:representation:nabla_xuti} (which is possible thanks to the regularity of ${\boldsymbol u}^i$ 
    ensured by Proposition \ref{prop:regularityfromOC}), we deduce that ${\boldsymbol Z}^i=(Z_t^i)_{t_0 \le t \le T}$ solves the 
    (uniquely solvable) 
    backward linear ODE 
    \begin{equation*}
\dot{Z}_t^i = - \nabla_x b(X_t^i,\nu_t^i) 
Z_t^i, \quad t \in [t_0,T]; \quad Z_T^i = \nabla_x L(X_T^i, Y_0). 
\end{equation*} 
Equivalently, $({\boldsymbol X}^{i},{\boldsymbol Z}^{i})$ is the solution to
the forward-backward ODE system:
\begin{equation}
    \left \{
    \begin{array}{ll}
    \dot{X}_t^{i} = b(X_t^{i},\nu_t^{i}), &\quad  t \in [t_0,T], 
    \quad X_{t_0}^i = X_0;
    \\
    \dot{Z}_t^{i} = - \nabla_x b(X_t^{i},\nu_t^{i})Z_t^{i}, 
    &\quad t \in [t_0,T], 
    \quad  Z_T^{i} = \nabla_x L(X_T^{i},Y_0).
    \end{array}
    \right.
\label{eq:ODE:representation:se:4}
\end{equation}
We easily check that, for all $t\in [t_0,T]$,  ${\mathbb P} \circ 
( X_t^{i} , Y_0 )^{-1} = \gamma_t^{i}$.
\vskip 4pt

\textit{Step 2. Conditional expectation of the backward component upon $X_0$.} For $i=1,2$, ${\boldsymbol X}^i$ solves an ODE with  a Lipschitz continuous velocity field. Therefore, for all $t \in [t_0,T]$ we can find two homeomorphisms $F_t^{i} : \R^{d_1} \rightarrow \R^{d_1}$, $i=1,2$ such that
$$ X_t^{i} = F_t^{i} \bigl( X_0 \bigr).$$
In particular, the $\sigma$-fields 
$(\sigma(X_t^i))_{i=1,2}$ generated by 
$X_t^1$ and $X_t^2$ respectively are each 
equal to $\sigma(X_0)$.

We then introduce, for all $t \in [t_0,T]$, the notation
\begin{equation} 
\hat{Z}_t^{i} = \E \bigl[ Z_t^{i} |X_0 \bigr]. 
\label{eq:notation-conditionalexpectation}
\end{equation}
Notice that, for each $t \in [t_0,T]$, 
$\hat{Z}_t^i$ is just uniquely defined 
${\mathbb P}$-almost surely. 
However, using the time continuity 
of ${\boldsymbol Z}^i$, it is standard to have a version of all these conditional expectations 
such that ${\mathbb P}$-almost surely, $t \in [t_0,T] \mapsto \hat{Z}_t^i$
is continuous.  

Since $X_t^i$ is $\sigma(X_0)$-measurable for each $t \in [t_0,T]$, we have
$$ \E \bigl[ \nabla_x b(X_t^{i},\nu_t^{i}) Z_t^{i} | X_0 \bigr] =    \nabla_x b(X_t^{i},\nu_t^{i}) \E \bigl[ Z_t^{i} | X_0 \bigr]=
  \nabla_x b(X_t^{i},\nu_t^{i})  \hat{Z}_t^{i}. $$
We deduce that, for $i=1,2$, $\hat{\boldsymbol Z}^{i} = (\hat{Z}^{i}_t)_{ t_0 \leq t \leq T}$ solves
$$ \tfrac{d}{dt} \hat{Z}_t^{i} = - \nabla_x b( X_t^{i},\nu_t^{i}) \hat{Z}_t^{i}, \quad t \in [t_0,T].  $$

\vskip 4pt

\textit{Step 3. Injectivity property.} 
Recalling formula
\eqref{eq:NOCfornut2Avril}, we can rewrite $\nu_{t}^{i}$, for $i=1,2$ as 
$$ \nu_{t}^{i}(a) = \frac1{z_t^i}   \exp \Bigl(  -\ell(a) - \frac{1}{\epsilon} \E \bigl[ b(X_t^{i},a) \cdot Z_t^{i} \bigr] \Bigr), \quad a \in A. $$
The above identity is true for 
any $t \in [t_0,T]$. In particular, we can choose $t=t_0$. 
By assumption, we already have 
$\nu_{t_0}^1 = \nu_{t_0}^2$.
Then, taking $a=0$
(which is possible since $\nu_{t_0}^i$ is a 
smooth function of $a$, see Proposition 
 \ref{prop:regularityfromOC})
 and using the fact that 
$b(x,0) = 0$ for all $x$
(see item (i) in 
\assreg), we obtain 
 $z_{t_0}^1 = z_{t_0}^2$. 
We
deduce that 
$$\forall a \in A, \quad \E \bigl[ b(X_0,a) \cdot Z_{t_0}^{1} \bigr] = \E \bigl[ b(X_0,a) \cdot Z_{t_0}^{2} \bigr].  $$
Recalling the notation \eqref{eq:notation-conditionalexpectation}, we deduce from 
\assdis
 that, ${\mathbb P}$-almost surely, 
$ \hat{Z}_{t_0}^1 = \hat{Z}_{t_0}^2$. 
\vskip 4pt

\textit{Step 4 - Stability argument and conclusion.} 
The point is to propagate the 
identity 
$ \hat{Z}_{t_0}^1 = \hat{Z}_{t_0}^2$
to any time $t \in (t_0,T]$. 
First, 
we address the regularity 
of 
the coefficients $b$ and $\nabla_x b$
(in 
\eqref{eq:ODE:representation:se:4})
with respect to the measure argument. 
We claim that there exists $C >0$ such that, for all $x \in \R^d$ and $t \in [t_0,T]$,
\begin{equation} 
|b(x,\nu_t^2) - b(x,\nu_t^1) | + |\nabla_x b(x,\nu_t^2) - \nabla_x b(x,\nu_t^1)| \leq C \bigl( \E \bigl[ | X_t^2 - X_t^1 |^2 \bigr]^{1/2} + \E \bigl[ |\hat{Z}_t^2 - \hat{Z}_t^1 | \bigr] \bigr).  
\label{eq:needssomeestimates}
\end{equation}
In a nutshell, the inequality 
\eqref{eq:needssomeestimates}
follows from the integrability properties of 
${\boldsymbol \nu}^1$
and 
${\boldsymbol \nu}^2$
stated in 
Proposition \ref{prop:regularityfromOC} and the assumptions on $b$
stated in \assreg. 
We provide a sketch of the proof 
at the end of the paragraph.

Thanks to the equations satisfied by ${\boldsymbol X}^1$ and ${\boldsymbol X}^2$, to 
\eqref{eq:needssomeestimates}
and to the Lipschitz property of 
$b$ in $x$ (locally in $a$, the gradient in 
$x$ growing at most at a quadratic rate in 
$a$), we then find a constant $C>0$ such that, $\mathbb{P}-$almost-surely,
\begin{equation} 
|X_t^2 -X_t^1| \leq C \int_{t_0}^t \bigl( \E \bigl[ | X_s^2 - X_s^1 |^2 \bigr]^{1/2} + \E \bigl[ |\hat{Z}_s^2 - \hat{Z}_s^1 | \bigr] \bigr) ds, \quad t \in [t_0,T]. 
\label{eq:totakeexpectations22/02}
\end{equation}
Using the identity $\hat{Z}^1_{t_0} = \hat{Z}^2_{t_0}$, the equations for $\hat{\boldsymbol Z}^1$ and $\hat{\boldsymbol Z}^2$ (seen forward in time) lead to
$$ |\hat{Z}_t^2 - \hat{Z}_t^1| \leq C \int_{t_0}^t\bigl( |X_s^2 - X_s^1| + \E \bigl[ |X_s^2 - X_s^1|^2 \bigr]^{1/2} + \E \bigr[ |\hat{Z}_s^2 - \hat{Z}_s^1 |\bigr] \bigr) |\hat{Z}^2_s| ds, \quad t \in [t_0,T], $$
and so, by Cauchy-Schwarz inequality
\begin{equation} 
\E \bigl[|\hat{Z}_t^2 - \hat{Z}_t^1| \bigr] \leq C \int_{t_0}^t\bigl( \E \bigl[ |X_s^2 - X_s^1|^2 \bigr]^{1/2} + \E \bigr[ |\hat{Z}_s^2 - \hat{Z}_s^1 |\bigr] \bigr) \E \bigl[ |\hat{Z}^2_s|^2 \bigr]^{1/2} ds, \quad t \in [t_0,T]. 
\label{eq:alsoGronwall22/02}
\end{equation}
We make the following observation: since $(\nabla_x b(X_t^i,\nu_t^i))_{t_0 \le t \le T}$
is bounded, there exists
a constant $C>0$ such that $\sup_{t_0 \le t \le T} \vert Z_t^i \vert \leq C 
\vert Z_T^i \vert = C \vert \nabla_x L(X_T^i,Y_0) \vert$ for $i \in \{1,2\}$.
Recalling that 
$({\boldsymbol X}^i)_{i=1,2}$
are square-integrable (since $({\boldsymbol \gamma}^i)_{i=1,2}$
take values in 
${\mathcal P}_3({\mathbb R}^{d_1} \times {\mathbb R}^{d_2})$), 
we deduce that $  {\mathbb E} [ \sup_{t_0 \le t \le T} \vert Z_t^i \vert^2]
\leq C$ and, by Jensen's inequality for the conditional expectation
\begin{equation}
\sup_{t \in [t_0,T]} \E [|\hat{Z}^{i}_t |^2] \leq  C.
\label{eq:fromJensen27/04}
\end{equation} 
Therefore,
$$ \E \bigl[|\hat{Z}_t^2 - \hat{Z}_t^1| \bigr] \leq C \int_{t_0}^t\bigl( \E \bigl[ |X_s^2 - X_s^1|^2 \bigr]^{1/2} + \E \bigr[ |\hat{Z}_s^2 - \hat{Z}_s^1 |\bigr] \bigr)  ds, \quad t \in [t_0,T]. $$
Since \eqref{eq:totakeexpectations22/02} is true $\mathbb{P}-$almost-surely, it also holds in $L^2(\mathbb{P})$ and we deduce that 
\begin{equation} 
\E \bigl[|X_t^2 -X_t^1|^2 \bigr]^{1/2} \leq C \int_{t_0}^t \bigl( \E \bigl[ | X_s^2 - X_s^1 |^2 \bigr]^{1/2} + \E \bigl[ |\hat{Z}_s^2 - \hat{Z}_s^1 | \bigr] \bigr) ds, \quad t \in [t_0,T]. 
\label{eq:tobeGronwalled22/02}
\end{equation}
Combining \eqref{eq:tobeGronwalled22/02} and \eqref{eq:alsoGronwall22/02} and then using Grönwall's lemma, we obtain
$$\forall t \in [t_0,T], \quad \E \bigl[ |X_t^1 -X_t^2|^2 \bigr]^{1/2} = \E \bigl[| \hat{Z}_t^1 - \hat{Z}_t^2 |] =0.  $$
Returning to the equations satisfied by 
${\boldsymbol \nu}^1$ and 
${\boldsymbol \nu}^2$, we deduce in particular that $\nu_t^1 = \nu_t^2$ for all $t \in [t_0,T]$. This easily implies that $(\gamma_t^{1}, u_t^1) = (\gamma_t^2,u_t^2)$ for all $t \in [t_0,T]$.

We now explain the proof of 
\eqref{eq:needssomeestimates}. 
By item (i) in 
\assreg, 
\begin{equation*}
\vert b(x,\nu_t^1) - b(x,\nu_t^2) 
\vert + | \nabla_x b(x,\nu_t^1) - \nabla_x b(x,\nu_t^2)| \leq C   \int_{{\mathbb R}^d} (1+ \vert a\vert^2) |\nu_t^1(a) - \nu_t^2(a)|  da, \quad t \in [t_0,T], \ x \in {\mathbb R}^d. 
\end{equation*}
It then remains to study the difference 
$\nu_t^1(a)-\nu_t^2(a)$, which relies itself on the difference 
${\mathbb E}[b(X_t^1,a) \cdot Z_t^1 
- b(X_t^2,a) \cdot Z_t^2]={\mathbb E}[b(X_t^1,a) \cdot \hat Z_t^1 
- b(X_t^2,a) \cdot \hat Z_t^2]$.  Then, using once again item (i) in \assreg,
we obtain 
\begin{equation*} 
\begin{split}
\bigl\vert {\mathbb E}\bigl[b(X_t^1,a) \cdot \hat{Z}_t^1 
- b(X_t^2,a) \cdot \hat{Z}_t^2\bigr] \bigr\vert &\leq 
C (1+\vert a \vert^2) {\mathbb E}\Bigl[ \bigl( 1 + \vert \hat{Z}_t^1 \vert \bigr) 
\vert X_t^1 - X_t^2 \vert +
\vert \hat{Z}_t^1 - \hat{Z}_t^2\vert \Bigr]
\\
&\leq C (1+\vert a \vert^2) \Bigl( {\mathbb E} \bigl[ 
\vert X_t^1 - X_t^2 \vert^2 \bigr]^{1/2} + \E \bigl[
\vert \hat{Z}_t^1 - \hat{Z}_t^2\vert \bigr] \Bigr),
\end{split} 
\end{equation*}
with the second line following from Cauchy-Schwarz inequality and \eqref{eq:fromJensen27/04}. 
Using the fact that $\ell$ grows 
at least as $\vert a \vert^4$ together with the bound 
$\vert {\mathbb E}[b(X_t^i,a) \cdot Z_t^i] \vert \leq C ( 1 + \vert a \vert) $, we can easily insert the latter bound in the exponential 
writing of $\nu_t^i$, see
\eqref{eq:NOCfornut2Avril}, 
with a similar argument for the normalizing constants $z_t^i$.
\end{proof}

\subsection{Proof of Proposition of \ref{prop:uniquenessLinSyst}}
\label{subse:4:uniquenessLinSyst}

The proof of Proposition \ref{prop:uniquenessLinSyst} is very similar to the one of Proposition \ref{prop:uniquenessNOC}. 

\begin{proof} 
    Let 
$(t_0,\gamma_0) \in [0,T] \times {\mathcal P}_3({\mathbb R}^{d_1} \times {\mathbb R}^{d_2})$
    and
    $(\boldsymbol \nu^*,\boldsymbol u^*,\boldsymbol \gamma^*)=(\nu^*_t, \gamma^*_t, u^*_t)_{t_0 \leq t \leq T}$
    be a minimizer of $J ((t_0,\gamma_0),\cdot )$ (and its corresponding curve and multiplier). 
\vskip 4pt
    
    \textit{Step 1. ODE representation.}
    Following the proof 
    of Proposition 
    \ref{prop:uniquenessNOC}, we introduce the solution 
    $\boldsymbol X^*=( X^*_t)_{t_0 \le t \le T}$ of the ODE 
    $$ \frac{d}{dt} X^*_t = b\bigl(X^*_t,\nu^*_t\bigr), \quad t \in [t_0,T], \quad X^*_{t_0} = X_0;$$
where $(X_0,Y_0)$ is an ${\mathbb R}^{d_1} 
\times {\mathbb R}^{d_2}$-valued random variable, constructed on some 
probability space $(\Omega, \mathcal{F}, \mathbb{P})$, 
with ${\mathbb P} \circ (X_0,Y_0)^{-1} = \gamma_0$. 
Moreover, letting $\boldsymbol Z^* =( Z^*_t := \nabla_x u^*_t(X^*_t,Y_0))_{t_0 \le t \le T}$, we know that
$$\frac{d}{dt} 
Z^*_t = -\nabla_x b\bigl( X^*_t,\nu^*_t\bigr) Z^*_t, \quad
t \in [t_0,T]; \quad Z^*_T = \nabla_x L \bigl(X^*_T,Y_0\bigr).$$

We then introduce ${\boldsymbol \zeta}=(\zeta_t := \nabla_x v_t(X^*_t,Y_0))_{t_0 \le t \le T}$. Differentiating in $x$ the first equation in 
\eqref{eq:vtrhotProp3.2} (which is possible thanks to Proposition 
\ref{prop:reg:v,rho}), we obtain 
\begin{equation}
\label{eq:zeta:t:diff}
\begin{split}
\dot{\zeta}_t
&= \nabla_x \partial_t v_t\bigl( X^*_t,Y_0\bigr) + \nabla_{xx}^2 v_t\bigl( X^*_t,Y_0 \bigr) b\bigl( X^*_t, \nu^*_t\bigr) 
\\
&= - \nabla_x b \bigl( X^*_t, \nu^*_t\bigr) \nabla_{x} v_t\bigl( X^*_t,Y_0 \bigr) - \nabla_x b \bigl(  X^*_t, \eta_t \bigr) \nabla_x  u^*_t \bigl( X^*_t,Y_0 \bigr)
-\nabla_{xx}^2 u^*_t\bigl( X^*_t,Y_0 \bigr) b\bigl( X^*_t,\eta_t\bigr)
\\
&= - \nabla_x b \bigl( X^*_t, \nu^*_t\bigr) \zeta_t - \nabla_x b \bigl(  X^*_t, \eta_t \bigr)  Z^*_t 
-\nabla_{xx}^2 u^*_t\bigl( X^*_t,Y_0 \bigr) b\bigl( X^*_t,\eta_t\bigr).
\end{split}
\end{equation}

\textit{Step 2. Conditioning upon $X_0$.}
 Following again the proof 
    of Proposition 
    \ref{prop:uniquenessNOC}, we define the 
   (time-continuous) collection of 
   conditional expectations: 
$$ \hat{\zeta}_t := \E \bigl[ \zeta_t | X_0 \bigr], 
\quad t \in [t_0,T].$$
Taking conditional expectation in the equation
\eqref{eq:zeta:t:diff}
satisfied by ${\boldsymbol \zeta}$, using the fact that $ X^*_t$ is 
$\sigma(X_0)$-measurable for any $t \in [t_0,T]$ and recalling \eqref{eq:discriminating26/05}, we get, for all $t \in [t_0,T]$
\begin{equation}
\begin{split}
&\displaystyle \frac{d}{dt} \hat{\zeta}_t = - \nabla_x b \bigl(X^*_t, \nu^*_t\bigr) \hat{\zeta}_t  - \nabla_x b\bigl( X^*_t,\eta_t \bigr) \E \bigl[ Z^*_t |X_0 \bigr]  - \E \bigl[ \nabla_{xx}^2 u^*_t\bigl(X^*_t,Y_0\bigr) | X_0] b\bigl(X^*_t , \eta_t \bigr).
\end{split}
\label{eq:equationhatZ26/06}
\end{equation}

\textit{Step 3. Using the discriminating property.}
Thanks to the explicit expression 
\eqref{eq:nutProp3.2}
for $\eta_t$, the condition $\eta_{t_0} = 0$
(combined with the already known condition $\rho_{t_0}=0$, see the last line
in \eqref{eq:vtrhotProp3.2}) leads to
$$ \int_{\R^{d_1} \times \R^{d_2}} b(x,\nu^*_{t_0} - \delta_a) \cdot \nabla_x v_{t_0}(x,y) d \gamma^*_{t_0}(x,y) = 0, \quad  a \in A, $$
which can be rewritten as
$$ \E \bigl[ b(X_0,\nu^*_{t_0} - \delta_a) \cdot {\zeta}_{t_0} \bigr] = 0, \quad   a \in A. $$
Taking first $a=0$ (and using the assumption $b(x,0)=0$), we see that $\E [ b(X_0,\nu^*_{t_0}) \cdot \zeta_{t_0} ] =0$ and then returning to the identity for a general $a$, we get, by linearity of $b(x,\nu)$ with respect to $\nu$, 
$$\E \bigl[ b(X_0,a) \cdot \zeta_{t_0} \bigr] =0, \quad a \in A.$$
By \assdis, we obtain, ${\mathbb P}$-almost surely, 
\begin{equation} 
 \hat{\zeta}_{t_0} =   \E \bigl[ \zeta_{t_0}|X_0\bigr] = 0.  
\label{eq:discriminating26/05}
\end{equation}
\vskip 4pt

\textit{Step 4. Stability argument.}
Notice 
from 
\eqref{eq:nutProp3.2}
that for any $t \in [t_0,T]$, $\eta_t$ can be rewritten as
\begin{equation} 
\begin{split}
\eta_t(a) 
&= \frac{\nu^*_t(a)}
{\epsilon} 
 \E \Bigl[ \Bigl(b\bigl(X^*_t,  \nu^*_t\bigr) - b\bigl(X^*_t, a \bigr) \Bigr) \cdot \hat{\zeta}_t \bigr] + \frac{\nu^*_t(a)}
{\epsilon} 
\bigl \langle \bigl( b(\cdot,\nu^*_t) - b(\cdot,a) \bigr) \cdot \nabla_x u^*_t; \rho_t \bigr \rangle.  
\end{split}
\label{eq:numoinstildenu26/06}
\end{equation}
Using Proposition \ref{prop:regularityfromOC} together with the integrability of $\nu_t^*$ we infer that 
$$ \int_A (1+|a|^2) d|\eta_t|(a) \leq C \Bigl( \E \bigl[ |\hat{\zeta}_t  \vert \bigr] + \norm{ \rho_t}_{(\mathcal{C}^1_3)^*} \Bigr).   $$
In particular using \assreg we deduce the upper bound
$$|b(x,\eta_t )| + |\nabla_x b(x, \eta_t )| \leq C\Bigl( \E \bigl[ |\hat{\zeta}_t  \vert \bigr] + \norm{ \rho_t}_{(\mathcal{C}^1_3)^*} \Bigr),$$
for some $C>0$ and for all $(t,x) \in [t_0,T] \times \R^{d_1}$. Recalling the definition of $\bd{Z}^*$, the estimate  $\sup_{ \in [t_0,T]} \norm{ u^*_t}_{\mathcal{C}^2_{2,1}} < +\infty$ from Proposition \ref{prop:SolutionBackxard16Sept}  and the integrability of $\gamma_t \in \mathcal{P}_3(\R^{d_1} \times \R^{d_2})$ we easily show that  
$$ \sup_{t \in[t_0,T]} \E \bigl[ |Z_t^*| \bigr] + \E  \bigl[ | \nabla_{xx}^2 u_t^*(X_t^*,Y_0)| \bigr] < +\infty, $$
and then, using the equation \eqref{eq:equationhatZ26/06}, seen forward in time from the initial condition $\hat{\zeta}_{t_0}=0$ and applying Grönwall's Lemma we find that 
$$ \E \bigl[ | \hat{\zeta}_t| \bigr] \leq  C \int_{t_0}^t \Bigl( \E \bigl[ |\hat{\zeta}_s  \vert \bigr] + \norm{ \rho_s}_{(\mathcal{C}^1_3)^*} \Bigr) ds.  $$
However, using the explicit formula for $\bd{\rho}$ from Proposition \ref{prop:differentiatinggammalambda09/02} we easily obtain 
$$ \norm{\rho_t}_{(\mathcal{C}^1_3)^*} \leq C \int_{t_0}^t \int_A (1+|a|)d|\eta_s|(a) ds.$$
Combined together this leads to 
$$ \E \bigl[ | \hat{\zeta}_t | \bigr] + \norm{\rho_t}_{(\mathcal{C}^1_3)^*} \leq C \int_{t_0}^t \Bigl( \E \bigl[ |\hat{\zeta}_s  \vert \bigr] + \norm{ \rho_s}_{(\mathcal{C}^1_3)^*} \Bigr) ds.$$
We deduce from Grönwall's Lemma again that 
$$ \E \bigl[ | \hat{\zeta}_t | \bigr] = \norm{\rho_t}_{(\mathcal{C}^1_3)^*} = 0, \quad  t \in [t_0,T].$$
Getting back to the equation \eqref{eq:numoinstildenu26/06} we deduce that $\eta_t= 0$ for all $t \in [t_0,T]$. 
Then, by 
\eqref{eq:vtrhotProp3.2}, we get $(\rho_t,v_t) = 0$ for all $t\in [t_0,T]$. 
\end{proof}

\subsection{Topological Properties of the Set $\mathcal{O}$.}

\label{sec:Oisopen04/03}

We now prove some further topological properties for the set $\mathcal{O}$.

\begin{prop}
    The set $\mathcal{O}$ is open in $[0,T] \times \mathcal{P}_3(\R^{d_1} \times \R^{d_2})$.
\label{prop:Oisopen}
\end{prop}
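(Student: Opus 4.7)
I argue by contradiction. Suppose there exist $(t_0,\gamma_0) \in \mathcal{O}$ and a sequence $(t_0^n,\gamma_0^n)$ converging to $(t_0,\gamma_0)$ in $[0,T] \times \mathcal{P}_3(\R^{d_1} \times \R^{d_2})$ with $(t_0^n,\gamma_0^n) \notin \mathcal{O}$ for every $n$. Let $\bd{\nu}^*$ be the unique (stable) minimizer of $J((t_0,\gamma_0),\cdot)$. Up to a subsequence, one of two situations occurs: (i) $J((t_0^n,\gamma_0^n),\cdot)$ admits two distinct minimizers $\bd{\nu}^{*,1,n} \neq \bd{\nu}^{*,2,n}$, or (ii) its unique minimizer $\bd{\nu}^{*,n}$ fails to be stable.

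\emph{Case (i).} Applying Lemma \ref{lem:convergenceifuniqueness} to each of $\bd{\nu}^{*,1,n}$ and $\bd{\nu}^{*,2,n}$, both sequences converge strongly to $\bd{\nu}^*$ in the sense of that lemma, and in particular $\int_{t_0^n}^T \mathcal{E}(\nu_t^{*,2,n}|\nu_t^{*,1,n}) dt \to 0$. Setting $\bd{\nu}^{*,n} := \bd{\nu}^{*,1,n}$ and $\bd{\nu}^n := \bd{\nu}^{*,2,n}$, Property $(\mathcal{Q}_0)$ of Section \ref{sec:preparatorywork} holds. Since $\bd{\nu}^n$ is itself a minimizer, Proposition \ref{prop:preliminayworkProp2-28/02} produces, along a further subsequence, a weak limit $(\bd{\eta},\bd{\rho},\bd{v})$ of the normalized differences $(\bd{\eta}^n,\bd{\rho}^n,\bd{v}^n)$ of \eqref{eq:normalizedvariables03/03} that solves the linearized system \eqref{eq:nutProp3.2}-\eqref{eq:vtrhotProp3.2} around $\bd{\nu}^*$. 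Stability of $\bd{\nu}^*$ then forces $(\bd{\eta},\bd{\rho},\bd{v}) = (0,0,0)$. To derive a contradiction I exploit the fixed-point identity $\nu_t^n = \Gamma_t[\bd{\nu}^n]$: from the representation \eqref{eq:rewritinggammanutn28/02} I get $\nabla_a \log(\nu_t^n/\nu_t^{*,n}) = -\lambda_n \nabla_a k_t^n$, so applying the log-Sobolev inequality of Lemma \ref{lem:LSI} satisfied by $\nu_t^{*,n}$ (with constant uniformly bounded thanks to Lemma \ref{lem:Q2-28/02}) to $f = \nu_t^n/\nu_t^{*,n}$ gives
\begin{equation*}
\mathcal{E}(\nu_t^n|\nu_t^{*,n}) \;\leq\; C \int_A \bigl|\nabla_a \log(\nu_t^n/\nu_t^{*,n})\bigr|^2 d\nu_t^n \;=\; C\lambda_n^2 \int_A |\nabla_a k_t^n(a)|^2 d\nu_t^n(a).
\end{equation*}
Integrating in $t$ and using $\int_{t_0^n}^T \mathcal{E}(\nu_t^n|\nu_t^{*,n}) dt = \lambda_n^2 > 0$ (positive since $\bd{\nu}^n \neq \bd{\nu}^{*,n}$), I divide by $\lambda_n^2$ to obtain
$1 \leq C \int_{t_0^n}^T \int_A |\nabla_a k_t^n|^2 d\nu_t^n dt$.
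But because the weak limit is trivial, the strong form of Proposition \ref{prop:convergencektn03/03} forces the right-hand side to tend to $0$, a contradiction.

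\emph{Case (ii).} Lemma \ref{lem:convergenceifuniqueness} gives $\bd{\nu}^{*,n} \to \bd{\nu}^*$, and non-stability provides a non-trivial $(\tilde{\bd{\eta}}^n,\tilde{\bd{\rho}}^n,\tilde{\bd{v}}^n) \in \mathcal{A}^\ell(t_0^n) \times \mathcal{R}(t_0^n) \times \mathcal{C}([t_0^n,T],\mathcal{C}^1_1)$ solving the linearized system around $\bd{\nu}^{*,n}$. By linearity I rescale so that $\int_{t_0^n}^T \int_A (\tilde\eta_t^n(a))^2/\nu_t^{*,n}(a)\,da\,dt = 1$. The exponential moment estimate on $\nu_t^{*,n}$ from Lemma \ref{lem:Q2-28/02} combined with Cauchy-Schwarz controls $\|\tilde{\bd{\eta}}^n\|_{\mathcal{D}(t_0^n)}$ uniformly in $n$, and then Propositions \ref{prop:differentiatinggammalambda09/02} and \ref{prop:SolnVt19/02} provide uniform bounds on $(\tilde{\bd{\rho}}^n,\tilde{\bd{v}}^n)$ in $\mathcal{R}(t_0^n)$ and in $\mathcal{C}([t_0^n,T],\mathcal{C}^2_1)$. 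A compactness argument mirroring the proof of Proposition \ref{prop:preliminaryworkprop1-28/02} extracts a limit $(\tilde{\bd{\eta}},\tilde{\bd{\rho}},\tilde{\bd{v}})$ solving the linearized system around $\bd{\nu}^*$; stability forces this limit to vanish. Injecting the ensuing strong convergences $\tilde{\bd{\rho}}^n \to 0$ and $\tilde{\bd{v}}^n \to 0$ back into the explicit representation \eqref{eq:nutProp3.2} of $\tilde\eta_t^n$ in terms of $(\tilde{\bd{\rho}}^n,\tilde{\bd{v}}^n)$ yields $\tilde\eta_t^n/\nu_t^{*,n} \to 0$ in a topology strong enough to contradict the normalization condition.

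The main obstacle is Case (ii): unlike Case (i), no Gibbs fixed-point structure is available to convert the weak triviality of the limit into a quantitative estimate through log-Sobolev, and one must instead rely on the smoothing properties of the linearized transport and continuity equations to upgrade weak convergence of $(\tilde{\bd{\rho}}^n,\tilde{\bd{v}}^n)$ into strong convergence that can be transferred back to $\tilde{\bd{\eta}}^n$ via \eqref{eq:nutProp3.2}.
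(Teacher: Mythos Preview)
Your proposal is correct but takes a different route from the paper in both cases, with one small step that needs tightening.

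In Case~(i) the paper does not use the Gibbs identity or log-Sobolev at all: it invokes Lemma~\ref{lem:relativeentropyasasqureddistance28/02} to get $\lambda_n^2\le C\sup_t\|\gamma_t^{*,n}-\gamma_t^n\|_{(\mathcal C^2_2)^*}^2$, which immediately yields the uniform lower bound $\sup_t\|\rho_t^n\|_{(\mathcal C^2_2)^*}\ge 1/\sqrt{C}$, and the contradiction comes directly from the strong convergence $\rho^n\to 0$ in Proposition~\ref{prop:preliminaryworkprop1-28/02}. Your argument via $\nu_t^n=\Gamma_t[\bd\nu^n]$, log-Sobolev, and the strong form of Proposition~\ref{prop:convergencektn03/03} is valid---it is essentially the final step of the proof of Theorem~\ref{prop:main:local:polyak:lojasiewicz}---but heavier. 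One point to fix: the claim that ``in particular $\int\mathcal E(\nu_t^{*,2,n}\,|\,\nu_t^{*,1,n})\,dt\to 0$'' follows from Lemma~\ref{lem:convergenceifuniqueness} is not immediate, since relative entropy is not a metric; you need Lemma~\ref{lem:relativeentropyasasqureddistance28/02} (or a direct computation from the Gibbs formulas) to verify item~(v) of Property~$(\mathcal Q_0)$.

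In Case~(ii) the paper normalizes instead by $\lambda_n:=\sup_t\|\rho_t^n\|_{(\mathcal C^2_2)^*}$, after observing that $\rho^n\not\equiv 0$ for any non-trivial solution of the linearized system; the contradiction then again comes from $\rho^n/\lambda_n\to 0$ strongly. Your normalization by $\int\!\int(\tilde\eta_t^n)^2/\nu_t^{*,n}\,da\,dt=1$ also works---Cauchy--Schwarz against the exponential moments of $\nu_t^{*,n}$ does control $\|\tilde{\bd\eta}^n\|_{\mathcal D(t_0^n)}$, and feeding the strong convergences $\tilde\rho^n,\tilde v^n\to 0$ back into \eqref{eq:nutProp3.2} gives the contradiction. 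The paper's choice is more economical: the same object $\rho$ carries both the normalization and the contradiction in both cases, so one never has to translate back from $(\rho,v)$ to $\eta$.
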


\begin{proof}
    Toward a contradiction, suppose that there exist $(t_0,\gamma_0) \in \mathcal{O}$ and a sequence $(t_0^n, \gamma^n_0)_{ n \geq 1} \notin \mathcal{O}$ converging in $[0,T] \times \mathcal{P}_3(\R^{d_1} \times \R^{d_2})$ to $(t_0,\gamma_0)$. Up to subsequences, there are two possibilities:
\begin{itemize}
    \item For all $n \geq 1$, there are two distinct minima of $J  ( (t_0^n, \gamma_0^n), \cdot  )$.
    \item For all $n \geq 1$, there is a unique minimum for $J  ((t_0^n, \gamma_0^n), \cdot  )$ but it is not stable. 
\end{itemize}
\vskip 4pt

\textit{Case 1.} 
For all $n \geq 1$, let $ \bd{\nu}^{*,n}$ and $\bd{\nu}^n$ be
two distinct minima of $J ( (t_0^n,\gamma_0^n) ,\cdot )$ with associated curve and multiplier $(\bd{\gamma}^{*,n}, \bd{u}^{*,n})$ and $(\bd{\gamma}^n,\bd{u}^n)$ respectively. We define the integrated relative entropy $\lambda_n^2 := \int_{t_0^n}^T \mathcal{E} ( \nu_t^n | \nu_t^{*,n})dt $ and the new variables $(\bd{\eta}^n,\bd{\rho}^n,\bd{v}^n) := \lambda_n^{-1} ( \bd{\nu}^n -\bd{\nu}^{*,n}, \bd{\gamma}^n - \bd{\gamma}^{*,n}, \bd{u}^n - \bd{u}^{*,n})$. By Lemma \ref{lem:relativeentropyasasqureddistance28/02}, there exists a constant $C >0$ independent of $n \in \mathbb{N}$ such that 
\begin{equation} 
\lambda_n^2 \leq C \sup_{t \in [t_0^n,T]} \norm{ \gamma^{*,n}_t - \gamma_t^n}^2_{(\mathcal{C}^2_2)^*}. 
\label{eq:lambdanbyrhon28/02}
\end{equation}
In particular, for the same $C>0$ as above and for all $n \in \mathbb{N}$,
\begin{equation} 
\sup_{t \in [t_0^n,T]} \norm{ \rho_t^n}_{(\mathcal{C}^2_2)^*} \geq 1/\sqrt{C}. 
\label{eq:lowerboundrhon28/02}
\end{equation}
By Lemma~\ref{lem:convergenceifuniqueness} (recalling that $(t_0,\gamma_0) \in \mathcal{O}$), we know that the two sequences $(\bd{\nu}^{*,n}, \bd{\gamma}^{*,n}, \bd{u}^{*,n})_{n \in \mathbb{N}}$ and $(\bd{\nu}^{n}, \bd{\gamma}^{n}, \bd{u}^{n})_{n \in \mathbb{N}}$ converge to $(\bd{\nu}^*, \bd{\gamma}^*, \bd{u}^*)$, in the same sense as specified in Lemma~\ref{lem:convergenceifuniqueness}. In particular, by \eqref{eq:lambdanbyrhon28/02}, $\lambda_n \rightarrow 0$ as $n \rightarrow +\infty$. We are precisely in the framework of Section \ref{sec:preparatorywork} (see {\bf Property (${\mathcal Q_0}$)}) and we can apply the results of Propositions \ref{prop:preliminaryworkprop1-28/02} and \ref{prop:preliminayworkProp2-28/02} to find a triple $(\bd{\eta}, \bd{\rho}, \bd{v}) \in \mathcal{A}^l(t_0) \times \mathcal{R}(t_0) \times \mathcal{C}([t_0,T], \mathcal{C}^1_1)$ solution to the linearized system \eqref{eq:nutProp3.2}-\eqref{eq:vtrhotProp3.2} such that $(\bd{\eta}^n, \bd{\rho}^n,\bd{v}^n)$ converges, up to a subsequence, to $(\bd{\eta}, \bd{\rho}, \bd{v})$ in the sense of Proposition \ref{prop:preliminaryworkprop1-28/02}. Since $(t_0,\gamma_0)$ belongs to $\mathcal{O}$, the limit triple $(\bd{\nu}, \bd{\rho}, \bd{v})$ is necessarily equal to $(0,0,0)$. In particular, by Proposition \ref{prop:preliminaryworkprop1-28/02} again, $\lim_{n \rightarrow +\infty} \norm{ \rho_t^n}_{(\mathcal{C}^2_2)^*} =0$. This is in contradiction with \eqref{eq:lowerboundrhon28/02}. 
\vskip 4pt

\textit{Case 2.} We now address the case where, for all $n \in \mathbb{N}$, the functional $J((t_0^n, \gamma_0^n), \cdot)$ admits a unique minimizer $\bd{\nu}^{*,n} := (\nu^{*,n}_t)_{t \geq t_0^n}$, which is not stable.
 This means that we can find $(\bd{\eta}^n, \bd{\rho}^n, \bd{v}^n):=(\eta_t^n, \rho_t^n,v_t^n)_{ t \in  [t_0^n,T]}$ a non-trivial solution to the linearized system around $(\bd{\nu}^{*,n}, \bd{\gamma}^{*,n}, \bd{u}^{*,n})$ where $ \bd{\gamma}^{*,n} := (\gamma_t^{*,n})_{t \geq t_0^n}$ is the trajectory associated to $\bd{\nu}^{*,n}$ and $\bd{u}^{*,n} := (u_t^{*,n})_{t \geq t_0^n}$ the associated multiplier. By a stability argument similar to Lemma \ref{lem:backthenitwasLemma6.2}, $\bd{\rho}^n$ cannot be identically $0$ (otherwise we would have $(\bd{\eta}^n ,\bd{\rho}^n,\bd{v}^n) = (0,0,0)$ ). We look at the system satisfied by 
$$ \frac{\eta_t^n}{\lambda_n}, \quad \frac{\rho_t^n}{\lambda_n}, \quad \frac{v_t^n}{\lambda_n}, \quad  t \in [t_0^n,T], \quad \quad  \lambda_n := \sup_{t \in [t_0^n,T]} \norm{ \rho_t^n}_{(\mathcal{C}_2^2)^*} >0,$$
and then proceed similarly as in the proof of the first step to obtain a contradiction by taking $n \rightarrow +\infty$ and getting a non-trivial solution to the linearized system.
\end{proof}

The next result shows that optimal solutions $\bd{\nu}^*$ for $J((t_0,\gamma_0),\cdot)$ are isolated when $(t_0,\gamma_0) \in \mathcal{O}$: there is a neighborhood of $\bd{\nu}^*$ with no other critical point. 

\begin{prop}
Take $(t_0,\gamma_0) \in \mathcal{O}$ with optimal solution $\bd{\nu}^*$ and associated curve and multiplier $(\bd{\gamma}^*, \bd{v}^*)$.  For some $r>0$, there is no other solution $(\bd{\nu}, \bd{\gamma}, \bd{u})$ to \eqref{eq:NOCfornut2Avril} - \eqref{eq:NOCforutgammat2Avril} with $\int_{t_0}^T \mathcal{E} \bigl( \nu_t | \nu^*_t \bigr) dt \leq r^2.$   
\label{prop:isolated}
\end{prop}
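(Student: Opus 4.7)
The plan is to argue by contradiction, using the compactness apparatus of Section \ref{sec:preparatorywork} exactly as in the proof of Case 1 of Proposition \ref{prop:Oisopen}. Suppose there exists a sequence $(\bd{\nu}^n, \bd{\gamma}^n, \bd{u}^n)_{n \in \mathbb{N}}$ of solutions to the first-order system \eqref{eq:NOCfornut2Avril}--\eqref{eq:NOCforutgammat2Avril} at $(t_0,\gamma_0)$, each distinct from $(\bd{\nu}^*, \bd{\gamma}^*, \bd{u}^*)$, and satisfying $\lambda_n^2 := \int_{t_0}^T \mathcal{E}(\nu_t^n \vert \nu_t^*)\, dt \to 0$ as $n \to \infty$.

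First I would place myself in the framework of Section \ref{sec:preparatorywork} by choosing $t_0^n = t_0$, $\gamma_0^n = \gamma_0$ and $\bd{\nu}^{*,n} = \bd{\nu}^*$. Items (i)--(iii) of Property $(\mathcal{Q}_0)$ are trivial, item (v) is the assumption, and item (iv) --- membership of $\bd{\nu}^n$ in $\mathcal{A}(t_0)$ --- follows from the Gibbs form \eqref{eq:NOCfornut2Avril} together with Proposition \ref{prop:regularityfromOC}, whose proof only requires $(\bd{\nu}^n,\bd{\gamma}^n,\bd{u}^n)$ to solve the first-order system. A Donsker--Varadhan bound combined with the exponential estimate \eqref{eq:bound:nu(a):exp(-ell(a))} on $\nu_t^*$ moreover yields a uniform-in-$n$ control on $\int_{t_0}^T \int_A |a|^4 d\nu_t^n dt$, so that the quantitative estimates of Section \ref{sec:preparatorywork} hold with constants independent of $n$. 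After normalizing as in \eqref{eq:normalizedvariables03/03}, Propositions \ref{prop:preliminaryworkprop1-28/02} and \ref{prop:preliminayworkProp2-28/02} provide, along a subsequence, a limit $(\bd{\eta}, \bd{\rho}, \bd{v}) \in \mathcal{A}^\ell(t_0) \times \mathcal{R}(t_0) \times \mathcal{C}([t_0,T], \mathcal{C}^1_1)$ which solves the linearized system \eqref{eq:nutProp3.2}--\eqref{eq:vtrhotProp3.2}; crucially, Proposition \ref{prop:preliminayworkProp2-28/02} applies here precisely because each $(\bd{\nu}^n,\bd{\gamma}^n,\bd{u}^n)$ solves the first-order system.

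Since $(t_0,\gamma_0) \in \mathcal{O}$, stability forces $(\bd{\eta}, \bd{\rho}, \bd{v}) = (0,0,0)$, so that $\sup_{t \in [t_0,T]} \norm{\rho_t^n}_{(\mathcal{C}^2_2)^*} \to 0$ by assertion (ii) of Proposition \ref{prop:preliminaryworkprop1-28/02}. The contradiction then follows by reproducing the reverse Pinsker-type bound used in the first step of the proof of Proposition \ref{prop:Oisopen}, namely
\begin{equation*}
\lambda_n^2 \leq C \sup_{t \in [t_0,T]} \norm{\gamma_t^n - \gamma_t^*}^2_{(\mathcal{C}^2_2)^*},
\end{equation*}
with $C$ independent of $n$, obtained by comparing the two Gibbs representations of $\nu_t^n$ and $\nu_t^*$ via the uniform regularity of $\bd{u}^n$ and $\bd{\gamma}^n$ provided by Proposition \ref{prop:SolutionBackxard16Sept}. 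Dividing by $\lambda_n^2$ yields $\sup_t \norm{\rho_t^n}_{(\mathcal{C}^2_2)^*} \geq 1/\sqrt{C}$, contradicting the previous vanishing. The main obstacle is essentially bookkeeping: the apparatus of Section \ref{sec:preparatorywork} was designed for sequences of global minimizers, and one must verify that the relevant a priori estimates remain uniform when the competitors $\bd{\nu}^n$ are merely critical points of the energy, relying solely on the Gibbs form \eqref{eq:NOCfornut2Avril} and on the condition $\lambda_n \to 0$ rather than on any variational inequality.
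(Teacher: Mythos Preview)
Your proposal is correct and follows the same route as the paper: the paper's own proof is only three lines long and simply says to set $t_0^n=t_0$, $\gamma_0^n=\gamma_0$, $\bd{\nu}^{*,n}=\bd{\nu}^*$ in the framework of Section~\ref{sec:preparatorywork} and then repeat verbatim Case~1 of the proof of Proposition~\ref{prop:Oisopen}. Two small remarks: the ``reverse Pinsker'' bound you invoke is precisely Lemma~\ref{lem:relativeentropyasasqureddistance28/02} (which uses the log-Sobolev inequality rather than Proposition~\ref{prop:SolutionBackxard16Sept}); and the bookkeeping concern you raise at the end is in fact a non-issue, since item~(iv) of Property~$(\mathcal{Q}_0)$ only asks that $\bd{\nu}^n\in\mathcal{A}(t_0^n)$, not that it be a minimizer, and the uniform fourth-moment bound on $\bd{\nu}^n$ follows directly from Lemma~\ref{lem:PinskerSecFOC28/02} applied with the minimizer $\bd{\nu}^*$.
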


\begin{proof}
    Otherwise we can find a sequence $(\bd{\nu}^n, \bd{\gamma}^n, \bd{u}^n) \in \mathcal{A}(t_0) \times \mathcal{C}([t_0,T], \mathcal{P}_3(\R^{d_1} \times \R^{d_2})) \times \mathcal{C}([t_0,T], \mathcal{C}^1_2)$ solution to \eqref{eq:NOCfornut2Avril}-\eqref{eq:NOCforutgammat2Avril} but distinct from $(\bd{\nu}^*, \bd{\gamma}^*, \bd{u}^*)$  such that
    $$\lim_{n \rightarrow +\infty} \Bigl \{ \lambda_n^2 :=  \int_{t_0}^T \mathcal{E} \bigl( \nu_t^n | \nu^*_t \bigr) dt \Bigr \}=0, $$
and $\lambda_n >0$ for all $n \in \mathbb{N}$. We are precisely in the framework of Section \ref{sec:preparatorywork} with $(t_0^n, \gamma_0^n) = (t_0,\gamma_0)$ and $\bd{\nu}^{*,n} = \bd{\nu}^*$ for all $n \in \mathbb{N}$ therein. The rest of the proof is identical to the first case in the proof of Proposition \ref{prop:Oisopen}.
\end{proof}

\subsection{Discriminating Property and the Universal Approximation Theorem}

\label{subse:DPandUAT}

In this subsection we explain the link between the discriminating property \assdis $ $ and the universal approximation theorem within the prototypical example \ref{ex:protoex}. First we state a useful, but strictly equivalent form of \assdis.

\vskip 4pt

\hypertarget{ass:dis2}{\noindent \textbf{Discriminating Property - Equivalent Formulation.}}
For any probability measure 
$\pi$ on ${\mathbb R}^{d_1} \times \R^{d_1}$ such that 
$\int_{{\mathbb R}^{d_1} \times 
{\mathbb R}^{d_1}} \vert z\vert d \pi(x,z) 
< \infty$, the following 
implication holds true:
\begin{equation*} 
\biggl( \forall a \in A, \quad\int_{{\mathbb R}^{d_1} \times {\mathbb R}^{d_1}}
\Bigl[
b(x,a) \cdot z 
\Bigr] d \pi(x,z) 
=0
\biggr) 
\Rightarrow 
\biggl( 
{\rm for} \  
\pi_{\rm x}\textrm{\rm -a.e.} \  
x \in {\mathbb R}^{d_1}, 
\quad 
\int_{{\mathbb R}^{d_1}}
z \pi(x,dz) =0
\biggr),
\end{equation*} 
where $\pi_{\rm x}$ denotes the first marginal law of 
$\pi$ on ${\mathbb R}^{d_1}$ (i.e., 
$\pi_{\rm x} := \pi \circ ((x,z) \mapsto x)^{-1}$)
and $x \in {\mathbb R}^{d_1} \mapsto \pi(x,\cdot)
\in {\mathcal P}({\mathbb R}^{d_1})$ is a measurable mapping obtained by disintegrating 
$\pi$ with respect to the first marginal (i.e., 
for any two Borel subsets $E_{\rm x}$ and $E_{\rm z}$ of 
${\mathbb R}^{d_1}$, $\pi(E_{\rm x} \times E_{\rm z}) = \int_{E_{\rm x}} \pi(x,E_{\rm z}) d\pi_{\rm x}(x)$). 
\vskip  4pt

The following statement shows that, within 
the prototypical example \ref{ex:protoex}, 
$b$ satisfies the discriminating property 
if the activation function $\sigma$ satisfies the conclusion of the universal approximation theorem
 (notice that $a_0$ below is a scalar whilst we took it as 
a vector in 
\eqref{eq:prototype:example}):

\begin{lem}
\label{lem:universal:to:discriminating}
Within the prototypical example \ref{ex:protoex}, assume that 
the closure, for the supremum norm over 
${\mathbb R}^{d_1}$, of the linear span of the 
set $\{x \mapsto a_0 \sigma(a_1 \cdot x + a_2), \ (a_0,a_1,a_2) \in {\mathbb R} \times {\mathbb R}^{d_1} 
\times {\mathbb R}
\}$ contains the set ${\mathcal C}_0({\mathbb R}^{d_1})$
of continuous functions on ${\mathbb R}^{d_1}$ vanishing at $\infty$, then 
$b$ in 
\eqref{eq:prototype:example} satisfies \assdis. 
\end{lem}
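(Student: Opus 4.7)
The plan is to work with the equivalent formulation of \assdis \ stated just above the lemma, and to reduce the discriminating condition to a classical ``completeness'' statement for the family $\{x \mapsto \sigma(a_1 \cdot x + a_2)\}$ on ${\mathbb R}^{d_1}$.

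First, let me fix a probability measure $\pi$ on ${\mathbb R}^{d_1} \times {\mathbb R}^{d_1}$ with $\int |z|\, d\pi(x,z) < \infty$ and such that
\begin{equation*}
\int_{{\mathbb R}^{d_1} \times {\mathbb R}^{d_1}} b(x,a) \cdot z \, d\pi(x,z) = 0, \quad \forall a = (a_0,a_1,a_2) \in {\mathbb R}^{d_1} \times {\mathbb R}^{d_1} \times {\mathbb R}.
\end{equation*}
Plugging in $b(x,a) = \sigma(a_1 \cdot x + a_2) a_0$, the condition becomes
\begin{equation*}
a_0 \cdot \int \sigma(a_1 \cdot x + a_2) z \, d\pi(x,z) = 0
\end{equation*}
for every $(a_0,a_1,a_2)$. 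Varying $a_0$ over a basis of ${\mathbb R}^{d_1}$, I obtain, for each coordinate $i \in \{1,\dots,d_1\}$,
\begin{equation*}
\int \sigma(a_1 \cdot x + a_2)\, z_i\, d\pi(x,z) = 0, \quad \forall (a_1,a_2) \in {\mathbb R}^{d_1} \times {\mathbb R}.
\end{equation*}

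Next, I would disintegrate $\pi$ with respect to its first marginal, writing $\pi(dx,dz) = \pi_{\rm x}(dx)\, \pi(x,dz)$, and define the integrable functions
\begin{equation*}
f_i(x) := \int_{{\mathbb R}^{d_1}} z_i \, \pi(x,dz), \quad i=1,\dots,d_1,
\end{equation*}
which lie in $L^1(\pi_{\rm x})$ by Fubini, since $\int |z|\, d\pi < \infty$. Applying Fubini once more, the previous identity rewrites as
\begin{equation*}
\int_{{\mathbb R}^{d_1}} \sigma(a_1 \cdot x + a_2)\, f_i(x)\, d\pi_{\rm x}(x) = 0, \quad \forall (a_1,a_2).
\end{equation*}
Equivalently, the finite signed Radon measure $\mu_i := f_i\, \pi_{\rm x}$ integrates to $0$ against every element of the linear span $\mathcal{S} := {\rm Span}\{x \mapsto a_0\, \sigma(a_1 \cdot x + a_2)\}$.

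The main step is then the following. Since each element of $\mathcal{S}$ is bounded (as $\sigma$ is bounded) and $\mu_i$ has finite total variation, the functional $\varphi \mapsto \int \varphi\, d\mu_i$ is continuous on $(\mathcal{C}_b({\mathbb R}^{d_1}), \|\cdot\|_\infty)$. By the assumed universal approximation property, every $\varphi \in \mathcal{C}_0({\mathbb R}^{d_1})$ is a uniform limit of elements of $\mathcal{S}$, so
\begin{equation*}
\int_{{\mathbb R}^{d_1}} \varphi(x)\, f_i(x)\, d\pi_{\rm x}(x) = 0, \quad \forall \varphi \in \mathcal{C}_0({\mathbb R}^{d_1}).
\end{equation*}
Since $\mathcal{C}_0({\mathbb R}^{d_1})$ separates finite signed Radon measures on ${\mathbb R}^{d_1}$ (by Riesz–Markov, as it is dense in $\mathcal{C}_c$), this forces $f_i\, \pi_{\rm x} \equiv 0$, i.e., $f_i(x) = 0$ for $\pi_{\rm x}$-a.e.\ $x$.

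Carrying this out for every coordinate $i$ yields $\int z\, \pi(x,dz) = 0$ for $\pi_{\rm x}$-a.e.\ $x$, which is precisely the conclusion of the equivalent form of \assdis. The only delicate point is the density argument of the third paragraph: one has to be careful that linear combinations of bounded functions $\sigma(a_1\cdot x + a_2)$ can be tested against a merely finite signed measure, which is why the integrability of $z$ under $\pi$ (ensuring $f_i \in L^1(\pi_{\rm x})$ and hence $|\mu_i|({\mathbb R}^{d_1}) < \infty$) is essential. Everything else is a direct application of the universal approximation property.
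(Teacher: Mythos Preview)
Your proof is correct and follows essentially the same route as the paper's: reduce via the structure $b(x,a)=\sigma(a_1\cdot x+a_2)a_0$ to testing the scalar family $\{\sigma(a_1\cdot x+a_2)\}$ against the finite signed measures $f_i\,\pi_{\rm x}$, then pass to the uniform closure using the approximation hypothesis and conclude by Riesz. The only cosmetic difference is the order of operations---you disintegrate first and work with $\mu_i=f_i\,\pi_{\rm x}$, while the paper approximates $\int f(x)\cdot z\,d\pi$ directly for $f\in\mathcal{C}_0(\mathbb{R}^{d_1},\mathbb{R}^{d_1})$ and disintegrates at the end---but the substance is identical.
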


In words, the assumption of Lemma 
\ref{lem:universal:to:discriminating} may be formulated as follows: for any function $f \in  \mathcal{C}_0(\R^{d_1})$ and any $\epsilon >0$, 
there exist
an integer
$m \geq 1$
and a tuple $(a^j:=(a_0^{j},a_1^{j},a_2^{j}))_{ 1 \leq j \leq m} \in ({\mathbb R} \times 
{\mathbb R}^{d_1} \times {\mathbb R})^m$ such that 
    $$ \sup_{ x \in \R^{d_1}} \biggl| f(x) - \sum_{j=1}^m a_0^{j} \sigma (a_1^{j} \cdot x + a_2^{j}) \biggr| \leq \epsilon.$$

Compared to the standard formulation of the universal approximation result, the above approximation property is slightly unusual because the state space (over which the supremum norm is taken) is non-compact. However, most of the classical activation functions $\sigma$ satisfy the above statement, see for instance 
Itô
\cite{ITO1992105}. In particular, all our assumptions are satisfied if $\sigma$ is the hyperbolic tangent or the logistic function.

\begin{proof}
Take a probability measure 
$\pi$ on ${\mathbb R}^{d_1} \times  {\R^{d_1}}$ such that 
$\int_{{\mathbb R}^{d_1} \times 
{\mathbb R}^{d_1}} \vert z\vert d \pi(x,z) 
< \infty$. Assume that 
\begin{equation} 
\forall a \in A, \quad \int_{{\mathbb R}^{d_1} \times {\mathbb R}^{d_1}} \Bigl[ b(x,a) \cdot z \Bigr]
d \pi(x,z) =0. 
\label{eq:7juillet20:19}
\end{equation}
We first prove that, for any continuous function 
$f$ from ${\mathbb R}^{d_1}$ to ${\mathbb R}^{d_1}$ vanishing at infinity,
$\int_{{\mathbb R}^{d_1} \times {\mathbb R}^{d_1}}
[ f(x) \cdot z] d\pi(x,z)=0$. 
To do so, it suffices to prove that, 
for any $\varepsilon >0$, 
\begin{equation} 
\label{eq:7juillet20:19:F}
\biggl\vert \int_{{\mathbb R}^{d_1} \times {\mathbb R}^{d_1}}
\bigl[ f(x) \cdot z \bigr] d\pi(x,z) \biggr\vert \leq \varepsilon. 
\end{equation}
By the universal approximation property,
we know that, for any coordinate 
$i \in \{1,\cdots,d_1\}$, 
there exist
an integer $m_i \geq 1$
and a tuple 
$(a_0^{i,j},a_1^{i,j},a_2^{i,j})_{1 \leq j \leq m_i} \in ({\mathbb R} \times {\mathbb R}^{d_1} \times {\mathbb R})^{m_i}$ such that 
$$ \sup_{ x \in \R^{d_1}} \biggl| \sum_{j=1}^{m_i} a_0^{i,j} \sigma( a_1^{i,j} \cdot x + a_2^{i,j})  - f^i(x)\biggr| \leq \varepsilon,$$
where $f^i$ denotes the $i$th coordinate of $f$. Writing $e^i$ for the $i$th vector 
of the canonical basis of ${\mathbb R}^{d_1}$, the above can be reformulated as 
\begin{equation*}
\begin{split}
&\sup_{ x \in \R^{d_1}} \biggl| \sum_{i=1}^{d_1} \sum_{j=1}^{m_i} b\biggl( x,\bigl( a_0^{i,j} e^i, a_1^{i,j}, a_2^{i,j} \bigr) 
\biggr)- f(x)\biggr|
 = \sup_{ x \in \R^{d_1}} \biggl| \sum_{i=1}^{d_1} \sum_{j=1}^{m_i} a_0^{i,j} \sigma( a_1^{i,j} \cdot x + a_2^{i,j}) e^i  - f(x)\biggr| \leq c \varepsilon,
\end{split}
\end{equation*}
for a constant $c$ only depending on $d_1$. 
And therefore, \eqref{eq:7juillet20:19} leads to
\eqref{eq:7juillet20:19:F}. As announced, we deduce that, for any $f\in \mathcal{C}_0(\R^{d_1},\R^{d_1})$, $\int_{{\mathbb R}^{d_1} \times {\mathbb R}^{d_1}} [f(x) \cdot z] d \pi(x,z) = 0$. 
Since $\int_{{\mathbb R}^{d_1} \times {\mathbb R}^{d_1}} \vert z \vert d \pi(x,z) < \infty$, the latter integral is well-defined and, in fact, by a standard approximation argument, the identity 
is true for any bounded and measurable function 
$f$ from ${\mathbb R}^{d_1}$ into itself. In the end, we have shown that, for any 
such $f$, 
\begin{equation*}
\int_{{\mathbb R}^{d_1}} \biggl[ f(x) \cdot \biggl( \int_{{\mathbb R}^{d_1}} z \pi(x,dz)
\biggr) \biggr] d\pi_{\rm x}(x) = 0. 
\end{equation*}
which shows that, for $\pi_{\rm x}$-almost every $x \in {\mathbb R}^{d_1}$, $\int_{{\mathbb R}^{d_1}} z \pi(x,dz) =0$.
\end{proof}

\section{Local Polyak--Lojasiewicz Condition}
\label{se:5}

The main purpose of this section is 
to state and prove a rigorous version of Meta-Theorem \ref{meta-thm:2}. For $(t_0,\gamma_0) \in [0,T] \times \mathcal{P}_2(\R^{d_1} \times \R^{d_2})$, a control $\bd{\nu} \in \mathcal{A}(t_0)$ and associated pair $(\bd{\gamma}, \bd{u})$ solution to the forward-backward system \eqref{eq:curvemulti28/02}, we associate the following functional:
\begin{equation} 
\begin{split}
&\mathcal{I} \bigl( (t_0,\gamma_0),
{\boldsymbol \nu}\bigr) 
\\
&:= \int_{t_0}^T \int_A \Bigl| \epsilon \nabla_a \log \nu_t(a) + \epsilon \nabla_a \ell(a) + \nabla_a \int_{\R^{d_1} \times \R^{d_2}} b(x,a) \cdot \nabla_x u_t(x,y) d \gamma_t(x,y) \Bigr |^2 d\nu_t(a)dt. 
\label{eq:defmathcalI}
\end{split}
\end{equation}
It is implicitly understood the left-hand side is equal to $+\infty$ if, 
for $t$ in a non-null Borel subset of $[t_0,T]$, $\nu_t$ is not absolutely continuous with respect to the Lebesgue measure or if $\nu_t$ is absolutely continuous but the root $\sqrt{\nu_t}$ does not belong to $H^1(A)$ (the subset of $L^2(A)$ with weak derivative in $L^2(A)$). There is  another interpretation 
of ${\mathcal I}$, which is very useful in the proof
of Theorem 
\ref{prop:main:local:polyak:lojasiewicz}.
For ${\boldsymbol \nu} \in 
{\mathcal A}(t_0)$
we recall the notation $\Gamma[\bd{\nu}] \in \mathcal{A}(t_0)$ introduced in \eqref{eq:defnGammaNut09/01}:

\begin{equation}
\Gamma_t[\bd\nu](a) :=  \frac1{z_t} 
\exp \biggl( -\ell(a) - \frac{1}{\epsilon}  \int_{\R^{d_1} \times \R^{d_2}} b(x,a) \cdot \nabla_x u_t(x,y) d\gamma_t(x,y) \biggr), \quad a \in A, 
\quad t \in [t_0,T],
\label{eq:nutinfty}
\end{equation}
where $z_t$
is a normalization constant 
(similar to
$z^*_t$ in 
Remark 
\ref{rmk:3.2}).
In particular, 
taking the logarithm in 
\eqref{eq:nutinfty},
we can rewrite $\mathcal{I}$ as a Fischer information:
\begin{equation} 
\mathcal{I} \bigl( (t_0,\gamma_0),
{\boldsymbol \nu} \bigr) = \epsilon^2 \int_{t_0}^T \int_A \Bigl| \nabla_a \log \frac{\nu_t}{\Gamma_t[\bd\nu]}(a) \Bigr|^2 d\nu_t(a)dt.
\label{eq:IasaFischerInfo03/03}
\end{equation}
We also recall, see Lemma \ref{lem:LSI}, that $\Gamma[\bd{\nu}]$ satisfies a log-Sobolev inequality.

At this stage, we notice that, for any $(t_0,\gamma_0) \in [0,T] \times \mathcal{P}_3(\R^{d_1} \times \R^{d_2})$, $\mathcal{I}((t_0,\gamma_0),{\boldsymbol \nu}) = 0$ if and only if $({\boldsymbol \nu}, {\boldsymbol \gamma},{\boldsymbol u})$ solves the system of optimality conditions \eqref{eq:NOCfornut2Avril}-\eqref{eq:NOCforutgammat2Avril}.
The result established in this section is to push the latter observation further. 
In words, 
the rigorous version of Meta-Theorem 
\ref{meta-thm:2}
stipulates that, for ${\boldsymbol \nu}$ in the neighborhood of a  stable minimizer 
$\boldsymbol \nu^*$ (in the sense of Definition 
\ref{defn:stablesolutions}), 
the functional ${\mathcal I}((t_0,\gamma_0),{\boldsymbol \nu})$ grows at least like the difference between
$J((t_0,\gamma_0),{\boldsymbol \nu})$
and 
$J((t_0,\gamma_0),\boldsymbol \nu^*)$.
This principle is referred to as a local Polyak--Lojasiewicz condition.
It 
takes the following form:

\begin{thm}
\label{prop:main:local:polyak:lojasiewicz}
    For every compact subset $\mathcal{K}$ of $\mathcal{O}$, there exist $r,c >0$ such that, for all $(t_0,\gamma_0) \in \mathcal{K}$ with associated stable solution $\boldsymbol \nu^* = (\nu^*_t)_{t_0 \le t \le T}$ and  ${\boldsymbol \nu} \in \mathcal{A}(t_0)$, it holds
    $$ \int_{t_0}^T \mathcal{E} \bigl( \nu_t | \nu^*_t \bigr) dt \leq r^2 \quad \Longrightarrow \quad \mathcal{I} \bigl( (t_0,\gamma_0) ,{\boldsymbol \nu} \bigr) \geq c \Bigl( J \bigl( (t_0,\gamma_0) ,{\boldsymbol \nu} \bigr) - J \bigl( (t_0, \gamma_0) , \boldsymbol \nu^* \bigr) \Bigr). $$
\end{thm}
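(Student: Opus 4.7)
The plan is to argue by contradiction and leverage the compactness setup of Section \ref{sec:preparatorywork} together with two applications of the log-Sobolev inequality of Lemma \ref{lem:LSI}. Assuming the conclusion fails, one would take $c_n = r_n = 1/n$ to produce sequences $(t_0^n,\gamma_0^n) \in \mathcal{K}$ and $\bd{\nu}^n \in \mathcal{A}(t_0^n)$ such that $\lambda_n^2 := \int_{t_0^n}^T \mathcal{E}(\nu_t^n|\nu_t^{*,n})\,dt \to 0$ while $\mathcal{I}^n := \mathcal{I}((t_0^n,\gamma_0^n),\bd{\nu}^n) < c_n\bigl(J((t_0^n,\gamma_0^n),\bd{\nu}^n) - J((t_0^n,\gamma_0^n),\bd{\nu}^{*,n})\bigr)$, where $\bd{\nu}^{*,n}$ is the unique minimizer at $(t_0^n,\gamma_0^n)$. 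Nonnegativity of $\mathcal{I}$ forces $\bd{\nu}^n \neq \bd{\nu}^{*,n}$ and hence $\lambda_n > 0$. Compactness of $\mathcal{K} \subset \mathcal{O}$ combined with Lemma \ref{lem:convergenceifuniqueness} allows one to extract a subsequence such that $(t_0^n,\gamma_0^n) \to (t_0,\gamma_0) \in \mathcal{K}$ and $\bd{\nu}^{*,n} \to \bd{\nu}^*$ in the strong sense of that lemma, placing us in the setting of \textbf{Property $(\mathcal{Q}_0)$}. By Lemma \ref{lem:expensioncost02/03}, one then has $\mathcal{I}^n/\lambda_n^2 \leq C c_n \to 0$.

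The next step is to combine \eqref{eq:IasaFischerInfo03/03} with the identity $\Gamma_t[\bd{\nu}^n] \propto \nu_t^{*,n} e^{-\lambda_n k_t^n}$ from Lemma \ref{lem:rewriting02/02} to obtain
\begin{equation*}
\frac{\mathcal{I}^n}{\lambda_n^2} = \epsilon^2 \int_{t_0^n}^T \int_A \Bigl| \tfrac{1}{\lambda_n} \nabla_a \log \tfrac{\nu_t^n}{\nu_t^{*,n}} + \nabla_a k_t^n \Bigr|^2 d\nu_t^n(a)\, dt,
\end{equation*}
and then to apply $|x+y|^2 \geq \tfrac12 |x|^2 - |y|^2$ together with the log-Sobolev inequality for $\nu_t^{*,n}$ (whose constant is uniformly bounded thanks to Lemmas \ref{lem:LSI} and \ref{lem:Q2-28/02}) to deduce
\begin{equation*}
\frac{\mathcal{I}^n}{\lambda_n^2} \geq \frac{\epsilon^2}{2 C_{\rm LSI}} - \epsilon^2 \int_{t_0^n}^T \int_A |\nabla_a k_t^n|^2 \, d\nu_t^n(a)\, dt.
\end{equation*}
Since the left-hand side vanishes, one obtains $\liminf_n \int\!\!\int |\nabla_a k_t^n|^2 d\nu_t^n\, dt \geq 1/(2C_{\rm LSI}) > 0$, so the contradiction will be reached once this $\liminf$ is shown to be zero. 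By the strengthened convergence in Proposition \ref{prop:convergencektn03/03}, this reduces to proving that every weak limit $(\bd{\eta},\bd{\rho},\bd{v})$ of the normalized triples $(\bd{\eta}^n,\bd{\rho}^n,\bd{v}^n)$ extracted by Proposition \ref{prop:preliminaryworkprop1-28/02} coincides with $(0,0,0)$.

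The main obstacle is the passage to the limit for $\bd{\eta}$: since $\bd{\nu}^n$ is not assumed to satisfy any Pontryagin-type equation, Proposition \ref{prop:preliminayworkProp2-28/02} is unavailable and the limiting equation for $\bd{\eta}$ must be recovered indirectly through $\Gamma[\bd{\nu}^n]$. The idea is to invoke Lemma \ref{lem:LSI} a second time, now applied to $\Gamma_t[\bd{\nu}^n]$ with test density $\nu_t^n/\Gamma_t[\bd{\nu}^n]$, which gives $\int \mathcal{E}(\nu_t^n\,|\,\Gamma_t[\bd{\nu}^n])\, dt \leq (C_{\rm LSI}/\epsilon^2)\, \mathcal{I}^n$; Pinsker's inequality then upgrades this into $\int \|(\nu_t^n - \Gamma_t[\bd{\nu}^n])/\lambda_n\|_{\rm TV}^2\, dt \leq 2 C_{\rm LSI}\, \mathcal{I}^n/(\epsilon^2 \lambda_n^2) \to 0$. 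Decomposing
\begin{equation*}
\frac{\nu_t^n - \nu_t^{*,n}}{\lambda_n} = \frac{\Gamma_t[\bd{\nu}^n] - \nu_t^{*,n}}{\lambda_n} + \frac{\nu_t^n - \Gamma_t[\bd{\nu}^n]}{\lambda_n}
\end{equation*}
and feeding Proposition \ref{prop:cvgcesumofIs03/03} into the first term on the right identifies $\bd{\eta}$ as $\eta_t(a) = -\nu_t^*(a)\bigl[k_t(a) - \int_A k_t\, d\nu_t^*\bigr]$, which is precisely the first equation of the linearized system \eqref{eq:nutProp3.2}--\eqref{eq:vtrhotProp3.2} with the correct normalizing constant $c_t = \epsilon \int_A k_t \, d\nu_t^*$. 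Since Proposition \ref{prop:preliminaryworkprop1-28/02} already delivers the equations for $\bd{\rho}$ and $\bd{v}$, the triple $(\bd{\eta},\bd{\rho},\bd{v})$ solves the full linearized system, and stability of $\bd{\nu}^*$ at $(t_0,\gamma_0) \in \mathcal{O}$ forces $(\bd{\eta},\bd{\rho},\bd{v}) = (0,0,0)$, closing the contradiction via Proposition \ref{prop:convergencektn03/03}.
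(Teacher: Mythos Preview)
Your proposal is correct and follows essentially the same approach as the paper's proof: the contradiction setup via Property $(\mathcal{Q})$, the use of Lemma~\ref{lem:expensioncost02/03} to get $\mathcal{I}^n/\lambda_n^2 \to 0$, the two applications of log-Sobolev (once for $\Gamma_t[\bd{\nu}^n]$ combined with Pinsker and Proposition~\ref{prop:cvgcesumofIs03/03} to identify the limiting equation for $\bd{\eta}$, and once for $\nu_t^{*,n}$ in the final Young-inequality step), and the appeal to stability to force $(\bd{\eta},\bd{\rho},\bd{v})=(0,0,0)$ and then to Proposition~\ref{prop:convergencektn03/03} to close the contradiction, are all exactly the steps the paper carries out. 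The only difference is the order of exposition---you state the final inequality before filling in the $(\bd{\eta},\bd{\rho},\bd{v})=(0,0,0)$ step, whereas the paper does the latter first---but the logic is identical.
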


The rest of this subsection is devoted to the proof of Theorem \ref{prop:main:local:polyak:lojasiewicz} which is divided in a series of steps.

\textit{Step 1. Contradicting the statement.}  
Generally speaking, we argue by contradiction.
The first point is thus to notice that if the conclusion of Theorem \ref{prop:main:local:polyak:lojasiewicz}
does not hold, then there exists 
a sequence of tuples $(r_n,c_n,t_0^n,\gamma_0^n,\boldsymbol \nu^{*,n})_{n \in {\mathbb N}}$ satisfying the 
following 
\vskip 4pt

\noindent {\bf Property (${\mathcal Q}$).}
\begin{enumerate}[(i)]
\item The sequences $(r_n)_{n \in {\mathbb N}}$ and $(c_n)_{n \in {\mathbb N}}$ are positive valued sequences converging to $0$ as $n  \rightarrow +\infty$;
\item For each $n \in {\mathbb N}$, 
 $(t_0^n, \gamma_0^n)$ belongs to  $\mathcal{K}$ (and therefore to $\mathcal{O}$); as such, $J((t_0^n,\gamma_0^n),\cdot)$ 
 has a unique stable solution denoted $\boldsymbol \nu^{*,n}=(\nu_t^{*,n})_{t_0^n \leq t \leq T}$;
 \item For each $n \in {\mathbb N}$, there exists ${\boldsymbol \nu}^n \in \mathcal{A}(t_0^n)$ such that
\begin{equation} 
\int_{t_0^n}^T \mathcal{E} \bigl(\nu_t^n | \nu_t^{*,n} \bigr) dt \leq r_n^2 \quad \mbox{ and } \quad  \mathcal{I} \bigl( (t_0^n,\gamma_0^n),{\boldsymbol \nu}^n \bigr) < c_n \Bigl( J \bigl( (t_0^n,\gamma_0^n), {\boldsymbol \nu}^n \bigr) - J \bigl( (t_0^n, \gamma_0^n), \boldsymbol \nu^{*,n} \bigr) \Bigr);
\label{eq:19Mai15:07}
\end{equation}
\item The sequence $(t_0^n,\gamma_0^n)_{ n \in {\mathbb N}}$ converges in $[0,T] \times \mathcal{P}_3(\R^{d_1} \times \R^{d_2})$
toward some $(t_0,\gamma_0)$ in ${\mathcal O}$.
\end{enumerate}

Notice that item (iv) is somewhat for free since $\mathcal{K}$ is a
compact subset of ${\mathcal O}$. Notice in particular that  $(t_0,\gamma_0)$ is assumed to be in ${\mathcal O}$.
\vskip 4pt

Because $(t_0,\gamma_0)$ belongs to ${\mathcal O}$, 
we can apply Lemma \ref{lem:convergenceifuniqueness}
and deduce

\begin{lem}
\label{lem:Q1} 
Under \textbf{\bf Property (${\mathcal Q}$)}, 
denote by $\boldsymbol \gamma^{*,n}$ and 
$\boldsymbol u^{*,n}$ the curve and multiplier associated with 
each $\boldsymbol \nu^{*,n}$. Then, the sequence $(\boldsymbol \nu^{*,n},\boldsymbol \gamma^{*,n},\boldsymbol u^{*,n})_{n \in {\mathbb N}}$
converges strongly (i.e., in the same sense as in the statement of
Lemma \ref{lem:convergenceifuniqueness}) toward $(\boldsymbol \nu^*,\boldsymbol \gamma^*,
\boldsymbol u^*)=
(\nu^*_t, \gamma^*_t, u^*_t)_{t_0 \le t \leq T}$, 
with $\boldsymbol \nu^*$ denoting 
the (unique) minimizer of $J ( (t_0,\gamma_0), \cdot )$, 
and $\boldsymbol \gamma^*$ and $\boldsymbol u^*$
denoting the corresponding curve and multiplier. 
\end{lem}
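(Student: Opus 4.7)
The plan is to apply Lemma \ref{lem:convergenceifuniqueness} directly; the entire proof reduces to checking that its hypotheses are satisfied under Property $(\mathcal{Q})$.

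First, I would observe that Property $(\mathcal{Q})$ (iv) provides the convergence of the initial data: $(t_0^n, \gamma_0^n) \to (t_0, \gamma_0)$ in $[0,T] \times \mathcal{P}_3(\mathbb{R}^{d_1} \times \mathbb{R}^{d_2})$. Since the $\mathcal{P}_3$-topology is finer than the $\mathcal{P}_2$-topology, this yields in particular the $[0,T] \times \mathcal{P}_2$-convergence required by Lemma \ref{lem:convergenceifuniqueness}.

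Next, I would invoke the fact that the limiting initial condition $(t_0,\gamma_0)$ lies in $\mathcal{O}$ (also guaranteed by Property $(\mathcal{Q})$ (iv)). By the very definition of $\mathcal{O}$ in \eqref{eq:elfamososetO}, $\gamma_0$ belongs to $\mathcal{P}_3(\mathbb{R}^{d_1} \times \mathbb{R}^{d_2})$ and the functional $J((t_0,\gamma_0),\cdot)$ admits a unique (and in fact stable) minimizer $\boldsymbol{\nu}^{*}$, with associated optimal curve $\boldsymbol{\gamma}^{*}$ and multiplier $\boldsymbol{u}^{*}$. This is exactly the uniqueness-of-minimizer assumption appearing in Lemma \ref{lem:convergenceifuniqueness}. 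Moreover, Property $(\mathcal{Q})$ (ii) supplies, for each $n \in \mathbb{N}$, a minimizer $\boldsymbol{\nu}^{*,n}$ of $J((t_0^n,\gamma_0^n),\cdot)$, so the sequence that we feed into the lemma is indeed a sequence of minimizers along the convergent sequence of initial conditions.

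All hypotheses are now in place, and Lemma \ref{lem:convergenceifuniqueness} delivers the desired convergence of $(\boldsymbol{\nu}^{*,n},\boldsymbol{\gamma}^{*,n},\boldsymbol{u}^{*,n})_{n \in \mathbb{N}}$ toward $(\boldsymbol{\nu}^{*},\boldsymbol{\gamma}^{*},\boldsymbol{u}^{*})$ verbatim, in the strong mode (simultaneous control of the exponential-weighted $L^\infty$-norm of $\nu_t^{*,n}-\nu_t^{*}$, the relative entropy, the $\mathcal{C}^2_1$-norm of $u_t^{*,n}-u_t^{*}$, and the $d_2$-distance between $\gamma_t^{*,n}$ and $\gamma_t^{*}$). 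There is no genuine obstacle: Lemma \ref{lem:convergenceifuniqueness} was tailored precisely for applications of this kind, its own proof (postponed to Subsection \ref{sec:additionalregandfirststability}) being the place where the real compactness-plus-uniqueness work is performed.
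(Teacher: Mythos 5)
Your proposal is correct and is exactly the paper's argument: the authors likewise note that $(t_0,\gamma_0)\in\mathcal{O}$ (so $J((t_0,\gamma_0),\cdot)$ has a unique minimizer) and that $(t_0^n,\gamma_0^n)\to(t_0,\gamma_0)$ by Property $(\mathcal{Q})$(iv), and then apply Lemma \ref{lem:convergenceifuniqueness} directly. No further comment is needed.
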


So far, we have just used items (i), (ii) and (iv) in 
\textbf{Property (${\mathcal Q}$)}. We now make use of 
item (iii). For a given $n \in \mathbb{N}$, we call $({\boldsymbol \gamma}^n,
{\boldsymbol u}^n)$ the solution to \eqref{eq:curvemulti28/02} associated with ${\boldsymbol \nu}^n$. Also, we introduce the integrated relative entropy
\begin{equation}
\lambda_n^2:= \int_{t_0^n}^T \mathcal{E} \bigl( \nu_t^n | \nu_t^{*,n} \bigr)   dt 
\label{eq:defnlambdan}.
\end{equation}
Notice that the second equation in \eqref{eq:19Mai15:07} implies that 
${\boldsymbol \nu}^n \neq \boldsymbol \nu^{*,n}$ and therefore $\lambda_n >0$. On the other hand, the first equation in \eqref{eq:19Mai15:07} together with the fact that the sequence $(r_n)_{n \in \mathbb{N}}$ converges to $0$ shows that $(\lambda_n)_{n \in \mathbb{N}}$ converges to $0$ as well. Together with Lemma \ref{lem:Q1} this shows that we are precisely in the setting of Section \ref{sec:preparatorywork}. In particular, {\bf Property $(\mathcal{Q}_0)$} therein holds. 

The first step toward a contradiction is the following observation.
\begin{lem}
\label{lem:step:2}
Under \textbf{\bf Property (${\mathcal Q}$)}
and with the notation \eqref{eq:defnlambdan} for $\lambda_n$,
we have
\begin{equation}
\lim_{ n \rightarrow +\infty} \frac{1}{\lambda_n^2} \mathcal{I}\bigl((t_0^n, \gamma_0^n),{\boldsymbol \nu}^n\bigr) = 0.
\label{eq:29MAI12:17}
\end{equation}
\end{lem}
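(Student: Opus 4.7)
The plan is very direct: combine the contradiction hypothesis (the second inequality in item (iii) of Property $(\mathcal{Q})$) with the cost expansion provided by Lemma \ref{lem:expensioncost02/03}. The setup of Section \ref{sec:preparatorywork} applies here because, as noted right before the lemma, $\bd{\nu}^n \neq \bd\nu^{*,n}$ (so $\lambda_n > 0$) and $\lambda_n \to 0$ (since $\lambda_n^2 \leq r_n^2 \to 0$), together with the strong convergence of $(\bd{\nu}^{*,n}, \bd{\gamma}^{*,n}, \bd{u}^{*,n})$ from Lemma \ref{lem:Q1}. Hence \textbf{Property} $(\mathcal{Q}_0)$ holds, making all the quantitative estimates of Section \ref{sec:preparatorywork} available.

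First, I would invoke Lemma \ref{lem:expensioncost02/03} directly: it asserts that, as $n \to +\infty$,
\begin{equation*}
J\bigl((t_0^n,\gamma_0^n),\bd{\nu}^n\bigr) - J\bigl((t_0^n,\gamma_0^n),\bd{\nu}^{*,n}\bigr) = O(\lambda_n^2).
\end{equation*}
That is, there exists a constant $C > 0$ independent of $n$ such that the absolute value of the left-hand side is bounded by $C \lambda_n^2$.

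Second, I would use the second inequality in \eqref{eq:19Mai15:07} (the part of item (iii) of Property $(\mathcal{Q})$ that quantifies the assumed failure of the PL inequality), which gives
\begin{equation*}
\mathcal{I}\bigl((t_0^n,\gamma_0^n),\bd\nu^n\bigr) < c_n \Bigl( J\bigl((t_0^n,\gamma_0^n),\bd\nu^n\bigr) - J\bigl((t_0^n,\gamma_0^n),\bd\nu^{*,n}\bigr)\Bigr) \leq C c_n \lambda_n^2,
\end{equation*}
where the second inequality follows from the cost expansion above together with the non-negativity of $\mathcal{I}$ (which ensures that the quantity in parentheses is itself non-negative, so we can replace it by its upper bound $C\lambda_n^2$). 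Dividing by $\lambda_n^2 > 0$ yields
\begin{equation*}
\frac{1}{\lambda_n^2} \mathcal{I}\bigl((t_0^n,\gamma_0^n),\bd\nu^n\bigr) \leq C c_n,
\end{equation*}
and the conclusion \eqref{eq:29MAI12:17} follows by letting $n \to \infty$ and using that $c_n \to 0$ by item (i) of Property $(\mathcal{Q})$.

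There is no real obstacle here: the lemma is essentially a bookkeeping step that packages the quadratic upper bound on the cost difference with the assumed strict inequality into a form ready for the subsequent analysis of $\mathcal{I}$ via the Fisher-information rewriting \eqref{eq:IasaFischerInfo03/03}. The \emph{real} work will come in the following steps, where one must show that this vanishing of $\lambda_n^{-2} \mathcal{I}$ forces the normalized triple $(\bd\eta^n, \bd\rho^n, \bd v^n)$ to converge to a nontrivial solution of the linearized system, contradicting the stability of $\bd\nu^*$ via Proposition \ref{prop:preliminayworkProp2-28/02} and the definition of $\mathcal{O}$.
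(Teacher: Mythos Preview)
Your proof is correct and follows essentially the same approach as the paper: invoke Lemma \ref{lem:expensioncost02/03} to get $J((t_0^n,\gamma_0^n),\bd{\nu}^n) - J((t_0^n,\gamma_0^n),\bd{\nu}^{*,n}) = O(\lambda_n^2)$, combine with the second inequality in \eqref{eq:19Mai15:07}, divide by $\lambda_n^2$, and use $c_n \to 0$. Your explicit remark that non-negativity of $\mathcal{I}$ forces the cost difference to be non-negative (so that the $O(\lambda_n^2)$ bound can be used as an upper bound) is a minor clarification the paper leaves implicit.
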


\begin{proof}
    By Lemma \ref{lem:expensioncost02/03} we know that 
    $$ J \bigl( (t_0^n,\gamma_0^n), \bd{\nu}^n \bigr) - J \bigl( (t_0^n,\gamma_0^n), \bd{\nu}^{*,n} \bigr) = O(\lambda_n^2) $$
as $n \rightarrow +\infty.$ Dividing the second equation in \eqref{eq:19Mai15:07} by $\lambda_n^2$ and recalling that $c_n \rightarrow 0$ as $n \rightarrow +\infty$ we get the result.
\end{proof}
Since $\lambda_n >0$ for all $n \in \mathbb{N}$ we can introduce the normalized variables
\begin{equation} 
\eta_t^n := \frac{\nu_t^n - \nu^{*,n}_t}{\lambda_n}, \quad \rho_t^n := \frac{\gamma^n_t - \gamma^{*,n}_t}{\lambda_n}, \quad v_t^n := \frac{u_t^n - u^{*,n}_t}{\lambda_n}, \quad t \in [t_0^n,T].  
\label{eq:normalizednewvariables}
\end{equation}
We can now apply Proposition \ref{prop:preliminaryworkprop1-28/02} in Section \ref{sec:preparatorywork} to find $(\bd{\eta}, \bd{\rho}, \bd{v}) \in \mathcal{A}^l(t_0) \times \mathcal{R}(t_0) \times \mathcal{C}([t_0,T], \mathcal{C}^1_1)$ solution to 
\begin{equation}
    \left \{
    \begin{array}{ll}
    -\partial_t v_t - b(x,\nu^*_t) \cdot \nabla_x v_t = b (x, \eta_t) \cdot \nabla_x u^*_t \quad &\mbox{\rm in }  [t_0,T] \times \R^{d_1} \times \R^{d_2}, 
    \\
     \qquad v_T = 0 \quad &\mbox{\rm in } \R^{d_1} \times \R^{d_2};
    \\
     \partial_t \rho_t + \div_x \bigl( b(x,\nu^*_t) \rho_t \bigr) = - \div_x ( b(x,\eta_t) \gamma^*_t ) \quad &\mbox{\rm in }  (t_0,T) \times \R^{d_1} \times \R^{d_2}, 
     \\
     \qquad \rho_{t_0} = 0 \quad &\mbox{\rm in } \R^{d_1} \times \R^{d_2};
    \end{array}
    \right.
\label{eq:v:rho:proof:Meta2.2}
\end{equation}
such that $(\bd{\eta}^n, \bd{\rho}^n, \bd{v}^n)_{n \in \mathbb{N}}$ converges, up to a sub-sequence, to $(\bd{\eta}, \bd{\rho}, \bd{v})$ in the sense of Proposition \ref{prop:preliminaryworkprop1-28/02}.

\textit{Step 2. Proving that $({\boldsymbol \eta}, {\boldsymbol \rho},{\boldsymbol v}) = (0,0,0)$.}

\begin{prop}
\label{prop:step:3}
Under \textbf{\bf Property (${\mathcal Q}$)}
and with the notations introduced in \textit{Step 1}, 
it holds that 
$({\boldsymbol \eta},{\boldsymbol  \rho},{\boldsymbol v}) = (0,0,0)$.
\end{prop}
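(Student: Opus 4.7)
The plan is to exploit the reformulation \eqref{eq:IasaFischerInfo03/03} of $\mathcal{I}$ as a Fischer information together with the log-Sobolev inequality satisfied by the Gibbs measure $\Gamma_t[\boldsymbol{\nu}^n]$ (cf.\ Lemma \ref{lem:LSI}). Because $\boldsymbol{\nu}^n$ lies in $\mathcal{A}(t_0^n)$ and $\|\boldsymbol{\nu}^n\|_{\mathcal{D}(t_0^n)}$ is controlled uniformly in $n$ (through Lemmas \ref{lem:Q2-28/02} and \ref{lem:timetogotoGallia02/03}), Lemma \ref{lem:LSI} provides a log-Sobolev constant for $\Gamma_t[\boldsymbol{\nu}^n]$ that is bounded uniformly in $(t,n)$. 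Combined with Lemma \ref{lem:step:2}, this yields
\begin{equation*}
\int_{t_0^n}^T \mathcal{E}\bigl( \nu_t^n | \Gamma_t[\boldsymbol{\nu}^n] \bigr) dt \leq \frac{C}{\epsilon^2}\, \mathcal{I}\bigl( (t_0^n, \gamma_0^n), \boldsymbol{\nu}^n \bigr) = o(\lambda_n^2), \quad n \to \infty.
\end{equation*}

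A weighted Pinsker inequality in the spirit of Lemma \ref{lem:PinskerSecFOC28/02}, combined with the exponential upper and lower bounds on $\Gamma_t[\boldsymbol{\nu}^n]$ analogous to \eqref{eq:bound:nu(a):exp(-ell(a))}, then converts this entropy control into
\begin{equation*}
\lim_{n \to \infty} \frac{1}{\lambda_n^2} \int_{t_0^n}^T \Bigl( \int_A \bigl| \nu_t^n(a) - \Gamma_t[\boldsymbol{\nu}^n](a) \bigr| da \Bigr)^2 dt = 0.
\end{equation*}
Writing $(\nu_t^n - \Gamma_t[\boldsymbol{\nu}^n])/\lambda_n = \eta_t^n - (\Gamma_t[\boldsymbol{\nu}^n] - \nu_t^{*,n})/\lambda_n$ and invoking Proposition \ref{prop:cvgcesumofIs03/03}, we obtain
\begin{equation*}
\lim_{n \to \infty} \int_{t_0^n}^T \Bigl( \int_A \bigl| \eta_t^n(a) + \nu_t^*(a) \bigl( k_t(a) - \textstyle\int_A k_t(a') d\nu_t^*(a') \bigr) \bigr| da \Bigr)^2 dt = 0,
\end{equation*}
with $\boldsymbol{k}$ the limiting map defined in \eqref{eq:defnkt03/03}. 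Since $\boldsymbol{\eta}^n$ also converges to $\boldsymbol{\eta}$ in the sense of \eqref{eq:testfnnutn03/03}, and since the map $a \mapsto \nu_t^*(a)(k_t(a) - \int k_t d\nu_t^*)$ has exponential decay by \eqref{eq:bound:nu(a):exp(-ell(a))} together with the $\mathcal{C}^1_2(A)$-bound on $\boldsymbol{k}$ from Proposition \ref{prop:convergencektn03/03}, both limits must coincide. Hence, for almost every $(t,a) \in [t_0,T] \times A$,
\begin{equation*}
\eta_t(a) = -\frac{\nu_t^*(a)}{\epsilon} \Bigl[ \bigl\langle b(\cdot,a) \cdot \nabla_x u_t^* ;\rho_t \bigr\rangle + \int_{\R^{d_1} \times \R^{d_2}} b(x,a) \cdot \nabla_x v_t(x,y) d\gamma_t^*(x,y) - c_t \Bigr],
\end{equation*}
for a suitable normalizing constant $c_t$, which is precisely the linearized optimality condition \eqref{eq:nutProp3.2}.

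Since $(\boldsymbol{\rho}, \boldsymbol{v})$ already solves \eqref{eq:v:rho:proof:Meta2.2} by Proposition \ref{prop:preliminaryworkprop1-28/02}, the triple $(\boldsymbol{\eta}, \boldsymbol{\rho}, \boldsymbol{v})$ is then a solution to the full linearized system \eqref{eq:nutProp3.2}--\eqref{eq:vtrhotProp3.2} around $(\boldsymbol{\nu}^*, \boldsymbol{\gamma}^*, \boldsymbol{u}^*)$. The assumption $(t_0,\gamma_0) \in \mathcal{O}$ means that $\boldsymbol{\nu}^*$ is a stable minimizer in the sense of Definition \ref{defn:stablesolutions}, so that $(\boldsymbol{\eta}, \boldsymbol{\rho}, \boldsymbol{v}) = (0,0,0)$, as claimed. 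The main obstacle is the reconciliation of the two modes of convergence for $\boldsymbol{\eta}^n$: the duality convergence of Proposition \ref{prop:preliminaryworkprop1-28/02} (against functions of subcubic growth) and the integrated $L^1$-type convergence coming from the Fischer information estimate; the uniform exponential moment bound on $\nu_t^{*,n}$ provided by Lemma \ref{lem:Q2-28/02} and the uniform $\mathcal{C}^1_2(A)$-estimate on $\boldsymbol{k}^n$ of Proposition \ref{prop:convergencektn03/03} are precisely what is needed to match these two limits by a standard tightness argument.
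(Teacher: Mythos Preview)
Your proposal is correct and follows essentially the same route as the paper: Fischer information plus log-Sobolev for $\Gamma_t[\boldsymbol{\nu}^n]$ gives $\int_{t_0^n}^T \mathcal{E}(\nu_t^n|\Gamma_t[\boldsymbol{\nu}^n])\,dt = o(\lambda_n^2)$, Pinsker converts this to $L^1$, the decomposition through $\Gamma_t[\boldsymbol{\nu}^n]$ combined with Proposition~\ref{prop:cvgcesumofIs03/03} yields the integrated $L^1$ convergence of $\eta_t^n + \nu_t^*(k_t - c_t)$, and matching with the weak limit from Proposition~\ref{prop:preliminaryworkprop1-28/02} identifies $\boldsymbol{\eta}$ as solving \eqref{eq:nutProp3.2}, so stability forces $(\boldsymbol{\eta},\boldsymbol{\rho},\boldsymbol{v}) = (0,0,0)$. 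Two minor remarks: the paper uses only the \emph{standard} Pinsker inequality (your invocation of a weighted version is harmless but unnecessary), and the reconciliation of the two convergence modes for $\boldsymbol{\eta}^n$ is handled in the paper simply by testing the $L^1$ convergence against smooth compactly supported functions $f$ on $(t_0,T]\times A$ and comparing with \eqref{eq:testfnnutn03/03}, rather than by a tightness argument.
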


\begin{proof}
We are going to show that $(\bd{\eta}, \bd{\rho}, \bd{v})$ is solution to the linearized system \eqref{eq:nutProp3.2}-\eqref{eq:vtrhotProp3.2} and conclude by stability of $\bd{\nu}^*$ since stable solutions are precisely those for which $(0,0,0)$ is the only solution to \eqref{eq:nutProp3.2}-\eqref{eq:vtrhotProp3.2}.  Following 
\eqref{eq:IasaFischerInfo03/03}, we let, for all $n \in {\mathbb N}$ and
 $t \in [t_0^n,T]$,
\begin{equation}
\label{eq:expression:nut:n,infinity}
 \Gamma_t[ \bd\nu^n](a) = \frac1{z_t^{n}}  \exp 
\biggl( -\ell(a) - \frac{1}{\epsilon} \int_{\R^{d_1} \times \R^{d_2}} b(x,a) \cdot \nabla_xu_t^n(x,y) d \gamma_t^n(x,y) \biggr), \quad a \in A,
\end{equation}
where $z_t^{n}$ is a normalization constant. 
By Lemma \ref{lem:timetogotoGallia02/03}, $\norm{\bd{\nu}^n}_{\mathcal{D}(t_0^n)} = \norm{ \bd{\nu}^{*,n} + \lambda_n \bd{\eta}^n}_{\mathcal{D}(t_0^n)}$ is bounded independently from $n \in \mathbb{N}$. By the same Lemma, $\int_{\R^{d_1} \times \R^{d_2}} (|x|^2 + |y|^2)^{3/2}d\gamma_0^n(x,y)$ is also bounded independently from $n \in \mathbb{N}$. Therefore we can apply Lemma \ref{lem:LSI} and deduce that $\Gamma_t[\bd{\nu}^n]$ satisfies a log-Sobolev inequality with constant independent from $t \in [t_0^n,T]$ and $n \in \mathbb{N}$. Thanks to \eqref{eq:29MAI12:17} 
in the statement of 
Lemma 
\ref{lem:step:2}
and to the log-Sobolev inequality 
(see Lemma 
\ref{lem:LSI}, with an additional approximation argument 
allowing us to choose $f\equiv \nu_t^n / \Gamma_t[\bd{\nu}^n]$), 
we deduce that
$$ \lim_{ n \rightarrow +\infty} \frac{1}{\lambda_n^2} \int_{t_0^n}^T \int_A \log \frac{\nu_t^n(a)}{\Gamma_t[\bd\nu^n] (a)} d\nu_t^n(a)dt = \lim_{n \rightarrow +\infty} \frac{1}{\lambda_n^2} \mathcal{I} \bigl( (t_0^n,\gamma_0^n), \bd{\nu}^n \bigr) =  0.  $$
Using Pinsker's inequality (\cite[(5.2.2)]{BakryGentilLedoux}), this leads to
\begin{equation} 
\lim_{n \rightarrow +\infty} \frac{1}{\lambda_n^2} \int_{t_0^n}^T \biggl( \int_A \bigl| \nu_t^n (a) - \Gamma_t[\bd\nu^n](a) \bigr|da \biggr)^2dt =0.
\label{eq:Pinsker30Mai}
\end{equation}
We now return back 
to the definition of $\bd{\eta}^n$ in
\eqref{eq:normalizednewvariables}, from which we obtain the decomposition:
$$ \eta_t^n(a) = \frac{1}{\lambda_n} \Bigl( \nu_t^n(a) - \Gamma_t[\bd\nu^n](a) \Bigr) + \frac{1}{\lambda_n}\Bigl(  \Gamma_t[\bd\nu^n](a) -\nu_t^{*,n}(a) \Bigr), \quad (t,a) \in [t_0^n,T] \times A.  $$
Introducing the variable
$$ k_t(a) := \frac{1}{\epsilon} \int_{\R^{d_1} \times \R^{d_2}} b(x,a) \cdot \nabla_x v_t(x,y) d\gamma^*_t(x,y)  + \frac{1}{\epsilon} \langle b(\cdot,a) \cdot \nabla_x u^*_t; \rho_t \rangle, \quad (t,a) \in [t_0,T] \times A $$
and the constant $c_t = \int_A k_t(a) d\nu_t^*(a)$ we deduce
\begin{equation}
\label{eq:somecomputation30Mai}
\begin{split}
\int_{t_0^n \vee t_0}^T \biggl( \int_A \Bigl| &\eta_t^n(a) + \nu_t^*(a) \bigl(k_t(a) - c_t \bigr)  \Bigr| da \biggr)^2dt  \leq \frac{2}{\lambda_n^2} \int_{t_0^n \vee t_0}^T  \biggl( \int_A \bigl| \nu_t^n (a) - \Gamma_t[ \bd \nu^n](a) \bigr|da \biggr)^2dt 
    \\
&\hspace{15pt} + 2 \biggl\{ \int_{t_0^n \vee t_0}^T \biggl( \int_A \Bigl| \frac{1}{\lambda_n} \bigl(\Gamma_t[\bd\nu^n](a) - \nu_t^{*,n}(a) \bigr) +\nu^*_t(a) \bigl(k_t(a) -c_t \bigr) \Bigr|da \biggr)^2dt \biggr\}.
\end{split}
\end{equation} 
The first term in the right-hand side is handled by \eqref{eq:Pinsker30Mai} and the second term by Proposition \ref{prop:cvgcesumofIs03/03}
and we get 
\begin{equation}
\lim_{n \rightarrow +\infty}
\int_{t_0^n \vee t_0}^T 
\biggl( \int_A
\Bigl\vert 
\eta_t^n(a) + \nu_t^*(a) (k_t(a) -c_t) \Bigr\vert da \biggr)^2 dt =0. 
\label{eq:abovedisplay25/02}
\end{equation}
 We now apply Proposition \ref{prop:preliminaryworkprop1-28/02} and specifically the convergence it provides for 
 $(\bd{\eta}^n)_{n \in \mathbb{N}}$. We choose  in \eqref{eq:testfnnutn03/03}  therein a smooth test function $f$  with compact support included in 
$(t_0,T] \times A$. 
We deduce from \eqref{eq:abovedisplay25/02} that 
\begin{equation*}
\int_{t_0}^T 
\int_A \varphi_t(a) \eta_t(a) 
da dt = - \int_{t_0}^T \int_A \varphi_t(a) (k_t(a) - c_t) \nu_t^*(a) dadt
\end{equation*}
which gives, for almost every 
$(t,a) \in [t_0,T] \times A$, 
$$ \eta_t(a) = -\nu_t^*(a) ( k_t(a) -c_t).$$
Together with 
\eqref{eq:v:rho:proof:Meta2.2}, 
this means that $({\boldsymbol \eta},{\boldsymbol \rho},{\boldsymbol  v})$ is a solution to the linearized system \eqref{eq:nutProp3.2}-\eqref{eq:vtrhotProp3.2} and therefore -since $(t_0,\gamma_0)$  belongs to $\mathcal{O}$- we have $({\boldsymbol \eta},{\boldsymbol  \rho},{\boldsymbol v}) =(0, 0,0)$. 

\end{proof}

\textit{Step 3. Conclusion.}
We now complete the proof of 
Theorem \ref{prop:main:local:polyak:lojasiewicz}. We use the latter statement to establish:
\begin{prop}
There is no sequence 
$(r_n,c_n,t_0^n,\gamma_0^n,\boldsymbol \nu^{*,n})_{n \in {\mathbb N}}$
satisfying 
\textbf{\bf Property (${\mathcal Q}$)}, i.e., 
\textbf{\bf Property (${\mathcal Q}$)} is empty. 
In particular, Theorem 
\ref{prop:main:local:polyak:lojasiewicz} holds true. 
\end{prop}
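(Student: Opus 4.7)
The plan is to combine the triviality of the weak limit established in Proposition~\ref{prop:step:3} with a sharp lower bound on $\mathcal{I}$ coming from the log-Sobolev inequality of Lemma~\ref{lem:LSI}, so as to contradict the vanishing of $\lambda_n^{-2}\mathcal{I}((t_0^n,\gamma_0^n),\bd{\nu}^n)$ established in Lemma~\ref{lem:step:2}. The guiding idea is that, after factoring out the optimal density in the integrand of $\mathcal{I}$, one obtains the sum of $\epsilon\nabla_a\log(\nu_t^n/\nu_t^{*,n})$ and a correction term of order $\lambda_n$ driven by $(\bd{\rho}^n,\bd{v}^n)$; the first piece is bounded below by $c\lambda_n^2$ via LSI, while the second becomes $o(\lambda_n^2)$ precisely because the limit $(\bd{\eta},\bd{\rho},\bd{v})$ is trivial.

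Concretely, I would begin by using the Gibbs form \eqref{eq:NOCfornut2Avril} of $\nu_t^{*,n}$, which makes $\mathcal{I}((t_0^n,\gamma_0^n),\bd{\nu}^{*,n})$ pointwise zero, to eliminate the explicit $\ell$ and $\nabla_x u^{*,n}_t$ terms in \eqref{eq:defmathcalI}. A direct algebraic rearrangement, together with the identity $\nabla_x u_t^n\gamma_t^n-\nabla_x u_t^{*,n}\gamma_t^{*,n}=\lambda_n(\nabla_x v_t^n\gamma_t^n+\nabla_x u_t^{*,n}\rho_t^n)$ and the definition \eqref{eq:ktn28/02} of $k_t^n$, yields
\begin{equation*}
\mathcal{I}\bigl((t_0^n,\gamma_0^n),\bd{\nu}^n\bigr)=\int_{t_0^n}^T\int_A\Bigl|\epsilon\nabla_a\log\frac{\nu_t^n}{\nu_t^{*,n}}(a)+\lambda_n\epsilon\nabla_a k_t^n(a)\Bigr|^2 d\nu_t^n(a)\,dt.
\end{equation*}
The elementary inequality $|A+B|^2\geq\tfrac12|A|^2-|B|^2$ then provides
\begin{equation*}
\mathcal{I}\bigl((t_0^n,\gamma_0^n),\bd{\nu}^n\bigr)\geq\frac{\epsilon^2}{2}\int_{t_0^n}^T\int_A\Bigl|\nabla_a\log\frac{\nu_t^n}{\nu_t^{*,n}}\Bigr|^2d\nu_t^n\,dt-\lambda_n^2\epsilon^2\int_{t_0^n}^T\int_A|\nabla_a k_t^n|^2 d\nu_t^n\,dt.
\end{equation*}

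I would then apply Lemma~\ref{lem:LSI} to $\nu_t^{*,n}=\Gamma_t[\bd{\nu}^{*,n}]$ (which is a fixed point of $\Gamma$ by optimality) with the choice $f=\nu_t^n/\nu_t^{*,n}$; the uniform moment bounds from Lemma~\ref{lem:Q2-28/02} guarantee that the LSI constant can be chosen independently of both $t\in[t_0^n,T]$ and $n\in\mathbb{N}$, producing $c_0>0$ with
\begin{equation*}
\int_{t_0^n}^T\int_A\Bigl|\nabla_a\log\frac{\nu_t^n}{\nu_t^{*,n}}\Bigr|^2 d\nu_t^n\,dt\geq c_0\int_{t_0^n}^T\mathcal{E}\bigl(\nu_t^n\mid\nu_t^{*,n}\bigr)dt = c_0\lambda_n^2.
\end{equation*}
For the negative contribution, I would invoke the strengthened convergence statement in Proposition~\ref{prop:convergencektn03/03}: since the weak limit $(\bd{\eta},\bd{\rho},\bd{v})$ is exactly $(0,0,0)$, this proposition gives $\int_{t_0^n}^T\int_A|\nabla_a k_t^n|^2 d\nu_t^n\,dt\to 0$. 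Dividing the preceding lower bound by $\lambda_n^2$, passing to the limit, and invoking Lemma~\ref{lem:step:2} would then force
\begin{equation*}
0=\lim_{n\to\infty}\frac{1}{\lambda_n^2}\mathcal{I}\bigl((t_0^n,\gamma_0^n),\bd{\nu}^n\bigr)\geq\frac{\epsilon^2 c_0}{2}>0,
\end{equation*}
which is the sought contradiction and concludes the proof of Theorem~\ref{prop:main:local:polyak:lojasiewicz}.

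The principal obstacle is not the final algebraic combination but the availability of the \emph{quantitative} (rather than merely weak) vanishing of $\int|\nabla_a k_t^n|^2 d\nu_t^n\,dt$; this relies on upgrading the convergence of $(\bd{\rho}^n,\bd{v}^n)$ to a mode strong enough to control the $\mathcal{C}^1_3(A)$-norm of $k_t^n$ uniformly in $t$ and $n$. This upgrade has however been prepared upstream in Section~\ref{sec:preparatorywork}, precisely via the second half of Proposition~\ref{prop:convergencektn03/03}, so the argument above can be carried out without any further compactness work.
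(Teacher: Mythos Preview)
Your proof is correct and follows essentially the same route as the paper's own argument. The paper obtains the identity for $\mathcal{I}$ by first passing through the Fisher-information form \eqref{eq:IasaFischerInfo03/03} with respect to $\Gamma_t[\bd{\nu}^n]$ and then substituting $\Gamma_t[\bd{\nu}^n]\propto\nu_t^{*,n}e^{-\lambda_n k_t^n}$, whereas you arrive at the same decomposition directly from the Gibbs form of $\nu_t^{*,n}$; the subsequent use of the elementary inequality, the uniform LSI for $\nu_t^{*,n}$, and the strengthened convergence in Proposition~\ref{prop:convergencektn03/03} (enabled by Proposition~\ref{prop:step:3}) are identical in both proofs.
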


\begin{proof}
Under \textbf{\bf Property (${\mathcal Q}$)}, with the notations introduced in  
 \textit{Step 2} we write $\mathcal{I}$ as a Fischer information, as in \eqref{eq:IasaFischerInfo03/03} 
\begin{equation*}
\begin{split}
\mathcal{I} \bigl( (t_0,\gamma_0),
{\boldsymbol \nu}^n \bigr) = \epsilon^2 \int_{t_0^n}^T \int_A \Bigl| \nabla_a \log \frac{\nu_t^n}{\Gamma_t[\bd \nu^n]}(a) \Bigr|^2 d\nu_t^n(a)dt,
\end{split}
\end{equation*}
which we rewrite
$$\mathcal{I} \bigl( (t_0,\gamma_0),
{\boldsymbol \nu}^n \bigr) = \epsilon^2 \int_{t_0^n}^T \int_A \Bigl| \nabla_a \log \frac{\nu_t^n}{ \nu_t^{*,n}}(a)
-
\nabla_a \log \frac{ \Gamma_t[ \bd \nu^n]}{\nu_t^{*,n}}
(a)
\Bigr|^2 d\nu_t^n(a)dt. $$
We recall from \eqref{eq:rewritinggammanutn28/02} in Lemma \ref{lem:rewriting02/02} that the probability measure $\Gamma_t[\bd{\nu}^n]$ can be rewritten as 
$$ \Gamma_t[\bd{\nu}^n] \propto \nu_t^{*,n} \exp \Bigl( -\frac{\lambda_n}{\epsilon} \int_{\R^{d_1} \times \R^{d_2}} b(x,\cdot) \cdot d(\nabla_x u_t^{*,n} \rho_t^n + \nabla_x v_t^n\gamma_t^n)(x,y) \Bigr),  $$
and then, by Young's inequality 
\begin{equation}
\label{eq:30Mai:15:24}
\begin{split}
    & \epsilon^2 \int_{t_0^n}^T  \int_A \Bigl|  \nabla_a \log \frac{\nu_t^n}{\nu^{*,n}_t} (a) \Bigr|^2 d\nu_t^n(a) dt \leq 2 \mathcal{I}(t_0^n,\gamma_0^n,{\boldsymbol \nu}^n) 
    \\
 &\hspace{15pt} + 2 \lambda_n^2 \int_{t_0^n}^T \int_A \Bigl| \nabla_a \int_{\R^{d_1} \times \R^{d_2}} b(x,a) \cdot d(\nabla_x v_t^n \gamma_t^n + \nabla_x u^{*,n}_t \rho_t^n)(x,y) \Bigr|^2 d \nu^n_t(a) dt.
\end{split}
\end{equation}
From Lemma \ref{lem:step:2}, the second part of Proposition \ref{prop:convergencektn03/03} and display \eqref{eq:30Mai:15:24}, we conclude that 
$$ \lim_{n \rightarrow +\infty} \frac{1}{\lambda_n^2} \int_{t_0^n}^T  \int_A  \Bigl|  \nabla_a \log \frac{\nu_t^n}{\nu^{*,n}_t} (a) \Bigr|^2 d\nu_t^n(a) dt = 0.$$
However, by log-Sobolev inequality for $\boldsymbol \nu^{*,n}$ (see Lemma 
\ref{lem:LSI}), 
$$ \lambda_n^2 = \int_{t_0^n}^T \int_A \log \frac{\nu_t^n(a)}{\nu^{*,n}_t(a)} d\nu_t^n(a)dt \leq C \int_{t_0^n}^T  \int_A \Bigl| \epsilon \nabla_a \log \frac{\nu_t^n}{\nu^{*,n}_t} (a) \Bigr|^2 d\nu_t^n(a) dt, $$
for some $C >0$ independent from $n \in \mathbb{N}$. Dividing by $\lambda^2_n$ and letting $n \rightarrow +\infty$ we obtain
$$ 1 \leq \lim_{n \rightarrow +\infty} \frac{C}{\lambda_n^2} \int_{t_0^n}^T  \int_A  \Bigl|  \nabla_a \log \frac{\nu_t^n}{\nu^{*,n}_t} (a) \Bigr|^2 d\nu_t^n(a) dt = 0, $$
which is the desired contradiction.
\end{proof}

\color{black}

\section{Existence of Optimal Controls and First Order Conditions}

\label{sec:ExistenceandFOC}

In this section, we address the optimal control problem \eqref{eq:OriginalControlPb24/06}.
We first establish the existence of a minimizer
in Subsection 
\ref{subse:7:1}. 
In Subsection 
\ref{subse:7:2}, we give a rigorous proof of the first order  condition. Further properties of the first order system are established in 
Subsection 
\ref{sec:additionalregandfirststability}.

\subsection{Existence of Optimal Solutions}
\label{subse:7:1}

{Here our objective is to establish the following statement:}
\begin{prop}
\label{prop:existenceprop}
For any $(t_0,\gamma_0) \in [0,T] \times {\mathcal P}_2({\mathbb R}^{d_1} \times {\mathbb R}^{d_2})$, 
the minimization problem \eqref{eq:OriginalControlPb24/06} admits at least one solution.
\end{prop}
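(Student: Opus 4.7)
The plan is to apply the direct method of the calculus of variations: extract a limit of a minimizing sequence, show this limit is admissible, and verify that $J((t_0,\gamma_0),\cdot)$ is sequentially lower semicontinuous along this minimizing sequence. Concretely, I would take a minimizing sequence $({\boldsymbol \nu}^n)_{n \in \mathbb{N}} \subset \mathcal{A}(t_0)$ and observe first that, since $L$ is bounded below by item (iii) of \assreg, the sequence of entropic costs $\int_{t_0}^T \mathcal{E}(\nu_t^n \vert \nu^\infty) dt$ is uniformly bounded. In view of \eqref{eq:interpretation:wider:space}, I identify each ${\boldsymbol \nu}^n$ with an element of $(T-t_0)\cdot {\mathcal P}([t_0,T] \times A)$, on which the relative entropy $\mathcal{E}(\cdot \vert {\rm Leb}_{[t_0,T]} \times \nu^\infty)$ is a good rate function. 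By the standard compactness of sublevel sets of relative entropy (together with the moment bound on $\nu^\infty$ stemming from the coercivity of $\ell$), I extract a subsequence converging narrowly on $[t_0,T] \times A$ toward some ${\boldsymbol \nu}^*$.

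Next, I would check that ${\boldsymbol \nu}^* \in \mathcal{A}(t_0)$. Passing to the limit in the first marginal preserves ${\rm Leb}_{[t_0,T]}$, so ${\boldsymbol \nu}^*$ disintegrates as ${\rm Leb}_{[t_0,T]} \otimes (\nu_t^*)_{t \in [t_0,T]}$ with $\nu_t^* \in \mathcal{P}(A)$ for almost every $t$; lower semicontinuity of the relative entropy along narrow convergence then yields
\[
\int_{t_0}^T \mathcal{E}(\nu_t^* \vert \nu^\infty) dt \leq \liminf_{n \to \infty} \int_{t_0}^T \mathcal{E}(\nu_t^n \vert \nu^\infty) dt < +\infty,
\]
so ${\boldsymbol \nu}^* \in \mathcal{A}(t_0)$.

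It remains to pass to the limit in the terminal cost $\int L \, d\gamma_T^n$. Using \eqref{eq:boundfromPinsker16Sept:sec:3}, the entropic bound yields a uniform bound on $\|{\boldsymbol \nu}^n\|_{\mathcal{D}(t_0)}$; in particular the $L^4$ in $a$ moments of the $\nu_t^n$ are uniformly integrable against the narrow topology, which allows testing the narrow convergence against functions growing as $1 + |a|^p$ for $p < 4$. Through this, I obtain convergence of the velocity fields $b(\cdot,\nu_t^n) \to b(\cdot,\nu_t^*)$ integrated in time, and the stability statements from Appendix \ref{subse:A:1} (used in the proof of Proposition \ref{lem:compactnessContinuityEquation24/06}) propagate this to convergence of $\gamma_t^n$ to $\gamma_t^*$ in $\mathcal{P}_2$. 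Since $L$ has at most quadratic growth and the second moments of $\gamma_T^n$ are uniformly bounded, a uniform integrability argument gives $\int L \, d\gamma_T^n \to \int L \, d\gamma_T^*$. Combining this continuity with the lower semicontinuity of the entropic term yields
\[
J((t_0,\gamma_0),{\boldsymbol \nu}^*) \leq \liminf_{n \to \infty} J((t_0,\gamma_0),{\boldsymbol \nu}^n) = \inf_{{\boldsymbol \nu} \in \mathcal{A}(t_0)} J((t_0,\gamma_0),{\boldsymbol \nu}),
\]
so ${\boldsymbol \nu}^*$ is a minimizer.

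The main technical obstacle is the passage to the limit in $\bd{\nu} \mapsto \bd{\gamma}$: narrow convergence of ${\boldsymbol \nu}^n$ only controls tests against continuous bounded functions on $[t_0,T] \times A$, whereas the driving field $b(x,\cdot)$ has linear-in-$a$ (and up to quadratic-in-$x$ for its first derivative) growth. The leverage is precisely the quantitative $L^4$ moment bound in $a$ supplied by \eqref{eq:boundfromPinsker16Sept:sec:3}, combined with the uniform moment bounds on $(\gamma_t^n)$ from \eqref{eq:lem:compactnessContinuityEquation24/06:statement:48}, which together provide the uniform integrability needed to upgrade narrow convergence to convergence against $b$ and to close the argument via the stability of the continuity equation.
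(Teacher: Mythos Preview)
Your approach is essentially the paper's: the direct method, with tightness coming from the uniform entropy bound (via \eqref{eq:interpretation:wider:space} and the compactness of sublevel sets of relative entropy), admissibility of the limit from lower semicontinuity of entropy, and stability of the continuity equation to pass to the limit in the terminal cost. The paper packages the compactness and lower semicontinuity into Lemma~\ref{lem:cost:convergence} and then concludes in one line.

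There is, however, a gap in your treatment of the terminal cost. You claim that quadratic growth of $L$ together with a uniform second-moment bound on $\gamma_T^n$ yields $\int L\, d\gamma_T^n \to \int L\, d\gamma_T^*$ by uniform integrability. This is not justified: with $\gamma_0 \in \mathcal{P}_2$ only, the estimates of Proposition~\ref{lem:compactnessContinuityEquation24/06} give convergence of $\gamma_T^n$ in $d_{2-\delta}$ for $\delta>0$, not in $d_2$, and a uniform bound on second moments alone does not provide uniform integrability of a function of exactly quadratic growth. The paper does not attempt to prove continuity of the terminal cost; instead, in Step~5 of the proof of Lemma~\ref{lem:cost:convergence}, it uses that $L$ is bounded below and continuous to obtain
\[
\int_{\R^{d_1}\times\R^{d_2}} L\, d\gamma_T^* \leq \liminf_{n\to\infty} \int_{\R^{d_1}\times\R^{d_2}} L\, d\gamma_T^n,
\]
which is all that is needed. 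Replace your convergence claim for the terminal cost by this lower-semicontinuity argument and the proof goes through.
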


The proof relies on several preliminary technical lemmas. In particular we will need the properties of solutions to the continuity equation already presented in Lemma \ref{lem:compactnessContinuityEquation24/06}.  

We state the following result (already quoted in \eqref{eq:boundfromPinsker16Sept:sec:3}), which provides a bound for the fourth moment of ${\boldsymbol \nu} \in {\mathcal A}(t_0)$ in terms of the cost $J((t_0,\gamma_0),{\boldsymbol \nu})$. 
(We recall that $\nu^\infty$, which appears in the statement below, 
has been introduced in \eqref{eq:defnuinfty03/07}.)

\begin{lem}
\label{lem:cost:L4:moments}
There exists a constant $C>0$ such that, for any $t_0 \in [0,T]$ and ${\boldsymbol \nu} \in \mathcal{A}(t_0)$,
\begin{equation} 
\int_{t_0}^T \int_A |a|^4 d\nu_t(a)dt + \sup_{t_0 \leq t_1 < t_2 \leq T} 
\biggl\{ \frac{1}{\sqrt{t_2-t_1}} \int_{t_1}^{t_2} \int_A |a|^2 d\nu_t(a)dt
\biggr\} \leq C \biggl( 1+ 
\int_{t_0}^T {\mathcal E}(\nu_t \vert \nu^\infty) dt \biggr).
\label{eq:boundfromPinsker16Sept}
\end{equation}
As a consequence, there exist two constants $c,C$, with $c>0$, such that, for any $t_0 \in [0,T]$ and any 
${\boldsymbol \nu} \in {\mathcal A}(t_0)$, 
\begin{equation}
\label{eq:lem:cost:L4:moments:00}
J\bigl( (t_0,\gamma_0) , {\boldsymbol \nu}\bigr) \geq - C + c \int_{t_0}^T 
\int_A \vert a \vert^4 d \nu_t(a) dt. 
\end{equation}
In particular, the right-hand side is (uniformly) bounded
on sub-level sets 
of $J((t_0,\gamma_0),\cdot)$.
\end{lem}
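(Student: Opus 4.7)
The plan is to deduce the first inequality in \eqref{eq:boundfromPinsker16Sept} from the Donsker--Varadhan variational principle applied to the reference measure $\nu^{\infty}$. A preliminary observation is that the convexity hypothesis $\nabla^2_{aa}\ell(a) \geq c(1+|a|^2) I_{d'}$ integrates, via a second-order Taylor expansion around $a=0$, to a coercivity bound of the form $\ell(a) \geq c_1 |a|^4 - C_1$ on all of $A$, for some $c_1 > 0$ and $C_1 \geq 0$. Consequently, for any $c_0 \in (0,c_1)$ the normalizing-type integral $\int_A e^{c_0 |a|^4} d\nu^{\infty}(a)$ is finite. Applying the Donsker--Varadhan formula with test function $f(a) = c_0 |a|^4$ then yields, for every $\nu \in \mathcal{P}(A)$ with finite relative entropy,
\begin{equation*}
c_0 \int_A |a|^4 d\nu(a) \leq \mathcal{E}(\nu | \nu^\infty) + \log \int_A e^{c_0 |a|^4} d\nu^{\infty}(a).
\end{equation*}
Integrating this inequality over $t \in [t_0,T]$ and using that the log-term is a constant independent of $\bd{\nu}$ controls $\int_{t_0}^T \int_A |a|^4 d\nu_t(a) dt$ by $C(1 + \int_{t_0}^T \mathcal{E}(\nu_t | \nu^{\infty}) dt)$, which produces the first summand in the left-hand side of \eqref{eq:boundfromPinsker16Sept}.

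For the Hölder-like control of the $L^2$-moments, the plan is to chain two Cauchy--Schwarz inequalities. Since $\nu_t$ is a probability measure, a first application in the $a$-variable gives $\int_A |a|^2 d\nu_t(a) \leq \bigl(\int_A |a|^4 d\nu_t(a)\bigr)^{1/2}$. A second application in the time variable then yields, for $t_0 \leq t_1 < t_2 \leq T$,
\begin{equation*}
\int_{t_1}^{t_2} \int_A |a|^2 d\nu_t(a) dt \leq \sqrt{t_2 - t_1} \Bigl( \int_{t_1}^{t_2} \int_A |a|^4 d\nu_t(a) dt \Bigr)^{1/2}.
\end{equation*}
After dividing by $\sqrt{t_2-t_1}$ and applying $\sqrt{x} \leq \tfrac{1}{2}(1+x)$, the previous step finishes the proof of \eqref{eq:boundfromPinsker16Sept}.

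The inequality \eqref{eq:lem:cost:L4:moments:00} is then immediate: since $L$ is bounded from below (item (iii) in \assreg) and $\gamma_T \in \mathcal{P}_2(\R^{d_1} \times \R^{d_2})$ by Proposition \ref{lem:compactnessContinuityEquation24/06}, the terminal term in the cost is bounded below by some $-C_L$. Combining this with the lower bound on $\epsilon \int_{t_0}^T \mathcal{E}(\nu_t | \nu^{\infty}) dt$ furnished by \eqref{eq:boundfromPinsker16Sept} yields constants $c > 0$ and $C$ such that $J((t_0,\gamma_0),\bd{\nu}) \geq -C + c \int_{t_0}^T \int_A |a|^4 d\nu_t(a) dt$, as required. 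There is no substantial obstacle in the argument; the only point to check carefully is the quartic lower bound on $\ell$, which is a standard consequence of integrating the Hessian estimate twice along straight lines from the origin.
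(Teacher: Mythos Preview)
Your proposal is correct and follows essentially the same approach as the paper. The only cosmetic difference is in how the fourth-moment bound is obtained: you invoke the Donsker--Varadhan variational formula with test function $c_0|a|^4$, while the paper instead compares $\nu^\infty$ to a standard Gaussian $g_{d'}$ and uses non-negativity of $\mathcal{E}(\nu \mid g_{d'})$ to get $\mathcal{E}(\nu \mid \nu^\infty) \geq -C + \int_A (\ell(a) - \tfrac{1}{2}|a|^2)\, d\nu(a)$; these are two phrasings of the same entropy inequality, and the Cauchy--Schwarz step and the lower bound on $J$ are handled identically.
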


\begin{proof}
We start with the following observation. Letting $g_{d'}(a) := (2\pi)^{-d'/2} \exp( - |a|^2/2)$ for $a \in A$ (with $d'$ denoting the dimension of $A$)
and using the non-negativity of the relative entropy between two probability measures, 
we deduce that there exists a constant $C$ such that, for any $\nu \in \mathcal{P}_2(A)$,
\begin{equation*}
\begin{split}
\mathcal{E}(\nu \vert \nu^\infty) 
= \mathcal{E}(\nu \vert g^{d'})
+ \int_A \log \biggl( \frac{g_d'(a)}{\nu^\infty(a)}
\biggr) d \nu(a) 
&\geq \int_A \log \biggl( \frac{g_d'(a)}{\nu^\infty(a)}
\biggr) d \nu(a)
\\
&\geq - C + \int_A  \Bigl( \ell(a)- \frac{a^2}2
\Bigr) d \nu(a).
\end{split}
\end{equation*}

Take now $t_0 \in [0,T]$
and ${\boldsymbol \nu} \in {\mathcal A}(t_0)$. 
Apply the above inequality with $\nu=\nu_t$. 
By 
the coercivity condition (ii) in \assreg \
and then
Young's inequality, we deduce that (for a possibly new value of the constant $C$)
\begin{equation*}
\int_{t_0}^T \int_A |a|^4 d\nu_t(a)dt  \leq C \biggl( 1+ 
\int_{t_0}^T {\mathcal E}(\nu_t \vert \nu^\infty) dt \biggr).
\end{equation*}
And then, by Cauchy-Schwarz inequality, 
we have, for any $t_1,t_2 \in [t_0,T]$
with $t_1 < t_2$, 
\begin{equation*}
\begin{split}
 \frac{1}{\sqrt{t_2-t_1}} \int_{t_1}^{t_2} \int_A |a|^2 d\nu_t(a)dt
&\leq 
\biggl\{ \int_{t_1}^{t_2} \int_A |a|^4 d\nu_t(a)dt
\biggr\}^{1/2}
 \leq C^{1/2} \biggl( 1+ 
\int_{t_0}^T {\mathcal E}(\nu_t \vert \nu^\infty) dt \biggr)^{1/2},
\end{split}
\end{equation*}
from which 
\eqref{eq:boundfromPinsker16Sept}
easily follows. 

We turn to the proof of 
\eqref{eq:lem:cost:L4:moments:00}. 
Recall 
\eqref{eq:deftotalcost}
for the definition of 
$J$. Since the function $L$ in the definition of $J$ is lower bounded, we deduce that there exists a constant 
$C$ (independent of $(t_0,\gamma_0)$ and ${\boldsymbol \nu}$) 
such that 
\begin{equation}
\label{eq:lem:cost:L4:moments:1}
- C +
\epsilon \int_{t_0}^T {\mathcal E}(\nu_t \vert \nu^\infty) 
dt 
 \leq 
J \bigl( (t_0,\gamma_0),{\boldsymbol \nu} \bigr).
\end{equation}

Using 
\eqref{eq:boundfromPinsker16Sept}, 
we complete the proof. 
\end{proof}

The next statement provides a very useful compactness and continuity result for the controls and their related trajectories and multipliers.

\begin{lem}
\label{lem:cost:convergence}
Let $(t_0,\gamma_0) \in [0,T] \times {\mathcal P_p}({\mathbb R}^{d_1} \times {\mathbb R}^{d_2})$ for some $p \geq 2$ and 
$({\boldsymbol \nu}^n)_{n \geq 1}$ be 
a sequence of elements of
${\mathcal A}(t_0)$ satisfying 
\begin{equation} 
\sup_{ n \geq 1} 
J\bigl( (t_0,\gamma_0),{\boldsymbol \nu}^n \bigr) < 
+ \infty.
\label{eq:CoercivityEstimate24/06:bis}
\end{equation}
Then, ($\bd{\nu}^n)_{ n \geq 1}$ admits some weak limit points and any weak limit 
${\boldsymbol \nu}$
belongs to ${\mathcal A}(t_0)$. 
Moreover, denoting by 
$({\boldsymbol \gamma}^n ,\bd{u}^n)_{n \geq 1}$  the curves and multipliers corresponding 
to $({\boldsymbol \nu}^n)_{n \geq 1}$, see equations
\eqref{eq:lem:compactnessContinuityEquation24/06:statement}  and \eqref{eq:adjoint:equation},
and denoting 
by $({\boldsymbol \gamma} , \bd{u})$ the curve and multiplier
corresponding to 
${\boldsymbol \nu}$, the following three properties hold true: 
\begin{enumerate}[(i)]
\item The sequence
$({\boldsymbol \gamma}^n)_{n \geq 1}$ is bounded in 
${\mathcal C}^{1/2}([t_0,T],{\mathcal P}_p({\mathbb R}^{d_1} \times {\mathbb R}^{d_2}))$ and the sequence $(\boldsymbol{u}^n)_{n \geq 1}$ is bounded in $\mathcal{C}^{1/2} \bigl( [t_0,T], \mathcal{C}^1_{2,1} \bigr) \cap \mathcal{L}^{\infty}([t_0,T], \mathcal{C}^3_{2,1})$;
In particular, the sequence 
$({\boldsymbol \gamma}^n)_{n \geq 1}$ is relatively compact in 
${\mathcal C}^{1/2}([t_0,T],{\mathcal P}_{p-\delta}({\mathbb R}^{d_1} \times {\mathbb R}^{d_2}))$ for any $\delta \in (0,p-1)$, and
the sequence 
$(\boldsymbol{u}^n)_{n \geq 1}$
and the sequence of its spatial derivatives are relatively compact for the
uniform topology on compact subsets of $[0,T] \times {\mathbb R}^{d_1} \times {\mathbb R}^{d_2}$;
\item Along any sub-sequence $(\varphi(n))_{n \geq 1}$ with $\varphi : \mathbb{N}^* \rightarrow \mathbb{N}^*$ strictly increasing, such that 
$({\boldsymbol \nu}^{\varphi(n)})_{n \geq 1}$
converges (in the weak sense) to 
${\boldsymbol \nu}$, 
$({\boldsymbol \gamma}^{\varphi(n)})_{n \geq 1}$ converges in 
${\mathcal C}([t_0,T],{\mathcal P}_{p - \delta}({\mathbb R}^{d_1} \times {\mathbb R}^{d_2}))$ to ${\boldsymbol \gamma}$ for any $\delta \in (0,p-1)$ and $(\bd{u}^{\varphi(n)})_{n \geq 1}$ converges to $\bd{u}$ in the following sense: for any closed ball $B$ in $\R^{d_1} \times \R^{d_2}$, it holds 
$$ \lim_{n \rightarrow +\infty} \sup_{t \in [t_0,T]} \norm{ u^{\varphi(n)}_t - u_t}_{\mathcal{C}^1(B)} = 0.  $$
\item $J((t_0,\gamma_0),{\boldsymbol \nu}) \leq \liminf_{n \rightarrow 
\infty}
J((t_0,\gamma_0),{\boldsymbol \nu}^n)$.
\end{enumerate}
\end{lem}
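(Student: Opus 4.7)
The starting point is to turn the cost bound into a compactness bound for the controls themselves. Since $L$ is bounded below, \eqref{eq:CoercivityEstimate24/06:bis} forces $\sup_n \int_{t_0}^T \mathcal{E}(\nu_t^n \vert \nu^\infty) dt < \infty$, and Lemma \ref{lem:cost:L4:moments} then yields $\sup_n \Vert \boldsymbol{\nu}^n \Vert_{\mathcal{D}(t_0)} < \infty$; in particular $\sup_n \int_{t_0}^T \int_A (1+\vert a \vert^4) d\nu_t^n(a) dt < \infty$. Viewing $\boldsymbol{\nu}^n$ as a positive measure of total mass $T-t_0$ on $[t_0,T] \times A$ in the sense of Remark \ref{rmk:measurability}, this uniform fourth-moment bound provides tightness, so Prokhorov's theorem gives a weakly convergent subsequence $\boldsymbol{\nu}^{\varphi(n)} \to \boldsymbol{\nu}$. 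Using the space-time reformulation \eqref{eq:interpretation:wider:space} and the lower-semicontinuity of relative entropy under narrow convergence, $\boldsymbol{\nu}$ has finite entropy with respect to $\mathrm{Leb}_{[t_0,T]} \otimes \nu^\infty$; disintegrating in $t$ then identifies $\boldsymbol{\nu}$ with an element of $\mathcal{A}(t_0)$.

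For assertion (i), I would simply plug the uniform $\mathcal{D}(t_0)$-bound into the quantitative estimates of Propositions \ref{lem:compactnessContinuityEquation24/06} and \ref{prop:SolutionBackxard16Sept}. This directly provides the boundedness of $(\boldsymbol{\gamma}^n)_{n \geq 1}$ in $\mathcal{C}^{1/2}([t_0,T], \mathcal{P}_p)$ and of $(\boldsymbol{u}^n)_{n \geq 1}$ in $\mathcal{C}^{1/2}([t_0,T], \mathcal{C}^1_{2,1}) \cap \mathcal{L}^\infty([t_0,T], \mathcal{C}^3_{2,1})$. Relative compactness is then a standard Arzelà--Ascoli argument: for the curves, the uniform $p$-th moment bound provides uniform integrability of $(\vert x \vert^2 + \vert y \vert^2)^{(p-\delta)/2}$ and upgrades the pointwise (in $t$) narrow limits to convergence in $\mathcal{P}_{p-\delta}$ for any $\delta \in (0,p-1)$; for the multipliers, the uniform $\mathcal{C}^3_{2,1}$-bound gives equicontinuity of $\boldsymbol{u}^n$ and $\nabla_x \boldsymbol{u}^n$ on compact subsets of $\mathbb{R}^{d_1} \times \mathbb{R}^{d_2}$, which combined with the time Hölder bound gives the claimed uniform convergence on compacts.

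For (ii), along any sub-sequence $(\varphi(n))_{n \geq 1}$ rendering $\boldsymbol{\nu}^{\varphi(n)}$ weakly convergent to $\boldsymbol{\nu}$, extract by (i) a further sub-sequence for which $(\boldsymbol{\gamma}^{\varphi(n)}, \boldsymbol{u}^{\varphi(n)})$ converge to limits $(\tilde{\boldsymbol{\gamma}}, \tilde{\boldsymbol{u}})$. The core step -- and I expect it to be the main obstacle -- is to pass to the limit in the bilinear term
\begin{equation*}
\int_{t_0}^{t_1} \int_{\mathbb{R}^{d_1} \times \mathbb{R}^{d_2}} \int_A b(x,a) \cdot \nabla_x \varphi_t(x,y) \, d\nu_t^{\varphi(n)}(a) \, d\gamma_t^{\varphi(n)}(x,y) \, dt
\end{equation*}
appearing in the weak formulations \eqref{eq:CE:def} and \eqref{eq:adjoint:equation}, since convergence of $\boldsymbol{\nu}^{\varphi(n)}$ is only weak while that of $\boldsymbol{\gamma}^{\varphi(n)}$ is strong. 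The idea is to write the integrand as a continuous function on $[t_0,T] \times A \times \mathbb{R}^{d_1} \times \mathbb{R}^{d_2}$ bounded by $C(1+|a|)(1+|x|+|y|)$ and to combine: narrow convergence of $\boldsymbol{\nu}^{\varphi(n)}$ on $[t_0,T] \times A$, strong convergence of $\boldsymbol{\gamma}^{\varphi(n)}$ in $\mathcal{C}([t_0,T], \mathcal{P}_{p-\delta})$, and a truncation/uniform integrability argument using the $L^4$-in-$a$ bound on $\boldsymbol{\nu}^{\varphi(n)}$ together with the $p$-th moment bound on $\gamma_t^{\varphi(n)}$. Once this passage to the limit is justified, $(\tilde{\boldsymbol{\gamma}}, \tilde{\boldsymbol{u}})$ solve the continuity and adjoint equations driven by $\boldsymbol{\nu}$, so uniqueness in Propositions \ref{lem:compactnessContinuityEquation24/06} and \ref{prop:SolutionBackxard16Sept} identifies them with $(\boldsymbol{\gamma}, \boldsymbol{u})$, and a standard subsequence argument promotes this to convergence of the whole extracted sub-sequence.

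Finally, assertion (iii) splits as $J = \int L d\gamma_T + \epsilon \int_{t_0}^T \mathcal{E}(\nu_t \vert \nu^\infty) dt$. The entropic part is lower-semicontinuous by the same space-time argument used in the first paragraph. For the terminal cost, (ii) provides narrow convergence $\gamma_T^n \to \gamma_T$ along any subsequence realizing $\liminf J((t_0,\gamma_0), \boldsymbol{\nu}^n)$, and since $L$ is continuous and bounded below, Fatou's lemma applied to $L + C$ yields $\int L \, d\gamma_T \leq \liminf \int L \, d\gamma_T^n$, completing the proof.
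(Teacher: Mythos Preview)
Your proposal is correct and follows essentially the same approach as the paper's proof: entropy bound $\Rightarrow$ tightness and weak limits in $\mathcal{A}(t_0)$; the $\mathcal{D}(t_0)$-bound plugged into Propositions~\ref{lem:compactnessContinuityEquation24/06} and~\ref{prop:SolutionBackxard16Sept} for (i); extraction plus passage to the limit in the bilinear drift term and identification via uniqueness for (ii); and lower-semicontinuity of entropy together with lower-boundedness of $L$ for (iii). The paper handles the bilinear term by freezing the test function and writing $I_2^n = \int_{t_0}^T \int_A f^n(t,a)\,d\nu_t^n(a)\,dt$ with $f^n(t,a) := \int b(x,a)\cdot\nabla_x\varphi_t\,d\gamma_t^n$ converging to $f$ uniformly on compacts of $[t_0,T]\times A$ (thanks to the strong convergence of $\boldsymbol{\gamma}^n$), and then combining weak convergence of $\boldsymbol{\nu}^n$ with the uniform $L^4$-in-$a$ bound---which is exactly the truncation/uniform-integrability mechanism you describe.
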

\begin{proof}
{ \ }
\vskip 4pt 

\textit{Step 1.}
For each $n \geq 1$, 
we insert the bound 
\eqref{eq:CoercivityEstimate24/06:bis}
in the inequality 
\eqref{eq:lem:cost:L4:moments:1}. 
Recalling the formula 
\eqref{eq:interpretation:wider:space}, we deduce that 
\begin{equation*}
\sup_{n \geq 1} {\mathcal E} \bigl( {\boldsymbol \nu}^n
\vert {\rm Leb}_{[t_0,T]} \times 
\nu^\infty \bigr) < \infty. 
\end{equation*}
Therefore, by 
\cite[Lemma 2.4]{BudhirajaDupuisbook},
the sequence
$({\boldsymbol \nu}^n)_{n \geq 1}$ (regarded up to a normalization by $T-t_0$ as a sequence of probability measures on $[0,T] \times A$) has a weakly converging sub-sequence, still denoted by $({\boldsymbol \nu}^n)_{n \geq 1}$, with limit point ${\boldsymbol \nu}$. 
Testing the convergence against $a$-independent test functions, we find that the time marginal of ${\boldsymbol \nu}$ is the {Lebesgue measure 
and we denote by 
$t \in [t_0,T] \mapsto  \nu_t \in {\mathcal P}(A)$
the disintegration of ${\boldsymbol \nu}$, i.e. 
$d {\boldsymbol \nu}(t,a) = d {\nu}_t(a) dt$ (by normalizing 
${\boldsymbol \nu}$ by $T-t_0$, we are reduced to the disintegration of 
a probability measure, from which we deduce 
the 
$(\nu_t)_{t_0 \le t \le T}$'s
are indeed probability measures)}.
By lower-semicontinuity of the entropy, see again 
\cite[Lemma 2.4]{BudhirajaDupuisbook}, we deduce that 
${\mathcal E} ( {\boldsymbol \nu}
\vert {\rm Leb}_{[t_0,T]} \times 
\nu^\infty)
< \infty$. 
This shows
that ${\boldsymbol \nu}$ belongs to 
${\mathcal A}(t_0)$. 
\vskip 4pt

\textit{Step 2.} 
Using the bound \eqref{eq:CoercivityEstimate24/06:bis} together with Lemma
\ref{lem:cost:L4:moments}, we
deduce that 
\begin{equation}
\label{eq:proof:lemma7:3:a4:bound}
\sup_{n \geq 1} \biggl \{  \int_{t_0}^T \int_A \vert a \vert^4 d \nu^n_t(a) dt + \sup_{t_1 < t_2 \in [t_0,T]} \frac{1}{\sqrt{t_2 - t_1}}\int_{t_0}^{t_2} \int_A (1+|a|^2) d\nu_t^n(a) dt \biggr \} < + \infty. 
\end{equation}
Property \textit{(i)} is then a consequence of Proposition \ref{lem:compactnessContinuityEquation24/06} and Proposition \ref{prop:SolutionBackxard16Sept}.
\vskip 4pt

\textit{Step 3.} We now prove item \textit{(ii)} in the statement. We start with the analysis of the limit points of the sequence $(\bd{\gamma}^n)_{n \geq 1}$ (whose existence is guaranteed by item \textit{(i)} in the statement). With the implicit convention that 
the sub-sequence $(\varphi(n))_{n \geq 1}$
in item \textit{(ii)} of the statement
is taken as the identity, i.e. $(\varphi(n)=n)_{n \geq 1}$, 
we are going to show that, for any $\delta \in (0,p-1)$, $(\bd\gamma^n)_{n\geq 1}$ converges in $\mathcal{C}([t_0,T], \mathcal{P}_{p-\delta}(\R^{d_1} \times \R^{d_2}))$ toward the unique weak solution ${\boldsymbol \gamma}$ of 
\begin{equation} 
\partial_t  {\gamma}_t + \div_x (  b(x,  {\nu}_t)  {\gamma}_t ) = 0, \quad  {\gamma}(t_0) = \gamma_0.
\label{eq:fpeexistenceproof}
\end{equation}
 We take $\bar{\boldsymbol \gamma} \in \mathcal{C}([t_0,T], \mathcal{P}_p(\R^{d_1} \times \R^{d_2}))$ as a limit point
  and for the ease of notation, do not relabel the subsequence here again. 

In order to prove that 
$\bar{\boldsymbol \gamma}$ satisfies 
\eqref{eq:fpeexistenceproof}, we
 fix $\varphi \in \mathcal{C}^{\infty}_c([t_0,T] \times \R^{d_1} \times \R^{d_2})$. For every $n \geq 1$, it holds
\begin{equation}
\label{eq:I_1n-I2n}
\begin{split}
 \int_{\R^{d_1} \times \R^{d_2}} \varphi_{t_0}(x,y)d\gamma_0(x,y) 
&= \int_{\R^{d_1} \times \R^{d_2}} \varphi_T(x,y) d\gamma^n_T(x,y)
-
 \int_{t_0}^T \int_{\R^{d_1}\times \R^{d_2}}   \partial_t \varphi_t(x,y)d\gamma^n_t(x)dt
\\
&\hspace{15pt}
 - \int_{t_0}^T \int_{\R^{d_1}\times \R^{d_2}}    b(x,\nu_t^n) \cdot \nabla_x \varphi_t(x,y)  
 d\gamma_t^n(x)dt
 \\
 &=: I_1^n - I_2^n.
 \phantom{\biggl(}
\end{split}
\end{equation}
\color{black}
By regularity of $\varphi$, we easily have that
\begin{align} 
\label{eq:lim:I_1n}
\lim_{ n \rightarrow +\infty} I_1^n = \int_{\R^{d_1} \times \R^{d_2}} \varphi_T(x,y) d\bar{\gamma}_T(x,y) - \int_{t_0}^T \int_{\R^{d_1} \times \R^{d_2}}  \partial_t \varphi_t(x,y)  d\bar{\gamma}_t(x,y)dt. 
\end{align}
We now handle $I_2^n$, which we rewrite in the form 
\begin{equation*}
I_2^n = \int_{t_0}^T \int_A \biggl[ \int_{{\mathbb R}^{d_1} \times 
{\mathbb R}^{d_2}}
b(x,a) \cdot \nabla_x \varphi_t(x,y) d \gamma_t^n (x,y) 
\biggr] d \nu_t^n(a) dt
=: \int_{t_0}^T \int_A f^n(t,a) d \nu_t^n(a) dt,
\end{equation*}
with an obvious definition for $f^n$. By item (i) in Assumption (\textbf{Regularity}), 
there exists a constant $C$ such that, 
for any $n \geq 1$, 
$\vert f^n(t,a)\vert \leq C(1+\vert a \vert)$.
Moreover,
because $\sup_{t \in [t_0,T]}
d_{p-\delta}(\gamma_t^n,\bar{\gamma}_t) \rightarrow 0$
as $n \rightarrow + \infty$, it holds, 
for any $(t,a) \in [t_0,T] \times A$, 
\begin{equation*}
\lim_{n \rightarrow + \infty}
f^n(t,a) = \int_{\R^{d_1}\times \R^{d_2}}
b(x,a) \cdot \nabla_x \varphi_t(x,y) d \bar{\gamma}_t(x,y) =: f(t,a), 
\end{equation*}
uniformly on compact subsets of 
$[0,T] \times A$. By combining the weak convergence of $\bd{\nu}^n$ toward $\bd{\nu}$ together with 
the $L^4$-bound 
\eqref{eq:proof:lemma7:3:a4:bound}, we easily deduce that 
\begin{equation*}
\begin{split}
\lim_{n \rightarrow \infty}
I^n_2 = \lim_{n \rightarrow \infty}
\int_{[t_0,T] \times A} 
f^n(t,a) 
d {\boldsymbol \nu}^n(t,a) 
&=
\int_{[t_0,T] \times A} 
f(t,a) 
d {\boldsymbol \nu}(t,a)
\\
&= \int_{t_0}^T \int_A \biggl[ \int_{\R^{d_1}\times \R^{d_2}}
b(x,a) \cdot \nabla_x \varphi_t(x,y) d \bar{\gamma}_t (x,y) 
\biggr] d \nu_t(a) dt. 
\end{split}
\end{equation*}
Inserting 
\eqref{eq:lim:I_1n} and the above display in 
\eqref{eq:I_1n-I2n},  
we deduce that
\begin{align*}
\int_{\R^{d_1}\times \R^{d_2}} \varphi_T(x,y) d \bar{\gamma}_T(x,y) &= \int_{\R^{d_1}\times \R^{d_2}}\varphi_{t_0}(x,y)d\gamma_0(x,y) \\
&+ \int_{t_0}^T \int_{\R^{d_1}\times \R^{d_2}}\bigl[ \partial_t \varphi_t(x,y) +  b(x,\nu_t) \cdot \nabla_x \varphi_t(x,y) \bigr] d \bar{\gamma}_t(x,y)dt.
\end{align*}
By the uniqueness result established in Proposition 
\ref{lem:compactnessContinuityEquation24/06}, 
 $\bar{\boldsymbol \gamma}$ coincides with the (unique) solution ${\boldsymbol \gamma}$ of the equation 
\eqref{eq:lem:compactnessContinuityEquation24/06:statement}. Moreover, the whole sequence $({\boldsymbol \gamma}_n)_{n \geq 1}$ converges to ${\boldsymbol \gamma}$
in 
${\mathcal C}([t_0,T],{\mathcal P}_{p-\delta}({\mathbb R}^{d_1} \times {\mathbb R}^{d_2}))$.
\vskip 4pt

\textit{Step 4.} We go on with the analysis of the limit points of $(\bd{u}^n)_{n \geq 1}$ (whose existence is guaranteed by item \textit{(i)} in the statement). We consider $\bar{u} \in \mathcal{C}^{1/2}([t_0,T], \mathcal{C}^1_{2,1})$ such that, along a sub-sequence $(\varphi(n))_{ n \geq 1}$ as in item \textit{(ii)} of the statement, 
for all closed ball $B$ of $\R^{d_1} \times \R^{d_2}$,
\begin{equation} 
\lim_{n \rightarrow +\infty} \sup_{t \in [t_0,T]} \norm{ u_t^{\varphi(n)} -\bar{u}_t}_{\mathcal{C}^1(B)} =0.
\label{eq:27Nov15:01}
\end{equation}
For all $(t,x,y) \in [t_0,T] \times \R^{d_1} \times \R^{d_2}$ and all $n \geq 1$ we have
$$ u_t^{\varphi(n)}(x,y) = L(x,y) + \int_t^T b(x,\nu_s^{\varphi(n)}) \cdot \nabla_x u_s^{\varphi(n)}(x,y) ds.$$
Using the weak convergence of $(\bd{\nu}^{\varphi(n)})_{n \geq 1}$ to $\bd{\nu}$,
together with the convergence \eqref{eq:27Nov15:01} and the uniform bound  \eqref{eq:proof:lemma7:3:a4:bound}, 
we deduce that, for all $(t,x,y) \in [t_0,T] \times \R^{d_1} \times \R^{d_2}$,
$$ \bar{u}_t(x,y) = L(x,y) + \int_t^T b(x,\nu_s) \cdot \nabla_x \bar{u}_s(x,y) ds,$$
and, 
by the uniqueness result established in Proposition 
\ref{prop:SolutionBackxard16Sept}, we conclude that the whole sequence $(\bd{u}^{\varphi(n)})_{n \geq 1}$ converges to the unique
solution of the transport equation
$$ -\partial_t u_t - b(x,\nu_t) \cdot \nabla_x u _t = 0 \quad \mbox{ in } [t_0,T] \times \R^{d_1} \times \R^{d_2}, \quad u_T = L \quad \mbox{in } \R^{d_1} \times \R^{d_2}. $$
This completes the proof of item \textit{(ii)} in the statement.
\vskip 4pt

\textit{Step 5.}
We now establish item \textit{(iii)}. By Step 1 (weak compactness of the sequence 
$({\boldsymbol \nu}^n)_{n \geq 1}$), we can assume without any loss of generality that
$({\boldsymbol \nu}^n)_{n \geq 1}$
converges to ${\boldsymbol \nu}$.
By lower boundedness and continuity of $L$, and by convergence in $d_{p-\delta}$ of $(\gamma_T^n)_{n \geq 1}$ toward $ {\gamma}_T$
(which follows from \textit{Step 3)}, we deduce that 
$$\int_{\R^{d_1} \times \R^{d_2}} L(x,y)d {\gamma}_T(x,y) \leq \liminf_{n \rightarrow +\infty}  \int_{\R^{d_1} \times \R^{d_2}} L(x,y) d\gamma_T^n(x,y).$$
Moreover, by \cite[Lemma 2.4]{BudhirajaDupuisbook}
\begin{equation*}
{\mathcal E}\bigl( {\boldsymbol \nu} \vert \textrm{\rm Leb}_{[t_0,T]}
\times \nu^\infty \bigr) 
\leq 
\liminf_{n \rightarrow \infty}
{\mathcal E}\bigl( {\boldsymbol \nu}^n\vert \textrm{\rm Leb}_{[t_0,T]}
\times \nu^\infty \bigr).
\end{equation*}
Using the formula 
\eqref{eq:interpretation:wider:space} in order to rewrite the two costs below in terms of the above two entropies, we deduce that 
\begin{equation*}
J\bigl( (t_0,\gamma_0),{\boldsymbol \nu} \bigr) 
\leq 
\liminf_{n \rightarrow \infty}
J\bigl( (t_0,\gamma_0),{\boldsymbol \nu}^n \bigr). 
\end{equation*} 
This completes the proof. 
\end{proof} 
\color{black}

We can easily deduce the existence of optimal solutions from the result above.

\begin{proof}[Proof of Theorem \ref{thm:OCThm27/06}]
We take a minimizing sequence $({\boldsymbol \nu}^n)_{n \geq 1}$ of $J((t_0,\nu_0),\cdot)$. 
By Lemma \ref{lem:cost:L4:moments}, we can find ${\boldsymbol \nu}$
in ${\mathcal A}(t_0)$ such that 
$J((t_0,\nu_0),{\boldsymbol \nu}) \leq \liminf_{n \rightarrow \infty}
J( (t_0,\gamma_0),{\boldsymbol \nu}^n)$. This suffices to conclude. 
\end{proof}

\subsection{Optimality Conditions: Proof of Theorem \ref{thm:OCThm27/06}}
\label{subse:7:2}

We go on with proof of the first order optimality conditions. The strategy is to \textit{linearize} the terminal cost around the optimal solution in order to bring ourselves back to the minimization of a (convex)  relative entropy for which we now the unique minimizer.

\begin{proof}[Proof of Theorem \ref{thm:OCThm27/06}]
    Let $\boldsymbol \nu^*=(\nu^*_t)_{t_0 \le t \le T}$ be an optimal control 
and     
    $\boldsymbol \gamma^* = (\gamma^*_t)_{t_0 \le t \le T}$ be the corresponding optimal trajectory. For another 
    ${\boldsymbol \nu}= (\nu_t)_{t_0 \le t \le T} \in \mathcal{A}(t_0)$ and some $\lambda \in [0,1]$, we let ${\boldsymbol \nu}^{\lambda} := (1-\lambda) \boldsymbol \nu^* + \lambda {\boldsymbol \nu}$ and, then, 
    we call ${\boldsymbol \gamma}^{\lambda} = (\gamma_t^{\lambda})_{t_0 \le t \le T}$ the solution to the continuity equation
    \eqref{eq:lem:compactnessContinuityEquation24/06:statement} driven by ${\boldsymbol \nu}^{\lambda}$.
    By optimality of $\boldsymbol \nu^*$, we have, for all $\lambda \in [0,1],$
\begin{equation}
\label{eq:tildenubetterlambdanu24/06}
J \bigl( (t_0,\gamma_0), \bd{\nu}^* \bigr) \leq J \bigl( (t_0, \gamma_0), \bd{\nu}^{\lambda} \bigr).
\end{equation}
By (linear) convexity of the function ${\boldsymbol \nu} \in {\mathcal A}(t_0) \mapsto \int_{t_0}^T \mathcal{E} \bigl(\nu_t | \nu^{\infty})dt$ (recall that $\nu^{\infty}$ has been introduced in \eqref{eq:defnuinfty03/07}),
we deduce from \eqref{eq:tildenubetterlambdanu24/06} that
\begin{align*}
    \epsilon \int_{t_0}^T\mathcal{E}\bigl (\nu^*_t | \nu^{\infty}) dt & \leq \epsilon \int_{t_0}^T \mathcal{E} (\nu_t^{\lambda} | \nu^{\infty})dt + \int_{\R^{d_1} \times \R^{d_2}} L(x,y) d ( \gamma_T^{\lambda} - \gamma^*_T)(x,y) \\
    & \leq (1-\lambda) \epsilon \int_{t_0}^T \mathcal{E} \bigl( \nu^*_t | \nu^{\infty}) dt + \lambda \epsilon \int_{t_0}^T \mathcal{E} \bigl( \nu_t |\nu^{\infty} \bigr)dt + \int_{\R^{d_1} \times \R^{d_2}} L(x,y) d ( \gamma_T^{\lambda} - \gamma^*_T)(x,y).
\end{align*}
Subtracting $ \epsilon \int_{t_0}^T\mathcal{E} \bigl (\nu^*_t | \nu^{\infty} \bigr)dt$ from both sides and dividing by $\lambda$, we get
\begin{equation*}
\epsilon \int_{t_0}^T \mathcal{E} \bigl( \nu^*_t |\nu^{\infty} \bigr) dt \leq \epsilon \int_{t_0}^T \mathcal{E} \bigl( \nu_t |\nu^{\infty} \bigr) dt + \frac{1}{\lambda} \int_{\R^{d_1} \times \R^{d_2}} L(x,y)d (\gamma_T^{\lambda} - \gamma^*_T)(x,y). 
\end{equation*}
Letting $\lambda \rightarrow 0^+$ we obtain, thanks to \eqref{eq:derivativeinlambda31/01} in Proposition \ref{prop:differentiatinggammalambda09/02},
$$ \epsilon \int_{t_0}^T \mathcal{E} \bigl (\nu^*_t | \nu^{\infty} \bigr)dt \leq \epsilon \int_{t_0}^T \mathcal{E} \bigl(\nu_t |\nu^{\infty} \bigr)dt + \int_{t_0}^T \int_{\R^{d_1} \times \R^{d_2}} b(x,\nu_t - \nu^*_t) \cdot \nabla_x u^*_t(x,y) d \gamma^*_t(x,y)  dt, $$
where $\boldsymbol u^*$ is the solution to the transport equation \eqref{eq:adjoint:equation} driven by $\boldsymbol \nu^*$. 
Equivalently, this can be rewritten as 
\begin{align} 
\label{eq:proof:optimality:condition:1:F}
\epsilon \int_{t_0}^T \mathcal{E} \bigl (\nu^*_t | \nu^{\infty} \bigr)dt &+ \int_{t_0}^T \int_{\R^{d_1} \times \R^{d_2}} b(x, \nu^*_t) \cdot \nabla_x u^*_t(x,y) d \gamma^*_t(x,y)  dt \\
\notag &\leq \epsilon \int_{t_0}^T \mathcal{E} \bigl (\nu_t |\nu^{\infty} \bigr)dt + \int_{t_0}^T \int_{\R^{d_1} \times \R^{d_2}} b(x,\nu_t) \cdot \nabla_x u^*_t(x,y) d \gamma^*_t(x,y)  dt. 
\end{align}
Following the statement, we now introduce, for each $t \in [t_0,T]$, the 
probability density (over $A$):
\begin{equation*}
\nu^{*,\infty}_t(a) : = \frac{1}{z^{*,\infty}_t}\exp 
\biggl( -   \ell(a) -  \frac{1}{\epsilon}\int_{\R^{d_1} \times \R^{d_2}}b(x,a) \cdot \nabla_x u^*_t(x,y)d \gamma^*_t(x,y)  \biggr), 
\quad a \in A, 
\end{equation*}
where $z_t^{*,\infty}$ is the normalization constant
\begin{equation*}
z^{*,\infty}_t := \int_{A} 
\exp \biggl( -   \ell(a) -  \frac{1}{\epsilon}\int_{\R^{d_1} \times \R^{d_2}}b(x,a) \cdot \nabla_x u^*_t(x,y)d \gamma^*_t(x,y)  \biggr) da. 
\end{equation*}
By (i) in \assreg \, and because $\gamma^*_t$ belongs to ${\mathcal P}_2({\mathbb R}^{d_1} 
\times {\mathbb R}^{d_2})$
and 
$\nabla_x  u^*_t$ has linear growth (see Proposition 
\ref{prop:SolutionBackxard16Sept}), 
the integral inside the exponential appearing in the definition of 
$\nu_t^{*,\infty}$ is finite
and less than $C(1+\vert a \vert)$
for a constant $C$ independent of $t$. 
In particular, one can easily prove that
\begin{equation*}
\int_{t_0}^T {\mathcal E}\bigl( \nu_t^{*,\infty} \vert 
\nu^\infty \bigr) dt < + \infty. 
\end{equation*}
From the latter definition, we deduce that 
$\boldsymbol \nu^{*,\infty} :=
{\rm Leb}_{[t_0,T]} \otimes (\nu^{*,\infty}_t)_{t_0 \le t \le T}$
belongs to ${\mathcal A}(t_0)$ (see Definition \ref{def:admissible:control}). We then rewrite the right-hand side of 
\eqref{eq:proof:optimality:condition:1:F} in the form 
\begin{equation}
\begin{split}
& \epsilon \int_{t_0}^T \mathcal{E}(\nu_t |\nu^{\infty})dt + \int_{t_0}^T \int_{A} \int_{\R^{d_1} \times \R^{d_2}} b(x,a) \cdot \nabla_x u^*_t(x,y) d \gamma^*_t(x,y) d\nu_t(a) dt 
\\
&= \epsilon \int_{t_0}^T \mathcal{E} \bigl( \nu_t | \nu_t^{*,\infty} \bigr) dt + \epsilon \int_{t_0}^T \log \frac{z^{\infty}}{z_t^{*,\infty}} dt,
\end{split}
\end{equation}
and, similarly, for the left-hand side of \eqref{eq:proof:optimality:condition:1:F}
\begin{equation}
\begin{split}
& \epsilon \int_{t_0}^T \mathcal{E}(\nu^*_t |\nu^{\infty})dt + \int_{t_0}^T \int_{A} \int_{\R^{d_1} \times \R^{d_2}} b(x,a) \cdot \nabla_x u^*_t(x,y) d \gamma^*_t(x,y) d\nu^*_t(a) dt 
\\
&= \epsilon \int_{t_0}^T \mathcal{E} \bigl( \nu^*_t | \nu_t^{*,\infty} \bigr) dt + \epsilon \int_{t_0}^T \log \frac{z^{\infty}}{z_t^{*,\infty}} dt,
\end{split}
\end{equation} 
where $z^{\infty} = \int_A e^{-\ell(a)}da$ is the normalizing constant for $\nu^{\infty}$.
We deduce from \eqref{eq:proof:optimality:condition:1:F} 
that $\boldsymbol \nu^*$ is a minimizer of the function 
${\boldsymbol \nu} \in 
{\mathcal A}(t_0)
\mapsto 
\epsilon \int_{t_0}^T \mathcal{E} ( \nu_t | \nu_t^{*,\infty}  ) dt
= \epsilon {\mathcal E}( {\boldsymbol \nu} \vert 
\boldsymbol \nu^{*,\infty})$.
However,
by strict convexity of the relative entropy, the latter function has a unique minimizer, which is 
$\boldsymbol \nu^{*,\infty}$. 
This proves that 
${\boldsymbol \nu}^* =\boldsymbol \nu^{*,\infty}$, and we deduce that, for almost every 
$t \in [t_0,T]$, $\nu^*_t$ is equal to $\nu_t^{*,\infty}$. 
Collecting the equations for $\boldsymbol \nu^*$, $\boldsymbol \gamma^*$ and $\boldsymbol u^*$, we get the system 
\eqref{eq:NOCfornut2Avril}--\eqref{eq:NOCforutgammat2Avril}. 

It remains to justify that the density $(t,a) \mapsto \nu_t^{*,\infty}(a)$ is jointly continuous. This follows from the explicit formula \eqref{eq:NOCfornut2Avril}, 
assumptions (i) and (ii) in \assreg \, together with the estimates on $\boldsymbol \gamma^*$ and $\boldsymbol u^*$ proven in 
Lemma 
\ref{lem:compactnessContinuityEquation24/06}
and
Proposition \ref{prop:SolutionBackxard16Sept}.
\end{proof}

\subsection{Additional Regularity and First Stability Results}

\label{sec:additionalregandfirststability}

We first prove that solutions of the system of optimality conditions satisfy further regularity properties.

\begin{proof}[Proof of Proposition \ref{prop:regularityfromOC}]
 { \ }

\textit{Step 1.}
Throughout the proof, $\Lambda_{\bd{\nu}}, \Lambda_{\gamma_0,\bd \nu} : \R_+ \rightarrow [1,+\infty)$ denote two non-decreasing functions, respectively of $\int_{t_0}^T \int_A|a|^4d\nu_t(a)dt$ and of $\int_{t_0}^T \int_A|a|^4d\nu_t(a)dt + \int_{\R^{d_1} \times \R^{d_2}} (|x| + |y|)d\gamma_0(x,y)$ and are allowed to change from line to line. Below, the last two arguments are not explicitly written as inputs of the two functions but are implicitly understood. Therefore, we simply write $\Lambda_{\bd{\nu}}$ and $\Lambda_{\gamma_0, \bd{\nu}}$.
By Proposition 
\ref{prop:SolutionBackxard16Sept} 
,  we know that ${\boldsymbol u}$ satisfies 
\begin{equation}
\sup_{t \in [t_0,T]} \norm{\nabla_x u_t}_{\mathcal{C}^0_1} \leq \Lambda_{\boldsymbol \nu },
\label{eq:nablaxut29sept}
\end{equation}
while Proposition \ref{lem:compactnessContinuityEquation24/06} gives
$$\sup_{t \in [t_0,T]} \int_{\R^{d_1} \times \R^{d_2}} (|x| + |y|) d\gamma_t(x,y) \leq \Lambda_{\bd{\nu}, \gamma_0}. $$
Thanks to the representation formulas
\eqref{eq:NOCfornut2Avril}
  and   \eqref{eq:F:formule:pour:tilde:z}
  (with $\boldsymbol \nu^*$ replaced by 
  ${\boldsymbol \nu}$)
and to the assumptions (i) and (ii) (on $b$ and $\ell$) in 
\assreg, 
we deduce that, for all $(t,a) \in [t_0,T] \times A$, 
\begin{equation}
\label{eq:loweruppernut29Sept}
\bigl[ \Lambda_{\boldsymbol \nu, \gamma_0}
    \bigr]^{-1}
e^{-4\ell(a)/3} \leq \nu_t(a) \leq  \Lambda_{\boldsymbol \nu, \gamma_0} e^{-3\ell(a)/4}. 
\end{equation}
This proves the first claim in the statement. 
\vskip 4pt

\textit{Step 2.} In order to prove the second claim, we consider a pair of random variables $(X_0,Y_0) \sim \gamma_0$ supported on some probability space $(\Omega, \mathcal{F}, \mathbb{P})$. We call $(X_t)_{t_0 \le t \le T}$ the solution to
the ODE
$$\dot{X}_t = b(X_t, \nu_t),
\quad t \in [t_0,T]; 
\quad X_{t_0}=X_0.$$
By Proposition \ref{lem:compactnessContinuityEquation24/06}, we know that $\gamma_t = \mathbb{P} \circ (X_t,Y_0)^{-1}$,  
for all $t \in [t_0,T]$. 
Letting $Z_t := \nabla_x u_t(X_t,Y_0)$ for $t \in [t_0,T]$,
the representation formulas 
\eqref{eq:NOCfornut2Avril}
  and   \eqref{eq:F:formule:pour:tilde:z}
  (for
  ${\boldsymbol \nu}$)
  can be rewritten in the form 
\begin{equation} 
\label{eq:probarepresentationnut}
\begin{split}
&\nu_t(a) = \frac{1}{z_t} \exp 
\biggl( - \ell(a) -  \frac{1}{\epsilon}\E \bigl[ b(X_t,a) \cdot Z_t   \bigr] \biggr), \quad (t,a) \in [t_0,T] \times A, 
\\
&z_t = \int_A  \exp 
\biggl(  - \ell(a) -  \frac{1}{\epsilon}\E \bigl[ b(X_t,a) \cdot Z_t   \bigr] \biggr) da, \quad t \in [t_0,T].
\end{split}
\end{equation}
By Lemma \ref{lem:ApproxBODE21Sept} 
, $(Z_t)_{t \in [t_0,T]}$ solves the backward ODE
\begin{equation*}
\dot{Z}_t = - \nabla_xb(X_t, \nu_t)Z_t, \quad t \in [t_0,T]; \quad Z_T = \nabla_x L(X_T,Y_0). 
\end{equation*}
Thanks to \eqref{eq:loweruppernut29Sept} and to the 
 growth assumptions on $b$ and $\nabla_x b$, see 
(i) in \assreg, we get, $\mathbb{P}-$almost-surely
and for all $t \in [t_0,T]$,
$$ |b(X_t, \nu_t)| +| \nabla_x b(X_t, \nu_t)| \leq C \int_A (1+|a|^2) d\nu_t(a) \leq \Lambda_{\boldsymbol \nu, \gamma_0}.$$
Getting back to the definition of $(Z_t)_{t \in [t_0,T]}$, together with the estimate \eqref{eq:nablaxut29sept} we get   
\begin{align*}
|Z_t| &\leq \Lambda_{\boldsymbol \nu}(1+|X_t| + |Y_0|) \leq \Lambda_{\bd \nu} (1+|X_0| + |Y_0|)
\end{align*}
where we used that 
$$ |X_t - X_0| = \biggl| \int_{t_0}^t b(X_s, \nu_s)ds \biggr| \leq C \int_{t_0}^T \int_A |a|d \nu_s(a)ds \leq \Lambda_{\bd {\nu}}.  $$
We deduce that, 
    $\mathbb{P}-$almost-surely and for all $(t_1,t_2) \in [t_0,T]$, 
$$ |X_{t_2} - X_{t_1}| + |Z_{t_2} - Z_{t_1} | \leq  
\Lambda_{\boldsymbol \nu, \gamma_0} (1+|X_0| + |Y_0|)
|t_2 - t_1|. $$
Using the growth assumptions on $b$ and $\nabla_x b$ again, this leads to 
$$ \bigl| \E \bigl[ b(X_{t_2},a) \cdot Z_{t_2}\bigr] - \E \bigl[ b(X_{t_1},a) \cdot Z_{t_1}\bigr] \bigr| \leq   
 \Lambda_{\boldsymbol \nu, \gamma_0} (1+|a|^2 )  |t_2 -t_1|, \quad  (t_1,t_2,a) \in [t_0,T]^2 \times A. $$
Using 
\eqref{eq:loweruppernut29Sept}, 
we also have 
$z_t \geq \Lambda_{\boldsymbol \nu, \gamma_0}^{-1}$
for all $t \in [t_0,T]$. Moreover, thanks to 
the explicit formulas in \eqref{eq:probarepresentationnut}, 
we also deduce 
\begin{equation*}
\begin{split}
\bigl\vert z_{t_2} - z_{t_1} \bigr\vert &\leq 
\int_A \exp \Bigl( - \ell(a)
\Bigr) \biggl\vert 
\exp 
\Bigl(   - \frac{1}{\epsilon} {\mathbb E}
\bigl[ b(X_{t_2},a) \cdot Z_{t_2}   \bigr] \Bigr) 
- 
\exp 
\Bigl(   - \frac{1}{\epsilon} {\mathbb E}
\bigl[ b(X_{t_1},a) \cdot Z_{t_1}   \bigr] \Bigr) 
\biggr\vert 
da
\\
&\leq \frac{1}{\epsilon} \int_A \exp \Bigl( - \ell(a)
+ \Lambda_{\bd{\nu}, \gamma_0} ( 1 +\vert a \vert) 
\Bigr) \Bigl\vert {\mathbb E}
\bigl[ b(X_{t_2},a) \cdot Z_{t_2}   \bigr]   - {\mathbb E}
\bigl[ b(X_{t_1},a) \cdot Z_{t_1}   \bigr]  \Bigr\vert 
da
\\
&\leq \Lambda_{\boldsymbol \nu, \gamma_0}
\vert t_2 - t_1 \vert
\int_A \bigl( 1 + \vert a \vert^2 \bigr) \exp \Bigl( - \ell (a)
+ \Lambda_{\bd{\nu},\gamma_0} ( 1 +\vert a \vert) 
\Bigr) da \\
&\leq \Lambda_{\boldsymbol \nu,\gamma_0} 
\vert t_2 - t_1 \vert. 
\end{split}
\end{equation*}
Similarly, 
\begin{equation*}
\begin{split}
\bigl\vert z_{t_2} \nu_{t_2}(a)- z_{t_1} \nu_{t_1}(a)
\bigr\vert &\leq  
\exp \Bigl( -  \ell(a)
\Bigr) \biggl\vert 
\exp 
\Bigl(   - \frac{1}{\epsilon} {\mathbb E}
\bigl[ b(X_{t_2},a) \cdot Z_{t_2}   \bigr] \Bigr) 
-
\exp 
\Bigl(   - \frac{1}{\epsilon} {\mathbb E}
\bigl[ b(X_{t_1},a) \cdot Z_{t_1}   \bigr] \Bigr) 
\biggr\vert 
\\
&\leq \Lambda_{\boldsymbol \nu, \gamma_0}
\vert t_2 - t_1 \vert 
  \bigl( 1 + \vert a \vert^2 \bigr) \exp \Bigl( -\ell(a)
+ \Lambda_{\bd\nu,\gamma_0} ( 1 +\vert a \vert) 
\Bigr). 
\end{split}
\end{equation*}
Combining the last two displays with the two bounds 
\eqref{eq:loweruppernut29Sept}
and $z_{t_1},z_{t_2} \geq [\Lambda_{\boldsymbol \nu , \gamma_0}]^{-1}$, we deduce that 
\begin{equation*}
\bigl\vert  \nu_{t_2}(a)- \nu_{t_1}(a)
\bigr\vert \leq \Lambda_{\boldsymbol \nu, \gamma_0} \vert t_2 - t_1 \vert 
e^{- 3 \ell(a)/4},
\end{equation*}
i.e., 
\begin{equation}
\label{eq:F:nu_t2-nut1}
\bigl\| e^{3 \ell/4}  \bigl[ \nu_{t_2} - \nu_{t_1} \bigr]
\bigr\|_{L^\infty} \leq \Lambda_{\boldsymbol \nu, \gamma_0} \vert t_2 - t_1 \vert. 
\end{equation}
In particular, 
\begin{equation}
\label{eq:proof:entropy:t1:t2:F}
\begin{split}
{\mathcal E}\bigl(\nu_{t_2} \vert \nu_{t_1} \bigr) 
&= \int_A  \ln 
\biggl(  1+ \frac{\nu_{t_2}(a)-\nu_{t_1}(a)}{\nu_{t_1}(a)} \biggr) d \nu_{t_2}(a) 
\\
&\leq \int_A  \frac{\vert \nu_{t_2}(a)-\nu_{t_1}(a)\vert}{\nu_{t_1}(a) }  d \nu_{t_2}(a)
 \leq \Lambda_{\boldsymbol \nu, \gamma_0} \vert t_2 - t_1\vert \int_A 
\frac{\nu_{t_2}(a)}{\nu_{t_1}(a)} e^{- 3 \ell(a)/4}  da. 
\end{split}
\end{equation}
By 
\eqref{eq:loweruppernut29Sept}, we complete the proof
of the penultimate display in the statement. We proceed similarly to justify that 
$$ \sup_{t \in [t_0,T]} \mathcal{E} \bigl( \nu_t | \nu^{\infty} \bigr) \leq \Lambda_{\bd{\nu}, \gamma_0} $$
and complete the proof of \eqref{eq:estimaterelativeentropy25/12}.
\vskip 4pt

\textit{Step 3.}
By 
Lemma \ref{lem:backwardtransportsmoothcontrol23/01} with $l=3$, $p=2$ and $q=1$
(the continuity of $t \mapsto \nu_t$ here following from the second step right above), we get the regularity properties of 
$u$, as stated in \eqref{eq:testfn1foru}--\eqref{eq:testfn2foru}.
\end{proof}

We go on with the proof of the log-Sobolev inequality satisfied by the probability measures $(\Gamma_t[\bd{\nu}])_{t \in [t_0,T]}$ when $\bd{\nu}$ is an element of $\mathcal{A}(t_0)$.
\begin{proof}[Proof of Lemma \ref{lem:LSI}]
For $R >0$, let $\varphi^R : A \rightarrow \R$ be a cutoff function satisfying 
    \begin{equation}
    \label{eq:conditions:varphiR}
    \left\{
    \begin{array}{ll}
|\varphi^R(a)| + |\nabla \varphi^R(a)| + |\nabla^2 \varphi^R(a)| \leq c, &\quad {\rm for} \ a \in A,
\\
\varphi^R(a) = 0, &\quad {\rm if} \ |a| \geq R+1, 
\\
\varphi^R(a) = 1, &\quad {\rm if} \  |a| \leq R,
\end{array}
\right.
\end{equation}
for some $c>0$ independent of $R >0$. Let us rewrite
the argument inside the exponential 
factor in 
\eqref{eq:defnGammaNut09/01}
as
\begin{equation} 
\ell(a) + \frac{1}{\epsilon} \int_{\R^{d_1} \times \R^{d_2}} b(x,a) \cdot \nabla_x u_t(x,y)  d\gamma_t(x,y) = \ell_t^1(a) + \ell_t^2(a),
\label{eq:decompositionL}
\end{equation}
 with
 \begin{equation*}
 \left\{
 \begin{array}{ll}
 \displaystyle \ell_t^1(a) :=\ell(a) + \frac{1}{\epsilon} \biggl(\int_{\R^{d_1} \times \R^{d_2}} b(x,a) \cdot \nabla_x u_t(x,y)  d\gamma_t(x,y) \biggr) \bigl(1-\varphi^R(a) \bigr), &\quad a \in A, 
 \\
\displaystyle \ell_t^2(a) := \frac{1}{\epsilon}\biggl(\int_{\R^{d_1} \times \R^{d_2}} b(x,a) \cdot \nabla_x u_t(x,y) d\gamma_t(x,y) \biggr) \varphi^R(a), &\quad a \in A.  
\end{array}
\right.
\end{equation*}
The aim is to show that, for a suitable choice of the function 
$\varphi^R$ (subject to the constraint \eqref{eq:conditions:varphiR}), the term $\ell^1$ is strongly convex 
in $a$ uniformly with respect to the parameter
$t$. To do so, we recall from Assumption 
\textbf{(Regularity)} that  $|\nabla^2_{aa} b(x,a)| \leq C(1+|a|)(1+|x|^2)$, for $x \in {\mathbb R}^{d_1}$ and $a \in A$. This says that 
\begin{equation*}
\Bigl\vert
  \nabla_{aa}^2 \bigl( \ell_t^1 - \ell \bigr) (a) 
\Bigr\vert \leq 
\left\{ 
\begin{array}{ll}
\displaystyle
C (1+|a|) \|\nabla_x u_t \|_{\mathcal{C}^0_1} \int_{{\mathbb R}^{d_1} \times
{\mathbb R}^{d_2}} 
\bigl( 1 + \vert x \vert^2 \bigr) (1+|x|+|y|)
d \gamma_t(x,y), &\quad \textrm{\rm if} \ \vert a \vert > R, 
\\
\displaystyle
0 , &\quad \textrm{\rm if} \ \vert a \vert \leq R.
\end{array}
\right.
\end{equation*} 
In particular, thanks to Propositions
\ref{lem:compactnessContinuityEquation24/06} with $p=3$
and \ref{prop:SolutionBackxard16Sept}, there exists a non decreasing function $\Lambda_{\boldsymbol{\nu},\gamma_0} : \R^+ \rightarrow \R^+$ such that 
\begin{equation*}
\Bigl\vert
  \nabla_{aa}^2 \bigl( \ell_t^1 - \ell\bigr) (a) 
\Bigr\vert \leq 
\left\{ 
\begin{array}{ll}
\displaystyle
(1+|a|) \Lambda_{\bd{\nu}, \gamma_0}, &\quad \textrm{\rm if} \ \vert a \vert > R, 
\\
\displaystyle
0 , &\quad \textrm{\rm if} \ \vert a \vert \leq R.
\end{array}
\right.
\end{equation*} 
where, following 
the notation used in the proof of 
Proposition 
\ref{prop:regularityfromOC}, 
$\Lambda_{\bd\nu,\gamma_0}$ is a shorthand notation for
$$ \Lambda  \biggl( \int_{t_0}^T \int_A |a|^4 d\nu_t(a)dt + \int_{\R^{d_1} \times \R^{d_2}} (|x|^2 + |y|^2)^{3/2}d\gamma_0(x,y) \biggr).$$
We now use the fact that $\nabla^2_{aa} 
\ell$ grows at least quadratically fast, see  \eqref{eq:convexityassumptionL}. 
Therefore, we can find $R:=R_{\bd{\nu},\gamma_0}>0$, depending  
on $(\bd{\nu}, \gamma_0)$ only through an upper bound for $\Lambda_{\bd{\nu}, \gamma_0}$ such that, choosing this $R$ in the definition of $\ell^1$, we have 
$$ \nabla^2_{aa} \ell_t^1(a) \geq \frac{1}{2} \nabla_{aa}^2\ell(a), \quad \forall a \in A, \quad \forall t \in [t_0,T],  $$
and, by assumption \eqref{eq:convexityassumptionL}, there is $\mu >0$ (depending only on $\ell$) such that 
$$ \nabla_{aa}^2 \ell_t^1(a) \geq \mu I_{d'}, \quad \forall a \in A, \quad \forall t \in [t_0,T]. $$
Therefore, by \cite[Corollary 5.7.2]{BakryGentilLedoux}, the probability density (on $A$) that is proportional to $\exp{ (-\ell_t^1(a))}$ satisfies a log-Sobolev inequality with some constant $C$ depending only on $\mu$. With the same choice of parameter $R$ in the definition of $\ell^2$, we get, using again Propositions
\ref{lem:compactnessContinuityEquation24/06} and  \ref{prop:SolutionBackxard16Sept},
\begin{align*} 
|\ell_t^2(a)| &\leq C(1+R_{\bd{\nu}, \gamma_0}) \norm{ \nabla_x u_t}_{\mathcal{C}^0_1} \int_{\R^{d_1} \times \R^{d_2}} (1+|x|) d\gamma_t(x,y) \\
&\leq \Lambda_{\bd{\nu},\gamma_0}(1+R_{\bd{\nu}, \gamma_0}).
\end{align*}
We can conclude from the decomposition \eqref{eq:decompositionL} and Holley-Stroock's perturbation property
(see
\cite[Proposition 5.1.6]{BakryGentilLedoux}) 
that $\Gamma_t[\bd\nu]$ satisfies a Log-Sobolev inequality with constant
$$ C_{\rm LSI}(\Gamma_t[\bd{\nu}]) \leq C e^{ \Lambda_{\bd{\nu}, \gamma_0}(1+R_{\bd \nu,\gamma_0}) } $$
for some $C$ depending only on $\mu$.
\end{proof}

We continue with the proof of Lemma \ref{lem:PinskerSecFOC28/02}. 
\begin{proof}[Proof of Lemma \ref{lem:PinskerSecFOC28/02}]
To estimate the first term in the definition of the norm $\norm{ \cdot }_{\mathcal{D}(t_0)}$, we rely on a generalization of Pinsker inequality stated in \cite[Theorem 2.1]{BolleyVillani} and recalled in Appendix \ref{sec:PinskerAppendix} for convenience. It gives, for all $\kappa>0$, $\bd{\nu} \in \mathcal{A}(t_0)$ and $t \in [t_0,T]$,
\begin{align*} 
 \int_A (1+|a|^4) & |\nu_t(a) - \nu^{*}_t(a)|da \\
 &\leq \kappa \biggl[  \frac{3}{2} +  \log \biggl( \int_A 
 \exp \Bigl\{ \frac2{\kappa} (1+|a|^4) \Bigr\} d\nu^{*}_t(a) \biggr) \biggr] \biggl[  \mathcal{E}\bigl( \nu_t | \nu_t^{*} \bigr)^{1/2} + \frac{1}{2} \mathcal{E}\bigl( \nu_t | \nu_t^{*} \bigr) \biggr].  
\end{align*}
Thanks to estimate \eqref{eq:bound:nu(a):exp(-ell(a))} and the growth assumption on $\ell$, we can choose $\kappa>0$ large enough depending only on $\int_{\R^{d_1} \times \R^{d_2}}(|x|^2+|y|^2)d\gamma_0(x,y)$, such that $\int_A \exp\{ (2/\kappa) (1+|a|^4) \} d\nu^{*}_t(a)$ is bounded independently 
of  $t \in [t_0,T]$. Integrating in time and using Cauchy-Schwarz inequality, we 
can find a non-decreasing function $\Lambda : \R^+ \rightarrow \R^+$ such that
\begin{align}
\int_{t_0}^T \int_A (1+|a|^4) |\nu_t(a) - \nu_t^{*}(a) |da dt &\leq \Lambda_{\gamma_0}  \biggl[ \int_{t_0}^T \mathcal{E}\bigl( \nu_t | \nu_t^{*} \bigr)^{1/2} dt + \int_{t_0}^T \mathcal{E}\bigl( \nu_t | \nu_t^{*} \bigr)  dt \biggr] \nonumber
\\ 
&\leq \Lambda_{\gamma_0}  \biggl[ \sqrt{T-t_0} \biggr( \int_{t_0}^T \mathcal{E}\bigl( \nu_t | \nu_t^{*} \bigr) dt \biggr)^{1/2} + \int_{t_0}^T \mathcal{E}\bigl( \nu_t | \nu_t^{*} \bigr) dt \biggr], \label{eq:Pinsker+CS}
\end{align}
where $\Lambda_{\gamma_0} := \Lambda ( \int_{\R^{d_1} \times \R^{d_2}} (|x|^2 +|y|^2) d\gamma_0(x,y) ).$

Finally, for all $t \in [t_0,T]$, we can use the second item in the generalized Pinsker inequality proven in \cite[Theorem 2.1]{BolleyVillani} 
--see Appendix \ref{sec:PinskerAppendix}-- to justify that, for every $c>0$,  
\begin{align} 
\notag \int_A (1+|a|^2) | \nu_t(a) - \nu_t^{*}(a)|dadt &= \frac{2}{c} \norm{ c(1+|a|^2) (\nu_t - \nu_t^{*})}_{TV} \\
&\leq \frac{2\sqrt{2}}{c} \biggl( 1 + \log \int_A e^{c^2(1+|a|^2)^2} d\nu^{*}_t(a) \biggr)^{1/2} \mathcal{E} \bigl( \nu_t | \nu_t^{*} \bigr)^{1/2},  
\label{eq:towardtimeregularitynutn}
\end{align}
where $\norm{\cdot}_{TV}$ is the total variation norm. In particular, recalling the exponential bound \eqref{eq:bound:nu(a):exp(-ell(a))}, we can choose $c>0$ depending only on $\int_{\R^{d_1} \times \R^{d_2}} (|x|^2 + |y|^2) d\gamma_0(x,y)$ such that the integral term in the right-hand side is bounded independently of $t \in [t_0,T]$. If we integrate \eqref{eq:towardtimeregularitynutn} in time and use Cauchy-Schwarz inequality, we find, for all $t_1,t_2 \in [t_0,T]$ with $t_1 \leq t_2$ and for possibly another value of the non-decreasing function $\Lambda$,
$$ \int_{t_1}^{t_2} \int_A (1+|a|^2) d|\nu_t - \nu_t^{*}|(a) dt \leq C \sqrt{t_2 -t_1}  \Bigl( \int_{t_1}^{t_2} \mathcal{E} (\nu_t | \nu_t^{*} ) dt \Bigr)^{1/2}.$$
Combined with \eqref{eq:Pinsker+CS}, this completes the proof of the lemma. 
\end{proof}

Before we prove Proposition \ref{prop:LipRegValueFuncion25Nov}, we first establish the following result.

\begin{lem}
    There exists $C >0$ such that, for any $(t_0,\gamma_0) \in [0,T] \times \mathcal{P}_2(\R^{d_1} \times \R^{d_2})$ and any 
    minimizer $\bd{\nu}^*$ of $J \bigl( (t_0,\gamma_0) , \cdot)$,
    $$ \int_{t_0}^T \mathcal{E}(\nu^*_t |\nu^{\infty}) dt \leq C \biggl( 1+ \int_{\R^{d_1} \times \R^{d_2}} (|x|^2 + |y|^2) d\gamma_0(x,y) \biggr).$$
\label{lem:7.4-04/03}
\end{lem}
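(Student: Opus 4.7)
The strategy is to bound $J((t_0,\gamma_0),\bd\nu^*)$ from above by comparing to the trivial competitor $\bd\nu^\infty \equiv \nu^\infty$ (constant in time), which has vanishing entropic penalty, and from below by the coercivity estimate that already appears (as an intermediate step) inside the proof of Lemma \ref{lem:cost:L4:moments}.

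First, I would apply inequality \eqref{eq:lem:cost:L4:moments:1} to $\bd\nu^*$ itself: since $L$ is bounded below, there is $C_1 \geq 0$ independent of $(t_0,\gamma_0,\bd\nu^*)$ with
$$
\epsilon \int_{t_0}^T \mathcal{E}(\nu^*_t | \nu^\infty)\, dt \leq J\bigl((t_0,\gamma_0),\bd\nu^*\bigr) + C_1.
$$
Next, the constant trajectory $\bd\nu^\infty$ belongs to $\mathcal{A}(t_0)$ since $\mathcal{E}(\nu^\infty|\nu^\infty) = 0$, and optimality of $\bd\nu^*$ yields
$$
J\bigl((t_0,\gamma_0),\bd\nu^*\bigr) \leq J\bigl((t_0,\gamma_0),\bd\nu^\infty\bigr) = \int_{\R^{d_1}\times\R^{d_2}} L(x,y)\, d\gamma_T^\infty(x,y),
$$
where $\bd\gamma^\infty$ denotes the curve associated to $\bd\nu^\infty$ through \eqref{eq:Continuitygammanu}. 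By the coercivity assumption \eqref{eq:convexityassumptionL}, $\ell$ grows at least like $|a|^4$, so $\nu^\infty$ has finite moments of all orders and Definition \ref{defn:D(t_0)} gives $\|\bd\nu^\infty\|_{\mathcal{D}(t_0)} \leq C_2$ for a constant $C_2$ independent of $t_0$. Proposition \ref{lem:compactnessContinuityEquation24/06} applied with $p=2$ then gives
$$
\int_{\R^{d_1}\times\R^{d_2}} (|x|^2+|y|^2)\, d\gamma_T^\infty(x,y) \leq \Lambda(C_2)\Bigl(1 + \int_{\R^{d_1}\times\R^{d_2}} (|x|^2+|y|^2)\, d\gamma_0(x,y)\Bigr),
$$
and the quadratic growth of $L$ from item (iii) of \assreg\ lets me bound $\int L\, d\gamma_T^\infty$ by $\|L\|_{\mathcal{C}^3_{2,1}}$ times the same quantity. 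Chaining the three inequalities and dividing by $\epsilon$ delivers the claim.

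There is no genuine obstacle here: the argument is a direct comparison against the stationary competitor $\nu^\infty$, and the only detail worth checking is the uniform bound $\|\bd\nu^\infty\|_{\mathcal{D}(t_0)} \leq C_2$, which is immediate from the quartic lower bound on $\ell$. The result thus mostly repackages information already available from Lemma \ref{lem:cost:L4:moments} and Proposition \ref{lem:compactnessContinuityEquation24/06}.
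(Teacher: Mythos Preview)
Your proof is correct and follows essentially the same approach as the paper: compare $\bd\nu^*$ to the constant competitor $\bd\nu^\infty$, bound the resulting terminal cost via the quadratic growth of $L$ and the moment estimate of Proposition \ref{lem:compactnessContinuityEquation24/06}, and extract the entropy using the lower bound \eqref{eq:lem:cost:L4:moments:1}. The paper's write-up is terser (it leaves the final extraction of the entropy implicit), but your version spells out the same steps.
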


\begin{proof}
    Take $(t_0,\gamma_0) \in [0,T] \times \mathcal{P}_2(\R^{d_1 } \times \R^{d_2})$. Since we can always see $t \in [t_0,T] \mapsto \nu^{\infty} $ as a competitor in the control problem, we have 
    $$ J \bigl( (t_0,\gamma_0), \bd{\nu}^* \bigr) \leq \int_{\R^{d_1} \times \R^{d_2}} L(x,y) d\gamma_T^{\infty}(x,y)$$
where $(\gamma^{\infty}_t)_{t \in [t_0,T]}$ is the solution to 
$$ \partial_t \gamma_t^{\infty} + \div_x (b(x,\nu^{\infty}) \gamma_t^{\infty}) = 0 \quad \mbox{ in } (t_0,T) \times \R^{d_1} \times \R^{d_2};  \quad \gamma^{\infty}_{t_0} = \gamma_0.$$
Using the fact that $L$ is at most of quadratic growth, we  deduce from
Proposition \ref{lem:compactnessContinuityEquation24/06} that
$$ J \bigl( (t_0,\gamma_0), \bd{\nu}^* \bigr) \leq C \biggl( 1 + \int_{\R^{d_1} \times \R^{d_2}} (|x|^2 + |y|^2) d\gamma_0(x,y) \biggr),$$
for some $C>0$ independent from $(t_0,\gamma_0) \in [0,T] \times \mathcal{P}_2(\R^{d_1} \times \R^{d_2})$.
\end{proof}

We also need  
the next result, which we use to compare the metrics $\norm{ \cdot }_{(\mathcal{C}^1_{2,1})^*}$ and $d_2$. 

\begin{lem}
    \label{lem:comparisonNorms}
For any two probability measures $\gamma^{1}, \gamma^{2} \in \mathcal{P}_2(\R^{d_1} \times \R^{d_2})$, we have
\begin{align*} 
\norm{ \gamma^1 - \gamma^2  }_{(\mathcal{C}^1_{2,1})^*} 
\leq  \biggl[  1+  \sqrt{2} \biggl(\int_{\R^{d_1} \times \R^{d_2}} 
\bigl(|x|^2 + |y|^2\bigr)
d\bigl(\gamma^{1} + \gamma^{2}\bigr)(x,y) \biggr)^{1/2}  \biggr] d_2\bigl(\gamma^{1}, \gamma^{2}\bigr).  
\end{align*}
\end{lem}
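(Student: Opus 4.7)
The plan is to bound $\int \varphi \, d(\gamma^1-\gamma^2)$ uniformly over $\varphi$ with $\|\varphi\|_{\mathcal{C}^1_{2,1}} \leq 1$ by a coupling argument. Writing $z=(x,y)\in\R^{d_1}\times\R^{d_2}$, the key structural input is that $\|\varphi\|_{\mathcal{C}^1_{2,1}}\leq 1$ gives the pointwise gradient bound $|\nabla\varphi(z)| \leq 1+|z|$, which makes $\varphi$ locally Lipschitz with a linearly growing constant.

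First, I would fix an optimal coupling $\pi \in \Gamma(\gamma^1,\gamma^2)$ for the quadratic Wasserstein cost, so that $\int |z_1-z_2|^2\, d\pi(z_1,z_2) = d_2(\gamma^1,\gamma^2)^2$. Then the fundamental theorem of calculus along the segment $s\mapsto z_2+s(z_1-z_2)$ yields
\begin{equation*}
|\varphi(z_1)-\varphi(z_2)| \leq \int_0^1 |\nabla\varphi(z_2+s(z_1-z_2))|\,ds \cdot |z_1-z_2| \leq \bigl(1+|z_1|+|z_2|\bigr)|z_1-z_2|,
\end{equation*}
where in the last step we used $|\nabla\varphi(\cdot)|\leq 1+|\cdot|$ together with the crude bound $|z_2+s(z_1-z_2)|\leq |z_1|+|z_2|$ for $s\in[0,1]$.

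Integrating this inequality against $\pi$ and applying Cauchy--Schwarz on the term containing $(|z_1|+|z_2|)|z_1-z_2|$, one obtains
\begin{equation*}
\int\varphi\,d(\gamma^1-\gamma^2) \leq d_2(\gamma^1,\gamma^2) + \Bigl(\int (|z_1|+|z_2|)^2 d\pi\Bigr)^{1/2}\,d_2(\gamma^1,\gamma^2).
\end{equation*}
Using $(|z_1|+|z_2|)^2\leq 2(|z_1|^2+|z_2|^2)$ and the fact that the marginals of $\pi$ are $\gamma^1$ and $\gamma^2$, the second moment integral becomes $2\int(|x|^2+|y|^2)\,d(\gamma^1+\gamma^2)$, which produces the prefactor $\sqrt{2}$ in the desired bound. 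Taking the supremum over $\varphi$ with $\|\varphi\|_{\mathcal{C}^1_{2,1}}\leq 1$ finishes the argument.

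There is no real obstacle: the only mild care needed is to justify that $\varphi$ (only of class $\mathcal{C}^1$ with controlled growth) is Lebesgue-integrable against each $\gamma^i$, which follows from $|\varphi(z)|\leq 1+|z|^2$ together with $\gamma^i\in\mathcal{P}_2$, and that the coupling-based expression $\int\varphi\,d(\gamma^1-\gamma^2) = \int[\varphi(z_1)-\varphi(z_2)]d\pi$ is well-defined, which is again ensured by the quadratic growth of $\varphi$ and the finiteness of the second moments.
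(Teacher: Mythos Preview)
Your proposal is correct and follows essentially the same coupling-plus-Cauchy--Schwarz argument as the paper: fix a coupling, use the gradient bound to control $|\varphi(z_1)-\varphi(z_2)|$ by $(1+\text{moment factor})|z_1-z_2|$, then apply Cauchy--Schwarz. The only minor slip is the line $|\nabla\varphi(z)| \leq 1+|z|$: from $\|\varphi\|_{\mathcal{C}^1_{2,1}}\leq 1$ one gets $|\nabla\varphi(x,y)| \leq 1+|x|+|y|$, not $1+|(x,y)|$; to recover the exact constant $\sqrt{2}$ you should bound the segment contribution by $\max_i(|x_i|+|y_i|)$ (via convexity) and then use $\max_i(|x_i|+|y_i|)^2 \leq 2(|z_1|^2+|z_2|^2)$, rather than your $|z_1|+|z_2|$ shortcut.
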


\begin{proof} 
    Let $\phi \in \mathcal{C}^1(\R^{d_1} \times \R^{d_2})$, with $|\nabla \phi(x,y)| \leq 1+ |x| + |y|$ for all $(x,y) \in \R^{d_1} \times \R^{d_2}$. Then, for
    any probability space 
    $(\Omega, \mathcal{F}, \mathbb{P})$  equipped with two couples of random variables $((X^{i},Y^{i}))_{i=1,2}$ such that 
    $\mathbb{P} \circ (X^{i}, Y^{i})^{-1} = \gamma^{i}$ for $i=1,2$ (with $\gamma^1$ and $\gamma^2$ as in the statement), we have
\begin{align*}
    \int_{\R^{d_1} \times \R^{d_2}} \phi(x,y) d(&\gamma^2 - \gamma^1)(x,y) = \E \Bigl[ \phi(X^2,Y^2) - \phi(X^1,Y^1) \Bigr] \\
    &\leq \E \Bigl[\bigl(1+|(X^2,Y^2)| + |(X^1,Y^1)|\bigr)\bigl(|(X^2,Y^2)-(X^1,Y^1)|\bigr) \Bigr] \\
    &\leq  \E \Bigl[(1+|(X^2,Y^2)| + |(X^1,Y^1)|)^2 \Bigr]^{1/2} \E \Bigl[ |X^2-X^1|^2 + |Y^2-Y^1|^2\Bigr]^{1/2}
\end{align*}
where we used Cauchy-Schwarz inequality at the third line. We get the result by taking the infimum over all possible couples $(X^1,Y^1)$ and $(X^2,Y^2)$ with $(X^{i},Y^{i}) \sim^{\mathbb{P}} \gamma^{i}$ for $i=1,2$ (which does impact the left-hand side), 
and then the supremum over $\phi$.
\end{proof}

\begin{proof}[Proof of Proposition \ref{prop:LipRegValueFuncion25Nov}] 
{ \ }
  
  \textit{Preliminary step.} By Lemma \ref{lem:7.4-04/03}, there exists $C>0$ such that, for any $(t_0,\gamma_0) \in [0,T] \times \mathcal{P}_2(\R^{d_1} \times \R^{d_2})$ and any minimizer $\bd{\nu}^* = (\nu_t^*)_{t \in [t_0,T]}$ 
  of $J( (t_0,\gamma_0), \cdot )$, it holds
\begin{equation} 
\int_{t_0}^T  \mathcal{E} \bigl( \nu_t^* | \nu^{\infty} \bigr) dt + \norm{ \bd{\nu}^*}_{\mathcal{D}(t_0)} \leq C \biggl(1+\int_{\R^{d_1} \times \R^{d_2}} (|x|^2+|y|^2) d\gamma_0(x,y) \biggr).
\label{eq:tobefollowed24/05}
\end{equation}
Obviously, the bound for the first term in the left-hand side directly follows from
Lemma \ref{lem:7.4-04/03}. Then, the bound for $\norm{\bd{\nu}^*}_{\mathcal{D}(t_0)}$ 
follows from \eqref{eq:boundfromPinsker16Sept} in Lemma \ref{lem:cost:L4:moments}.
\vskip 4pt
   
\textit{Main step: local Lipschitz regularity of $U$.} We first address the regularity 
of $U$ in the measure variable. Let $t_0 \in [0,T]$  
and
$\gamma^1, \gamma^2\in \mathcal{P}_2(\R^{d_1} \times \R^{d_2})$ be fixed,
and $\bd{\nu}^*$ be an optimal control for $J( (t_0,\gamma^2), \cdot )$. We call $(\gamma_t^1)_{t \in [t_0,T]}$ and 
$(\gamma_t^2)_{t \in [t_0,T]}$ the solutions to the continuity equation
\eqref{eq:lem:compactnessContinuityEquation24/06:statement} driven by the same control $\bd{\nu}^*$ but starting respectively from $(t_0,\gamma^1)$ and $(t_0,\gamma^2)$ (so that 
$\bd{\gamma}^2=(\gamma_t^2)_{t \in [t_0,T]}$ is an optimal curve for $J( (t_0,\gamma^2), \cdot )$).
Using the optimality of $\bd{\nu}^*$ for $J ( (t_0,\gamma^2), \cdot  )$, we have
$$U(t_0,\gamma^1) - U(t_0,\gamma^2) \leq \int_{\R^{d_1} \times \R^{d_2}} L(x,y) d\gamma_T^1(x,y) - \int_{\R^{d_1} \times \R^{d_2}} L(x,y)d \gamma_T^2(x,y).$$ 
In order to handle the right-hand side, 
we use a duality argument. We call
$u: [t_0,T] \times \R^{d_1} \times \R^{d_2} \rightarrow \R$ the solution to the backward equation \eqref{eq:adjoint:equation} with control $\bd{\nu}^*$. By Lemma \ref{lem:duality:1,2:F}
(with ${\boldsymbol \nu}^1$ and ${\boldsymbol \nu}^2$ being both equal to ${\boldsymbol \nu}^*$ therein,
so that the second line 
in 
\eqref{eq:duality:relation:statement}
disappears), we deduce that
\begin{align*}
& U(t_0,\gamma^1) - U(t_0,\gamma^2) 
\\
&\leq \int_{\R^{d_1} \times \R^{d_2}} u_{t_0}(x,y) d\gamma^1(x,y) -  \int_{\R^{d_1} \times \R^{d_2}} u_{t_0}(x,y) d\gamma^2(x,y) \\
&\leq \sup_{t \in [t_0,T]} \norm{ u_t}_{\mathcal{C}^1_{2,1}} \norm{\gamma^1 - \gamma^2}_{(\mathcal{C}^1_{2,1})^*} 
\phantom{\biggl)}
\\
&\leq \Lambda \biggl( \int_{\R^{d_1} \times \R^{d_2}} (|x|^2 + |y|^2) d\gamma^2(x,y) \biggr)  \biggl[ \int_{\R^{d_1} \times \R^{d_2}}(|x|^2 + |y|^2)
d \bigl( \gamma^1 + \gamma^2 \bigr) (x,y) \biggr]^{1/2} d_2 (\gamma^2,\gamma^1),
\end{align*}
for some non-decreasing function $\Lambda: \R_+ \rightarrow \R_+$, independent of $t_0$, $\gamma^1$ and $\gamma^2$. To get the last line, we used 
Proposition \ref{prop:SolutionBackxard16Sept} together with the preliminary step
in order to bound 
$\sup_{t \in [t_0,T]} \norm{ u_t}_{\mathcal{C}^1_{2,1}}$
and then Lemma \ref{lem:comparisonNorms}
in order to bound $\| \gamma^1 - \gamma^2 \|_{(\mathcal{C}^1_{2,1})^*}$.
Exchanging the roles of $\gamma^1$ and $\gamma^2$, we conclude that 
\begin{equation} 
\label{eq:lipingamma25Nov}
\begin{split}
&\bigl| U(t_0,\gamma^1) - U(t_0,\gamma^2) \bigr| \leq \Lambda \biggl(    \int_{\R^{d_1} \times \R^{d_2}}(|x|^2 + |y|^2)d \bigl( \gamma^1 + \gamma^2 \bigr) (x,y) \biggr)    d_2 (\gamma^2,\gamma^1),
\end{split}
\end{equation}
for some possibly different function $\Lambda$ independent of $(t_0,(\gamma^1,\gamma^2)) \in [0,T] \times (\mathcal{P}_2(\R^{d_1} \times \R^{d_2}))^2$. This proves the local Lipschitz property in the measure argument.

In order to address the time regularity, we proceed as follows. We fix $(t_0,\gamma_0) \in [0,T] \times \mathcal{P}_2(\R^{d_1} \times \R^{d_2}) $ as well as $t_1 \in (t_0,T]$. We take $\bd{\nu} = (\nu_t)_{t \in [t_0,T]}$ an optimal control for $(t_0,\gamma_0)$ (for simplicity, we remove the superscript $*$
in the notation 
$\bd{\nu}$ of the optimal control)
and we denote by $\bd{\gamma} = (\gamma_t)_{t\in [t_0,T]}$ the resulting curve. By dynamic programming, $(\nu_t)_{t \in [t_1,T]}$ is a minimizer of $J((t_1,\gamma_{t_1}),\cdot)$, with 
 $U(t_1, \gamma_{t_1})$
 as optimal cost.
As a consequence, we get
\begin{align}
\label{eq:Ut1gamma0:Ut_0gamma0}
U(t_1,\gamma_0) - U(t_0,\gamma_0) &= U(t_1, \gamma_0) - U(t_1, \gamma_{t_1}) - \epsilon \int_{t_0}^{t_1} \mathcal{E} \bigl( \nu_t | \nu^{\infty} \bigr)dt.
\end{align}
We handle the difference $U(t_1, \gamma_0) - U(t_1, \gamma_{t_1})$  by means of 
\eqref{eq:lipingamma25Nov}. 
To do so, 
we notice from Proposition \ref{lem:compactnessContinuityEquation24/06} and the preliminary step that
$\int_{{\mathbb R}^{d_1} \times {\mathbb R}^{d_2}} (\vert x \vert^2 + \vert y \vert^2) d \gamma_{t_1}(x,y) \leq \Lambda (  \int_{{\mathbb R}^{d_1} \times {\mathbb R}^{d_2}} (\vert x \vert^2 + \vert y \vert^2) d \gamma_{0}(x,y))$, for a non-decreasing function $\Lambda : \R_+ \rightarrow \R_+$ independent of the
choices of $t_0$, $t_1$, $\gamma_{0}$. And then, 
\eqref{eq:lipingamma25Nov} 
yields
\begin{equation*}
\bigl\vert 
U(t_1, \gamma_0) - U(t_1, \gamma_{t_1})
\bigr\vert \leq \Lambda \biggl( \int_{{\mathbb R}^{d_1} \times {\mathbb R}^{d_2}} (\vert x \vert^2 + \vert y \vert^2)  d \gamma_{t_0}(x,y) \biggr) d_2(\gamma_{0},\gamma_{t_1}).
\end{equation*} 
Inserting the above bound in the expansion 
\eqref{eq:Ut1gamma0:Ut_0gamma0}
and then invoking 
\eqref{eq:estimaterelativeentropy25/12} in Proposition \ref{prop:regularityfromOC},
we deduce that 
\begin{equation}
\bigl| U(t_1,\gamma_0) - U(t_0,\gamma_0) \bigr| \leq \Lambda_{\gamma_0} \Bigl( d_2 \bigl( \gamma_0, \gamma_{t_1} \bigr) +  |t_1- t_0| \Bigr), 
\label{eq:U:Lipschitz:in:time:proof}
\end{equation}
where the constant $\Lambda_{\gamma_0}$ is given by $\Lambda_{\gamma_0} = \Lambda( \int_{\R^{d_1} \times \R^{d_2}} (|x|^2 + |y|^2) d\gamma_0(x,y) )$
for a non-decreasing function $\Lambda : {\mathbb R}_+ \rightarrow {\mathbb R}_+$
independent of $t_0$, $t_1$ and $\gamma_0$.

In order to complete the proof, it remains to show that, in 
\eqref{eq:U:Lipschitz:in:time:proof}, 
$d_2(\gamma_0,\gamma_{t_1})$
can be bounded by $\Lambda_{\gamma_0} \vert t_1-t_0 \vert$. We thus establish in this paragraph 
that
the curve ${\boldsymbol \gamma}=(\gamma_t)_{t \in [t_0,T]}$
is Lipschitz continuous in $t$ w.r.t. to the distance $d_2$. 
Using the ODE representation of ${\boldsymbol \gamma}=(\gamma_t)_{t \in [t_0,T]}$
provided by Proposition 
\ref{prop:FBODErepresentation}, 
we see that it suffices to get a bound on the drift $(x \mapsto b(x,\nu_t))_{t \in [t_0,T]}$. 
By 
\eqref{eq:bound:nu(a):exp(-ell(a))} in
Proposition \ref{prop:regularityfromOC}, it is enough to get a bound for the term $\Lambda_{{\boldsymbol \nu},\gamma_0}$ in \eqref{eq:bound:nu(a):exp(-ell(a))}. 
The latter is an increasing function of 
$\int_{t_0}^T \int_A |a|^4 d\nu_t(a)dt$, which is bounded by 
$C(1+\int_{\R^{d_1} \times \R^{d_2}} (|x|^2 + |y|^2)d\gamma_0(x,y))$ thanks to the preliminary step, 
and of 
$ \int_{\R^{d_1} \times \R^{d_2}} (|x|^2 + |y|^2) d\gamma_0(x,y)$. 
Up to a new value of the constant $\Lambda_{\gamma_0}$ (equivalently, for 
a possibly different choice of the non-decreasing function  $\Lambda$
introduced right above), we get 
$$ d_2 (\gamma_0, \gamma_{t_1}) \leq \Lambda_{\gamma_0} |t_1 - t_0|.$$
Therefore,
by 
\eqref{eq:U:Lipschitz:in:time:proof},
$$ | U(t_1, \gamma_0) - U(t_0,\gamma_0)| \leq \Lambda_{\gamma_0} |t_1- t_0|,$$
which completes the proof of the statement.
\end{proof}

The rest of the section is dedicated to the proof of Lemma 
\ref{lem:convergenceifuniqueness}, which requires some preliminary results. We start with the following statement:

\begin{lem}
\label{lem:stability3-7/3-8}
There exists a non-decreasing function $\Lambda : {\mathbb R}_+ \rightarrow 
{\mathbb R}_+$ such that, for any pair of initial conditions $(t_0^1,\gamma_0^1),(t_0^2,\gamma_0^2) \in [0,T] \times {\mathcal P}_2({\mathbb R}^{d_1} \times {\mathbb R}^{d_2})$, and any pair of solutions $(\boldsymbol{\nu}^1, \boldsymbol{\gamma}^1, \boldsymbol{u}^1), (\boldsymbol{\nu}^2, \boldsymbol{\gamma}^2, \boldsymbol{u}^2)$ to the system \eqref{eq:NOCfornut2Avril}--\eqref{eq:NOCforutgammat2Avril} with respective initial condition $(t_0^1,\gamma_0^1)$ and $(t_0^2,\gamma_0^2)$,  
 \begin{equation*}
 \label{lem:backthenitwasLemma6.2}
 \begin{split}
  &\sup_{t \in [t_0^1 \vee t_0^2 ,T]} \Bigl \{ \bigl\| e^{3\ell/4}  (\nu^2_t - \nu^1_t) \bigr\|_{L^{\infty}}    + \mathcal{E}\bigl(\nu_t^1|\nu_t^2 \bigr) + \bigl\|  u_t^{2} - u_t^{1} \bigr\|_{\mathcal{C}_1^2} \Bigr \} 
  \\
  &\leq \sup_{i=1,2} \Lambda \biggl( \int_{\R^{d_1} \times \R^{d_2}} |x|^2 d\gamma_0^{i}(x,y)
  +
  \int_{t_0^{i}}^T \int_A |a|^4d\nu_t^{i}(a)dt \biggr) 
  \sup_{t \in [t_0^1 \vee t_0^2,T]} \| \gamma_t^{2} - \gamma_t^{1} \|_{(\mathcal{C}_1^2)^*}. 
  \end{split}
\end{equation*}

\end{lem}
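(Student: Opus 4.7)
The plan is to close a backward-in-time Grönwall argument that couples bounds on $\nu_t^2-\nu_t^1$ (in the exponentially weighted $L^\infty$-norm appearing in the statement) with bounds on $u_t^2-u_t^1$ (in $\mathcal{C}^2_1$). The starting point is the Gibbs representation \eqref{eq:NOCfornut2Avril} of each $\nu_t^i$ as an explicit function of the pair $(\gamma_t^i,u_t^i)$: on the one hand, the difference $\nu_t^2-\nu_t^1$ is controlled, up to the exponentially decaying prefactor, by the difference of those pairs; on the other hand, $u^2-u^1$ solves an inhomogeneous backward transport equation whose source is driven by $\nu^2-\nu^1$. Throughout, $\Lambda$ denotes a non-decreasing function of the quantities appearing on the right-hand side of the claim, allowed to change from line to line and built from the a priori bounds of Propositions \ref{lem:compactnessContinuityEquation24/06}, \ref{prop:SolutionBackxard16Sept} and \ref{prop:regularityfromOC}, together with Lemma \ref{lem:cost:L4:moments}.

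For the first leg, I would introduce the effective potentials $\Phi_t^i(a):=\int b(x,a)\cdot\nabla_x u_t^i(x,y)\,d\gamma_t^i(x,y)$, so that $\nu_t^i=(z_t^i)^{-1}\exp(-\ell-\Phi_t^i/\epsilon)$, and split
\begin{equation*}
\Phi_t^2-\Phi_t^1=\int b(x,a)\cdot\nabla_x(u_t^2-u_t^1)\,d\gamma_t^2+\int b(x,a)\cdot\nabla_x u_t^1\,d(\gamma_t^2-\gamma_t^1).
\end{equation*}
The regularity of $b$ from \assreg, together with the bound $\sup_t\|u_t^i\|_{\mathcal{C}^3_{2,1}}\leq\Lambda$ and the uniform polynomial integrability of $\gamma_t^i$, gives
\begin{equation*}
\sup_{a\in A}\frac{|\Phi_t^2(a)-\Phi_t^1(a)|}{1+|a|^2}\leq\Lambda\bigl(\|u_t^2-u_t^1\|_{\mathcal{C}^2_1}+\|\gamma_t^2-\gamma_t^1\|_{(\mathcal{C}^2_1)^*}\bigr).
\end{equation*}
Mimicking the time-stability computation \eqref{eq:F:nu_t2-nut1} from the proof of Proposition \ref{prop:regularityfromOC}, i.e.\ using $|\Phi_t^i(a)|\leq\Lambda(1+|a|)$, the lower bound $z_t^i\geq\Lambda^{-1}$, and the mean value theorem for the exponential, I would transfer this estimate into
\begin{equation*}
\|e^{3\ell/4}(\nu_t^2-\nu_t^1)\|_{L^\infty}\leq\Lambda\bigl(\|u_t^2-u_t^1\|_{\mathcal{C}^2_1}+\|\gamma_t^2-\gamma_t^1\|_{(\mathcal{C}^2_1)^*}\bigr).
\end{equation*}

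For the second leg, $w_t:=u_t^2-u_t^1$ satisfies the backward transport equation $-\partial_t w_t-b(x,\nu_t^2)\cdot\nabla_x w_t=b(x,\nu_t^2-\nu_t^1)\cdot\nabla_x u_t^1$ with $w_T\equiv 0$, and thus admits the representation $w_t(x,y)=\int_t^T b(X_s^{*,t,x},\nu_s^2-\nu_s^1)\cdot\nabla_x u_s^1(X_s^{*,t,x},y)\,ds$ along the backward flow of $b(\cdot,\nu^2)$, in the spirit of \eqref{eq:representationformulavt}. Differentiating twice in $(x,y)$ using the $\mathcal{C}^4$-regularity of $b$ and the $\mathcal{C}^3_{2,1}$-regularity of $u^1$, and invoking the finiteness of $\int(1+|a|^2)e^{-3\ell(a)/4}da$ (guaranteed by the quartic growth of $\ell$), yields $\|w_t\|_{\mathcal{C}^2_1}\leq\Lambda\int_t^T\|e^{3\ell/4}(\nu_s^2-\nu_s^1)\|_{L^\infty}\,ds$. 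Setting $\psi(t):=\|e^{3\ell/4}(\nu_t^2-\nu_t^1)\|_{L^\infty}$ and combining the two legs, we arrive at
\begin{equation*}
\psi(t)\leq\Lambda\Bigl(\int_t^T\psi(s)\,ds+\sup_s\|\gamma_s^2-\gamma_s^1\|_{(\mathcal{C}^2_1)^*}\Bigr),
\end{equation*}
and since $\psi(T)<\infty$ (already given by the exponential bound of Proposition \ref{prop:regularityfromOC}), a reverse-time Grönwall argument closes the estimate for $\psi$, the matching bound on $\|u_t^2-u_t^1\|_{\mathcal{C}^2_1}$ following immediately from the second leg. For the entropy, I would use $\log(\nu_t^1/\nu_t^2)=-(\Phi_t^1-\Phi_t^2)/\epsilon+\log(z_t^2/z_t^1)$ to write $\mathcal{E}(\nu_t^1|\nu_t^2)=-\epsilon^{-1}\int(\Phi_t^1-\Phi_t^2)d\nu_t^1+\log(z_t^2/z_t^1)$ and dominate both terms by $\Lambda\sup_a(1+|a|^2)^{-1}|\Phi_t^2(a)-\Phi_t^1(a)|$, using the finite $\nu_t^1$-moments from Proposition \ref{prop:regularityfromOC}.

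The main obstacle is the careful bookkeeping in the second leg: controlling $\|w_t\|_{\mathcal{C}^2_1}$ requires differentiating twice along the characteristic flow of $b(\cdot,\nu_t^2)$, which demands quantitative bounds on up-to-second-order derivatives of the flow map in terms of $\|\bd\nu^2\|_{\mathcal{D}(t_0^2)}$; tracing the final constant only through the quantities displayed on the right-hand side of the statement is where most of the technical care lies. Beyond that, the argument is a reverse-time Grönwall bootstrap made tractable by the quartic decay of the reference measure $\nu^\infty$.
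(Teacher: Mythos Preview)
Your proposal is correct and follows essentially the same approach as the paper: derive a representation formula for $u_t^2-u_t^1$ along the backward characteristics of $b(\cdot,\nu^2)$, bound its $\mathcal{C}^2_1$-norm by a time integral of the weighted $\nu$-difference, bound the latter via the Gibbs formula in terms of the $u$- and $\gamma$-differences, and close with a reverse-time Gr\"onwall argument. The only minor discrepancies are cosmetic (the paper tracks the power $(1+|a|^3)$ rather than your $(1+|a|^2)$ when differentiating twice, and handles the entropy via the inequality $\log(1+x)\le |x|$ as in \eqref{eq:proof:entropy:t1:t2:F} rather than via your direct expansion of $\log(\nu_t^1/\nu_t^2)$), but both routes work thanks to the quartic growth of $\ell$.
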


\begin{proof}
Throughout the proof, the value of the
non-decreasing function $\Lambda :{\mathbb R}_+ \rightarrow [1,+\infty)$ is allowed
to change from line to line.
To simplify the notations, we just denote the quantity $\sup_{i=1,2} \Lambda  ( \int_{\R^d \times \R^d} |x|^2 d\gamma_0^{i}(x,y)
+
  \int_{t_0^{i}}^T \int_A (1+|a|^4)d\nu_t^{i}(a)dt) $ by $\Lambda_{\boldsymbol{\nu}^{1},\boldsymbol{\nu}^2,\gamma_0^1,\gamma_0^2}.$
  
Let us write $\delta u_t := u_t^{2} - u_t^{1}$ for $t \in [t_0^1 \vee t_0^2,T]$, and let 
$(X_s^{2,t,x})_{t \leq s \leq T}$ be the solution to
$$ \dot{X}_s^{2,t,x} = b(X_s^{2,t,x}, \nu_t^{2}), \quad
s \in [t,T]; \quad X_t^{2,t,x} = x.$$
By Lemma \ref{lem:duality:1,2:F} (with ${\boldsymbol \gamma}^1$
therein --not to be confused with the current choice of 
${\boldsymbol \gamma}^1$-- being understood as 
$(\delta_{(X_s^{2,t,x},y)})_{t \leq s \leq T}$ with initial condition $\delta_{(x,y)}$ at time $t$, which is 
a very specific 
example of application of Proposition \ref{lem:compactnessContinuityEquation24/06}
when the continuity equation is initalized from a delta mass), and $\bd{\varphi}^2$ in Lemma \ref{lem:duality:1,2:F} being understood as $\bd{u}^1$)
we obtain
\begin{equation*}
\begin{split}
u_t^1(x,y) &= L(X_T^{2,t,x},y) - \int_t^T b(X_s^{2,t,x}, \nu_s^{2} - \nu_s^{1}) \cdot \nabla_x u_s^{1}(X_s^{2,t,x},y)ds
\\
&= u_t^2(x,y) - \int_t^T b(X_s^{2,t,x}, \nu_s^{2} - \nu_s^{1}) \cdot \nabla_x u_s^{1}(X_s^{2,t,x},y)ds,
\end{split}
\end{equation*}
which gives
\begin{equation}
\delta u_t(x,y) = - \int_t^T b(X_s^{2,t,x}, \nu_s^{2} - \nu_s^{1}) \cdot \nabla_x u_s^{1}(X_s^{2,t,x},y)ds.
\label{eq:explicitformludeltautn6Avril}
\end{equation}
Since $ \| X_s^{2,t,\cdot} \|_{\mathcal{C}^2_{1,b}}$ and   $\|\nabla_xu_s^{1} \|_{\mathcal{C}^{2}_1}$
are bounded independently of $s$ and $n$ (see Propositions \ref{prop:ODE16Sept} and \ref{prop:SolutionBackxard16Sept}), we get, by the growth assumption on $b$ in \assreg, for all $t \in [t_0^1 \vee t_0^2,T]$,
\begin{equation} 
\| \delta u_t \|_{\mathcal{C}^2_1} \leq \Lambda_{\boldsymbol{\nu}^{1},\boldsymbol{\nu}^2,\gamma_0^1,\gamma_0^2} \int_{t}^T \int_A (1+|a|^3)|\nu_s^{2}(a) - \nu_s^{1}(a) |da ds. 
\label{eq:estimatedeltautn6Avril}
\end{equation}
Getting back to the expressions 
 for $\nu_t^{1}$ and $\nu_t^{2}$, see 
 \eqref{eq:NOCfornut2Avril},  
we observe that the difference between the two arguments in the exponential form of the Gibbs density 
can be estimated as follows: 
\begin{equation*}
\begin{split}
&\biggl\vert \int_{\R^{d_1} \times \R^{d_2}} \bigl[ b(x,a) \cdot \nabla_x {u}^1_t(x,y) \bigr] d {\gamma}_t^1(x,y) - \int_{\R^d \times \R^d} \bigl[ b(x,a) \cdot \nabla_x {u}^2_t(x,y) \bigr] d {\gamma}_t^2(x,y)
\biggr\vert
\\
&\leq \biggl\vert \int_{\R^{d_1} \times \R^{d_2}} \bigl[ b(x,a) \cdot \nabla_x {u}^1_t(x,y) \bigr] d \bigl( {\gamma}_t^1 - {\gamma}_t^2\bigr) (x,y)
\biggr\vert
\\
&\hspace{15pt} +\biggl\vert \int_{\R^{d_1} \times \R^{d_2}}  b(x,a) \cdot  \bigl[\nabla_x {u}^1_t(x,y) - \nabla_x {u}^2_t(x,y) \bigr] d {\gamma}_t^2(x,y)
\biggr\vert.
\end{split}
\end{equation*}
Using (i) in \assreg \, together with 
Proposition \ref{prop:SolutionBackxard16Sept}, we 
can bound the first term by 
$\Lambda_{\boldsymbol{\nu}^{1},\boldsymbol{\nu}^2,\gamma_0^1,\gamma_0^2}
(1 + \vert a \vert^3) \| \gamma_t^{2} - \gamma_t^{1} \|_{(\mathcal{C}_1^2)^*}$. Moreover,  
we can invoke Proposition
\ref{lem:compactnessContinuityEquation24/06}
to 
bound the second term by 
$\Lambda_{\boldsymbol{\nu}^{1},\boldsymbol{\nu}^2,\gamma_0^1,\gamma_0^2} 
( 1 + \vert a \vert) \|\nabla_x \delta u_t \|_{\mathcal{C}^0_1}$. 
Following the proof of \eqref{eq:F:nu_t2-nut1}, we obtain, for   $(t,a) \in [t_0^1 \vee t_0^2,T] \times A$,
\begin{equation}
\label{eq:tobeinjected}
   \vert \nu_t ^{2}(a) - \nu_t^{1}(a)| \leq \Lambda _{\boldsymbol{\nu}^{1},\boldsymbol{\nu}^2,\gamma_0^1,\gamma_0^2} \Bigl( \|\nabla_x \delta u_t \|_{\mathcal{C}^0_1} + \norm{\gamma_t^{2} - \gamma_t^{1} }_{(\mathcal{C}_1^2)^*} \Bigr)  \exp\Bigl( - \frac{3}{4} \ell(a)\Bigr).  
\end{equation}
Injecting \eqref{eq:estimatedeltautn6Avril} into \eqref{eq:tobeinjected}  and using Grönwall's Lemma, 
we get
$$ \sup_{t \in [t_0^1 \vee t_0^2,T]}   \bigl\| e^{ 3\ell /4}  \bigl[ \nu^2_t - \nu^1_t\bigr] \bigr\|_{L^{\infty}} \leq \Lambda_{\boldsymbol{\nu}^{1},\boldsymbol{\nu}^2,\gamma_0^1,\gamma_0^2} \sup_{t \in [t_0^1 \vee t_0^2,T]} \|\gamma_t^{2} - \gamma_t^{1}\|_{(\mathcal{C}_1^2)^*}.   $$
Using \eqref{eq:estimatedeltautn6Avril} again, we deduce that 
\begin{equation}
\sup_{t \in [t_0^1 \vee t_0^2 ,T]} \Bigl\{ \bigl\| e^{ 3 \ell/4}  \bigl[ \nu^2_t - \nu^1_t\bigr] \Bigr\|_{L^{\infty}}  +  \|  u_t^{2} - u_t^{1} \|_{\mathcal{C}_1^2}  \Bigr \} \leq \Lambda_{\boldsymbol{\nu}^1,\boldsymbol{\nu}^2,\gamma_0^1,\gamma_0^2} \sup_{t \in [t_0^1 \vee t_0^2,T]} \| \gamma_t^{2} - \gamma_t^{1} \|_{(\mathcal{C}_1^2)^*}. 
\label{eq:30Sept15:17}
\end{equation}
It remains to estimate the relative entropy term, 
but this follows from  the same argument as in \eqref{eq:proof:entropy:t1:t2:F}.
This concludes the proof of the lemma.
\end{proof}

Using log-Sobolev inequality,
we can also obtain the following estimate when 
two solutions to the system 
\eqref{eq:NOCfornut2Avril}--\eqref{eq:NOCforutgammat2Avril}
start from a common initial position $(t_0,\gamma_0)$ in $[0,T] \times \mathcal{P}_3(\R^{d_1} \times \R^{d_2}).$

\begin{lem}
\label{lem:relativeentropyasasqureddistance28/02}
There is a non-decreasing function $\Lambda:\R_+ \rightarrow \R_+$ such that, for any $(t_0,\gamma_0) \in [0,T] \times \mathcal{P}_3(\R^{d_1} \times \R^{d_2})$ and any two solutions $(\bd{\nu}^{i}, \bd{\gamma}^{i}, \bd{u}^{i})$, $i=1,2$ to the system \eqref{eq:NOCfornut2Avril}--\eqref{eq:NOCforutgammat2Avril} with $(t_0,\gamma_0)$ as common initial condition,
it holds
\begin{equation*}
\begin{split}
&\sup_{t \in [t_0,T]} \mathcal{E}(\nu_t^2 | \nu_t^1) 
\\
&\hspace{15pt} \leq \Lambda \biggl( \int_{\R^{d_1} \times \R^{d_2}} (|x|^2 + |y|^2)^{3/2} d\gamma_0(x,y) + \int_{t_0}^T \int_A |a|^4 d(\nu_t^1 + \nu_t^2)(a) \biggr) \sup_{t \in [t_0,T]} \norm{ \gamma_t^2 - \gamma_t^1}^2_{(\mathcal{C}^2_2)^*}. 
\end{split}
\end{equation*}
\end{lem}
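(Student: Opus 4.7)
The plan is to apply the log-Sobolev inequality of Lemma \ref{lem:LSI} to $\nu_t^1$ (which, as a solution of the first-order system, coincides with $\Gamma_t[\bd{\nu}^1]$), with test density $f = d\nu_t^2/d\nu_t^1$. This yields, for every $t \in [t_0,T]$,
\begin{equation*}
\mathcal{E}\bigl(\nu_t^2 \vert \nu_t^1\bigr)
\leq C_{\mathrm{LSI}}(\nu_t^1) \int_A \Bigl| \nabla_a \log \frac{\nu_t^2}{\nu_t^1}(a) \Bigr|^2 d\nu_t^2(a),
\end{equation*}
where Lemma \ref{lem:LSI} controls $C_{\mathrm{LSI}}(\nu_t^1)$ by a non-decreasing function of $\int_{\R^{d_1}\times\R^{d_2}} (|x|^2 + |y|^2)^{3/2} d\gamma_0(x,y) + \int_{t_0}^T \int_A |a|^4 d\nu_t^1(a) dt$, uniformly in $t$. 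The need for the third moment of $\gamma_0$ is precisely what motivates the hypothesis of the lemma.

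Next, exploiting the explicit Gibbs form \eqref{eq:NOCfornut2Avril} for $\nu_t^i$ and noting that the normalizing constants contribute only an $a$-independent term which vanishes under $\nabla_a$, I would write
\begin{equation*}
\nabla_a \log \frac{\nu_t^2}{\nu_t^1}(a) = -\frac{1}{\epsilon} \nabla_a \Bigl[ \int_{\R^{d_1}\times\R^{d_2}} b(x,a) \cdot \nabla_x u_t^2 d\gamma_t^2 - \int_{\R^{d_1}\times\R^{d_2}} b(x,a) \cdot \nabla_x u_t^1 d\gamma_t^1 \Bigr].
\end{equation*}
I would then invoke Lemma \ref{lem:convenientexpress} (specifically \eqref{eq:secondconvenientexpress02/02}) with the choice $q=1$, $k_1=1$, $k_2=2$, $p_1=p_2=2$, using as inputs $\rho^i = \gamma_t^i$ and $\varphi^i = u_t^i$. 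This yields
\begin{equation*}
\Bigl| \nabla_a \log \frac{\nu_t^2}{\nu_t^1}(a) \Bigr| \leq C_b (1+|a|^3) \Bigl( \|\nabla_x u_t^2\|_{\mathcal{C}^2_1} + \|\gamma_t^1\|_{(\mathcal{C}^1_2)^*} \Bigr) \Bigl( \|\gamma_t^2 - \gamma_t^1\|_{(\mathcal{C}^2_2)^*} + \|\nabla_x(u_t^2 - u_t^1)\|_{\mathcal{C}^1_1} \Bigr),
\end{equation*}
where the prefactor is bounded uniformly in $t$ by Proposition \ref{prop:SolutionBackxard16Sept} and Proposition \ref{lem:compactnessContinuityEquation24/06} (the relevant moments of $\gamma_0$ and $\nu^i$ appearing in the final $\Lambda(\cdots)$ factor). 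To handle the $u_t^2 - u_t^1$ term I would then apply Lemma \ref{lem:stability3-7/3-8}, whose output $\sup_t \|u_t^2 - u_t^1\|_{\mathcal{C}^2_1} \lesssim \sup_t \|\gamma_t^2 - \gamma_t^1\|_{(\mathcal{C}^2_1)^*}$ can be further upper bounded by $\sup_t \|\gamma_t^2 - \gamma_t^1\|_{(\mathcal{C}^2_2)^*}$ by the elementary inclusion $\mathcal{C}^2_1 \hookrightarrow \mathcal{C}^2_2$.

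It then remains to square the above inequality and integrate against $d\nu_t^2$. The resulting polynomial factor $(1+|a|^3)^2$ integrates finitely against $\nu_t^2$ thanks to the exponential upper bound $\nu_t^2(a) \leq \Lambda e^{-\ell(a)/2}$ provided by \eqref{eq:bound:nu(a):exp(-ell(a))} in Proposition \ref{prop:regularityfromOC}, combined with the coercivity assumption $\ell(a) \gtrsim |a|^4$. Plugging this back into the log-Sobolev bound concludes the proof. The main technical point (and the only non-routine step) is the careful bookkeeping between the various weighted function spaces and their duals so as to ensure that the final right-hand side involves exactly the norm $\|\gamma_t^2 - \gamma_t^1\|_{(\mathcal{C}^2_2)^*}$ rather than a stronger or weaker one; all constants that appear depend only on the quantities permitted in the statement, via the non-decreasing function $\Lambda$.
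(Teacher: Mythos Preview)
Your proposal is correct and follows essentially the same route as the paper: apply the log-Sobolev inequality of Lemma~\ref{lem:LSI} to one of the two Gibbs measures, compute $\nabla_a \log(\nu_t^2/\nu_t^1)$ from the explicit formula \eqref{eq:NOCfornut2Avril}, bound the resulting difference of integrals (the paper does this directly rather than via Lemma~\ref{lem:convenientexpress}, but the content is the same), and then absorb $\|\nabla_x(u_t^2-u_t^1)\|$ into $\sup_t\|\gamma_t^2-\gamma_t^1\|_{(\mathcal{C}^2_2)^*}$ via Lemma~\ref{lem:stability3-7/3-8}. The only cosmetic difference is that the paper applies LSI with the roles of $\nu^1$ and $\nu^2$ swapped, which is immaterial by symmetry of the final bound.
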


\begin{proof}[Proof of Lemma \ref{lem:relativeentropyasasqureddistance28/02}]
    By the log-Sobolev inequality stated in Lemma \ref{lem:LSI}
    (with ${\boldsymbol \nu}$ being taken as 
    ${\boldsymbol \nu}^2$, 
    in which case $\Gamma({\boldsymbol \nu})$ is also equal to 
    ${\boldsymbol \nu}^2$, and $f$ being taken as $\nu_t^1/\nu_t^2$), there is a non-decreasing function $\Lambda :\R_+ \rightarrow \R_+$ such that, for all $t \in [t_0,T]$,
    $$  \mathcal{E}(\nu_t^1| \nu_t^2) \leq \Lambda_{\bd{\nu}^2, \gamma_0} \int_A 
    \Bigl| \nabla_a \log \frac{\nu_t^1}{\nu_t^2}(a) \Bigr|^2 d\nu_t^1(a) $$
where $\Lambda_{\bd{\nu}^2,\gamma_0}$ is a short-hand notation for  $\Lambda \bigl( \int_{\R^{d_1} \times \R^{d_2}} (|x|^2 + |y|^2)^{3/2} d\gamma_0(x,y) + \int_{t_0}^T \int_A |a|^4 d\nu^2_t(a) dt  \bigr) $. Recalling the equations satisfied by $\bd{\nu}^1$ and $\bd{\nu}^2$ (or equivalently \eqref{eq:defnGammaNut09/01}), and using the regularity assumption on $b$ this leads to 
\begin{align*}
    &\mathcal{E}(\nu_t^1|\nu_t^2) 
    \\
    &\leq \Lambda_{\bd{\nu}^2,\gamma_0} \frac{1}{\epsilon^2} \int_A \bigl| \nabla_a  \int_{\R^{d_1} \times \R^{d_2}} b(x,a) \cdot d \bigl( \nabla_x u_t^1 \gamma_t^1 - \nabla_xu_t^2 \gamma_t^2)(x,y) \bigr|^2 d\nu_t^1(a)  \\
    &\leq \frac{1}{\epsilon^2} \Lambda_{\bd{\nu}^2,\gamma_0} \biggl( \norm{ \nabla_xu_t^1 - \nabla_x u_t^2}_{\mathcal{C}^0_1} \int_{\R^{d_1} \times \R^{d_2}} (|x|^2 + |y|^2)d\gamma_t^1(x,y) + \norm{ \nabla_x u_t^2}_{\mathcal{C}^2_1} \norm{ \gamma_t^1 - \gamma_t^2}_{(\mathcal{C}^2_2)^*} \biggr)^2 
    \\
    &\hspace{15pt} \times \int_A (1+|a|^6) d\nu_t^1(a).
\end{align*}
It remains to use the previous Lemma \ref{lem:stability3-7/3-8} together with Proposition \ref{prop:regularityfromOC} to conclude.
\end{proof}

\color{black}

Using the estimates from Lemma \ref{lem:stability3-7/3-8}, we can now prove Lemma \ref{lem:convergenceifuniqueness}.

\begin{proof}[Proof of Lemma \ref{lem:convergenceifuniqueness}] 
Throughout the proof, we take for granted the notations introduced in the statement of  Lemma \ref{lem:convergenceifuniqueness}.
Following  \eqref{eq:tobefollowed24/05} in the preliminary step of the proof of Proposition \ref{prop:LipRegValueFuncion25Nov} together with the fact that $(\gamma_0^n)_{n \geq 1}$ converges toward $\gamma_0$ in $\mathcal{P}_2(\R^{d_1} \times \R^{d_2})$, we have
\begin{equation} 
\sup_{ n \geq 1} \Bigl \{ \int_{t_0^n}^T \int_A |a|^4 d\nu^{*,n}_t(a)dt + \int_{\R^{d_1} \times \R^{d_2}} (|x|^2 + |y|^2) d\gamma_0^n(x,y) \Bigr \} < +\infty. 
\label{eq:L4bound06/01/2025}
\end{equation}
By extending 
$\bd{\nu}^{*,n}$
to the entire $[t_0,T]$ if necessary, letting in this case 
$\nu^{*,n}_t = \nu^{*,n}_{t_0^n}$ for $t \in [t_0,t_0^n)$, 
we obtain, for any $n \geq 1$, an admissible control $\bar{\bd{\nu}}^n \in \mathcal{A}(t_0 \wedge t_0^n)$
(the fact that 
it satisfies 
\eqref{eq:definitionadmissiblecontrols}
follows from 
\eqref{eq:estimaterelativeentropy25/12}, applied to 
${\boldsymbol \nu}^{*,n}$). Taking $(t_0,\gamma_0)$ as an initial condition for 
each $n \geq 1$, we then let $\bar{\bd{\gamma}}^n = (\bar{\gamma}^n_t)_{t \in [t_0,T]}$ the resulting curve starting from $(t_0,\gamma_0)$ (which curve should not be confused with 
${\boldsymbol \gamma}^{*,n}$). 
\vskip 4pt

\textit{Step 1.} We first claim that $(\bar{\bd{\gamma}}^n)_{ n \geq 1}$ converges to $\bd\gamma^*$ in $\mathcal{C}([t_0,T], \mathcal{P}_1(\R^{d_1} \times \R^{d_2}))$. Our strategy is to prove that $(\bd{\bar{\nu}}^n)_{ n \geq 1}$ is a minimizing sequence for $J  ( (t_0,\gamma_0), \cdot  )$ and then to use Lemma \ref{lem:cost:convergence}. To this end, we express the cost of $\bd{\bar{\nu}}^n$ as
\begin{align}
    J \bigl( (t_0,\gamma_0), \bd{\bar{\nu}}^n \bigr) &= \epsilon \int_{t_0}^T \mathcal{E} \bigl( \bar{\nu}_t^n | \nu^{\infty} \bigr) dt + \int_{\R^{d_1} \times \R^{d_2}} L(x,y) d\bar{\gamma}_T^n(x,y) \label{eq:06/01_11:28} \\
    &= J \bigl( (t^n_0,\gamma^n_0), \bd{\nu}^{*,n} \bigr) + \epsilon \int_{t_0}^T \mathcal{E} \bigl( \bar{\nu}_t^n | \nu^{\infty} \bigr)dt - \epsilon \int_{t_0^n}^T \mathcal{E} \bigl( \nu_t^{*,n} | \nu^{\infty} \bigr) dt \notag \\
    &\hspace{15pt} + \int_{\R^{d_1} \times \R^{d_2}} L(x,y) d \bar{\gamma}_T^n(x,y) - \int_{\R^{d_1} \times \R^{d_2}} L(x,y) d\gamma_T^{*,n}(x,y). \notag
\end{align}
On the one hand, using the estimate \eqref{eq:estimaterelativeentropy25/12} in Proposition \ref{prop:regularityfromOC} 
and the fact that 
$\bar{\boldsymbol \nu}^n$ and 
${\boldsymbol \nu}^{*,n}$
coincide on $[t_0^n,T]$,
we find a constant $C >0$ that is independent of $n \geq 1$ thanks to \eqref{eq:L4bound06/01/2025}, such that
\begin{equation}
\label{eq:proof:convergence:comparison:entropies}
 \Bigl| \int_{t_0}^T \mathcal{E} \bigl( \bar{\nu}_t^n | \nu^{\infty} \bigr)dt -  \int_{t_0^n}^T \mathcal{E} \bigl( \nu_t^{*,n} | \nu^{\infty} \bigr) dt \Bigr| \leq C |t_0^n -t_0|. 
 \end{equation}
On the other hand, introducing $\bd{\bar{u}}^n : [t_0^n \wedge t_0, T] \times \R^{d_1} \times \R^{d_2} \rightarrow \R$ the solution to the backward equation associated to the control $\bd{\bar{\nu}}^n$ 
and observing that $\boldsymbol{\bar{u}}^n$ coincides with ${\boldsymbol u}^{*,n}$
on $[t_0^n \vee t_0,T]$, 
we get by the duality relation stated in Lemma \ref{lem:duality:1,2:F} (applied twice, once 
with $(t_0,\gamma_0)$ as initial condition and 
${\boldsymbol \nu}^1={\boldsymbol \nu}^2=\overline{\boldsymbol \nu}^n$
as control, and once with 
$(t_0^n,\gamma_0^n)$ as initial condition and 
${\boldsymbol \nu}^1={\boldsymbol \nu}^2={\boldsymbol \nu}^{\star,n}$
as control), 
\begin{equation}
\label{eq:proof:convergence:comparison:L}
\begin{split}
&\int_{\R^{d_1} \times \R^{d_2}} L(x,y) d \bar{\gamma}_T^n(x,y) - \int_{\R^{d_1} \times \R^{d_2}} L(x,y) d\gamma_T^{*,n}(x,y)\\
&= \int_{\R^{d_1} \times \R^{d_2}} \bar{u}_{t_0}^n(x,y) d\gamma_0(x,y) - \int_{\R^{d_1} \times \R^{d_2}} \bar{u}_{t_0^n}^n(x,y) d\gamma_0^n(x,y) \\
&= \int_{\R^{d_1} \times \R^{d_2}} \bigl[ \bar{u}_{t_0}^n(x,y) - \bar{u}^n_{t^n_0}(x,y) \bigr] d\gamma_0(x,y) + \int_{\R^{d_1} \times \R^{d_2}} \bar{u}^n_{t_0^n} (x,y) d(\gamma_0- \gamma_0^n) (x,y). 
\end{split}
\end{equation}
The estimate from Proposition \ref{prop:SolutionBackxard16Sept} gives
\begin{align*}
\int_{\R^{d_1} \times \R^{d_2}} \bigl[\bar{u}_{t_0}^n(x,y) - \bar{u}^n_{t^n_0}(x,y) \bigr] d\gamma_0(x,y) &\leq C \norm{ \bar{u}_{t_0}^n - \bar{u}_{t_0^n}^n}_{\mathcal{C}^0_1} \biggl( \int_{\R^{d_1} \times \R^{d_2}} (|x| + |y|)d\gamma_0(x,y) \biggr)  \\
&\leq C\sqrt{|t_0^n - t_0|}.
\end{align*}
Moreover, by Lemma \ref{lem:comparisonNorms}, 
we get
\begin{align*} 
&\int_{\R^{d_1} \times \R^{d_2}} \bar{u}^n_{t_0^n} (x,y) d(\gamma_0- \gamma_0^n) (x,y) \\
&\leq \bigl\| \bar{u}^n_{t_0^n} \bigr\|_{\mathcal{C}^1_{2,1}} \norm{ \gamma_0 - \gamma_0^n}_{(\mathcal{C}^1_{2,1})^*} \\
&\leq   \bigl\|  \bar{u}^n_{t_0^n} \bigr\|_{\mathcal{C}^1_{2,1}} 
\biggl[ 1+  \sqrt{2} \biggl(   \int_{\R^{d_1} \times \R^{d_2}} (|x|^2 + |y|^2) d \bigl( \gamma_0 + \gamma_0^n \bigr) (x,y) \biggr)^{1/2}  \biggr] d_2(\gamma_0, \gamma_0^n).
\end{align*}
Thanks to Proposition \ref{prop:SolutionBackxard16Sept} and 
 \eqref{eq:L4bound06/01/2025} again, 
we can bound 
$\|  \bar{u}^n_{t_0^n} \|_{\mathcal{C}^1_{2,1}} $
 by a constant independent of $n$. And then, the left-hand side is less than
$C d_2(\gamma_0,\gamma_0^n)$,
for some $C>0$ independent of $ n  \geq 1$. Thanks to the last two displays, we deduce that the left-hand side in 
\eqref{eq:proof:convergence:comparison:L}
is less than $C(\sqrt{\vert t_0^n - t_0 \vert} + d_2(\gamma_0,\gamma_0^n))$. 
Combining this bound together with  \eqref{eq:06/01_11:28} and 
\eqref{eq:proof:convergence:comparison:entropies}, and recalling the definition of the value function $U$ in 
\eqref{eq:value function:U}, we obtain
$$ J \bigl( (t_0,\gamma_0), \bd{\bar{\nu}}^n \bigr) \leq U (t_0^n, \gamma_0^n) + C \sqrt{|t_0^n-t_0|} + d_2(\gamma_0^n,\gamma_0).  $$
Using Proposition \ref{prop:LipRegValueFuncion25Nov}, we get
$$ \limsup_{n \rightarrow +\infty} J \bigl( (t_0,\gamma_0), \bd{\bar{\nu}}^n \bigr) \leq U(t_0,\gamma_0). $$
This implies that $(\bd{\bar{\nu}}^n)_{ n \geq 1}$ is a minimizing sequence for $J( (t_0,\gamma_0), \cdot ).$ By uniqueness, we deduce from the compactness argument of Lemma \ref{lem:cost:convergence}  that $(\bd{\bar{\nu}}^n)_{ n \geq 1}$ converges weakly to $\bd\nu^*$. By Lemma \ref{lem:cost:convergence} again, we have
$$ \lim_{n \rightarrow +\infty} \sup_{t \in [t_0,T]} d_2 \bigl(\bar{\gamma}_t^n , \gamma^*_t \bigr) =0.$$
\textit{Step 2.} Now we claim that 
$$ \lim_{n \rightarrow +\infty} \sup_{t \in [t_0^n \vee t_0,T] } d_2 (\bar{\gamma}_t^n, \gamma_t^{*,n}) = 0.$$
Indeed,
because the continuity 
equations for both $\bar{\boldsymbol \gamma}^n$ and 
${\boldsymbol \gamma}^{*,n}$ are driven by the same vector field after 
$t_0^n \vee t_0$
(which vector field is Lipschitz continuous uniformly in $n \geq 1$), 
it suffices, by 
an elementary stability argument, 
to compare 
$\bar{\gamma}^n_{t_0^n \vee t_0}$
and 
${\gamma}^{*,n}_{t_0^n \vee t_0}$, and thus to bound 
$d_2(\bar{\gamma}^n_{t_0^n \vee t_0},{\gamma}^{*,n}_{t_0^n \vee t_0})$.
Obviously, 
$$d_2\bigl(\bar{\gamma}^n_{t_0^n \vee t_0},{\gamma}^{*,n}_{t_0^n \vee t_0}\bigr) 
\leq 
d_2\bigl(\bar{\gamma}^n_{t_0^n \vee t_0},{\gamma}_0\bigr)
+
d_2\bigl({\gamma}_0,\gamma_0^n\bigr)
+d_2\bigl(\gamma_0^n,{\gamma}^{*,n}_{t_0^n \vee t_0}\bigr).
$$
We recall that $\gamma_0$ is the initial condition of 
$\bar{\boldsymbol \gamma}^n$ and
$\gamma_0^n$ the initial condition of 
${\boldsymbol \gamma}^{*,n}$. 
Following the last paragraph in the proof of Proposition \ref{prop:LipRegValueFuncion25Nov}, one can prove that $\bar{\boldsymbol \gamma}^n$ and ${\boldsymbol \gamma}^{*,n}$  are  time Lipschitz continuous, uniformly in $n \geq 1$. 
We deduce that
$$\sup_{t \in [t_0^n \vee t_0,T] } d_2 (\bar{\gamma}_t^n, \gamma_t^{*,n}) \leq C \bigl( |t_0^n - t_0| + d_2 ( \gamma_0^n, \gamma_0) \bigr),$$
for some $C>0$ independent of $n \in \mathbb{N}^*$. The claim follows easily.
\vskip 4pt

\textit{Conclusion.} It remains to apply Lemmas \ref{lem:backthenitwasLemma6.2} and \ref{lem:comparisonNorms} to conclude.

\end{proof}

\section{The Linearized Continuity and Transport Equations}

\label{sec:LinearizedEquations}

The goal of this section is to address the well-posedness of the linearized continuity equation \eqref{eq:rhotProp3.2} as well as the linearized transport equation \eqref{eq:linearizedeqvt04/03} and, in particular, to prove Propositions \ref{prop:differentiatinggammalambda09/02} and  \ref{prop:SolnVt19/02}. The corresponding formulation of these results are given in Propositions \ref{prop:differentiatinggammalamnda25/01},  
\ref{prop:A.22} and \ref{prop:A.24} for the linearized continuity equation, and in Proposition \ref{prop:linearized:transport:equation:representation} for the linearized transport equation. These results are essential for establishing the second-order optimality conditions of Theorem \ref{prop:SecondOrderConditions3Avril}, which is the purpose of Section \ref{sec:SOC}. 

The current section is somewhat lengthy and technical, and may be skipped on a first reading.

\subsection{The Linearized Continuity Equation}
\label{subse:linearized:equations}
The focus of this subsection is  the linearized equation \eqref{eq:rhotProp3.2}, that we rewrite here:
\begin{equation}
\left \{
\begin{array}{ll}
\displaystyle \partial_t \rho_t  + \div_x \bigl(  b(x,\nu_t) \rho_t \bigr) =- \div_x \bigl(  b(x,\eta_t) \gamma_t \bigr) &\quad \mbox{in } (t_0,T) \times \R^{d_1} \times\R^{d_2},
\\
\displaystyle \rho_{t_0}=0 &\quad \mbox{in } \R^{d_1} \times \R^{d_2}.
\end{array}
\right.
\label{eq:rhotProp3.2Append}
\end{equation}
We also recall that the notion of solution is given in Subsection \ref{subse:SOC04/03}.

\subsubsection{A preliminary measurability result}

\label{sec:AppendixFubini}

\begin{prop}
    Take $\varphi : [t_0,T] \times \R^{d_1} \times \R^{d_2} \rightarrow \R$ with $ \nabla_x \varphi \in \mathcal{C} \bigl( [t_0,T], \mathcal{C}^1_{2}) \cap \mathcal{L}^{\infty}([t_0,T], \mathcal{C}^2_{1})$
and $ \bd{\rho} \in \mathcal{C}([t_0,T], (\mathcal{C}^2_{1})^*) $ such that $ \sup_{t \in [t_0,T]} \sup_{\phi \in \mathcal{C}^2_1, \norm{ \phi}_{\mathcal{C}^1_2} \leq 1} | \langle \phi; \rho_t \rangle |$ is finite. Then,
\begin{enumerate}
    \item For all $t \in [t_0,T]$, the map $a \mapsto \langle b(\cdot,a)\cdot \nabla_x \varphi_t; \rho_t \bigr \rangle$ is measurable and, for all $\nu \in \mathcal{M}(A)$ such that $(1+|a|^2) \nu$ belongs to $\mathcal{M}(A)$, $\int_A | \langle b(\cdot,a) \cdot \nabla_x \varphi_t; \rho_t \rangle |d|\nu|(a) $ is finite and we have
    $$ \int_A \langle b(\cdot,a) \cdot \nabla_x \varphi_t; \rho_t \rangle d\nu(a) = \langle b(\cdot, \nu)\cdot \nabla_x \varphi_t; \rho_t \rangle. $$
    \item \label{lem:timemeasurabilitybracket12/01} For all $\bd{\nu} \in \mathcal{D}(t_0)$, the map $ t\mapsto \langle b(\cdot, \nu_t ) \cdot \nabla_x \varphi_t ; \rho_t \rangle $ is measurable and coincides with $ t \mapsto \int_A \langle b(\cdot, a) \cdot \nabla_x \varphi_t; \rho_t \rangle d \nu_t(a)$. Moreover, it satisfies 
\begin{equation} 
\int_{t_0}^T \bigl| \langle b(\cdot, \nu_t ) \cdot \nabla_x \varphi_t ; \rho_t \rangle \bigr| dt \leq C_b \sup_{t \in [t_0,T]} \sup_{\phi \in \mathcal{C}^2_1, \norm{ \phi}_{\mathcal{C}^1_2} \leq 1} | \langle \phi; \rho_t \rangle | \sup_{t \in [t_0,T]} \norm{ \nabla_x \varphi_t}_{\mathcal{C}^1_2} \int_{t_0}^T \int_A (1+|a|^2)d|\nu_t|(a)dt,
\label{eq:L1boundDualityBracket09/02}
\end{equation}
for a constant $C_b>0$ depending only on the vector field $b$.
\end{enumerate}
\label{prop:measurable+fubini15/01}
\end{prop}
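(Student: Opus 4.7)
The plan is to reduce both statements to a direct application of Lemma~\ref{lem:convenientexpress}, an approximation-by-discrete-measures argument for the Fubini-type exchange, and a joint-continuity+Fubini argument for the time-measurability.

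For part (1), I would first apply the first estimate \eqref{eq:firstconvenientexpress02/03} of Lemma~\ref{lem:convenientexpress} with $k=1$, $p=2$, $q=1$ to the fixed-time quantity $f_t(a) := \langle b(\cdot,a)\cdot\nabla_x\varphi_t;\rho_t\rangle$. This gives $\|f_t\|_{\mathcal{C}^1_2(A)} \leq C_b\,\|\nabla_x\varphi_t\|_{\mathcal{C}^1_1}\|\rho_t\|_{(\mathcal{C}^1_2)^*}$, where the dual norm of $\rho_t$ used here is exactly the finite quantity displayed in the hypothesis. In particular $f_t$ is continuous with growth of order $2$ in $a$, hence Borel measurable, and the assumption $(1+|a|^2)\nu\in\mathcal{M}(A)$ immediately yields $\int_A |f_t(a)|\,d|\nu|(a)<\infty$. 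To establish the Fubini-type identity $\int_A f_t(a)\,d\nu(a)=\langle b(\cdot,\nu)\cdot\nabla_x\varphi_t;\rho_t\rangle$, I would first note that the identity is trivial for discrete measures $\nu=\sum_{i=1}^N c_i\delta_{a_i}$, by linearity of the bracket and of $\nu\mapsto b(\cdot,\nu)$. Then I would approximate a general $\nu$ by such discrete measures $\nu_n$ in the norm $\int_A(1+|a|^2)d|\nu-\nu_n|(a)\to 0$ (by truncation and partition of $A$ into small cells), and pass to the limit on both sides: the left-hand side converges by the quadratic growth of $f_t$, and the right-hand side converges because $\nu\mapsto b(\cdot,\nu)\cdot\nabla_x\varphi_t$ is continuous into $\mathcal{C}^1_2(\mathbb{R}^{d_1}\times\mathbb{R}^{d_2})$ with respect to the same norm on $\nu$, again thanks to the growth bounds of (i) in \assreg.

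For part (2), the key intermediate step is joint continuity of $g(t,a):=\langle b(\cdot,a)\cdot\nabla_x\varphi_t;\rho_t\rangle$ on $[t_0,T]\times A$. I would obtain this by applying the stability estimate \eqref{eq:secondconvenientexpress02/02} of Lemma~\ref{lem:convenientexpress} to the pairs $(\nabla_x\varphi_t,\rho_t)$ and $(\nabla_x\varphi_{t'},\rho_{t'})$, producing a bound of the form $\|g(t',\cdot)-g(t,\cdot)\|_{\mathcal{C}^0_2(A)}\leq C(\|\nabla_x\varphi_{t'}-\nabla_x\varphi_t\|_{\mathcal{C}^0_1}+\|\rho_{t'}-\rho_t\|_{(\mathcal{C}^1_2)^*})$; the right-hand side tends to $0$ as $t'\to t$ by the assumed time-continuity of $\nabla_x\varphi$ in $\mathcal{C}^1_2$ and of $\rho$ in $(\mathcal{C}^2_1)^*$. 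Combined with continuity in $a$ from part (1), this gives joint continuity, hence joint measurability on $[t_0,T]\times A$. Viewing $\bd\nu\in\mathcal{D}(t_0)$ as the measure $d\bd\nu(t,a)=d\nu_t(a)dt$ on $[t_0,T]\times A$ (see Remark~\ref{rmk:measurability}), the uniform bound $|g(t,a)|\leq C_b\,(1+|a|^2)\sup_t\|\nabla_x\varphi_t\|_{\mathcal{C}^2_1}\sup_t\sup_{\|\phi\|_{\mathcal{C}^1_2}\leq 1}|\langle\phi;\rho_t\rangle|$ together with the definition \eqref{eq:definitionadmissiblecontrols:aux} of $\|\bd\nu\|_{\mathcal{D}(t_0)}$ make $g$ integrable against $d|\bd\nu|(t,a)$. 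Fubini's theorem then delivers simultaneously the measurability of $t\mapsto\int_A g(t,a)d\nu_t(a)$, the identification of this map with $t\mapsto\langle b(\cdot,\nu_t)\cdot\nabla_x\varphi_t;\rho_t\rangle$ through part (1) applied pointwise in $t$, and the $L^1$-bound \eqref{eq:L1boundDualityBracket09/02}.

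The only non-routine step is the Fubini-type exchange in part (1): the duality bracket cannot be written \emph{a priori} as an integral against a measure in the $x$-variable (since $\rho_t$ is a distribution, not a measure on $\mathbb{R}^{d_1}\times\mathbb{R}^{d_2}$), so one cannot appeal to Fubini's theorem directly and must go through the density of discrete measures. Everything else is a bookkeeping exercise on the indices of Lemma~\ref{lem:convenientexpress} and an application of joint continuity.
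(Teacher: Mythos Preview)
Your overall strategy is sound and in several places cleaner than the paper's, but there are two genuine gaps in the execution.

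For part (1), the claim that a general $\nu\in\mathcal M_{1+|a|^2}(A)$ can be approximated by discrete measures $\nu_n$ with $\int_A(1+|a|^2)\,d|\nu-\nu_n|(a)\to 0$ is false: a measure absolutely continuous with respect to Lebesgue cannot be approximated in weighted total variation by atomic measures. What does work is a weak-type approximation (truncate to a ball of radius $K$, partition into cells $A_p$ of diameter $\epsilon$, and set $\nu^{K,\epsilon}=\sum_p\nu(A_p)\delta_{a_p}$); both sides of the identity then converge because $a\mapsto b(x,a)$ (together with its $x$-derivative) and $a\mapsto f_t(a)$ are locally Lipschitz, so one only needs weak convergence plus second-moment tightness. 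The paper sidesteps this issue by approximating $b(\cdot,a)$ with the step function $b^{\epsilon,P}(\cdot,a)=\sum_{p\leq P} b(\cdot,a_p(\epsilon))\mathbf 1_{A_p(\epsilon)}(a)$ instead of approximating $\nu$; this produces the \emph{uniform} bound $\|b^{\epsilon}(\cdot,a)-b(\cdot,a)\|_{\mathcal C^1_1}\leq C\epsilon(1+|a|^2)$, which drives both sides of the Fubini identity simultaneously and avoids any discussion of modes of convergence for $\nu$.

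For part (2), your displayed bound $\|g(t',\cdot)-g(t,\cdot)\|_{\mathcal C^0_2(A)}\leq C(\|\nabla_x\varphi_{t'}-\nabla_x\varphi_t\|_{\mathcal C^0_1}+\|\rho_{t'}-\rho_t\|_{(\mathcal C^1_2)^*})$ cannot be used as written: the hypothesis gives continuity of $t\mapsto\rho_t$ only in $(\mathcal C^2_1)^*$, and since $\mathcal C^2_1\hookrightarrow\mathcal C^1_2$ the dual embedding goes the wrong way---one does not get convergence in $(\mathcal C^1_2)^*$. The correct estimate (precisely Lemma~\ref{lem:BackThenLemA.17}, which is \eqref{eq:secondconvenientexpress02/02} with $k_1=1,p_1=2,k_2=2,p_2=1,q=0$) pairs $\|\rho_{t_2}-\rho_{t_1}\|_{(\mathcal C^2_1)^*}$ with $\|\nabla_x\varphi_{t_2}-\nabla_x\varphi_{t_1}\|_{\mathcal C^1_2}$ and carries a $(1+|a|^3)$ prefactor; with those indices both difference terms do tend to $0$ under the stated hypotheses, joint continuity of $g$ follows, and your Fubini route goes through. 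For comparison, the paper's proof does not use Fubini directly: it freezes the bracket at dyadic times $t_i^N$, shows $t\mapsto\int_A\langle b(\cdot,a)\cdot\nabla_x\varphi_{t_i^N};\rho_{t_i^N}\rangle\,d\nu_t(a)$ is measurable by truncation in $|a|$, and then passes to the limit $N\to\infty$ pointwise in $t$ via the same continuity lemma. Your joint-continuity-plus-Fubini argument is the more elegant route once the indices are fixed.
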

Regarding point (2) in the statement, observe that, as a consequence of the first point, the function \( t \mapsto \langle b(\cdot, \nu_t) \cdot \nabla_x \varphi_t ; \rho_t \rangle \) is a priori well-defined only at times \( t \) where \( \nu_t \) belongs to \( \mathcal{M}_{1+|a|^2} \). By the definition of elements of \( \mathcal{D}(t_0) \), this set is measurable and has full Lebesgue measure. Therefore, it is indeed possible to find a measurable version of the function \( t \mapsto \langle b(\cdot, \nu_t) \cdot \nabla_x \varphi_t ; \rho_t \rangle \) and our result says that 
the bound in equation \eqref{eq:L1boundDualityBracket09/02} holds.

As for the proof of Proposition \ref{prop:measurable+fubini15/01}, it relies on a series of intermediate steps. We start with a suitable approximation of the vector field $b$, based on the following notations. For every $\epsilon >0$ we consider a partition of $A$ into Borel subsets $A = \sqcup_{p \geq 1} A_p(\epsilon) $ such that for every 
integer
$p \geq 1$, there is $a_p(\epsilon) \in A_p(\epsilon)$ such that $A_p(\epsilon) \subset B(a_p(\epsilon),\epsilon)$ (the ball of center $a_p(\epsilon)$ and radius $\epsilon$). For $\epsilon >0$ and $P \geq 1$, we define  
$$b^{\epsilon,P}(a,x) := \sum_{p =1}^P b(x,a_p(\epsilon)) \mathbf{1}_{A_p(\epsilon)}(a), \quad b^{\epsilon}(a,x) = \sum_{p \geq 1} b(x,a_p(\epsilon)) \mathbf{1}_{A_p(\epsilon)}(a). $$

\begin{lem}
\label{lem:approxb03/07}
    There is $C>0$, independent of $a \in A$ and $\epsilon >0$, such that
\begin{equation} 
\norm{ b^{\epsilon}(\cdot,a) - b(\cdot,a)}_{\mathcal{C}^1_{1}} \leq C \epsilon (1+|a|^2).
\label{eq:bepsC111}
\end{equation}
Moreover, for all $a \in A$, 
\begin{equation} 
\lim_{P \rightarrow +\infty} \norm{b^{\epsilon,P}(\cdot,a) - b^{\epsilon}(\cdot,a) }_{\mathcal{C}^1_b} =0. 
\label{eq:bepsPC1b}
\end{equation}
\end{lem}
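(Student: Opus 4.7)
The plan is to reduce both estimates to elementary consequences of the regularity assumption \assreg \ on $b$. The key observation is that for any $a\in A$, there is a unique index $p\geq 1$ with $a\in A_p(\epsilon)$, and then $b^\epsilon(x,a) = b(x,a_p(\epsilon))$ with $|a-a_p(\epsilon)|\leq \epsilon$. So the first estimate is a straightforward first-order Taylor expansion in the variable $a$.

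For \eqref{eq:bepsC111}, I would write, for each $x\in{\mathbb R}^{d_1}$,
\begin{equation*}
b(x,a) - b(x,a_p(\epsilon)) = \int_0^1 \nabla_a b\bigl(x, a_p(\epsilon) + s(a-a_p(\epsilon))\bigr) \cdot (a-a_p(\epsilon))\, ds,
\end{equation*}
and similarly for $\nabla_x b(x,a) - \nabla_x b(x,a_p(\epsilon))$ with $\nabla_a\nabla_x b$ in place of $\nabla_a b$. Using item (i) of \assreg, namely $|\nabla_a b(x,a')|\leq C(1+|a'|)(1+|x|)$ and $|\nabla_a\nabla_x b(x,a')|\leq C(1+|a'|^2)(1+|x|)$, together with $|a'|\leq |a|+\epsilon$ for $a'$ on the segment between $a$ and $a_p(\epsilon)$, one immediately obtains
\begin{equation*}
|b(x,a)-b^\epsilon(x,a)| + |\nabla_x b(x,a) - \nabla_x b^\epsilon(x,a)| \leq C\,\epsilon\, (1+|a|^2)(1+|x|),
\end{equation*}
which, after dividing by $1+|x|$ and taking the supremum, yields \eqref{eq:bepsC111}. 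Since $\epsilon$ is implicitly bounded (the statement is of asymptotic nature as $\epsilon\to 0$), the extra $\epsilon$- and $\epsilon^2$-terms can be absorbed in the constant.

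For \eqref{eq:bepsPC1b}, there is in fact nothing to prove beyond noting that the sum defining $b^\epsilon(\cdot,a)$ reduces to a single term: fix $a\in A$ and let $p\geq 1$ be the unique index with $a\in A_p(\epsilon)$. Then for any $P\geq p$, both $b^{\epsilon,P}(\cdot,a)$ and $b^\epsilon(\cdot,a)$ equal $b(\cdot,a_p(\epsilon))$, so their difference is identically zero as a function of $x$. In particular the $\mathcal{C}^1_b$-norm of the difference vanishes for $P$ large enough (depending on $a$ and $\epsilon$), which gives the convergence claim. The only mild point is that $b(\cdot,a_p(\epsilon))$ indeed belongs to $\mathcal{C}^1_b$, which is immediate from item (i) of \assreg \ applied with $l=0$, $k\in\{0,1\}$, since the right-hand side of the growth bound then does not depend on $x$.

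No serious obstacle is expected: the whole argument is a direct application of the growth assumptions on $b$ and $\nabla_a b$, $\nabla_a\nabla_x b$, combined with the defining property $A_p(\epsilon)\subset B(a_p(\epsilon),\epsilon)$ of the partition.
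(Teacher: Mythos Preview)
Your proof is correct and follows essentially the same approach as the paper. For \eqref{eq:bepsC111} the paper is terser but relies on the same first-order expansion in $a$ and the growth bounds on $\nabla_a b$ and $\nabla_a\nabla_x b$; for \eqref{eq:bepsPC1b} the paper writes the difference as a tail sum $\sum_{p>P}\norm{b(\cdot,a_p(\epsilon))}_{\mathcal{C}^1_b}\mathbf{1}_{A_p(\epsilon)}(a)$ and then observes that at most one indicator is nonzero, which is exactly your ``single term'' observation stated slightly less directly.
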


\begin{proof}
    For all $x \in \R^{d_1}$ and all $a \in A$, $ b^{\epsilon}(x,a) = b(x,a_p(\epsilon))$ if $ a \in A_p(\epsilon).$ By assumption on $b$ and $\nabla_x b $ we deduce that
$$|b^{\epsilon}(x,a) - b(x,a) | +  |\nabla_x b^{\epsilon}(x,a) - \nabla_xb(x,a)| \leq C(1+|x|)(1+|a|^2) \epsilon$$
which leads to \eqref{eq:bepsC111}.
For the second estimate, we have
$$ \norm{ b^{\epsilon,P}(\cdot,a) - b^{\epsilon}(\cdot,a) }_{\mathcal{C}^1_b}  \leq \sum_{p=P+1}^{+\infty} \norm{ b(\cdot, a_p(\epsilon))}_{\mathcal{C}^1_b}\mathbf{1}_{A_p(\epsilon)}(a) \leq C \sum_{p=P+1}^{+\infty} (1+|a_p(\epsilon)|^2) \mathbf{1}_{A_p(\epsilon)}(a). $$
For every $a \in A$, there is $p_0$ such that $\mathbf{1}_{A_p(\epsilon)}(a)=0 $ whenever $p \geq p_0$ and we easily deduce that \eqref{eq:bepsPC1b} holds for every $a \in A$.
\end{proof}

Item (1) in the statement of Proposition 
\ref{prop:measurable+fubini15/01} is a consequence of the following lemma:
\begin{lem}
     Let $\varphi : \R^{d_1} \times \R^{d_2} \rightarrow \R$ be a function such that  $\nabla_x \varphi \in \mathcal{C}^2_1$ and let $\rho$ be en element of $\bigl( \mathcal{C}^2_1 \bigr)^* $  such that $\sup_{ \phi \in \mathcal{C}^2_1, \norm{ \phi}_{\mathcal{C}^1_2} \leq 1} |\langle \phi; \rho \rangle |$ is finite.   Then $a \mapsto \langle b(\cdot,a) \cdot \nabla_x \varphi; \rho \rangle$ is Borel-measurable and, for any $\nu \in \mathcal{M}(A)$ such that $(1+|a|^2)\nu$ belongs to $\mathcal{M}(A)$, $\int_A |\langle b(\cdot,a) \cdot \nabla_x \varphi; \rho \rangle| d|\nu|(a)$ is finite and it holds
\begin{equation} 
\int_A \langle b(\cdot,a) \cdot \nabla_x \varphi; \rho \rangle d\nu(a) = \langle b(\cdot,\nu) \cdot \nabla_x \varphi; \rho \rangle.  
\label{eq:Fubini14/01}
\end{equation}
\label{lem:measurabilityina31/01}
\end{lem}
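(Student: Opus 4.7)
The approach will be to approximate $b$ in the variable $a$ by the piecewise-constant fields $b^{\epsilon,P}$ and $b^{\epsilon}$ constructed right before Lemma~\ref{lem:approxb03/07}, for which the measurability in $a$ and the Fubini-type identity are immediate, and then to pass to the limit using the assumed bound on $\rho$. As a preliminary step I would record that, by assumption (i) in \assreg, $|\nabla_x^k b(x,a)| \le C(1+|a|^{k+1})$ uniformly in $x$ for $k \le 2$, so that $b(\cdot,a)\cdot\nabla_x\varphi$ lies in $\mathcal{C}^2_1$ and
\begin{equation*}
\|b(\cdot,a)\cdot\nabla_x\varphi\|_{\mathcal{C}^1_2} \le \|b(\cdot,a)\cdot\nabla_x\varphi\|_{\mathcal{C}^1_1} \le C\bigl(1+|a|^2\bigr)\|\nabla_x\varphi\|_{\mathcal{C}^1_1}.
\end{equation*}
Together with the seminorm hypothesis on $\rho$ this yields the master bound $|\langle b(\cdot,a)\cdot\nabla_x\varphi;\rho\rangle| \le C(1+|a|^2)$, which will both settle the integrability claim (once measurability is proven) and serve as the dominating function throughout the argument.

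Next I would treat the simple case: for $b^{\epsilon,P}(\cdot,a) = \sum_{p=1}^P b(\cdot,a_p(\epsilon)) \mathbf{1}_{A_p(\epsilon)}(a)$, the expression
\begin{equation*}
\langle b^{\epsilon,P}(\cdot,a)\cdot\nabla_x\varphi;\rho\rangle = \sum_{p=1}^P \langle b(\cdot,a_p(\epsilon))\cdot\nabla_x\varphi;\rho\rangle\,\mathbf{1}_{A_p(\epsilon)}(a)
\end{equation*}
is a finite combination of indicator functions of Borel sets, hence Borel measurable in $a$, and linearity of $\rho$ gives the Fubini identity with $b^{\epsilon,P}$ in place of $b$. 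Then I would let $P \to \infty$ at fixed $\epsilon$: Lemma~\ref{lem:approxb03/07} provides $b^{\epsilon,P}(\cdot,a) \to b^{\epsilon}(\cdot,a)$ in $\mathcal{C}^1_b$ for every $a$, so continuity of $\rho$ yields $\langle b^{\epsilon,P}(\cdot,a)\cdot\nabla_x\varphi;\rho\rangle \to \langle b^{\epsilon}(\cdot,a)\cdot\nabla_x\varphi;\rho\rangle$ pointwise in $a$, and measurability of the limit is inherited. Since $|a_p(\epsilon)| \le |a|+\epsilon$ on $A_p(\epsilon)$, one has $|\langle b^{\epsilon,P}(\cdot,a)\cdot\nabla_x\varphi;\rho\rangle| \le C(1+(|a|+\epsilon)^2)$ uniformly in $P$, so dominated convergence transports the Fubini identity to $b^{\epsilon}$.

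For the last step $\epsilon \to 0$, I would invoke \eqref{eq:bepsC111}, which gives $\|(b^{\epsilon}-b)(\cdot,a)\|_{\mathcal{C}^1_1} \le C\epsilon(1+|a|^2)$; combining it with $\nabla_x\varphi \in \mathcal{C}^1_1$ yields $\|(b^{\epsilon}-b)(\cdot,a)\cdot\nabla_x\varphi\|_{\mathcal{C}^1_2} \le C\epsilon(1+|a|^2)$, so $\langle b^{\epsilon}(\cdot,a)\cdot\nabla_x\varphi;\rho\rangle \to \langle b(\cdot,a)\cdot\nabla_x\varphi;\rho\rangle$ pointwise in $a$. Measurability of the limit follows, and dominated convergence against the common majorant $C(1+|a|^2)$ delivers \eqref{eq:Fubini14/01}. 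The main --and essentially only-- obstacle is bookkeeping: at each stage one must verify that the product $b^{\#}(\cdot,a)\cdot\nabla_x\varphi$ actually sits in $\mathcal{C}^2_1$ (so that $\rho$ may be evaluated), that its $\mathcal{C}^1_2$-norm is controlled by $C(1+|a|^2)$, and that this control is uniform in the approximation parameter; once this is organized, each limit passage is a routine application of the dominated convergence theorem.
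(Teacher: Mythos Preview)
Your proposal is correct and follows essentially the same route as the paper: approximate $b$ by the piecewise-constant fields $b^{\epsilon,P}$, obtain measurability and the Fubini identity trivially for finite sums, then pass to the limit $P\to\infty$ and $\epsilon\to0$ using Lemma~\ref{lem:approxb03/07} together with the seminorm bound on $\rho$ and dominated convergence. The only point you leave implicit is the convergence of the \emph{right-hand side} $\langle b^{\epsilon,P}(\cdot,\nu)\cdot\nabla_x\varphi;\rho\rangle \to \langle b(\cdot,\nu)\cdot\nabla_x\varphi;\rho\rangle$, which the paper handles by bounding $\|b^{\epsilon,P}(\cdot,\nu)-b(\cdot,\nu)\|_{\mathcal{C}^1_1}\le \int_A \|b^{\epsilon,P}(\cdot,a)-b(\cdot,a)\|_{\mathcal{C}^1_1}\,d|\nu|(a)$ and applying dominated convergence once more; this fits squarely into the ``bookkeeping'' you flag at the end.
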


\begin{proof}
    For every $a \in A$, we have
\begin{align*}
    \bigl| \langle b^{\epsilon}(\cdot,a) \cdot \nabla_x \varphi; \rho \rangle - \langle b(\cdot,a) \cdot \nabla_x \varphi; \rho \rangle \bigr| & \leq \norm{ [b^{\epsilon}(\cdot,a) - b(\cdot,a) ] \cdot \nabla_x \varphi}_{\mathcal{C}^1_{2}}  \sup_{ \phi \in \mathcal{C}^2_1, \norm{ \phi}_{\mathcal{C}^1_2} \leq 1} |\langle \phi; \rho \rangle | 
    \nonumber
    \\
    &\leq C \norm{ b^\epsilon(\cdot,a) - b(\cdot,a)}_{\mathcal{C}^1_{1}} \norm{\nabla_x \varphi}_{\mathcal{C}^1_{1}}   \sup_{ \phi \in \mathcal{C}^2_1, \norm{ \phi}_{\mathcal{C}^1_2} \leq 1} |\langle \phi; \rho \rangle | 
    \nonumber
    \\
    &\leq C \epsilon (1+|a|^2) \norm{\nabla_x \varphi}_{\mathcal{C}^1_{1}}  \sup_{ \phi \in \mathcal{C}^2_1, \norm{ \phi}_{\mathcal{C}^1_2} \leq 1} |\langle \phi; \rho \rangle |,
\end{align*}
which shows that 
\begin{equation}
\lim_{\epsilon \rightarrow 0} \sup_{a \in A} \frac{ \bigl| \langle b^{\epsilon}(\cdot,a) \cdot \nabla_x \varphi; \rho \rangle - \langle b(\cdot,a) \cdot \nabla_x \varphi; \rho \rangle \bigr|}{1+|a|^2} =0.
    \label{eq:passage:limit:epsilon}
\end{equation}
Similarly, we can prove that, for every $\epsilon >0$ and for every $a \in A$
\begin{equation}
 \lim_{P \rightarrow +\infty} \langle b^{\epsilon,P}(\cdot,a)\cdot \nabla_x \varphi; \rho \rangle = \langle b^{\epsilon}(\cdot,a)\cdot \nabla_x \varphi; \rho \rangle. 
\label{eq:passage:limit:P}
 \end{equation}
For every $\epsilon >0$ and every $P \geq 1$, we have, by linearity of the duality bracket,
\begin{equation}  
\langle b^{\epsilon,P}(\cdot,a)\cdot \nabla_x \varphi; \rho \rangle = \sum_{p=1}^P \langle b(\cdot, a_p(\epsilon))\cdot \nabla_x \varphi;\rho \rangle \mathbf{1}_{A_p(\epsilon)}(a), \quad a \in A. 
\label{eq:bracketforbepsP03/07}
\end{equation}
In particular, $a \mapsto \langle b^{\epsilon,P}(\cdot,a) \cdot \nabla_x \varphi; \rho \rangle$ is Borel measurable for all $\epsilon >0$ and $P \geq 1$ and so is the map $a \mapsto \langle b(\cdot,a) \cdot \nabla_x \varphi; \rho \rangle$ as the point-wise limit of a sequence of measurable functions. From \eqref{eq:bracketforbepsP03/07} and thanks again to the linearity of the duality bracket, we also deduce that, for all $\nu \in \mathcal{M}(A)$,
\begin{equation}
\int_A \langle b^{\epsilon,P}(\cdot,a) \cdot \nabla_x \varphi; \rho \rangle d\nu(a) = \langle b^{\epsilon,P}(\cdot,\nu) \cdot \nabla_x \varphi; \rho \rangle.
\label{eq:FubinibepsP03/07}
\end{equation}
By assumption on $b$, see in particular \eqref{eq:firstconvenientexpress02/03} in Lemma \ref{lem:convenientexpress}, there is $C>0$ such that, for all $\epsilon >0$, all $P >0$ and all $a \in A$,
$$|\langle b^{\epsilon,P}(\cdot,a) \cdot \nabla_x \varphi; \rho \rangle | \leq C(1+|a|^2) \norm{\nabla_x \varphi}_{\mathcal{C}^1_{1}} \norm{\rho}_{(\mathcal{C}^1_{1})^*}.$$
Therefore, if $\nu$ belongs to $\mathcal{M}_{1+|a|^2}(A)$, the quantity $\int_A |\langle b(\cdot,a) \cdot \nabla_x \varphi; \rho \rangle| d|\nu|(a)$ is finite and we can then use Lebesgue dominated convergence theorem to pass to the limit in the left-hand side of \eqref{eq:FubinibepsP03/07} to get
$$ \lim_{\epsilon \rightarrow 0} \lim_{P \rightarrow +\infty} \int_A \langle b^{\epsilon,P}(\cdot,a) \cdot\nabla_x \varphi; \rho \rangle d\nu(a) = \int_A \langle b(\cdot,a) \cdot\nabla_x \varphi; \rho \rangle d\nu(a). $$
We now address the limit in the right-hand side of 
\eqref{eq:FubinibepsP03/07}. By the same argument as above,
\begin{align*}
    | \langle b^{\epsilon,P}(\cdot, \nu) \cdot \nabla_x \varphi ; \rho \rangle - \langle b(\cdot, \nu) \cdot \nabla_x \varphi ; \rho  \rangle | &\leq C \norm{b^{\epsilon,P}( \cdot,\nu) - b(\cdot,\nu) }_{\mathcal{C}^1_{1}} \norm{\nabla_x\varphi}_{\mathcal{C}^1_{1}} \norm{\rho}_{(\mathcal{C}^1_{2})^*} \\
    &\leq C \int_A \norm{b^{\epsilon,P}( \cdot,a) - b(\cdot,a) }_{\mathcal{C}^1_{1}} d|\nu|(a).
\end{align*}
In order to show that the right-hand side tends to $0$, 
we use Lemma \ref{lem:approxb03/07} to infer that, for every $a \in A$, $$\lim_{\epsilon \rightarrow 0} \lim_{P \rightarrow +\infty} \norm{ b^{\epsilon,P}(\cdot,a) - b(\cdot,a)}_{\mathcal{C}^1_{1}} =0.$$ We also observe that, for a new value of the constant $C>0$ (which is independent of $\epsilon>0$ and $P \geq 1$), 
$$ \norm{ b^{\epsilon,P}(\cdot,a) - b(\cdot,a)}_{\mathcal{C}^1_{1}} \leq C(1+|a|^2).$$
We can apply once again Lebesgue dominated convergence theorem to conclude that \eqref{eq:Fubini14/01} holds true.
\end{proof}

We also need the following lemma in order to establish the measurability property stated in the first item of Proposition 
\ref{prop:measurable+fubini15/01}:

\begin{lem}
Let $\varphi: [t_0,T] \times \mathbb{R}^{d_1} \times \mathbb{R}^{d_2} \rightarrow \mathbb{R}$ be a function such that $ \nabla_x \varphi \in \mathcal{C}([t_0,T], \mathcal{C}^1_2) \cap \mathcal{L}^{\infty}([t_0,T], \mathcal{C}^2_1)$, and let $ \rho \in \mathcal{C}([t_0,T], (\mathcal{C}^2_1)^*)$ with $ \sup_{t \in [t_0,T]} \sup_{ \varphi \in \mathcal{C}_1^2, \norm{ \varphi}_{\mathcal{C}_2^1} \leq 1} \langle \varphi; \rho_t \rangle$  finite.
Then, for any $a \in A$, the map $t \mapsto \langle b(\cdot, a) \cdot \nabla_x \varphi_t; \rho_t \rangle$ is continuous. Moreover, for all $ t_1 < t_2 \in [t_0,T] $, we have the following inequality:
\begin{align*} 
&| \langle b(\cdot,a) \cdot \nabla_x \varphi_{t_2}; \rho_{t_2} \rangle - \langle b(\cdot,a) \cdot \nabla_x \varphi_{t_1}; \rho_{t_1} \rangle | \leq C(1+|a|^3) \\
& \times \Bigl( \sup_{t \in [t_0,T]} \norm{ \nabla_x \varphi_t}_{\mathcal{C}^2_{1}} + \sup_{t \in [t_0,T]} \sup_{ \varphi \in \mathcal{C}_1^2, \norm{ \varphi}_{\mathcal{C}_2^1} \leq 1} \langle \varphi; \rho_t \rangle\Bigr) \Bigl(\norm{ \nabla_x \varphi_{t_2} - \nabla_x \varphi_{t_1}}_{\mathcal{C}^1_{2}} + \norm{\rho_{t_2} - \rho_{t_1}}_{(\mathcal{C}^2_{1})^*} \Bigr).  
\end{align*}
\label{lem:BackThenLemA.17}
\end{lem}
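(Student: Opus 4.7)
The plan is to split the time increment via the standard telescoping identity
\begin{equation*}
\langle b(\cdot,a)\cdot \nabla_x \varphi_{t_2};\rho_{t_2}\rangle
-\langle b(\cdot,a)\cdot \nabla_x \varphi_{t_1};\rho_{t_1}\rangle
=\langle b(\cdot,a)\cdot \nabla_x \varphi_{t_2};\rho_{t_2}-\rho_{t_1}\rangle
+\langle b(\cdot,a)\cdot (\nabla_x \varphi_{t_2}-\nabla_x \varphi_{t_1});\rho_{t_1}\rangle,
\end{equation*}
and then estimate the two pieces in two different dual pairings, chosen so that each side sits in the space in which the relevant continuity is assumed: $\rho$ is only continuous in $(\mathcal{C}^2_1)^*$, whereas $\nabla_x\varphi$ is only continuous in $\mathcal{C}^1_2$.

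For the first term, the function $F(x,y):=b(x,a)\cdot \nabla_x \varphi_{t_2}(x,y)$ will be tested against $\rho_{t_2}-\rho_{t_1}$ through the usual duality with $(\mathcal{C}^2_1)^*$. A straightforward Leibniz expansion using assumption (i) in \assreg $\,$--- which provides the pointwise bound $|\nabla_x^k b(\cdot,a)|\leq C(1+|a|^{k+1})$ with no growth in $x$ --- together with $\nabla_x\varphi_{t_2}\in\mathcal{C}^2_1$ yields $\|F\|_{\mathcal{C}^2_1}\leq C(1+|a|^3)\,\|\nabla_x\varphi_{t_2}\|_{\mathcal{C}^2_1}$; the cubic factor comes precisely from the worst term $\nabla_{xx}^2 b(\cdot,a)$. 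This controls the first piece by
$C(1+|a|^3)\,\sup_{t}\|\nabla_x\varphi_t\|_{\mathcal{C}^2_1}\,\|\rho_{t_2}-\rho_{t_1}\|_{(\mathcal{C}^2_1)^*}$.

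For the second term, the function $G(x,y):=b(x,a)\cdot(\nabla_x\varphi_{t_2}-\nabla_x\varphi_{t_1})(x,y)$ belongs to $\mathcal{C}^2_1$ (hence is an admissible test function for $\rho_{t_1}$) but only its $\mathcal{C}^1_2$ norm controls the time increment of $\nabla_x\varphi$. Using the hypothesis that $\sup_{t}\sup_{\phi\in\mathcal{C}^2_1,\|\phi\|_{\mathcal{C}^1_2}\leq 1}\langle\phi;\rho_t\rangle$ is finite, the pairing $\langle G;\rho_{t_1}\rangle$ is bounded by $\|G\|_{\mathcal{C}^1_2}$ times this supremum. A similar Leibniz computation gives $\|G\|_{\mathcal{C}^1_2}\leq C(1+|a|^2)\|\nabla_x\varphi_{t_2}-\nabla_x\varphi_{t_1}\|_{\mathcal{C}^1_2}$, where the quadratic factor now arises from $\nabla_x b$; this is absorbed in the $(1+|a|^3)$ prefactor of the statement. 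Adding the two estimates yields the claimed inequality, and continuity of $t\mapsto \langle b(\cdot,a)\cdot\nabla_x\varphi_t;\rho_t\rangle$ follows at once from the assumed continuity of $t\mapsto \rho_t$ in $(\mathcal{C}^2_1)^*$ and of $t\mapsto \nabla_x\varphi_t$ in $\mathcal{C}^1_2$. There is no real obstacle here beyond a careful bookkeeping of the polynomial growth rates in $a$ and $(x,y)$ produced by \assreg.
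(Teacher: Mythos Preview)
Your proof is correct and follows essentially the same approach as the paper: the same telescoping decomposition (with the roles of $t_1$ and $t_2$ harmlessly swapped), the same choice of dual pairings for the two pieces --- $(\mathcal{C}^2_1)^*$ for the $\rho$-increment and the $\mathcal{C}^1_2$-bound hypothesis for the $\nabla_x\varphi$-increment --- and the same Leibniz-type estimates giving the $(1+|a|^3)$ and $(1+|a|^2)$ factors.
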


\begin{proof}
    With the same notation as in the statement, 
    we have, for any $a \in A$,
\begin{align*}
    &| \langle b(\cdot,a) \cdot \nabla_x \varphi_{t_2}; \rho_{t_2} \rangle - \langle b(\cdot,a) \cdot \nabla_x \varphi_{t_1}; \rho_{t_1} \rangle | \\
    &\leq | \langle b(\cdot,a) \cdot \nabla_x( \varphi_{t_2} - \varphi_{t_1}); \rho_{t_2} \rangle | + |\langle b(\cdot,a) \cdot \nabla_x \varphi_{t_1}; \rho_{t_2} - \rho_{t_1} \rangle | \\
    &\leq \norm{\rho_{t_2}}_{(\mathcal{C}^1_2)^*} \norm{ b(\cdot,a) \cdot \nabla_x (\varphi_{t_2} - \varphi_{t_1}) }_{\mathcal{C}^1_2} + \norm{ b(\cdot,a) \cdot \nabla_x \varphi_{t_1}}_{\mathcal{C}^2_1} \norm{ \rho_{t_2} - \rho_{t_1}}_{(\mathcal{C}^2_1)^*}.
\end{align*}
And then, we complete the proof by noticing from \assreg \, that
\begin{equation*}
\begin{split}
&\norm{  b(\cdot,a) \cdot \nabla_x( \varphi_{t_2} - \varphi_{t_1})}_{\mathcal{C}^1_{2}} \leq C(1+|a|^2) \norm{  \nabla_x( \varphi_{t_2} - \varphi_{t_1})}_{\mathcal{C}^1_{2}},  
\\
& \norm{b(\cdot,a) \cdot \nabla_x \varphi_{t_1} }_{\mathcal{C}^2_{1}} \leq C(1+|a|^3) \norm{ \nabla_x \varphi_{t_1}}_{\mathcal{C}^2_{1}},
\end{split}
\end{equation*}
for $a \in A$. 
\end{proof}

We can finally finish the Proof of Proposition \ref{prop:measurable+fubini15/01}.

\begin{proof}[Proof of Proposition \ref{prop:measurable+fubini15/01}]
The first item in the statement follows from Lemma
\ref{lem:measurabilityina31/01}. 
It only remains to prove point \eqref{lem:timemeasurabilitybracket12/01}.
    We let $\tau :=T- t_0$ and, for all $N \geq 1$ and $1 \leq i \leq N $, $t_{i}^N := t_0 + \frac{i-1}{N} \tau$, so that $[t_0,T]$ can be decomposed into $\sqcup_{i=1}^{N-1} [t_{i}^N, t_{i+1}^N[  \sqcup [t_N^N,T].$ 

We first claim that, as a consequence of the measurability of the 
    map $t \in [t_0,T] \mapsto (1+ \vert a \vert^2)^{-1} \nu_t \in {\mathcal M}(A)$,   the map
$t \in [t_0,T] \mapsto \int_A \langle b(\cdot, a) \cdot \nabla_x \varphi_{t_i^N}; \rho_{t_i^N} \rangle d\nu_t(a) $ is measurable
for each fixed 
pair $(N,i)$, with $N\geq 1$ and $i \in \{1,\cdots,N\}$. Indeed, 
we know that 
the map 
$t \in [t_0,T] \mapsto \nu_t \in \mathcal{M}(A)$ is measurable.
Therefore, for any $R \geq 1$, the map 
$t \in [t_0,T] \mapsto \int_{\{\vert a \vert \leq R\}} \langle b(\cdot, a) \cdot \nabla_x \varphi_{t_i^N}; \rho_{t_i^N} \rangle d\nu_t(a) $ 
is measurable. 
Since $\bd{\nu}$ belongs to $\mathcal{D}(t_0)$, 
the set $\{ t \in [t_0,T] : \int_A (1+|a|^4)d\nu_t(a)
< \infty\}$ is a Borel subset of $[t_0,T]$ and has full Lebesgue measure. 
In particular, the map $t \in [t_0,T] \mapsto {\mathbf 1}_E(t) \int_{\{\vert a \vert \leq R\}} \langle b(\cdot, a) \cdot \nabla_x \varphi_{t_i^N}; \rho_{t_i^N} \rangle d\nu_t(a)$ is measurable. 
Since the function $a \in A \mapsto \langle b(\cdot, a) \cdot \nabla_x \varphi_{t_i^N}; \rho_{t_i^N} \rangle$ is measurable with quadratic growth (uniformly in $t$), we can easily let $R$ tend to $+\infty$
and deduce that the map 
$t \in [t_0,T] \mapsto {\mathbf 1}_E(t) \int_{A} \langle b(\cdot, a) \cdot \nabla_x \varphi_{t_i^N}; \rho_{t_i^N} \rangle d\nu_t(a)$ is measurable, which suffices to construct a measurable version
of $t \in [t_0,T] \mapsto \int_{A} \langle b(\cdot, a) \cdot \nabla_x \varphi_{t_i^N}; \rho_{t_i^N} \rangle d\nu_t(a)$.

For $(N,i)$ fixed as above and for $E$ also defined as above, we observe from Lemma \ref{lem:measurabilityina31/01} that $t \in [t_0,T] \mapsto 
{\mathbf 1}_E(t)
\int_A \langle b(\cdot, a) \cdot \nabla_x \varphi_{t_i^N}; \rho_{t_i^N} \rangle d\nu_t(a) $ coincides with $t \in [t_0,T] \mapsto {\mathbf 1}_E(t) \langle b(\cdot, \nu_t) \cdot \nabla_x \varphi_{t_i^N}; \rho_{t_i^N} \rangle$.
We deduce that, for all $t \in E$, 
\begin{equation}
\label{eq:19:mars:2025:1}
\begin{split}
&\sum_{i=1}^{N-1} \langle b(\cdot, \nu_t) \cdot \nabla_x \varphi_{t_i^N}; \rho_{t_i^N} \rangle \mathbf{1}_{[t_i^N,t_{i+1}^N[}(t) + \langle b(\cdot, \nu_t) \cdot \nabla_x \varphi_{t_N^N}; \rho_{t_N^N} \rangle \mathbf{1}_{[t_N^N,t_{N+1}^N]}(t) 
\\
&= \int_A  \Bigl \{ \sum_{i=1}^N \langle b(\cdot, a) \cdot \nabla_x \varphi_{t_i^N}; \rho_{t_i^N} \rangle \mathbf{1}_{[t_i^N,t_{i+1}^N[}(t) +\langle b(\cdot, a) \cdot \nabla_x \varphi_{t_N^N}; \rho_{t_N^N} \rangle \mathbf{1}_{[t_N^N,t_{N+1}^N]}(t) \Bigr \} d\nu_t(a).
\end{split}
\end{equation}
By the previous paragraph, the function 
\begin{equation*}
{\boldsymbol \beta}^N : t \mapsto {\mathbf 1}_E(t) \biggl( \sum_{i=1}^{N-1} \langle b(\cdot, \nu_t) \cdot \nabla_x \varphi_{t_i^N}; \rho_{t_i^N} \rangle \mathbf{1}_{[t_i^N,t_{i+1}^N[}(t) + \langle b(\cdot, \nu_t) \cdot \nabla_x \varphi_{t_N^N}; \rho_{t_N^N} \rangle \mathbf{1}_{[t_N^N,t_{N+1}^N]}(t) 
\biggr)
\end{equation*}
is measurable. 
The goal is then to pass to the limit as $N$ tends to $+\infty$. 
By Lemma \ref{lem:BackThenLemA.17},
we have, for all $t \in E$, 
\begin{align*} 
\bigl| {\boldsymbol \beta}^N(t) - 
\langle b(\cdot, \nu_t) \cdot \nabla_x \varphi_{t}; \rho_{t} \rangle
\bigr\vert  
&\leq C \biggl( \int_A (1+|a|^3) d\nu_t(a) \biggr) \Bigl( \sup_{s \in [t_0,T]} \norm{ \nabla_x \varphi_s}_{\mathcal{C}^2_{1}} + \sup_{s\in [t_0,T]} \norm{\rho_s}_{(\mathcal{C}^1_{2})^*} \Bigr)
\\
&\hspace{15pt} \times 
\sup_{ \vert s - r \vert \leq \tau/N} \Bigl(\norm{ \nabla_x \varphi_{r} - \nabla_x \varphi_{s}}_{\mathcal{C}^1_{2}} + \norm{\rho_{r} - \rho_{s}}_{(\mathcal{C}^2_{1})^*} \Bigr).  
\end{align*}
By definition of $E$, the right-hand side tends to $0$ as $N$ tends to $+\infty$. We deduce that 
the function ${\boldsymbol \beta} :  t \in [t_0,T] \mapsto {\mathbf 1}_E(t) \langle b(\cdot, \nu_t) \cdot \nabla_x \varphi_t; \rho_t \rangle $ is measurable as point-wise limit of measurable functions. Passing to 
the limit in 
\eqref{eq:19:mars:2025:1} (thanks again to 
Lemma \ref{lem:BackThenLemA.17}), 
we see that 
${\boldsymbol \beta}$
coincides with $t \in [t_0,T] \mapsto {\mathbf 1}_E(t) \int_A  \langle b(\cdot, a) \cdot \nabla_x \varphi_t; \rho_t \rangle d\nu_t(a)$. This proves the first part in item (2) of the statement.

Finally, for any $t \in [t_0,T]$ we have, by assumption on $b$,
$$ | \langle b(\cdot,\nu_t) \cdot \nabla_x \varphi_t; \rho_t \rangle | \leq C_b \norm{ \rho_t}_{(\mathcal{C}^1_2)^*} \norm{ \nabla_x \varphi_t}_{\mathcal{C}^1_2} \int_A(1+|a|^2) d|\nu_t|(a) $$
for some $C_b >0$ depending only on $b$. Estimate \eqref{eq:L1boundDualityBracket09/02} follows after integrating in $t$.
\end{proof}

Before ending this section we add a similar result about the differentiability of the map $a \mapsto \langle b(\cdot,a) \cdot  \nabla_x \varphi;\rho \bigr\rangle$.

\begin{lem}
    Let $\rho$ be an element of  $(\mathcal{C}^2_2)^*$ such that $\sup_{\varphi \in \mathcal{C}^2_2, \norm{ \varphi}_{\mathcal{C}^1_3 \leq 1} }\langle \varphi; \rho \rangle$ is finite and let $\varphi : \R^{d_1} \times \R^{d_2} \rightarrow \R$ be a function such that $\nabla_x \varphi$ belongs to $\mathcal{C}^2_1$. Then, the map $a \in A \mapsto \langle b(\cdot,a) \cdot \nabla_x \varphi;\rho \rangle$ is differentiable, and its gradient is given by $a \mapsto \langle \nabla_a[b(\cdot,a) \cdot \nabla_x \varphi]; \rho \rangle.$
\label{lem:differentiabilitydualitybracket31/01}
\end{lem}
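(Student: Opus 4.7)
The plan is to derive Fréchet differentiability from a second-order Taylor expansion of $b$ in the variable $a$, combined with the continuity of $\rho$ on $\mathcal{C}^2_2$ in the stronger $\mathcal{C}^1_3$-norm provided by the assumption. Fixing $a_0 \in A$ and, for $h \in A$ small, the starting point is
\[
b(x, a_0 + h) = b(x, a_0) + \nabla_a b(x, a_0) h + R_h(x), \qquad R_h(x) := \int_0^1 (1-s)\, \nabla_a^2 b(x, a_0 + sh)(h, h)\, ds.
\]
After multiplying by $\nabla_x \varphi(x,y)$ and testing against $\rho$, the linearity of the duality bracket reduces the proof to showing that $\langle R_h \cdot \nabla_x \varphi; \rho \rangle = o(|h|)$, since $\langle \nabla_a b(\cdot, a_0) h \cdot \nabla_x \varphi; \rho\rangle$ is exactly $h \cdot \langle \nabla_a[b(\cdot, a_0) \cdot \nabla_x \varphi]; \rho \rangle$.

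The first task will be to check that the three test functions $b(\cdot, a_0) \cdot \nabla_x \varphi$, $\nabla_a b(\cdot, a_0) h \cdot \nabla_x \varphi$, and $R_h \cdot \nabla_x \varphi$ all belong to $\mathcal{C}^2_2$, so that $\rho$ is legitimately evaluated on each of them. Using that $b(\cdot, a)$ is bounded in $x$ and $\nabla_a b(\cdot, a)$ has at most linear growth in $x$ (by \assreg), a direct triangle inequality on $R_h = b(\cdot, a_0+h) - b(\cdot, a_0) - \nabla_a b(\cdot, a_0) h$ yields the \emph{coarse} bound
\[
|R_h(x)| \leq C|h|\,(1+|a_0|+|h|)\,(1+|x|),
\]
and analogous bounds on $\nabla_x R_h$ and $\nabla_x^2 R_h$ using the higher-order items in \assreg. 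Since $\nabla_x \varphi \in \mathcal{C}^2_1$, it follows that $R_h \cdot \nabla_x \varphi$ together with its partial derivatives up to order~$2$ has at most quadratic growth in $(x,y)$, hence lies in $\mathcal{C}^2_2$.

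For the decisive $o(|h|)$ estimate, I would instead use the sharper Taylor-type bound coming from $|\nabla_a^2 b(x,a)| \leq C(1+|a|)(1+|x|^2)$:
\[
|R_h(x)| \leq C|h|^2(1+|a_0|+|h|)(1+|x|^2),
\]
together with the analogous estimates on $\nabla_x R_h$ and $\nabla_x^2 R_h$. Multiplying by $\nabla_x \varphi$, $\nabla_x^2 \varphi$ or $\nabla_y \nabla_x \varphi$ (each of linear growth in $(x,y)$) and exploiting $(1+|x|^2)(1+|x|+|y|) \leq C(1+|x|^3+|y|^3)$, one obtains
\[
\|R_h \cdot \nabla_x \varphi\|_{\mathcal{C}^1_3} \leq C|h|^2\,(1+|a_0|+|h|).
\]
The continuity hypothesis on $\rho$ then yields $|\langle R_h \cdot \nabla_x \varphi; \rho\rangle| \leq C_\rho \|R_h \cdot \nabla_x \varphi\|_{\mathcal{C}^1_3} = O(|h|^2)$, uniformly in the direction of $h$. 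This is $o(|h|)$, which establishes Fréchet differentiability of $a \mapsto \langle b(\cdot, a) \cdot \nabla_x \varphi; \rho\rangle$ at $a_0$ with gradient $\langle \nabla_a[b(\cdot, a_0) \cdot \nabla_x \varphi]; \rho \rangle$.

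The main obstacle, and the reason the argument requires some care, is the simultaneous use of two growth scales for the remainder $R_h$: the coarser linear-in-$x$ bound obtained from the triangle inequality is what confines $R_h \cdot \nabla_x \varphi$ to $\mathcal{C}^2_2$ and therefore makes $\langle R_h \cdot \nabla_x \varphi; \rho\rangle$ well-defined, whereas the finer quadratic-in-$x$ Taylor bound supplies the crucial $|h|^2$ prefactor needed to close the argument through the $\mathcal{C}^1_3$-norm. Neither estimate alone suffices.
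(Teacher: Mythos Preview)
Your proposal is correct and follows essentially the same approach as the paper: a second-order Taylor expansion in $a$, verification that the remainder lies in $\mathcal{C}^2_2$ (so that the bracket with $\rho$ is defined), and then control of the remainder in the $\mathcal{C}^1_3$-norm to exploit the hypothesis on $\rho$. The only cosmetic difference is that the paper expands $f(x,y,a):=b(x,a)\cdot\nabla_x\varphi(x,y)$ directly rather than expanding $b$ and then multiplying; your more explicit discussion of the two growth scales (linear in $x$ for $\mathcal{C}^2_2$-membership, quadratic in $x$ for the $O(|h|^2)$ bound in $\mathcal{C}^1_3$) is a helpful clarification of a point the paper handles implicitly.
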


\begin{proof}
    Let $f(x,y,a) := b(x,a) \cdot \nabla_x \varphi(x,y)$, for $(x,y,a) \in {\mathbb R}^{d_1} 
    \times {\mathbb R}^{d_2} \times A$. Then, by assumptions on $b$, for every $a_0 \in A$ we have, by Taylor's expansion
    $$ f(x,y,a) = f(x,y,a_0) + (a-a_0) \cdot \nabla_a f(x,y,a_0) + \epsilon(x,y,a),$$
where $$ \epsilon(x,y,a) =\biggl(\int_0^1 (1-t) \nabla^2_af (x,y, a_0 + t(a-a_0))dt \biggr)(a-a_0) \cdot (a-a_0).$$ 
Using the properties of $b$ in \assreg, 
we see that $\epsilon$
satisfies
$$ \lim_{a \rightarrow a_0} \frac{\norm{ \epsilon(\cdot,a)}_{\mathcal{C}^1_3}}{|a-a_0|} =0. $$
Observing that for all $a \in A$,
the three functions $(x,y) \mapsto \nabla_x \varphi(x,y)$, $(x,y) \mapsto b(x,a)$ and $(x,y) \mapsto \nabla_a b(x,a)$ belong to $\mathcal{C}^2_1$, we easily deduce that, for any 
$a \in A$, 
$(x,y) \mapsto f(x,y,a)$ and 
$(x,y) \mapsto \nabla_a f(x,y,a)$
belong to ${\mathcal C}^{2}_2$.
Writing 
$\epsilon(x,y,a) = f(x,y,a) - f(x,y,a_0) - (a-a_0) \cdot \nabla_a f(x,y,a_0)$, 
for $(x,y,a) \in {\mathbb R}^{d_1} \times {\mathbb R}^{d_2} \times A$, 
we deduce that, for any $a \in A$, 
$\epsilon(\cdot,a)$ belongs to $\mathcal{C}^2_2$. By linearity of the duality bracket we have
$$ \langle f(\cdot,a); \rho \rangle = \langle f(\cdot,a_0); \rho \rangle + (a-a_0) \cdot \langle \nabla_a f(\cdot,a_0) ; \rho \rangle + \langle \epsilon(\cdot,a); \rho \rangle  $$
and the remainder can be estimated by 
$$ \limsup_{a\rightarrow a_0} \frac{|\langle \epsilon(\cdot,a); \rho \rangle |}{ |a-a_0| }  \leq \sup_{ \varphi \in \mathcal{C}_2^2, \norm{ \varphi}_{\mathcal{C}^1_3} \leq 1} \langle \varphi; \rho \rangle  \times \lim_{a \rightarrow a_0} \frac{\norm{ \epsilon(\cdot,a)}_{\mathcal{C}^1_3}}{|a-a_0|} =0,$$
which completes the proof. 
\end{proof}

Now we turn to the well-posedness of the linearized equation
for $\bd\rho$, see 
\eqref{eq:rhotProp3.2Append}. We recall that the notion of solution is given in Section \ref{subse:SOC04/03}.

\subsubsection{Existence}

We continue the analysis with the following lemma on the stability 
properties of the continuity equation \eqref{eq:lem:compactnessContinuityEquation24/06:statement}:

\begin{lem}
\label{lem:backthenA.21-09/02}
    Take $(t_0,\gamma_0) \in [0,T] \times \mathcal{P}_p(\R^{d_1} \times \R^{d_2})$, for $ p \in [2,3]$,
    and $\bd{\nu}, \bd{\eta} \in \mathcal{D}(t_0)$. For any $\lambda \in [-1,1]$, let $ \bd{\nu}^{\lambda} := \bd{\nu} + \lambda \bd{\eta}$, and $\bd{\gamma}^{\lambda}$ be the solution to 
\begin{equation}
\label{eq:nu:gamma:lambda}
 \partial_t \gamma_t^{\lambda} + \div_x \bigl(b(x, \nu^{\lambda}_t) \gamma_t^{\lambda}\bigr) = 0 \quad \mbox{in } (t_0,T) \times \R^{d_1} \times \R^{d_2}, \quad  \gamma_{t_0}=\gamma_0.
\end{equation}
For $\lambda \neq 0,$ we define $\bd{ \delta \gamma}^{\lambda} := \lambda^{-1} [\bd{\gamma}^{\lambda}  - \bd{\gamma}^0].$ Then, there exists a non-decreasing function $\Lambda: \R_+ \rightarrow \R_+^*$, independent of $(t_0,\gamma_0)$, $\bd{\nu}$ and $\bd{\eta}$, such that  
\begin{align} 
&\sup_{\lambda \in [-1,1] \setminus \{0 \} } \Bigl \{ \sup_{t \in [t_0,T]} \norm{ \delta \gamma_t^{\lambda}}_{(\mathcal{C}^1_p)^*} +\sup_{t_1 \neq t_2 \in [t_0,T]} \frac{\norm{ \delta \gamma_{t_2}^{\lambda} - \delta \gamma_{t_1}^{\lambda}}_{(\mathcal{C}^2_{p})^*}}{\sqrt{|t_2 - t_1|}} \Bigr \} \leq \Lambda_{\bd{\nu} , \lambda \bd{\eta}, \gamma_0} \norm{\bd\eta}_{\mathcal{D}(t_0)}, 
\label{eq:23/01:17:18}
\\
&\sup_{ \lambda_1 \neq \lambda_2 \in [-1,1] \setminus \{0 \}} \frac{1}{|\lambda_2 - \lambda_1|} \sup_{t \in [t_0,T]}\norm{ \delta \gamma_{t}^{\lambda_2} - \delta \gamma_{t}^{\lambda_1}}_{(\mathcal{C}^2_p)^*} \leq 
\Lambda_{\bd{\nu} ,  \bd{\eta}, \gamma_0} \norm{\bd\eta}_{\mathcal{D}(t_0)},
\label{eq:23/01:17:54}
\end{align}
where, for simplicity, 
we used $\Lambda_{\bd{\nu} , \lambda \bd{\eta}, \gamma_0}$
(with $\lambda$ being equal to 1 in 
\eqref{eq:23/01:17:54})
as a short-hand notation:
\begin{equation}
\label{eq:lemma:2nd:linear:Lambda}
\Lambda_{\bd{\nu}  , \lambda \bd{\eta} ,\gamma_0} := \Lambda \biggl( \norm{ \bd{\nu}}_{\mathcal{D}(t_0)} + \vert \lambda\vert \, \norm{\bd\eta}_{\mathcal{D}(t_0)} + \int_{\R^{d_1} \times \R^{d_2}} (|x|^2 + |y|^2)^{p/2} d\gamma_0(x,y) \biggr).
\end{equation}
\end{lem}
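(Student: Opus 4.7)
The plan is to derive the linearized equation satisfied by $\bd{\delta\gamma}^\lambda$, recognize it as an instance of \eqref{eq:rhotProp3.2Append}, and apply a duality argument based on the backward transport equation to obtain all three estimates. Subtracting \eqref{eq:nu:gamma:lambda} at $\lambda$ and at $0$ and using
$b(x,\nu^\lambda_t)\gamma^\lambda_t - b(x,\nu_t)\gamma^0_t = b(x,\nu^\lambda_t)(\gamma^\lambda_t-\gamma^0_t) + \lambda\, b(x,\eta_t)\gamma^0_t$,
division by $\lambda$ yields
$$
\partial_t\,\delta\gamma^\lambda_t + \div_x\bigl(b(x,\nu^\lambda_t)\delta\gamma^\lambda_t\bigr) = -\div_x\bigl(b(x,\eta_t)\gamma^0_t\bigr), \quad \delta\gamma^\lambda_{t_0}=0,
$$
which is exactly \eqref{eq:rhotProp3.2Append} with control $\bd\nu^\lambda$, perturbation $\bd\eta$, and base curve $\bd\gamma^0$.

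For the sup-in-$t$ bound in \eqref{eq:23/01:17:18}, fix $\phi \in \mathcal{C}^1_p$ and $t_1 \in [t_0,T]$ and test $\delta\gamma^\lambda$ against the backward flow $\varphi^{\lambda,t_1}$ driven by $\bd\nu^\lambda$ with terminal data $\phi$ at $t_1$. The duality identity (of the same type as Lemma \ref{lem:duality:1,2:F}) yields
$$
\langle \phi; \delta\gamma^\lambda_{t_1}\rangle = \int_{t_0}^{t_1}\int_{\R^{d_1}\times\R^{d_2}} b(x,\eta_t)\cdot \nabla_x\varphi^{\lambda,t_1}_t(x,y)\,d\gamma^0_t(x,y)\,dt.
$$
Proposition \ref{prop:SolutionBackxard16Sept}, adapted to the growth class $\mathcal{C}^k_p$ via Lemma \ref{lem:backwardtransportsmoothcontrol23/01}, gives $\|\nabla_x\varphi^{\lambda,t_1}_t\|_{\mathcal{C}^0_{p-1}}\le \Lambda(\|\bd\nu^\lambda\|_{\mathcal{D}(t_0)})\|\phi\|_{\mathcal{C}^1_p}$; Proposition \ref{lem:compactnessContinuityEquation24/06} controls the $p$-th moment of $\gamma^0_t$ uniformly in $t$; the growth of $b$ from \assreg\ combined with Proposition \ref{prop:measurable+fubini15/01} to handle the $|a|$-factor produces the factor $\int_{t_0}^T\!\int_A(1+|a|)d|\eta_t|(a)dt\le \|\bd\eta\|_{\mathcal{D}(t_0)}$, giving the first part of \eqref{eq:23/01:17:18}. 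For the time-H\"older bound, test with $\phi\in \mathcal{C}^2_p$ and split
$$
\langle \phi;\delta\gamma^\lambda_{t_2}-\delta\gamma^\lambda_{t_1}\rangle = \int_{t_1}^{t_2}\!\int b(x,\eta_t)\cdot\nabla_x\varphi^{\lambda,t_2}_t\,d\gamma^0_t\,dt + \int_{t_0}^{t_1}\!\int b(x,\eta_t)\cdot \nabla_x\bigl(\varphi^{\lambda,t_2}_t-\varphi^{\lambda,t_1}_t\bigr)\,d\gamma^0_t\,dt.
$$
The first integral is $O(\sqrt{t_2-t_1})$ via the short-time clause in $\|\cdot\|_{\mathcal{D}(t_0)}$. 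For the second, the H\"older estimate of Proposition \ref{prop:SolutionBackxard16Sept} gives $\|\varphi^{\lambda,t_2}_{t_1}-\phi\|_{\mathcal{C}^1_{p-1}}=O(\sqrt{t_2-t_1})$; one propagates this backward on $[t_0,t_1]$ again via Proposition \ref{prop:SolutionBackxard16Sept}, the extra derivative of $\phi$ (class $\mathcal{C}^2_p$ rather than $\mathcal{C}^1_p$) absorbing the loss of regularity of the backward flow's gradient.

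For \eqref{eq:23/01:17:54}, subtract the linearized equations at $\lambda_1$ and $\lambda_2$ to find
$$
\partial_t(\delta\gamma^{\lambda_2}_t-\delta\gamma^{\lambda_1}_t) + \div_x\bigl(b(x,\nu^{\lambda_2}_t)(\delta\gamma^{\lambda_2}_t-\delta\gamma^{\lambda_1}_t)\bigr) = -(\lambda_2-\lambda_1)\div_x\bigl(b(x,\eta_t)\delta\gamma^{\lambda_1}_t\bigr),
$$
with zero initial datum. Repeating the duality argument, now testing against $\phi \in \mathcal{C}^2_p$ (since the source involves $\delta\gamma^{\lambda_1}_t \in (\mathcal{C}^1_p)^*$ rather than $\gamma^0_t$, which costs one derivative), and plugging in the already-proved bound on $\bd{\delta\gamma}^{\lambda_1}$, one obtains the $|\lambda_2-\lambda_1|$-linear estimate in $(\mathcal{C}^2_p)^*$. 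The main obstacle is the H\"older-in-time estimate: tracking the polynomial growth exponents of the backward flow together with its H\"older-in-time norm carefully enough that the short-time gain $\sqrt{t_2-t_1}$ is not destroyed by the polynomial growth factors, which is precisely what forces the weaker dual class $(\mathcal{C}^2_p)^*$ in the time-regularity statement. The same book-keeping is what constrains Step 4 to close in $(\mathcal{C}^2_p)^*$ rather than $(\mathcal{C}^1_p)^*$.
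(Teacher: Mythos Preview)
Your strategy---derive the linearized equation for $\bd{\delta\gamma}^\lambda$ and argue by duality against the backward transport flow---matches the paper's, and your computations are essentially correct. A few differences are worth noting.

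First, the paper writes the source term as $-\div_x(b(x,\eta_t)\gamma^{\lambda}_t)$ rather than your $-\div_x(b(x,\eta_t)\gamma^{0}_t)$; both decompositions are valid and lead to the same estimates since $\gamma^0$ and $\gamma^\lambda$ have uniform $p$-moment bounds. Your choice actually makes Step~4 slightly cleaner: your source for $\delta\gamma^{\lambda_2}-\delta\gamma^{\lambda_1}$ is just $-(\lambda_2-\lambda_1)\div_x(b(x,\eta_t)\delta\gamma^{\lambda_1}_t)$, whereas the paper carries an extra term $\delta\gamma^{\lambda_1,\lambda_2}_t=(\gamma^{\lambda_2}_t-\gamma^{\lambda_1}_t)/(\lambda_2-\lambda_1)$.

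Second, for the time-H\"older bound the paper takes a simpler route than yours: it tests the equation for $\bd{\delta\gamma}^\lambda$ against a \emph{fixed} $\phi\in\mathcal{C}^2_p$ between $t_1$ and $t_2$, getting
\[
\langle \phi;\delta\gamma^\lambda_{t_2}-\delta\gamma^\lambda_{t_1}\rangle
=\int_{t_1}^{t_2}\langle b(\cdot,\nu^\lambda_t)\cdot\nabla_x\phi;\delta\gamma^\lambda_t\rangle\,dt
+\int_{t_1}^{t_2}\int b(x,\eta_t)\cdot\nabla_x\phi\,d\gamma^\lambda_t\,dt,
\]
and then uses the already-proved $(\mathcal{C}^1_p)^*$ bound on $\delta\gamma^\lambda_t$ together with the $\sqrt{t_2-t_1}$ clause in $\|\cdot\|_{\mathcal{D}(t_0)}$. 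This avoids entirely your comparison of two backward flows $\varphi^{\lambda,t_1}$ and $\varphi^{\lambda,t_2}$ and the propagation of their $\mathcal{C}^1$ difference, which works but is more delicate.

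Third, the paper makes the duality rigorous by regularizing $\bd\nu^\lambda$ into $\bd\nu^{\lambda,n}$ (Lemma~\ref{lem:newregularizationb23/12}) so that $\varphi^{\lambda,n}$ is a legitimate test function, and then passes to the limit; you invoke the duality identity directly, which is fine at the level of a sketch but hides this step.
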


\begin{proof}

\vskip 4pt

\textit{Step 1.} By subtracting the two equations \eqref{eq:nu:gamma:lambda} for the cases \(\lambda \neq 0\) and \(\lambda = 0\), and then dividing by \(\lambda\), we obtain that \(\bd{\delta \gamma}^{\lambda}\) satisfies the equation
\begin{equation} 
\partial_t \delta \gamma^{\lambda}_t + \div_x \bigl(b(x, \nu_t^{\lambda}) \delta \gamma_t^{\lambda}\bigr) = - \div_x \bigl( b(x,\eta_t) \gamma^{\lambda}_t\bigr) \quad \mbox{ in } (t_0,T) \times \R^{d_1} \times \R^{d_2}, \quad \delta \gamma_{t_0}^{\lambda} =0. \label{eq:23/01/16:08}
\end{equation}
By Lemma \ref{lem:A.4} in Appendix \ref{subse:A:1}, $\bd{\gamma}^{\lambda}$ and $\bd{\gamma}$ have 
bounded $p$-moments, uniformly in $t \in [t_0,T]$. Using a standard approximation argument, this shows that the equation above can be tested against any function $ \varphi : [t_0,T] \times \R^{d_1} \times \R^{d_2} \rightarrow \R$ such that $\varphi, \partial_t \varphi, \nabla_x \varphi$ are continuous 
and bounded, up to a multiplicative constant, by $(1+|x|^p + |y|^p)$, uniformly in 
$t \in [t_0,T]$.

Precisely, the test functions $\varphi$ are chosen 
according to the following duality principle. For all $n\in \mathbb{N}^*$ and $\lambda \in [-1,1]$, we call $\bd{\nu}^{\lambda,n}$ the approximation of $\bd{\nu}^{\lambda}$ given by Lemma \ref{lem:newregularizationb23/12}.
Then, for $t_1 \in (t_0,T]$ and $\phi \in \mathcal{C}^1_p$,  
we let
$\varphi^{\lambda,n} : [t_0,t_1] \times \R^{d_1} \times \R^{d_2} \rightarrow  \R$ be the solution to 
\begin{equation}
\label{eq:varphi:lambda,n}
 -\partial_t \varphi^{\lambda,n}_t -b(x, \nu_t^{\lambda,n}) \cdot \nabla_x \varphi^{\lambda,n}_t = 0 \quad \mbox{ in } [t_0,t_1] \times \R^{d_1} \times \R^{d_2}, \quad \varphi^{\lambda,n}_{t_1} = \phi \mbox{ in } \R^{d_1} \times \R^{d_2},
 \end{equation}
as given by Lemma \ref{lem:backwardtransportsmoothcontrol23/01}.

By 
combining 
Lemmas 
\ref{lem:propertiesnun11/01}
and
\ref{lem:backwardtransportsmoothcontrol23/01}, 
we can find a non-decreasing function $\Lambda$, independent of 
$t_0$, $\bd{\nu}$ and 
$\phi$, such that
\begin{equation} 
\sup_{ \lambda \in [-1,1]} \sup_{ n \in \mathbb{N}^*} \sup_{t \in [t_0,t_1]} \norm{ \varphi_t^{\lambda,n}}_{\mathcal{C}^1_p} \leq \Lambda \bigl( \norm{ \bd{\nu}}_{\mathcal{D}(t_0)} + 
\vert \lambda \vert \norm{\bd{\eta}}_{\mathcal{D}(t_0)} \bigr) \norm{ \phi}_{\mathcal{C}^1_p}.
\label{eq:23/01:17:11}
\end{equation}
\vskip 4pt

{\it Step 2.} Inserting the above bound in 
\eqref{eq:varphi:lambda,n}, and using the growth assumption on $b$ 
(as stated in \assreg), 
this implies that $ \| \partial_t \varphi_t^{\lambda,n} \|_{\mathcal{C}^p_b}$ 
is bounded. In particular, $\varphi^{\lambda,n}$ is an admissible test function for \eqref{eq:23/01/16:08}, and we have
\begin{align*} 
\int_{\R^{d_1} \times \R^{d_2}} \phi(x,y) d \bigl[\delta \gamma_{t_1}^{\lambda}\bigr](x,y) &= \int_{t_0}^{t_1} \int_{\R^{d_1} \times \R^{d_2}} \bigl \{\partial_t   \varphi_{t}^{\lambda,n} (x,y) + b(x, \nu_t^{\lambda}) \cdot \nabla_x \varphi_t^{\lambda,n}(x,y)  \bigr \} d
\bigl[ \delta \gamma_t^{\lambda} \bigr](x,y) 
\\
&\hspace{15pt}+ \int_{t_0}^{t_1} \int_{\R^{d_1} \times \R^{d_2}} b(x,\eta_t) \cdot \nabla_x \varphi_t^{\lambda,n}(x,y) d\gamma^{\lambda}_t(x,y)dy 
\\
&=\int_{t_0}^{t_1} \int_{\R^{d_1} \times \R^{d_2}} b(x, \nu_t^{\lambda} - \nu_t^{\lambda,n}) \cdot \nabla_x \varphi^{\lambda,n}_t(x,y) d \bigl[ \delta \gamma_t^{\lambda}\bigr](x,y) dt 
\\
&\hspace{15pt} + \int_{t_0}^{t_1} \int_{\R^{d_1} \times \R^{d_2}} b(x,\eta_t) \cdot \nabla_x \varphi_t^{\lambda,n}(x,y) d\gamma^{\lambda}_t(x,y)dt.
\end{align*}
On the one hand, we can easily show from Lemma \ref{lem:newregularizationb23/12} and \eqref{eq:23/01:17:11} that, for any $\lambda \in [-1,1] \setminus \{0 \}$, 
$$ \lim_{ n \rightarrow +\infty}  \int_{t_0}^{t_1} \int_{\R^{d_1} \times \R^{d_2}} b(x, \nu_t^{\lambda} - \nu_t^{\lambda,n}) \cdot \nabla_x \varphi^{\lambda,n}_t(x,y) 
d \bigl[ \delta \gamma_t^{\lambda} \bigr] (x,y) dt =0.$$
On the other hand, using \eqref{eq:23/01:17:11} again we get, for all $n \in \mathbb{N}^*$,
\begin{equation}
\label{eq:2nd:linearization:proof:conclusion:second:step}
\begin{split}
    &\int_{t_0}^{t_1} \int_{\R^{d_1} \times \R^{d_2}} b(x,\eta_t) \cdot  \nabla_x \varphi_t^{\lambda,n}(x,y) d\gamma^{\lambda}_t(x,y)dt 
    \\
    &\hspace{5pt} \leq \Lambda \bigl( \norm{ \bd{\nu}}_{\mathcal{D}(t_0)}
    + \vert \lambda \vert \, 
\norm{ \bd{\eta}}_{\mathcal{D}(t_0)} 
    \bigr) \norm{ \phi}_{\mathcal{C}^1_p} \Bigl(\sup_{t \in [t_0,t_1]} \int_{\R^{d_1} \times \R^{d_2}} (|x|^p + |y|^p) d\gamma^{\lambda}_t(x,y) \Bigr) 
    \\
&\hspace{15pt} \times    \int_{t_0}^{t_1} \int_A (1+|a|) d|\eta_t|(a) dt.
\end{split}
\end{equation}
and we conclude by Lemma \ref{lem:A.4} that the first 
bound for 
$\| \delta \gamma_t^{\lambda} \|_{(\mathcal{C}^1_p)^*}$
in 
\eqref{eq:23/01:17:18} holds. 
\vskip 4pt

\textit{Step 3.} We now establish the time regularity estimate in \eqref{eq:23/01:17:18}. We take $\lambda \in [-1,1] \setminus \{0 \}$ and $t_1 < t_2 \in [t_0,T] $. Let $\phi \in \mathcal{C}^2_{p}$. Then, using the equation \eqref{eq:23/01/16:08} for $\bd{\delta \gamma}^{\lambda}$ we get
\begin{equation}
\label{eq:deltagammat2lambda-deltagammat1lambda}
\begin{split}
    \int_{\R^{d_1} \times \R^{d_2}} \phi(x,y) d \bigl[\delta \gamma^{\lambda}_{t_2} - \delta \gamma_{t_1}^{\lambda} \bigr](x,y) &= \int_{t_1}^{t_2} \int_{\R^{d_1} \times \R^{d_2}} b(x, \nu_t^{\lambda}) \cdot \nabla_x \phi(x,y) d \bigl[\delta \gamma_t^{\lambda}\bigr](x,y) dt \\
    &\hspace{15pt}+ \int_{t_1}^{t_2} \int_{\R^{d_1} \times \R^{d_2}} b(x, \eta_t) \cdot \nabla_x \phi(x,y) d \gamma^{\lambda}_t (x,y). 
\end{split}
\end{equation}
Thanks to \eqref{eq:23/01:17:18} and the growth assumptions on $b$ and $\nabla_x b$, the first term in the right-hand side of 
\eqref{eq:deltagammat2lambda-deltagammat1lambda}
can be bounded by 
\begin{equation}
\label{eq:deltagammat2lambda-deltagammat1lambda:1}
\begin{split}
&\biggl\vert 
\int_{t_1}^{t_2} \int_{\R^{d_1} \times \R^{d_2}} b(x, \nu_t^{\lambda}) \cdot \nabla_x \phi(x,y) d \bigl[\delta \gamma_t^{\lambda}\bigr](x,y) dt
\biggr\vert
\\
&\hspace{15pt} \leq C \norm{ \phi}_{\mathcal{C}^2_{p}} \sup_{t \in [t_1,t_2]} 
\bigl\| \delta \gamma_t^{\lambda} \bigr\|_{(\mathcal{C}^1_p)^*} \int_{t_1}^{t_2} \int_A (1+|a|^2) d|\nu_t^{\lambda}|(a)dt.  
\end{split}
\end{equation}
We then observe that
$$ \int_{t_1}^{t_2} \int_A (1+|a|^2) d|\nu_t^{\lambda}|(a)dt \leq C \sqrt{t_2 -t_1} \Bigl( \norm{ \bd{\nu}
+ \lambda
\bd{\eta}}_{\mathcal{D}(t_0)} \Bigr),$$
for some constant $C >0$ independent of $t_1$, $t_2$ and $\lambda$. Similarly, the second term in 
\eqref{eq:deltagammat2lambda-deltagammat1lambda}
can be bounded by
\begin{equation}
\label{eq:deltagammat2lambda-deltagammat1lambda:2}
\begin{split}
&\biggl\vert \int_{t_1}^{t_2} \int_{\R^{d_1} \times \R^{d_2}}
b(x, \eta_t) \cdot \nabla_x \phi(x,y) d\gamma^{\lambda}_t(x,y)
\biggr\vert 
\\
&\hspace{15pt} \leq C \norm{ \phi}_{\mathcal{C}^1_{p}} \Bigl(\sup_{t \in [t_0,t_1]} \int_{\R^{d_1} \times \R^{d_2}} (|x|^2 + |y|^2)^{p/2} d\gamma^{\lambda}_t(x,y) \Bigr) \int_{t_1}^{t_2} \int_A (1+|a|)d |\eta_t|(a) dt. 
\end{split}
\end{equation}
Once again, Lemma \ref{lem:A.4} gives 
$$ \sup_{t \in [t_0,t_1]} \int_{\R^{d_1} \times \R^{d_2}} (|x|^2 + |y|^2)^{p/2} d\gamma^{\lambda}_t(x,y) \leq \Lambda_{\bd{\nu}  , \lambda \bd{\eta}, \gamma_0},$$
with 
$\Lambda_{\bd{\nu} , \lambda \bd{\eta}, \gamma_0}$
being as in the statement, see \eqref{eq:lemma:2nd:linear:Lambda}. Moreover, by definition of the norm $\norm{ \cdot }_{\mathcal{D}(t_0)}$, we have
$$ \int_{t_1}^{t_2} \int_A (1+|a|) d|\eta_t|(a) dt \leq \norm{ \bd{\eta}}_{\mathcal{D}(t_0)} \sqrt{t_2- t_1}.$$
By inserting the last two displays in 
\eqref{eq:deltagammat2lambda-deltagammat1lambda:2}
and then 
plugging
\eqref{eq:deltagammat2lambda-deltagammat1lambda:1}
and
\eqref{eq:deltagammat2lambda-deltagammat1lambda:2}
into 
\eqref{eq:deltagammat2lambda-deltagammat1lambda},
We easily deduce that \eqref{eq:23/01:17:18} holds, which completes the third step of the proof.
\vskip 4pt

\textit{Step 4.} In order to prove the last estimate \eqref{eq:23/01:17:54} in the statement, we argue once again by duality. For $\lambda_1 \neq \lambda_2 \in [-1,1] \setminus \{0 \}$, we let 
$ \xi^{\lambda_1, \lambda_2}_t := [ \delta \gamma_t^{\lambda_2} - \delta \gamma_t^{\lambda_1}]/(\lambda_2 - \lambda_1)$, and easily verify that the latter satisfies 
$$ \partial_t \xi_t^{\lambda_1, \lambda_2} + \div_x \Bigl(b(x,\nu_t^{\lambda_2}) \xi_t^{\lambda_1,\lambda_2}\Bigr) = - \div_x \Bigl(b(x,\eta_t) \bigl[ \delta \gamma_t^{\lambda_2} + \delta \gamma_t^{\lambda_1,\lambda_2} \bigr] \Bigr), \quad t \in [t_0,T],$$
with $\delta\gamma_t^{\lambda_1,\lambda_2} := [ 
\gamma_t^{\lambda_2} - \gamma_t^{\lambda_1}] / (\lambda_2 - \lambda_1).$ 

The above equation has a structure very similar to that of 
\eqref{eq:23/01/16:08}, with $\nu_t^\lambda$ being replaced by 
$\nu_t^{\lambda_2}$ and $\gamma_t^\lambda$ by 
$\delta \gamma_t^{\lambda_2} + \delta \gamma_t^{\lambda_1,\lambda_2}$. 

In order to repeat the duality argument used in the second step of 
the proof, one must first study the regularity 
properties of 
$\delta \gamma_t^{\lambda_2}$ and $\delta \gamma_t^{\lambda_1,\lambda_2}$. 
By the first part 
of \eqref{eq:23/01:17:18}, we know that 
$\delta \gamma^{\lambda_2}_t$ is bounded in $(\mathcal{C}^1_p)^*$, independently of 
$t \in [t_0,T]$ and 
$\lambda_2 \in [-1,1] \setminus \{0 \}$. In fact, we claim that
 $\delta \gamma^{\lambda_1,\lambda_2}_t$ is also bounded in $(\mathcal{C}^1_p)^*$ independently of 
 $t \in [t_0,T]$ and 
 $\lambda_1 \neq \lambda_2 \in [-1,1] \setminus \{0 \}$.
 This can also be seen as a consequence 
of the first part of 
\eqref{eq:23/01:17:18}, when 
the parameter $\lambda$ 
is shifted by  
$\lambda_1$: this amounts to replacing 
$\nu_t$ by $\nu_t^{\lambda_1}$
and 
$(\gamma_t^\lambda - \gamma_t^0)/\lambda$
by 
$(\gamma_t^{\lambda+\lambda_1} - \gamma_t^{\lambda_1})/\lambda$, with   
the latter being equal to 
$\delta \gamma_t^{\lambda_1,\lambda_2}$
when $\lambda$
is taken as $\lambda_2-\lambda_1$. 

The next step is to examine what 
\eqref{eq:2nd:linearization:proof:conclusion:second:step}
becomes 
when 
$\gamma_t^\lambda$ is replaced by 
$\delta \gamma_t^{\lambda_2} + \delta \gamma_t^{\lambda_1,\lambda_2}$. 
Since the latter is now regarded as an element of the dual space
$(\mathcal{C}^1_p)^*$, and no longer as an element of 
${\mathcal P}_p({\mathbb R}^{d_1} \times {\mathbb R}^{d_2})$, 
this forces us to work with a test function
$\phi$
in
$\mathcal{C}^2_p$
(and not only in 
$\mathcal{C}^1_p$). 
The conclusion follows in the same way as the derivation of \eqref{eq:2nd:linearization:proof:conclusion:second:step}.
\color{black}
\end{proof}

We use the previous result to get the existence of a solution to the linearized equation
\eqref{eq:rhotProp3.2Append}:

\begin{prop}
\label{prop:differentiatinggammalamnda25/01}
    Take $(t_0,\gamma_0) \in [0,T] \times \mathcal{P}_p(\R^{d_1} \times \R^{d_2})$, for $p \in [2,3]$, and $\bd{\nu}, \bd{\eta}$ in $\mathcal{D}(t_0)$. Then,
    the functions $(t \in [t_0,T] \mapsto \delta \gamma_t^{\lambda})_{\lambda \in [-1,1] \setminus \{0\}}$
    converge, as $ \lambda\rightarrow 0$ and in
    the space $\mathcal{C}([t_0,T], (\mathcal{C}^2_p)^*)$, toward some $\bd{\rho} \in \mathcal{C}([t_0,T], (\mathcal{C}^2_p)^*)$, solution to the linearized equation \eqref{eq:rhotProp3.2Append}. In particular, we have
$$ \frac{d}{d\lambda}\Big|_{\lambda=0} \bd{\gamma}^{\lambda} = \bd{\rho}, \quad \mbox{ in } \ \mathcal{C}([t_0,T], (\mathcal{C}^2_p)^* \bigr),$$
which means that 
$$ \lim_{\lambda \rightarrow 0} \sup_{t \in [t_0,T]} \norm{ \frac{\gamma_t^{\lambda} - \gamma_t}{\lambda} - \rho_t }_{(\mathcal{C}^2_p)^*} =0.  $$
Moreover, $\bd{\rho}$ satisfies the estimate
\begin{equation}
\sup_{t \in [t_0,T]}
\sup_{ \phi \in \mathcal{C}^2_p, \norm{\phi}_{\mathcal{C}^1_p} \leq 1} | \langle \phi, \rho_t \rangle |
 + \sup_{t_1 \neq t_2 \in [t_0,T]} \frac{\norm{ \rho_{t_2} - \rho_{t_1}}_{(\mathcal{C}^2_{p})^*}}{\sqrt{|t_2 - t_1|}}  \leq \Lambda_{\bd{\nu} , \gamma_0} \norm{\bd{\eta}}_{\mathcal{D}(t_0)},
\label{eq:24/01:22:16}
\end{equation}
for a non-decreasing function $\Lambda: \R_+ \rightarrow \R_+^*$, independent of $(t_0,\gamma_0)$ and $\bd{\nu}, \bd{\eta}$, and  
where, for simplicity, 
we used $\Lambda_{\bd{\nu} , \gamma_0}$ as a short-hand notation:
\begin{equation*}
\Lambda_{\bd{\nu} ,\gamma_0} := \Lambda \biggl( \norm{ \bd{\nu}}_{\mathcal{D}(t_0)}   + \int_{\R^{d_1} \times \R^{d_2}} (|x|^2 + |y|^2)^{p/2} d\gamma_0(x,y) \biggr).
\end{equation*}
\end{prop}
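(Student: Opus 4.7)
The argument is essentially driven by the quantitative Cauchy-in-$\lambda$ estimate \eqref{eq:23/01:17:54} from Lemma \ref{lem:backthenA.21-09/02}. The plan splits into three steps: first extract the limit $\bd\rho$ in $\mathcal{C}([t_0,T], (\mathcal{C}^2_p)^*)$ using completeness of this Banach space, then identify $\bd\rho$ as the weak solution to \eqref{eq:rhotProp3.2Append} by passing to the limit $\lambda \to 0$ in the weak form of \eqref{eq:23/01/16:08}, and finally obtain the estimate \eqref{eq:24/01:22:16} by taking the liminf in the $\lambda$-uniform bound \eqref{eq:23/01:17:18}.

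\textbf{Step 1 (extraction of the limit).} Inequality \eqref{eq:23/01:17:54} rewrites as the Lipschitz property
$$\sup_{t \in [t_0,T]} \bigl\|\delta\gamma^{\lambda_2}_t - \delta\gamma^{\lambda_1}_t\bigr\|_{(\mathcal{C}^2_p)^*} \leq \Lambda_{\bd{\nu},\bd{\eta},\gamma_0} \norm{\bd\eta}_{\mathcal{D}(t_0)} \, |\lambda_2-\lambda_1|,$$
valid for all $\lambda_1,\lambda_2 \in [-1,1]\setminus\{0\}$, so by completeness of $\mathcal{C}([t_0,T],(\mathcal{C}^2_p)^*)$ there exists a unique $\bd\rho \in \mathcal{C}([t_0,T],(\mathcal{C}^2_p)^*)$ with $\sup_{t \in [t_0,T]}\|\delta\gamma^\lambda_t - \rho_t\|_{(\mathcal{C}^2_p)^*} \to 0$ as $\lambda \to 0$. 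This is exactly the differentiation statement claimed in the proposition.

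\textbf{Step 2 (identification of $\bd\rho$).} Fix $t_1 \in [t_0,T]$ and a test function $\varphi$ satisfying \eqref{eq:testfn1-09/03}--\eqref{eq:testfn2-09/03}. Writing \eqref{eq:23/01/16:08} in weak form and decomposing $b(\cdot,\nu_t^\lambda)=b(\cdot,\nu_t)+\lambda b(\cdot,\eta_t)$ yields
\begin{align*}
\langle \varphi_{t_1}; \delta\gamma^\lambda_{t_1}\rangle &= \int_{t_0}^{t_1}\!\! \bigl\langle \partial_t\varphi_t + b(\cdot,\nu_t)\cdot\nabla_x\varphi_t\, ; \delta\gamma^\lambda_t \bigr\rangle dt + \lambda \int_{t_0}^{t_1}\!\! \bigl\langle b(\cdot,\eta_t)\cdot\nabla_x\varphi_t\, ;\delta\gamma^\lambda_t\bigr\rangle dt \\
&\hspace{15pt} + \int_{t_0}^{t_1}\int_{\R^{d_1}\times\R^{d_2}} b(x,\eta_t)\cdot\nabla_x\varphi_t(x,y)\, d\gamma_t^\lambda(x,y)\, dt.
\end{align*}
Using the regularity assumption on $b$, one checks that $b(\cdot,a)\cdot\nabla_x\varphi_t \in \mathcal{C}^2_1 \subset \mathcal{C}^2_p$. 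Then: (a) the left-hand side converges to $\langle \varphi_{t_1};\rho_{t_1}\rangle$ since $\varphi_{t_1} \in \mathcal{C}^2_2 \subset \mathcal{C}^2_p$ and the convergence of Step~1 is pointwise in $t$; (b) the first integral on the right passes to the limit thanks to the uniform-in-$t$ convergence established in Step~1, combined with the $L^1$-in-$t$ control of $\|\partial_t\varphi_t + b(\cdot,\nu_t)\cdot\nabla_x\varphi_t\|_{\mathcal{C}^2_p}$ which follows from the growth assumptions on $b$, the fact that $\bd{\nu} \in \mathcal{D}(t_0)$, and the boundedness of the derivatives of $\varphi$; (c) the second integral is $O(\lambda)$ because $\sup_t\|\delta\gamma^\lambda_t\|_{(\mathcal{C}^1_p)^*}$ is $\lambda$-uniformly bounded by \eqref{eq:23/01:17:18}; (d) the last term converges by continuity of $\lambda \mapsto \bd\gamma^\lambda$ in $\mathcal{C}([t_0,T],\mathcal{P}_p(\R^{d_1}\times\R^{d_2}))$, which is a consequence of Proposition \ref{lem:compactnessContinuityEquation24/06} applied to the difference $\bd\gamma^\lambda - \bd\gamma$. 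Finally, Proposition \ref{prop:measurable+fubini15/01} rewrites $\langle b(\cdot,\nu_t)\cdot\nabla_x\varphi_t;\rho_t\rangle = \int_A\langle b(\cdot,a)\cdot\nabla_x\varphi_t;\rho_t\rangle d\nu_t(a)$, yielding the weak formulation \eqref{eq:weakeqrho12/01Section2} for $\bd\rho$.

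\textbf{Step 3 (bounds and the representation formula).} For each $t \in [t_0,T]$ and each $\phi \in \mathcal{C}^2_p$ with $\norm{\phi}_{\mathcal{C}^1_p}\leq 1$, one has
$$|\langle \phi; \rho_t\rangle| = \lim_{\lambda \to 0}|\langle \phi;\delta\gamma^\lambda_t\rangle| \leq \liminf_{\lambda \to 0}\norm{\delta\gamma^\lambda_t}_{(\mathcal{C}^1_p)^*} \leq \Lambda_{\bd{\nu},\gamma_0}\norm{\bd\eta}_{\mathcal{D}(t_0)},$$
using \eqref{eq:23/01:17:18} and the fact that $|\lambda|\leq 1$ combined with monotonicity of $\Lambda$ so that $\Lambda_{\bd\nu,\lambda\bd\eta,\gamma_0}\leq \Lambda_{\bd\nu,\bd\eta,\gamma_0}$. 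The time-H\"older bound on $\bd\rho$ is derived identically from the corresponding bound on $\delta\gamma^\lambda$. Finally, the explicit formula \eqref{eq:derivativeinlambda31/01} follows from the duality argument in Step 2: choose $\phi\in\mathcal{C}^1_p$ and let $\varphi$ solve the backward transport equation $-\partial_t\varphi_t - b(x,\nu_t)\cdot\nabla_x\varphi_t = 0$ with terminal datum $\phi$ at $t_1$; the identity then drops out of the weak form above with the first integrand vanishing. Extending $\rho_{t_1}$ to $\mathcal{C}^1_p$ is justified by the bound just established, which only uses the $\mathcal{C}^1_p$-seminorm of the test function.

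\textbf{Main obstacle.} The only genuinely delicate point is the passage to the limit in the integral $\int_{t_0}^{t_1}\langle b(\cdot,\nu^\lambda_t)\cdot\nabla_x\varphi_t;\delta\gamma^\lambda_t\rangle dt$: the drift depends on $\lambda$, and the duality bracket involves a test element in $\mathcal{C}^2_p$ (two derivatives are needed because $\delta\gamma^\lambda_t$ only converges in $(\mathcal{C}^2_p)^*$, not in $(\mathcal{C}^1_p)^*$). The decoupling $\nu^\lambda_t = \nu_t + \lambda\eta_t$ is essential: the $\nu_t$-part passes to the limit using the $(\mathcal{C}^2_p)^*$-convergence together with integrability of the test function in $t$, while the $\lambda\eta_t$-part vanishes thanks to the sharper boundedness of $\delta\gamma^\lambda$ in the more regular dual space $(\mathcal{C}^1_p)^*$ provided by \eqref{eq:23/01:17:18}.
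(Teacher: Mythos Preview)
Your proof is correct and follows essentially the same route as the paper: use the Lipschitz-in-$\lambda$ estimate \eqref{eq:23/01:17:54} together with completeness of $\mathcal{C}([t_0,T],(\mathcal{C}^2_p)^*)$ to produce the limit $\bd\rho$, pass to the limit in the weak form of \eqref{eq:23/01/16:08} (splitting $\nu_t^\lambda=\nu_t+\lambda\eta_t$ exactly as the paper does), and read off \eqref{eq:24/01:22:16} from \eqref{eq:23/01:17:18} via a $\liminf$.

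Two small remarks. First, the last paragraph of your Step~3 concerning the representation formula \eqref{eq:derivativeinlambda31/01} and the extension of $\rho_{t_1}$ to $\mathcal{C}^1_p$ is not part of this proposition; in the paper that material is handled separately in Proposition~\ref{prop:A.24}. Second, your justification for the $\bd\eta$-independent constant $\Lambda_{\bd\nu,\gamma_0}$ is slightly inconsistent: the displayed chain ends with $\Lambda_{\bd\nu,\gamma_0}$, but the sentence after it only argues for $\Lambda_{\bd\nu,\bd\eta,\gamma_0}$ via monotonicity. The sharp constant comes from the $\liminf$ itself, since $\Lambda_{\bd\nu,\lambda\bd\eta,\gamma_0}\to\Lambda_{\bd\nu,\gamma_0}$ as $\lambda\to 0$ once one assumes (without loss of generality, as the paper does) that $\Lambda$ is right-continuous.
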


\begin{proof}
    With the notation and estimates of the previous proposition we know that $[-1,1] \ \setminus \{0 \} \ni \lambda \mapsto 
    \bd{\delta \gamma}^{\lambda} = (\delta \gamma_t^\lambda)_{t \in [t_0,T]} \in \mathcal{C}([t_0,T], (\mathcal{C}^2_p)^*)$ is Lipschitz continuous and therefore, by completeness of $\mathcal{C}([t_0,T], (\mathcal{C}^2_p)^*)$, extends uniquely at $\lambda =0$. We then find $\bd\rho \in \mathcal{C}([t_0,T] , (\mathcal{C}^2_p)^*)$ such that 
    $$ \lim_{\lambda \rightarrow 0} \bd{\delta \gamma}^{\lambda} = \bd{\rho}, \quad \mbox{ in } \mathcal{C}([t_0,T], (\mathcal{C}^2_p)^*). $$
Using the estimate 
\eqref{eq:23/01:17:18}
in the statement of Lemma \ref{lem:backthenA.21-09/02}
(assuming without any loss of generality that the function $\Lambda$ 
in the statement of 
Lemma \ref{lem:backthenA.21-09/02}
is right-continuous), we deduce that, for any $\varphi \in \mathcal{C}^2_p$ with $\norm{\varphi}_{\mathcal{C}^1_p} \leq 1$ we have
$$ |\langle \varphi ; \rho_t \rangle | = \lim_{\lambda \rightarrow 0} |\langle \varphi ; \delta \gamma^{\lambda}_t \rangle | \leq \liminf_{\lambda \rightarrow 0} \norm{\delta \gamma_t^{\lambda}}_{(\mathcal{C}^1_p)^*}
\leq \Lambda_{\bd{\nu},\gamma_0}
\norm{\bd{\eta}}_{{\mathcal D}(t_0)},$$
and similarly, for any $\varphi \in \mathcal{C}^2_p$ with $\norm{\varphi}_{\mathcal{C}^2_p} \leq 1$ and any $t_2 > t_1$ in $[t_0,T]$, we have
$$ |\langle \varphi; \rho_{t_2} - \rho_{t_1} \rangle | = \lim_{\lambda \rightarrow 0} |\langle \varphi; \delta \gamma^{\lambda}_{t_2} - \delta \gamma^{\lambda}_{t_1} \rangle | \leq \liminf_{ \lambda \rightarrow 0} \norm{ \delta \gamma_{t_2}^{\lambda} - \delta \gamma_{t_1}^{\lambda}}_{(\mathcal{C}^2_p)^*} \leq \Lambda_{\bd{\nu},\gamma_0}
\norm{\bd{\eta}}_{{\mathcal D}(t_0)} \sqrt{t_2-t_1},$$
where we used the same notation 
$\Lambda_{\bd{\nu} ,\gamma_0}$ as in the statement. We easily deduce that $\bd{\rho}$ belongs to $\mathcal{C}([t_0,T], (\mathcal{C}^2_p)^*)$ and $\sup_{\varphi \in \mathcal{C}^2_p, \norm{\varphi}_{\mathcal{C}^1_p} \leq 1} \langle \varphi; \rho_t \rangle $ is bounded independently from $t \in [t_0,T]$ and $\bd{\rho}$  satisfies estimate \eqref{eq:24/01:22:16}.

Let us now check that $\bd{\rho}$ solves the linearized equation. For $\lambda \in [-1,1] \setminus 
\{ 0\}$, $\bd{\delta \gamma}^{\lambda}$ solves \eqref{eq:23/01/16:08}.
In particular, for all test function $\varphi$ satisfying the conditions \eqref{eq:testfn1-09/03}-\eqref{eq:testfn2-09/03}, we deduce from 
\eqref{eq:23/01/16:08} that, for all $t_1 \in [t_0,T]$,
\begin{equation} 
\langle \varphi_{t_1}; \delta \gamma^{\lambda}_{t_1} \rangle = \int_{t_0}^{t_1} \langle \partial_t\varphi_t + b(\cdot, \nu^{\lambda}_t) \cdot \nabla_x \varphi_t; \delta \gamma^{\lambda}_t \rangle dt + \int_{t_0}^{t_1} \int_{\R^{d_1} \times \R^{d_2}} b(x,\eta_t) \cdot \nabla_x \varphi_t(x,y)d \gamma_t(x,y) dt. 
\label{eq:weakformdeltagammalambda25/01}
\end{equation}
Thanks to the convergence of $\bd{\delta \gamma}$ toward $\bd{\rho}$, and because $p \geq 2$,
we have the following two  limits
\begin{equation*}
\begin{split}
&\limsup_{\lambda \rightarrow 0} | \langle \varphi_{t_1}; \delta \gamma_{t_1}^{\lambda} \rangle - \langle \varphi_{t_1} ; \rho_{t_1} \rangle | \leq \norm{ \varphi_{t_1}}_{\mathcal{C}^2_2} \lim_{\lambda \rightarrow 0} \bigl\| \delta \gamma_{t_1}^{\lambda} - \rho_{t_1} \bigr\|_{(\mathcal{C}^2_2)^*} = 0,
\\
&\limsup_{\lambda \rightarrow 0} \sup_{t \in [t_0,T]} | \langle \partial_t \varphi_t; \delta \gamma_t^{\lambda} - \rho_t \rangle | \leq \sup_{t \in [t_0,T]}  \norm{ \partial_t \varphi_t}_{\mathcal{C}^2_2} \lim_{\lambda \rightarrow 0} \sup_{t \in [t_0,T]} \bigl\| \delta \gamma_t^{\lambda} -\rho_t
\bigr\|_{(\mathcal{C}^2_2)^*} =0. 
\end{split}
\end{equation*}
Moreover, for all $t \in [t_0,t_1]$,
\begin{align*}
    &\Bigl| \bigl\langle b(\cdot, \nu_t^{\lambda}) \cdot \nabla_x \varphi_t; \delta \gamma_t^{\lambda} \bigr\rangle - \bigl\langle b(\cdot, \nu_t) \cdot \nabla_x \varphi_t; \rho_t 
    \bigr\rangle \Bigr| 
    \\
 &   \leq  |\lambda| \, \Bigl| 
 \bigl\langle b(\cdot, \eta_t) \cdot \nabla_x \varphi_t ; \delta \gamma_t^{\lambda}  \bigr\rangle 
 \Bigr| \, + \,  \Bigl| \bigl\langle 
 b(\cdot, \nu_t) \cdot \nabla_x \varphi_t; \delta \gamma_t^{\lambda} - \rho_t \bigr\rangle
 \Bigr| 
    \\
    &\leq |\lambda| \norm{\nabla_x \varphi_t}_{\mathcal{C}^1_2} \bigl\| \delta \gamma_t^{\lambda}
    \bigr\|_{(\mathcal{C}^1_2)^*} \int_A (1+|a|^2) d|\eta_t|(a) 
    \\
    &\hspace{15pt}+ C  \norm{ \nabla_x \varphi_t}_{\mathcal{C}^2_2} \bigl\| \delta \gamma_t^{\lambda} - \rho_t \bigr\|_{(\mathcal{C}^2_2)^*} \int_A (1+|a|^3) d|\eta_t|(a),
\end{align*}
for a constant $C$ independent of $\lambda$ and $t$. Using the fact that $\bd{\eta} \in {\mathcal D}(t_0)$
and letting $\lambda \rightarrow 0$ in \eqref{eq:weakformdeltagammalambda25/01}, 
we deduce that $\bd{\rho}$ is solution to the linearized equation  \eqref{eq:rhotProp3.2Append}.
\end{proof}

\subsubsection{Uniqueness}

The next lemma will be useful to prove uniqueness of solutions 
to the linearized continuity equation \eqref{eq:rhotProp3.2Append}. It also explains why we require $\bd{\rho}$ to be bounded in $(\mathcal{C}^1_3)^*$ although the equation makes sense if $\bd{\rho}$ is only bounded in $(\mathcal{C}^1_2)^*$. Notice that, to obtain existence of solutions that are bounded in $(\mathcal{C}^1_3)^*$, we need $\gamma_0$ to be in $\mathcal{P}_3(\R^{d_1} \times \R^{d_2})$  and not only $\mathcal{P}_2(\R^{d_1} \times \R^{d_2})$.

\begin{lem}
\label{lem:uniquenessextension25/01}
    Take $\rho \in (\mathcal{C}^2_2)^*$ such that 
   $$ \sup_{\varphi \in \mathcal{C}^2_2, \norm{\varphi}_{\mathcal{C}^1_3} \leq 1}  | \langle \varphi ; \rho \rangle|< +\infty. $$
Then, 
\begin{equation}
\label{eq:lem:uniquenessextension25/01:condition:rho}
 \Bigl \{ \forall \phi \in \mathcal{C}^3_{2,1} \quad  \langle \phi ; \rho \rangle = 0 \Bigr \}  \quad \Rightarrow  \quad \rho =0 \quad \mbox{ in }  (\mathcal{C}^2_2)^*.
 \end{equation}
\end{lem}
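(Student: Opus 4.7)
The plan is a density argument: show that every $\varphi \in \mathcal{C}^2_2$ can be approximated by functions in $\mathcal{C}^3_{2,1}$ in such a way that the dual pairing passes to the limit. The subtle point is that the approximation must converge in the $\mathcal{C}^1_3$ norm (not just the $\mathcal{C}^1_2$ norm), because that is the only continuity the hypothesis grants to $\rho$ beyond the weaker $\mathcal{C}^1_2$ continuity inherited from $(\mathcal{C}^2_2)^*$.

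First, I would construct a standard smooth cutoff family $\chi_R \in \mathcal{C}^\infty_c(\R^{d_1} \times \R^{d_2})$ with $\chi_R = 1$ on $B_R$, $\chi_R = 0$ outside $B_{2R}$, and $\|\nabla^k \chi_R\|_{\infty} \leq C R^{-k}$ for $k \in \{0,1,2\}$. Set $\varphi_R := \chi_R \varphi$; this is a $\mathcal{C}^2$ function with compact support. Then, for a standard mollifier $\eta_\delta$, set $\varphi_{R,\delta} := \varphi_R * \eta_\delta \in \mathcal{C}^\infty_c$. Clearly $\varphi_{R,\delta}$ belongs to $\mathcal{C}^3_{2,1}$, so by hypothesis $\langle \varphi_{R,\delta};\rho\rangle = 0$.

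The key estimate is to show $\|\varphi - \varphi_R\|_{\mathcal{C}^1_3} \to 0$ as $R \to \infty$. Since $(1-\chi_R)\varphi$ is supported in $\{|x|+|y| \geq R\}$, using $\varphi \in \mathcal{C}^0_2$ gives
\[
\frac{\bigl|(1-\chi_R)\varphi(x,y)\bigr|}{1+|x|^3+|y|^3} \leq \|\varphi\|_{\mathcal{C}^0_2} \cdot \frac{1+|x|^2+|y|^2}{1+|x|^3+|y|^3}\mathbf{1}_{|x|+|y|\geq R} \leq \frac{C\|\varphi\|_{\mathcal{C}^2_2}}{R},
\]
and similarly for $\nabla[(1-\chi_R)\varphi] = (1-\chi_R)\nabla\varphi - \nabla\chi_R \cdot \varphi$, using $\nabla\varphi \in \mathcal{C}^0_2$ and $\|\nabla\chi_R\|_\infty \leq C/R$, together with the quadratic growth of $\varphi$. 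Each contribution is $O(1/R)$ in the $\mathcal{C}^1_3$ norm. For fixed $R$, the mollification error $\varphi_R - \varphi_{R,\delta}$ has support in a fixed compact set and tends to $0$ in $\mathcal{C}^1$ (uniformly with its gradient) as $\delta \to 0$, since $\varphi_R$ is $\mathcal{C}^2$; on such a fixed compact support, $\mathcal{C}^1$ convergence implies $\mathcal{C}^1_3$ convergence.

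Putting things together, for any $\varepsilon > 0$ choose $R$ large so that $\|\varphi - \varphi_R\|_{\mathcal{C}^1_3} \leq \varepsilon$, then choose $\delta$ small so that $\|\varphi_R - \varphi_{R,\delta}\|_{\mathcal{C}^1_3} \leq \varepsilon$. Using the bound $|\langle \psi;\rho\rangle| \leq C\|\psi\|_{\mathcal{C}^1_3}$ valid for all $\psi \in \mathcal{C}^2_2$ from the extra hypothesis on $\rho$, we get $|\langle \varphi;\rho\rangle| = |\langle \varphi - \varphi_{R,\delta};\rho\rangle| \leq 2C\varepsilon$, and letting $\varepsilon \to 0$ gives $\langle \varphi;\rho\rangle = 0$. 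Since $\varphi \in \mathcal{C}^2_2$ was arbitrary, $\rho = 0$ in $(\mathcal{C}^2_2)^*$. The only delicate step is verifying the $\mathcal{C}^1_3$ decay of the cutoff error, where it is crucial that the denominator has growth of order $3$ rather than $2$ --- this is precisely what makes the additional $\mathcal{C}^1_3$ hypothesis on $\rho$ indispensable.
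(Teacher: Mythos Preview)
Your proof is correct and follows essentially the same approach as the paper: a density argument in which the key insight is that the tail $(1-\chi_R)\varphi$ decays in the $\mathcal{C}^1_3$ norm (not merely $\mathcal{C}^1_2$), exploiting the extra unit of growth in the denominator. The paper's version differs only cosmetically---it posits an abstract approximating sequence $(\phi^n) \subset \mathcal{C}^3_{2,1}$ bounded in $\mathcal{C}^1_2$ and converging in $\mathcal{C}^2_{\rm loc}$, then decomposes $\phi = \phi^n + (\phi-\phi^n)\chi_R + (\phi-\phi^n)(1-\chi_R)$, whereas you construct the approximation explicitly via cutoff-then-mollify; both routes hinge on the same $\mathcal{C}^1_3$ tail estimate.
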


\begin{proof}
     For all $R >1$, we consider a smooth cut-off function 
      $\chi_R : \R^{d_1} \times \R^{d_2} \rightarrow [0,1]$ satisfying
\begin{equation*}
    \chi_R = 
    \left \{
    \begin{array}{ll}
    1  \quad \mbox{\rm in}&B(0,R) 
    \\
    0  \quad \mbox{\rm outside}\hspace{-5pt} &B(0,R+1)
    \end{array}
    \right.,
\end{equation*}
with
 $\norm{ \chi_R}_{\mathcal{C}^1} \leq c$, for some $c>0$ independent from $R$. 
 We easily check that there is $C>0$, depending on $c$, such that, for all $R >1$ and $\phi \in \mathcal{C}^1_2$, 
 $$ \bigl\| \phi(1-\chi_R) \bigr\|_{\mathcal{C}^1_3} \leq \frac{C}{R} \norm{\phi}_{\mathcal{C}^1_2}.$$
 Now, we consider $\phi \in \mathcal{C}^2_2$ together with a sequence of  functions  
 $(\phi^n)_{n \geq 1}$ in $\mathcal{C}^3_{2,1}$, bounded in $\mathcal{C}^1_2$ and converging to $\phi$ in $\mathcal{C}^2_{\rm loc}$. Then, for all $n \in \mathbb{N}^*$ and  $R>1$, we rewrite
 $\phi$ in the form
 \begin{equation}
 \label{eq:phi:phin:chiR}
  \phi = \phi^n + (\phi - \phi^n)\chi_R + (\phi - \phi^n)(1 - \chi_R).
  \end{equation}
We now take the duality bracket with respect to 
 $\rho \in (\mathcal{C}^2_2)^*$. Assuming 
that $\rho$ satisfies the condition in the left-hand side of \eqref{eq:lem:uniquenessextension25/01:condition:rho}, we observe that $\langle \phi^n ; \rho \rangle =0$ for all $n \in \mathbb{N}^*$, because each $\phi^n$ belongs to $\mathcal{C}^3_{2,1}$. Next, we estimate the duality bracket between $\rho$
 and the second and third terms in the right-hand side of 
\eqref{eq:phi:phin:chiR}. We have
 \begin{equation*}
 \begin{split}
  &\bigl\vert \bigl\langle (\phi - \phi^n) \chi_R; \rho \bigr\rangle \bigr| \leq C \bigl\| \phi - \phi^n
  \bigr\|_{\mathcal{C}^1(B(0,R+1))},  
  \\
  &\bigl| \bigl\langle (\phi - \phi^n)(1-\chi_R) ; \rho \bigr\rangle \bigr| \leq C \bigl\| (\phi -\phi^n) (1-\chi_R) \bigr\|_{\mathcal{C}^1_3} \leq CR^{-1} \bigl(\norm{\phi}_{\mathcal{C}^1_2} + \norm{\phi^n}_{\mathcal{C}^1_2} \bigr).  
  \end{split}
  \end{equation*}
 We easily deduce, by letting first $n \rightarrow +\infty$ and then $R \rightarrow +\infty$ in 
 \eqref{eq:phi:phin:chiR},  
 that $ \langle \phi ; \rho \rangle =0.$
\end{proof}

As announced, we use Lemma 
\ref{lem:uniquenessextension25/01} to prove 
the following uniqueness result:

\begin{prop}
\label{prop:A.22}
Let $(t_0,\gamma_0) \in [0,T] \times \mathcal{P}_2(\R^{d_1} \times \R^{d_2})$, $\bd{\nu}, \bd{\eta} \in \mathcal{D}(t_0)$, and $\bd{\gamma} \in \mathcal{C}([t_0,T], \mathcal{P}_2(\R^{d_1} \times \R^{d_2}))$.
Then, the linearized equation 
\eqref{eq:rhotProp3.2Append}, understood as in 
\eqref{eq:testfn1-09/03}--\eqref{eq:testfn2-09/03}--\eqref{eq:weakeqrho12/01Section2}, 
has at most one solution $\bd{\rho}$ satisfying, in addition to 
\eqref{eq:spaceforrho12/01}, 
the condition
\begin{equation} 
\label{eq:furtherintegrability08/01}
\sup_{t \in [t_0,T]} \sup_{ \varphi \in \mathcal{C}^2_2, \norm{\varphi}_{\mathcal{C}^1_3} \leq 1} |\langle \varphi; \rho_t \rangle | < +\infty.
\end{equation}
\end{prop}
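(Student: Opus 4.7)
The plan is to argue by linearity and duality. Subtracting two candidate solutions reduces the problem to showing that any $\bd\rho$ in the class \eqref{eq:spaceforrho12/01}--\eqref{eq:furtherintegrability08/01} solving the homogeneous equation
\[
\partial_t \rho_t + \div_x\bigl(b(x,\nu_t)\rho_t\bigr) = 0, \qquad \rho_{t_0} = 0,
\]
in the weak sense of \eqref{eq:weakeqrho12/01Section2} (with zero source) must vanish identically. The strategy is to test against solutions of the adjoint backward transport equation, then invoke Lemma \ref{lem:uniquenessextension25/01} to upgrade the resulting identity to vanishing in all of $(\mathcal{C}^2_2)^*$.

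Fix $t_1 \in (t_0,T]$ and $\phi \in \mathcal{C}^3_{2,1}$. Introduce a smooth approximation $(\bd\nu^n)_{n \geq 1}$ of $\bd\nu$ via Lemma \ref{lem:newregularizationb23/12}, and let $\bd\varphi^n$ be the solution on $[t_0,t_1]$ of
\[
-\partial_t \varphi^n_t - b(x,\nu^n_t) \cdot \nabla_x \varphi^n_t = 0, \qquad \varphi^n_{t_1} = \phi,
\]
provided by Lemma \ref{lem:backwardtransportsmoothcontrol23/01}. The regularity of $\bd\nu^n$ and of $\phi$ ensures that $\varphi^n$ belongs to the admissible test-function class \eqref{eq:testfn1-09/03}--\eqref{eq:testfn2-09/03}, with the relevant norms (in particular $\sup_t \norm{\nabla_x \varphi^n_t}_{\mathcal{C}^1_3}$) bounded uniformly in $n$ by quantities depending only on $\norm{\bd\nu}_{\mathcal{D}(t_0)}$ and $\norm{\phi}_{\mathcal{C}^3_{2,1}}$. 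Substituting $\varphi^n$ into \eqref{eq:weakeqrho12/01Section2} and using the identity $\partial_t \varphi^n_t + b(\cdot,\nu_t) \cdot \nabla_x \varphi^n_t = b(\cdot, \nu_t - \nu^n_t) \cdot \nabla_x \varphi^n_t$ yields
\[
\langle \phi ; \rho_{t_1} \rangle = \int_{t_0}^{t_1} \bigl\langle b(\cdot, \nu_t - \nu^n_t) \cdot \nabla_x \varphi^n_t; \rho_t \bigr\rangle \, dt.
\]

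The decisive step is passing to the limit $n \to \infty$ in the right-hand side. Combining Proposition \ref{prop:measurable+fubini15/01} (to rewrite the duality bracket as an integral in $a$ against $\nu_t - \nu^n_t$) with Lemma \ref{lem:convenientexpress} applied with $k=1$, $p=3$, $q=0$ produces the pointwise bound
\[
\bigl| \langle b(\cdot,a) \cdot \nabla_x \varphi^n_t ; \rho_t \rangle \bigr| \leq C_b (1+|a|^2) \norm{\nabla_x \varphi^n_t}_{\mathcal{C}^1_3} \sup_{\psi \in \mathcal{C}^2_2, \norm{\psi}_{\mathcal{C}^1_3} \leq 1} |\langle \psi ; \rho_t \rangle|,
\]
where both factors on the right are uniformly bounded in $t \in [t_0,T]$ and $n \in \mathbb{N}^*$ thanks to \eqref{eq:furtherintegrability08/01} and the uniform estimates on $\varphi^n$. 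Integrating in time and invoking the convergence $\int_{t_0}^T \int_A (1+|a|^2) \, d|\nu_t - \nu^n_t|(a) \, dt \to 0$ afforded by Lemma \ref{lem:newregularizationb23/12}, we conclude that $\langle \phi ; \rho_{t_1} \rangle = 0$ for every $\phi \in \mathcal{C}^3_{2,1}$. Lemma \ref{lem:uniquenessextension25/01}, whose hypothesis is precisely \eqref{eq:furtherintegrability08/01}, then gives $\rho_{t_1} = 0$ in $(\mathcal{C}^2_2)^*$; since $t_1 \in (t_0,T]$ was arbitrary, $\bd\rho \equiv 0$. The main technical obstacle is to pair the defect term $b(\cdot, \nu_t - \nu^n_t) \cdot \nabla_x \varphi^n_t$ with $\rho_t$ uniformly in $n$: the multiplication by $b(\cdot,a)$ introduces an extra factor of $(1+|x|)$ in the test function's growth, which is exactly why the stronger integrability \eqref{eq:furtherintegrability08/01} (rather than the bare \eqref{eq:spaceforrho12/01}) is needed both for the duality estimate and, in the concluding step, to apply Lemma \ref{lem:uniquenessextension25/01}.
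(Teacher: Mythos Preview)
Your overall strategy matches the paper's proof exactly: reduce to the homogeneous equation, test against the solution $\bd\varphi^n$ of the backward transport equation driven by the regularized control $\bd\nu^n$, pass to the limit to kill the defect term, then invoke Lemma~\ref{lem:uniquenessextension25/01}. The issue lies in how you pass to the limit.

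You claim that Lemma~\ref{lem:newregularizationb23/12} gives
\[
\int_{t_0}^T \int_A (1+|a|^2)\, d|\nu_t - \nu^n_t|(a)\, dt \longrightarrow 0.
\]
It does not. What the lemma proves is convergence \emph{after} integration against the vector field:
\[
\int_{t_0}^T \bigl\| b(\cdot,\nu_t) - b(\cdot,\nu_t^n) \bigr\|_{\mathcal{C}^3_1}\, dt \longrightarrow 0.
\]
These are genuinely different. The regularization $\nu^n_t = \rho^n * \bar{\bd\nu}$ mollifies only in time, so if for instance $\nu_t = \delta_{a(t)}$ for a non-constant continuous curve $a(\cdot)$, then $\nu^n_t$ is diffuse along the curve and $|\nu_t - \nu^n_t|(A) = 2$ for every $n$ and every interior time $t$; your integral stays bounded away from zero. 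By taking absolute values \emph{before} integrating in $a$, you discard the smoothing effect of $b(\cdot,a)$ that makes the convergence work.

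The fix is immediate and is what the paper does: do not disintegrate in $a$. Estimate the bracket directly as
\[
\bigl| \langle b(\cdot,\nu_t - \nu^n_t) \cdot \nabla_x \varphi^n_t; \rho_t \rangle \bigr|
\leq \bigl\| b(\cdot,\nu_t - \nu^n_t)\bigr\|_{\mathcal{C}^1_1}\, \|\nabla_x \varphi^n_t\|_{\mathcal{C}^1_1}\, \sup_{\psi \in \mathcal{C}^2_2,\, \|\psi\|_{\mathcal{C}^1_2}\le 1} |\langle \psi; \rho_t\rangle|,
\]
with the last two factors bounded uniformly in $n$ and $t$. Then integrate in time and apply the actual content of Lemma~\ref{lem:newregularizationb23/12}. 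Note incidentally that this estimate only requires the weaker bound \eqref{eq:spaceforrho12/01}; the stronger hypothesis \eqref{eq:furtherintegrability08/01} is needed only at the very end to invoke Lemma~\ref{lem:uniquenessextension25/01}, contrary to what your final paragraph suggests.
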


\begin{proof}
    Let $\bd{\rho}^1, \bd{\rho}^2$ be two solutions to 
    \eqref{eq:rhotProp3.2Append},    
    satisfying both 
    \eqref{eq:spaceforrho12/01}
    and \eqref{eq:furtherintegrability08/01}. Let also
    $(\bd{\nu}^n)_{n \geq 1}$ be the regularization of $\bd{\nu}$ from Lemma \ref{lem:newregularizationb23/12}. For $n \in \mathbb{N}^*$, $t_1 \in (t_0,T]$ and $\phi \in \mathcal{C}^{3}_{2,1}$, we call $\bd{\varphi}^n$ the solution to \eqref{eq:varphin08/01} with control $\bd{\nu}^n$. Then, by Lemma \ref{lem:backwardtransportsmoothcontrol23/01}, see also Remark \ref{rmk:A.9},  $\bd{\varphi}^n$ is an admissible test function for the equations for $\bd{\rho}^1$ and $\bd{\rho}^2$. Then, we have
    $$ \bigl\langle \phi; \rho^2_{t_1} - \rho^1_{t_1} \bigr\rangle = \int_{t_0}^{t_1} \bigl\langle [ b(\cdot, \nu_t ) -b(\cdot, \nu_t^n) ] \cdot \nabla_x \varphi_t^n; \rho_t^2 -\rho_t^1 \bigr\rangle dt.$$
For every $t \in [t_0,t_1]$ it holds
\begin{align*} 
\bigl| \bigl\langle [ b(\cdot, \nu_t ) -b(\cdot, \nu_t^n) ] \cdot \nabla_x \varphi_t^n; \rho_t^2 -\rho_t^1 \bigr\rangle \bigr| &\leq \norm{ b(\cdot, \nu_t ) -b(\cdot, \nu_t^n) ] \cdot \nabla_x \varphi_t^n }_{\mathcal{C}^1_2}  \norm{ \rho_t^2 - \rho_t^1}_{(\mathcal{C}^1_2)^*}  \\
& \leq \norm{ b(\cdot, \nu_t ) -b(\cdot, \nu_t^n)  }_{\mathcal{C}^1_1} \norm{ \nabla_x \varphi_t^n}_{\mathcal{C}^1_1} \norm{ \rho_t^2 - \rho_t^1}_{(\mathcal{C}^1_2)^*}.
\end{align*}
By Proposition \ref{prop:SolutionBackxard16Sept} and Lemma \ref{lem:newregularizationb23/12} we have
$$ \sup_{n \geq 1} \sup_{t \in [t_0,t_1]} \norm{ \nabla_x \varphi_t^n}_{\mathcal{C}^1_1} < +\infty  $$
and, by Lemma \ref{lem:newregularizationb23/12} again we also have
$$ \lim_{n \rightarrow +\infty} \int_{t_0}^{t_1} \norm{ b(\cdot, \nu_t ) -b(\cdot, \nu_t^n)  }_{\mathcal{C}^1_1} dt =0. $$
By combining the last four displays, we deduce that $ \langle \phi; \rho_{t_1}^2 - \rho_{t_1}^1 \rangle = 0$ for all $t_1 \in [t_0,T]$ and all $\phi \in \mathcal{C}^3_{2,1}$. We conclude using Lemma \ref{lem:uniquenessextension25/01}.
\end{proof}

Combined with Proposition \ref{prop:differentiatinggammalamnda25/01} this gives the first part of Proposition \ref{prop:differentiatinggammalambda09/02}. It remains to prove the representation formula \eqref{eq:derivativeinlambda31/01}.

\subsubsection{Representation formula}

Notice that, for any $\lambda_0 \in (-1,1)$, we can apply the result above replacing $\bd{\nu}$ by $\bd{\nu} + \lambda_0 \bd{\eta}$ and deduce that $\lambda \mapsto \bd{\gamma}^{\lambda}$ is differentiable over $(-1,1)$, 
the derivative being given, for all $\lambda \in (-1,1)$, by 
\begin{equation}
\label{eq:gamma:rho:derivative:w.r.t.:lambda}
\frac{d }{d\lambda } \bd{\gamma}^{\lambda} = \bd{\rho}^{\lambda} \quad \mbox{ in } \ \mathcal{C}([t_0,T], (\mathcal{C}^2_2)^*),
\end{equation}
where $\bd{\rho}^{\lambda}$ is the unique solution in $\mathcal{R}(t_0)$ to 
\begin{equation} 
\partial_t \rho_t^{\lambda} + \div_x(b(x, \nu_t^{\lambda})\rho_t^{\lambda}) = - \div_x (b(x,\eta_t) \gamma_t^{\lambda}) \quad \mbox{ in } (t_0,T) \times \R^{d_1} \times \R^{d_2}, \quad \rho_{t_0}^{\lambda} =0. 
\label{eq:LinRhoAnnex11/01lambda}
\end{equation}
Notice that, with this notation, the solution $\bd{\rho}$ of \eqref{eq:rhotProp3.2Append} is simply $\bd{\rho}^0$.
We conclude with a useful expression for $\bd{\rho}^{\lambda}$.When $\lambda =0$, it gives an explicit formula for $\bd{\rho}$.

\begin{prop}
    Take $(t_0,\gamma_0) \in [0,T] \times \mathcal{P}_3(\R^{d_1} \times \R^{d_2})$ and $\bd{\nu}, \bd{\eta} \in \mathcal{D}(t_0)$. For $\lambda \in (-1,1)$, let $\bd{\rho}^{\lambda}$ be the unique solution to \eqref{eq:LinRhoAnnex11/01lambda}, as given by 
    Proposition \ref{prop:differentiatinggammalambda09/02}. Then, for all $t_1 \in [t_0,T]$, $\rho^{\lambda}_{t_1}$ is given, for all $\phi \in \mathcal{C}^3_2$ by
\begin{equation} 
\langle \phi ; \rho^{\lambda}_{t_1} \rangle = \lambda \int_{t_0}^{t_1} \bigl \langle b(\cdot, \eta_t) \cdot \nabla_x \varphi_t; \rho_t^{\lambda} \bigr \rangle dt +    \int_{t_0}^{t_1} \int_{\R^{d_1} \times \R^{d_2}} b(x,\eta_t) \cdot \nabla_x \varphi_t(x,y) d\gamma^{\lambda}_t(x,y) dt, 
\label{eq:explicit25/01}
\end{equation}
where $\bd{\gamma}^{\lambda} = (\gamma^{\lambda}_t)_{t \in [t_0,t_1]}$ is the solution to the continuity equation \eqref{eq:nu:gamma:lambda} starting from $(t_0,\gamma_0)$ and controlled by $\bd{\nu} + \lambda \bd{\eta}$, and $\varphi : [t_0,t_1] \times \R^{d_1} \times \R^{d_2} \rightarrow \R$ is the solution to the transport equation
$$ - \partial_t \varphi_t - b(x, \nu_t) \cdot \nabla_x \varphi_t = 0 \quad \mbox{ in } (t_0,t_1) \times \R^{d_1} \times \R^{d_2}, \quad \varphi_{t_1} = \phi \quad \mbox{ in } \R^{d_1} \times \R^{d_2}.$$
\label{prop:Explicitformularholambda25/01}
\end{prop}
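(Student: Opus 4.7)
My plan is to obtain the formula \eqref{eq:explicit25/01} by a direct duality computation: use $\bd{\varphi}$, the solution of the backward transport equation with control $\bd{\nu}$ and terminal condition $\phi$, as an admissible test function in the weak formulation \eqref{eq:weakeqrho12/01Section2} of the linearized continuity equation \eqref{eq:LinRhoAnnex11/01lambda} satisfied by $\bd{\rho}^{\lambda}$. This is exactly the same strategy used for \eqref{eq:derivativeinlambda31/01} in Proposition \ref{prop:differentiatinggammalambda09/02}, but now $\bd{\varphi}$ does not solve the backward equation with control $\bd{\nu}^\lambda = \bd{\nu} + \lambda \bd{\eta}$ (which is the drift appearing in the equation for $\bd{\rho}^\lambda$); this mismatch will produce the additional linear term on the right-hand side of \eqref{eq:explicit25/01}.

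First, I would check admissibility of $\bd{\varphi}$ as a test function. Since $\phi \in \mathcal{C}^3_2$ and $\bd{\nu} \in \mathcal{D}(t_0)$, Proposition \ref{prop:SolutionBackxard16Sept} (or, more precisely, Lemma \ref{lem:backwardtransportsmoothcontrol23/01} applied with parameters ensuring $\mathcal{C}^2_2 \cap \mathcal{C}^1_2$-regularity in $t$) guarantees that $\bd{\varphi}$ satisfies the conditions \eqref{eq:testfn1-09/03}--\eqref{eq:testfn2-09/03}. Then the measurability/integrability requirements on $\langle b(\cdot,\nu_t^\lambda) \cdot \nabla_x \varphi_t; \rho_t^\lambda \rangle$ and $\langle b(\cdot,\eta_t) \cdot \nabla_x \varphi_t; \rho_t^\lambda \rangle$ follow from Proposition \ref{prop:measurable+fubini15/01}, thanks to the bound \eqref{eq:24/01:22:16} on $\bd{\rho}^\lambda$ in $\mathcal{R}(t_0)$ and to $\bd{\nu}, \bd{\eta} \in \mathcal{D}(t_0)$.

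The key algebraic step comes next. Using the backward transport equation for $\bd{\varphi}$, we have $\partial_t \varphi_t = - b(x,\nu_t) \cdot \nabla_x \varphi_t$, whence
\begin{equation*}
\partial_t \varphi_t + b\bigl(x, \nu^\lambda_t\bigr) \cdot \nabla_x \varphi_t = b\bigl(x, \nu_t + \lambda \eta_t - \nu_t\bigr) \cdot \nabla_x \varphi_t = \lambda\, b(x,\eta_t) \cdot \nabla_x \varphi_t,
\end{equation*}
by linearity of $\nu \mapsto b(x,\nu)$. Plugging this into the weak formulation \eqref{eq:weakeqrho12/01Section2} of \eqref{eq:LinRhoAnnex11/01lambda} tested against $\bd{\varphi}$ yields
\begin{equation*}
\langle \varphi_{t_1}; \rho^\lambda_{t_1} \rangle = \lambda \int_{t_0}^{t_1} \bigl\langle b(\cdot,\eta_t) \cdot \nabla_x \varphi_t; \rho^\lambda_t \bigr\rangle dt + \int_{t_0}^{t_1} \int_{\R^{d_1} \times \R^{d_2}} b(x,\eta_t) \cdot \nabla_x \varphi_t(x,y) d\gamma^\lambda_t(x,y)dt,
\end{equation*}
and since $\varphi_{t_1} = \phi$, we recover \eqref{eq:explicit25/01}.

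The only delicate point I anticipate is verifying that the weak formulation \eqref{eq:weakeqrho12/01Section2} can indeed be applied to $\bd{\varphi}$: the control driving $\bd{\rho}^\lambda$ is $\bd{\nu}^\lambda$, so I need $\langle b(\cdot,\nu^\lambda_t) \cdot \nabla_x \varphi_t; \rho^\lambda_t \rangle$ to make sense and to be equal, via Fubini's theorem, to $\int_A \langle b(\cdot,a) \cdot \nabla_x \varphi_t; \rho^\lambda_t \rangle d\nu^\lambda_t(a)$; this is precisely item (2) of Proposition \ref{prop:measurable+fubini15/01}, provided the regularity of $\nabla_x \varphi_t$ (namely in $\mathcal{C}([t_0,T],\mathcal{C}^1_2) \cap \mathcal{L}^\infty([t_0,T],\mathcal{C}^2_1)$) and that of $\bd{\rho}^\lambda$ (which lies in $\mathcal{R}(t_0)$ with the sharper bound \eqref{eq:furtherintegrability08/01} at our disposal) are sufficient. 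The growth assumption $\phi \in \mathcal{C}^3_2$ is exactly tailored to produce $\nabla_x \varphi \in \mathcal{C}^2_1$ through the backward equation, so no obstacle is expected here; everything else is routine manipulation.
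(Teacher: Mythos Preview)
Your duality computation is conceptually correct and more direct than what the paper does, but there is a genuine technical gap in the admissibility step. You invoke Lemma \ref{lem:backwardtransportsmoothcontrol23/01} to conclude that $\bd{\varphi}$ satisfies \eqref{eq:testfn1-09/03}--\eqref{eq:testfn2-09/03}, but that lemma has a standing hypothesis you do not have: it requires $t \mapsto \nu_t \in \mathcal{M}_{1+|a|^l}(A)$ to be \emph{continuous} (see also Remark \ref{rmk:A.9}). For a general $\bd{\nu} \in \mathcal{D}(t_0)$, the map $t \mapsto \nu_t$ is merely measurable, and $\partial_t \varphi_t = -b(x,\nu_t)\cdot\nabla_x\varphi_t$ need be neither bounded in $\mathcal{C}^2_1$ nor continuous in $\mathcal{C}^1_2$ as functions of $t$: the factor $\int_A(1+|a|^3)\,d|\nu_t|(a)$ is only time-integrable, not bounded. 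So as written, $\bd{\varphi}$ is not an admissible test function for \eqref{eq:weakeqrho12/01Section2}.

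The fix is the standard regularization used elsewhere in the paper (e.g.\ in the uniqueness proof, Proposition \ref{prop:A.22}): replace $\bd{\nu}$ by the mollified $\bd{\nu}^n$ of Lemma \ref{lem:newregularizationb23/12}, let $\bd{\varphi}^n$ solve the backward equation with control $\bd{\nu}^n$ (now Remark \ref{rmk:A.9} applies and $\bd{\varphi}^n$ is admissible), insert $\bd{\varphi}^n$ into the weak formulation of the equation for $\bd{\rho}^\lambda$, and pass to the limit $n\to\infty$ using Corollary \ref{cor:approximation:nu} and the uniform bounds. This yields \eqref{eq:explicit25/01} exactly as you claim.

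The paper takes a different route that sidesteps this difficulty altogether: it applies the duality relation of Lemma \ref{lem:duality:1,2:F} (which internally performs the regularization) to $\bd{\gamma}^\lambda$ and $\bd{\varphi}$, obtaining
\[
\int \phi \, d\gamma_{t_1}^\lambda = \int \varphi_{t_0}\,d\gamma_0 + \lambda \int_{t_0}^{t_1}\!\!\int b(x,\eta_t)\cdot\nabla_x\varphi_t\,d\gamma_t^\lambda\,dt,
\]
and then differentiates both sides in $\lambda$, using \eqref{eq:gamma:rho:derivative:w.r.t.:lambda}. This requires only $\nabla_x\varphi_t \in \mathcal{C}^2_2$ (to pair with $(\gamma^{\lambda+h}-\gamma^\lambda)/h - \rho^\lambda \in (\mathcal{C}^2_2)^*$), which does hold for $\phi \in \mathcal{C}^3_2$ via Proposition \ref{prop:SolutionBackxard16SeptAppendix}, and no time regularity of $\partial_t\varphi$ is needed. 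Your approach, once patched with the regularization step, is arguably cleaner, but the paper's avoids the extra limit argument.
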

\begin{proof}
Throughout the proof, we use the same notation as in the statement.  For all $\lambda \in (-1,1)$ we have, using the equations for $\bd{\varphi}$ and $\bd{\gamma}^{\lambda}$ and applying Lemma \ref{lem:duality:1,2:F},
\begin{equation*}
     \begin{split}
     \int_{\R^{d_1} \times \R^{d_2}} \phi (x,y) d\gamma_{t_1}^{\lambda}(x,y) &= \int_{\R^{d_1} \times \R^{d_2}} \varphi_{t_0}(x,y)d\gamma_0(x,y) 
     \\
     &\hspace{15pt} + \lambda \int_{t_0}^{t_1} \int_{\R^{d_1} \times \R^{d_2}} b(x, \eta_t)\cdot \nabla_x \varphi_t(x,y) d\gamma_t^{\lambda}(x,y) dt. 
     \end{split}
\end{equation*}
The strategy is to differentiate both sides of the equality with respect to $\lambda$. To do so, we observe that, for $h \in (-1,1)$ such that $\lambda + h$ belongs to $(-1,1)$, it holds
\begin{align*}
    &\biggl| \int_{t_0}^{t_1} \Bigl\langle b(\cdot,\eta_t) \cdot \nabla_x \varphi_t; \frac{\gamma_t^{\lambda+h}-\gamma_t^{\lambda}}{h} - \rho_t^{\lambda} \Bigr\rangle dt \biggr| 
    \\
    & \leq \sup_{t \in[t_0,t_1]} \biggl\| \frac{\gamma_t^{\lambda+h}-\gamma_t^{\lambda}}{h} - \rho_t^{\lambda} \biggr\|_{(\mathcal{C}^2_2)^*} \int_{t_0}^{t_1} \norm{ b(\cdot, \eta_t) \cdot \nabla_x \varphi_t }_{\mathcal{C}^2_2} dt. 
\end{align*}
By assumptions on $b$ (see \assreg) and Proposition \ref{prop:SolutionBackxard16SeptAppendix}, we get 
\begin{align*} 
\int_{t_0}^{t_1} \norm{ b(\cdot, \eta_t) \cdot \nabla_x \varphi_t }_{\mathcal{C}^2_2} dt  &\leq C \sup_{t \in [t_0,t_1]}  \norm{ \nabla_x \varphi_t}_{\mathcal{C}^2_2} \int_{t_0}^{t_1}\int_A (1+|a|^3) d|\eta_t|(a) dt \\
&\leq C \norm{ \nabla_x \phi}_{\mathcal{C}^2_2} \int_{t_0}^{t_1}\int_A (1+|a|^3) d|\eta_t|(a) dt < +\infty.
\end{align*}
Back to the penultimate display, we deduce from
the identity
$[d/d\lambda] |_{\lambda =0} \bd{\gamma}^{\lambda} = \bd{\rho}$
in $\mathcal{C}([t_0,T], (\mathcal{C}^2_2)^*)$ (see \eqref{eq:derivative19/02}) that
$$ \frac{d}{d\lambda} \int_{t_0}^{t_1} \int_{\R^{d_1} \times \R^{d_2}} b(x,\eta_t) \cdot \nabla_x \varphi_t(x,y) d\gamma^{\lambda}_t(x,y) dt  = \int_{t_0}^{t_1} \langle b(\cdot,\eta_t) \cdot \nabla_x \varphi_t ; \rho_t^{\lambda} \rangle dt. $$
The rest follows easily.
\end{proof}

We can use this explicit representation to extend $\bd \rho$ to test functions with only one spacial derivative.

\begin{prop}
\label{prop:A.24}
     Take $(t_0,\gamma_0) \in [0,T] \times \mathcal{P}_3(\R^{d_1} \times \R^{d_2})$ and $\bd{\nu}, \bd{\eta} \in \mathcal{D}(t_0)$, and let $\bd{\rho}$ be the only solution in $\mathcal{R}(t_0)$ to \eqref{eq:rhotProp3.2Append}. Then, for all $t_1 \in [t_0,T]$, $\rho_{t_1}$ extends uniquely to $\mathcal{C}^1_2$ and it is given, for any $\phi \in \mathcal{C}^1_2$ by 
      $$ \langle \phi; \rho_{t_1} \rangle = \int_{t_0}^{t_1} \int_{\R^{d_1} \times \R^{d_2}} b(x,\eta_t) \cdot \nabla_x \varphi_t(x,y) d\gamma_t(x,y) dt, $$
where $\varphi : [t_0,t_1] \times \R^{d_1} \times \R^{d_2} \rightarrow \R$ is the solution to 
\begin{equation} 
- \partial_t \varphi_t - b(x, \nu_t) \cdot \nabla_x \varphi_t = 0 \quad \mbox{ in } [t_0,t_1] \times \R^{d_1} \times \R^{d_2}, \quad \varphi_{t_1} = \phi \quad \mbox{ in } \R^{d_1} \times \R^{d_2}.
\label{eq:22/05/2025}
\end{equation}
In particular, for all $\phi \in \mathcal{C}^1_2$ and all $t_1 \in [t_0,T]$, it holds
\begin{equation} 
\langle \phi; \rho_{t_1} \rangle =  \frac{d}{d\lambda} \Big|_{\lambda = 0} \int_{\R^{d_1} \times \R^{d_2}} \phi(x,y) d\gamma_{t_1}^{\lambda}(x,y) =  \int_{t_0}^{t_1} \int_{\R^{d_1} \times \R^{d_2}} b(x,\eta_t) \cdot \nabla_x \varphi_t(x,y) d\gamma_t(x,y) dt. 
\label{eq:derivativeinlambda31/01Ap}
\end{equation}
\end{prop}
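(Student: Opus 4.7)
The approach builds on the explicit formula established in Proposition \ref{prop:Explicitformularholambda25/01} specialized at $\lambda = 0$: for any $\phi \in \mathcal{C}^3_2$,
\begin{equation*}
\langle \phi; \rho_{t_1} \rangle = \int_{t_0}^{t_1} \int_{\R^{d_1} \times \R^{d_2}} b(x,\eta_t) \cdot \nabla_x \varphi_t(x,y) d\gamma_t(x,y) dt,
\end{equation*}
where $\bd{\varphi}$ solves the backward transport equation \eqref{eq:22/05/2025} with terminal condition $\phi$. The plan is to extend both sides of this identity to $\phi \in \mathcal{C}^1_2$ by a continuity/density argument, and then to identify the extended object with the derivative in $\lambda$ at zero.

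The first step is to check that equation \eqref{eq:22/05/2025} admits a meaningful solution for $\phi \in \mathcal{C}^1_2$ and that the right-hand side of the formula defines a continuous linear form on $\mathcal{C}^1_2$. The solution is given by the characteristic representation $\varphi_t(x,y) = \phi(X^{t,x}_{t_1}, y)$, where $(X^{t,x}_s)_{s \in [t,t_1]}$ is the flow driven by $\bd{\nu}$ (cf.~Appendix \ref{subse:A:1}). Standard ODE estimates combined with the growth of $\nabla_x b$ from \assreg \, yield a uniform-in-$t$ bound on the Jacobian $\nabla_x X^{t,\cdot}_{t_1}$, and therefore $\sup_{t \in [t_0,t_1]} \norm{\nabla_x \varphi_t}_{\mathcal{C}^0_2} \leq \Lambda_{\bd{\nu}} \norm{\phi}_{\mathcal{C}^1_2}$. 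Together with $|b(x,\eta_t)| \leq C \int_A (1+|a|) d|\eta_t|(a)$ and the second-moment bound on $\bd{\gamma}$ provided by Proposition \ref{lem:compactnessContinuityEquation24/06}, this shows that the linear form
\begin{equation*}
T_{t_1}(\phi) := \int_{t_0}^{t_1} \int_{\R^{d_1} \times \R^{d_2}} b(x,\eta_t) \cdot \nabla_x \varphi_t(x,y) d\gamma_t(x,y) dt
\end{equation*}
satisfies $|T_{t_1}(\phi)| \leq \Lambda_{\bd{\nu},\gamma_0} \norm{\bd{\eta}}_{\mathcal{D}(t_0)} \norm{\phi}_{\mathcal{C}^1_2}$.

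The second step is to extend the identity $\langle \phi; \rho_{t_1}\rangle = T_{t_1}(\phi)$ from $\phi \in \mathcal{C}^3_2$ to $\phi \in \mathcal{C}^1_2$. Given $\phi \in \mathcal{C}^1_2$, a mollification followed by a truncation produces a sequence $(\phi^n)_{n \geq 1} \subset \mathcal{C}^3_2$ with $\sup_n \norm{\phi^n}_{\mathcal{C}^1_2} < \infty$, such that $\phi^n \to \phi$ and $\nabla_x \phi^n \to \nabla_x \phi$ locally uniformly. On the right-hand side, dominated convergence along characteristics (using the uniform bound $\norm{\nabla_x \varphi^n_t}_{\mathcal{C}^0_2} \leq C \sup_n \norm{\phi^n}_{\mathcal{C}^1_2}$ and the integrability of $\gamma_t$) yields $T_{t_1}(\phi^n) \to T_{t_1}(\phi)$. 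On the left-hand side, the cut-off argument from the proof of Lemma \ref{lem:uniquenessextension25/01} --- using the finiteness of $\sup_{\norm{\psi}_{\mathcal{C}^1_3} \leq 1} |\langle \psi; \rho_{t_1}\rangle|$ built into the definition of $\mathcal{R}(t_0)$ to control the tails, and the bound $\norm{\rho_{t_1}}_{(\mathcal{C}^2_2)^*}$ together with locally uniform convergence to handle the compactly supported part --- shows that $\langle \phi^n; \rho_{t_1}\rangle$ converges to $T_{t_1}(\phi)$. This defines the desired extension of $\rho_{t_1}$ to $\mathcal{C}^1_2$, and uniqueness follows by applying the same cut-off/mollification procedure to the difference of any two candidate extensions that agree on $\mathcal{C}^3_2$.

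Finally, to obtain the derivative identity \eqref{eq:derivativeinlambda31/01Ap}, I would use the particle representation $\int \phi d\gamma_{t_1}^{\lambda} = \E[\phi(X^{\lambda,t_0,X_0}_{t_1}, Y_0)]$ with $(X_0,Y_0) \sim \gamma_0$, differentiate in $\lambda$ at $0$ via the variational equation for the flow, and apply Duhamel's formula to express the derivative as $\E \int_{t_0}^{t_1} (\nabla_x X^{t, X^{t_0,X_0}_t}_{t_1})^T \nabla_x \phi(X^{t_0,X_0}_{t_1}, Y_0) \cdot b(X^{t_0,X_0}_t, \eta_t) dt$; the flow/chain-rule identity $\varphi_t(X^{t_0,x}_t, y) = \phi(X^{t_0,x}_{t_1}, y)$ then identifies this derivative precisely with $T_{t_1}(\phi)$, while the integrability hypothesis $\gamma_0 \in \mathcal{P}_3$ provides the domination needed to justify the interchange of differentiation and expectation. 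The principal obstacle is the approximation in the second step: one must simultaneously accommodate locally uniform convergence of $(\phi^n, \nabla_x \phi^n)$ on the right-hand side, and a $(\mathcal{C}^2_2)^*$-type convergence on the left-hand side. Reconciling these two modes of convergence --- by combining the local control of $\rho_{t_1}$ in $(\mathcal{C}^2_2)^*$ with the tail estimate encoded in the $\mathcal{C}^1_3$-norm bound of $\mathcal{R}(t_0)$ --- is what makes the extension argument go through.
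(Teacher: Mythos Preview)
Your extension argument (Steps 1--2) follows the same strategy as the paper: define the linear form $T_{t_1}$ directly via the integral formula, check its continuity on $\mathcal{C}^1_2$, identify it with $\rho_{t_1}$ on $\mathcal{C}^3_2$ via Proposition~\ref{prop:Explicitformularholambda25/01}, and handle uniqueness through the cut-off technique of Lemma~\ref{lem:uniquenessextension25/01}. The paper phrases this slightly more directly (defining the extension first and then checking agreement) but the content is the same.

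Where you genuinely diverge is in the derivative identity~\eqref{eq:derivativeinlambda31/01Ap}. You work in the Lagrangian picture: differentiate the flow $\lambda \mapsto X^{\lambda,t_0,x}_{t_1}$, invoke the variational equation and Duhamel's formula, and use the chain rule on $\varphi_t(x,y) = \phi(X^{t,x}_{t_1},y)$ to recover $T_{t_1}(\phi)$. This is correct, but it obliges you to justify differentiability of the flow in $\lambda$ and to set up dominated convergence for the interchange with the expectation. The paper instead stays Eulerian: it applies the duality relation of Lemma~\ref{lem:duality:1,2:F} with $\bd{\nu}^1 = \bd{\nu}^\lambda$ and $\bd{\nu}^2 = \bd{\nu}$, which yields directly, for every $\phi \in \mathcal{C}^1_2$ and $\lambda \neq 0$,
\[
\int_{\R^{d_1}\times\R^{d_2}} \phi \, d\Bigl[\frac{\gamma^\lambda_{t_1} - \gamma_{t_1}}{\lambda}\Bigr] = \int_{t_0}^{t_1}\int_{\R^{d_1}\times\R^{d_2}} b(x,\eta_t)\cdot\nabla_x\varphi_t(x,y)\,d\gamma^\lambda_t(x,y)\,dt,
\]
so that the problem reduces to the continuity at $\lambda=0$ of the right-hand side, handled by dominated convergence and the continuity of $\lambda \mapsto \gamma^\lambda_t$ in $\mathcal{P}_2$. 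The duality route is shorter and avoids any direct appeal to flow differentiability; your Lagrangian route makes the mechanism more transparent but costs more bookkeeping.
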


\begin{proof}
Proposition 
\ref{prop:SolutionBackxard16SeptAppendix}
makes it possible to define a continuous linear form on 
$(\mathcal{C}^1_2)^*$, by letting, for every $\phi \in \mathcal{C}^1_2$,
    $$ \langle \phi ; \tilde{\rho_{t_1}} \rangle :=\int_{t_0}^{t_1} \int_{\R^{d_1} \times \R^{d_2}} b(x,\eta_t) \cdot \nabla_x \varphi_t(x,y) d\gamma_t(x,y) dt.  $$
Thanks to 
Lemma 
\ref{lem:A.4} and because 
$\gamma_0 \in {\mathcal P}_3({\mathbb R}^{d_1} \times {\mathbb R}^{d_3})$, we then check that 
$$ \sup_{ \phi \in \mathcal{C}^1_2, \norm{ \phi}_{\mathcal{C}^1_3} \leq 1} | \langle \phi ; \tilde{\rho_{t_1}} \rangle | < +\infty. $$
Thanks to Proposition \ref{prop:Explicitformularholambda25/01} (with $\lambda=0$), $\tilde{\rho_{t_1}}$ coincides with $\rho_{t_1}$ over $\mathcal{C}^3_2$. We then proceed similarly to Lemma \ref{lem:uniquenessextension25/01} to prove that the extension is unique. 
It remains to justify that the expression for the derivative \eqref{eq:derivativeinlambda31/01Ap} is valid for any function $\phi \in \mathcal{C}_2^1$. To this end, we use Lemma \ref{lem:duality:1,2:F} to write, for $\lambda \in (-1,1) \setminus \{ 0\}$,
$$ \int_{\R^{d_1} \times \R^{d_2}} \phi(x,y) d\bigl[ \frac{\gamma_{t_1}^{\lambda} - \gamma_{t_1}}{\lambda} \bigr](x,y) = \int_{t_0}^{t_1} \int_{\R^{d_1} \times \R^{d_2}} b(x,\eta_t)\cdot \nabla_x \varphi_t(x,y) d\gamma_t^{\lambda}(x,y)dt. $$
Therefore, it suffices to prove the continuity  at $\lambda = 0$ of 
$$ \lambda \mapsto \int_{t_0}^{t_1} \int_{\R^{d_1} \times \R^{d_2}} b(x,\eta_t)\cdot \nabla_x \varphi_t(x,y) d\gamma_t^{\lambda}(x,y)dt. $$
First, we easily justify from the ODE representation that $\lambda \mapsto \gamma_t^{\lambda}$ is continuous in $\mathcal{P}_2(\R^{d_1} \times \R^{d_2})$ for all $t \in [t_0,t_1]$. By Proposition \ref{prop:SolutionBackxard16SeptAppendix}, $(x,y) \mapsto  b(x,\eta_t) \cdot \nabla_x \varphi_t(x,y) $ is continuous with at most quadratic growth for Lebesgue almost all $t \in [t_0,t_1]$ and therefore, at 
these times,
$$ \lim_{\lambda \rightarrow 0}  \int_{\R^{d_1} \times \R^{d_2}} b(x,\eta_t) \cdot \nabla_x \varphi_t(x,y) d\gamma_t^{\lambda} (x,y) = \int_{\R^{d_1} \times \R^{d_2}} b(x,\eta_t) \cdot \nabla_x \varphi_t(x,y) d\gamma_t (x,y). $$
Moreover, by 
Lemma \ref{lem:A.4}
and
Proposition \ref{prop:SolutionBackxard16SeptAppendix}, there is a constant $C>0$ independent of $\lambda $ such that
\begin{align*} 
&\sup_{\lambda \in (-1,1)}  \biggl| \int_{\R^{d_1} \times \R^{d_2}}  b(x,\eta_t) \cdot  \nabla_x \varphi_t(x,y) d\gamma_t^{\lambda} (x,y) \biggr| 
\\
&\hspace{15pt} \leq C \norm{ \nabla_x \phi}_{\mathcal{C}^0_2}  \int_{\R^{d_1} \times \R^{d_2}} \bigl(|x|^2 + |y|^2\bigr) d\gamma_0(x,y) \int_A (1+|a|) d|\eta_t|(a). 
\end{align*}
Since the right-hand side belongs to $L^1([t_0,t_1])$, we conclude by Lebesgue dominated convergence theorem that 
$$ \lim_{\lambda \rightarrow 0} \int_{t_0}^{t_1} \int_{\R^{d_1} \times \R^{d_2}} b(x,\eta_t)\cdot \nabla_x \varphi_t(x,y) d\gamma_t^{\lambda}(x,y)dt = \int_{t_0}^{t_1} \int_{\R^{d_1} \times \R^{d_2}} b(x,\eta_t)\cdot \nabla_x \varphi_t(x,y) d\gamma_t(x,y)dt, $$
which completes the proof.
\end{proof}

\subsubsection{Stability}

We continue with some further results on the application $\lambda \in [-1,1] \mapsto \bd{\rho}^{\lambda} $ where $\bd{\rho}^{\lambda}$ is the unique solution 
of the equation 
\eqref{eq:LinRhoAnnex11/01lambda}
in the space $\mathcal{R}(t_0)$. 

First, we have the following regularity estimate with respect to $\lambda$.

\begin{lem}
\label{lem:25/01/15:27}
    There exists a non-decreasing function $\Lambda : \R_+ \rightarrow \R_+^*$ such that, for 
 $(t_0,\gamma_0) \in [0,T] \times \mathcal{P}_3(\R^{d_1} \times \R^{d_2})$ and $\bd{\nu}, \bd{\eta} \in \mathcal{D}(t_0)$, 
    $$ \sup_{t \in [t_0,T]} \Bigl\| \rho_t^{\lambda_2} - \rho_t^{\lambda_1}
    \Bigr\|_{(\mathcal{C}^2_2)^*} \leq \Lambda_{\gamma_0, \bd{\nu}, \bd{\eta}} |\lambda_2 - \lambda_1|, \quad \mbox{ for all } \lambda_1, \lambda_2 \in (-1,1),$$
where $\Lambda_{\gamma_0, \bd{\nu}, \bd{\eta}}$ is a short-hand notation for 
$$ \Lambda_{\gamma_0, \bd{\nu}, \bd{\eta}} = \Lambda \biggl( 
\norm{ \bd{\nu}}_{\mathcal{D}(t_0)} + \norm{\bd{\eta}}_{\mathcal{D}(t_0)} 
+
\int_{\R^{d_1} \times \R^{d_2}} (|x|^2 + |y|^2)^{3/2} d\gamma_0(x,y)  \biggr). $$
\end{lem}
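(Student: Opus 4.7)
The strategy is to derive an equation for $\bd{\delta\rho}^{\lambda_1,\lambda_2} := \bd{\rho}^{\lambda_2} - \bd{\rho}^{\lambda_1}$ and then run the duality argument already used in Step 2 of Lemma \ref{lem:backthenA.21-09/02} and in Proposition \ref{prop:Explicitformularholambda25/01}. Subtracting the two copies of \eqref{eq:LinRhoAnnex11/01lambda} and using $\nu_t^{\lambda_2}-\nu_t^{\lambda_1}=(\lambda_2-\lambda_1)\eta_t$, one sees that $\bd{\delta\rho}^{\lambda_1,\lambda_2}$ solves, in the distributional sense,
\begin{equation*}
\partial_t \delta\rho_t^{\lambda_1,\lambda_2} + \div_x\bigl(b(x,\nu_t^{\lambda_2})\,\delta\rho_t^{\lambda_1,\lambda_2}\bigr) = -(\lambda_2-\lambda_1)\div_x\bigl(b(x,\eta_t)\,\rho_t^{\lambda_1}\bigr) - \div_x\bigl(b(x,\eta_t)\,(\gamma_t^{\lambda_2}-\gamma_t^{\lambda_1})\bigr),
\end{equation*}
with $\delta\rho_{t_0}^{\lambda_1,\lambda_2}=0$. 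Both forcing terms are of size $O(|\lambda_2-\lambda_1|)$: the first one by construction, and the second one because, by \eqref{eq:23/01:17:18}--\eqref{eq:23/01:17:54} in Lemma \ref{lem:backthenA.21-09/02}, $\|\gamma_t^{\lambda_2}-\gamma_t^{\lambda_1}\|_{(\mathcal{C}^2_3)^*}\leq \Lambda_{\gamma_0,\bd\nu,\bd\eta}\,|\lambda_2-\lambda_1|$ (using the decomposition $\gamma_t^{\lambda_2}-\gamma_t^{\lambda_1}=\lambda_2(\delta\gamma_t^{\lambda_2}-\delta\gamma_t^{\lambda_1})+(\lambda_2-\lambda_1)\delta\gamma_t^{\lambda_1}$, or equivalently $\gamma_t^{\lambda_2}-\gamma_t^{\lambda_1}=\int_{\lambda_1}^{\lambda_2}\rho_t^{s}\,ds$ combined with \eqref{eq:24/01:22:16}).

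Next, for $t_1\in(t_0,T]$ and $\phi\in\mathcal{C}^3_{2,1}$, I will introduce $\bd\varphi$ as the solution of the backward transport equation
\begin{equation*}
-\partial_t\varphi_t - b(x,\nu_t^{\lambda_2})\cdot\nabla_x\varphi_t=0\ \text{in}\ [t_0,t_1]\times\R^{d_1}\times\R^{d_2},\qquad \varphi_{t_1}=\phi,
\end{equation*}
which, by Lemma \ref{lem:backwardtransportsmoothcontrol23/01}, belongs to $\mathcal{L}^\infty([t_0,t_1],\mathcal{C}^3_{2,1})$ with norm controlled by $\Lambda(\|\bd\nu\|_{\mathcal{D}(t_0)}+\|\bd\eta\|_{\mathcal{D}(t_0)})\|\phi\|_{\mathcal{C}^3_{2,1}}$. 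Regularizing $\bd{\nu}^{\lambda_2}$ as in the proof of Proposition \ref{prop:A.22} so that $\bd\varphi$ becomes an admissible test function, and then passing to the limit in the regularization, I obtain the representation
\begin{equation*}
\langle\phi;\delta\rho_{t_1}^{\lambda_1,\lambda_2}\rangle = (\lambda_2-\lambda_1)\int_{t_0}^{t_1}\bigl\langle b(\cdot,\eta_t)\cdot\nabla_x\varphi_t;\rho_t^{\lambda_1}\bigr\rangle\,dt + \int_{t_0}^{t_1}\int_{\R^{d_1}\times\R^{d_2}} b(x,\eta_t)\cdot\nabla_x\varphi_t(x,y)\,d(\gamma_t^{\lambda_2}-\gamma_t^{\lambda_1})(x,y)\,dt.
\end{equation*}

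The two terms are then bounded with the convenient inequalities of Lemma \ref{lem:convenientexpress}. For the first one, $\|b(\cdot,a)\cdot\nabla_x\varphi_t\|_{\mathcal{C}^1_3}\leq C(1+|a|^2)\|\nabla_x\varphi_t\|_{\mathcal{C}^1_2}$ combined with the $(\mathcal{C}^1_3)^*$-control on $\rho_t^{\lambda_1}$ provided by \eqref{eq:24/01:22:16} and the definition of $\|\bd\eta\|_{\mathcal{D}(t_0)}$ yields a bound proportional to $|\lambda_2-\lambda_1|\,\Lambda_{\gamma_0,\bd\nu,\bd\eta}\|\phi\|_{\mathcal{C}^3_{2,1}}$. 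For the second one, $\|b(\cdot,a)\cdot\nabla_x\varphi_t\|_{\mathcal{C}^2_3}\leq C(1+|a|^3)\|\nabla_x\varphi_t\|_{\mathcal{C}^2_2}$ combined with the already mentioned $O(|\lambda_2-\lambda_1|)$ bound for $\gamma_t^{\lambda_2}-\gamma_t^{\lambda_1}$ in $(\mathcal{C}^2_3)^*$ gives the same order. Taking the supremum over $\phi\in\mathcal{C}^3_{2,1}$ with $\|\phi\|_{\mathcal{C}^2_2}\leq 1$ and extending the resulting estimate from $\mathcal{C}^3_{2,1}$ to $\mathcal{C}^2_2$ by the density/cut-off argument of Lemma \ref{lem:uniquenessextension25/01} completes the proof. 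The only point that requires care is the bookkeeping of the growth indices so that each occurrence of $b(\cdot,a)\cdot\nabla_x\varphi_t$ can be paired with the corresponding dual norm of $\rho_t^{\lambda_1}$ or $\gamma_t^{\lambda_2}-\gamma_t^{\lambda_1}$; this is the main technical obstacle, and it is handled exactly as in the proof of \eqref{eq:23/01:17:54}.
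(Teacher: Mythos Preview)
Your overall strategy is exactly the paper's: write the equation for $\bd{\rho}^{\lambda_2}-\bd{\rho}^{\lambda_1}$, test against the (regularized) backward transport solution, and bound the two forcing terms. The paper does the same, only with the roles of $\lambda_1$ and $\lambda_2$ swapped in the velocity/forcing (it transports with $\nu^{\lambda_1}$ and gets $\rho^{\lambda_2}$ in the first forcing term), which is immaterial.

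There is, however, a real index mismatch in your treatment of the second forcing term that breaks the final step. You pair $\gamma_t^{\lambda_2}-\gamma_t^{\lambda_1}$ with $b(\cdot,a)\cdot\nabla_x\varphi_t$ in the $(\mathcal{C}^2_3)^*$--$\mathcal{C}^2_3$ duality. This forces you to control $\|\nabla_x\varphi_t\|_{\mathcal{C}^2_2}$, which in turn is governed by $\|\nabla\phi\|_{\mathcal{C}^2_2}$ and hence by the \emph{third} derivatives of $\phi$. So your bound is really $|\langle\phi;\delta\rho\rangle|\leq C|\lambda_2-\lambda_1|\,\|\phi\|_{\mathcal{C}^3_{\cdot}}$, not $C|\lambda_2-\lambda_1|\,\|\phi\|_{\mathcal{C}^2_2}$. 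Taking the supremum over $\phi\in\mathcal{C}^3_{2,1}$ with $\|\phi\|_{\mathcal{C}^2_2}\leq 1$ therefore does \emph{not} give a finite quantity, and the density/cut-off argument of Lemma~\ref{lem:uniquenessextension25/01} cannot repair this: that lemma lets you extend an identity (or a bound already expressed in $\|\phi\|_{\mathcal{C}^2_2}$) from $\mathcal{C}^3_{2,1}$ to $\mathcal{C}^2_2$, but it cannot convert a $\mathcal{C}^3$-bound into a $\mathcal{C}^2$-bound.

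The fix is the one the paper uses and that you in fact already have in hand: $(\gamma_t^{\lambda_2}-\gamma_t^{\lambda_1})/(\lambda_2-\lambda_1)$ is bounded in $(\mathcal{C}^1_p)^*$, not merely in $(\mathcal{C}^2_p)^*$ (this is precisely the content of Step~4 of Lemma~\ref{lem:backthenA.21-09/02}, obtained by shifting $\bd\nu\mapsto\bd\nu^{\lambda_1}$ in \eqref{eq:23/01:17:18}; your integral formula $\int_{\lambda_1}^{\lambda_2}\rho_t^s\,ds$ together with \eqref{eq:24/01:22:16} gives the same). Pairing in $(\mathcal{C}^1_2)^*$--$\mathcal{C}^1_2$ only requires $\|\nabla_x\varphi_t\|_{\mathcal{C}^1_2}\leq\Lambda\|\phi\|_{\mathcal{C}^2_2}$, which needs nothing beyond $\phi\in\mathcal{C}^2_2$. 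With this change you can take $\phi\in\mathcal{C}^2_2$ from the outset (as the paper does), and the extension step disappears entirely.
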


\begin{proof}
 We argue by duality again. We first notice that, by making the difference in the equations for $\bd{\rho}^{\lambda_2}$ and $\bd{\rho}^{\lambda_1}$ (recall again \eqref{eq:LinRhoAnnex11/01lambda}), we have 
\begin{align*} 
\partial_t(\rho_t^{\lambda_2} - \rho_t^{\lambda_1}) + \div_x( b(x,\nu_t^{\lambda_1}) (\rho_t^{\lambda_2} - \rho_t^{\lambda_1})) = &-(\lambda_2 - \lambda_1) \div_x ( b(x,\eta_t) \rho_t^{\lambda_2}) \\
&- \div_x (b(x,\eta_t) (\gamma_t^{\lambda_2} - \gamma_t^{\lambda_1})) \quad \mbox{ in } (t_0,T) \times \R^{d_1} \times \R^{d_2}.  
\end{align*}
We fix $t_1 \in (t_0,T]$ and $\phi \in \mathcal{C}^2_2$ and we let $ \bd\varphi^n : [t_0,t_1] \times \R^{d_1} \times \R^{d_2} \rightarrow \R $ be the solution to the backward equation \eqref{eq:22/05/2025} with control $\bd{\nu}^{\lambda_1,n}$ --the already used regularization of $\bd{\nu}^{\lambda_1}$, 
see for instance \eqref{eq:varphi:lambda,n} together with the first step in the proof of Lemma 
\ref{lem:backthenA.21-09/02}--  with terminal condition $\phi$ at $t_1$. Using  the equation satisfied by the difference $\bd{\rho}^{\lambda_2} - \bd{\rho}^{\lambda_1}$
and 
the equation satisfied by $\bd{\varphi}^n$, we have
\begin{equation}
\label{eq:rholambda1-rholambda2}
\begin{split}
    \langle \phi; \rho_{t_1}^{\lambda_2} - \rho_{t_1}^{\lambda_1} \rangle &= \int_{t_0}^{t_1} \langle b(\cdot,\nu_t^{\lambda_1} - \nu_t^{\lambda_1,n}) \cdot \nabla_x \varphi_t^n ; \rho_t^{\lambda_2} - \rho_t^{\lambda_1}\rangle dt 
    \\
    &\hspace{15pt}
    + (\lambda_2 - \lambda_1) \int_{t_0}^{t_1} \langle b(\cdot,\eta_t) \cdot \nabla_x \varphi_t^n; \rho_t^{\lambda_2} \rangle dt \\
    &\hspace{15pt} + \int_{t_0}^{t_1} \int_{\R^{d_1} \times \R^{d_2}} b(x , \eta_t) \cdot \nabla_x \varphi_t^n(x,y) d(\gamma_t^{\lambda_2} - \gamma_t^{\lambda_1})(x,y)dt 
    \\
    &=: I^n_1 + I^n_2 + I^n_3.
\end{split}
\end{equation}
We start with $I_1^n$:
\begin{align*}
    |I_1^n|&\leq \int_{t_0}^{t_1} \bigl\| b(\cdot, \nu_t^{\lambda_1}-\nu_t^{\lambda_1,n}) \cdot \nabla_x \varphi_t^n \bigr\|_{\mathcal{C}^1_3} \bigl\| \rho_t^{\lambda_2} - \rho_t^{\lambda_1}
    \bigr\|_{(\mathcal{C}^1_3)^*} dt \\
    &\leq C \int_{t_0}^{t_1} \bigl\| b(\cdot, \nu_t^{\lambda_1} - \nu_t^{\lambda_1,n} ) \bigr\|_{\mathcal{C}^1_1} \norm{ \nabla_x \varphi_t^n}_{\mathcal{C}^1_2} \bigl\| \rho_t^{\lambda_2} - \rho_t^{\lambda_1} \bigr\|_{(\mathcal{C}^1_3)^*} dt \\
    &\leq C \sup_{t \in [t_0,t_1]}  \norm{ \nabla_x \varphi_t^n}_{\mathcal{C}^1_2} \sup_{t \in [t_0,t_1]} \bigl\| \rho_t^{\lambda_2} - \rho_t^{\lambda_1} \bigr\|_{(\mathcal{C}^1_3)^*} \int_{t_0}^{t_1} \bigl\| b(\cdot, \nu_t^{\lambda_1} - \nu_t^{\lambda_1,n} ) \bigr\|_{\mathcal{C}^1_1} dt
\end{align*}
and we use Proposition \ref{prop:SolutionBackxard16SeptAppendix} and Lemma \ref{lem:newregularizationb23/12} to conclude that $\lim_{ n \rightarrow +\infty} I_1^n =0$. We then proceed with $I_2^n$:
\begin{align*}
    |I_2^n| &\leq |\lambda_2- \lambda_1| \int_{t_0}^{t_1} \norm{ b(\cdot,\eta_t) \cdot \nabla_x \varphi_t^n }_{\mathcal{C}^1_3} \bigl\| \rho_t^{\lambda_2} \bigr\|_{(\mathcal{C}^1_3)^*} dt 
    \\
    & \leq |\lambda_2- \lambda_1| \sup_{t \in [t_0,t_1]} \norm{ \nabla_x \varphi_t^n}_{\mathcal{C}^1_3} \sup_{t\in [t_0,t_1]} \bigl\| \rho_t^{\lambda_2} \bigr\|_{(\mathcal{C}^1_3)^*} \int_{t_0}^{t_1} \int_A (1+|a|^2) d|\eta_t|(a) dt 
    \\
    &\leq \Lambda_{\bd{\nu}, \bd{\eta},\gamma_0}  |\lambda_2 -\lambda_1|,
\end{align*}
for a function $\Lambda$ as in the statement. 
Then, we use Lemma \ref{lem:backthenA.21-09/02} to handle $I_3^n$:
\begin{align*}
    |I_3^n| &\leq  |\lambda_2- \lambda_1| \sup_{t \in [t_0,t_1]} \norm{ \nabla_x \varphi_t^n}_{\mathcal{C}^1_2} \sup_{t\in [t_0,t_1]} \biggl\| 
    \frac{\gamma_t^{\lambda_2} - \gamma_t^{\lambda_1}}{\lambda_2 - \lambda_1}
    \biggr\|_{(\mathcal{C}^1_2)^*} \int_{t_0}^{t_1} \int_A (1+|a|^2) d|\eta_t|(a) dt \\
    &\leq \Lambda_{\bd{\nu}, \bd{\eta},\gamma_0}  |\lambda_2 -\lambda_1|.
\end{align*}
Inserting the last three displays in \eqref{eq:rholambda1-rholambda2} and letting 
$n$ tend to $+\infty$, we complete the proof. 
\end{proof}

We can go one step further and establish the following 
weak differentiability property of 
the function
$\lambda \in [-1,1] \mapsto \bd{\rho}^{\lambda}$, where 
we recall again that $\bd{\rho}^{\lambda}$ is the unique solution 
of the equation 
\eqref{eq:LinRhoAnnex11/01lambda}
in the space $\mathcal{R}(t_0)$:

\begin{lem}
    For all $\phi \in \mathcal{C}^3_{2,1}$, $\lambda \mapsto \langle \phi; \rho_T^{\lambda} \rangle$ is differentiable at $0$ and we have
    $$ \frac{d^2}{d\lambda^2} \Big|_{\lambda =0} \int_{\R^{d_1} \times \R^{d_2}} \phi(x,y) d\gamma_T^{\lambda}(x,y)  = \frac{d}{d\lambda }\Big|_{\lambda=0} \langle \phi; \rho_T^{\lambda} \rangle = 2 \int_{t_0}^T \bigl \langle b(\cdot, \eta_t) \cdot \nabla_x \varphi_t; \rho_t \bigr \rangle dt,$$
where $\varphi : [t_0,T] \times \R^{d_1} \times \R^{d_2} \rightarrow \R$ is the solution to 
$$ -\partial_t \varphi_t - b(x, \nu_t) \cdot \nabla_x \varphi_t = 0 \quad \mbox{ in } [t_0,T] \times \R^{d_1} \times \R^{d_2}, \quad \varphi_T = \phi \quad\mbox{ in } \R^{d_1} \times \R^{d_2}.$$
\label{lem:secondderivativeterminalcost29/01}
\end{lem}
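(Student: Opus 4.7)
The plan hinges on combining the explicit representation of $\rho^\lambda$ from Proposition \ref{prop:Explicitformularholambda25/01} with the Lipschitz estimate of Lemma \ref{lem:25/01/15:27} and the differentiability identity \eqref{eq:derivativeinlambda31/01Ap} of Proposition \ref{prop:A.24}.

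First, I will establish the first equality. Applying \eqref{eq:derivativeinlambda31/01Ap} with the shifted base control $\bd{\nu} + \lambda_0 \bd{\eta}$ (for $\lambda_0 \in (-1,1)$), I obtain
$$\frac{d}{d\lambda}\Big|_{\lambda=\lambda_0} \int_{\R^{d_1} \times \R^{d_2}} \phi(x,y) \, d\gamma_T^\lambda(x,y) = \langle \phi; \rho_T^{\lambda_0} \rangle.$$
In particular, $\lambda \mapsto \int \phi \, d\gamma_T^\lambda$ is differentiable on a neighbourhood of $0$ with derivative $\lambda \mapsto \langle \phi; \rho_T^\lambda\rangle$, so that the second derivative at $0$ on the left is exactly the first derivative at $0$ of the right-hand side.

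Second, to compute $\frac{d}{d\lambda}\big|_{\lambda=0} \langle \phi; \rho_T^\lambda \rangle$, I will use the representation \eqref{eq:explicit25/01}: since $\phi \in \mathcal{C}^3_{2,1} \subset \mathcal{C}^3_2$,
$$\langle \phi; \rho_T^\lambda \rangle = \lambda \int_{t_0}^T \langle b(\cdot, \eta_t) \cdot \nabla_x \varphi_t; \rho_t^\lambda \rangle \, dt + \int_{t_0}^T \int_{\R^{d_1} \times \R^{d_2}} b(x, \eta_t) \cdot \nabla_x \varphi_t(x,y) \, d\gamma_t^\lambda(x,y) \, dt,$$
where crucially $\varphi$ is independent of $\lambda$ since it is driven only by $\bd{\nu}$. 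For the first term, the factor $\lambda$ removes the value at $\lambda = 0$, so I only need $\lim_{\lambda \to 0} \int_{t_0}^T \langle b(\cdot,\eta_t) \cdot \nabla_x \varphi_t; \rho_t^\lambda \rangle \, dt = \int_{t_0}^T \langle b(\cdot,\eta_t) \cdot \nabla_x \varphi_t; \rho_t \rangle \, dt$; this follows from Lemma \ref{lem:25/01/15:27}, which gives Lipschitz continuity of $\lambda \mapsto \rho_t^\lambda$ in $(\mathcal{C}^2_2)^*$ uniformly in $t$, combined with the estimate $\| b(\cdot,\eta_t) \cdot \nabla_x \varphi_t\|_{\mathcal{C}^2_2} \leq C_b \| \nabla_x \varphi_t\|_{\mathcal{C}^2_1} \int_A (1+|a|^3) d|\eta_t|(a)$ obtained from Lemma \ref{lem:convenientexpress} together with the integrability of $\bd{\eta}$.

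For the second term, the idea is to interpret it as $\int_{t_0}^T \langle \psi_t; \gamma_t^\lambda \rangle\, dt$ with $\psi_t := b(\cdot,\eta_t) \cdot \nabla_x \varphi_t \in \mathcal{C}^1_2$ (as a test function in $(x,y)$), and then reapply \eqref{eq:derivativeinlambda31/01Ap} of Proposition \ref{prop:A.24} pointwise in $t$: for almost every $t \in [t_0,T]$,
$$\frac{d}{d\lambda}\Big|_{\lambda=0} \int_{\R^{d_1}\times\R^{d_2}} \psi_t \, d\gamma_t^\lambda = \langle \psi_t; \rho_t\rangle = \langle b(\cdot,\eta_t)\cdot \nabla_x \varphi_t; \rho_t\rangle.$$
Exchanging the $\lambda$-derivative with the time integral will be justified by the Lipschitz bound $\| \gamma_t^\lambda - \gamma_t \|_{(\mathcal{C}^2_2)^*} \leq \Lambda |\lambda| \|\bd{\eta}\|_{\mathcal{D}(t_0)}$ from Lemma \ref{lem:backthenA.21-09/02}, which yields a time-integrable dominating function for the difference quotient and thereby allows dominated convergence. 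Adding the two contributions produces the factor~$2$ announced in the statement.

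The main technical obstacle is not conceptual but bookkeeping: carefully tracking the polynomial growth of $\psi_t$ and its spatial derivatives in $(x,y)$ via Lemma \ref{lem:convenientexpress}, and verifying that the moment assumption $\gamma_0 \in \mathcal{P}_3$ together with the bounds of Lemmas \ref{lem:backthenA.21-09/02} and \ref{lem:25/01/15:27} provides enough regularity for both duality brackets $\langle \cdot; \rho_t^\lambda\rangle$ and $\langle \cdot; \gamma_t^\lambda\rangle$ to be continuous in $\lambda$ and integrable in $t$, uniformly in $\lambda$ near $0$.
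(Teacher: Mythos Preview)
Your proposal is correct and follows essentially the same approach as the paper: both rely on the representation \eqref{eq:explicit25/01}, then control the $\rho_t^\lambda$ contribution via the Lipschitz estimate of Lemma \ref{lem:25/01/15:27} and the $\gamma_t^\lambda$ contribution via the convergence $\delta\gamma^\lambda \to \rho$ from Proposition \ref{prop:differentiatinggammalamnda25/01}. The only cosmetic difference is that the paper writes out the difference quotient $\langle \phi; \delta\rho_T^\lambda\rangle$ directly and passes to the limit in each term, whereas you differentiate the two summands of \eqref{eq:explicit25/01} separately (product rule on $\lambda \cdot \int \langle \cdots; \rho_t^\lambda\rangle\,dt$ and dominated convergence on $\int \int \psi_t\,d\gamma_t^\lambda\,dt$); the underlying estimates are identical.
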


\begin{proof}[Proof of Lemma \ref{lem:secondderivativeterminalcost29/01}.]
We make 
use of the expression 
\eqref{eq:explicit25/01}, for two different values of 
the parameter therein: $\lambda \not =0$ and $0$. 
Making the difference between these two expressions
and dividing the result by $\lambda$, we get:
\begin{equation} 
\langle \phi; \delta \rho_T^{\lambda}  \rangle = \int_{t_0}^T \bigl \langle b(\cdot , \eta_t) \cdot \nabla_x \varphi_t ; \rho_t^{\lambda} \bigr \rangle dt + \int_{t_0}^T \int_{\R^{d_1} \times \R^{d_2}} b(x, \eta_t) \cdot \nabla_x \varphi_t(x,y) d \delta \gamma_t^{\lambda}(x,y)
dt,  
\label{eq:25/01/15:29}
\end{equation}
where we used the notations $\delta \rho_T^{\lambda} := \lambda^{-1} (\rho_T^{\lambda} - \rho_T)$ and $\delta \gamma_t^{\lambda} := \lambda^{-1}(\gamma_t^{\lambda} - \gamma_t)$, 
recalling that $\bd{\rho}$ and $\bd{\gamma}$
are shortened notations for 
$\bd{\rho}^0$ and $\bd{\gamma}^0$.

It remains to pass to the limit in the right-hand side when $\lambda \rightarrow 0$. For the first term in the right-hand side of 
\eqref{eq:25/01/15:29}, we obtain
\begin{align*}
    &\biggl| \int_{t_0}^T \bigl \langle b(\cdot , \eta_t)  \cdot \nabla_x \varphi_t ; \rho_t^{\lambda} \bigr \rangle dt - \int_{t_0}^T \bigl \langle b(\cdot , \eta_t) \cdot \nabla_x \varphi_t ; \rho_t \bigr \rangle dt \biggr|   
    \\
    &= \biggl| \int_{t_0}^T \bigl \langle b(\cdot , \eta_t)  \cdot \nabla_x \varphi_t ; \rho_t^{\lambda} - \rho_t\bigr \rangle dt  \biggr| 
    \\
    &\leq C \sup_{t \in [t_0,T]} \norm{ \varphi_t}_{\mathcal{C}^3_2} \sup_{t\in [t_0,T]} 
    \bigl\| \rho_t^{\lambda} - \rho_t \bigr\|_{(\mathcal{C}^2_2)^*}  \int_{t_0}^T \int_A (1+|a|^3) d|\eta_t|(a) dt.
\end{align*}
We then use Proposition \ref{prop:SolutionBackxard16SeptAppendix} and Lemma \ref{lem:25/01/15:27} to deduce that the right-hand side goes to $0$ with $\lambda$, which concludes the analysis of the first term in the right-hand side of \eqref{eq:25/01/15:29}. 

We handle the second term in a similar manner, but using Proposition \ref{prop:differentiatinggammalamnda25/01} instead of Lemma  \ref{lem:25/01/15:27}. As a consequence, we get
$$ \lim_{\lambda \rightarrow 0} \langle \phi; \delta \rho_T^{\lambda} \rangle = 2 \int_{t_0}^T \bigl \langle b(\cdot, \eta_t ) \cdot \nabla_x \varphi_t ; \rho_t \bigr \rangle dt, $$
which is te desired result.

\end{proof}

\subsection{The Linearized Transport Equation}

\label{sec:LinTransEq}

We now turn to the linearized transport equation 
\begin{equation}
\label{eq:linearized:transport:eq}
-\partial_t v_t - b(x, \nu_t) \cdot \nabla_x v_t = b(x,\eta_t) \cdot \nabla_x u_t \quad \mbox{ in } [t_0,T] \times \R^{d_1} \times \R^{d_2}, \quad v_T =0 \quad \mbox{ in } \R^{d_1} \times \R^{d_2},
\end{equation}
where $u$ is the solution to 
\begin{equation}
-\partial_t u_t - b(x,\nu_t) \cdot \nabla_x u_t = 0 \quad \mbox{ in } [t_0,T] \times \R^{d_1} \times \R^{d_2}, \quad u_T =L. 
\label{eq:transport:reminder:for:linearization}
\end{equation}
We take $\bd{\nu}, \bd{\eta} \in \mathcal{D}(t_0) $. 
Denoting the right-hand side
of \eqref{eq:linearized:transport:eq} by 
$ f_t(x,y) = b(x, \eta_t) \cdot \nabla_x u_t(x,y)$, we deduce 
from the regularity assumptions on $b$ (see \assreg) and Proposition \ref{prop:SolutionBackxard16SeptAppendix} that 
$$ \int_{t_0}^T \norm{ f_t}_{\mathcal{C}^2_1}dt + \sup_{t_1 < t_2 \in [t_0,T]} \frac{1}{\sqrt{t_2-t_1}} \int_{t_1}^{t_2} \norm{f_t}_{\mathcal{C}^1_1}dt \leq \Lambda \bigl(  \norm{\bd{\nu}}_{\mathcal{D}(t_0)} + \norm{ \bd{\eta}}_{\mathcal{D}(t_0)} \bigr).$$
The above bound provides an indication on the space within which the equation 
\eqref{eq:linearized:transport:eq}
should be solved. Precisely,
we say that  $v \in \mathcal{C}([t_0,T], \mathcal{C}^1_{1})$ is a solution if, for all $t_1 < t_2 \in [t_0,T]$ and all $(x,y) \in \R^{d_1} \times \R^{d_2}$ it holds
\begin{equation}
 \label{eq:linearized:transport:weak:formulation}
 v_{t_1}(x,y) - v_{t_2}(x,y) = \int_{t_1}^{t_2} \bigl \{ b(x, \nu_t) \cdot \nabla_xv_t(x,y) + b(x, \eta_t) \cdot \nabla_x u_t(x,y)\bigr \}dt.  
\end{equation}

Similarly to Proposition \ref{prop:SolutionBackxard16SeptAppendix}, we have the following result (the proof is identical and thus omitted):

\begin{prop}
\label{prop:linearized:transport:equation:representation}
    There is a unique solution to the linearized transport equation
    \eqref{eq:linearized:transport:eq}. For all $t \in [t_0,T]$, $v_t$ belongs to $\mathcal{C}^2$ and we have the estimate 
    $$ \sup_{t \in [t_0,T]} \norm{ v_t }_{\mathcal{C}^{2}_{1}} + \sup_{t_1 < t_2 \in [t_0,T]}  \frac{\norm{v_{t_2} - v_{t_1} }_{\mathcal{C}^1_{1}}}{\sqrt{t_2-t_1}} \leq \Lambda \bigl(  \norm{\bd{\nu}}_{\mathcal{D}(t_0)} \bigr)  \norm{ \bd{\eta}}_{\mathcal{D}(t_0)},$$
for a non-decreasing function 
    $\Lambda : {\mathbb R}_+ \rightarrow {\mathbb R}_+$, independent of $t_0$ and 
    ${\boldsymbol \nu}$.
    
Moreover, for all $t \in [t_0,T] $ and all $(x,y) \in \R^{d_1} \times \R^{d_2}$, the solution is given by 
\begin{equation}
\label{eq:representation:appendix:v}
v_t(x,y) = \int_{t}^T b \bigl( X_s^{t,x},\eta_s) \cdot \nabla_x u_s \bigl( X_s^{t,x},y \bigr) ds, 
\end{equation}
where $(X_s^{t,x})_{s \geq t}$ is the flow of the ODE, solution to 
$$ \dot{X}^{t,x}_s = b(X_s^{t,x},\nu_s) \quad s \geq t, \quad X_t^{t,x} = x. $$
\end{prop}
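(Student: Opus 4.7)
The plan is to mirror the proof strategy of Proposition \ref{prop:SolutionBackxard16SeptAppendix} for the unperturbed transport equation, using the method of characteristics. First, I would introduce the flow $(X_s^{t,x})_{s \in [t,T]}$ solving $\dot{X}_s^{t,x} = b(X_s^{t,x}, \nu_s)$ with $X_t^{t,x}=x$. Standard ODE estimates (together with the bounds on $b(\cdot,\nu_s)$ provided by \eqref{eq:estimateb29/01} and \assreg) guarantee that the map $s \mapsto X_s^{t,x}$ is Lipschitz in $s$, twice continuously differentiable in $x$, with derivatives bounded in terms of $\norm{\bd{\nu}}_{\mathcal{D}(t_0)}$.

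Second, I would \emph{define} the candidate solution by the representation formula \eqref{eq:representation:appendix:v}, namely
\begin{equation*}
v_t(x,y) := \int_t^T b(X_s^{t,x}, \eta_s) \cdot \nabla_x u_s(X_s^{t,x}, y) \, ds.
\end{equation*}
A direct differentiation in $s$ of the composition $s \mapsto v_s(X_s^{t,x}, y)$ along a characteristic shows that any $v$ satisfying \eqref{eq:linearized:transport:eq} must obey the chain-rule identity $(d/ds)[v_s(X_s^{t,x}, y)] = -b(X_s^{t,x}, \eta_s) \cdot \nabla_x u_s(X_s^{t,x}, y)$; integrating from $t$ to $T$ with $v_T=0$ then yields the formula. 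Conversely, checking that the explicit $v$ above satisfies the weak formulation \eqref{eq:linearized:transport:weak:formulation} reduces to Fubini and the Lipschitz dependence of $X_s^{t,x}$ on $t$. Uniqueness is then immediate: any $v \in \mathcal{C}([t_0,T], \mathcal{C}^1_1)$ solving \eqref{eq:linearized:transport:weak:formulation} is forced, by the same chain-rule argument, to coincide with \eqref{eq:representation:appendix:v}.

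Third, I would derive the regularity estimates from the representation formula. The spatial regularity comes from differentiating under the integral using (i) the bound on $\norm{X_s^{t,\cdot}}_{\mathcal{C}^2_{1,b}}$ in terms of $\norm{\bd{\nu}}_{\mathcal{D}(t_0)}$, (ii) the bound $\sup_{s \in [t_0,T]} \norm{\nabla_x u_s}_{\mathcal{C}^2_1} \leq \Lambda(\norm{\bd \nu}_{\mathcal{D}(t_0)})$ from Proposition \ref{prop:SolutionBackxard16Sept}, and (iii) the growth conditions in \assreg. Combining these with $\int_{t_0}^T \int_A (1+|a|^3) d|\eta_s|(a) ds \leq C \norm{\bd{\eta}}_{\mathcal{D}(t_0)}$ yields the desired $\mathcal{C}^2_1$ bound on $v_t$, linear in $\norm{\bd{\eta}}_{\mathcal{D}(t_0)}$.

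The main technical step is the $1/2$-Hölder continuity in time, which I would obtain by writing
\begin{equation*}
v_{t_2}(x,y) - v_{t_1}(x,y) = -\int_{t_1}^{t_2} b(X_s^{t_1,x}, \eta_s) \cdot \nabla_x u_s(X_s^{t_1,x}, y) \, ds + \int_{t_2}^T \Bigl[ \cdots \Bigr] ds,
\end{equation*}
where the bracket is the difference between the integrands along the two flows starting at $(t_1,x)$ and $(t_2,x)$. The first integral is bounded by $\sqrt{t_2 - t_1}$ thanks precisely to the second term in the definition \eqref{eq:definitionadmissiblecontrols:aux} of $\norm{\bd \eta}_{\mathcal{D}(t_0)}$, which controls the mass of $\int_{t_1}^{t_2}\int_A (1+|a|^2) d|\eta_s|(a) ds$ by $\sqrt{t_2-t_1}\norm{\bd{\eta}}_{\mathcal{D}(t_0)}$; the second integral is bounded by $|t_2 - t_1|$ using the Lipschitz-in-initial-time property of the flow $X_s^{\cdot,x}$ on $[t_2,T]$, combined with the spatial regularity of $\nabla_x u_s$ and $b$. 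This carefully-weighted bookkeeping of the $\sqrt{t_2 - t_1}$ gain is the only real obstacle; everything else is routine characteristic-method analysis patterned on Proposition \ref{prop:SolutionBackxard16Sept}.
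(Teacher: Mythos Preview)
Your proposal is correct and follows precisely the route the paper indicates: the paper omits the proof entirely, stating only that it is ``identical'' to that of Proposition \ref{prop:SolutionBackxard16SeptAppendix}, and your sketch is a faithful elaboration of that strategy via the characteristic representation. One minor simplification: for the time-H\"older estimate you can bypass the splitting of the representation integral and instead use the equation directly, writing $v_{t_1}-v_{t_2}=\int_{t_1}^{t_2}\{b(\cdot,\nu_t)\cdot\nabla_x v_t + b(\cdot,\eta_t)\cdot\nabla_x u_t\}\,dt$ and bounding each term in $\mathcal{C}^1_1$ using the already-established $\mathcal{C}^2_1$ bounds on $v_t$ and $u_t$ together with the $\sqrt{t_2-t_1}$ control on $\int_{t_1}^{t_2}\int_A(1+|a|^2)\,d|\nu_t|$ and $\int_{t_1}^{t_2}\int_A(1+|a|^2)\,d|\eta_t|$ from the $\mathcal{D}(t_0)$-norm; this is how the analogous step is handled in Lemma \ref{lem:backwardtransportsmoothcontrol23/01}.
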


When $\bd{\nu}$ and $\bd{\eta}$ satisfy some further regularity we can improve the regularity of the solution. In this case, the equation is satisfied in the classical sense.

\begin{prop}
Assume that $t \mapsto \nu_t$ and $t\mapsto \eta_t$ are continuous from $[t_0,T]$ to $\mathcal{M}_{1+|a|^3}(A)$. Then, $v$ 
(as given by \eqref{eq:representation:appendix:v})
and its gradient $\nabla_x v$ are jointly continuously differentiable in $(t,x,y)$.
\label{prop:additionalregularityvt}
\end{prop}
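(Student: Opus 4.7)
The strategy is to first obtain full spatial regularity via the representation formula \eqref{eq:representation:appendix:v}, and then to recover the time derivatives from the PDE \eqref{eq:linearized:transport:eq} itself.

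\medskip

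\emph{Step 1: Regularity of the flow.} Under the assumption that $t \mapsto \nu_t$ is continuous in $\mathcal{M}_{1+|a|^3}(A)$, the quadratic growth of $\nabla_x^k b(x,\cdot)$ for $k \le 2$ (item (i) of \assreg) shows that the map $(s,x) \mapsto b(x,\nu_s)$ is continuous in $s$ uniformly on compacts and belongs to $\mathcal{C}^3_b$-on-compacts in $x$, with modulus of continuity in $s$ controlled by the total variation of $(1+|a|^3)\nu_s$. Standard ODE theory then yields that the flow $(t,s,x) \mapsto X_s^{t,x}$ is jointly $\mathcal{C}^1$, $\mathcal{C}^2$ in $x$, with $\nabla_x X_s^{t,x}$ and $\nabla^2_x X_s^{t,x}$ jointly continuous, and $\partial_t X_s^{t,x} = -\nabla_x X_s^{t,x}\, b(x,\nu_t)$ jointly continuous in $(t,s,x)$.

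\medskip

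\emph{Step 2: Joint spatial regularity of $v$.} By the continuity assumption on $\eta_s$ in $\mathcal{M}_{1+|a|^3}(A)$ and the linear growth of $a \mapsto b(\cdot,a)$ in $\mathcal{C}^2_b$-on-compacts, $(s,x) \mapsto b(x,\eta_s)$ is jointly continuous and $\mathcal{C}^2$ in $x$. Combined with the continuity of $(s,x,y) \mapsto \nabla_x u_s(x,y)$ and $\nabla^2_x u_s(x,y)$ (from Proposition \ref{prop:SolutionBackxard16Sept} together with the assumption that $t \mapsto \nu_t$ is continuous), the integrand in \eqref{eq:representation:appendix:v} is jointly continuous in $(s,t,x,y)$ and its spatial derivatives up to order $2$ are locally dominated. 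The chain rule, applied to the composition with the flow from Step 1, allows differentiation under the integral sign, yielding the joint continuity of $v$, $\nabla_x v$, and $\nabla^2_x v$ on $[t_0,T] \times \R^{d_1} \times \R^{d_2}$.

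\medskip

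\emph{Step 3: Time regularity of $v$.} Since $v \in \mathcal{C}([t_0,T],\mathcal{C}^1_1)$ and now $\nabla_x v$ is jointly continuous, the weak formulation \eqref{eq:linearized:transport:weak:formulation} can be read pointwise. Under the continuity of $t \mapsto \nu_t$, $\eta_t$, the integrand
\[
b(x,\nu_t) \cdot \nabla_x v_t(x,y) + b(x,\eta_t) \cdot \nabla_x u_t(x,y)
\]
is jointly continuous in $(t,x,y)$. Therefore $\partial_t v_t(x,y)$ exists, equals the opposite of this expression, and is jointly continuous.

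\medskip

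\emph{Step 4: Joint $\mathcal{C}^1$ regularity of $\nabla_x v$.} Taking $\nabla_x$ of \eqref{eq:linearized:transport:eq} (justified by Step 2 and the existence of $\nabla^2_x u$ from Proposition \ref{prop:SolutionBackxard16Sept}), we obtain
\[
-\partial_t \nabla_x v_t = \nabla_x b(\cdot,\nu_t)\,\nabla_x v_t + b(\cdot,\nu_t)\nabla^2_x v_t + \nabla_x b(\cdot,\eta_t)\,\nabla_x u_t + b(\cdot,\eta_t)\nabla^2_x u_t.
\]
Each term on the right is jointly continuous in $(t,x,y)$ by the preceding steps and the continuity of $t \mapsto \nu_t,\eta_t$ in $\mathcal{M}_{1+|a|^3}$ combined with the $(1+|a|^3)$ growth of $\nabla_x^2 b(x,a)$. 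This gives $\partial_t \nabla_x v$ jointly continuous, which together with the spatial $\mathcal{C}^2$-regularity from Step 2 proves that $\nabla_x v$ is jointly $\mathcal{C}^1$ in $(t,x,y)$.

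\medskip

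The main technical point is Step 1, where one must justify that the continuity of $t \mapsto \nu_t$ in the weighted space $\mathcal{M}_{1+|a|^3}(A)$ (which is exactly the right weight, given the quadratic-in-$a$ bound on $\nabla^2_x b$) transfers to the $\mathcal{C}^3$-in-$x$ continuity of the velocity field driving the flow; the rest of the argument is then essentially a bootstrapping via the PDE.
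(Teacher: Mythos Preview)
Your proposal is correct and follows essentially the same route as the paper's proof: spatial regularity of $v$ and $\nabla_x v$ via differentiation of the representation formula \eqref{eq:representation:appendix:v} (using the flow regularity from Proposition \ref{prop:ODE16Sept} and Remark \ref{rmk:regularvectorfield27/01}, and the regularity of $\nabla_x u$ from Lemma \ref{lem:backwardtransportsmoothcontrol23/01}), followed by time regularity of $\nabla_x v$ by differentiating the PDE \eqref{eq:linearized:transport:eq} in $x$. One small slip: in Step 1 you claim the velocity field is $\mathcal{C}^3_b$ in $x$, but continuity of $t \mapsto \nu_t$ in $\mathcal{M}_{1+|a|^3}(A)$ only guarantees $\mathcal{C}^2_b$ (since $|\nabla_x^3 b(x,a)|$ grows like $(1+|a|^4)$); this is harmless because your subsequent steps only invoke spatial derivatives up to order two.
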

\begin{proof}
    We only briefly sketch the argument for $\nabla_xv$. Using 
    Proposition \ref{prop:ODE16Sept} (see also Remark \ref{rmk:regularvectorfield27/01}) and Lemma \ref{lem:backwardtransportsmoothcontrol23/01}, we can differentiate twice in space the representation formula \eqref{eq:representation:appendix:v}
    and then deduce that the functions     
$(t,x,y) \mapsto \nabla v_t$ and $(t,x,y) \mapsto \nabla^2 v_t$ are jointly continuous. 
By differentiating once in the $x$-variable the equation \eqref{eq:linearized:transport:eq} for $v_t$ we get $\partial_t \nabla_x v_t = \nabla_x \bigl \{ -b(x,\nu_t) \cdot \nabla_x v_t - b(x, \eta_t) \cdot \nabla_x u_t \bigr \}  
    $  and deduce from the time continuity 
of 
$t \mapsto \nu_t$ and $t\mapsto \eta_t$, as assumed in the statement, that $\partial_t \nabla_x v_t $ is also jointly continuous.
\end{proof}

As expected, the solution $v$
to 
\eqref{eq:linearized:transport:eq}, as provided by Proposition
\ref{prop:linearized:transport:equation:representation}, can be retrieved by linearizing the transport equation 
\eqref{eq:transport:reminder:for:linearization}
with respect to the control parameter.

\begin{lem}
    Take $t_0 \in [0,T]$ and $\bd{\nu}, \bd{\eta} \in \mathcal{D}(t_0)$. Let $\bd{u} \in \mathcal{C}([t_0,T], \mathcal{C}^1_{2,1})$ be the solution to the backward equation 
    $$ - \partial_t u_t - b(x,\nu_t) \cdot \nabla_x u_t = 0 \quad \mbox{ in } [t_0,T] \times \R^{d_1} \times \R^{d_2}, \quad u_T = L \quad \mbox{ in } \R^{d_1} \times \R^{d_2}.$$
    For all $\lambda \in (-1,1)$, with $\lambda \neq 0$, let $\bd{\nu}^{\lambda} := \bd{\nu} + \lambda \bd{ \eta}$ and $\bd{u}^{\lambda} \in \mathcal{C}([t_0,T], \mathcal{C}^1_{2,1})$ be the solution to the backward equation 
    $$ - \partial_t u^{\lambda}_t - b(x,\nu^{\lambda}_t) \cdot \nabla_x u^{\lambda}_t = 0 \quad \mbox{ in } [t_0,T] \times \R^{d_1} \times \R^{d_2}, \quad u^{\lambda}_T = L \quad \mbox{ in } \R^{d_1} \times \R^{d_2}.$$
Letting $\delta \bd{u}^{\lambda} := \lambda^{-1} (\bd u^{\lambda} - \bd u)$, for $\lambda \not =0$, we have the estimate
$$ \sup_{t \in [t_0,T]} \bigl\| \delta u^{\lambda}_t \bigr\|_{\mathcal{C}^{2}_{1}} + \sup_{t_1 < t_2 \in [t_0,T]}  \frac{\| \delta u^{\lambda}_{t_2} - \delta u^{\lambda}_{t_1} \|_{\mathcal{C}^1_{1}}}{\sqrt{t_2-t_1}} \leq \Lambda \bigl(  \norm{\bd{\nu}}_{\mathcal{D}(t_0)} ,  \norm{ \bd{\eta}}_{\mathcal{D}(t_0)} \bigr),$$
for some non-decreasing function $\Lambda : \R_+ \rightarrow \R_+$ independent 
of $\lambda \in (-1,1)$ and $t_0 \in [0,T]$.
\label{lem:diffulambda24/02:13:23}
\end{lem}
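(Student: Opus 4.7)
\medskip

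\noindent \textbf{Proof plan for Lemma \ref{lem:diffulambda24/02:13:23}.} The strategy is to identify an equation for $\delta \bd{u}^\lambda$ that has exactly the structure of the linearized transport equation \eqref{eq:linearized:transport:eq} already analyzed in Proposition \ref{prop:linearized:transport:equation:representation}, and then to transfer the estimates uniformly in $\lambda$. Subtracting the backward equation for $\bd{u}^\lambda$ from that for $\bd{u}$ and dividing by $\lambda \ne 0$ yields
\begin{equation*}
-\partial_t \delta u^\lambda_t - b(x, \nu^\lambda_t) \cdot \nabla_x \delta u^\lambda_t \;=\; b(x, \eta_t) \cdot \nabla_x u_t \quad \text{in } [t_0,T] \times \R^{d_1} \times \R^{d_2}, \qquad \delta u^\lambda_T = 0.
\end{equation*}
This is a backward transport equation whose transport velocity is $b(\cdot, \nu^\lambda_t)$ (not $b(\cdot, \nu_t)$) and whose source $b(\cdot, \eta_t) \cdot \nabla_x u_t$ uses the unperturbed multiplier $u$. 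Its structure is therefore that of the equation treated in Proposition \ref{prop:linearized:transport:equation:representation}, up to the substitution $\bd{\nu} \rightsquigarrow \bd{\nu}^\lambda$ in the transport term, and the proof of that proposition applies verbatim. In particular, the unique solution admits the representation formula
\begin{equation*}
\delta u^\lambda_t(x,y) \;=\; \int_t^T b\bigl(X_s^{\lambda, t, x}, \eta_s\bigr) \cdot \nabla_x u_s\bigl(X_s^{\lambda, t, x}, y\bigr)\, ds,
\end{equation*}
where $(X_s^{\lambda, t, x})_{s \in [t,T]}$ is the flow of $\dot X_s = b(X_s, \nu^\lambda_s)$ started from $x$ at time $t$.

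The spatial estimate $\sup_{t}\|\delta u^\lambda_t\|_{\mathcal{C}^2_1}$ follows by differentiating the representation formula twice in $x$ and combining (i) the flow regularity given by Proposition \ref{prop:ODE16Sept}, which provides $\mathcal{C}^2_{1,b}$ bounds on $x \mapsto X_s^{\lambda,t,x}$ in terms of $\|\bd{\nu}^\lambda\|_{\mathcal{D}(t_0)}$; (ii) the bound $\sup_t\|\nabla_x u_t\|_{\mathcal{C}^2_1} \leq \Lambda(\|\bd{\nu}\|_{\mathcal{D}(t_0)})$ from Proposition \ref{prop:SolutionBackxard16SeptAppendix}; and (iii) the growth assumption on $b$ in \assreg\ together with the fourth-moment control on $\bd{\eta}$ embedded in the $\|\cdot\|_{\mathcal{D}(t_0)}$ norm. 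The crucial uniformity in $\lambda$ comes from the elementary bound
\begin{equation*}
\sup_{\lambda \in (-1,1)} \|\bd{\nu}^\lambda\|_{\mathcal{D}(t_0)} \;\leq\; \|\bd{\nu}\|_{\mathcal{D}(t_0)} + \|\bd{\eta}\|_{\mathcal{D}(t_0)},
\end{equation*}
so that every estimate depending only on the $\mathcal{D}(t_0)$-norm of the transport control is automatically $\lambda$-uniform.

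The time-regularity estimate in $\mathcal{C}^1_1$ is then a direct consequence of the integral form of the equation: for $t_1 < t_2 \in [t_0,T]$,
\begin{equation*}
\delta u^\lambda_{t_1}(x,y) - \delta u^\lambda_{t_2}(x,y) \;=\; \int_{t_1}^{t_2} \bigl\{ b(x, \nu^\lambda_t) \cdot \nabla_x \delta u^\lambda_t(x,y) + b(x, \eta_t) \cdot \nabla_x u_t(x,y) \bigr\}\, dt.
\end{equation*}
Each of the two terms on the right-hand side is estimated in $\mathcal{C}^1_1$ using the already established spatial bound on $\delta u^\lambda_t$, the $\mathcal{C}^1_1$ bound on $\nabla_x u_t$, \assreg, and the defining property of $\mathcal{D}(t_0)$ that $(t_2-t_1)^{-1/2}\int_{t_1}^{t_2}\int_A(1+|a|^2)\, d|\nu^\lambda_t|(a)\,dt \leq \|\bd{\nu}^\lambda\|_{\mathcal{D}(t_0)}$, and the analogous bound for $\bd{\eta}$. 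This yields the $\sqrt{t_2-t_1}$ bound, again uniformly in $\lambda$.

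The proof is essentially routine; the only point requiring (minor) care is that Proposition \ref{prop:linearized:transport:equation:representation} cannot be invoked as a black box because the transport coefficient is $\nu^\lambda_t$ while the source involves $\nabla_x u_t$ (not $\nabla_x u^\lambda_t$), so one must reread its proof in order to observe that it goes through unchanged. Once this is done, no new technical obstacle appears.
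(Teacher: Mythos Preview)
Your proof is correct and follows essentially the same route as the paper: derive the equation for $\delta u^\lambda$ (transport by $b(\cdot,\nu^\lambda_t)$ with source $b(\cdot,\eta_t)\cdot\nabla_x u_t$), obtain the representation formula along the flow $X^{\lambda,t,x}$, deduce the $\mathcal{C}^2_1$ bound from Propositions \ref{prop:ODE16Sept} and \ref{prop:SolutionBackxard16SeptAppendix}, and finally get the time-H\"older bound by integrating the equation and using the $\mathcal{D}(t_0)$ norm. The only cosmetic difference is that the paper obtains the representation formula by invoking the duality identity of Lemma \ref{lem:duality:1,2:F} (with $\gamma_0=\delta_{(x,y)}$), whereas you reach it by rereading the proof of Proposition \ref{prop:linearized:transport:equation:representation}; both yield the same formula and the same estimates.
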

\begin{proof}
    For $t \in [t_0,T]$ and $(x,y) \in \R^{d_1} \times \R^{d_2}$, we let $(X^{\lambda,t,x}_s)_{s \in [t,T]}$ be the solution to 
    $$ \dot{X}^{\lambda,t,x}_s = b(X^{\lambda,t,x}_s, \nu^{\lambda}_s ), \quad s \in [t,T]; \quad X^{\lambda,t,x}_t = x. $$
Applying  \eqref{eq:duality:relation:statement} in Lemma \ref{lem:duality:1,2:F} with $t_0=t$, $t_1=T$, $\phi = L$, $\bd{\nu}^1 = \bd{\nu}^{\lambda}$, $\bd{\nu}^2 = \bd{\nu}$ and $\gamma_0 = \delta_{(x,y)}$, and therefore with $(\gamma^1_s)_{s \in [t_0,T]} := (\delta_{(X_s^{\lambda,t,x},y)} )_{s \in [t_0,T]}$ therein, we get
$$ \delta u_t^{\lambda}(x,y) = \int_t^T b(X_s^{\lambda,t,x},\eta_s) \cdot \nabla_x u_s(X_s^{\lambda,t,x},y)ds.  $$
We easily deduce from Proposition \ref{prop:ODE16Sept} and Proposition \ref{prop:SolutionBackxard16SeptAppendix} that 
$$ \sup_{t \in [t_0,T]} \bigl\| \delta u_t^{\lambda} \bigr\|_{\mathcal{C}^2_1} \leq \Lambda\bigl( \norm{\bd{\nu}}_{\mathcal{D}(t_0)} + \norm{ \bd{\eta}}_{\mathcal{D}(t_0)} \bigr). $$
By making the difference between the equation for $\bd{u}^{\lambda}$ and the equation for $\bd{u}$ and then dividing by $\lambda$, we also find that $\delta\bd{u}^{\lambda}$ solves
$$ -\partial_t \delta u_t^{\lambda} - b(x,\nu_t^{\lambda}) \cdot \nabla_x \delta u_t^{\lambda} = - b(x,\eta_t) \cdot \nabla_x u_t \quad \mbox{ in } [t_0,T] \times \R^{d_1} \times \R^{d_2}, \quad v_T(x,y) = 0. $$
Integrating in time and recalling 
\eqref{defn:D(t_0)}
in Definition \ref{eq:definitionadmissiblecontrols:aux}, we find
\begin{align*} 
\norm{ \delta u_{t_2} -\delta u_{t_1}}_{\mathcal{C}^1_1} &= \norm{ \int_{t_1}^{t_2} \left \{ b(\cdot, \nu_t^{\lambda}) \cdot \nabla_x \delta u_t^{\lambda} + b(\cdot,\eta_t) \cdot \nabla_x u_t \right \} dt }_{\mathcal{C}^1_1} \\
&\leq \Lambda \bigl( \norm{ \bd{\nu}}_{\mathcal{D}(t_0)} + \norm{ \bd{\eta}}_{\mathcal{D}(t_0)} \bigr) \Bigl( \sup_{t \in [t_0,T]} \norm{\nabla_x \delta u^\lambda_t}_{\mathcal{C}^1_1} + \sup_{t \in [t_0,T]} \norm{ \nabla_x u_t}_{\mathcal{C}^1_1} \Bigr)  \sqrt{t_2-t_1}, 
\end{align*}
and we conclude using the previous estimate together with Proposition \ref{prop:SolutionBackxard16SeptAppendix}.
\end{proof}

We conclude with a useful duality relation. 

\begin{prop}
   Take $(t_0,\gamma_0) \in [0,T] \times \mathcal{P}_3(\R^{d_1} \times \R^{d_2})$ and $\bd{\nu}, \bd{\eta}^1, \bd{\eta}^2 \in \mathcal{D}(t_0)$. Let $(\bd{\gamma}, \bd{u})$ be the solutions to the continuity and transport equations 
   \eqref{eq:lem:compactnessContinuityEquation24/06:statement}
   and 
   \eqref{eq:transport:reminder:for:linearization} with control $\bd{\nu}$, and $(\bd{\rho}^{i}, \bd{v}^{i})$ be the solutions to the linearized equations 
   \eqref{eq:rhotProp3.2Append}
   and
   \eqref{eq:linearized:transport:eq} 
   associated to $\bd{\eta}^{i}$ for $i=1,2$.
   Then, we have
    \begin{equation} 
\notag \int_{t_0}^T \Bigl \langle b( \cdot ,\eta^2_t) \cdot  \nabla_x u_t  ; \rho^1_t  \Bigr \rangle dt = \int_{t_0}^T  \int_{\R^{d_1} \times \R^{d_2}} b(x,\eta^1_t) \cdot \nabla_x v^2_t(x,y) d \gamma_t(x,y) dt. 
\label{eq:intheproofofProp3.21Avril}
\end{equation}
\label{prop:IPPLinearized27/01}
\end{prop}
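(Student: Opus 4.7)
The plan is a standard integration by parts in time, pairing $\bd{v}^2$ (solving the linearized transport equation) with $\bd{\rho}^1$ (solving the linearized continuity equation). Formally, one would use the boundary conditions $v^2_T = 0$ and $\rho^1_{t_0}=0$ to write
\begin{equation*}
0 = \langle v^2_T; \rho^1_T \rangle - \langle v^2_{t_0}; \rho^1_{t_0} \rangle = \int_{t_0}^T \frac{d}{dt} \bigl\langle v^2_t; \rho^1_t \bigr\rangle dt,
\end{equation*}
and then insert the equations
\begin{equation*}
\partial_t v^2_t = -b(x, \nu_t) \cdot \nabla_x v^2_t - b(x, \eta^2_t) \cdot \nabla_x u_t, \qquad \partial_t \rho^1_t = -\div_x\bigl(b(x,\nu_t) \rho^1_t\bigr) - \div_x\bigl(b(x,\eta^1_t) \gamma_t\bigr).
\end{equation*}
After the cancellation of the two $\langle b(\cdot,\nu_t) \cdot \nabla_x v^2_t; \rho^1_t \rangle$ terms, the identity follows. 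Concretely, this can be carried out by choosing $\varphi_t := v^2_t$ as a test function in the weak formulation \eqref{eq:weakeqrho12/01Section2} of the equation for $\rho^1$ (with perturbation $\eta^1$), evaluated at $t_1 = T$; the vanishing of $v^2_T$ and the use of \eqref{eq:linearized:transport:weak:formulation} to identify $\partial_t v^2_t + b(\cdot,\nu_t) \cdot \nabla_x v^2_t = -b(\cdot,\eta^2_t) \cdot \nabla_x u_t$ yields the claim.

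The main obstacle is that $\bd{v}^2$, as provided by Proposition \ref{prop:linearized:transport:equation:representation}, does not directly satisfy the regularity required of an admissible test function in \eqref{eq:testfn1-09/03}--\eqref{eq:testfn2-09/03}: one has $v^2_t \in \mathcal{C}^2_1$ with $\mathcal{C}^{1/2}$ time regularity into $\mathcal{C}^1_1$, but the condition $\nabla_x v^2_t \in \mathcal{C}^2_1$ (i.e. third spatial derivatives with linear growth) and joint continuity of $\partial_t v^2$ are not automatic.

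To circumvent this, the strategy is to regularize. Using an approximation of the type provided by Lemma \ref{lem:newregularizationb23/12}, replace $\bd{\nu}$, $\bd{\eta}^1$, $\bd{\eta}^2$ by sequences $\bd{\nu}^n$, $\bd{\eta}^{1,n}$, $\bd{\eta}^{2,n}$ that are continuous into $\mathcal{M}_{1+|a|^3}(A)$ and converge to the originals in $\mathcal{D}(t_0)$. The corresponding $\bd{u}^n, \bd{v}^{2,n}$ then enjoy the joint $\mathcal{C}^1$ regularity of Proposition \ref{prop:additionalregularityvt} (applied to both the transport equation for $u^n$ and its linearization for $v^{2,n}$), so that $v^{2,n}$ is an admissible test function for the weak form of the equation satisfied by $\rho^{1,n}$. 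The formal computation outlined above then produces the desired identity at the regularized level, namely
\begin{equation*}
\int_{t_0}^T \bigl\langle b(\cdot, \eta^{2,n}_t) \cdot \nabla_x u^n_t ; \rho^{1,n}_t \bigr\rangle dt = \int_{t_0}^T \int_{\R^{d_1} \times \R^{d_2}} b(x, \eta^{1,n}_t) \cdot \nabla_x v^{2,n}_t(x,y) d\gamma^n_t(x,y) dt.
\end{equation*}

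It then remains to pass to the limit as $n \to \infty$. The stability bounds from Propositions \ref{lem:compactnessContinuityEquation24/06}, \ref{prop:SolutionBackxard16Sept}, \ref{prop:A.24}, Lemma \ref{lem:backthenA.21-09/02} and Proposition \ref{prop:linearized:transport:equation:representation}, combined with Proposition \ref{prop:measurable+fubini15/01} and the quantitative expressions of Lemma \ref{lem:convenientexpress}, yield convergence of $\nabla_x u^n \to \nabla_x u$ and $\nabla_x v^{2,n} \to \nabla_x v^2$ in suitable polynomial-growth norms, of $\rho^{1,n} \to \rho^1$ in the relevant dual norm (with uniform bound in $(\mathcal{C}^1_3)^*$), and of $\gamma^n \to \gamma$ in $\mathcal{C}([t_0,T], \mathcal{P}_2)$. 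Together with convergence of $\bd{\eta}^{i,n}$ in $\mathcal{D}(t_0)$, these estimates allow dominated convergence on both sides, giving the claimed identity. The essentially technical point is only to organize the approximation so that all terms remain uniformly bounded; the cancellation at the heart of the identity is structural and purely algebraic.
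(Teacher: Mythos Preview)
Your strategy is the natural one, and the algebraic cancellation you describe is indeed the heart of the identity. However, there is a genuine regularity gap in the step where you claim that $v^{2,n}$ is an admissible test function for the weak formulation \eqref{eq:weakeqrho12/01Section2} of the equation for $\rho^{1,n}$. The test function class \eqref{eq:testfn1-09/03}--\eqref{eq:testfn2-09/03} requires in particular that $t \mapsto \nabla_x \varphi_t \in \mathcal{C}^2_1$ be bounded, i.e.\ three spatial derivatives of $\varphi$ with linear growth. Proposition \ref{prop:additionalregularityvt} only yields that $v^{2,n}$ and $\nabla_x v^{2,n}$ are jointly $\mathcal{C}^1$, hence $v^{2,n}_t \in \mathcal{C}^2$ spatially but not $\mathcal{C}^3$. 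The obstruction is structural: since $L \in \mathcal{C}^3_{2,1}$ only, one has $u^n_t \in \mathcal{C}^3_{2,1}$ and $\nabla_x u^n_t \in \mathcal{C}^2_1$, so the source term $b(\cdot,\eta^{2,n}_t)\cdot \nabla_x u^n_t$ in the equation for $v^{2,n}$ is only $\mathcal{C}^2$, and the representation formula \eqref{eq:representation:appendix:v} then gives no more than two spatial derivatives for $v^{2,n}$. Without this, the individual pairings $\langle \partial_t v^{2,n}_t; \rho^{1,n}_t \rangle$ and $\langle b(\cdot,\nu^n_t)\cdot \nabla_x v^{2,n}_t; \rho^{1,n}_t \rangle$ are not a priori defined (recall $\rho^{1,n}_t \in (\mathcal{C}^2_2)^*$), and so plugging $v^{2,n}$ into \eqref{eq:weakeqrho12/01Section2} is not justified as written. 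One could try to repair this by a further mollification of $L$ or of $v^{2,n}$ in space, or by extending the weak formulation to exploit that the \emph{sum} $\partial_t v^{2,n}_t + b(\cdot,\nu^n_t)\cdot \nabla_x v^{2,n}_t = -b(\cdot,\eta^{2,n}_t)\cdot \nabla_x u^n_t$ is in $\mathcal{C}^2_1$, but neither is in your proposal.

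The paper sidesteps this difficulty by never pairing $v^{2,n}$ with $\rho^1$ directly. Instead, it introduces the one-parameter family $\bd{\gamma}^\lambda$ solving the continuity equation with control $\bd{\nu}+\lambda\bd{\eta}^1$, and tests $v^{2,n}$ against $\bd{\gamma}^\lambda$ via the duality of Lemma \ref{lem:duality:1,2:F}. Since $\gamma^\lambda_t \in \mathcal{P}_3(\R^{d_1}\times\R^{d_2})$, this only requires $v^{2,n}_t$ and $\nabla_x v^{2,n}_t$ to be continuous with polynomial growth --- well within the regularity supplied by Proposition \ref{prop:additionalregularityvt}. After passing $n\to\infty$, one obtains an identity parametrized by $\lambda$; subtracting the case $\lambda=0$, dividing by $\lambda$, and letting $\lambda\to 0$ (using Proposition \ref{prop:differentiatinggammalamnda25/01} to recover $\rho^1$ as $\frac{d}{d\lambda}|_{\lambda=0}\gamma^\lambda$) yields the result. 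In short: the paper differentiates in $\lambda$ \emph{after} the duality pairing rather than before, which lowers the regularity demanded of the test function from $\mathcal{C}^3$ to $\mathcal{C}^1$.
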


\begin{proof}
For $\lambda \in (-1,1)$, we let $\bd{\gamma}^{\lambda}$ be the solution to the continuity equation
\eqref{eq:lem:compactnessContinuityEquation24/06:statement}
when driven by the control $\bd{\nu}^{\lambda} := \bd{\nu} + \lambda \bd{\eta}^1.$ We use Lemma
\ref{lem:newregularizationb23/12}
in order to 
regularize $\bd{\nu}$ and $\bd{\eta}^2$, 
with 
$(\bd{\nu}^n)_{n \geq 1}$ and $(\bd{\eta}^{2,n})_{n \geq 1}$ as regularized sequences.
Then, for each $n \geq 1$, we 
call  $\bd{u}^n$
   the solution to the transport equation \eqref{eq:transport:reminder:for:linearization} 
   driven by ${\boldsymbol \nu}^n$. 
Also,  we   
let $\bd{v}^{2,n}$ be the classical solution to
the equation
\eqref{eq:linearized:transport:eq}
associated to 
the input $({\boldsymbol \nu}^n,\bd{\eta}^{2,n},\bd{u}^{n})$, i.e.
$$ -\partial_t v_t^{2,n} - b(x,\nu_t^{n}) \cdot \nabla_x v_t^{2,n}  = b(x,\eta_t^{2,n}) \cdot \nabla_x u_t^{n}, \quad v_T^{2,n} = 0,$$
with the regularity of 
$\bd{v}^{2,n}$ being given by 
Proposition \ref{prop:additionalregularityvt}. 

Using the equations satisfied by $\bd{\gamma}^{\lambda}$ first and $\bd{v}^{2,n}$ next, we
can follow the derivation of 
\eqref{eq:duality:relation:proof}
and then obtain (noticing that $v_T^{2,n}$ is null) 
\begin{align}
\nonumber
     \int_{\R^{d_1} \times \R^{d_2}} v_{t_0}^{2,n}(x,y) d\gamma_0(x,y) &=- \int_{t_0}^T \int_{\R^{d_1} \times \R^{d_2}} \bigl \{ \partial_t v_t^{2,n}(x,y) + b(x,\nu_t^{\lambda}) \cdot \nabla_x v_t^{2,n}(x,y)  \bigr \} d\gamma_t^{\lambda}(x,y) dt 
     \\
    &= -  \int_{t_0}^T \int_{\R^{d_1} \times \R^{d_2}}  b(x, \nu_t^{\lambda} - \nu_t^{n}) \cdot \nabla_x v_t^{2,n}(x,y) d\gamma_t^{\lambda}(x,y) dt 
    \label{eq:v2:with:regularization}
    \\
    &\hspace{15pt} + \int_{t_0}^T \int_{\R^{d_1} \times \R^{d_2}} b(x,\eta_t^{2,n}) \cdot \nabla_xu_t^n(x,y)
    d\gamma_t^{\lambda}(x,y) dt.
\nonumber
\end{align}
The objective now is to pass to the limit as 
$n \rightarrow + \infty$. 
Denoting, 
for each $n \in {\mathbb N}^*$, $(X_t^{n,x})_{t \in [t_0,T]}$ the solution to the ODE 
\eqref{eq:ODE:appendix}      
    starting from $x$ at time $t_0$ and driven by the control ${\boldsymbol \nu}^n$ (we remove the superscript 
    $n$ when $\bd{\nu}^n$ is replaced by $\bd{\nu}$), 
    we know from Corollary 
\ref{cor:approximation:nu} that, for any $t \in [t_0,T]$,
\begin{equation*}
    \lim_{n \rightarrow +\infty} \sup_{ s \in [t,T]} \norm{ X_s^{n, \cdot} - X_s^{\cdot}}_{\mathcal{C}^3_{1}} = 0.
\end{equation*}
By the representation formula in Proposition 
\ref{prop:SolutionBackxard16SeptAppendix}, we deduce that 
$(x,y) \mapsto (\nabla_x u_t^n(x,y),\nabla^2_x u_t^n(x,y))$ converges locally uniformly to 
$(x,y) \mapsto (\nabla_xu_t(x,y),\nabla_x^2 u_t(x,y))$ as $n \rightarrow +\infty$.

Moreover, we know from Lemma 
\ref{lem:newregularizationb23/12} that
that 
\begin{equation*}
\lim_{n \rightarrow +\infty} \int_{t_0}^T 
\Bigl( 
\norm{ b(\cdot, \nu_t^n) - b(\cdot, \nu_t) }_{\mathcal{C}^3_{1}} 
+
\norm{ b(\cdot, \eta_t^n) - b(\cdot, \eta_t) }_{\mathcal{C}^3_{1}} 
\Bigr) 
dt =0.
\end{equation*}
By 
using formula \eqref{eq:representation:appendix:v} (with $X^{t,x}_s$ being replaced by $X^{n}_s$, 
$\eta_s$ by $\eta_s^{2,n}$ and $\nabla_x u_s$ by $\nabla_x u^n_s$) 
to represent $v^{2,n}$, we deduce from the last three convergence properties that, for any $t \in [t_0,T]$, 
$(x,y) \mapsto v^{2,n}_t(x,y)$ converges
to 
$(x,y) \mapsto v^{2}_t(x,y)$ 
locally uniformly. 
And then, by using the first inequality in the statement 
of Proposition 
\ref{prop:linearized:transport:equation:representation}, 
we can let $n \rightarrow +\infty$ in 
\eqref{eq:v2:with:regularization} to infer that 
\begin{align*}  
\int_{\R^{d_1} \times \R^{d_2}} v^2_{t_0}(x,y) d\gamma_0(x,y) &= -\lambda \int_{t_0}^T \int_{\R^{d_1} \times \R^{d_2}}  b(x, \eta^1_t) \cdot \nabla_x v^2_t(x,y) d\gamma_t^{\lambda}(x,y) dt \\
& +\int_{t_0}^T \int_{\R^{d_1} \times \R^{d_2}} b(x,\eta^2_t) \cdot \nabla_xu_t(x,y) d\gamma_t^{\lambda}(x,y) dt.
\end{align*}
In particular, when $\lambda =0$ we get 
$$ \int_{\R^{d_1} \times \R^{d_2}} v^2_{t_0}(x,y) d\gamma_0(x,y) = \int_{t_0}^T \int_{\R^{d_1} \times \R^{d_2}} b(x,\eta^2_t) \cdot \nabla_xu_t(x,y) d\gamma_t^{0}(x,y) dt. $$
Substracting the two previous equalities and dividing by $\lambda$ gives 
$$ \int_{t_0}^T \int_{\R^{d_1} \times \R^{d_2}} b(x,\eta_t^2) \cdot \nabla_x u_t (x,y) d \bigl[ \frac{\gamma_t^{\lambda} - \gamma_t^0}{\lambda } \bigr](x,y)dt =  \int_{t_0}^T \int_{\R^{d_1} \times \R^{d_2}}  b(x, \eta^1_t) \cdot \nabla_x v^2_t(x,y) d\gamma_t^{\lambda}(x,y) dt. $$
Thanks to Proposition 
\ref{prop:differentiatinggammalamnda25/01}, we can pass to the limit when $\lambda \rightarrow 0$ in the left-hand side, while estimate \eqref{eq:23/01:17:18} in Lemma \ref{lem:backthenA.21-09/02} together with the regularity of $\bd{v}^2$ from Proposition \ref{prop:linearized:transport:equation:representation} allows us to pass to the limit in the right-hand side.
\end{proof}

\section{Second Order Conditions}

\label{sec:SOC}

The purpose of this section is to prove the main results of Subsection 
\ref{subse:SOC04/03}, and in particular Theorem 
\ref{prop:SecondOrderConditions3Avril}
and 
Proposition 
\ref{prop:reg:v,rho}.

\subsection{Second Order Variations}

We start with following statement, which corresponds to the first part of Theorem 
\ref{prop:SecondOrderConditions3Avril} (recall Definition \ref{def:admissible:perturbation}
for the definition of the set $\mathcal{A}^\ell(t_1)$ used below):

\begin{prop}
Let $(t_0,\gamma_0) \in [0,T] \times {\mathcal P}_3
({\mathbb R}^{d_1} \times {\mathbb R}^{d_2})$ and
   $\bd{\nu}^*$ be minimum of  $J ((t_0,\gamma_0), \cdot )$ with associated optimal trajectory $\bd{\gamma}^*$ and $t_1 \in [t_0,T)$. Then, $ \mathcal{J} ( (t_1, \gamma^*_{t_1}),  \bd{\nu}^*, \bd{\eta} ) \geq 0$
for all $\bd{\eta} \in \mathcal{A}^\ell(t_1)$. 
\label{prop:mathcalJnonnegative26/06}
\end{prop}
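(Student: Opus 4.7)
The plan proceeds in four steps. First, by dynamic programming, the restriction $(\nu^*_t)_{t_1 \le t \le T}$ is itself a minimizer of $J((t_1, \gamma^*_{t_1}), \cdot)$, so I may assume without loss of generality that $t_1 = t_0$. If $\bd{\eta} \in \mathcal{A}^\ell(t_0)$ is not absolutely continuous with respect to Lebesgue with density in $L^2((\bd{\nu}^*)^{-1})$, then $\mathcal{J}((t_0,\gamma_0), \bd{\nu}^*, \bd{\eta}) = +\infty$ by the definition \eqref{eq:minformathcalJ1Avril} and the inequality is trivial. Otherwise I write $\eta_t = f_t \nu^*_t$ with $f_t \in L^2(\nu^*_t)$ and $\int_A f_t \, d\nu^*_t = 0$ (the last identity following from $\eta_t(A)=0$).

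Next, I would produce admissible perturbations by truncation. For $N \geq 1$, set $\bar f^N_t := (f_t \wedge N) \vee (-N)$, $c^N_t := \int_A \bar f^N_t \, d\nu^*_t$, $f^N_t := \bar f^N_t - c^N_t$, and $\eta^N_t := f^N_t \nu^*_t$. Then $|f^N_t| \leq 2N$ and $\eta^N_t(A)=0$; combined with the exponential upper bound $\nu^*_t(a) \leq \Lambda_{\bd{\nu}^*,\gamma_0} \, e^{-\ell(a)/2}$ from Proposition \ref{prop:regularityfromOC} (and the super-quadratic growth of $\ell$), one verifies that $\bd{\eta}^N \in \mathcal{A}^\ell(t_0)$. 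Crucially, for $|\lambda| \leq 1/(4N)$ the density $\nu^*_t + \lambda \eta^N_t = (1+\lambda f^N_t)\nu^*_t$ is positive, of unit mass, and has finite relative entropy with respect to $\nu^\infty$ (since $1+\lambda f^N_t$ is bounded between $1/2$ and $3/2$ and $\int_{t_0}^T\mathcal{E}(\nu^*_t|\nu^\infty)dt<\infty$ by Proposition \ref{prop:regularityfromOC}); hence $\bd{\nu}^* + \lambda \bd{\eta}^N \in \mathcal{A}(t_0)$.

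Third, I would identify the second derivative of $g_N(\lambda) := J((t_0,\gamma_0), \bd{\nu}^* + \lambda \bd{\eta}^N)$ at $\lambda=0$. The second derivative of the entropic term equals $\epsilon \int_{t_0}^T\int_A (\eta^N_t)^2/\nu^*_t \, da\,dt$ by direct differentiation under the integral, justified by the uniform bound on $f^N$ and the convex structure of $\nu \mapsto \nu\log\nu$. For the terminal term, Lemma \ref{lem:secondderivativeterminalcost29/01} applied with $\phi = L \in \mathcal{C}^3_{2,1}$ (whose associated backward adjoint is precisely $\bd{u}^*$) yields $2\int_{t_0}^T \langle b(\cdot,\eta^N_t)\cdot \nabla_x u^*_t;\rho^N_t\rangle dt$, where $\bd{\rho}^N \in \mathcal{R}(t_0)$ is the solution of the linearized continuity equation \eqref{eq:rhotProp3.2Append} driven by $\bd{\eta}^N$. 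Summing, $g_N''(0) = \mathcal{J}((t_0,\gamma_0), \bd{\nu}^*, \bd{\eta}^N)$, and this quantity is non-negative because $\lambda = 0$ is a (local) minimum of $g_N$.

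The main obstacle is the passage $N\to\infty$ yielding $\mathcal{J}((t_0,\gamma_0), \bd{\nu}^*, \bd{\eta}) \geq 0$. By dominated convergence, $c^N_t \to 0$ (using $f_t \in L^2(\nu^*_t) \subset L^1(\nu^*_t)$ and Cauchy--Schwarz to bound $|c^N_t|^2 \leq \int f_t^2 d\nu^*_t$), so $f^N_t \to f_t$ pointwise. For the entropic part, $(f^N_t)^2 \nu^*_t \leq 2 f_t^2 \nu^*_t + 2\,\|f_t\|_{L^2(\nu^*_t)}^2\,\nu^*_t$ is dominated in $L^1([t_0,T]\times A)$, giving convergence to $\int\int f_t^2\nu^*_t\,da\,dt$. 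For the terminal part, the exponential decay $(1+|a|^k)\nu^*_t \leq C_k e^{-\ell(a)/4}$ combined with $\bd{\eta} \in \mathcal{D}(t_0)$ furnishes an $L^1$ dominator allowing DCT, so $\bd{\eta}^N \to \bd{\eta}$ in $\mathcal{D}(t_0)$; the stability estimate \eqref{eq:bound:for:rho:useful} (applied to $\bd{\eta}^N - \bd{\eta}$) then yields $\bd{\rho}^N \to \bd{\rho}$ in $\mathcal{R}(t_0)$, and Lemma \ref{lem:convenientexpress} handles the product structure in the duality bracket, delivering convergence of the terminal contribution and completing the proof.
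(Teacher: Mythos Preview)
Your proof is correct and follows essentially the same strategy as the paper: reduce to a bounded perturbation for which $\bd{\nu}^* + \lambda\bd{\eta}^N \in \mathcal{A}(t_0)$, compute the second variation via Lemma~\ref{lem:secondderivativeterminalcost29/01} for the terminal cost and direct differentiation for the entropy, and then pass to the limit using the stability estimate~\eqref{eq:bound:for:rho:useful} for $\bd{\rho}$. The only difference is the truncation scheme: you truncate the ratio $f_t = \eta_t/\nu^*_t$ at level $N$ (which automatically makes $1+\lambda f^N_t$ bounded away from zero), whereas the paper truncates $\eta_t$ both in value and in the spatial variable $a$ (compact support in $B(0,R)$), relying instead on the pointwise lower bound for $\nu^*_t$ from Proposition~\ref{prop:regularityfromOC} to ensure positivity. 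Your variant is slightly cleaner in that admissibility of $\bd{\nu}^* + \lambda\bd{\eta}^N$ follows from a uniform multiplicative bound; the paper's variant has the advantage that $\bd{\eta}^R$ is compactly supported in $a$, which trivializes the entropy differentiation. Both limiting arguments go through by dominated convergence, and your Cauchy--Schwarz reduction of the $\mathcal{D}(t_0)$-convergence to $L^2((\bd{\nu}^*)^{-1})$-convergence of $f^N_t - f_t$ is a legitimate shortcut.
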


\begin{proof}
{ \ }

\textit{Step 1.}
    We first assume that $\eta \in \mathcal{A}^\ell(t_1)$ also satisfies 
    \begin{equation}
    \bd{\eta} \mbox{ belongs to } L^{\infty}([t_1,T] \times A) \mbox{ with }  \bd{\eta} =0 \mbox{ outside } [t_1,T] \times B(0,R) \mbox{ for some }R >1.
\label{ass:furtherassumptionfornu}
\end{equation}
Thanks to the lower bound on $\bd\nu^*$ from Proposition \ref{prop:regularityfromOC}, $\bd{\nu}^{\lambda} := \bd{\nu}^* + \lambda \bd{\eta}$ belongs 
to $\mathcal{A}(t_1)$ for all $\lambda \in (-\lambda_0,\lambda_0)$, for some $\lambda_0 \in (0,1)$ small enough
(recall Definition \ref{def:admissible:control} for the definition of 
${\mathcal A}(t_1)$). We denote by $\bd{\gamma}^{\lambda} = (\gamma^{\lambda}_t)_{t \in [t_1,T]}$ the resulting curve, solution to 
$$ \partial_t \gamma^{\lambda}_t + \div_x \bigl( b(x,\nu_t^{\lambda}) \gamma_t^{\lambda} \bigr) = 0, \quad \mbox{ in } (t_1,T) \times \R^{d_1} \times \R^{d_2}, \quad \gamma_{t_1}^{\lambda} = \gamma^*_{t_1}.$$
By dynamic programming, $\bd{\nu}^*$ (once restricted to $[t_1,T]$) is optimal for $J ( (t_1,\gamma^*_{t_1}),\cdot )$ and therefore, 
\begin{equation} 
J \bigl( (t_1, \gamma^*_{t_1}), \bd\nu^{\lambda} \bigr) \geq J \bigl( (t_1, \gamma^*_{t_1}), \bd\nu^* \bigr), \quad \forall \lambda \in (-\lambda_0,\lambda_0). 
\label{eq:1Avriloptimalitytildenu}
\end{equation}
We are going to compute the second order variation of $\lambda \in (-\lambda_0,\lambda_0)\mapsto J ( (t_1, \gamma^*_{t_1}), \bd\nu^{\lambda} )$. 
The terminal cost is handled by Lemma \ref{lem:secondderivativeterminalcost29/01} (which relies on the notation introduced in 
\eqref{eq:gamma:rho:derivative:w.r.t.:lambda}, with the interval $(-1,1)$ being replaced by 
$(-\lambda_0,\lambda_0)$). 
It holds
\begin{equation*} 
\frac{d^2}{d\lambda^2} \Big|_{\lambda = 0} \int_{\R^{d_1} \times \R^{d_2}} 
L(x,y) d\gamma_{T}^{\lambda}(x,y) =  2 \int_{t_0}^{T} \int_{\R^{d_1} \times \R^{d_2}} b(x,\eta_t) \cdot \nabla_x u_t^*(x,y) d\gamma_t^*(x,y) dt,
\end{equation*}
where ${\bd u}^*$ solves the 
transport equation 
in \eqref{eq:NOCforutgammat2Avril}.

On the other hand, using Lebesgue's dominated convergence theorem together with condition \eqref{ass:furtherassumptionfornu} and Proposition \ref{prop:regularityfromOC} (which supplies us with the proper integrability conditions), we have
$$ \frac{d}{d\lambda} \int_{t_1}^T \mathcal{E} (\nu_t^{\lambda} | \nu^{\infty}) dt = \int_{t_1}^T \int_A \ell(a) \eta_t(a) da dt +  \int_{t_1}^T \int_{A} \log (\nu_t^{\lambda} (a)) d\eta_t(a) dt$$
and,
\begin{equation} 
\frac{d^2}{d\lambda^2}\Big|_{\lambda = 0} \int_{t_1}^T \mathcal{E} \bigl( \nu_t^{\lambda} | \nu^{\infty} \bigr)dt = \int_{t_1}^T \int_{A} \frac{\eta_t(a)^2}{ \nu^*_t(a)} dadt.
\label{eq:secondderivativerunningcost}
\end{equation}
Using \eqref{eq:1Avriloptimalitytildenu}
(and recalling the expression 
\eqref{eq:deftotalcost} for
the cost $J$
and the formula 
\eqref{eq:minformathcalJ1Avril}
for ${\mathcal J}$), we deduce from the last two displays that, for all $\bd \eta \in \mathcal{A}^\ell(t_1)$ satisfying the condition \eqref{ass:furtherassumptionfornu} it holds
\begin{equation}  
\mathcal{J} \bigl( (t_1, \gamma_{t_1}^*), \bd{\nu}^* , \bd{\eta} \bigr)=\frac{d^2}{d\lambda^2}\Big|_{\lambda = 0} J \bigl( (t_1, \gamma^*_{t_1}), \bd\nu^{\lambda} \bigr) \geq 0.
\label{eq:SecondOrderVariation24Sept}
\end{equation}
\vskip 2pt

\textit{Step 2.}
It remains to drop Condition \eqref{ass:furtherassumptionfornu} 
in order to get 
the result for any ${\bd \eta} \in {\mathcal A}^\ell(t_1)$. We proceed by
an approximation argument. For that we take 
$\bd \eta \in \mathcal{A}^\ell(t_1)$ with associated curve $\bd{\rho} \in \mathcal{R}(t_1)$ solution to \eqref{eq:rhotProp3.2} 
(with $\bd \nu$ being understood as $\bd \nu^*$ therein). We can assume that $\eta_t$ has a density for all $t \in [t_1,T]$ and
\begin{equation} 
\int_{t_1}^T \int_{A} \frac{\eta_t(a)^2}{\nu^*_t(a)} da dt <+\infty,
\label{eq:integrability1Avril}
\end{equation}
as otherwise $\mathcal{J} \bigl( t_1, \bd\nu^*,   \bd{\eta}  \bigl) = +\infty$ (see 
\eqref{eq:minformathcalJ1Avril}) and 
\eqref{eq:SecondOrderVariation24Sept}
trivially holds true. For $R>0$, we 
consider the cutoff function $\varphi_R : \R \rightarrow \R$:
\begin{equation}
\varphi_R(r) := 
\left \{
\begin{array}{ll}
r & \mbox{ if } |r| \leq R, \\
0 &\mbox{ otherwise.}
\end{array}
\right.
\end{equation}
By the lower bound 
\eqref{eq:bound:nu(a):exp(-ell(a))}
for $\bd \nu^*$, we can find $R_0 >0$ such that, for all $R \geq R_0$ and 
$t \in [t_0,T]$, it holds $\int_{B(0,R)} \nu^*_t(a')da' \geq 1/2$. Then, we define the following approximation 
$\bd \eta^R$ of $\bd \eta$, by letting
$$\eta_t^R(a) := \varphi_R\bigl(\eta_t(a)\bigr) \mathds{1}_{B(0,R)}(a) - \frac{\int_{B(0,R)}\varphi_R(\eta_t(a'))da'}{\int_{B(0,R)} \nu^*_t(a')da'} \nu^*_t(a) \mathds{1}_{B(0,R)}(a), \quad (t,a) \in [t_1,T] \times A, $$
where $B(0,R)$ of the open ball of center $0$ and radius 
$R$ in $A$. We easily verify that $\bd\eta^R$ belongs to $\mathcal{A}^\ell(t_1)$ for all $R \geq R_0$, satisfies the condition \eqref{ass:furtherassumptionfornu} and converges point-wise to $\bd\eta$ when $R \rightarrow +\infty$ (because $\bd \eta$ is centered). Moreover,
we have, for all $(t,a) \in [t_1,T] \times A$,
\begin{equation} 
\bigl|\eta_t^R(a)\bigr| \leq |\eta_t(a)| + 2 \biggl( \int_A |\eta_t(a')| da' \biggr) \nu^*_t(a). 
\label{eq:uniforminR4Sept}
\end{equation}
By Lebesgue dominated convergence theorem,
we deduce that $\bd{\eta}^R$ converges to $\bd{\eta}$ in $\mathcal{D}(t_1)$ equipped with the norm $\| \cdot \|_{\mathcal{D}(t_1)}$ 
defined in 
\eqref{eq:definitionadmissiblecontrols:aux} (since 
$\bd \eta$ and 
$\bd \nu^*$ themselves belong to 
${\mathcal D}(t_1)$). Thanks to \eqref{eq:integrability1Avril} (and again to 
the domination property \eqref{eq:uniforminR4Sept}), we can also conclude that 
\begin{equation} 
\lim_{R \rightarrow +\infty} \int_{t_1}^T \int_A \frac{|\eta_t^R(a)|^2}{\nu^*_t(a)}dadt = \int_{t_1}^T \int_A \frac{|\eta_t(a)|^2}{\nu^*_t(a)}dadt.
\label{eq:limnutR}
\end{equation}
On the other hand, if we let $\bd\rho^R$ be the solution to \eqref{eq:rhotProp3.2} associated to $\bd\eta^R$, we can apply the estimate 
\eqref{eq:bound:for:rho:useful}
of Proposition \ref{prop:differentiatinggammalambda09/02} (after noticing that $\bd{\rho}^R- \bd{\rho}$ is the solution associated to $\bd{\eta}^R- \bd{\eta}$) to find $C>0$ independent from $R>0$ such that (recall 
that $\gamma_{t_1}^*$ belongs to 
${\mathcal P}_3
({\mathbb R}^{d_1} \times {\mathbb R}^{d_2})$
because $\gamma_0$ does)
$$ \norm{\bd{\rho}^R - \bd{\rho}}_{\mathcal{R}(t_1)} \leq C \norm{ \bd{\eta}^R - \bd{\eta} }_{\mathcal{D}(t_1)}, $$
and then both sides tend to $0$ as $R$ tends to $\infty$. Using \eqref{eq:testfn2foru}, we deduce
\begin{align*}
&\biggl| \int_{t_1}^T  \langle b(\cdot,\eta_t^R) \cdot \nabla_x u_t^*; \rho_t^R \rangle dt -  \int_{t_1}^T \langle b(\cdot,\eta_t) \cdot \nabla_x u_t^*; \rho_t \rangle dt \biggr| 
\\
&\hspace{15pt} \leq \biggl| \int_{t_1}^T \langle b(\cdot,\eta_t^R-\eta_t) \cdot \nabla_x u_t^*; \rho_t^R \rangle dt \biggr| + \biggl| \int_{t_1}^T \langle b(\cdot,\eta_t) \cdot \nabla_x u_t^*; \rho_t^R- \rho_t \rangle dt \biggr| 
\\
&\hspace{15pt} \leq C \norm{\bd{\rho}^R}_{\mathcal{R}(t_1)} \int_{t_1}^T \int_A (1+|a|^2)d|\eta_t^R - \eta_t|(a)dt + C \norm{\bd{\rho}^R - \bd{\rho}}_{\mathcal{R}(t_1)} \int_{t_1}^T \int_A (1+|a|^2) d|\eta_t|(a)dt 
\\
&\hspace{15pt} \leq C \Bigl( \norm{ \bd{\eta}^R - \bd{\eta}}_{\mathcal{D}(t_1)} + \norm{\bd{\rho}^R - \bd{\rho}}_{\mathcal{R}(t_1)} \Bigr).
\end{align*}
We deduce that 
$$  \int_{t_1}^T \langle b(\cdot,\eta_t) \cdot \nabla_x u^*_t; \rho_t \rangle dt =   \lim_{R \rightarrow +\infty}  \int_{t_1}^T \langle b( \cdot,\eta^R_t) \cdot \nabla_x u_t ;\rho^R_t \rangle dt. $$
Combining the above display with \eqref{eq:limnutR}
and recalling once
again
the formula 
\eqref{eq:minformathcalJ1Avril}
for ${\mathcal J}$, 
we obtain
$$ \mathcal{J} \bigl( (t_1, \gamma^*_{t_1}) ,\bd\nu^*, \bd\eta \bigr) = \lim_{ R \rightarrow +\infty} \mathcal{J} \bigl( ( t_1, \gamma^*_{t_1}) ,\bd{\nu}^*, \bd\eta^R \bigr) \geq 0,$$
with the inequality in the right-hand side following from
the first step. This concludes the proof of the proposition.
\end{proof}

\subsection{Necessary and Sufficient Conditions}
We now establish the second part of Theorem 
\ref{prop:SecondOrderConditions3Avril}.

\begin{prop}
Given 
$(t_0, \gamma_0) \in [0,T] \times \mathcal{P}_3(\R^{d_1} \times \R^{d_2})$,  
let $\bd{\nu}^*$  be a minimum of  $J 
( (t_0,\gamma_0), \cdot )$ with associated optimal trajectory and multiplier $(\bd{\gamma}^*, \bd{u}^*)$. Then,
for 
$t_1 \in [t_0,T)$
and for $\bd{\eta} \in \mathcal{A}^{ \ell}(t_1)$ with associated solution $\bd{\rho} \in \mathcal{R}(t_1)$ to the advection equation \eqref{eq:rhotProp3.2}, $\mathcal{J} ( (t_1, \gamma^*_{t_1}) , \bd\nu^*, \bd{\eta}) =0$ if and only if there exists $ \bd{v} \in \mathcal{C}([t_1,T], \mathcal{C}^1_{1})$  such that $(\bd\eta, \bd\rho, \bd v)$ solves \eqref{eq:nutProp3.2}-\eqref{eq:vtrhotProp3.2}.
\label{prop:NSCond26/06}
\end{prop}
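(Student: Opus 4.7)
The approach is to exploit the fact that $\mathcal{J}((t_1,\gamma^*_{t_1}), \bd{\nu}^*, \cdot)$ is a convex quadratic functional on $\mathcal{A}^\ell(t_1)$, which is nonnegative by Proposition \ref{prop:mathcalJnonnegative26/06}. The equivalence in the statement then becomes a standard characterization of minimizers via first-order Euler--Lagrange conditions, with Proposition \ref{prop:IPPLinearized27/01} providing the crucial duality relation that lets us introduce the backward multiplier $\bd{v}$.

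For the ``if'' direction, assume $(\bd{\eta},\bd{\rho},\bd{v})$ solves \eqref{eq:nutProp3.2}--\eqref{eq:vtrhotProp3.2}. I would multiply the pointwise identity \eqref{eq:nutProp3.2} by $\eta_t(a)$ and integrate over $A$; because $\eta_t$ integrates to zero, the normalizing constant $c_t$ drops out. Using Proposition \ref{prop:measurable+fubini15/01} to swap the $a$-integral with the duality bracket (resp. the $\gamma^*_t$-integral), this yields, for a.e.~$t \in [t_1,T]$,
\begin{equation*}
\epsilon \int_A \frac{|\eta_t(a)|^2}{\nu^*_t(a)} da = -\bigl\langle b(\cdot,\eta_t) \cdot \nabla_x u^*_t; \rho_t \bigr\rangle - \int_{\R^{d_1} \times \R^{d_2}} b(x,\eta_t) \cdot \nabla_x v_t(x,y) d\gamma^*_t(x,y).
\end{equation*}
Integrating over $[t_1,T]$ and then applying Proposition \ref{prop:IPPLinearized27/01} with $\bd{\eta}^1=\bd{\eta}^2=\bd{\eta}$ to identify the two cross terms on the right-hand side, I would conclude that the two contributions on the right are equal, which forces
$\epsilon \int_{t_1}^T \int_A |\eta_t|^2/\nu^*_t \, da\, dt = -2 \int_{t_1}^T \langle b(\cdot,\eta_t) \cdot \nabla_x u^*_t; \rho_t \rangle dt$, i.e., $\mathcal{J}((t_1,\gamma_{t_1}^*), \bd{\nu}^*, \bd{\eta}) = 0$.

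For the ``only if'' direction, suppose $\mathcal{J}(\bd{\eta})=0$. By Proposition \ref{prop:mathcalJnonnegative26/06}, $\bd{\eta}$ is a minimizer, and necessarily $\bd{\eta} \in L^2((\bd{\nu}^*)^{-1})$. Pick any test perturbation $\bd{\tilde{\eta}} \in \mathcal{A}^\ell(t_1)$ satisfying the boundedness/compact support condition \eqref{ass:furtherassumptionfornu}; by the lower bound on $\nu^*$ in \eqref{eq:bound:nu(a):exp(-ell(a))}, the curve $\bd{\eta}+\lambda\bd{\tilde{\eta}}$ remains in the domain of $\mathcal{J}$ for all small $\lambda \in \R$. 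Computing the first-order variation (using that $\bd{\rho}$ is linear in $\bd{\eta}$, so the perturbed curve is $\bd{\rho}+\lambda\bd{\tilde{\rho}}$, with $\bd{\tilde{\rho}} \in \mathcal{R}(t_1)$ the solution of \eqref{eq:rhotProp3.2} for $\bd{\tilde{\eta}}$), the minimality condition $\frac{d}{d\lambda}|_{\lambda=0}\mathcal{J}(\bd{\eta}+\lambda\bd{\tilde{\eta}})=0$ reads
\begin{equation*}
0 = \epsilon \int_{t_1}^T \int_A \frac{\eta_t \tilde{\eta}_t}{\nu^*_t} da\, dt + \int_{t_1}^T \langle b(\cdot,\tilde{\eta}_t) \cdot \nabla_x u^*_t; \rho_t \rangle dt + \int_{t_1}^T \langle b(\cdot,\eta_t) \cdot \nabla_x u^*_t; \tilde{\rho}_t \rangle dt.
\end{equation*}
Now I define $\bd{v} \in \mathcal{C}([t_1,T], \mathcal{C}^1_1)$ as the solution of the linearized backward equation \eqref{eq:linearizedeqvt04/03} with source $b(x,\eta_t) \cdot \nabla_x u^*_t$ (whose well-posedness is given by Proposition \ref{prop:SolnVt19/02}), and apply Proposition \ref{prop:IPPLinearized27/01} with $\bd{\eta}^1=\bd{\tilde{\eta}}$, $\bd{\eta}^2=\bd{\eta}$ to rewrite the third term as $\int_{t_1}^T \int b(x,\tilde{\eta}_t) \cdot \nabla_x v_t d\gamma^*_t dt$. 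Combining terms and applying Fubini gives
\begin{equation*}
0 = \int_{t_1}^T \int_A \tilde{\eta}_t(a) \Phi_t(a) da\, dt, \quad \Phi_t(a) := \epsilon \frac{\eta_t(a)}{\nu^*_t(a)} + \bigl\langle b(\cdot,a) \cdot \nabla_x u^*_t; \rho_t \bigr\rangle + \int b(x,a) \cdot \nabla_x v_t d\gamma^*_t.
\end{equation*}

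The main obstacle is the final step: deducing from the vanishing of this integral, over all admissible test $\bd{\tilde{\eta}}$ with $\int_A \tilde{\eta}_t = 0$, that $\Phi_t(\cdot)$ is constant in $a$ for a.e.~$t$ (which is exactly identity \eqref{eq:nutProp3.2} with $c_t = \int \Phi_t d\nu^*_t$, the normalization forcing $\int \eta_t da = 0$). To make this rigorous I would approximate: given any smooth compactly supported test $\psi(t,a)$, the perturbation $\tilde{\eta}_t(a) := \psi(t,a) - (\int \psi(t,a') da' / \int_{B(0,R)} \nu^*_t(a') da') \nu^*_t(a) \mathds{1}_{B(0,R)}$ belongs to $\mathcal{A}^\ell(t_1)$ and satisfies \eqref{ass:furtherassumptionfornu} for $R$ large enough (as in Step 2 of the proof of Proposition \ref{prop:mathcalJnonnegative26/06}). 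Plugging this into the identity yields $\int \psi(t,a)[\Phi_t(a) - \bar{\Phi}_t] da\, dt = 0$ with $\bar{\Phi}_t = \int \Phi_t d\nu^*_t / \int_{B(0,R)} \nu^*_t$, which by density concludes the argument.
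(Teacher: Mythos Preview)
Your proof is correct and follows essentially the same approach as the paper's: both directions hinge on Proposition \ref{prop:IPPLinearized27/01} to pass between the $\rho$-term and the $v$-term, the ``if'' direction by direct substitution of \eqref{eq:nutProp3.2} into $\mathcal{J}$, and the ``only if'' direction by computing the first-order variation of the quadratic functional $\mathcal{J}$ at its minimizer and testing against centered compactly supported perturbations. The only cosmetic difference is that the paper parametrizes the variation as a convex combination $(1-\lambda)\bd{\eta}+\lambda\bd{\eta}'$ rather than $\bd{\eta}+\lambda\bd{\tilde{\eta}}$, and centers the test function slightly differently; your centering via $\nu^*_t \mathds{1}_{B(0,R)}$ is in fact a bit cleaner.
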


\begin{proof}
 { \ }
 \vskip 4pt
    \textit{Step 1.} In this step, we prove that the 
    conditions are sufficient. To do so, we first assume that there exists $\bd{v} \in \mathcal{C}([t_1,T], \mathcal{C}^1_{1})$ such that $(\bd\nu, \bd\rho, \bd{v})$ solves \eqref{eq:nutProp3.2} and \eqref{eq:vtrhotProp3.2}. 
Using the equation 
\eqref{eq:nutProp3.2} satisfied by $\bd\eta$ and recalling that, for all $t \in [t_1,T]$, $\eta_t(A) =0$, we have, for $dt$-almost all $t \in [t_1,T]$,
\begin{equation*}
\begin{split}
&\int_{A} \frac{(\eta_t(a))^2 }{\nu^*_t(a)}da = \int_A \frac{\eta_t(a)}{\nu^*_t(a)} d\eta_t(a) 
\\
&=    -\frac{1}{\epsilon} \int_A \biggl( \Bigl \langle b(\cdot,a) \cdot \nabla_x u^*_t; \rho_t \Bigr \rangle +  \int_{\R^{d_1} \times \R^{d_2}} b(x,a) \cdot  \nabla_x v_t(x,y) d\gamma^*_t (x,y) \biggr) d\eta_t(a).
\end{split}
\end{equation*}
Integrating in time and combining the above display with Proposition \ref{prop:IPPLinearized27/01} and the formula \eqref{eq:minformathcalJ1Avril}, 
we get
$$ \mathcal{J} \bigl( (t_1, \gamma_{t_1}^*), \bd\nu^*, \bd{\eta} \bigr) = 0.$$
Since Proposition \ref{prop:mathcalJnonnegative26/06} shows that the left-hand side is always non-negative, $\bd{\eta}$ is indeed a minimum of $\mathcal{J} ( ( t_1,  \gamma_{t_1}^*),\bd{\nu}^*, \cdot )$. 
\vskip 4pt

\textit{Step 2.} We now prove that the conditions are necessary. Let us consider a minimum 
$\tilde{\bd \eta}=( \tilde{\eta}_t)_{t_1 \le t \le T}$ of 
$\mathcal{J} ( (t_1, \gamma_{t_1}^*), \bd\nu^*, \cdot)$ in
\eqref{eq:minformathcalJ1Avril}
in 
${\mathcal A}^\ell(t_1)$
with associated curve 
$\tilde{\bd \rho}=( \tilde{\rho}_t)_{t_1 \le t \le T}$
in ${\mathcal R}(t_1)$. We also let 
$ \tilde{\bd{v}} =  (\tilde{v}_t)_{t_1 \le t \le T}$ be the solution to
\begin{equation} 
-\partial_t  \tilde{v}_t  -  b(x,\nu^*_t) \cdot \nabla_x \tilde{v}_t =  b(x,\tilde{\eta}_t)  \cdot \nabla_x u^*_t \mbox{ in } (t_1,T) \times \R^{d_1} \times \R^{d_2}, \quad \tilde{v}_T = 0. 
\label{eq:blte1}
\end{equation}
Take another admissible couple 
$(\bd{\eta}, \bd{\rho}) \in {\mathcal A}^\ell(t_1) \times 
{\mathcal R}(t_1)$ with finite cost $\mathcal{J} ( (t_1, \gamma_{t_1}^*), \bd\nu^*, \bd \eta)$. For $\lambda \in (-1,1)$ we let $(\bd \eta^{\lambda} , \bd \rho^{\lambda}) := (1- \lambda) (\tilde{\bd \eta}, \tilde{\bd \rho}) + \lambda (\bd \eta, \bd \rho)$ which is also admissible. Noticing that 
$$ \lambda \mapsto \int_{t_1}^T \int_A \Bigl \langle b(\cdot,a) \cdot \nabla_x u^*_t ; \rho_t^{\lambda} \Bigr \rangle d\eta_t^{\lambda}(a)dt $$
is polynomial (of order two) in $\lambda$, we easily compute
\begin{align*}
    \frac{d}{d\lambda} \Big|_{\lambda = 0}  \int_{t_1}^T \int_A \Bigl \langle b(\cdot,a) \cdot \nabla_x u^*_t ; \rho_t^{\lambda} \Bigr \rangle d\eta_t^{\lambda}(a)dt   &= \int_{t_1}^T \int_A \Bigl \langle b( \cdot,a) \cdot \nabla_x u^*_t;  \rho_t - \tilde{\rho}_t \Bigr \rangle d\tilde{\eta}_t(a) dt  \\
    &\hspace{15pt} + \int_{t_1}^T \int_A \Bigl \langle b(\cdot,a) \cdot \nabla_x u^*_t  ; \tilde{\rho}_t \Bigr \rangle d(\eta_t - \tilde{\eta}_t)(a) dt. 
\end{align*}
In order to handle 
the first term in the right-hand side, we use the equations satisfied by $\tilde{\bd \rho}$ and $\tilde{\bd v}$ together with Proposition \ref{prop:IPPLinearized27/01} with 
$(\bd{\rho}^1,\bd{\eta}^1) = 
(\bd{\rho}- \tilde{\bd \rho},
\bd{\eta} - \tilde{\bd \eta})$ and 
$(\bd{\rho}^2,\bd{\eta}^2) = (\tilde{\bd \rho}, \bd{\tilde{\eta}})$ therein
and Proposition \ref{prop:measurable+fubini15/01}, to get
\begin{equation}
\label{IPP:29Sept}
\begin{split}
&\int_{t_1}^T  \int_{A} \Bigl \langle b( \cdot,a)  \cdot \nabla_x u^*_t ; \rho_t - \tilde{\rho}_t \Bigr \rangle d\tilde{\eta}_t(a)dt 
\\
   &=  \int_{t_1}^T \int_A \int_{\R^{d_1} \times \R^{d_2}}  b( x ,a) \cdot \nabla_x \tilde{v}_t(x,y) d \gamma^*_t(x,y) d(\eta_t - \tilde{\eta}_t)(a)dt. 
\end{split}
\end{equation}
This implies
\begin{equation}
\label{eq:derivativerunnigcostlambda9sept}
\begin{split}
&\frac{d}{d\lambda} \Big|_{\lambda = 0}  \int_{t_1}^T \int_A \Bigl \langle b(\cdot,a) \cdot \nabla_x u^*_t ;\rho_t^{\lambda} \Bigr \rangle d\eta_t^{\lambda}(a)dt   
\\
    &= \int_{t_1}^T \int_A \Bigl( \Bigl \langle b( \cdot,a) \cdot \nabla_x u^*_t ; \tilde{\rho}_t \Bigr \rangle  + \int_{\R^{d_1} \times \R^{d_2}}  b(x,a) \cdot \nabla_x \tilde{v}_t(x,y) d\gamma^*_t(x,y) \Bigr) d(\eta_t - \tilde{\eta}_t)(a)dt,
\end{split}
\end{equation}
which permits to handle the derivative w.r.t. 
$\lambda$ of the second term in the definition of the cost
$\mathcal{J} ( (t_1, \gamma_{t_1}^*), \bd\nu^*, \bd \eta^\lambda)$
in \eqref{eq:minformathcalJ1Avril}.

Now, we deal with the first part of the cost $\mathcal{J} ( (t_1, \gamma_{t_1}^*), \bd\nu^*, \bd \eta^\lambda)$
in \eqref{eq:minformathcalJ1Avril}. Since  $ \mathcal{J}( (t_1,\gamma_{t_1}^*), \bd\nu^*, \bd\eta )$ is finite, $\bd \eta$ is absolutely continuous with respect to the Lebesgue measure on $[t_1,T] \times A$ with a density (still written) $(t,a) \mapsto \eta(t,a) $ in $L^2((\bd{\nu}^*)^{-1})$. This allows us to differentiate the quadratic map $$\lambda \mapsto \int_{t_1}^T \int_A \frac{(\eta_t^{\lambda})^2(a)}{\nu^*_t(a)}da dt,$$
and find
\begin{equation}
\label{eq:derivativerunnigcostlambda9sept:2}
\frac{d}{d \lambda }\Big|_{\lambda=0} \int_{t_1}^T \int_A \frac{(\eta_t^{\lambda})^2(a)}{\nu^*_t(a)}da dt = 2 \int_{t_1}^T \int_A \frac{\tilde{\eta}_t(a) (\eta_t(a) -\tilde{\eta}_t(a))}{\nu^*_t(a)}dadt.
\end{equation}
By minimality of $\tilde{\bd \eta}$ we deduce from 
\eqref{eq:derivativerunnigcostlambda9sept}
and
\eqref{eq:derivativerunnigcostlambda9sept:2} that, for all admissible $\boldsymbol{\eta}$ with finite cost, 
$$ \int_{t_1}^T \int_A \Bigl(  l_t(a) + \frac{\tilde{\eta}_t(a)}{\nu^*_t(a)} \Bigr) d(\eta_t - \tilde{\eta}_t)(a) dt =0,$$
where the function $l$ is defined by
\begin{equation}
\label{def:l:order2:sufficient}
l_t(a) := \Bigl \langle b( \cdot , a) \cdot \nabla_x u^*_t ; \tilde{\rho}_t \Bigr \rangle + \int_{\R^{d_1} \times \R^{d_2}} b(x,a) \cdot \nabla_x \tilde{v}_t(x,y) d\gamma^*_t(x,y), \quad (t,a) \in [t_1,T] \times A.
\end{equation}
Taking $\boldsymbol{\eta}$ in the form 
$$\eta_t(a) = \tilde{\eta}_t(a) + \xi_t(a) - \int_A \xi_t(a')da', \quad (t,a) \in [t_1,T] \times A,$$ 
for an arbitrary  smooth function $\xi : [t_1,T] \times A \rightarrow \R$ with compact support, we get (thanks to Fubini's theorem)
\begin{align*} 
&\int_{t_1}^T \int_A \biggl[ l_t(a) 
+
\frac{\tilde{\eta}_t(a)}{\nu_t^*(a)} - \int_A \biggl(l_t(a') 
+ \frac{\tilde{\eta}_t(a')}{\nu_t^*(a')} \biggr) da'  \biggr] \xi_t(a)dadt 
\\
&= \int_{t_1}^T \int_A \Bigl( l_t(a) + \frac{\tilde{\eta}_t(a)}{\nu_t^*(a)} \Bigr) \biggl(\xi_t(a) - \int_A \xi_t(a')da' \biggr)da dt =0, 
\end{align*}
and we deduce that 
\begin{equation} 
l_t(a) + \frac{\tilde{\eta}_t(a)}{\nu^*_t(a)} = \int_A \Bigl( l_t(a') +\frac{\tilde{\eta}_t(a')}{\nu^*_t(a')} \Bigr) da',
\label{eq:29Sept14:23}
\end{equation}
for Lebesgue almost every $(t,a) \in [t_1,T] \times A$. Recalling that for $dt$-almost every $t \in [t_1,T]$, $\eta_t^*(A) =0$ and $\nu^*_t(A)=1$, we deduce from \eqref{eq:29Sept14:23} (after multiplying both sides by $\nu^*_t(a)$ and integrating in $a$) that
$$ \int_A l_t(a) d\nu^*_t(a) =  \int_A \Bigl( l_t(a') +\frac{\tilde{\eta}_t(a')}{\nu^*_t(a')} \Bigr) da', \quad  \mbox{for Lebesgue almost every } t  \in [t_1,T]. $$
Getting back to \eqref{eq:29Sept14:23} and recalling the definition 
\eqref{def:l:order2:sufficient}
of $l$, we get the result.
\end{proof}

\subsection{Additional Regularity}

Similarly to the first order conditions we can prove some more regularity for solutions to the linearized system.

\begin{proof}[Proof of Proposition \ref{prop:reg:v,rho}]
By Proposition \ref{prop:regularityfromOC} we know that $t \mapsto \nu_t \in \mathcal{M}_{1+|a|^3}(A)$ is continuous.  Thanks to Proposition \ref{prop:additionalregularityvt}, it therefore suffices to prove that $t \mapsto \eta_t \in \mathcal{M}_{1+|a|^3}(A)$ is continuous. From the exponential integrability of $\bd{\nu}$ given by Proposition \ref{prop:regularityfromOC}, the regularity of  $\nabla_x u$ from \eqref{eq:testfn2foru} in Proposition \ref{prop:regularityfromOC}, the facts that $\bd{\rho}$ belongs to $\mathcal{R}(t_1)$ (recall Definition \ref{def:R(t_0)})  and the fact that ${\boldsymbol v}$ belongs to $ \mathcal{C}([t_1,T], \mathcal{C}^1_{1})$ by Proposition \ref{prop:SolnVt19/02}, we first deduce from the explicit formula \eqref{eq:nutProp3.2} for $\bd{\eta}$ that 
\begin{equation}
\sup_{t \in [t_1,T]} \int_A (1+|a|^m)d|\eta_t|(a) <+\infty,
\label{19/02:19:43}
\end{equation}
for any $m \geq 1$. Using the representation formula \eqref{eq:representationformulavt} for $\bd{v}$ we easily deduce that $\sup_{t \in [t_1,T]}\norm{ \nabla_xv_t}_{\mathcal{C}^1_{2,1}} $ is finite. Using the time regularity of $t \mapsto \nu_t$ from Proposition \ref{prop:regularityfromOC} and Lemma \ref{lem:BackThenLemA.17} we deduce that $t \mapsto (a \mapsto \nu_t(a) \langle b(\cdot,a) \cdot \nabla_x u_t;\rho_t \rangle ) \in \mathcal{M}_{1+|a|^3}(A)$ is continuous over $[t_1,T]$. Now for $t_3 \geq t_2 \in [t_1,T]$ we have
\begin{align*}
   & \Big| \int_{\R^{d_1} \times \R^{d_2}} b(x,a) \cdot \nabla_x v_{t_3} (x,y) d\gamma_{t_3}(x,y) - \int_{\R^{d_1} \times \R^{d_2}} b(x,a) \cdot \nabla_x v_{t_2} (x,y) d\gamma_{t_2}(x,y) \Big| \\
   &\leq \Big| \int_{\R^{d_1} \times \R^{d_2}} b(x,a) \cdot \nabla_x(v_{t_3} -v_{t_2})d\gamma_{t_3}(x,y) \Big| + \Big|\int_{\R^{d_1} \times \R^{d_2}} b(x,a)\cdot \nabla_xv_{t_2}(x,y)d(\gamma_{t_3} - \gamma_{t_2})(x,y) \Big| \\
   &\leq C (1+|a|) \norm{ \nabla_x v_{t_3} - \nabla_xv_{t_2}}_{\mathcal{C}^0_3} \int_{\R^{d_1} \times \R^{d_2}} (|x|^2 +|y|^2)^{3/2} d\gamma_{t_3}(x,y)  \\
   &+ C(1+|a|^2) \norm{\nabla_x v_{t_2}}_{\mathcal{C}^1_{2,1}} \norm{ \gamma_{t_3} - \gamma_{t_2}}_{(\mathcal{C}^1_{2,1})^*}
\end{align*}
and we conclude from Proposition \ref{prop:SolnVt19/02}, Proposition \ref{lem:compactnessContinuityEquation24/06} and Lemma \ref{lem:comparisonNorms} together with the regularity of $t \mapsto \nu_t$ that $t \mapsto (a \mapsto \nu_t(a) \int_{\R^{d_1} \times \R^{d_2}}b(x,a) \cdot \nabla_x v_t(x,y) d\gamma_t(x,y)) $ is continuous from $[t_1,T]$ to $\mathcal{M}_{1+|a|^3}(A)$. The continuity of $t \mapsto c_t$ the normalizing constant is handled similarly. 
\end{proof}

\section{Proofs of Section \ref{sec:preparatorywork} }

\label{sec:ProofsofSection3}

In this section we prove the various results presented in Section \ref{sec:preparatorywork}. We make repeated use of the objects and notation introduced therein.

We start with

\begin{proof}[Proof of Proposition \ref{prop:preliminaryworkprop1-28/02}]
    \textit{Step 1. Convergence of $({\boldsymbol \eta}^n)_{ n \in \mathbb{N}}$.}

For each $n \in {\mathbb N}$, 
we consider ${\boldsymbol \eta}^n$ as a finite signed measure on $[t_0^n,T] \times A$, which we can decompose (with standard notations) into 
${\boldsymbol \eta}^n = {\boldsymbol \eta}^{n,+} - {\boldsymbol \eta}^{n,-}$. 
Without any difficulty, 
${\boldsymbol \eta}^{n,\pm}$ can be extended in a trivial manner to 
(the Borel subsets of) 
$[0,T] \times A$, by letting for any Borel subset $B$ of $[0,T] \times A$, 
${\boldsymbol \eta}^{n,\pm}(B)={\boldsymbol \eta}^{n,\pm}(B \cap ([t_0^n,T] \times A))$. 
We can extend ${\boldsymbol \eta}^n$ to $[0,T] \times A$ in a similar manner while preserving the decomposition 
${\boldsymbol \eta}^n={\boldsymbol \eta}^{n,+} - {\boldsymbol \eta}^{n,-}$. 
From Lemma \ref{lem:timetogotoGallia02/03}, we have 
\begin{equation}
\sup_{n \in {\mathbb N}}
\int_0^T \int_A \bigl( 1 + \vert a \vert^4 \bigr) d{\boldsymbol \eta}^{n,\pm}(t,a) 
< \infty.
\label{eq:04/03:10:37}
\end{equation}
And then, we deduce that, up to a subsequence, 
the two sequences of positive measures $({\boldsymbol \eta}^{n,+})_{n \in {\mathbb N}}$
and 
$({\boldsymbol \eta}^{n,-})_{n \in {\mathbb N}}$
converge weakly to some positive measures ${\boldsymbol \eta}^+$ and 
${\boldsymbol \eta}^-$ (on $[0,T] \times A$). 
It is standard to see that 
\begin{equation}
\int_0^T \int_A \bigl( 1 + \vert a \vert^4 \bigr) d{\boldsymbol \eta}^{\pm}(t,a) \leq  \liminf_{n \rightarrow +\infty} \int_0^T \int_A \bigl( 1 + \vert a \vert^4 \bigr) d{\boldsymbol \eta}^{n,\pm}(t,a) \leq  C.
\label{eq:tobecombinedwith04/03}
\end{equation}
And, by a uniform integrability argument, it holds, for all continuous 
function $f \in \mathcal{C}([0,T] \times A)$ satisfying  
$|f_t(a) | \leq C(1+|a|^{3+\delta})$
for some $(C,\delta) \in \R^+ \times (0,1)$, 
\begin{equation}
\label{eq:lim:etan}
\lim_{n \rightarrow +\infty} \int_{0}^T \int_A f_t(a) 
d{\boldsymbol \eta}^{n,\pm}(t,a) 
= \int_{0}^T \int_A f_t(a) d{\boldsymbol \eta}^{\pm}(t,a).
\end{equation}

It remains to see that the time marginal laws
of ${\boldsymbol \eta}^+$ and ${\boldsymbol \eta}^-$
are absolutely continuous (with respect to the Lebesgue measure 
on $[0,T]$). Repeating the second part of the proof of Lemma \ref{lem:PinskerSecFOC28/02} word for word, 
we deduce that there exists a constant $C$ such that for any 
$n \in {\mathbb N}$ and any Borel subset $I$ of $[0,T]$, 
\begin{equation}
\int_{I \times A}
d {\boldsymbol \eta}^{n,\pm}(t,a) \leq C  \sqrt{\vert I\vert},
\label{eq:09/12/2024}
\end{equation}
where $\vert I \vert$ denotes the Lebesgue measure of $I$. By weak convergence, \eqref{eq:09/12/2024} still holds at the limit for $\bd{\eta}^+$ and $\bd{\eta}^-$.  In particular we can find 
two positive functions $g^+$ and $g^-$ in $L^1([0,T])$ and two collections of 
regular conditional probability distributions $(n_t^+(\cdot))_{t \in [0,T]}$
and 
$(n_t^-(\cdot))_{t \in [0,T]}$ on $A$ such that, for any bounded 
Borel function $f : [0,T] \times A \rightarrow {\mathbb R}$,
\begin{equation*}
\int_{[0,T] \times A} f(t,a) d {\boldsymbol \eta}^{\pm}(t,a)  = \int_0^T \biggl( \int_A f_t(a) dn_t^{\pm}(a) \biggr) g_t^{\pm} dt.  
\end{equation*}
Letting $\eta_t^{\pm} := g_t^{\pm} \cdot n_t^{\pm}$, we deduce that 
\begin{equation*}
\int_{[0,T] \times A} f(t,a) d {\boldsymbol \eta}^{\pm}(t,a)  = \int_0^T \biggl( \int_A f_t(a)d \eta_t^{\pm}(a) \biggr)  dt, 
\end{equation*}
which is the required identity. In particular, letting $\eta_t:=\eta_t^+-\eta_t^-$, for any 
$t \in [0,T]$, we recover the fact that $d{\boldsymbol \eta}(t,a)$
can be disintegrated as $d\eta_t(a) dt$. 
Choosing 
$f_t(a)$ in \eqref{eq:lim:etan} as a function independent of $a$ 
and using the fact that $\eta^n_t(A)=0$ for all $n \in {\mathbb N}$
and $t \in [0,T]$, we get that $\eta_t(A)=0$ for (almost every) $t \in [0,T]$. 

Also, choosing $f_t(a)$ to be zero for $t > t_0-\epsilon$ for $\epsilon>0$ arbitrarily small and using the fact that $\eta^{\pm,n}_t=0$ for all $n \in {\mathbb N}$ and 
(almost every) $t \in [0,t_0)$, we deduce from \eqref{eq:lim:etan} that $\eta^{\pm}_t=0$ for $t < t_0$.

Now we take $t_1 < t_2 \in (t_0,T]$ and $f$ a bounded, continuous function over $[t_1,t_2] \times A$. Approximating the function $\mathbf{1}_{[t_1,t_2]}$ by continuous functions and using  the uniform bound \eqref{eq:09/12/2024} we can show that
$$\lim_{n \rightarrow +\infty} \int_{t_1}^{t_2} \int_A f_t(a) d\eta^n_t(a) dt = \int_{t_1}^{t_2} \int_A f_t(a) d\eta_t(a) dt. $$
By a uniform integrability argument, using the uniform bound \eqref{eq:04/03:10:37} we show that this limit holds when $f$ has growth of order $(1+|a|^{3+\delta})$ for $\delta \in (0,1)$. In particular, for all $t_2 > t_1$ in $(t_0,T]$ we have 
$$\frac{1}{\sqrt{t_2 - t_1}} \int_{t_1}^{t_2} \int_A (1+|a|^2) d \eta^{\pm}_t(a) dt = \lim_{n \rightarrow +\infty} \frac{1}{\sqrt{t_2-t_1}} \int_{t_1}^{t_2}\int_A (1+|a|^2) d \eta^{n,\pm}_t(a) dt < +\infty.$$
Combined with \eqref{eq:tobecombinedwith04/03} this shows that $\bd{\eta}$ belongs to $\mathcal{D}(t_0)$ and 
$$ \norm{ \bd{\eta}}_{\mathcal{D}(t_0)} \leq \liminf_{n \rightarrow +\infty} \norm{ \bd{\eta}^n}_{\mathcal{D}(t_0^n)}. $$

\color{black}

\color{black}
\vskip 4pt

\textit{Step 2. Convergence of $({\boldsymbol v}^n)_{n \in \mathbb{N}}$.} 
Following
\textit{Step 1}, we first extend 
${\boldsymbol v}^n$, for each 
$n \in {\mathbb N}$, 
to $[0,T]$ by letting 
$v^n_t=v^n_{t_0^n}$
for $t \in [0,t_0^n)$. This does not change the estimate from Lemma \ref{lem:timetogotoGallia02/03}. 
Then, the combination of the two bounds for $\bd{v}^n$ in \eqref{lem:timetogotoGallia02/03}
shows that 
the functions 
$(v^n)_{n \in {\mathbb N}}$ and 
$(\nabla_x v^n)_{n \in {\mathbb N}}$
are uniformly bounded and 
uniformly 
continuous 
on $[0,T] \times B$ for any closed ball $B$ of $\R^{d_1} \times \R^{d_2}$. It follows by Ascoli's theorem that there is $v \in \mathcal{C}([t_0,T],\mathcal{C}^1_1)$ such that 
$$ \lim_{n \rightarrow +\infty} \sup_{t \in [t_0 \vee t_0^n,T]} \norm{ v_t^n-v_t}_{\mathcal{C}^1(B)}=0. $$
In particular, we easily deduce that 
$$ \sup_{t \in [t_0,T]} \norm{v_t}_{\mathcal{C}^1_1} \leq \liminf_{n \rightarrow +\infty}  \sup_{t \in [t_0,T]} \norm{v^n_t}_{\mathcal{C}^1_1}. $$
Now we can consider functions $(\chi_R)_{ R >0}$ such that
\begin{equation*}
    \chi_R(x,y) = \left \{ 
\begin{array}{ll}
1 & \mbox{ if } (x,y) \in B(0,R) \\
0 & \mbox{ if } (x,y) \notin B(0,R+1)
\end{array}
\right.
\end{equation*}
and $\norm{ \chi_R}_{\mathcal{C}^1_b} \leq c$ for some $c >0$ independent from $R$, where $B(0,R)$ is the ball of $\R^{d_1} \times \R^{d_2}$ of radius $R$, centered at the origin. Then, for every $R >0$ and every $n \geq 1$ it holds
\begin{align*} 
\sup_{t \in [t_0^n \vee t_0,T] } \norm{v_t^n -v_t}_{\mathcal{C}^1_2} &\leq \sup_{t \in [t_0^n \vee t_0,T] } \norm{ \chi_R(v_t^n -v_t)}_{\mathcal{C}^1_2} + \sup_{t \in [t_0^n \vee t_0,T] } \norm{(1- \chi_R)(v_t^n -v_t)}_{\mathcal{C}^1_2} \\
&\leq C \sup_{t \in[t_0^n \vee t_0,T]} \norm{ v_t^n - v_t }_{\mathcal{C}^1(B(0,R+1))} + \frac{C}{R} (\sup_{t \in[t_0^n,T]} \norm{ v_t^n}_{\mathcal{C}^1_1} + \sup_{t \in[ t_0,T]  } \norm{v_t}_{\mathcal{C}^1_1} ).  
\end{align*}
The claim follows by taking first  $R \rightarrow +\infty$ and then $n \rightarrow +\infty$.
\vskip 4pt

\textit{Step 3. Convergence of $({\boldsymbol \rho}^n)_{ n \in \mathbb{N}}$.}
Following \textit{Step 2}, we extend ${\boldsymbol \rho}^n$, for each $n \in {\mathbb N}$, to $[0,T]$ by letting 
${\rho}_t^n = {\rho}^n_{t_0^n}$ for 
$t \in [0,t_0^n)$. Then, thanks to the uniform estimate on $\norm{ \bd{\rho}^n}_{\mathcal{R}(t_0^n)}$ from Lemma \ref{lem:timetogotoGallia02/03} we can apply the compactness result of Lemma \ref{lem:StrongCompactnessCurves} to find ${\boldsymbol \rho} \in \mathcal{R}(t_0)$ such that
$$\lim_{n \rightarrow +\infty} \sup_{t \in [t_0,T]} \norm{ \rho_t^n - \rho_t}_{(\mathcal{C}^2_{2})^*} =0.$$
\vskip 4pt

\textit{Step 4. Proof of 
\eqref{eq:v:rho:proof:Meta2.2-02/03}.} It remains to show 
that $({\boldsymbol \rho},{\boldsymbol v})$
solves \eqref{eq:v:rho:proof:Meta2.2-02/03}. Obviously, the strategy is to pass to the limit in 
\eqref{eq:systemrhonvn29Mai-28/02}. 

By combining  Lemma \ref{lem:convergenceifuniqueness}
and the uniform bound on $\norm{\bd{\eta}^n}_{\mathcal{D}(t_0^n)}$ from Lemma \ref{lem:timetogotoGallia02/03}, we first observe that 
\begin{equation}
\label{eq:proof:5.22:vun:tildenu}
\lim_{n \rightarrow 
+ \infty}
\int_{t_0^n \vee t_0}^T \int_A (1+|a|^4)| \nu_t^n(a) - 
 \nu^*_t(a)|da
dt
= 0. 
\end{equation}
Following the proof of Lemma \ref{lem:diffulambda24/02:13:23} we can rewrite, for all $t \in (t_0,T]$ and all $(x,y) \in \R^{d_1} \times \R^{d_2}$ and all $n$ large enough such that $t_0^n <t$,
\begin{equation}
v_t^n(x,y) = \int_t^T b(X_s^{n,t,x},\eta_s^n) \cdot \nabla_x u^{*,n}_s(X_s^{n,t,x},y) ds 
\label{eq:step3:proof(5.18):representation:vn}
\end{equation}
where $(X_s^{n,t,x})_{s \in [t,T]}$ is the solution to
\begin{equation*}
\dot{X}_s^{n,t,x} = b(X_s^{n,t,x},\nu_s^n),
\quad 
s \in [t,T]; \quad X_t^{n,t,x}=x. 
\end{equation*}
Similarly we can introduce the solution $(X_s^{*,t,x})_{s \in [t,T]}$ to the ODE
\begin{equation*}
\dot{X}_s^{*,t,x}
= b(X_s^{,x},\nu^{*}_s), \quad s \in [t,T]; 
\quad X_t^{*,t,x}=x. 
\end{equation*}
Using the regularity properties of 
$b$ together with 
\eqref{eq:proof:5.22:vun:tildenu}, we get 
\begin{equation*}
\lim_{n \rightarrow + \infty}
\sup_{s \in [t, T]}
\vert X_s^{n,t,x} - X_s^{*,t,x} \vert 
= 0. 
\end{equation*}
Recall that, by item $iii)$ in  Property {\bf $(\mathcal{Q}_0)$}, $(\bd{\nu}^{*,n}, \bd{\gamma}^{*,n}, \bd{u}^{*,n})_{n \in \mathbb{N}}$ is assumed to converge to $(\bd{\nu}^*, \bd{\gamma}^{*}, \bd{u}^*)$ in the sense of Lemma \ref{lem:convergenceifuniqueness}. Thanks to the convergence of $(\bd{\eta}^n)_{n \in \mathbb{N}}$ from \textit{Step 1}
it is then quite easy to pass to the limit in 
\eqref{eq:step3:proof(5.18):representation:vn}
and to get 
the formula 
\begin{equation*}
v_t(x,y) = \int_t^T b(X_s^{t,x}, \eta_s) \cdot 
\nabla_x u^*_s(X_{s}^{t,x},y) ds, \quad (t,x,y) \in (t_0,T] \times \R^{d_1} \times \R^{d_2}.
\end{equation*}
By a continuity argument, the representation formula also holds at $t=t_0$. The first equation in  
\eqref{eq:v:rho:proof:Meta2.2-02/03}
follows. 

We now address the second equation in \eqref{eq:v:rho:proof:Meta2.2-02/03}.
To prove that 
it is satisfied, we
fix $t_1 \in (t_0,T]$ and return to the second equation 
in \eqref{eq:systemrhonvn29Mai-28/02}(for $n$ sufficiently large so that $t_0^n<t_1$). We take a test function $\varphi : [t_0,T] \times \R^{d_1} \times \R^{d_2} \rightarrow \R$ satisfying \eqref{eq:testfn1-09/03}- \eqref{eq:testfn2-09/03} and some time $t_2 \in (t_1,T]$. Given the weak formulation of the equation we have
$$ \langle \varphi_{t_2}; \rho_{t_2}^n \rangle = \langle \varphi_{t_1}; \rho_{t_1}^n \rangle + \int_{t_1}^{t_2} \langle \partial_t \varphi_t + b(\cdot,\nu_t^n) \cdot \nabla_x \varphi_t; \rho_t^n\rangle dt + \int_{t_1}^{t_2} \int_{\R^{d_1} \times \R^{d_2}} b(x,\eta_t^n) \cdot \nabla_x \varphi_t (x,y) d\gamma_t^{*,n}(x,y) dt. $$
Using the regularity of $\varphi, \partial_t \varphi$ and $\nabla_x \varphi$, together with the regularity of $b$ and the convergence of $(\bd{\rho}^n)_{ n \in \mathbb{N}}$ to $\bd{\rho}$ from Step $iii)$ of the proof we first obtain that 
\begin{align*} \lim_{n \rightarrow +\infty} \Bigl( \langle \varphi_{t_2}; \rho_{t_2}^n \rangle, \langle \varphi_{t_1}; \rho_{t_1}^n \rangle, &\int_{t_1}^{t_2} \langle \partial_t \varphi_t + b(\cdot , \nu_t^*) \cdot \nabla_x \varphi_t; \rho_t^n \rangle dt \Bigr) \\
&= \Bigl( \langle \varphi_{t_2}; \rho_{t_2} \rangle, \langle \varphi_{t_1}; \rho_{t_1} \rangle, \int_{t_1}^{t_2} \langle \partial_t \varphi_t + b(\cdot , \nu_t^*) \cdot \nabla_x \varphi_t; \rho_t \rangle dt \Bigr).
\end{align*}
Now we observe that $(\bd{\rho}^n)_{ n \in \mathbb{N}}$ being bounded in $\mathcal{R}(t_0)$ there is $C_{\varphi} >0$ independent from $n$ such that 
\begin{align} 
\notag \Bigl| \int_{t_1}^{t_2} \langle b(\cdot,\nu_t^n- \nu_t^*) \cdot & \nabla_x \varphi_t ; \rho_t^n \rangle dt \Bigr| \leq C_{\varphi} \int_{t_1}^{t_2} \norm{ b(\cdot, \nu_t^n - \nu_t^*)}_{\mathcal{C}^1_b} dt  \leq C_{\varphi} \int_{t_1}^{t_2} \int_A (1+|a|^2) d|\nu_t^n - \nu_t^*|(a)dt   \\
&\leq C_{\varphi} \lambda_n \int_{t_1}^{t_2} \int_A (1+|a|^2) d|\eta_t^n|(a)dt + C_{\varphi} \int_{t_1}^{t_2} \int_A (1+|a|^2)d|\nu_t^{*,n} - \nu_t^*|(a)dt \label{eq:25/02-09:59}
\end{align}
The first term in the second line is handle by the boundness of $(\bd{\eta}^n)_{n \in \mathbb{N}}$ in $\mathcal{D}(t_0)$ and the fact that $\lambda_n \rightarrow 0$ as $n \rightarrow +\infty$. The second term is handled item $iii)$ in {\bf Property $(\mathcal{Q}_0)$}. Combined with \eqref{eq:25/02-09:59} this shows that 
$$ \lim_{n \rightarrow +\infty} 
\int_{t_1}^{t_2} \langle b(x,\nu_t^n) \cdot \nabla_x \varphi_t ; \rho_t^n \rangle dt = \int_{t_1}^{t_2} \langle b(x,\nu_t^*) \cdot \nabla_x \varphi_t ; \rho_t \rangle dt. $$
Now we handle the term involving $\bd{\gamma}^{*,n}$ that we decompose into
\begin{align}
\notag     \int_{t_1}^{t_2} \int_{\R^{d_1} \times \R^{d_2}} & b(x,\eta_t^n) \cdot \nabla_x \varphi_t(x,y) d\gamma_t^{*,n}(x,y) dt -  \int_{t_1}^{t_2} \int_{\R^{d_1} \times \R^{d_2}} b(x,\eta_t) \cdot \nabla_x \varphi_t(x,y) d\gamma_t^{*}(x,y) dt  \\
\notag    &=  \int_{t_1}^{t_2} \int_{\R^{d_1} \times \R^{d_2}} b(x,\eta_t^n) \cdot \nabla_x \varphi_t(x,y) d(\gamma_t^{*,n} - \gamma_t^*)(x,y) dt  \\
    &+  \int_{t_1}^{t_2} \int_{\R^{d_1} \times \R^{d_2}} b(x,\eta_t^n-\eta_t) \cdot \nabla_x \varphi_t(x,y) d\gamma_t^*(x,y)dt  \label{eq:25/02-10:22}
\end{align}
We estimate the first term by 
\begin{align*}
    & \Bigl| \int_{t_1}^{t_2} \int_{\R^{d_1} \times \R^{d_2}} b(x,\eta_t^n) \cdot \nabla_x \varphi_t(x,y) d(\gamma_t^{*,n} - \gamma_t^*)(x,y) dt \Bigr|  \\
    &\leq \sup_{t \in [t_1,t_2]} \norm{ \gamma_t^{*,n} - \gamma_t^*}_{(\mathcal{C}^1_{2,1})^*} \int_{t_1}^{t_2} \norm{ b(\cdot, \eta_t^n) \cdot \nabla_x \varphi_t}_{\mathcal{C}^1_{2,1}} dt \\
    &\leq C_{\varphi} \sup_{t \in [t_1,t_2]} \norm{ \gamma_t^{*,n} - \gamma_t^*}_{(\mathcal{C}^1_{2,1})^*} \int_{t_1}^{t_2} \int_A (1+|a|^2) d|\eta^n_t|(a)dt.
\end{align*}
By item $iii)$ in {\bf Property $(\mathcal{Q}_0)$} in  and Lemma \ref{lem:comparisonNorms} $\sup_{t \in [t_1,t_2]} \norm{ \gamma_t^{*,n} - \gamma_t^*}_{(\mathcal{C}^1_{2,1})^*} $ goes to $0$ as $n \rightarrow +\infty$ while $\int_{t_1}^{t_2} \int_A (1+|a|^2) d|\eta_t^n|(a)dt$ is bounded independently from $n$ by Lemma \ref{lem:timetogotoGallia02/03}. For the last term in \eqref{eq:25/02-10:22} we use Fubini's theorem, see in particular Appendix \ref{sec:AppendixFubini} to rewrite it as 
\begin{equation} \int_{t_1}^{t_2} \int_A \Bigl \{ \int_{\R^{d_1} \times \R^{d_2}} b(x,a) \cdot \nabla_x \varphi_t(x,y) d\gamma_t^*(x,y) \Bigr \} d(\eta_t^n-\eta_t)(a)dt 
\label{eq:25/02-10:43}
\end{equation}
and observe that the integrand is continuous with linear growth in $a$, uniformly in $t \in [t_1,t_2]$. This makes it possible to apply the conclusions of the first step of this proof to justify that \eqref{eq:25/02-10:43} converges to $0$ as $n \rightarrow +\infty$.
We can go back to \eqref{eq:25/02-10:22} to infer that 
$$ \lim_{n \rightarrow +\infty}  \int_{t_1}^{t_2} \int_{\R^{d_1} \times \R^{d_2}}  b(x,\eta_t^n) \cdot \nabla_x \varphi_t(x,y) d\gamma_t^{*,n}(x,y) dt =  \int_{t_1}^{t_2} \int_{\R^{d_1} \times \R^{d_2}} b(x,\eta_t) \cdot \nabla_x \varphi_t(x,y) d\gamma_t^{*}(x,y) dt$$
an conclude that 
$$ \langle \varphi_{t_2}; \rho_{t_2} \rangle = \langle \varphi_{t_1}; \rho_{t_1} \rangle + \int_{t_1}^{t_2} \langle \partial_t \varphi_t + b(\cdot,\nu_t) \cdot \nabla_x \varphi_t; \rho_t\rangle dt + \int_{t_1}^{t_2} \int_{\R^{d_1} \times \R^{d_2}} b(x,\eta_t) \cdot \nabla_x \varphi_t (x,y) d\gamma_t^{*}(x,y) dt $$
for all $t_1,t_2 \in (t_0,T]$ with $t_1 < t_2$.  By a continuity argument we then propagate this equality to $t_1=t_0$. It remains to notice that $\langle \varphi_{t_0}; \rho_{t_0} \rangle =0$. But this follows from the uniform bound on $\norm{\bd{\rho}^n}_{n \in \mathcal{R}(t_0^n)}$ since it gives 
$$ |\langle \varphi_{t_0}; \rho_{t_0} \rangle | = | \lim_{n \rightarrow +\infty} \langle \varphi_{t_0} ; \rho_{t_0}^n \rangle | \leq C_{\varphi} \liminf_{n \rightarrow +\infty} \norm{ \rho^n_{t_0} - \rho^n_{t_0^n}}_{(\mathcal{C}^2_2)^*} \leq C_{\varphi}  \liminf_{n \rightarrow +\infty} \bigl \{ \norm{ \bd{\rho}^n}_{\mathcal{R}(t_0^n)} \sqrt{|t_0^n -t_0|} \bigr \} =0. $$

\end{proof}

\begin{proof}[Proof of Proposition \ref{prop:convergencektn03/03}.]
\textit{Step 1. $t \mapsto k_t \in \mathcal{C}^1_2(A)$ is bounded.}    For all $t \in [t_0,T]$ we have
    $$ \norm{ k_t}_{\mathcal{C}^1_2(A)} \leq \frac{1}{\epsilon} \norm{ \int_{\R^{d_1} \times \R^{d_2}} b(x,\cdot) \cdot \nabla_x v_t(x,y) d\gamma^*_t(x,y)  }_{\mathcal{C}^1_2(A)} + \frac{1}{\epsilon} \norm{a \mapsto \langle b(\cdot,a) \cdot \nabla_x u^*_t; \rho_t \rangle }_{\mathcal{C}^1_2(A)} $$
For the first term, we use formula \eqref{eq:firstconvenientexpress02/03} with $q=1,k=0,p=3$, and for the second term, the same formula with $q=k=1$ and $p=3$ and we obtain
$$ \norm{ k_t}_{\mathcal{C}^1_2(A)} \leq C_b \bigl( \norm{ \nabla_x v_t}_{\mathcal{C}^0_2} \int_{\R^{d_1} \times \R^{d_2}}(|x|^2 + |y|^2)^{3/2}d\gamma_t^*(x,y) + \norm{ \nabla_x u_t^*}_{\mathcal{C}^1_2} \norm{ \rho_t}_{(\mathcal{C}^1_3)^*} \bigr), $$
for some constant $C_b>0$ depending only on $b$.

\textit{Step 2. Convergence of $k_t^n$ to $k_t$ in $\mathcal{C}^0_3(A)$.}
Now, for $t \in [t_0^n \vee t_0,T]$ and $a \in A$ we have, by assumptions on $b$,
\begin{equation}
\label{eq:fnt-ft:a}
\begin{split}
    |k_t^n(a) - k_t(a) | &= \frac{1}{\epsilon} \biggl| \int_{\R^{d_1} \times \R^{d_2}} b(x,a) \cdot d\Bigl\{ \bigl( \nabla_x v_t^n - \nabla_x v_t \bigr) \gamma_t^n + \nabla_x v_t\bigl( \gamma_t^n -\gamma^*_t \bigr) \Bigr \}(x,y)
    \\
    &\hspace{45pt} + \Bigl \langle    b(\cdot,a) \cdot \bigl( \nabla_x u_t^{*,n} - \nabla_x u^*_t \bigr) ; \rho_t^n \Bigr \rangle + \Bigl \langle b(\cdot,a) \cdot \nabla_x u^*_t ; \rho_t^n - \rho_t \Bigr \rangle  \biggr| \\
    & \leq C (1+|a|) \norm{ \nabla_x v_t^n - \nabla_x v_t}_{\mathcal{C}^0_2} \int_{\R^{d_1} \times \R^{d_2}} (|x|^2 + |y|^2) d\gamma_t^n(x,y) \\
    &+ C(1+|a|^2) \norm{ \nabla_x v_t}_{\mathcal{C}^1_{2,1}} \norm{ \gamma_t^n - \gamma_t^*}_{(\mathcal{C}^1_{2,1})^*}
    \\
    &  + C (1+|a|^2) \norm{\nabla_x u_t^{*,n} - \nabla_x u_t^*}_{\mathcal{C}^1_3} \norm{ \rho_t^n}_{(\mathcal{C}^1_3)^*}  +  C(1+|a|^3) \norm{ \nabla_x u_t^*}_{\mathcal{C}^2_2} \norm{ \rho_t^n - \rho_t}_{(\mathcal{C}^2_2)^*}, 
\end{split}
\end{equation}
for some $C>0$ independent from $t \in [t_0^n \vee t_0,T]$, $a \in A$ and $n \geq 1$. Recalling the regularity estimates from Proposition \ref{prop:SolnVt19/02} and Proposition \ref{prop:SolutionBackxard16Sept} for $\bd{v}$ and $\bd{u}^*$ respectively as well as Lemma \ref{lem:compactnessContinuityEquation24/06} for $\bd{\gamma}^n$ and  Proposition \ref{prop:preliminaryworkprop1-28/02} for  $\bd{\rho}^n$, we have the uniform bounds
$$ \sup_{t \in [t_0^n \vee t_0],T} \Bigl \{ \sup_{ n \in \mathbb{N}} \int_{\R^{d_1} \times \R^{d_2}} (|x|^2 + |y|^2)d\gamma_t^n(x,y) + \norm{ \nabla_x v_t}_{\mathcal{C}^1_{2,1}} + \sup_{n \in \mathbb{N} } \norm{ \rho_t^n}_{(\mathcal{C}^1_3)^*} + \norm{\nabla_x u_t^*}_{\mathcal{C}^2_2} \Bigr\} < +\infty.$$
And then, after noticing that $ \gamma_t^n- \gamma_t^* = \lambda_n \rho_t^n + \gamma_t^{*,n} - \gamma_t^*$ we have, by item $ii)$ of Property $(\mathcal{Q}_0)$ for the convergence of $(\bd{\gamma}^{*,n})_{n \in \mathbb{N}}$ to $\bd{\gamma}^*$ and $(\bd{u}^{*,n})_{n \in \mathbb{N}}$ to $\bd{u}^{*}$ and Proposition \ref{prop:preliminaryworkprop1-28/02} for the convergence of $(\bd{v}^n)_{n \in \mathbb{N}}$ to $\bd{v}$ and $(\bd{\rho}^n)_{n \in \mathbb{N}}$ to $\bd{\rho}$,
$$ \lim_{ n \rightarrow +\infty} \sup_{t \in [t_0^n\vee t_0,T] } \Bigl \{ \norm{ \nabla_x v_t^n - \nabla_x v_t}_{\mathcal{C}^0_2} + \norm{ \gamma_t^n - \gamma_t^*}_{(\mathcal{C}^1_{2,1})^*} + \norm{ \nabla_x u_t^{*,n} - \nabla_x u_t^*}_{\mathcal{C}^1_3} + \norm{ \rho_t^n - \rho_t}_{(\mathcal{C}^2_2)^*} \Bigr \} =0.$$
Combined together this leads to 
\begin{equation} 
\lim_{n \rightarrow +\infty} \sup_{t \in [t_0^n \vee t_0,T]} \norm{ k_t^n -k_t}_{\mathcal{C}^0_3(A)}=0. 
\label{eq:cvgceC03(A)03/03}
\end{equation}
Given the integrability of $\nu_t^{*,n}$ from Lemma \ref{lem:Q2-28/02} we easily deduce that 
$$ \lim_{n \rightarrow +\infty} \sup_{t \in [t_0^n \vee t_0,T]} \int_A |k_t^n(a) - k_t(a)|d\nu_t^{*,n}(a) =0. $$

\textit{Step 3. Higher-order convergence when the limit is $(0,0,0)$.}
We now assume that $(\bd{\eta},\bd{\rho}, \bd{v}) = (0,0,0)$. In particular, this implies that $\bd{k} = 0$. 
Using the assumptions on $b$ we can estimate $|\nabla_ak^n_t(a)|$ as follows. First we use the regularity assumption on $b$ to find  $C >0$ independent from $t \in [t_0^n,T]$ and $n \in \mathbb{N}$ such that 
$$ |\nabla_ak_t^n(a)| \leq C \Bigl( (1+|a|) \norm{\nabla_x v_t^n}_{\mathcal{C}^0_2} \int_{\R^{d_1} \times \R^{d_2}} (|x|^2 + |y|^2)^{3/2} d\gamma_t^n(x,y) 
+ (1+|a|^3) \norm{ \nabla_x u_t^{*,n}}_{\mathcal{C}^2_1} \norm{\rho_t^n}_{(\mathcal{C}^2_2)^*} \Bigr). $$
The term involving $\bd{\gamma}^n$ is bounded independently from $t \in [t_0^n,T]$ and $n \in \mathbb{N}$ thanks to Lemma \ref{lem:timetogotoGallia02/03} and Lemma \ref{lem:compactnessContinuityEquation24/06}.
By Proposition \ref{prop:preliminaryworkprop1-28/02} and recalling that we assume $(\bd{\rho}, \bd{v}) = (0,0)$ we get, 
$$ \lim_{n \rightarrow +\infty} \Bigl \{ \sup_{t \in [t_0^n,T]} \norm{ \nabla_x v_t^n}_{\mathcal{C}^0_2} + \sup_{t \in [t_0^n,T]} \norm{ \rho_t^n}_{(\mathcal{C}^2_2)^*} \Bigr\} =0. $$
Combined with \eqref{eq:cvgceC03(A)03/03} we get 
\begin{equation} 
\lim_{n \rightarrow +\infty} \sup_{t \in [t_0^n \vee t_0,T]} \norm{ k_t^n}_{\mathcal{C}^1_3(A)} =0.   
\label{eq:C13(A)}
\end{equation}
It remains to prove that 
\begin{equation} 
\lim_{n \rightarrow +\infty} \int_{t_0^n}^T \int_A |\nabla_a k_t^n(a)|^2 d\nu_t^n(a) = 0.
\label{eq:03/03-11:17}
\end{equation}
To this end we use the definition of $\lambda_n$ to obtain 
\begin{equation} 
\int_{t_0^n}^T \int_A |\nabla_a k_t^n(a) |^2 d\nu_t^n(a) dt = \int_{t_0^n}^T \int_A |\nabla_a k_t^n(a) |^2 d\nu_t^{*,n}(a) dt + \lambda_n \int_{t_0^n}^T \int_A |\nabla_a k_t^n(a) |^2 d\eta_t^n(a) dt 
\label{eq:26/02-14:39-03/03}
\end{equation}
On the one hand, using the integrability of $\nu_t^{*,n}$ from Lemma \ref{lem:Q2-28/02} and \eqref{eq:C13(A)} we get
\begin{equation} 
\lim_{n \rightarrow +\infty} \int_{t_0^n}^T \int_A |\nabla_a k_t^n(a)|^2 d\nu_t^{*,n}(a) dt =0. 
\label{eq:Valdotextedme-03/03}
\end{equation}
On the other hand, using the bound on $\bd{k}^n$ in $\mathcal{C}^1_2(A)$ from Lemma \ref{lem:timetogotoGallia02/03} we find $C>0$ independent from $n \in \mathbb{N}$ such that 
$$ \Bigl| \int_{t_0^n}^T \int_A |\nabla_a k_t^n(a) |^2 d\eta_t^n(a) \Bigr| \leq C \int_{t_0^n}^T \int_A (1+|a|^4) d|\eta_t^n|(a) dt, $$
and the right-hand side is bounded from above independently from $n \in \mathbb{N}$ by Lemma \ref{lem:timetogotoGallia02/03}. Combined with \eqref{eq:Valdotextedme-03/03} in \eqref{eq:26/02-14:39-03/03} and recalling that $\lambda_n \rightarrow 0$ as $n \rightarrow +\infty$ this shows that  \eqref{eq:03/03-11:17} holds.
\end{proof}

\begin{proof}[Proof of Proposition \ref{prop:cvgcesumofIs03/03}]
The proof follows from Lemmas \ref{lem:sumofIs03/03}, \ref{lem:I1I2I5I6-03/03} and \ref{lem:I3I4-03/03}  below.

\end{proof}

\begin{lem}
In the setting of Proposition \ref{prop:cvgcesumofIs03/03} we have 
    \begin{align} 
\notag \limsup_{ n \rightarrow +\infty} \int_{t_0^n \vee t_0}^T \Bigl( \int_A \Bigl|  \frac{\Gamma_t[\bd{\nu}^n](a) - \nu_t^{*,n}(a)}{\lambda_n} &+ \nu_t^*(a) \bigl( k_t(a) - \int_A k_t(a') d\nu_t^*(a') \bigr) \Bigr|da \Bigr)^2 dt \\
&\leq  \limsup_{n \rightarrow +\infty} \int_{t_0^n \vee t_0}^T \Bigl(\sum_{i = 1}^6 \int_A  | I_t^{i,n}(a) |da \Bigr)^2 dt . 
\end{align}
where $\bd{I}^{i,n} := (I_t^{i,n})_{t \in [t_0^n \vee t_0,T]} $ is defined, for $1 \leq i\leq 6$ by
\begin{equation}
\label{eq:notation:lem:step:3:1:b}
\begin{split} 
    & I_t^{1,n}(a) :=  \nu^*_t(a) \biggl( \int_A k_t(a')d \nu^*_t(a') - k_t(a) \biggr) -  \nu^{*,n}_t(a) \biggl( \int_A k_t(a')d \nu^{*,n}_t(a') - k_t(a) \biggr), 
    \\
    & I_t^{2,n}(a) := \biggl[ \frac{1}{\lambda_n} \Bigl(1 - e^{-\lambda_nk_t^n(a)}
    \Bigr) -k_t^n(a) \biggr]\nu^{*,n}_t(a),  
    \\
    & I_t^{3,n}(a) := \Bigl( k_t^n(a) -k_t(a) \Bigr) \nu_t^{*,n}(a),
    \\
    & I_t^{4,n}(a) := \biggl[ \int_A \Bigl(k_t(a') - k_t^n(a') \Bigr)d\nu^{*,n}_t(a') \biggr] \nu^{*,n}_t(a),
     \\
    & I_t^{5,n}(a) := \biggl[ \frac{e^{-\lambda_n k_t^n(a)}}{\lambda_n} \biggl(1 - \frac{1}{\int_A e^{-\lambda_n k_t^n(a') }d\nu^{*,n}_t(a') }  +\lambda_n \int_A k_t^n(a')d\nu^{*,n}_t(a')  \biggr) \biggr] \nu^{*,n}_t(a), 
    \\
    &I_t^{6,n}(a) := \biggl[ \Bigl( 1- e^{-\lambda_n k_t^n(a)} \Bigr) \int_A k_t^n(a')d\nu^{*,n}_t(a') \biggr] \nu^{*,n}_t(a).
\end{split}
\end{equation}
\label{lem:sumofIs03/03}
\end{lem}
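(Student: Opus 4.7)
The plan is to establish the pointwise algebraic identity
\begin{equation*}
\frac{\Gamma_t[\bd{\nu}^n](a) - \nu_t^{*,n}(a)}{\lambda_n} + \nu_t^*(a)\Bigl(k_t(a) - \int_A k_t(a')d\nu_t^*(a')\Bigr) = -\sum_{i=1}^6 I_t^{i,n}(a),
\end{equation*}
after which the conclusion of the lemma follows immediately by the triangle inequality, squaring, and integration in $t$. No estimate or limit is needed at this stage: everything reduces to bookkeeping starting from the representation $\Gamma_t[\bd{\nu}^n](a) = \nu_t^{*,n}(a)e^{-\lambda_n k_t^n(a)}/Z_t^n$ with $Z_t^n := \int_A e^{-\lambda_n k_t^n(a')}d\nu_t^{*,n}(a')$ provided by \eqref{eq:rewritinggammanutn28/02}.

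The identity will be obtained in two passes. The first pass addresses the centered term $\nu_t^*(k_t - \int k_t d\nu_t^*)$: replacing $\nu_t^*$ by $\nu_t^{*,n}$ peels off exactly $-I_t^{1,n}(a)$, and then replacing $k_t$ by $k_t^n$ in the resulting expression peels off $-I_t^{3,n}(a)$ (the pointwise correction) and $-I_t^{4,n}(a)$ (the correction of the averaged term). These three identifications are direct from the definitions in \eqref{eq:notation:lem:step:3:1:b}.

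The second pass handles the remainder
\begin{equation*}
R_t^n(a) := \frac{\Gamma_t[\bd{\nu}^n](a) - \nu_t^{*,n}(a)}{\lambda_n} + \nu_t^{*,n}(a)\bigl(k_t^n(a) - L_t^n\bigr), \qquad L_t^n := \int_A k_t^n(a')d\nu_t^{*,n}(a').
\end{equation*}
The key decomposition is
\begin{equation*}
\frac{\Gamma_t[\bd{\nu}^n](a)}{\nu_t^{*,n}(a)} - 1 = \bigl(e^{-\lambda_n k_t^n(a)} - 1\bigr) + e^{-\lambda_n k_t^n(a)}\Bigl(\frac{1}{Z_t^n} - 1\Bigr).
\end{equation*}
Multiplying by $\nu_t^{*,n}/\lambda_n$ and combining the first summand with $\nu_t^{*,n}(k_t^n - L_t^n)$ produces $-I_t^{2,n}(a)$ together with an extra term $-\nu_t^{*,n}(a) L_t^n$. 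Handling the second summand by adding and subtracting $\lambda_n L_t^n$ inside the parenthesis isolates $-I_t^{5,n}(a)$ and a cross-term $\nu_t^{*,n}(a) e^{-\lambda_n k_t^n(a)} L_t^n$; this cross-term recombines with the leftover $-\nu_t^{*,n}(a) L_t^n$ to give exactly $-I_t^{6,n}(a)$, completing the identity.

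The only bookkeeping subtlety to watch is that the first pass naturally produces the "mixed" average $\int k_t d\nu_t^{*,n}$, whereas $I_t^{5,n}$ and $I_t^{6,n}$ are written in terms of the "pure" average $L_t^n = \int k_t^n d\nu_t^{*,n}$; the term $I_t^{4,n}$ is precisely the bridge between these two normalizations. No analytic difficulty arises—the whole argument is a sign- and factor-tracking exercise around the expansion of the ratio $e^{-\lambda_n k_t^n(a)}/Z_t^n$ at first order in $\lambda_n$.
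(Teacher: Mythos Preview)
Your proposal is correct and follows exactly the same approach as the paper: establish the pointwise identity $\frac{\Gamma_t[\bd{\nu}^n](a) - \nu_t^{*,n}(a)}{\lambda_n} + \nu_t^*(a)(k_t(a) - c_t) = -\sum_{i=1}^6 I_t^{i,n}(a)$ starting from \eqref{eq:rewritinggammanutn28/02}, then apply the triangle inequality. The paper merely asserts that this identity is ``standard algebra to check,'' whereas you have spelled out the two-pass decomposition explicitly and correctly.
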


\begin{proof}
    Let $t \in [t_0^n\vee t_0,T]$ and $a \in A$. Recalling the expression \eqref{eq:rewritinggammanutn28/02} from Lemma \ref{lem:rewriting02/02} this gives
\begin{equation*}
\begin{split}
&\frac{1}{\lambda_n} \bigl( \Gamma[\bd\nu^n]_t(a) - \nu_t^{*,n}(a) \bigr) +\nu_t^*(a)(k_t(a) - c_t)
\\
&= -\frac{\nu_t^{*,n}(a)}{\lambda_n} \biggl(1-\frac{e^{-\lambda_n k_t^n(a)}}{\int_{A} e^{-\lambda_n k_t^n(a')}d\nu^{*,n}_t(a')} \biggr) - \biggl( \int_A k_t(a') d\nu^*_t(a') - k_t(a) \biggr) \nu^*_t(a).
\end{split}
\end{equation*}
It is then standard algebra to check that 
the second line above is equal to the sum $-\sum_{i=1}^6 I_t^{i,n}(a)$, i.e. 
\begin{equation}
\label{eq:sumofIs}   
\begin{split}
\frac{1}{\lambda_n} \bigl( \Gamma[\bd\nu^n]_t(a) -\nu_t^{*,n}(a)\bigr) +\nu_t^*(a)(k_t(a) - c_t) =- \sum_{i=1}^6 I_t^{i,n}(a). 
\end{split}
\end{equation}
and the result easily follows.
\end{proof}

\begin{lem}
\label{lem:I1I2I5I6-03/03}
In the setting of Proposition \ref{prop:cvgcesumofIs03/03}, we have the following convergence for $\bd{I}^{1,n}, \bd{I}^{2,n}, \bd{I}^{5,n}$ and $\bd{I}^{6,n}$
    $$ \lim_{n \rightarrow +\infty} \int_{t_0^n \vee t_0}^T \biggl( \int_A 
\Bigl[ \bigl| I_t^{1,n} (a) \bigr|+
\bigl| I_t^{2,n} (a) \bigr|+
\bigl| I_t^{5,n} (a) \bigr| + \bigl| I_t^{6,n} (a) \bigr|
\Bigr]
da \biggr)^2 dt =0.$$  
\end{lem}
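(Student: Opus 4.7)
The strategy is to show, for each $i \in \{1,2,5,6\}$, a uniform-in-$t$ bound of the form
\[
\sup_{t \in [t_0^n \vee t_0,T]} \int_A |I_t^{i,n}(a)| \, da \leq \omega_i(n), \qquad \lim_{n \to \infty} \omega_i(n) = 0,
\]
which, since $[t_0^n \vee t_0,T]$ has length at most $T-t_0$, immediately gives the claimed $L^2$-in-time convergence. The four estimates rely on three ingredients that are uniform in $n$ and $t$: (a) the bound $\| k_t \|_{\mathcal{C}^1_2(A)} + \| k_t^n \|_{\mathcal{C}^1_2(A)} \leq C$ (from Step 1 of the proof of Proposition \ref{prop:convergencektn03/03} and Lemma \ref{lem:timetogotoGallia02/03}); (b) the uniform exponential moment $\sup_{n,t} \int_A e^{c(1+|a|^4)} d\nu_t^{*,n}(a) \leq C$ of Lemma \ref{lem:Q2-28/02}; (c) the strong convergence $\sup_t \|e^{\ell/2}(\nu_t^{*,n} - \nu_t^*)\|_{L^\infty} \to 0$ from Lemma \ref{lem:convergenceifuniqueness}.

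For $I^{1,n}$, I would split
\[
I_t^{1,n}(a) = \bigl(\nu_t^*(a) - \nu_t^{*,n}(a)\bigr)\Bigl(\textstyle\int_A k_t \, d\nu_t^* - k_t(a)\Bigr) + \nu_t^{*,n}(a) \int_A k_t \, d(\nu_t^* - \nu_t^{*,n}).
\]
The factor $\int_A k_t \, d\nu_t^* - k_t(a)$ grows at most quadratically in $a$, while $e^{\ell/2}|\nu_t^* - \nu_t^{*,n}|$ tends to zero in $L^\infty$; since $\ell$ grows like $|a|^4$, the quadratic growth is absorbed by the exponential weight and both terms go to $0$ uniformly in $t$.

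For $I^{2,n}$ and $I^{6,n}$, the idea is to use the elementary inequality
\[
|e^{-y} - 1 + y| \leq \tfrac{y^2}{2}\, e^{|y|}, \qquad |1 - e^{-y}| \leq |y| e^{|y|}, \qquad y \in \mathbb{R}.
\]
Setting $y = \lambda_n k_t^n(a)$, the bound $|k_t^n(a)| \leq C(1+|a|^2)$ gives $\lambda_n |k_t^n(a)| \leq C\lambda_n(1+|a|^2)$, which, for $n$ large enough, is bounded by $\tfrac{c}{2}(1+|a|^4) + C'$ with $c$ as in ingredient (b). Therefore
\[
\int_A |I_t^{2,n}(a)|\, da \leq C \lambda_n \int_A (1+|a|^4) e^{(c/2)(1+|a|^4)} \nu_t^{*,n}(a)\, da \leq C' \lambda_n,
\]
and an analogous computation yields $\int_A |I_t^{6,n}(a)| \, da \leq C' \lambda_n \cdot |\int_A k_t^n \, d\nu_t^{*,n}| \leq C'' \lambda_n$. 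Both vanish since $\lambda_n \to 0$.

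The main obstacle is the estimate for $I^{5,n}$, because the explicit $+\lambda_n \int_A k_t^n \, d\nu_t^{*,n}$ correction inside the parenthesis is precisely designed to cancel the leading-order Taylor contribution from the normalization. Denoting $Z_t^n := \int_A e^{-\lambda_n k_t^n(a')} d\nu_t^{*,n}(a')$, I would write
\[
1 - \frac{1}{Z_t^n} + \lambda_n \int_A k_t^n \, d\nu_t^{*,n} = \frac{Z_t^n - 1 + \lambda_n Z_t^n \int_A k_t^n \, d\nu_t^{*,n}}{Z_t^n},
\]
and expand $Z_t^n - 1 = -\lambda_n \int_A k_t^n \, d\nu_t^{*,n} + R_t^n$ with $|R_t^n| \leq C\lambda_n^2$ (by the same inequality used for $I^{2,n}$, applied under the integral). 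Substituting, the numerator becomes $\lambda_n (Z_t^n - 1)\int_A k_t^n \, d\nu_t^{*,n} + R_t^n$, and since $|Z_t^n - 1| \leq C\lambda_n$ and $\int_A k_t^n \, d\nu_t^{*,n}$ is uniformly bounded, the whole numerator is $O(\lambda_n^2)$ uniformly in $t$. Since also $Z_t^n \geq c > 0$ uniformly (by the same moment bound applied to $e^{-\lambda_n k_t^n}$), we conclude $|I_t^{5,n}(a)| \leq C\lambda_n \, e^{-\lambda_n k_t^n(a)} \nu_t^{*,n}(a)$, and integration in $a$ against the uniform exponential moment of $\nu_t^{*,n}$ yields $\int_A |I_t^{5,n}(a)| \, da \leq C'\lambda_n$, which completes the proof.
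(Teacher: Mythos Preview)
Your proposal is correct and follows essentially the same approach as the paper: both use the strong convergence of $\boldsymbol{\nu}^{*,n}$ to $\boldsymbol{\nu}^*$ for $I^{1,n}$, the second-order Taylor remainder bound $|e^{-y}-1+y|\leq \tfrac{y^2}{2}e^{|y|}$ together with the uniform exponential moments of $\nu_t^{*,n}$ for $I^{2,n}$ and $I^{6,n}$, and a second-order expansion of the normalizing constant $Z_t^n$ to show the bracketed quantity in $I^{5,n}$ is $O(\lambda_n)$ uniformly in $t$. The only cosmetic difference is that for $I^{5,n}$ the paper passes directly from the bound on $Z_t^n-1+\lambda_n\!\int k_t^n\,d\nu_t^{*,n}$ to the corresponding bound on its reciprocal, whereas you write out the algebraic identity $1-1/Z+\lambda K=(Z-1+\lambda KZ)/Z$ explicitly; the content is identical.
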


\begin{proof}
 We first address the convergence of $({\boldsymbol I}^{1,n})_{n \in {\mathbb N}}$. Lemma \ref{lem:convergenceifuniqueness}
guarantees that 
$(\boldsymbol \nu^{*,n})_{n  \in {\mathbb N}}$ converges strongly toward $\boldsymbol \nu^*$, from which we easily deduce that 
$$ \lim_{n \rightarrow +\infty} \int_{t_0^n \vee t_0}^T \biggl( \int_A \bigl| I_t^{1,n} (a) \bigr| da \biggr)^2 dt =0.$$

We now address
$({\boldsymbol I}^{2,n})_{n \in {\mathbb N}}$
and 
$({\boldsymbol I}^{5,n})_{n \in {\mathbb N}}$. 
We first notice from Lemma \ref{lem:timetogotoGallia02/03} and Proposition \ref{prop:convergencektn03/03} that there is 
$C$ such that, for any 
$n \in {\mathbb N}$, 
$t \in [t_0^n,T]$
and $a \in A$, 
$\vert k^n_t(a) \vert 
\leq C(1+ \vert a\vert^2)$, and 
for any $t \in [t_0,T]$ and $a \in A$, $\vert k_t(a) \vert 
\leq C(1+ \vert a\vert^2)$.

For ${\boldsymbol I}^{2,n}$, Taylor formula yields
\begin{equation}
\label{eq:I2:tend:to:0-03/03}
\begin{split}
\Bigl\vert \frac{1}{\lambda_n}\Bigl(1 - e^{-\lambda_n k_t^n(a)}\Bigr)
- k_t^n(a) \Bigr\vert 
\nu^{*,n}_t(a)
&\leq \Bigl(\tfrac12 \lambda_n 
\vert k_t^n(a) \vert^2 
\sup_{\theta \in [0,1]}
e^{- \theta \lambda_n k_t^n(a)}
\Bigr) \nu^{*,n}_t(a)
\\
&\leq \lambda_n 
\vert k_t^n(a) \vert^2 
\Bigl( 1 + 
e^{-  \lambda_n k_t^n(a)}
\Bigr) \nu^{*,n}_t(a). 
\end{split}
\end{equation}
Using Proposition 
\ref{prop:regularityfromOC},
there is no difficulty in integrating with respect to $a$, squaring and then integrating in 
time. Since $(\lambda_n)_{n \in {\mathbb N}}$ tends
to $0$, we deduce that 
$$ \lim_{n \rightarrow +\infty} \int_{t_0^n \vee t_0}^T \biggl( \int_A \bigl| I_t^{2,n} (a) \bigr| da \biggr)^2 dt =0.$$

We now address ${\boldsymbol I}^{5,n}$. 
First, 
by integrating 
\eqref{eq:I2:tend:to:0-03/03}
with respect to $a$ and then multiplying by $\lambda_n$, we notice that 
\begin{equation*}
\biggl\vert 
\int_{A} e^{-\lambda_n k_t^n(a')}
d \nu^{*,n}_t(a')
-1
+ \lambda_n \int_A k_t^n(a') 
d \nu^{*,n}_t(a')
\biggr\vert 
\leq C \lambda_n^2 
\int_{A} \bigl( 1 +\vert a \vert^4 \bigr) 
e^{\lambda_n \vert a \vert^2}
d\nu_t^{*,n}(a) \leq C \lambda_n^2,
\end{equation*}
for a constant $C$ independent of 
$n \in {\mathbb N}$ and $t
\in [t_0^n \vee t_0,T]$. Then, 
for a new value of $C$,
\begin{equation*}
\biggl\vert 
\biggl( \int_{A} e^{-\lambda_n k_t^n(a')}
d \nu^{*,n}_t(a')
\biggr)^{-1}
-1
- \lambda_n \int_A k_t^n(a') 
d \nu^{*,n}_t(a')
\biggr\vert 
\leq C \lambda_n^2.
\end{equation*}
Dividing by 
$\lambda_n$, 
we get
$$  \Bigl| \frac{1}{\lambda_n} \Bigl(1 - \frac{1}{\int_Ae^{-\lambda_n k_t^n(a') }d\nu^{*,n}_t(a') } \Bigr)  + \int_A k_t^n(a')d\nu^{*,n}_t(a')   \Bigr| \leq C \lambda_n .$$
Multiplying 
by $\exp(-\lambda_n k_t(a)) \nu^*_t(a)$, integrating in 
$a$, squaring and then integrating time, 
we 
proceed as above to 
deduce that 
$$ \lim_{n \rightarrow +\infty} \int_{t_0^n \vee t_0}^T \biggl( \int_A \bigl| I_t^{5,n} (a) \bigr| da \biggr)^2 dt =0.$$
It thus remains to 
address ${\boldsymbol I}^{6,n}
=(I^{6,n}_t)_{t_0^n \vee t_0 \le t \le T}$. Recall that we can find a constant $C$ such that, for any $n \in {\mathbb N}$, $\sup_{t \in [t_0^n \vee t_0,T]}
\vert k^n_t(a) \vert \leq C (1+\vert a \vert^2)$ and then using  the bound 
$|1-\exp(-\lambda_n k^n_t(a))| \leq \lambda_n \vert k^n_t(a)\vert e^{\lambda_n |k_t^n(a)|}$ together with the fact that $\lambda_n$ tends to $0$ and the integrability of $\nu_t^{*,n}$ from Lemma \ref{lem:Q2-28/02}, there is no difficulty in proving that 
$$ \lim_{n \rightarrow +\infty} \int_{t_0^n \vee t_0}^T \biggl( \int_A \bigl| I_t^{6,n} (a) \bigr| da \biggr)^2 dt =0.$$
This completes the proof. 
\end{proof}

\begin{lem}
\label{lem:I3I4-03/03}
In the setting of Proposition \ref{prop:cvgcesumofIs03/03}, we have the following convergence for $\bd{I}^{3,n}, \bd{I}^{4,n}$
$$ \lim_{n \rightarrow +\infty} \int_{t_0^n \vee t_0}^T \biggl( \int_A 
\Bigl[ \bigl| I_t^{3,n} (a) \bigr|+
\bigl| I_t^{4,n} (a) \bigr|
\Bigr]
da \biggr)^2 dt =0.$$  
\end{lem}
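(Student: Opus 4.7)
The plan is to reduce both terms to the single quantity
\[
J^n(t) := \int_A \bigl| k_t^n(a) - k_t(a) \bigr| \, d\nu_t^{*,n}(a),
\]
and then to invoke the second (uniform in $t$) convergence statement of Proposition~\ref{prop:convergencektn03/03}, namely $\lim_{n \to +\infty} \sup_{t \in [t_0^n \vee t_0,T]} J^n(t) = 0$.

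For $I_t^{3,n}$, recalling that $\nu_t^{*,n}(a)\,da = d\nu_t^{*,n}(a)$, a direct computation gives
\[
\int_A \bigl| I_t^{3,n}(a) \bigr|\, da \;=\; \int_A \bigl| k_t^n(a) - k_t(a) \bigr|\, \nu_t^{*,n}(a)\, da \;=\; J^n(t).
\]
For $I_t^{4,n}$, the bracket does not depend on $a$, so factoring it out and using $\int_A \nu_t^{*,n}(a)\,da = 1$ (since $\nu_t^{*,n}$ is a probability density, cf. Proposition~\ref{prop:regularityfromOC}) yields
\[
\int_A \bigl| I_t^{4,n}(a) \bigr|\, da \;=\; \biggl| \int_A \bigl( k_t(a') - k_t^n(a')\bigr) d\nu_t^{*,n}(a') \biggr| \;\leq\; J^n(t).
\]

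Combining the two bounds and squaring, we obtain the pointwise (in $t$) estimate
\[
\biggl( \int_A \bigl[ | I_t^{3,n}(a) | + | I_t^{4,n}(a) | \bigr] da \biggr)^2 \;\leq\; 4 \bigl( J^n(t) \bigr)^2 \;\leq\; 4 \Bigl( \sup_{s \in [t_0^n \vee t_0, T]} J^n(s) \Bigr)^2.
\]
Integrating over $[t_0^n \vee t_0, T]$ and using $T - (t_0^n \vee t_0) \leq T$, we get
\[
\int_{t_0^n \vee t_0}^T \biggl( \int_A \bigl[ | I_t^{3,n}(a) | + | I_t^{4,n}(a) | \bigr] da \biggr)^2 dt \;\leq\; 4 T \Bigl( \sup_{s \in [t_0^n \vee t_0, T]} J^n(s) \Bigr)^2,
\]
and the right-hand side tends to $0$ by Proposition~\ref{prop:convergencektn03/03}. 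There is no real obstacle here; the only subtle point is ensuring that the second (and not merely the $\mathcal{C}^0_3(A)$) convergence statement from Proposition~\ref{prop:convergencektn03/03} is the one being applied, since the weighting by $\nu_t^{*,n}$ (rather than by Lebesgue measure) is essential to control the growth of $k_t^n - k_t$ at infinity via the exponential integrability of $\nu_t^{*,n}$ from Lemma~\ref{lem:Q2-28/02}.
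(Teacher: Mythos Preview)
Your proof is correct and follows essentially the same approach as the paper: the paper also reduces $I_t^{4,n}$ to $I_t^{3,n}$ via the observation $\int_A |I_t^{4,n}(a)|\,da = \bigl|\int_A I_t^{3,n}(a)\,da\bigr| \leq \int_A |I_t^{3,n}(a)|\,da$ (which is your bound $\int_A |I_t^{4,n}(a)|\,da \leq J^n(t)$ in different notation), and then invokes Proposition~\ref{prop:convergencektn03/03} for the uniform-in-$t$ convergence of $\int_A |k_t^n - k_t|\,d\nu_t^{*,n}$.
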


\begin{proof}

The convergence of the term involving $\bd{I}^{3,n}$ is a direct consequence of Proposition \ref{prop:convergencektn03/03}. Since, for all $t \in [t_0^n,T]$ and all $ n \in \mathbb{N}$ we have
$$ \int_A |I_t^{4,n}(a) |da = \bigl| \int_A I_t^{3,n}(a) da \bigr| \leq \int_A |I_t^{3,n}(a)|da ,$$
the result for $\bd{I}^{4,n} = (I_t^{4,n})_{t \in [t_0^n \vee t_0,T]}$ directly follows.

\end{proof}

\appendix

\section{Some Auxiliary Statements}

\label{sec:AppendixA}

\subsection{Lagrangian approach to the continuity and transport equations.}

\label{subse:A:1}
The goal of this section is to provide various existence, stability and regularity results for the
continuity and transport equations 
\eqref{eq:Continuitygammanu} and 
\eqref{eq:adjoint:equation}. This is mostly done through the (Lagrangian) representation 
based on the ODE \eqref{eq:ODE:intro}, which we recall below: 
\begin{equation}
\label{eq:ODE:appendix}
\dot{X}_s^{t,x} = b(X_s^{t,x}, \nu_s), \quad X_t^{t,x} = x, \quad s \in [t,T].
\end{equation}
Above, $b$ satisfies $(i)$ in \assreg.
The input ${\boldsymbol \nu}$ is taken in the class $\mathcal{D}(t_0)$ defined in
Definition \ref{defn:D(t_0)}.

\subsubsection{Analysis of the ODE}

We start with the analysis of the ODE \eqref{eq:ODE:appendix}. For some initial condition $(t,x) \in [t_0,T] \times \R^{d_1}$ and some control ${\boldsymbol \nu} \in \mathcal{D}(t_0)$, we say that $(X_s^{t,x})_{t \leq s \leq T}$ is solution to the ODE 
\eqref{eq:ODE:appendix} if $s \in [t,T] \mapsto X_s^{t,x} \in {\mathbb R}^{d_1}$ is continuous and 
$$\forall s \in [t,T], \quad X_s^{t,x} = x +\int_t^s b(X_u^{t,x}, \nu_u) du.$$

\begin{prop}
\label{prop:ODE16Sept}
    Let $t_0 \in [0,T]$ and ${\boldsymbol \nu} \in {\mathcal D}(t_0)$. 
    Then, for all $(t,x) \in [t_0,T] \times \R^{d_1}$, there exists a unique solution to the ODE \eqref{eq:ODE:appendix}. For all $s \in [t,T]$, the map $x \in {\mathbb R}^{d_1} \mapsto X_s^{t,x}$ is three times differentiable and there exists a non-decreasing function 
    $\Lambda : {\mathbb R}_+ \rightarrow {\mathbb R}_+$, independent of $t_0$ and 
    ${\boldsymbol \nu}$, such that
\begin{equation} 
\sup_{t \in [t_0,T]} \Biggl \{ \sup_{s \in [t,T]} \norm{ X_s^{t,\cdot} }_{\mathcal{C}^3_{1,b}} + \sup_{t_0 \leq s_1 < s_2 \leq T} \frac{\| X_{s_2}^{t, \cdot} - X_{s_1}^{t,\cdot} \|_{\mathcal{C}^1_b}}{\sqrt{s_2-s_1}} \Biggr\} \leq \Lambda \bigl( \norm{ \bd \nu}_{\mathcal{D}(t_0)} \bigr).  
\label{eq:estimateODEAppendix19Aout}
\end{equation}

\color{black}
Moreover, if the coefficients $(t,x) \mapsto (b(x,\nu_t), \dots, \nabla_{x}^l b(x,\nu_t) )$, for $l \in \{2,3\}$, are jointly continuous and bounded, then the map $(t,s,x) \mapsto X_s^{t,x}$ (with the solution being extended to $s \in (-\infty,t)$
by the solving ODE 
\eqref{eq:ODE:appendix} backwards 
for $s<t$) 
has continuous and bounded derivatives of the form $\partial^m_t \partial_x^k X_s^{t,x}$, with $m=0,1$, $k=0,\dots,l$ and $0 <m+k \leq l$. 
\end{prop}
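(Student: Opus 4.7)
\medskip

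\noindent\textbf{Proof plan for Proposition \ref{prop:ODE16Sept}.} The strategy is the classical Cauchy–Lipschitz/Gronwall scheme for an ODE driven by a time-dependent vector field of finite total variation, carefully tracking the dependence of all bounds on $\|\bd\nu\|_{\mathcal{D}(t_0)}$. The key point is that the measure ${\bd \nu}$ enters only through integrals of $(1+|a|^k)$, $k\leq 4$, and that assumption (i) in \assreg\ provides
$$|\nabla_x^k b(x,a)| \leq C(1+|a|^{k+1}), \quad k=0,1,2,3,$$
so that $b(\cdot,a)$ and its spatial derivatives are uniformly bounded in $x$ with polynomial growth in $a$. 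Consequently $\int_{t_0}^T \|\nabla_x^k b(\cdot,\nu_s)\|_{L^\infty}\,ds \leq C_k \|\bd\nu\|_{\mathcal{D}(t_0)}$ for every $k\leq 3$, which is the only quantitative input needed throughout.

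\medskip

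\noindent\emph{Step 1: existence, uniqueness, and the $\mathcal{C}^0_{1,b}$ part of \eqref{eq:estimateODEAppendix19Aout}.} Since $\int_t^s |\nabla_x b(\cdot,\nu_u)|_{L^\infty}\,du$ is finite and grows linearly in $s-t$ (up to the multiplicative constant $\Lambda(\|\bd\nu\|_{\mathcal{D}(t_0)})$), standard Cauchy–Lipschitz on a Banach fixed-point argument in $\mathcal{C}([t,T];\R^{d_1})$ yields a unique solution. The bound $|b(x,a)|\leq C(1+|a|)$ gives $|X_s^{t,x}-x|\leq C\int_t^s\int_A (1+|a|)d|\nu_u|(a)du \leq C\|\bd\nu\|_{\mathcal{D}(t_0)}$, uniformly in $x$; this controls $\|X_s^{t,\cdot}\|_{\mathcal{C}^0_1}$ and, by the sup-in-interval part of the norm $\|\cdot\|_{\mathcal{D}(t_0)}$, provides the $1/2$-Hölder bound $|X_{s_2}^{t,x}-X_{s_1}^{t,x}| \leq C\|\bd\nu\|_{\mathcal{D}(t_0)}\sqrt{s_2-s_1}$.

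\medskip

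\noindent\emph{Step 2: spatial regularity up to order three.} Formally differentiating the ODE $k$ times in $x$ produces a linear ODE for $\partial_x^k X_s^{t,x}$ whose right-hand side is a polynomial in $\partial_x^j X_s^{t,x}$, $j\leq k$, with coefficients $\nabla_x^l b(X_s^{t,x},\nu_s)$, $l\leq k$. One justifies this differentiation rigorously by a standard induction on $k$, using difference quotients and the dominated convergence theorem; the bound $\|\nabla_x^l b(\cdot,\nu_s)\|_{L^\infty}\leq C\int_A (1+|a|^{l+1})d|\nu_s|(a)$, integrable in $s$ on $[t_0,T]$, is what legitimates passing to the limit. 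The case $k=1$ produces a homogeneous linear ODE $\partial_s \nabla_x X_s^{t,x} = \nabla_x b(X_s^{t,x},\nu_s)\nabla_x X_s^{t,x}$ whose fundamental solution is bounded by $\exp(C\|\bd\nu\|_{\mathcal{D}(t_0)})$; cases $k=2,3$ then follow by Gronwall applied to the affine linear equations inherited from Faà di Bruno's formula, each step producing a bound of the form $\Lambda(\|\bd\nu\|_{\mathcal{D}(t_0)})$. Combining with Step 1 yields the $\mathcal{C}^3_{1,b}$ estimate. The $1/2$-Hölder bound on $\nabla_x X^{t,\cdot}$ in $\mathcal{C}^0_b$ is recovered analogously, writing
$$\nabla_x X_{s_2}^{t,x}-\nabla_x X_{s_1}^{t,x} = \int_{s_1}^{s_2} \nabla_x b(X_u^{t,x},\nu_u)\,\nabla_x X_u^{t,x}\,du$$
and using the sup-interval part of $\|\cdot\|_{\mathcal{D}(t_0)}$ again.

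\medskip

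\noindent\emph{Step 3: joint smoothness in $(t,s,x)$ under the extra continuity hypothesis.} If $(t,x)\mapsto \nabla_x^j b(x,\nu_t)$ is jointly continuous and bounded for $j\leq l$, then $\partial_s X_s^{t,x}=b(X_s^{t,x},\nu_s)$ is continuous in $(s,t,x)$ and, by Step 2, so are its derivatives up to order $l$ in $x$. To obtain derivatives in $t$, I use the semigroup identity $X_s^{t,x}=X_s^{t',X_{t'}^{t,x}}$ valid for $t\leq t'\leq s$; differentiating at $t'=t$ gives the representation
$$\partial_t X_s^{t,x} = -\nabla_x X_s^{t,x}\,b(x,\nu_t),$$
from which continuity of $\partial_t X_s^{t,x}$ and the required boundedness follow directly from the previous step and the continuity/boundedness hypothesis on $b(\cdot,\nu_t)$. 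Mixed higher-order derivatives $\partial_t\partial_x^k X_s^{t,x}$ with $k+1\leq l$ are obtained by differentiating this identity $k$ times in $x$, each factor being controlled in Step~2 and by assumption.

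\medskip

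\noindent\emph{Main obstacle.} The conceptually delicate point is the $t$-regularity in Step 3: for a generic ${\bd \nu}\in\mathcal{D}(t_0)$, only measurability in time is available, so $X_s^{t,x}$ need not be differentiable in $t$; the semigroup-based identity for $\partial_t X_s^{t,x}$ only makes sense once the continuity hypothesis on $b(\cdot,\nu_\cdot)$ is imposed, and this is precisely the reason for stating the two regularity results separately. Everything else is a careful, but bookkeeping-style, iteration of Gronwall estimates against the finite quantity $\|\bd\nu\|_{\mathcal{D}(t_0)}$.
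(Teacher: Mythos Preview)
Your proposal is correct and follows essentially the same approach as the paper: Cauchy--Lipschitz for existence/uniqueness, then iterated Gr\"onwall on the variational equations (with the Fa\`a di Bruno structure) for the $\mathcal{C}^3_{1,b}$ bounds, and direct integration of the ODE for the $\tfrac12$-H\"older time regularity. The one substantive difference is the final claim: the paper dispatches it in one line by citing Hartman's textbook (Part~V, Chapter~4, Theorem~4.1 and Corollary~4.1), whereas you give an explicit argument via the semigroup identity $\partial_t X_s^{t,x} = -\nabla_x X_s^{t,x}\,b(x,\nu_t)$; your route is perfectly valid and has the merit of being self-contained.
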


\begin{rmk}
    Notice that, for any $l \in \{ 0,1,2,3 \}$ it follows from the assumptions on $b$ and its derivatives that a sufficient condition for $(t,x) \mapsto \nabla_x^{l}b(x,\nu_t)$ to be bounded is to require $t \mapsto \int_A (1+|a|^{l+1})d |\nu_t|(a)$ to be bounded while a sufficient condition for $(t,x) \mapsto \nabla_x^lb(x,\nu_t)$ to be jointly continuous is to require $t \mapsto \nu_t \in \mathcal{M}_{1+|a|^{l+1}}$ to be continuous.
\label{rmk:regularvectorfield27/01}
\end{rmk}

\begin{proof} { \ }

\textit{Step 1.}
We first address the solvability of 
\eqref{eq:ODE:appendix}. 
By (i) in \assreg, 
we notice that, for Lebesgue almost every $t$ in $[t_0,T]$, 
the vector field $ x \mapsto b(x, \nu_t)$ belongs to $\mathcal{C}^3(\R^d)$ with  
\begin{equation}
\label{eq:prop:A.1:step:1:1}
\int_{t_0}^T \norm{b(\cdot,\nu_t)}_{\mathcal{C}^3_b} dt \leq C \int_{t_0}^T\int_A (1+|a|^4) d|\nu_t|(a)dt < +\infty.
\end{equation}
Therefore, we 
can apply Cauchy-Lipschitz theorem and deduce that 
the ODE 
\eqref{eq:ODE:appendix} is uniquely solvable. 
\vskip 4pt

\textit{Step 2.}
We now turn to the proof of the space regularity estimate in 
\eqref{eq:estimateODEAppendix19Aout}. 
We start with the following bound (for $s \in [t,T]$):
\begin{equation*}
    |X_s^{t,x}| \leq |x| + \biggl| \int_t^s b(X_u^{t,x},\nu_u)du \biggr| \\
    \leq |x| + C \int_s^t \int_A (1+|a|) d|\nu_u|(a)du,
\end{equation*}
and the right-hand side can be easily bounded in terms of the fourth moment of 
$|{\boldsymbol \nu}|$. This gives the bound for the first term in 
\eqref{eq:estimateODEAppendix19Aout} when the ${\mathcal C}^3_{1,b}$-norm is replaced 
the ${\mathcal C}^0_{1}$-norm.

Moreover, by 
\eqref{eq:prop:A.1:step:1:1}, it is a standard fact that, for each $s \in [t,T]$, the mapping $x \mapsto X_s^{t,x}$
is three times differentiable. 
Differentiating the flow, we have
\begin{equation} 
\nabla_x X_s^{t,x} = I_d +\int_t^s \nabla_x b(X_u^{t,x},\nu_u) \nabla_x X_u^{t,x} du \quad s \in [t,T],
\label{eq:differentiatingtheflow26/01}
\end{equation}
and the growth assumption on $\nabla_x b$ gives
$$ |\nabla_x X_s^{t,x} | \leq C + C \int_t^s
\biggl( |\nabla_x X_u^{t,x}| \int_A(1+|a|^2) d|\nu|_u(a) \biggr) du. $$
By Grönwall's lemma, 
the left-hand side can be estimated in terms of the fourth moment of 
$|{\boldsymbol \nu}|$. 

We can proceed in the same way for the higher-order derivatives.
Taking $l\in \{2,3\}$
and assuming that we have a bound for 
$\sup_{s \in [t,T]} \| \nabla_x X_s^{t,\cdot} \|_{{\mathcal C}_b^{l-1}}$, 
we can find a universal constant $c>0$ such that
\begin{align*}
|\nabla_{x}^l X_s^{t,x} | &\leq \int_{t}^s |\nabla_x b(X_u^{t,x},\nu_u)| |\nabla_{x}^l X_u^{t,x} |du 
 + 
c \sum_{k=2}^{l}
\sum_{\underset{i_1 + \cdots + i_k =l}{1 \leq i_1 \leq \cdots \leq i_k }}
\int_s^t |\nabla_{x}^{k} b(X_u^{t,x},\nu_u)| 
\prod_{j=1}^k |\nabla_x^{i_j} X_u^{t,x} | du 
\\
    &\leq C \int_t^s \biggl( |\nabla_{x}^l X_u^{t,x} | \int_A (1+|a|^2) d|\nu_u|(a) 
    \biggr) du 
    \\
&\quad    + C 
     \sup_{s \in [t,T]}     \norm{ \nabla_x X_s^{t, \cdot}}_{\mathcal{C}^{l-1}_b}  
    \sum_{k=2}^l     
    \int_s^t \int_A \bigl(1+|a|^{k+1}\bigr)d|\nu_u|(a)du,
\end{align*}
for a constant $C$ depending on the parameters in (i) in \assreg. 
Using Gr\"onwall's lemma together with 
the fact that $k+1$ above is less than 4,
we derive the first bound in 
\eqref{eq:estimateODEAppendix19Aout}.
\vskip 4pt

\textit{Step 3.} We 
now prove the time regularity estimate in  \eqref{eq:estimateODEAppendix19Aout}.
It follows from the following inequality, which is itself a consequence of 
\eqref{eq:boundfromPinsker16Sept}:
\begin{equation*}
\begin{split}
|X_{s_2}^{t,x} - X_{s_1}^{t,x}| &\leq \int_{s_1}^{s_2} | b( X_s^{t,x},\nu_s)|ds 
\\
&\leq C \int_{s_1}^{s_2} \int_A (1+|a|)d|\nu_s|(a) ds \leq C \sqrt{|s_2 -s_1|}
\norm{\bd{\nu}}_{\mathcal{D}(t_0)},
\end{split}
\end{equation*}
which holds true for all $s_1,s_2 \in [t,T]$ with $s_1 < s_2$. Similarly, recalling \eqref{eq:differentiatingtheflow26/01} we get 
\begin{align*}
   & \bigl| \nabla_x X_{s_2}^{t,x} - \nabla_x X_{s_1}^{t,x} \bigr| \leq  \int_{s_1}^{s_2} \bigl| \nabla_x b(X_u^{t,x},\nu_u) \nabla_x X_u^{t,x} \bigr| du \\
    &\leq C \sup_{u \in [s_1,s_2]} \norm{ X_u^{t,\cdot}}_{\mathcal{C}^1_{1,b}} \int_{s_1}^{s_2} (1+|a|^2) d|\nu_u|(a)du \leq C \sup_{u \in [s_1,s_2]} \norm{ X_u^{t,\cdot}}_{\mathcal{C}^1_{1,b}} \sqrt{s_2-s_1} \norm{\bd{\nu}}_{\mathcal{D}(t_0)}.
\end{align*}
Combined with \textit{Step 2}, this completes the proof of 
\eqref{eq:estimateODEAppendix19Aout}. 
\vskip 4pt

\textit{Step 4.}
As for the last claim in the statement it follows from 
\cite[Part V, Chapter 4, Theorem 4.1 \& Corollary 4.1,]{Hartman}.
\end{proof}

\subsubsection{Regularizing the vector field}
\label{subsec:regularizingvectorfield}

The analysis of the continuity
and transport equations
\eqref{eq:Continuitygammanu} 
and \eqref{eq:adjoint:equation} relies on the following regularization argument.

We start by extending $\bd{\nu}$ to $\R$ by setting
$$ \bar{\nu}_t := \left \{ 
\begin{array}{ll}
\nu_t & \mbox{ if } t \in [t_0,T], \\
0 & \mbox{ otherwise. }
\end{array}
\right. $$
The collection $(\bar \nu_t)_{t \in {\mathbb R}}$ is denoted by $\bar{\boldsymbol{\nu}}.$

Then we take a smooth, even, density $\rho : \R \rightarrow \R^+$ and let for all 
$n \in \mathbb{N}^*$, $\rho^n := n \rho(n \cdot)$. 
For a fixed $n \in {\mathbb N}^*$,
we define
\begin{equation}
    \bd{\nu}^n := \rho^n * \bar{\bd{\nu}}
    \label{eq:defnnun23/12}
\end{equation}
over $\R$. Equivalently, we have for all $t \in \R$ and all Borel subset $B$ of $A$,
$$ \nu_t^n(B) = \int_{\R} \rho^n(t-s) \bar{\nu}_s(B) ds = \int_{t_0}^T \rho^n(t-s) \nu_s(B)ds.$$
By restricting $\bd{\nu}^n$ to $[t_0,T]$, we can see it as an element of $\mathcal{D}(t_0)$. Then $\bd{\nu}^n$ satisfies the following results:

\begin{lem}
    \label{lem:propertiesnun11/01}
Take $\bd\nu \in \mathcal{D}(t_0)$ and consider
$(\bd{\nu}^n)_{n \geq 1}$ as given by
\eqref{eq:defnnun23/12}. Then we have
\begin{equation} 
\sup_{ n \in \mathbb{N}^* } \norm{ \bd{\nu}^n}_{\mathcal{D}(t_0)} \leq \norm{\bd{\nu}}_{\mathcal{D}(t_0)}, 
\label{eq:estimatenunDt0}
\end{equation}
as well as the uniform integrability condition
\begin{equation} 
\lim_{K \rightarrow +\infty} \sup_{ n \in \mathbb{N}^*} \int_{t_0}^T \int_A (1+|a|^4) 
\mathbf1_{\{|a| \geq K\}} d|\nu_t^{n}|(a)dt = 0.
\label{eq:uniformintegrability23/12}
\end{equation}
\end{lem}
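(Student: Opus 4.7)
The plan is to reduce everything to a single pointwise domination at the level of total-variation measures. Since $\nu_t^n = \int_\R \rho^n(t-s) \bar{\nu}_s \, ds$ with $\rho^n \geq 0$, a direct computation on each Borel subset of $A$ (estimating sums $\sum_i \vert \nu_t^n(B_i)\vert$ over partitions by Jensen/triangle inequality applied inside the $s$-integral) shows that for every $t \in \R$,
\begin{equation*}
|\nu_t^n| \,\leq\, \rho^n * |\bar{\nu}|\quad \textrm{as measures on } A.
\end{equation*}
This is the one inequality that drives both claims.

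For the first estimate \eqref{eq:estimatenunDt0}, I would treat the two pieces of the norm separately. For the $L^4$-moment piece, setting $H(s) := \int_A (1+|a|^4) d|\nu_s|(a)$ (extended by zero outside $[t_0,T]$), the domination gives $\int_A (1+|a|^4) d|\nu_t^n|(a) \leq (\rho^n * H)(t)$; integrating over $t \in [t_0,T]$ and applying Fubini together with $\int_\R \rho^n = 1$ yields the desired bound. For the $1/2$-Hölder piece, let $F(s) := \int_A (1+|a|^2) d|\nu_s|(a)$ (extended by zero) and fix $t_1 < t_2$ in $[t_0,T]$. Using the \emph{evenness} of $\rho$ together with the change of variables $v=s+u$, one may rewrite
\begin{equation*}
\int_{t_1}^{t_2} (\rho^n * F)(t)\, dt \,=\, \int_\R \rho^n(u)\left( \int_{t_1-u}^{t_2-u} F(s)\, ds\right) du.
\end{equation*}
Since $F \geq 0$ vanishes outside $[t_0,T]$, each inner integral equals $\int_{[t_1-u,t_2-u]\cap [t_0,T]} F$ over an interval of length at most $t_2-t_1$, so the definition of $\|\bd{\nu}\|_{\mathcal{D}(t_0)}$ bounds it by $\|\bd{\nu}\|_{\mathcal{D}(t_0)}\sqrt{t_2-t_1}$. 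Integrating $\rho^n$ to $1$ and dividing by $\sqrt{t_2-t_1}$ gives the result. This ``interval sliding'' step is the only point that is not purely mechanical, and it is where both the symmetry of $\rho^n$ and the non-negativity of $F$ enter essentially; I expect this to be the main (though still mild) obstacle.

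For the uniform integrability statement \eqref{eq:uniformintegrability23/12}, apply the same domination and Fubini to obtain
\begin{equation*}
\int_{t_0}^T \!\int_A (1+|a|^4)\mathbf{1}_{\{|a|\geq K\}} d|\nu_t^n|(a)\, dt \,\leq\, \int_{t_0}^T \!\int_A (1+|a|^4)\mathbf{1}_{\{|a|\geq K\}} d|\nu_s|(a)\, ds,
\end{equation*}
where the right-hand side is independent of $n$. Since $\bd{\nu} \in \mathcal{D}(t_0)$, the integrand $s \mapsto \int_A (1+|a|^4) d|\nu_s|(a)$ is in $L^1(t_0,T)$, and dominated convergence (in the pair $(s,a)$ against the Lebesgue$\otimes|\nu_s|$ measure) sends the right-hand side to $0$ as $K\to\infty$, uniformly in $n$. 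This completes the plan.
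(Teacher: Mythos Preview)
Your proposal is correct and follows essentially the same approach as the paper: the key domination $|\nu_t^n| \leq \rho^n * |\bar{\nu}|$ followed by Fubini and the ``interval sliding'' argument for the H\"older piece is exactly what the paper does. One minor remark: the evenness of $\rho$ is not actually needed for the displayed identity $\int_{t_1}^{t_2}(\rho^n * F)(t)\,dt = \int_\R \rho^n(u)\int_{t_1-u}^{t_2-u} F(s)\,ds\,du$, which follows from Fubini alone.
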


\begin{proof}
We fix some $n \in {\mathbb N}^*$. We observe that, for almost every  $t \in [t_0,T]$ and all Borel subset $B$ of $A$,
$$ |\nu_t^n|(B) \leq \int_{t_0}^T \rho^n(t-s) \bigl[ |\nu_s|(B) \bigr]
ds.$$
The above inequality can be easily extended from indicator functions of Borel subsets to positive measurable functions. Therefore, for all $t_1 < t_2 \in [t_0,T]$, $k \in \mathbb{N}$ and $K>0$,
we get, by a simple change of variable,
\begin{align*}
    \int_{t_1}^{t_2} \int_A (1+|a|^k) \mathbf{1}_{\{ |a| \geq K
    \} 
    } d|\nu_t^n|(a)dt &\leq  \int_{t_1}^{t_2} \int_{t_0}^T \rho^n(t-s) \int_A (1+|a|^k)
    \mathbf{1}_{\{ |a| \geq K
    \}}    
    d|\nu_s|(a)  ds dt \\
    &\leq \int_{t_1}^{t_2} \int_{t_0-t}^{T -t} \rho^n(u) \int_A (1+|a|^k) \mathbf{1}_{\{  |a| \geq K \}}d|\nu_{u+t}|(a)dudt, 
\end{align*}
and then, we deduce the trivial bound
\begin{align*}
     \int_{t_1}^{t_2} \int_A (1+|a|^k) \mathbf{1}_{\{ |a| \geq K\}} d|\nu_t^n|(a)dt &\leq \int_{t_1}^{t_2} \int_{\R} \rho^n(u) \int_A (1+|a|^k)\mathbf{1}_{\{ |a| \geq K\} } d|\bar{\nu}_{u+t}|(a)dudt \\
    &\leq \int_{\R} \rho^n(u) \int_{t_1}^{t_2} \int_A  (1+|a|^k) \mathbf{1}_{\{ |a| \geq K\} } d|\bar{\nu}_{u+t}|(a)dt du 
\end{align*}
from which it comes, since $\int_{\R} \rho^n(u)du =1$,
\begin{align*}
     \int_{t_1}^{t_2} \int_A (1+|a|^k) \mathbf{1}_{\{ |a| \geq K\}} d|\nu_t^n|(a)dt &\leq \sup_{u \in \R} \int_{t_1}^{t_2} \int_A  (1+|a|^k) \mathbf{1}_{\{ |a| \geq K\} } d|\bar{\nu}_{u+t}|(a)dt.
\end{align*}
Now, for $k=2$ and $K=0$ we have
$$ \sup_{u \in \R} \int_{t_1}^{t_2} \int_A (1+|a|^2)d|\bar{\nu}_{t+u}|(a) du \leq \sqrt{t_2-t_1} \sup_{s_1 < s_2 \in [t_0,T]} \frac{1}{\sqrt{s_2-s_1}} \int_{s_1}^{s_2} \int_A (1+|a|^2) d|\nu_t|(a)dt, $$
where we used that $\bar{\boldsymbol \nu} =0$ outside $[t_0,T] \times A$ and, for $k=4$,
$$ \sup_{u \in \R} \int_{t_0}^T \int_A (1+|a|^4) \mathbf{1}_{\{ |a| \geq K\} } d|\bar{\nu}_{t+u}|(a)dt = \int_{t_0}^T \int_A (1+|a|^4) \mathbf{1}_{\{ |a| \geq K\} } d|\nu_t|(a)dt. $$
Combined together we get both \eqref{eq:estimatenunDt0} and \eqref{eq:uniformintegrability23/12}.
\end{proof}

We use the above statement to derive the following properties on 
the collection of vector fields 
$((t,x) \mapsto b(x, \nu_t^n))_{n \geq 1}$:
\begin{lem}
   Take $\bd\nu \in \mathcal{D}(t_0)$ and consider
$(\bd{\nu}^n)_{n \geq 1}$ as given by
\eqref{eq:defnnun23/12}. Then, for each $n \in {\mathbb N}^*$, the vector field $(t,x) \mapsto b(x,\nu_t^n)$ is smooth in the sense that all the derivatives $ \partial_t^m \nabla_x^k [(t,x) \mapsto b(x, \nu_t^n)]$ with $m\geq 0$ and $0 \leq k \leq 3 $ are continuous and bounded. We also  have the estimate
\begin{equation} 
\sup_{n \in \mathbb{N}^*} \biggl \{ \int_{t_0}^T \norm{ b(\cdot , \nu_t^n)}_{\mathcal{C}^3_b} dt + \sup_{t_1 < t_2 \in [t_0,T]} \frac{1}{\sqrt{t_2-t_1}} \int_{t_1}^{t_2} \norm{b(\cdot,\nu_t^n)}_{\mathcal{C}^1_b}dt \biggr \} \leq C \norm{\bd{\nu}}_{\mathcal{D}(t_0)}. 
\label{eq:uniformboundnun23-12}
\end{equation}
Moreover, the vector fields $((t,x) \mapsto b(x,\nu_t^n))_{n \geq 1}$ approximate $(t,x) \mapsto b(x,\nu_t)$ in the following sense:
\begin{equation} 
\lim_{n \rightarrow +\infty} \int_{t_0}^T \norm{ b(\cdot, \nu_t^n) - b(\cdot, \nu_t) }_{\mathcal{C}^3_{1}} dt =0.
\label{eq:approxbnun23/12}
\end{equation}
\label{lem:newregularizationb23/12}
\end{lem}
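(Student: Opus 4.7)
The proof splits naturally into three pieces corresponding to the three conclusions of the statement. The unifying tool is the Fubini-type identity
\begin{equation*}
b(x, \nu_t^n) = \int_A b(x,a) \, d\nu_t^n(a) = \int_\R \rho^n(t-s) \, b(x, \bar\nu_s) \, ds, \qquad (t,x) \in \R \times \R^{d_1},
\end{equation*}
whose validity follows from the linear growth in $a$ of $b$ (item (i) of \assreg) combined with $\|\bd\nu\|_{\mathcal{D}(t_0)} < \infty$. This exhibits $(t,x) \mapsto b(x,\nu_t^n)$ as the time-mollification of the $x$-smooth field $(s,x) \mapsto b(x,\bar\nu_s)$.

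The smoothness claim is an immediate consequence of this representation: time derivatives $\partial_t^m$ can be transferred onto the compactly supported smooth mollifier $\rho^n$, and spatial derivatives $\nabla_x^k$ commute with the integration in $s$ by dominated convergence, the majorant being $\rho^n(t-s)(1+|a|^{k+1}) d|\bar\nu_s|(a)$, which is integrable thanks to \assreg. The uniform estimate \eqref{eq:uniformboundnun23-12} then follows from the pointwise bound $\|b(\cdot,\nu_t^n)\|_{\mathcal{C}^3_b} \leq C \int_A (1+|a|^4) d|\nu_t^n|(a)$ (valid by (i) of \assreg), integrated in $t$ and paired with \eqref{eq:estimatenunDt0} of Lemma \ref{lem:propertiesnun11/01}. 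The H\"older-type summand of the norm $\|\cdot\|_{\mathcal{D}(t_0)}$ is treated in exactly the same manner, using the $(t_2-t_1)$-Lipschitz-in-$L^1$ part of the norm.

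For the approximation property \eqref{eq:approxbnun23/12}, I plan to use $\int_\R \rho^n(u) du = 1$ to recast
\begin{equation*}
b(x, \nu_t^n) - b(x, \nu_t) = \int_\R \rho^n(u) \bigl[ b(x, \bar\nu_{t-u}) - b(x, \bar\nu_t) \bigr] du, \qquad t \in [t_0,T],
\end{equation*}
take the $\mathcal{C}^3_1$-norm in $x$, integrate in $t$, swap integrals, and bound the result by $\int_\R \rho^n(u) F(u) du$ where $F(u) := \int_{t_0}^T \|b(\cdot, \bar\nu_{t-u}) - b(\cdot, \bar\nu_t)\|_{\mathcal{C}^3_1} dt$. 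Since $F$ is uniformly bounded by $2 \int_{t_0}^T \|b(\cdot,\bar\nu_t)\|_{\mathcal{C}^3_1} dt < \infty$ and $\rho^n$ concentrates at $0$, dominated convergence reduces the matter to proving $\lim_{u \to 0} F(u) = 0$, i.e.\ continuity of time-translation for the $\mathcal{C}^3_1$-valued map $s \mapsto b(\cdot, \bar\nu_s)$ in the $L^1_t$-sense.

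This last point is the main obstacle. I expect to establish it by a truncation argument: given $\varepsilon > 0$, split $b(x,a) = b(x,a) \chi_K(a) + b(x,a)(1-\chi_K(a))$, where $\chi_K$ is a smooth cutoff supported in $\{|a|\leq K+1\}$ equal to $1$ on $\{|a|\leq K\}$. The tail contribution is controlled, uniformly in $u$, by the uniform integrability property \eqref{eq:uniformintegrability23/12} of Lemma \ref{lem:propertiesnun11/01} applied to both $\bar\nu$ and its translate, and can be made $\varepsilon$-small by choosing $K$ large. On the truncated part, the integrand $b(\cdot,a)\chi_K(a)$ has compact support in $a$, so the question reduces to the classical scalar $L^1$-continuity of translation for the functions $s \mapsto \int_A \nabla_x^l b(x,a) \chi_K(a) d\bar\nu_s(a)$, indexed by $(x,l)$. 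Uniformity in $x$ (and thus control of the full $\mathcal{C}^3_1$-norm) is then recovered by a standard equicontinuity argument based on the bound $\|b(\cdot,a)\chi_K\|_{\mathcal{C}^3_b} \leq C(1+K^4)$ supplied by \assreg, reducing the supremum over $x \in \R^{d_1}$ to a supremum over a precompact set of $x$'s via the linear-growth factor $(1+|x|)^{-1}$.
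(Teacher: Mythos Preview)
Your treatment of the smoothness claim and of the uniform estimate \eqref{eq:uniformboundnun23-12} coincides with the paper's first step: both exploit the convolution representation $b(x,\nu_t^n)=\int_{t_0}^T \rho^n(t-s)\,b(x,\nu_s)\,ds$ and combine the growth bounds from item (i) of \assreg\ with the moment control of Lemma \ref{lem:propertiesnun11/01}.

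For the approximation property \eqref{eq:approxbnun23/12}, your route is correct but genuinely different from the paper's. The paper introduces a \emph{second} mollifier $\zeta^m$, this time in the $a$-variable, producing measures $\bar\nu_t^m=\zeta^m*\bar\nu_t$ and $\nu_t^{n,m}=\zeta^m*\nu_t^n$ that possess densities. It then argues in three pieces: (i) $\sup_n \int_{t_0}^T \|b(\cdot,\nu_t^{n,m})-b(\cdot,\nu_t^n)\|_{\mathcal{C}^3_1}\,dt\to 0$ as $m\to\infty$, via the Lipschitz-in-$a$ bound on $\nabla_x^k b$; (ii) the analogous statement with $n$ removed; and (iii) for each fixed $m$, $\int_{t_0}^T \|b(\cdot,\nu_t^{n,m})-b(\cdot,\bar\nu_t^m)\|_{\mathcal{C}^3_b}\,dt\to 0$ as $n\to\infty$, which reduces to scalar $L^1$-continuity of translation applied \emph{pointwise in $a$} to the density $t\mapsto \bar\nu_t^m(a)$, followed by dominated convergence over $a$. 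Your alternative---truncation in $a$ via \eqref{eq:uniformintegrability23/12}, then a finite-net reduction in $x$ so that scalar $L^1$-continuity need only be invoked at finitely many points $x_i$---is equally valid and arguably more direct, as it avoids a second layer of regularization. One small correction: to carry out the equicontinuity step for the $l=3$ component of the $\mathcal{C}^3_1$-norm you need Lipschitz control in $x$ of $\nabla_x^3 b(x,a)\chi_K(a)$, i.e.\ a bound on $\nabla_x^4 b$; so you should cite the $\mathcal{C}^4_b$ bound (available from \assreg\ since $k=4$, $l=0$ is allowed) rather than $\mathcal{C}^3_b$.
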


\begin{proof}
\textit{Step 1.} For a fixed $n \in {\mathbb N}^*$, we reformulate 
$(t,x) \mapsto b(x,\nu_t^n)$
in the form
    $$ b(x,\nu_t^n) = \int_{t_0}^T \rho^n(t-s) \biggl[ \int_A b(x,a) d\nu_s(a) \biggr] ds, 
    \quad t \in [t_0,T] \times {\mathbb R}^{d_1}.$$ The regularity of $(t,x) \mapsto b(x,\nu_t^n)$ follows in a straightforward way from the regularity of $b$ and the growth assumption on $\nabla^k_x b(x,a)$ for $0 \leq k \leq 3$ (see \assreg) together with the integrability 
    properties of $\bd{\nu}$ (see 
    Definition \ref{defn:D(t_0)}). Moreover, for any integer $k \in \{0,1,2,3\}$,
$$ \forall t \in [t_0,T], \quad 
\norm{b(\cdot, \nu_t^n)}_{\mathcal{C}^k_b} \leq C \int_A \bigl(1+|a|^{k+1}\bigr) d|\nu_t^n|(a),$$
from which we deduce \eqref{eq:uniformboundnun23-12}.
\vskip 4pt

 \textit{Step 2.}
We now turn to the proof of 
\eqref{eq:approxbnun23/12}.
For this, we introduce another smooth density $\zeta : A \rightarrow \R^+$ with compact support and define, for all $m \in \mathbb{N}^*$, $\zeta^m(a) = m^{d'} \zeta(m \, \cdot)$. For $t \in \R$ and $n \in \mathbb{N}^*$, we let
$$ \nu^{n,m}_t := \zeta^m * \nu_t^n, \quad \quad \bar{\nu}_t^m := \zeta^m * \bar{\nu}_t,$$
where the convolution is taken with respect to the $a$-variable. 
For every $t \in \R$, the measures $\nu_t^{n,m}$ and $\bar \nu_t^m$ have densities with respect to the Lebesgue measure over $A$ and these densities are still denoted $a\mapsto \nu_t^{n,m}(a)$ and $a \mapsto \nu_t^m(a)$ respectively. Moreover, for all 
$t \in {\mathbb R}$
and 
Borel subset $B$ of $A$,
\begin{equation}
\label{eq:nunm=rhon*barnum}
\begin{split}
\nu_t^{n,m}(B) = \int_{A}
\zeta^m(a) 
\nu_t^n(B-a) da
&= \int_A \biggl( \int_{t_0}^T
\zeta^m(a) 
\rho^n(t-s) 
\nu_{s} (B-a) 
ds
\biggr)
da
\\
&= \int_{t_0}^T \biggl( \int_A 
\zeta^m(a) 
\nu_{s} (B-a) 
da
\biggr)
\rho^n(t-s) 
 ds
\\
&= \int_{\mathbb R} 
\rho^n(t-s) 
\bar \nu^m_{s} (B) 
 ds 
= \bigl( \rho^n *  
\bar \nu^m (B)\bigr)_t, 
\end{split}
\end{equation}
which permits to identify  
$\nu_t^{n,m}$ and $( \rho^n *  
\bar \nu^m)_t$ (with the latter being interpreted as a Pettis integral, even though we do not use this notion below).

For all $(t,a) \in [t_0,T]$,  $m,n \in \mathbb{N}^*$
and $k \in \{0,1,2, 3\}$, we have
\begin{align*}
    \bigl| \nabla_x^kb(x, \nu_t^{n,m}) - \nabla^k_x b(x,\nu_t^n) \bigr| &= \biggl| \int_A \bigl[ \zeta^m *\nabla_x^kb(x,\cdot)(a) - \nabla_x^kb(x,a) \bigr] d\nu_t^n(a) \biggr| \\
    &\leq \frac{C}{m}(1+|x|) \int_A (1+|a|^{k+1})d|\nu_t^n|(a), 
\end{align*}
where we used the growth assumption on $\nabla_x^k \nabla_ab$ in the last line (see \assreg). It then follows from \eqref{eq:estimatenunDt0} in Lemma \ref{lem:propertiesnun11/01} that
\begin{equation} 
\lim_{m \rightarrow +\infty} \sup_{n \in \mathbb{N}^*} \int_{t_0}^T \norm{b(\cdot,\nu_t^{n,m}) - b(\cdot,\nu_t^n)}_{\mathcal{C}^3_1} dt =0. 
\label{eq:23/12-1}
\end{equation}
Similarly, we have
\begin{equation} 
\lim_{m \rightarrow +\infty}  \int_{t_0}^T \norm{b (\cdot,\bar \nu_t^{m} ) - b(\cdot,\nu_t)}_{\mathcal{C}^3_1} dt =0. 
\label{eq:23/12-2}
\end{equation}
Now, for fixed $k \in \{1,2,3\}$ and $m,n \in \mathbb{N}^*$, we deduce from 
\eqref{eq:nunm=rhon*barnum} that, for all $(t,x) \in [t_0,T] \times \R^{d_1}$,
\begin{align*} 
\bigl| \nabla_x^kb(x, \nu_t^{n,m}) - \nabla_x^{k} b(x,\bar{\nu}_t^m) \bigr| &= \biggl| \int_{\R} \rho(s) \biggl[ \int_A \nabla_x^{k} b(x,a) \bigl[ \bar{\nu}_{t- s/n}^m(a)- \bar{\nu}_t^m(a)\bigr] da \biggr]ds \biggr| \\
&\leq C \int_{\R} \rho(s) \biggl( \int_A (1+ |a|^{k+1}) \bigl|
\bar{\nu}^m_{t - s/n}(a) - \bar{\nu}_t^m(a)\bigr|da \biggr) ds
\end{align*}
for some $C>0$ independent of  $(t,x) \in [t_0,T] \times \R^{d_1}$ and $m,n \in \mathbb{N}^*$. As a consequence, we get
\begin{equation}
\int_{t_0}^T \norm{ b(\cdot,\nu_t^{n,m}) - b(\cdot, \bar{\nu}_t^m)}_{\mathcal{C}^3_b} dt \leq C \int_{\R} \rho(s) \biggl\{ \int_A (1+|a|^4) \biggl( \int_{\R} \bigl|
\bar{\nu}^m_{t-s/n}(a) - 
\bar{\nu}_t^m(a)\bigr|dt \biggr) da \biggr\} ds.
\label{eq:togetbacjto14/01}
\end{equation}
Now, for all $m \in \mathbb{N}^*$, $a \in A$ and $t \in \R$, we have
\begin{align}
\notag    (1+|a|^4) |\bar{\nu}_t^m(a)| &\leq \int_A (1+|a|^4)\zeta^m(a-a') d|\bar{\nu}_t|(a') \\
\notag    &\leq 4 \int_A |a-a'|^4 \zeta^m(a-a')d|\bar{\nu}_t|(a') + 4 \int_A (1+|a'|^4) \zeta^m(a-a') d|\bar{\nu}_t|(a') \\
    &\leq C_m \int_A (1+|a'|^4)d |\bar{\nu}_t|(a'), \label{eq:14/01:13:48}
\end{align}
for some $C_m >0$ independent from $(t,a) \in \R \times A$, but depending on $m$. In particular, since $ \bd{\nu}$ belongs to $\mathcal{D}(t_0)$, for every $m \in \mathbb{N}^*$ and every $a \in A$, the map $t \mapsto \bar{\nu}_t^m(a)$ belongs to $L^1(\R)$. By definition of $\bar{\bd{\nu}}$, it also has compact support in $[t_0,T]$.  By a simple approximation (in $L^1(\R)$) by uniformly continuous functions, we deduce that, for all $m \in \mathbb{N}^*$ and 
$(s,a) \in \R \times A$,
$$ \lim_{n \rightarrow +\infty} \int_{\R} 
\bigl|\bar{\nu}^{m}_{t-s/n}(a) - 
\bar{\nu}^{m}_t(a)\bigr|dt =0.$$
Using the bound \eqref{eq:14/01:13:48} together with Lebesgue dominated convergence theorem and getting back to \eqref{eq:togetbacjto14/01} we deduce that, for all $ m \in \mathbb{N}^*$,
\begin{equation} 
\lim_{n \rightarrow +\infty} \int_{t_0}^T \norm{b(\cdot,\nu_t^{n,m}) - b(\cdot, 
\bar{\nu}_t^m) }_{\mathcal{C}^3_b} dt =0.
\label{eq:23/12-3}
\end{equation}
Combining together \eqref{eq:23/12-1}, \eqref{eq:23/12-2} and \eqref{eq:23/12-3}, this leads to 
$$ \lim_{n \rightarrow +\infty} \int_{t_0}^T \norm{ b(\cdot , \nu_t^n) - b(\cdot, \nu_t) }_{\mathcal{C}^3_1} dt =0,$$
which completes the proof of \eqref{eq:approxbnun23/12}. 
\end{proof}

\begin{cor}
\label{cor:approximation:nu}
Let $t_0 \in [0,T]$ and ${\boldsymbol \nu} \in {\mathcal D}(t_0)$. For 
$({\boldsymbol \nu}^n)_{n \geq 1}$ defined in \eqref{eq:defnnun23/12}, let, for each 
$n \in {\mathbb N}^*$, 
$(X^{n,t,x}_s)_{t \leq s \leq T}$
be the solution 
to the ODE 
\eqref{eq:ODE:appendix} 
driven by ${\boldsymbol \nu}^n$. 
Then, we have
\begin{equation}
    \lim_{n \rightarrow +\infty} \sup_{t \in [t_0,T]} \sup_{ s \in [t,T]} \norm{ X_s^{t, \cdot} - X_s^{n,t,\cdot}}_{\mathcal{C}^3_{1}} = 0.
\label{eq:cor:approximation:nu}
\end{equation}
\color{black}
\end{cor}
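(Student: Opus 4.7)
The plan is to compare the two Lagrangian flows integrated against $\bd{\nu}^n$ and $\bd{\nu}$ respectively, and to propagate the control on the driver provided by Lemma~\ref{lem:newregularizationb23/12} through successive Grönwall arguments at each differentiation order. Throughout, I will exploit two key uniform inputs: first, the bound $\sup_n \norm{\bd{\nu}^n}_{\mathcal{D}(t_0)} \leq \norm{\bd{\nu}}_{\mathcal{D}(t_0)}$ (which, via Proposition~\ref{prop:ODE16Sept}, gives a uniform in $n$ bound on $\sup_{t \le s}\norm{X_s^{n,t,\cdot}}_{\mathcal{C}^3_{1,b}}$ and hence on all quantities like $\int_{t_0}^T \norm{\nabla_x^k b(\cdot,\nu_u^n)}_{\mathcal C^0_b}du$ for $0\le k\le 3$); second, the strong approximation $\int_{t_0}^T \norm{b(\cdot,\nu_u^n)-b(\cdot,\nu_u)}_{\mathcal C^3_1}du\to 0$ which drives the smallness.

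\textbf{Step 1: the $\mathcal C^0_1$ estimate.} Setting $Y_s^{n,t,x} := X_s^{n,t,x}-X_s^{t,x}$, the two integral equations yield
\begin{equation*}
Y_s^{n,t,x} = \int_t^s \bigl[b(X_u^{n,t,x},\nu_u^n)-b(X_u^{t,x},\nu_u^n)\bigr]du + \int_t^s \bigl[b(X_u^{t,x},\nu_u^n)-b(X_u^{t,x},\nu_u)\bigr]du.
\end{equation*}
Using $\sup_x|\nabla_x b(x,\nu_u^n)|\le C\int_A(1+|a|^2)d|\nu_u^n|(a)$ (by (i) in \assreg) to control the first integrand by $C(\int_A(1+|a|^2)d|\nu_u^n|)|Y_u^{n,t,x}|$, and bounding the second integrand by $\norm{b(\cdot,\nu_u^n)-b(\cdot,\nu_u)}_{\mathcal C^0_1}(1+|X_u^{t,x}|)\le C(1+|x|)\norm{b(\cdot,\nu_u^n)-b(\cdot,\nu_u)}_{\mathcal C^0_1}$, Grönwall's lemma gives
\begin{equation*}
\sup_{t\in[t_0,T]}\sup_{s\in[t,T]}\sup_{x\in\R^{d_1}}\frac{|Y_s^{n,t,x}|}{1+|x|}\le C\int_{t_0}^T \norm{b(\cdot,\nu_u^n)-b(\cdot,\nu_u)}_{\mathcal C^0_1}du\xrightarrow[n\to\infty]{}0.
\end{equation*}

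\textbf{Step 2: the derivatives.} Differentiating \eqref{eq:ODE:appendix} once in $x$ gives the linear ODE $\nabla_x X_s^{n,t,x}=I+\int_t^s \nabla_x b(X_u^{n,t,x},\nu_u^n)\nabla_x X_u^{n,t,x}du$, and likewise without the $n$. Subtracting produces, inside the integral, the quantity $\nabla_x b(X_u^{n,t,x},\nu_u^n)(\nabla_x X_u^{n,t,x}-\nabla_x X_u^{t,x}) + [\nabla_x b(X_u^{n,t,x},\nu_u^n)-\nabla_x b(X_u^{t,x},\nu_u)]\nabla_x X_u^{t,x}$. For the second bracket I split it further as $[\nabla_x b(X_u^{n,t,x},\nu_u^n)-\nabla_x b(X_u^{t,x},\nu_u^n)]+[\nabla_x b(X_u^{t,x},\nu_u^n)-\nabla_x b(X_u^{t,x},\nu_u)]$; the first piece is bounded by $|\nabla_x^2 b(\cdot,\nu_u^n)|_\infty|Y_u^{n,t,x}|\le C\bigl(\int_A(1+|a|^3)d|\nu_u^n|\bigr)|Y_u^{n,t,x}|$ (and, post division by $1+|x|$, yields a factor already controlled by Step 1), while the second piece is bounded by $\norm{b(\cdot,\nu_u^n)-b(\cdot,\nu_u)}_{\mathcal C^3_1}$. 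Factoring out the uniform bound on $\norm{\nabla_x X_u^{t,\cdot}}_{\mathcal C^0_b}$ from Proposition~\ref{prop:ODE16Sept} and applying Grönwall again (justified because the Lipschitz coefficient $\int_{t_0}^T\norm{\nabla_x b(\cdot,\nu_u^n)}_\infty du$ is uniformly bounded by Lemma~\ref{lem:newregularizationb23/12}) yields $\sup_{t,s,x}\frac{|\nabla_x X_s^{n,t,x}-\nabla_x X_s^{t,x}|}{1+|x|}\to 0$.

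\textbf{Step 3: higher orders and conclusion.} For $\nabla_x^l$ with $l=2,3$, Faà di Bruno's formula turns the derivative of $b(X_u,\nu)$ into a polynomial in the $(\nabla_x^j X_u)_{1\le j\le l}$ with coefficients $\nabla_x^k b(X_u,\nu)$, $k\le l$. Subtracting the two chains term by term and inserting, between each pair of factors, an intermediate value reduces every contribution to one of three types: (a) the same polynomial in $(\nabla_x^j X_u^{n,t,x})_{j\le l}$ but weighted by $\nabla_x^l b(X_u^{n,t,x},\nu_u^n)$ acting on $\nabla_x^l Y_u^{n,t,x}$ (the unknown Grönwall term); (b) a term measuring the difference $X_u^{n,t,x}-X_u^{t,x}$ or $\nabla_x^j X_u^{n,t,x}-\nabla_x^j X_u^{t,x}$ at a lower order $j<l$, already vanishing by the induction hypothesis; (c) a term measuring $\nabla_x^k b(\cdot,\nu_u^n)-\nabla_x^k b(\cdot,\nu_u)$ for some $k\le l\le 3$, controlled by $\norm{b(\cdot,\nu_u^n)-b(\cdot,\nu_u)}_{\mathcal C^3_1}$ after multiplying by the uniform bound on $\norm{X^{t,\cdot}_u}_{\mathcal C^3_{1,b}}$. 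One more Grönwall argument, combined with the uniform integrability of $\int_A(1+|a|^{l+1})d|\nu_u^n|$ for $l\le 3$, closes the induction and delivers \eqref{eq:cor:approximation:nu}.

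\textbf{Main obstacle.} The nontrivial bookkeeping lies in the Faà di Bruno expansion at order $l=3$: the assumption on $b$ only controls $|\nabla_x^k b(x,a)|$ by $C(1+|a|^{k+1})$ (no growth in $x$), so every cross term in the expansion fits within the $\mathcal C^3_1$ scale, but verifying that all residual factors of the form $\nabla_x^{i_j}X_u^{t,x}$ are uniformly bounded (not just of linear growth) and that no $|x|$-growth is created when comparing the $\nu_u^n$ and $\nu_u$ coefficients is the delicate point. Once this algebraic decomposition is arranged and the induction hypothesis from the lower orders is inserted, the Grönwall closure is immediate and uniform in $(t,s)\in\{t_0\le t\le s\le T\}$.
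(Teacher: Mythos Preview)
Your Steps 1 and 2 are correct and essentially match the paper. The gap is at Step 3, specifically for $l=3$. In your decomposition, the type (b) contribution coming from the $k=3$ term of Fa\`a di Bruno is
\[
\bigl[\nabla_x^3 b(X_u^{n,t,x},\nu_u^n)-\nabla_x^3 b(X_u^{t,x},\nu_u^n)\bigr]\otimes(\nabla_x X_u^{n,t,x})^{\otimes 3}.
\]
To ``measure the difference $X_u^{n,t,x}-X_u^{t,x}$'' here by the Lipschitz device you used in Step 2, you would need $\|\nabla_x^4 b(\cdot,\nu_u^n)\|_\infty$, which under \assreg\ is bounded by $C\int_A(1+|a|^5)\,d|\nu_u^n|(a)$. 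But $\bd\nu\in\mathcal D(t_0)$ only controls the fourth moment integrated in time, and Lemma~\ref{lem:propertiesnun11/01} gives no fifth-moment bound uniform in $n$. So the forcing term in your Gr\"onwall inequality at order $3$ is not known to be small, and the induction does not close as written.

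The paper resolves this with two ingredients you do not invoke. First, since $\sup_n\sup_{t,s}\|\nabla_x X_s^{n,t,\cdot}\|_{\mathcal C^2_b}<\infty$ (Proposition~\ref{prop:ODE16Sept} and Lemma~\ref{lem:propertiesnun11/01}), it suffices to prove convergence of the derivatives uniformly on \emph{compact} $x$-sets; the $\mathcal C^3_1$ weight then absorbs the tail. Second, on compact $x$-sets the paper handles $\nabla_x^k b(X_u^{n,t,x},\nu_u^n)-\nabla_x^k b(X_u^{t,x},\nu_u^n)$ by splitting the $a$-integral into $\{|a|\le K\}$ and $\{|a|>K\}$: the tail is small uniformly in $n$ by the uniform integrability \eqref{eq:uniformintegrability23/12}, and on $\{|a|\le K\}$ one uses only \emph{continuity} of $\nabla_x^k b(\cdot,a)$ together with the Step~1 convergence $X_u^{n,t,x}\to X_u^{t,x}$. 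This continuity-plus-truncation argument avoids any moment beyond the fourth and is precisely what replaces your Lipschitz step at the top order.
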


\begin{proof} { \ }
\textit{Step 1.}
Following
\eqref{eq:prop:A.1:step:1:1} and
using
Gr\"onwall's lemma, we can find 
a constant $C$ such that, for any $x \in {\mathbb R}^d$
and $n \in \N^*$,
\begin{align*}
\sup_{t \in [t_0,T]}
\sup_{s \in [t,T]}
\vert X_s^{t,x}
- X_s^{n,t,x}
\vert 
&\leq C
\sup_{t \in [t_0,T]}
\sup_{s \in [t,T]}
\biggl\vert \int_{t}^s
b\bigl(X_r^{t,x},\nu_r^n - \nu_r \bigr)  
dr
\biggr\vert. \\
&\leq C (1+|x|) \sup_{z \in \R^{d_1}} \sup_{t \in [t_0,T]} \sup_{s \in [t,T]}
\biggl[ \frac{1+|X^{t,z}_s|}{1+|z|} \int_{t_0}^T \norm{ b( \cdot, \nu_r^n) - b(\cdot,\nu_r)}_{\mathcal{C}^0_1} dr \biggr].
\end{align*}
Using the linear growth of $X_r^{t,x}$ from \eqref{eq:estimateODEAppendix19Aout} of Proposition \ref{prop:ODE16Sept} as well as the convergence of $b(\cdot,\nu^n)$ to $b(\cdot,\nu)$ from Lemma \ref{lem:newregularizationb23/12} we deduce that 
$$ \lim_{n \rightarrow +\infty} \sup_{t \in [t_0,T]} \sup_{s \in [t,T]} \sup_{x \in \R^{d_1}} \frac{|X_s^{t,x} -X_s^{n,t,x}|}{1+|x|} =0. $$
This proves 
\eqref{eq:cor:approximation:nu}
when the sum over 
$k$ therein is in fact restricted to 
$k=0$. 
\vskip 4pt

\textit{Step 2.}
We claim that, by the same argument, we can handle the convergence of $\nabla^k_x X_s^{n,t,x}$ for $k=1,2,3$.  {Since, by Proposition \ref{prop:ODE16Sept} and Lemma \ref{lem:propertiesnun11/01}, $\| \nabla_x X_s^{n,t,\cdot} \|_{\mathcal{C}^2_b} $ is bounded independently of $t,s$ and $n$, it suffices to prove that convergence holds uniformly over bounded subsets of $\R^{d_1}$}. In fact, the main difficulty is the case of the third derivative in $x$, which we prove now under the assumption that, $(\nabla_x X_s^{t,n})_{ n \geq 1}$ and  $(\nabla^2_x X_s^{t,n})_{ n \geq 1}$ converge uniformly over compact subsets of $\R^{d_1}$ (uniformly in $t \in [t_0,T]$ and $s \in [t,T]$ as well) to $\nabla_x X_s^{t,x}$ and $\nabla_x^2 X_s^{t,x}$ respectively. Following the first step in the proof of Proposition \ref{prop:ODE16Sept}, we can find a constant $C$ such that, for any $n \in {\mathbb N}^*$ and $x \in {\mathbb R}^{d_1}$, 
\begin{equation*}
\begin{split}
&\sup_{s \in [t,T]}
\bigl\vert \nabla_{x}^3 X_s^{n,t,x}
- \nabla_{x}^3 X_s^{t,x}
\bigr\vert 
\\
&\leq C \sup_{s \in [t,T]}
\biggl\vert 
\int_t^s 
\bigl[
\nabla_x b(X_u^{t,x},\nu_u)
- 
\nabla_x b(X_u^{n,t,x},\nu_u^n)
\bigr]
\otimes
\nabla_x^3 X_u^{t,x}
du 
\biggr\vert
\\
&\hspace{5pt}
+ C 
\sum_{k=2,3}
\sum_{\underset{i_1+\dots + i_k=3}{1 \leq i_1 \leq \dots \leq i_k}}
\sup_{s \in [t,T]}
\biggl\vert 
\int_t^s 
\Bigl( 
\nabla_x^k b(X_u^{t,x},\nu_u) 
\otimes
\bigotimes_{j=1}^k
\nabla_x^{i_j} X_u^{t,x}
-
\nabla_x^k b(X_u^{n,t,x},\nu_u^n) 
\otimes
\bigotimes_{j=1}^k
\nabla_x^{i_j} X_u^{n,t,x}
\Bigr) 
du 
\biggr\vert
\\
&=:C \bigl( T_1^n(t,s,x)+T_2^n(t,s,x)\bigr).
\end{split}
\end{equation*}
We explain how to handle $T_2^n(t,s,x)$. The term $T_1^n(t,s,x)$ can be handled in the same way. For 
$k \in \{2,3\}$ and  $(i_1,\dots,i_k) \in \{1,\dots,k-1\}$
with $i_1 \le \dots \leq i_k$ and $i_1+\dots+i_k=3$, we write
\begin{equation*}
\begin{split}
&\int_t^s 
\Bigl( 
\nabla_x^k b(X_u^{t,x},\nu_u) 
\otimes \bigotimes_{j=1}^k
\nabla_x^{i_j} X_u^{t,x}
-
\nabla_x^k b(X_u^{n,t,x},\nu_u^n) 
\otimes \bigotimes_{j=1}^k
\nabla_x^{i_j} X_u^{n,t,x}
\Bigr) 
du 
\\
&= \int_t^s 
\Bigl( 
\nabla_x^k b(X_u^{t,x},\nu_u - \nu_u^n) 
\otimes \bigotimes_{j=1}^k
\nabla_x^{i_j} X_u^{t,x}
\Bigr) 
du 
\\
&\hspace{15pt}
+ \int_t^s 
\Bigl( 
\nabla_x^k b(X_u^{t,x},\nu_u^n) 
\otimes \bigotimes_{j=1}^k
\nabla_x^{i_j} X_u^{t,x}
-
\nabla_x^k b(X_u^{n,t,x},\nu_u^n) 
\otimes \bigotimes_{j=1}^k
\nabla_x^{i_j} X_u^{n,t,x}
\Bigr) 
du 
\\
&=: T_{2,1}^{n,k,(i_1,\dots,i_k)}(t,s,x) + T_{2,2}^{n,k,(i_1,\dots,i_k)}(t,s,x). 
\end{split}
\end{equation*}
Using the bounds available for 
the derivatives of 
$b$ (see (i) in \assreg) and 
the derivatives of 
$X^{t,\cdot}_u$
and $X^{n,t,\cdot}_u$
(see Proposition 
\ref{prop:ODE16Sept}), we deduce that, for any $K>0$, 
\begin{equation*}
\begin{split}
    &\bigl\vert T_{2,2}^{n,k,(i_1,\dots,i_k)}(t,s,x)\bigr\vert 
    \\
    &\leq 
    \int_t^T
    \biggl(
    \int_{A}
{\mathbf 1}_{\{\vert a \vert \leq K\}}
\Bigl\vert 
\nabla_x^k b(X_u^{t,x},a) 
\otimes \bigotimes_{j=1}^k
\nabla_x^{i_j} X_u^{t,x}
-
\nabla_x^k b(X_u^{n,t,x},a) 
\otimes \bigotimes_{j=1}^k
\nabla_x^{i_j} X_u^{n,t,x}
\Bigr\vert  
d |\nu_u^n|(a)
\biggr) du 
\\
&\hspace{15pt}+
C \int_t^T \int_A (1+ \vert a \vert^4) {\mathbf 1}_{\{ \vert a \vert \ge K\}} d |{\boldsymbol \nu}^n|(t,a),
\end{split}
\end{equation*}
the constant $C$ in the last line being independent of $(t,s,x,n)$. 
By \eqref{eq:uniformintegrability23/12} in 
Lemma 
\ref{lem:newregularizationb23/12}, the second term in the right-hand side tends to $0$ as 
$K$ tends to $\infty$, uniformly in $n \geq 1$. 
Then, for a given $K$ we can use the continuity of $\nabla_x^k b$ and the fact that 
\eqref{eq:cor:approximation:nu} holds true for the derivatives of order 0, 1 and 2 to deduce that the first term in the right-hand side tends to $0$ as $n$ tends to $\infty$, uniformly with respect to $x$ in compact subsets. 
\vskip 4pt

\textit{Step 3.} We now address 
$T_{2,1}^{n,k,(i_1,\dots,i_k)}(t,s,x)$ that can be estimated as follows, thanks to Proposition \ref{prop:ODE16Sept}, 
\begin{align*}
    \bigl| T_{2,1}^{n,k,(i_1,\dots,i_k)}(t,s,x) \bigr| &\leq C(1+|x|) \int_{t_0}^T \norm{b(\cdot,\nu_u) - b(\cdot, \nu_u^n)}_{\mathcal{C}^3_1} du.
\end{align*}
The proof is easily completed using Lemma \ref{lem:newregularizationb23/12}.
\end{proof}

\subsubsection{Well-posedness of the Continuity and Transport Equations}

\begin{lem}
Take $t_0,t_1 \in [0,T]$ with $t_1 >t_0$ and $\bd{\nu} \in \mathcal{D}(t_0)$. Assume moreover that $ [t_0,t_1] \ni t \mapsto \nu_t \in \mathcal{M}_{1+|a|}(A)$ is continuous. Then, for any $p \geq q \geq 0$ and any $\phi \in \mathcal{C}^1_{p,q}$ there is a unique classical solution $\varphi : [t_0,t_1] \times \R^{d_1} \times \R^{d_2} \rightarrow \R$ to the backward transport equation 
\begin{equation} 
-\partial_t \varphi_t - b(x, \nu_t) \cdot \nabla_x \varphi_t = 0 \quad \mbox{ in } [t_0,t_1] \times \R^{d_1} \times \R^{d_2}, \quad \varphi_{t_1} = \phi \mbox{ in } \R^{d_1} \times \R^{d_2}.
\label{eq:varphin08/01}
\end{equation}
It is given by $ \varphi_t(x,y) = \phi(X_{t_1}^{t,x},y)$, for all $(t,x,y) \in [t_0,t_1] \times \R^{d_1} \times \R^{d_2}$, where $(X_s^{t,x})_{s \in [t,t_1]}$ is the solution to the ODE 
$$ \dot{X}_s^{t,x} = b(X_s^{t,x}, \nu_s) \quad  s \in [t,t_1], \quad X_t^{t,x} = x.$$
Moreover, 
there exists a non-decreasing function 
    $\Lambda : {\mathbb R}_+ \rightarrow {\mathbb R}_+$, independent of $t_0$, $\phi$ and 
    ${\boldsymbol \nu}$, such that 
\begin{align}
\label{lem:A.6:2}
&\sup_{t \in [t_0,t_1]} \norm{ \varphi_t}_{\mathcal{C}^0_p}  \leq \Lambda \bigl(\norm{\bd{\nu}}_{\mathcal{D}(t_0)}\bigr) \norm{ \phi}_{\mathcal{C}^0_p},
\\
&\sup_{t \in [t_0,t_1]} \norm{ \nabla \varphi_t}_{\mathcal{C}^0_q} +\sup_{t_2 <t_3 \in [t_0,t_1]} \frac{\norm{\varphi_{t_3} - \varphi_{t_2}}_{\mathcal{C}^0_{q}}}{\sqrt{t_3 -t_2}}  \leq \Lambda \bigl(\norm{\bd{\nu}}_{\mathcal{D}(t_0)}\bigr) \norm{ \nabla \phi}_{\mathcal{C}^0_q}. 
\label{lem:A.6:2:b}
\end{align}
If we also assume that, for $l=2$ or $l=3$, $\phi$ belongs to $\mathcal{C}^{l}_{p,q}$  and $[t_0,t_1] \ni t \mapsto \nu_t \in\mathcal{M}_{1+|a|^{l}}(A)$ is continuous, then $\phi$ has continuous derivatives of the form $\partial_t^m \nabla^k_x \varphi$ with $m=0,1$, $k=0,\dots,l$ and $m+k \leq l$. 
Moreover, for a possibly new choice of $\Lambda$, we have the estimates
\begin{equation}
\label{lem:A.6:3}
 \sup_{t \in [t_0,T]} \norm{ \nabla \varphi_t}_{\mathcal{C}^{l-1}_q} + \sup_{t_2 <t_2 \in [t_0,t_1]} \frac{\norm{\varphi_{t_3}- \varphi_{t_2}}_{\mathcal{C}^1_q}}{\sqrt{t_3-t_2}} \leq \Lambda \bigl( \norm{\bd{\nu}}_{\mathcal{D}(t_0)} \bigr) \norm{ \nabla \phi}_{\mathcal{C}^{l-1}_q}. 
 \end{equation} 
In that case, $\partial_t \varphi$ and $\nabla_x \varphi$ also belong to $\mathcal{C}([t_0,t_1], \mathcal{C}^{l-2}_q) \cap \mathcal{L}^{\infty}([t_0,t_1], \mathcal{C}^{l-1}_q)$.

\label{lem:backwardtransportsmoothcontrol23/01}
\end{lem}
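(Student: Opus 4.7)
The idea is to solve the equation via the method of characteristics using the ODE flow from Proposition \ref{prop:ODE16Sept}. For $(t,x,y) \in [t_0,t_1] \times \R^{d_1} \times \R^{d_2}$, set $\varphi_t(x,y) := \phi(X_{t_1}^{t,x},y)$, where $(X_s^{t,x})_{s \in [t,t_1]}$ is the unique solution of the forward ODE driven by $\bd{\nu}$. Thanks to the assumed time-continuity of $t \mapsto \nu_t$, the vector field $(t,x) \mapsto b(x,\nu_t)$ is jointly continuous; combined with the $\mathcal{C}^1$-regularity in $x$ already used in Proposition \ref{prop:ODE16Sept}, this will ensure that $(t,s,x) \mapsto X_s^{t,x}$ is $\mathcal{C}^1$ jointly (by the last claim of Proposition \ref{prop:ODE16Sept}), so one may differentiate $\varphi_t(x,y)$ in $t$. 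The identity $\partial_t X_s^{t,x} = -\nabla_x X_s^{t,x}\,b(x,\nu_t)$, obtained by differentiating the flow relation $X_s^{t,x} = X_s^{u,X_u^{t,x}}$ at $u=t$, then yields $\partial_t \varphi_t(x,y) = -b(x,\nu_t)\cdot \nabla_x \varphi_t(x,y)$, which is the desired equation. Uniqueness follows by a straightforward reverse characteristic argument: if $\tilde{\varphi}$ is another classical solution with the same terminal data, then for fixed $(t,x,y)$ the map $s \mapsto \tilde{\varphi}_s(X_s^{t,x},y)$ has zero derivative on $[t,t_1]$, so $\tilde{\varphi}_t(x,y) = \tilde{\varphi}_{t_1}(X_{t_1}^{t,x},y) = \phi(X_{t_1}^{t,x},y) = \varphi_t(x,y)$.

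Next I would derive the quantitative estimates directly from the representation formula and the bounds on the flow provided by \eqref{eq:estimateODEAppendix19Aout}. For \eqref{lem:A.6:2}, the sublinear growth $|X_{t_1}^{t,x}| \leq \Lambda(\|\bd\nu\|_{\mathcal{D}(t_0)})(1+|x|)$ together with the $\mathcal{C}^0_p$-bound on $\phi$ gives $|\varphi_t(x,y)| \leq \Lambda \|\phi\|_{\mathcal{C}^0_p}(1+|x|^p+|y|^p)$. For the spatial gradient bound in \eqref{lem:A.6:2:b}, I use the chain rule
\begin{equation*}
\nabla_x \varphi_t(x,y) = \bigl(\nabla_x X_{t_1}^{t,x}\bigr)^\top \nabla_x \phi\bigl(X_{t_1}^{t,x},y\bigr),\qquad \nabla_y \varphi_t(x,y) = \nabla_y \phi\bigl(X_{t_1}^{t,x},y\bigr),
\end{equation*}
together with the uniform $\mathcal{C}^1_{1,b}$-bound on $X_{t_1}^{t,\cdot}$ from \eqref{eq:estimateODEAppendix19Aout} and the $\mathcal{C}^0_q$-bound on $\nabla\phi$. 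For time-regularity, I write $\varphi_{t_3}(x,y) - \varphi_{t_2}(x,y) = \phi(X_{t_1}^{t_3,x},y) - \phi(X_{t_1}^{t_2,x},y)$, apply a first-order Taylor expansion in the first argument, and bound $|X_{t_1}^{t_3,x} - X_{t_1}^{t_2,x}|$ by $\sqrt{t_3-t_2}\,\Lambda(\|\bd\nu\|_{\mathcal{D}(t_0)})(1+|x|)$, which is the content of the second part of \eqref{eq:estimateODEAppendix19Aout} applied after a short argument showing continuous dependence of $X_{t_1}^{\cdot,x}$ on the initial time (via $X_{t_1}^{t_3,x} = X_{t_1}^{t_2, X_{t_2}^{t_3,x}}$ and the Lipschitz dependence of the flow on its spatial initial condition).

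For the higher-order estimates \eqref{lem:A.6:3}, when $\phi \in \mathcal{C}^l_{p,q}$ and $t \mapsto \nu_t \in \mathcal{M}_{1+|a|^l}(A)$ is continuous, the last claim of Proposition \ref{prop:ODE16Sept} gives joint continuity and boundedness of $\partial_t^m \nabla_x^k X_s^{t,x}$ for $m+k \leq l$ and $k \leq l$, with the spatial derivatives controlled by $\Lambda(\|\bd\nu\|_{\mathcal{D}(t_0)})$ via \eqref{eq:estimateODEAppendix19Aout}. Applying Faà di Bruno's formula to the composition $\phi(X_{t_1}^{t,x},y)$ then expresses $\nabla_x^k \varphi_t$ as a polynomial in the derivatives of the flow multiplied by the derivatives of $\phi$ evaluated at $X_{t_1}^{t,x}$; the growth bounds all combine to give the stated $\mathcal{C}^{l-1}_q$-estimate on $\nabla\varphi_t$, and the time-$1/2$-Hölder estimate on $\varphi$ in the $\mathcal{C}^1_q$-norm is obtained from the same argument as above applied coordinate-wise to $\nabla_x^k \varphi$ for $k \leq 1$. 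The continuity of $\partial_t\varphi$ and $\nabla_x\varphi$ in the spaces $\mathcal{C}^{l-2}_q$ and their boundedness in $\mathcal{C}^{l-1}_q$ follow by using the equation itself, $\partial_t\varphi_t = -b(\cdot,\nu_t)\cdot\nabla_x\varphi_t$, together with the assumed continuity of $t \mapsto \nu_t$ in $\mathcal{M}_{1+|a|^l}(A)$, which by Remark \ref{rmk:regularvectorfield27/01} ensures the required regularity of $(t,x) \mapsto b(x,\nu_t)$ up to order $l-1$.

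The main obstacle I anticipate is the bookkeeping in the higher-order step: one must track how the $\mathcal{C}^{l-1}_q$-norm of the composite $\nabla^k(\phi \circ X_{t_1}^{t,\cdot})$ depends simultaneously on the polynomial-growth exponent $q$ of $\nabla\phi$, the \emph{linear} growth of $X_{t_1}^{t,\cdot}$ itself, and the bounded derivatives of order $\geq 1$ of the flow, and check that the growth $q$ (rather than some inflated exponent) is indeed preserved. This is essentially a careful Faà di Bruno computation, but it is where all the notational weight of the spaces $\mathcal{C}^k_{p,q}$ pays off.
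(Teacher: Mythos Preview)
Your proposal is correct and follows essentially the same approach as the paper: represent the solution via the characteristic flow from Proposition \ref{prop:ODE16Sept}, read off the spatial and higher-order estimates from \eqref{eq:estimateODEAppendix19Aout} and the chain rule (Fa\`a di Bruno), and use the equation $\partial_t\varphi_t = -b(\cdot,\nu_t)\cdot\nabla_x\varphi_t$ together with the continuity of $t\mapsto\nu_t$ to control $\partial_t\varphi$.

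The one place where your route diverges slightly is the time-H\"older estimate. You obtain it from the representation $\varphi_{t_3}-\varphi_{t_2}=\phi(X_{t_1}^{t_3,\cdot},\cdot)-\phi(X_{t_1}^{t_2,\cdot},\cdot)$, which forces you to control the flow $X_{t_1}^{t,x}$ in the \emph{initial} time $t$ (via the semigroup identity and backward propagation); for the $\mathcal{C}^1_q$ version in \eqref{lem:A.6:3} this further requires estimating $\nabla_x X_{t_1}^{t_3,\cdot}-\nabla_x X_{t_1}^{t_2,\cdot}$, which is doable but not directly provided by \eqref{eq:estimateODEAppendix19Aout}. The paper instead integrates the PDE,
\[
\|\varphi_{t_3}-\varphi_{t_2}\|_{\mathcal{C}^{l-1}_q}\leq\int_{t_2}^{t_3}\|b(\cdot,\nu_t)\cdot\nabla_x\varphi_t\|_{\mathcal{C}^{l-1}_q}\,dt\leq C\sup_t\|\nabla_x\varphi_t\|_{\mathcal{C}^{l-1}_q}\int_{t_2}^{t_3}\!\!\int_A(1+|a|^l)\,d|\nu_t|(a)\,dt,
\]
and then invokes the definition of $\|\bd\nu\|_{\mathcal{D}(t_0)}$ to produce the factor $\sqrt{t_3-t_2}$. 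This is shorter and reuses the already-established spatial bounds on $\nabla\varphi$, avoiding any separate initial-time analysis of the flow.
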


\begin{rmk}
  Notice that, for any $\bd{\nu} \in \mathcal{D}(t_0)$ and any $n \in \mathbb{N}^*$, the regularization $\bd{\nu}^n$ from Lemma \ref{lem:newregularizationb23/12} satisfies the continuity condition required in Lemma \ref{lem:backwardtransportsmoothcontrol23/01}.
\label{rmk:23/01}
\end{rmk}

\begin{rmk}
    As an application of Lemma \ref{lem:backwardtransportsmoothcontrol23/01} (with $l=3$), we find that, if $[t_0,T] \ni t \mapsto \nu_t \in \mathcal{M}_{1+|a|^3}(A)$ is continuous and $\phi$ belongs to $\mathcal{C}^3_{2,1}$ then the solution $\varphi : [t_0,T] \times \R^{d_1} \times \R^{d_2} \rightarrow {\mathbb R}$ with terminal condition $\phi$ and control $\bd{\nu}$ is three times continuously differentiable and satisfies 
    $$ \sup_{t \in [t_0,T]} \norm{ \varphi_t}_{\mathcal{C}^3_{2,1}} \leq \Lambda \bigl( \norm{ \bd{\nu}}_{\mathcal{D}(t_0)} \bigr) \norm{\phi}_{\mathcal{C}^3_{2,1}}. $$
\end{rmk}

\begin{rmk}
    In particular, if $\phi \in \mathcal{C}^3_{2,1}$ and  $\bd{\nu} \in \mathcal{D}(t_0)$ such that $ [t_0,T] \ni t\mapsto \nu_t \in \mathcal{M}_{1+|a|^3}(A)$ is continuous, then the solution  $\varphi$ satisfies 
\begin{equation} 
\varphi \in \mathcal{C}([t_0,t_1], \mathcal{C}^1_2) \cap \mathcal{L}^{\infty}([t_0,t_1], \mathcal{C}^2_2)
\end{equation}
\begin{equation} 
\partial_t \varphi , \nabla_x \varphi \in  \mathcal{C} \bigl( [t_0,t_1], \mathcal{C}^1_{2}) \cap \mathcal{L}^{\infty}([t_0,t_1], \mathcal{C}^2_{1})
\end{equation}
This is therefore an admissible test function for the equation for $\bd{\rho}$.
\label{rmk:A.9}
\end{rmk}

\begin{proof}[Proof of Lemma \ref{lem:backwardtransportsmoothcontrol23/01}]
When \(\phi\) belongs to \(\mathcal{C}^1_p\), both the well-posedness of the equation and the representation formula follow directly from classical theory. Specifically, due to the continuity of the map \( t \mapsto \nu_t \in \mathcal{M}_{1+|a|}(A) \) for \( t \in [t_0, t_1] \), we can apply Remark \ref{rmk:regularvectorfield27/01}. This implies that the vector field \( (t,x) \mapsto b(x, \nu_t) \), which is already Lipschitz in \((x,y)\) with a time-integrable Lipschitz constant, is jointly continuous and bounded.
When $\phi$ belongs to $\mathcal{C}^{l}_{p,q}$, 
   the higher regularity properties of 
   $\varphi$ follow from the explicit representation formula for the latter together with Proposition \ref{prop:ODE16Sept} and Remark \ref{rmk:regularvectorfield27/01}.
   
The estimate in the first line of 
 \eqref{lem:A.6:2} and the bounds for the first terms in 
 the left-hand sides of 
\eqref{lem:A.6:2:b}
 and
  \eqref{lem:A.6:3} also follow from the explicit formula and Proposition \ref{prop:ODE16Sept}.
  
To handle the second terms in 
 the left-hand sides of 
\eqref{lem:A.6:2:b}
 and
  \eqref{lem:A.6:3}, 
we use \assreg\, together with the fact that $ \sup_{t \in [t_0,T]} \norm{ \nabla_x \varphi_t}_{\mathcal{C}^{l-1}_q} \leq \Lambda (\norm{\bd{\nu}}_{\mathcal{D}(t_0)}) \norm{ \nabla \phi}_{\mathcal{C}^{l-1}_q}$ and proceed as follows. With $l$ being equal to 1, 2 we have, for any $t_3 > t_2 \in [t_0,t_1]$,
\begin{align} 
\notag \norm{ \varphi_{t_3} - \varphi_{t_2}}_{\mathcal{C}^{l-1}_q} &= \norm{ \int_{t_2}^{t_3} b(\cdot,\nu_t) \cdot \nabla_x \varphi_t dt  }_{\mathcal{C}^{l-1}_q}  \leq \int_{t_2}^{t_3} \norm{ b(\cdot,\nu_t) \cdot \nabla_x \varphi_t }_{\mathcal{C}^{l-1}_q} dt \\
&\leq C \sup_{t \in [t_0,T]} \norm{ \nabla_x \varphi_t}_{\mathcal{C}^{l-1}_q} \int_{t_2}^{t_3} \int_A (1+|a|^l)d|\nu_t|(a)dt \label{eq:thederivationof24/05}\\
&\notag \leq C \sqrt{t_3 -t_2} \sup_{t \in [t_0,T]} \norm{ \nabla_x \varphi_t}_{\mathcal{C}^{l-1}_q} \norm{ \bd{\nu}}_{\mathcal{D}(t_0)},
\end{align}
for some $C>0$ depending only on the vector field $b$. 

For $l =3$, $\nabla_x \varphi$ belongs to $\mathcal{C}([t_0,t_1],\mathcal{C}_q^{1})$ as a consequence of the continuity of $[t_0,t_1] \ni t \mapsto \nu_t \in \mathcal{M}_{1+|a|^3}(A)$. Indeed following the derivation of \eqref{eq:thederivationof24/05} we have, for any $t_3 > t_2 \in [t_0,t_1]$,
\begin{align*}
    \norm{ \varphi_{t_3} - \varphi_{t_2}}_{\mathcal{C}^2_q} &\leq C \sup_{t \in [t_0,T]} \norm{ \nabla_x \varphi_t}_{\mathcal{C}^{2}_q} \int_{t_2}^{t_3} \int_A (1+|a|^3)d|\nu_t|(a)dt \\
    &\leq C (t_3-t_2) \sup_{t \in [t_0,T]} \norm{ \nabla_x \varphi_t}_{\mathcal{C}^{2}_q} \sup_{t \in [t_0,t_1]} \int_A(1+|a|^3)d|\nu_t|(a)
\end{align*}
for some $C>0$ depending only on $b$.

It remains to prove the regularity of $\partial_t \varphi$ when $l =2,3$. Using 
the equation \eqref{eq:varphin08/01} for $\partial_t \varphi$, we have
$$ \norm{ \partial_t \varphi_t}_{\mathcal{C}^{l-1}_q} = \norm{ b(\cdot,\nu_t) \cdot \nabla_x \varphi_t}_{\mathcal{C}^{l-1}_q} \leq C \norm{ \nabla_x \varphi_t }_{\mathcal{C}^{l-1}_q} \int_A (1+|a|^l) d|\nu_t|(a), $$
and,
\begin{align*}
    \bigl\| \partial_t \varphi_{t_3} - \partial_t \varphi_{t_2}\bigr\|_{\mathcal{C}^{l-2}_q} &= 
    \bigl\| b(\cdot, \nu_{t_3}) \cdot \nabla_x \varphi_{t_3} - b(\cdot, \nu_{t_2}) \cdot \nabla_x \varphi_{t_2} \bigr\|_{\mathcal{C}_q^{l-2}} \\
    &\leq \bigl\| [b(\cdot, \nu_{t_3}) - b(\cdot,\nu_{t_2})] \cdot \nabla_x \varphi_{t_3}  \bigr\|_{\mathcal{C}^{l-2}_q} + \bigl\| b(\cdot, \nu_{t_2}) \cdot \nabla_x (\varphi_{t_3} - \varphi_{t_2}) 
    \bigr\|_{\mathcal{C}^{l-2}_q} \\
    &\leq C  \norm{ \nabla_x \varphi_{t_3}}_{\mathcal{C}^{l-2}_q} \int_A (1+|a|^{l-1}) d|\nu_{t_3} - \nu_{t_2}|(a)  
    \\
    &\hspace{15pt} +  C \norm{ \nabla_x (\varphi_{t_3} - \varphi_{t_2})}_{\mathcal{C}^{l-2}_q}\int_A (1+|a|^{l-1}) d|\nu_{t_2}|(a),
\end{align*}
and we conclude using the bounds on $\norm{ \nabla_x \varphi_t}_{\mathcal{C}^{l-1}_{q}} $ and $\norm{ \varphi_{t_3} - \varphi_{t_2}}_{\mathcal{C}^{l-1}_q}$.
This completes the proof of the regularity of the lemma.
\end{proof}

Regarding
the analysis of 
\eqref{eq:lem:compactnessContinuityEquation24/06:statement}, we now turn to the following well-posedness result for the continuity equation, previously stated in Proposition
\ref{lem:compactnessContinuityEquation24/06}:

\begin{lem}
\label{lem:A.4}
    Take ${\boldsymbol \nu}=(\nu_t)_{t_0 \le t \le T} \in \mathcal{D}(t_0)$. Then, for any given $\gamma_0 \in \mathcal{P}_2(\R^{d_1} \times \R^{d_2})$, there is a unique distributional solution ${\boldsymbol \gamma} \in \mathcal{C}([t_0,T], \mathcal{P}_2(\R^{d_1} \times \R^{d_2}))$ to the continuity equation
\begin{equation}
\label{eq:lem:compactnessContinuityEquation24/06:statementAp}
 \partial_t \gamma_t + \div_x( b(x,\nu_t) \gamma_t) = 0 \quad \mbox{ in } (t_0,T) \times \R^{d_1} \times \R^{d_2}, \quad \gamma(t_0)= \gamma_0.
 \end{equation}
It is given by 
$(\gamma_t = (X_t^{t_0,\cdot}, i_d) \# \gamma_0)_{t_0 \le t \le T}$
where $(X_t^{t_0,\cdot})_{t_0 \le t \le T}$
is the flow of 
\eqref{eq:ODE:intro}.

Moreover, there exists a non-decreasing function $\Lambda : {\mathbb R}_+ \rightarrow {\mathbb R}_+$, independent of $(t_0,\gamma_0)$ and 
${\boldsymbol \nu}$, such that, for each $p \geq 1$ such that $\gamma_0$ belongs to $\mathcal{P}_p(\R^{d_1} \times \R^{d_2})$, 
\begin{equation}
\label{eq:lem:compactnessContinuityEquation24/06:statement:48Ap}
\sup_{t \in [t_0,T]}\frac{ \Bigl(\int_{\R^{d_1} \times \R^{d_2}} \bigl( |x|^2 + |y|^2 \bigr)^{p/2}  d\gamma_t(x,y) \Bigr)^{1/p}}{1 + \Bigl( \int_{\R^{d_1} \times \R^{d_2}} \bigl( |x|^2 + |y|^2 \bigr)^{p/2}  d\gamma_0(x,y) \Bigr)^{1/p}}  + \sup_{t \neq s \in [t_0,T]}  \frac{d_p(\gamma_t,\gamma_s)}{\sqrt{|t-s|}} \leq \Lambda
 \bigl( \norm{\bd\nu}_{\mathcal{D}(t_0)} \bigr).
 \end{equation} 
 
\end{lem}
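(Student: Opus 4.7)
The plan is to construct the solution directly via the Lagrangian flow, then derive the estimates, and finally establish uniqueness by a regularization-duality argument.

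\textbf{Existence and representation formula.} By Proposition~\ref{prop:ODE16Sept}, the flow $(X_s^{t_0,x})_{t_0 \le s \le T}$ is well defined and $x \mapsto X_s^{t_0,x}$ has linear growth uniformly in $s$. I would define the candidate $\gamma_t := (X_t^{t_0,\cdot}, i_d)\#\gamma_0$, i.e., for any Borel bounded $\varphi$,
\[
\int_{\R^{d_1}\times\R^{d_2}} \varphi(x,y) d\gamma_t(x,y) = \int_{\R^{d_1}\times\R^{d_2}} \varphi(X_t^{t_0,x},y)d\gamma_0(x,y).
\]
Taking $\varphi \in \mathcal{C}^1_b([t_0,T]\times \R^{d_1}\times\R^{d_2})$, differentiating in $t$ under the integral (justified because $s\mapsto X_s^{t_0,x}$ is absolutely continuous with $\dot X_s^{t_0,x}=b(X_s^{t_0,x},\nu_s)$) and using the chain rule yields the weak formulation~\eqref{eq:CE:def} with the prescribed initial condition. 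Thus $\bd{\gamma}$ is a distributional solution.

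\textbf{Moment and H{\"o}lder estimates.} For the $p$-moment bound, the linear growth estimate from Proposition~\ref{prop:ODE16Sept} gives $|X_t^{t_0,x}| \leq |x| + C\Lambda(\|\bd{\nu}\|_{\mathcal{D}(t_0)})$, so integrating $(|x|^2+|y|^2)^{p/2}$ against $\gamma_t = (X_t^{t_0,\cdot},i_d)\#\gamma_0$ and using $(a+b)^p \leq 2^{p-1}(a^p+b^p)$ yields the first estimate in~\eqref{eq:lem:compactnessContinuityEquation24/06:statement:48Ap}. For the H{\"o}lder-in-time bound, fix $s<t$ in $[t_0,T]$, consider a probability space $(\Omega,\mathcal F,\P)$ carrying $(X_0,Y_0)\sim \gamma_0$, and note that $(X_s^{t_0,X_0},Y_0)\sim \gamma_s$, $(X_t^{t_0,X_0},Y_0)\sim \gamma_t$, so
\[
d_p(\gamma_s,\gamma_t)^p \leq \E\bigl|X_t^{t_0,X_0}-X_s^{t_0,X_0}\bigr|^p.
\]
Since $|X_t^{t_0,x}-X_s^{t_0,x}| \leq \int_s^t |b(X_u^{t_0,x},\nu_u)|du \leq C\int_s^t \int_A(1+|a|)d|\nu_u|(a)du$, applying Cauchy--Schwarz (using the second term in the definition~\eqref{eq:definitionadmissiblecontrols:aux} of the $\mathcal{D}(t_0)$-norm) yields the $\sqrt{|t-s|}$ bound, uniformly in $x$.

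\textbf{Uniqueness (main obstacle).} The vector field $(t,x)\mapsto b(x,\nu_t)$ is only time-integrable, with a Lipschitz constant in $t$ that is merely in $L^1$; standard DiPerna--Lions arguments apply, but here I would proceed by the approximation scheme already built into the paper. Let $\bd{\nu}^n$ be the regularization from Lemma~\ref{lem:newregularizationb23/12}, with smooth flow $X^{n,t_0,\cdot}$ (Proposition~\ref{prop:ODE16Sept}) converging to $X^{t_0,\cdot}$ in $\mathcal{C}^3_1$ (Corollary~\ref{cor:approximation:nu}). Let $\tilde{\bd\gamma} \in \mathcal{C}([t_0,T],\mathcal{P}_2)$ be any distributional solution associated with $\bd{\nu}$. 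For $t_1\in (t_0,T]$ and $\phi \in \mathcal{C}^\infty_c(\R^{d_1}\times\R^{d_2})$, Lemma~\ref{lem:backwardtransportsmoothcontrol23/01} (applied to $\bd{\nu}^n$, which satisfies the required time continuity by construction) provides a classical solution $\varphi^n$ of $-\partial_t \varphi^n_t - b(x,\nu_t^n)\cdot\nabla_x\varphi^n_t=0$ with $\varphi^n_{t_1}=\phi$. Testing the equation for $\tilde{\bd\gamma}$ against $\varphi^n$ gives
\[
\int \phi\, d\tilde\gamma_{t_1} - \int \varphi^n_{t_0}\, d\gamma_0 = \int_{t_0}^{t_1}\int b(x,\nu_t-\nu_t^n)\cdot\nabla_x\varphi^n_t(x,y)\, d\tilde\gamma_t(x,y)\,dt.
\]
The right-hand side is controlled by $\sup_{n,t}\|\nabla_x\varphi^n_t\|_{\mathcal{C}^0_1}$, which is uniformly bounded by Lemma~\ref{lem:backwardtransportsmoothcontrol23/01} together with~\eqref{eq:estimatenunDt0}, and $\int_{t_0}^{t_1}\|b(\cdot,\nu_t-\nu_t^n)\|_{\mathcal C^0_1}dt$, which tends to $0$ by~\eqref{eq:approxbnun23/12}; hence it vanishes as $n\to \infty$. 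Meanwhile, $\varphi^n_{t_0}(x,y) = \phi(X_{t_1}^{n,t_0,x},y) \to \phi(X_{t_1}^{t_0,x},y)$ locally uniformly with sublinear growth, and by uniform $p$-moment control on $\gamma_0$ one passes to the limit to get $\int \phi d\tilde\gamma_{t_1}= \int \phi(X_{t_1}^{t_0,x},y) d\gamma_0(x,y) = \int \phi\, d\gamma_{t_1}$. Density of test functions and arbitrariness of $t_1$ yield $\tilde{\bd\gamma}=\bd\gamma$. The subtle point in this last step is ensuring the passage to the limit is uniform enough in the tails: this requires using the $p=2$ moment bound on $\tilde\gamma_t$ together with the linear growth of $\nabla_x\varphi^n_t$, which is precisely what Lemma~\ref{lem:backwardtransportsmoothcontrol23/01} provides.
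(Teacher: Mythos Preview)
Your proof is correct and follows essentially the same approach as the paper: construct the solution via the Lagrangian pushforward, derive the moment and H\"older estimates from the flow bounds of Proposition~\ref{prop:ODE16Sept}, and prove uniqueness by testing against the backward transport solution driven by the regularized control $\bd{\nu}^n$ from Lemma~\ref{lem:newregularizationb23/12}. The only cosmetic differences are that the paper compares two arbitrary solutions $\bd\gamma^1,\bd\gamma^2$ directly (rather than comparing an arbitrary $\tilde{\bd\gamma}$ to the constructed $\bd\gamma$) and uses Minkowski's inequality for the moment bound where you use the elementary $(a+b)^p\le 2^{p-1}(a^p+b^p)$.
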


\begin{proof}
    For existence, we easily check
    (with the notation of Proposition \ref{prop:ODE16Sept}) that
    the flow of probability measures $(\gamma_t = (X_t^{t_0,\cdot}, i_d) \# \gamma_0)_{t_0 \le t \le T}$ is a solution. As a consequence of 
  \eqref{eq:estimateODEAppendix19Aout}, we have, for each $ p \geq 1$, 
\begin{align*} 
 \Bigl( \int_{\R^{d_1} \times \R^{d_2}}  \bigl( |x|^2 +|y|^2 \bigr)^{p/2} d\gamma_t(x,y) \Bigr)^{1/p} &= \Bigl( \int_{\R^{d_1} \times \R^{d_2}} \bigl ( \bigl|X_t^{t_0,x} \bigr|^2 + |y|^2 \bigr)^{p/2}  d\gamma_0(x,y) \Bigr)^{1/p} \\
 &\leq \sqrt{2} \Bigl( \int_{\R^{d_1} \times \R^{d_2}} \bigl( |x|^2 +|y|^2 + \Lambda^2(\norm{\bd{\nu}}_{\mathcal{D}(t_0)}) \bigr)^{p/2} d\gamma_0(x,y) \Bigr)^{1/p} \\
 &\leq \sqrt{2} \Bigl( \int_{\R^{d_1} \times \R^{d_2}} \bigl(|x|^2+ |y|^2 \bigr)^{p/2} d\gamma_0(x,y) \Bigr)^{1/p} +\sqrt{2} \Lambda (\norm{\bd{\nu}}_{\mathcal{D}(t_0)}), 
\end{align*}
where we used Minkowski's inequality at the last line. Moreover,
$$ d_p(\gamma_t, \gamma_s) \leq \biggl( \int_{\R^{d_1}  \times \R^{d_2}} |X_t^{t_0,x_0} - X_s^{t_0,x_0} |^p d\gamma_0(x_0,y_0) \biggr)^{1/p} \leq \Lambda \bigl( \norm{\bd{\nu}}_{\mathcal{D}(t_0)} \bigr) \sqrt{|t-s|},$$
which proves 
\eqref{eq:lem:compactnessContinuityEquation24/06:statementAp}
and
\eqref{eq:lem:compactnessContinuityEquation24/06:statement:48Ap}.
    
For uniqueness, we consider two (distributional) solutions ${\boldsymbol \gamma}^1, {\boldsymbol \gamma}^2$ in 
${\mathcal C}([t_0,T],{\mathcal P}_2({\mathbb R}^{d_1} \times \R^{d_2}))$. For  given $t_1 \in (t_0,T]$ and $\phi \in \mathcal{C}^2_b(\R^{d_1} \times \R^{d_2})$, we consider $({\boldsymbol \nu}^n)_{n \geq 1}$ the  approximation of ${\boldsymbol \nu}$ given by 
\eqref{eq:defnnun23/12}. 
Recalling Lemma \ref{lem:backwardtransportsmoothcontrol23/01}, we find that for each $n \in {\mathbb N}^*$, there exists a (unique) solution ${\varphi}^n \in {\mathcal C}^{1,2}([t_0,T] \times {\mathbb R}^{d_1} \times \R^{d_2}) $ (one continuous time derivative and two continuous space derivatives) 
to the transport equation
$$ - \partial_t \varphi_t^n -b(x, \nu_t^n) \cdot \nabla_x \varphi_t^n =0 \quad \mbox{ in } [t_0,t_1] \times\R^{d_1} \times \R^{d_2}, \quad \quad \varphi_{t_1}^n = \phi \quad \mbox{ in } \R^{d_1} \times \R^{d_2}.$$
\eqref{eq:ODE:appendix}
By combining Lemma \ref{lem:backwardtransportsmoothcontrol23/01}
and \eqref{eq:estimatenunDt0} in 
Lemma 
\ref{lem:newregularizationb23/12}, 
it is easy to 
find a constant $C$ such that, for any $n \geq 1$, 
$\| \nabla_x \varphi^n \|_{{\mathcal C}_b^0} \leq C$. 
Using the definition of a solution to the continuity equation, as given in 
Subsection 
\ref{subse:3.1} (see 
\eqref{eq:CE:def}), we obtain
\begin{equation}
\label{eq:phi:T_1+T_2}
\int_{\R^{d_1} \times \R^{d_2}} \phi(x,y)  d (\gamma_{t_1}^2 - \gamma_{t_1}^1)(x,y) = \int_{t_0}^{t_1} \int_{\R^{d_1} \times \R^{d_2}} b(x,\nu_t - \nu_t^n) \cdot \nabla_x \varphi_t^n(x,y) d (\gamma_t^2 -\gamma_t^1)(x,y)dt.
\end{equation}
As a consequence, we get
\begin{align*} 
\Bigl|\int_{\R^{d_1} \times \R^{d_2}} &\phi(x,y)  d (\gamma_{t_1}^2 - \gamma_{t_1}^1)(x,y) \Bigr| 
\\
&\leq C \Bigl( \sup_{t \in [t_0,T]} \int_{\R^{d_1} \times \R^{d_2}}(1+|x|) d(\gamma_t^1 + \gamma_t^2)(x,y) \Bigr) \int_{t_0}^T \norm{ b( \cdot , \nu_t^n) - b(\cdot,\nu_t)}_{\mathcal{C}^0_{1}} dt 
\end{align*}
and we deduce from Lemma \ref{lem:newregularizationb23/12} and letting $n \rightarrow +\infty$ that
$$ \int_{\R^{d_1} \times \R^{d_2}} \phi(x,y) d (\gamma_{t_1}^2 - \gamma_{t_1}^1)(x,y) = 0.$$
Since this is true for any smooth $\phi$, we get that $\gamma^1_{t_1}= \gamma^2_{t_1}$, which implies uniqueness
because $t_1$ is arbitrary. \color{black}
\end{proof}

Using similar arguments, we now address the well-posedness of the transport equation \eqref{eq:varphin08/01}, but without the continuity assumption made in the statement of 
Lemma \ref{lem:backwardtransportsmoothcontrol23/01}. It will be convenient to cover the case where the terminal condition is only a $\mathcal{C}^1$ function, in which case $t \mapsto \nabla_x u_t(x,y)$ is not necessarily continuous. If $\phi$ belongs to $\mathcal{C}^1_p$ for some $p \geq 0$, we say that  $\varphi \in \mathcal{C}([t_0,t_1] \times \R^{d_1} \times \R^{d_2})$ is a solution to the transport equation if, for all $t \in [t_0,t_1]$, $(x,y) \mapsto  \varphi_t(x,y)$ is differentiable, $\sup_{t \in [t_0,T]} \norm{ \varphi}_{\mathcal{C}^1_{p} }$ is finite and it holds, for all $t \in [t_0,t_1]$ and all $(x,y) \in \R^{d_1}\times \R^{d_2}$ 
$$ \varphi_{t}(x,y)  = \phi(x,y) +  \int_{t}^{t_1} b(x, \nu_s) \cdot \nabla_x \varphi_s(x,y) ds.   $$

\begin{prop}
\label{prop:SolutionBackxard16SeptAppendix}
    Take  $t_0,t_1 \in [0,T]$, with $t_0<t_1$, and $\bd{\nu} \in \mathcal{D}(t_0)$. Consider a terminal condition $\phi \in \mathcal{C}^1_{p,q}$ for some $ p,q \geq 0$. Then, there is a unique solution $\varphi  \in \mathcal{C}([t_0,t_1] \times \R^{d_1} \times \R^{d_2})$ to the backward transport equation 
    \eqref{eq:varphin08/01} with terminal condition $\phi$. It is 
    given by $ \varphi_t(x,y) = \phi(X_{t_1}^{t,x},y)$, for all $(t,x,y) \in [t_0,t_1] \times \R^{d_1} \times \R^{d_2}$, where $(X_s^{t,x})_{s \in [t,t_1]}$ is the solution to the ODE 
$$ \dot{X}_s^{t,x} = b(X_s^{t,x}, \nu_s) \quad  s \in [t,t_1], \quad X_t^{t,x} = x.$$
Moreover, there exists a non-decreasing function 
    $\Lambda : {\mathbb R}_+ \rightarrow {\mathbb R}_+$, independent of $t_0$, $\phi$ and 
    ${\boldsymbol \nu}$, such that 
\begin{equation*}
    \begin{split}
&\sup_{t \in [t_0,t_1]} \norm{ \varphi_t}_{\mathcal{C}^0_p}  \leq \Lambda \bigl(\norm{\bd{\nu}}_{\mathcal{D}(t_0)}\bigr) \norm{ \phi}_{\mathcal{C}^0_p},
\\
&\sup_{t \in [t_0,t_1]} \norm{ \nabla \varphi_t}_{\mathcal{C}^0_q} +\sup_{t_2 <t_3 \in [t_0,t_1]} \frac{\norm{\varphi_{t_3} - \varphi_{t_2}}_{\mathcal{C}^0_{q}}}{\sqrt{t_3 -t_2}}  \leq \Lambda \bigl(\norm{\bd{\nu}}_{\mathcal{D}(t_0)}\bigr) \norm{ \nabla \phi}_{\mathcal{C}^0_q}. 
\end{split}
\end{equation*}
If we also assume that, for $l=2$ or $l=3$, $\phi$ belongs to $\mathcal{C}^{l}_{p,q}$,  then $\varphi_t$ belongs to $\mathcal{C}^{l}_{p,q}$ for all $t \in [t_0,T]$, and we have, for a possibly new function $\Lambda$, 
$$ \sup_{t \in [t_0,T]} \norm{ \nabla \varphi_t}_{\mathcal{C}^{l-1}_q} + \sup_{t_2 <t_3 \in [t_0,t_1]} \frac{\norm{\varphi_{t_3}- \varphi_{t_2}}_{\mathcal{C}^1_q}}{\sqrt{t_3-t_2}} \leq \Lambda \bigl( \norm{\bd{\nu}}_{\mathcal{D}(t_0)} \bigr) \norm{ \nabla \phi}_{\mathcal{C}^{l-1}_q}. $$
\end{prop}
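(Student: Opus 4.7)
The plan is to define $\varphi_t(x,y) := \phi(X_{t_1}^{t,x},y)$ via the flow from Proposition \ref{prop:ODE16Sept} driven by $\bd{\nu}$, deduce all the asserted estimates from the regularity of this flow, verify the integral formulation by time-mollifying $\bd{\nu}$, and handle uniqueness by duality against the continuity equation. The main obstacle compared to Lemma~\ref{lem:backwardtransportsmoothcontrol23/01} is that here $t \mapsto \nu_t$ is only measurable, so $(t,x) \mapsto b(x,\nu_t)$ is not jointly continuous and the classical characteristics method does not apply directly.

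For the regularity estimates, I would first observe that the flow $(t,s,x) \mapsto X_s^{t,x}$ is jointly continuous: continuity in $(s,x)$ is standard, while continuity in the initial time $t$ follows from Grönwall applied to the integral identity satisfied by $X_s^{t,x} - X_s^{t',x}$, using the crucial bound $\int_t^{t'} \|b(\cdot,\nu_u)\|_{\mathcal{C}^0_1} du \leq C \sqrt{|t-t'|} \|\bd{\nu}\|_{\mathcal{D}(t_0)}$. Hence $\varphi$ is jointly continuous on $[t_0,t_1] \times \mathbb{R}^{d_1} \times \mathbb{R}^{d_2}$. Combining Proposition~\ref{prop:ODE16Sept}, which provides $\sup_t \|X_{t_1}^{t,\cdot}\|_{\mathcal{C}^3_{1,b}} \leq \Lambda(\|\bd{\nu}\|_{\mathcal{D}(t_0)})$ together with the $1/2$-Hölder in-time control in $\mathcal{C}^1_b$, with the chain-rule formulas $\nabla_x \varphi_t(x,y) = [\nabla_x X_{t_1}^{t,x}]^\top \nabla_x \phi(X_{t_1}^{t,x},y)$ and $\nabla_y \varphi_t(x,y) = \nabla_y \phi(X_{t_1}^{t,x},y)$, directly yields the $\mathcal{C}^0_p$-bound on $\varphi_t$ and the $\mathcal{C}^0_q$-bound on $\nabla \varphi_t$. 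The time-Hölder estimate follows by the mean value theorem in the first argument of $\phi$ combined with the flow's time-Hölder control. For $\phi \in \mathcal{C}^l_{p,q}$ with $l \in \{2,3\}$, Faà di Bruno applied to the chain-rule formula together with $\|X_{t_1}^{t,\cdot}\|_{\mathcal{C}^3_{1,b}} \leq \Lambda$ supplies the $\mathcal{C}^{l-1}_q$-bound on $\nabla \varphi_t$, and the time-Hölder bound in $\mathcal{C}^1_q$ is obtained analogously.

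The principal difficulty, namely the integral equation, is handled by approximation: let $\bd{\nu}^n$ be the time-mollification of $\bd{\nu}$ from \eqref{eq:defnnun23/12}. By Remark~\ref{rmk:23/01} and Lemma~\ref{lem:backwardtransportsmoothcontrol23/01}, the corresponding $\varphi^n_t(x,y) = \phi(X_{t_1}^{n,t,x},y)$ is a classical solution, so in particular
\[
\varphi^n_t(x,y) = \phi(x,y) + \int_t^{t_1} b(x,\nu^n_s) \cdot \nabla_x \varphi^n_s(x,y)\, ds.
\]
By Corollary~\ref{cor:approximation:nu}, $X_{t_1}^{n,t,\cdot} \to X_{t_1}^{t,\cdot}$ in $\mathcal{C}^3_1$ uniformly in $t$, so $\varphi^n_t(x,y) \to \varphi_t(x,y)$ and $\nabla_x \varphi^n_s(x,y) \to \nabla_x \varphi_s(x,y)$ pointwise, with uniform $\mathcal{C}^1_q$-bounds. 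I would pass to the limit in the integral by splitting
\[
\int_t^{t_1}\bigl[b(x,\nu^n_s)-b(x,\nu_s)\bigr] \cdot \nabla_x \varphi^n_s(x,y)\, ds + \int_t^{t_1} b(x,\nu_s) \cdot \nabla_x \varphi^n_s(x,y)\, ds;
\]
the first piece vanishes thanks to Lemma~\ref{lem:newregularizationb23/12} and the uniform bound on $\nabla_x \varphi^n$, while the second converges to $\int_t^{t_1} b(x,\nu_s) \cdot \nabla_x \varphi_s(x,y)\, ds$ by dominated convergence, the majorant $(1+|x|)(1+|x|^q+|y|^q) \int_A (1+|a|)\,d|\nu_s|(a)$ being integrable in $s$ since $\bd{\nu} \in \mathcal{D}(t_0)$.

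For uniqueness, let $\psi := \varphi^1 - \varphi^2$ be the difference of two solutions; then $\psi \in \mathcal{C}([t_0,t_1] \times \mathbb{R}^{d_1} \times \mathbb{R}^{d_2})$, $\psi_{t_1} \equiv 0$, $\sup_t \|\psi_t\|_{\mathcal{C}^1_q} < \infty$, and $\psi_t(x,y) = \int_t^{t_1} b(x,\nu_s) \cdot \nabla_x \psi_s(x,y)\, ds$. Mimicking the duality argument of Lemma~\ref{lem:A.4}, I would fix an arbitrary $\gamma_{t_0} \in \mathcal{C}^\infty_c(\mathbb{R}^{d_1} \times \mathbb{R}^{d_2})$, let $\bd{\gamma}$ be the associated solution of the continuity equation driven by $\bd{\nu}$, and mollify $\psi$ in time to produce admissible test functions for the weak formulation of the continuity equation. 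Using the integral equation to identify the distributional time-derivative $\partial_t \psi_t(x,y) = -b(x,\nu_t) \cdot \nabla_x \psi_t(x,y)$ for a.e.\ $t$, the time-mollified quantity $\int \psi^\epsilon_t\,d\gamma_t$ is seen to be constant in $t$ up to an error vanishing with $\epsilon$, hence $\int \psi_{t_0}\,d\gamma_{t_0} = \int \psi_{t_1}\,d\gamma_{t_1} = 0$. By arbitrariness of $\gamma_{t_0}$, $\psi_{t_0} \equiv 0$, and the same argument applied on $[t,t_1]$ for each intermediate $t$ yields $\psi \equiv 0$.
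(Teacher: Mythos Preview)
Your proof is correct and closely parallels the paper's argument for existence and the estimates: both construct the candidate $\varphi_t(x,y) = \phi(X_{t_1}^{t,x},y)$, read off the bounds from Proposition~\ref{prop:ODE16Sept}, and verify the integral equation by regularizing $\bd{\nu}$ via Lemma~\ref{lem:newregularizationb23/12} and passing to the limit through Corollary~\ref{cor:approximation:nu}.

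The uniqueness arguments differ. The paper regularizes $\bd{\nu}$ to $\bd{\nu}^n$ and solves the \emph{continuity} equation driven by $\bd{\nu}^n$ from a smooth compactly supported initial density; the resulting $\gamma^n_t$ then has a density that is $\mathcal{C}^1$ in time, which permits a direct pointwise product computation and integration by parts against any candidate solution $\varphi'$, after which the limit $n \to \infty$ (using $\int_{t_0}^{t_1} \|b(\cdot,\nu_t - \nu_t^n)\|_{\mathcal{C}^0_1}\,dt \to 0$) forces $\int \varphi'_{t_2}\,d\gamma_0 = \int \varphi_{t_2}\,d\gamma_0$. You instead keep $\bd{\gamma}$ driven by the unregularized $\bd{\nu}$ and time-mollify the difference $\psi$; this produces an admissible test function but requires controlling the commutator $\int \rho^\epsilon(t-s)[b(x,\nu_t)-b(x,\nu_s)] \cdot \nabla_x \psi_s\,ds$, which indeed vanishes by $L^1$-continuity of translations since $t \mapsto b(\cdot,\nu_t)$ lies in $L^1([t_0,t_1], \mathcal{C}^1_b)$. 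Both routes are valid. The paper's choice has the advantage of reusing the same regularization already set up for existence and avoiding the commutator estimate, while your DiPerna--Lions-style argument keeps the duality structure transparent and does not require tracking the density regularity of $\gamma^n_t$.
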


\begin{proof}
For $\phi \in \mathcal{C}^l_{p,q}$, with $l=1,2,3$,
we let $\varphi_t(x,y) := \phi(X_T^{t,x},y)$. Regularizing $\bd{\nu}$ into $(\bd{\nu}^n)_{ n \geq 1}$ as in \eqref{eq:defnnun23/12} and combining Corollary \ref{cor:approximation:nu} and Lemma \ref{lem:backwardtransportsmoothcontrol23/01} we get the existence of a solution satisfying the different regularity estimates. It remains to prove uniqueness. Using the same sequence 
$({\boldsymbol \nu}^n)_{n \geq 1}$, 
we consider the continuity equation 
\eqref{eq:Continuitygammanu} but initialized at some $t_2 \in [t_0,t_1]$ from some
$\gamma_0 \in \mathcal{P}(\R^{d_1} \times \R^{d_2})$ 
with a smooth compactly supported density. 
Following the proof of Lemma \ref{lem:A.4}
, we know that the solution 
(when restricted to $[t_2,t_1]$)
reads
$(\gamma_t^n = (X_t^{n,t_2,\cdot}, i_d) \# \gamma_0)_{t_2 \le t \le t_1}$, where $(X^{n,t_2,x}_t)_{t_2 \leq t \le t_1}$ solves the ODE 
\eqref{eq:ODE:appendix}
 with 
${\boldsymbol \nu}$
replaced by 
${\boldsymbol \nu}^n$. In particular, each $\gamma_t^n$ has a density. Denoting by $(\chi_t^{n,\cdot})_{t_2 \le t \le t_1}$ the inverse of the flow
$(X_t^{n,t_2,\cdot})_{t_2 \le t \le t_1}$, we can write this density as (denoting the density by  $\gamma_t^n$ as itself):
\begin{equation*}
\gamma_t^n(x,y) 
= \frac{\gamma_0(\chi_t^{n,x},y)}{\vert {\rm det}(\nabla_x X_t^{n,t_2,\chi_t^{n,x}})\vert},
\quad t \in [t_2,t_1], \ (x,y) \in {\mathbb R}^{d_1} \times {\mathbb R}^{d_2},
\end{equation*}
from which we deduce that 
$({\boldsymbol \gamma}_t^n(x,y))_{t_2 \leq t \leq t_1}$ is continuously differentiable in $t$. Then, we can justify that, for any other solution ${\varphi}'$
to the transport equation (satisfying the prescription of 
Proposition \ref{prop:SolutionBackxard16Sept}),
\begin{equation}
\label{eq:IBP:F:1}
\begin{split}
&\varphi'_{ t_2}(x,y) \gamma^n_{t_2}(x,y) - \phi(x,y) \gamma^n_{ t_1}(x,y) 
\\
&= \lim_{\vert \pi \vert \rightarrow 0}
\sum_{i=1}^{n-1}
\Bigl[
\bigl( 
\varphi_{s_i}'(x,y)  - \varphi_{s_{i+1}}'(x,y) \bigr) \gamma^n_{s_i}(x,y)
+ \varphi_{s_{i+1}}'(x,y) \bigl( \gamma_{s_i}^n(x,y ) - \gamma_{s_{i+1}}^n(x,y) 
\bigr) 
\Bigr]
\\
&= \int_{t_2}^{t_1} b(x,\nu_s) \cdot \nabla_x \varphi_{s}'(x,y) 
\gamma_s^n(x,y) 
ds -   \int_{t_2}^{t_1} \varphi_s'(x,y) \partial_s {\gamma}_s^n(x,y) ds.
\end{split}
\end{equation}
where the limit is taken over the subdivision $\pi = (s_i)_{i=1,\dots,n}$ of
$[t_2,t_1]$ (with $\vert \pi \vert$ denoting the step of $\pi$). Since ${\boldsymbol \gamma}^n$ is compactly supported, we can integrate in 
$(x,y)$. Using the continuity 
equation satisfied by 
${\boldsymbol \gamma}^n$ and then performing an integration by parts, we obtain 
\begin{equation}
\label{eq:IBP:F:2}
\begin{split}
 \int_{\R^{d_1} \times \R^{d_2}} \varphi_{t_1}'(x,y) d\gamma_0(x,y) 
 &= 
 \int_{\R^{d_1} \times \R^{d_2}} \phi(x,y) d\gamma_{t_2}^n(x,y) 
 \\
&\hspace{15pt} +
 \int_{t_2}^{t_1} \int_{\R^{d_1} \times \R^{d_2}} b(x,\nu_s -\nu_s^n) \cdot \nabla_x \varphi_s'(x,y) d\gamma^n_s(x,y) ds
 \\
 &=: S_1^n + S_2^n.
 \end{split}
 \end{equation}
The first term $S_1^n$ is equal to $\int_{{\mathbb R}^{d_1} \times {\mathbb R}^{d_2}} 
\phi(X_{t_1}^{n,t_2,x},y) d \gamma_0(x,y)$.
Therefore, it converges to 
$$\lim_{n \rightarrow +\infty} S_1^n = \int_{{\mathbb R}^{d_1} \times {\mathbb R}^{d_2}} 
\phi(X_{t_1}^{t_2,x},y) d \gamma_0(x,y) = \int_{{\mathbb R}^{d_1} \times {\mathbb R}^{d_2}}
\varphi_{t_2}(x,y) d \gamma_0(x,y).$$
As for the second term, we have
\begin{align*} 
S_2^n &= \biggl| \int_{t_2}^{t_1} \int_{\R^{d_1} \times \R^{d_2}} b(x,\nu_t -\nu_t^n) \cdot \nabla_x \varphi_t'(x,y) d\gamma^n_t(x,y) dt \biggr| \\
&\leq C \sup_{t \in [t_2,t_1]} \norm{\nabla_x \varphi'_t}_{\mathcal{C}^0_q} \sup_{t \in [t_2,t_1]} \int_{\R^{d_1} \times \R^{d_2}} (|x|^2+|y|^2)^{(q+1)/2} d\gamma_t^n(x,y)  \int_{t_2}^{t_1}\norm{ b(\cdot, \nu_t^n-\nu_t)}_{\mathcal{C}^0_1}dt. 
\end{align*}
Since $\gamma_0$ is compactly supported, we easily show that the term in the middle is bounded independently of $n \in {\mathbb N}^*$ and conclude that $\lim_{n \rightarrow +\infty} S_2^n =0.$ As a consequence, for all $t_1 \in [t_0,T]$ and all smooth and compactly supported density $\gamma_0$ it holds
$$ \int_{\R^{d_1} \times \R^{d_2}} \varphi'_{t_1}(x,y) d\gamma_0(x,y) =  \int_{\R^{d_1} \times \R^{d_2}} \varphi_{t_1}(x,y) d\gamma_0(x,y) $$
from which we conclude that $\varphi'= \varphi$ and there is a unique solution to the equation.
\end{proof}

\subsubsection{Duality Relation}

\begin{lem}
\label{lem:duality:1,2:F}    Take $t_0 \in [0,T]$ and $\gamma_0 \in \mathcal{P}_2(\R^{d_1} \times \R^{d_2})$. Take as well $t_1 \in [t_0,T]$, $\phi \in \mathcal{C}^1_2$ and $\bd{\nu}^1, \bd{\nu}^2$ in $\mathcal{D}(t_0)$. Let $\bd{\gamma}^1$ be the solution to the continuity equation 
\eqref{eq:lem:compactnessContinuityEquation24/06:statementAp} starting from $(t_0,\gamma_0)$ and driven by $\bd{\nu}^1$, and ${\varphi}^2$ the solution to the backward equation 
\eqref{eq:varphin08/01} with terminal condition $\phi$ at $t_1$ and driven by $\bd{\nu}^2$. Then, we have
    \begin{equation}
\label{eq:duality:relation:statement}
        \begin{split}
     \int_{\R^{d_1} \times \R^{d_2}} \phi(x,y) d\gamma_{t_1}^1(x,y) &= \int_{\R^{d_1} \times \R^{d_2}} \varphi_{t_0}^2(x,y) d\gamma_0(x,y)  
     \\
     &\hspace{15pt}+ \int_{t_0}^{t_1} \int_{\R^{d_1} \times \R^{d_2}} b(x, \nu_t^1 -\nu_t^2) \cdot \nabla_x \varphi_t^2(x,y) d\gamma^1_t(x,y)dt.
\end{split}
\end{equation}
If we let $\bd{\gamma}^2$ be the solution to the continuity equation starting from $(t_0,\gamma_0)$ driven by $\bd{\nu}^2$ we also have $ \int_{\R^{d_1} \times \R^{d_2}} \varphi_{t_0}^2 (x,y) d\gamma_0(x,y) = \int_{\R^{d_1} \times \R^{d_2}} \phi(x,y) d\gamma_{t_1}^2(x,y) $ and, if ${\varphi}^1$ is the solution to the backward transport equation with terminal condition $\phi$ and control $\bd{\nu}^1$, we have $\int_{\R^{d_1} \times \R^{d_2}} \phi(x,y)d\gamma_{t_1}^1 (x,y) = \int_{\R^{d_1} \times \R^{d_2}} \varphi^1_{t_0}(x,y) d\gamma_0(x,y)$.
\end{lem}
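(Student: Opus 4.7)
My plan is to argue by regularization. Let $(\bd{\nu}^{1,n})_n$ and $(\bd{\nu}^{2,n})_n$ be the time-convolution approximations of $\bd{\nu}^1$ and $\bd{\nu}^2$ provided by \eqref{eq:defnnun23/12}, and mollify and cut off $\phi$ to produce $\phi^n \in \mathcal{C}^\infty_c(\R^{d_1}\times\R^{d_2})$ with $\phi^n \to \phi$ locally uniformly and $\sup_n \|\phi^n\|_{\mathcal{C}^1_2} \leq C \|\phi\|_{\mathcal{C}^1_2}$. Denote by $\bd{\gamma}^{1,n}$ the solution to the continuity equation starting from $(t_0,\gamma_0)$ driven by $\bd{\nu}^{1,n}$, and by $\varphi^{2,n}$ the solution to the backward transport equation with terminal data $\phi^n$ and control $\bd{\nu}^{2,n}$. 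By Lemma \ref{lem:newregularizationb23/12} and Remark \ref{rmk:23/01}, $(t,x)\mapsto b(x,\nu_t^{i,n})$ is jointly $\mathcal{C}^\infty$ in $(t,x)$ with all derivatives bounded, so the final statement of Proposition \ref{prop:ODE16Sept} together with Lemma \ref{lem:backwardtransportsmoothcontrol23/01} (with $l=2$) implies that $\varphi^{2,n}$ is jointly $\mathcal{C}^{1,2}$ in $(t,x,y)$ with $\partial_t\varphi^{2,n}$ and $\nabla_x\varphi^{2,n}$ continuous and bounded, so it is an admissible test function for the continuity equation satisfied by $\bd{\gamma}^{1,n}$.

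For each fixed $n$, the chain rule combined with the equations gives
\begin{equation*}
\frac{d}{dt}\int \varphi^{2,n}_t\, d\gamma^{1,n}_t = \int \bigl[\partial_t\varphi^{2,n}_t + b(\cdot,\nu^{1,n}_t)\cdot\nabla_x\varphi^{2,n}_t\bigr]\,d\gamma^{1,n}_t = \int b(\cdot,\nu^{1,n}_t-\nu^{2,n}_t)\cdot\nabla_x\varphi^{2,n}_t\,d\gamma^{1,n}_t,
\end{equation*}
where the second equality uses the backward equation for $\varphi^{2,n}$. Integrating from $t_0$ to $t_1$ and recalling $\varphi^{2,n}_{t_1}=\phi^n$ and $\gamma^{1,n}_{t_0}=\gamma_0$ yields
\begin{equation}
\label{eq:duality:relation:proof}
\int \phi^n\,d\gamma^{1,n}_{t_1} = \int \varphi^{2,n}_{t_0}\,d\gamma_0 + \int_{t_0}^{t_1}\!\!\int b(\cdot,\nu^{1,n}_t-\nu^{2,n}_t)\cdot\nabla_x\varphi^{2,n}_t\,d\gamma^{1,n}_t\,dt.
\end{equation}

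It remains to pass to the limit as $n\to\infty$. By Corollary \ref{cor:approximation:nu} the flows of the ODEs driven by $\bd{\nu}^{1,n}$ and $\bd{\nu}^{2,n}$ converge in $\mathcal{C}^3_1$ to those driven by $\bd{\nu}^1$ and $\bd{\nu}^2$. Combined with Lemma \ref{lem:A.4}, this gives $\sup_{t\in[t_0,T]} d_2(\gamma^{1,n}_t,\gamma^1_t)\to 0$ with uniform second-moment bounds; combined with the representation $\varphi^{2,n}_t(x,y)=\phi^n(X^{2,n,t,x}_{t_1},y)$ and the uniform $\|\phi^n\|_{\mathcal{C}^1_2}$-control, one gets that $\varphi^{2,n}$ converges locally uniformly to $\varphi^2$ together with $\nabla_x\varphi^{2,n}\to\nabla_x\varphi^2$, with a uniform $\mathcal{C}^1_2$-bound (via Proposition \ref{prop:SolutionBackxard16SeptAppendix}). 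The $\mathcal{P}_2$-convergence of $\gamma^{1,n}_{t_1}$ and the uniform quadratic bound on $\phi^n$ (plus locally uniform convergence) then give $\int\phi^n\,d\gamma^{1,n}_{t_1}\to\int\phi\,d\gamma^1_{t_1}$ by dominated convergence; the same argument yields $\int\varphi^{2,n}_{t_0}\,d\gamma_0\to\int\varphi^2_{t_0}\,d\gamma_0$.

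The principal obstacle is the passage to the limit in the integral term, where three quantities vary simultaneously. I would decompose $b(\cdot,\nu^{1,n}_t-\nu^{2,n}_t) = [b(\cdot,\nu^{1,n}_t)-b(\cdot,\nu^1_t)] - [b(\cdot,\nu^{2,n}_t)-b(\cdot,\nu^2_t)] + b(\cdot,\nu^1_t-\nu^2_t)$. The first two brackets tend to zero in $L^1([t_0,t_1];\mathcal{C}^0_1)$ by Lemma \ref{lem:newregularizationb23/12}, and when integrated against $\nabla_x\varphi^{2,n}_t\,d\gamma^{1,n}_t$ they produce a negligible contribution thanks to the uniform $\mathcal{C}^1_2$-bound on $\nabla_x\varphi^{2,n}$ and the uniform second-moment bound on $\gamma^{1,n}_t$. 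For the last term $b(\cdot,\nu^1_t-\nu^2_t)$, which does not depend on $n$, I would split it into the two pieces $b(\cdot,\nu^1_t)$ and $b(\cdot,\nu^2_t)$, each locally Lipschitz in $(x,y)$ with quadratic-growth-in-$t$-integrable bound (guaranteed by \assreg combined with $\bd{\nu}^i\in\mathcal{D}(t_0)$), and conclude via the locally uniform convergence of $\nabla_x\varphi^{2,n}\to\nabla_x\varphi^2$ together with the $\mathcal{P}_2$-convergence of $\gamma^{1,n}_t$ to $\gamma^1_t$, controlled by the dominated convergence theorem in $(t,x,y)$. The auxiliary claims at the end of the statement follow immediately by specialising \eqref{eq:duality:relation:statement} to $\bd{\nu}^1=\bd{\nu}^2$.
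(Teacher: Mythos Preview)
Your argument is correct, but it regularizes more than needed and this makes the limit passage heavier than necessary. The paper only regularizes $\bd{\nu}^2$ (via \eqref{eq:defnnun23/12}) and keeps both $\bd{\gamma}^1$ and $\phi$ untouched: once $\bd{\nu}^{2,n}$ is time-smooth, Lemma \ref{lem:backwardtransportsmoothcontrol23/01} already makes $\varphi^{2,n}$ (with terminal datum $\phi\in\mathcal{C}^1_2$, not mollified) a $\mathcal{C}^{1,1}$ function with $\partial_t\varphi^{2,n},\nabla_x\varphi^{2,n}$ continuous of quadratic growth, hence an admissible test function for the continuity equation satisfied by $\bd{\gamma}^1$ itself. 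One then obtains directly
\[
\int\phi\,d\gamma^1_{t_1}=\int\varphi^{2,n}_{t_0}\,d\gamma_0+\int_{t_0}^{t_1}\!\!\int b(\cdot,\nu^1_t-\nu^{2,n}_t)\cdot\nabla_x\varphi^{2,n}_t\,d\gamma^1_t\,dt,
\]
and only $\varphi^{2,n}$ and $\nu^{2,n}$ vary with $n$. The limit follows by pointwise convergence of $\nabla_x\varphi^{2,n}\to\nabla_x\varphi^2$ (Corollary \ref{cor:approximation:nu}) together with uniform $\mathcal{C}^1_2$-bounds and dominated convergence against the fixed measure $d\gamma^1_t\,dt$. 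Your additional regularizations of $\bd{\nu}^1$ and $\phi$ force you to track three simultaneously varying objects and to invoke $d_2$-convergence of $\gamma^{1,n}_t$ and the extra decomposition of $b(\cdot,\nu^{1,n}_t-\nu^{2,n}_t)$; this is not wrong, but it is avoidable work.
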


\begin{proof}
We approximate $\bd{\nu}^2$ by $(\bd{\nu}^{2,n})_{n \geq 1}$ as in \eqref{eq:defnnun23/12}, and we let $({\varphi}^{2,n})_{n \geq 1}$ be the corresponding solutions to \eqref{eq:varphin08/01} (with terminal condition $\phi$ at $t_1$). 
We know from Lemma  \ref{lem:backwardtransportsmoothcontrol23/01} that ${\varphi}^{2,n}, \partial_t {\varphi}^{2,n}$ and $\nabla_x {\varphi}^{2,n}$ are continuous with quadratic growth. In particular, ${\varphi}^{2,n}$ is an admissible test function for the continuity equation (see \eqref{eq:CE:def}, which easily extends to test functions with quadratic growth, thanks to \eqref{eq:lem:compactnessContinuityEquation24/06:statement:48Ap}), and we have
\begin{equation}
\begin{split}
&\int_{\R^{d_1}  \times \R^{d_2}} \phi(x,y)d \gamma^1_{t_1}(x,y) 
\\
&= \int_{\R^{d_1} \times \R^{d_2}} \varphi_{t_0}^{2,n}(x,y) d\gamma_0(x,y) 
\\
&\hspace{15pt} + \int_{t_0}^{t_1} \int_{\R^{d_1} \times \R^{d_2}} \bigl \{ \partial_t \varphi_t^{2,n}(x,y) + b(x, \nu_t^1) \cdot \nabla_x \varphi_t^{2,n}(x,y) \bigl \} d\gamma^1_t(x,y) dt 
\\  
&= \int_{\R^{d_1} \times \R^{d_2}} \varphi_{t_0}^{2,n}(x,y) d\gamma_0(x,y) +\int_{t_0}^{t_1} \int_{\R^{d_1} \times \R^{d_2}} b(x, \nu_t^1 - \nu_t^{2,n}) \cdot \nabla_x \varphi_t^{2,n}(x,y) d\gamma^1_t(x,y)dt,
\end{split}
\label{eq:duality:relation:proof}
\end{equation}
where we used the equation satisfied by $\bd{\gamma}^1$ first and ${\varphi}^{2,n}$ second. Now, for all $t \in [t_0,T]$, $(x,y) \mapsto \varphi^{2,n}_t(x,y)$ and $(x,y) \mapsto b(x,\nu^1_t- \nu_t^{2,n})\cdot \nabla_x \varphi_t^{2,n}(x,y)$ converge pointwise as $n \rightarrow +\infty$ to $(x,y) \mapsto \varphi_t(x,y)$ and $(x,y) \mapsto b(x,\nu_t^1 - \nu_t^2) \cdot \nabla_x \varphi_t(x,y)$ respectively, see Corollary \ref{cor:approximation:nu}. Moreover, by Proposition \ref{prop:SolutionBackxard16SeptAppendix} we have the uniform bounds
$$ \sup_{n \in \N^*} \norm{ \varphi_{t_0}^{2,n}}_{\mathcal{C}^0_2} + \sup_{ n \in \mathbb{N}} \sup_{t \in [t_0,t_1]} \norm{ b(\cdot, \nu_t^1 - \nu_t^{2,n}) \cdot \nabla_x \varphi_t^{2,n}}_{\mathcal{C}^0_2} <+\infty $$
where we also used the fact that the drifts $(b(\cdot,{\boldsymbol \nu}^n))_{n \geq 1}$ 
are uniformly bounded. Since $\bd{\gamma}^1$ is bounded in $\mathcal{P}_2(\R^{d_1} \times \R^{d_2})$ by Lemma \ref{lem:A.4} we easily obtain \eqref{eq:duality:relation:statement} by taking the limit $n \rightarrow +\infty$ in \eqref{eq:duality:relation:proof} and using Lebesgue dominated convergence theorem. This proves the first part of the statement.

For the two claims in the second part of the statement, it suffices to notice from the first part that, in the special case $\bd{\nu}^1 = \bd{\nu}^2$,
$$ \int_{\R^{d_1} \times \R^{d_2}} \phi(x,y) d\gamma_{t_1}^{2}(x,y) = \int_{\R^{d_1} \times \R^{d_2}} \varphi_{t_0}^2 (x,y) d\gamma_0(x,y), $$
and similarly with the index $2$ being replaced by $1$.
\end{proof}

\subsubsection{FBODE Representation}

\begin{lem}
\label{lem:ApproxBODE21Sept}
Let $(t_0,\gamma_0) \in [0,T] \times {\mathcal P}_2({\mathbb R}^{d_1} 
\times {\mathbb R}^{d_2})$
and ${\boldsymbol \nu} \in \mathcal{A}(t_0)$ with  ${\boldsymbol u}=(u_t)_{t \in [t_0,T]}$ as corresponding solution to the backward equation
\eqref{eq:adjoint:equation}. If $(\Omega,\mathcal{F},\mathbb{P})$ is a probability space supporting a random variable $(X_0,Y_0)$ with probability distribution $\gamma_0$ and $(X_t)_{t \in [t_0,T]}$ is the solution to
the ODE
    $$ \dot{X}_t = b(X_t,\nu_t), \quad t \in [t_0,T]; \quad X_{t_0}=X_0,$$
    then $(Z_t := \nabla_x u_t(X_t,Y_0))_{t \in [t_0,T]}$ solves the backward ODE
    $$ \dot{Z}_t = - \nabla_x b(X_t,\nu_t)Z_t, \quad t \in [t_0,T]; \quad Z_T = \nabla_x L(X_T,Y_0).$$
\end{lem}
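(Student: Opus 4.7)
The plan is to exploit the Lagrangian representation $u_t(x,y) = L(X_T^{t,x},y)$ provided by Proposition \ref{prop:SolutionBackxard16SeptAppendix} (which applies since $\boldsymbol{\nu} \in \mathcal{A}(t_0) \subset \mathcal{D}(t_0)$ and $L \in \mathcal{C}^3_{2,1}$ by \assreg), and then use the differentiability of the flow with respect to its spatial initial condition to compute $Z_t$ explicitly. Differentiating the representation in $x$ yields
\begin{equation*}
\nabla_x u_t(x,y) = \bigl[ \nabla_x X_T^{t,x} \bigr]^{T} \nabla_x L\bigl( X_T^{t,x}, y \bigr),
\end{equation*}
and evaluating at $(X_t,Y_0)$ together with the flow/semigroup identity $X_T^{t,X_t} = X_T$ gives
\begin{equation*}
Z_t = M_t^{T}\, \nabla_x L(X_T,Y_0), \quad \text{where } M_t := \nabla_x X_T^{t,X_t}.
\end{equation*}

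Next, I would derive an ODE for $M_t$ by combining two ingredients. First, the semigroup property $X_T^{t_0,x} = X_T^{t,X_t^{t_0,x}}$, differentiated in $x$ at $x=X_0$, yields the factorisation $M_t\,\Psi_t = \Psi_T$, where $\Psi_t := \nabla_x X_t^{t_0,X_0}$ is the Jacobian of the forward flow. Second, from Proposition \ref{prop:ODE16Sept} the matrix-valued curve $\Psi_t$ is absolutely continuous and satisfies $\dot{\Psi}_t = \nabla_x b(X_t,\nu_t)\,\Psi_t$ for Lebesgue-a.e.\ $t$, with $\Psi_{t_0} = I$; moreover $\Psi_t$ is invertible (its inverse solves the conjugate linear ODE backwards). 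Differentiating $M_t \Psi_t = \Psi_T$ in $t$ and using invertibility of $\Psi_t$ yields
\begin{equation*}
\dot{M}_t = - M_t\, \nabla_x b(X_t,\nu_t) \quad \text{a.e.\ on } [t_0,T].
\end{equation*}
Taking transposes and multiplying by the (time-independent) vector $\nabla_x L(X_T,Y_0)$, we obtain the desired linear ODE $\dot{Z}_t = -\nabla_x b(X_t,\nu_t)^{T} Z_t$. The terminal condition is immediate from $M_T = \nabla_x X_T^{T,X_T} = I$, which gives $Z_T = \nabla_x L(X_T,Y_0)$.

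The only point that requires some care is the regularity in time of the matrix-valued process: because $\boldsymbol{\nu} \in \mathcal{A}(t_0)$ is merely measurable in $t$, the map $t \mapsto \nabla_x b(X_t,\nu_t)$ is only in $L^1$ (controlled by $\int (1+|a|^2) d|\nu_t|(a)$, integrable by \eqref{eq:boundfromPinsker16Sept}), so $\Psi_t$, $M_t$ and hence $Z_t$ are absolutely continuous rather than $\mathcal{C}^1$. The ODE identities must therefore be understood as integral equations holding for all $t$, with derivatives existing only almost everywhere; this is compatible with the integral form of the backward ODE stated in the lemma. If one prefers to avoid the direct manipulation of $\Psi_t^{-1}$, the same conclusion can be reached by mollifying $\boldsymbol{\nu}$ into smooth $\boldsymbol{\nu}^n$ via \eqref{eq:defnnun23/12}, running the classical argument for each $n$, and passing to the limit using Corollary \ref{cor:approximation:nu} together with the convergence of the corresponding $\nabla_x u^n_t$ on compact sets deduced from the representation formula. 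This will be the main technical hurdle, but it is a routine stability argument given the estimates already established.
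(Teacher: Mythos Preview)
Your argument is correct and in fact somewhat more direct than the paper's. You work straight from the Lagrangian representation $u_t(x,y)=L(X_T^{t,x},y)$ and the semigroup factorisation $M_t\Psi_t=\Psi_T$ of the Jacobians, obtaining the ODE for $Z_t$ by differentiating $M_t=\Psi_T\Psi_t^{-1}$ in $t$; this only needs absolute continuity of $t\mapsto\Psi_t$, which is already provided by the variational equation in Proposition \ref{prop:ODE16Sept}. The paper instead takes as its \emph{primary} route the mollification argument you sketch at the end: it regularises $\boldsymbol{\nu}$ into $(\boldsymbol{\nu}^n)_{n\ge1}$ so that $(t,x,y)\mapsto\nabla_x u_t^n(x,y)$ becomes classically time-differentiable (Lemma \ref{lem:backwardtransportsmoothcontrol23/01}), applies the ordinary chain rule to $Z_t^n=\nabla_x u_t^n(X_t^n,Y_0)$, and then passes to the limit via Corollary \ref{cor:approximation:nu}. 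Your approach bypasses regularisation entirely at the cost of manipulating $\Psi_t^{-1}$; the paper's approach avoids the explicit inversion but pays with a limiting step. Both are valid, and the paper's choice is consistent with its systematic use of the approximation machinery elsewhere. One cosmetic point: you write $\nabla_x b(X_t,\nu_t)^{T}$ with a transpose, whereas the paper writes the ODE without one; this is purely a matter of which convention is adopted for the symbol $\nabla_x b$ applied to a vector field, and causes no difficulty.
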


\begin{proof}
    Once again, we use 
    the regularization procedure introduced in Lemma 
    \ref{lem:newregularizationb23/12} and consider
    the same sequence $({\boldsymbol \nu}^n)_{n \geq 1}$
    as therein. 
    Following Corollary
    \ref{cor:approximation:nu}, we call, for each $n \in {\mathbb N}^*$, $(X_t^n)_{t \in [t_0,T]}$ the solution to the ODE 
\eqref{eq:ODE:appendix}      
    starting from $X_0$ and driven by the control ${\boldsymbol \nu}^n$. 
    We also denote by
    $(u_t^n)_{t \in [t_0,T]}$ the solution to the backward transport equation 
  \eqref{eq:adjoint:equation} driven by ${\boldsymbol \nu}^n$. We then let $Z_t^n := \nabla_x u_t^n(X_t^n,Y_0)$ for each $t \in [t_0,T]$. Then, 
  observing that $(t,x,y) \mapsto \nabla_x u^n_t(x,y)$
  is time differentiable (see Lemma \ref{lem:backwardtransportsmoothcontrol23/01}), we can differentiate 
  $t \in [t_0,T] \mapsto Z_t^n$. We find
    $$ \dot{Z}_t^n = \partial_t \nabla_x u_t^n(X_t^n,Y_0) + \nabla^2_{xx} u_t^n(X_t^n,Y_0) \dot{X}_t^n =\partial_t \nabla_x u_t^n(X_t^n,Y_0) + \nabla_{xx}^2u_t^n(X_t^n,Y_0) b(X_t^n, \nu_t^n).$$
    Moreover, 
differentiating in $x$ the equation for $u_t^n$ we find that
$$ -\partial_t \nabla_x u_t^n(x,y) - \nabla_x b(x,\nu_t^n) \nabla_xu_t^n(x,y) - \nabla^2_{xx} u_t^n(x,y) b(x,\nu_t^n) = 0, \quad (t,x,y) \in  
[t_0,T]\times\R^{d_1} \times \R^{d_2}.$$
Therefore, 
$$\dot{Z}_t^n = -\nabla_xb(X_t^n,Y_0)Z_t^n, \quad t\in [t_0,T]$$
which can be rewritten into
$$ Z_t^n = \nabla_xL(X_T^n,Y_0) + \int_t^T \nabla_x b(X_s^n,\nu_s^n) Z_s^n ds, \quad  t \in [t_0,T].$$

It remains to let $n$ tend to $+\infty$. To do so, we observe that, for all $n \in \mathbb{N}^*$ and $t \in [t_0,T]$, 
$ Z_t^n -Z_t = \nabla_x u_t^n(X_t^n,Y_0) - \nabla_x u_t(X_t,Y_0) $.
By Corollary \ref{cor:approximation:nu} (and the representation formula provided by 
Proposition \ref{prop:SolutionBackxard16SeptAppendix}), 
$(x,y) \mapsto \nabla_x u_t^n(x,y)$ converges locally uniformly to $(x,y) \mapsto \nabla_xu_t(x,y)$ and $X_t^n$ converges to $X_t$ as $n \rightarrow +\infty$.
Therefore, 
$Z_t^n$ converges to $Z_t$ as $n \rightarrow +\infty$. 
By 
Lemma 
    \ref{lem:newregularizationb23/12}
    and (i) in \assreg, 
we can pass to the limit in the backward ODE for $(Z_t^n)_{t \in [t_0,T]}$ and deduce that, for all $t \in [t_0,T]$, 
$$ Z_t =  \nabla_xL(X_T,Y_0) + \int_t^T \nabla_x b(X_s,\nu_s) Z_s ds,$$
which completes the proof. 
\end{proof}

\subsection{A Compactness Argument}

Recalling Definition 
\ref{def:R(t_0)} for the space $\mathcal{R}(t_0)$, we have the following lemma:

\begin{lem}
\label{lem:StrongCompactnessCurves}
Assume that for a fixed $t_0 \in [0,T]$, we are given a bounded sequence $(\bd{\rho}^n)_{n \geq 1}$ in $\mathcal{R}(t_0)$. Then, there exists $\bd{\rho} \in \mathcal{R}(t_0)$ such that, up to a subsequence, 
\begin{equation} 
\lim_{n \rightarrow +\infty} \sup_{t \in [t_0,T]} \norm{\rho_t^n - \rho_t}_{(\mathcal{C}^2_{2})^*} =0.
\label{eq:lem:A.8:3}
\end{equation}
Moreover, $\bd{\rho}$ satisfies
\begin{equation} 
\norm{\bd{\rho}}_{\mathcal{R}(t_0)} \leq \liminf_{n \rightarrow +\infty} \norm{\bd{\rho}^n}_{\mathcal{R}(t_0)}. 
\label{eq:lem:A.8:12}
\end{equation}
\end{lem}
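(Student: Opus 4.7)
The plan is to apply an Arzel\`a--Ascoli argument in $\mathcal{C}([t_0,T], (\mathcal{C}^2_2)^*)$. Equicontinuity in $t$ is immediate from the Hölder seminorm built into $\norm{\cdot}_{\mathcal{R}(t_0)}$: boundedness of $(\bd{\rho}^n)_n$ in $\mathcal{R}(t_0)$ yields $\norm{\rho_{t_2}^n - \rho_{t_1}^n}_{(\mathcal{C}^2_2)^*} \leq M \sqrt{|t_2 - t_1|}$ uniformly in $n$. The main obstacle, and the core of the proof, is pointwise (in $t$) norm-relative compactness of $\{\rho_t^n\}_{n \geq 1}$ in $(\mathcal{C}^2_2)^*$, which is not automatic from the first summand in $\norm{\cdot}_{\mathcal{R}(t_0)}$: one needs to exploit that $\rho_t^n$ is bounded as a linear form on $\mathcal{C}^2_2$ measured in the \emph{weaker} $\mathcal{C}^1_3$ norm.

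The key auxiliary lemma is that the inclusion $\mathcal{C}^2_2 \hookrightarrow \mathcal{C}^1_3$ is compact on bounded sets. To prove it, I would take $(\phi_k)_k \subset \mathcal{C}^2_2$ with $\norm{\phi_k}_{\mathcal{C}^2_2} \leq 1$; the uniform bounds $|\phi_k|, |\nabla\phi_k|, |\nabla^2\phi_k| \leq 1+|x|^2+|y|^2$ give local equicontinuity of $\phi_k$ and $\nabla\phi_k$, so Arzel\`a--Ascoli and a diagonal argument produce $\phi \in \mathcal{C}^1$ and a subsequence with $\phi_k \to \phi$ in $\mathcal{C}^1_{\rm loc}$. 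Upgrading to $\mathcal{C}^1_3$-convergence is achieved by the tail bound
\begin{equation*}
\sup_{|x|+|y|\geq R} \frac{|\phi_k - \phi|+|\nabla(\phi_k - \phi)|}{1+|x|^3+|y|^3} \leq \frac{2(1+|x|^2+|y|^2)}{1+|x|^3+|y|^3} \leq \frac{C}{1+R},
\end{equation*}
combined with local uniform convergence on $\{|x|+|y|\leq R\}$. By Schauder's theorem, the adjoint inclusion $(\mathcal{C}^1_3)^* \to (\mathcal{C}^2_2)^*$ (restriction of linear forms) is then also compact; consequently, any sequence in $(\mathcal{C}^2_2)^*$ bounded for the operator norm associated with $\norm{\cdot}_{\mathcal{C}^1_3}$ is norm-relatively compact in $(\mathcal{C}^2_2)^*$.

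Applying this to $(\rho_t^n)_n$ at each fixed $t$, combined with a countable dense set $\{t_j\} \subset [t_0,T]$ and a diagonal extraction, produces a subsequence (not relabelled) and a mapping $t \mapsto \rho_t$ such that $\rho_{t_j}^n \to \rho_{t_j}$ in $(\mathcal{C}^2_2)^*$ for every $j$. A standard $\varepsilon/3$ argument, using the uniform-in-$n$ Hölder bound to control $\norm{\rho_t^n - \rho_{t_j}^n}_{(\mathcal{C}^2_2)^*}$ and (after defining $\rho_t$ as a limit) $\norm{\rho_{t_j}-\rho_t}_{(\mathcal{C}^2_2)^*}$ for $t_j$ close enough to $t$, promotes this to
\begin{equation*}
\lim_{n \to \infty}\sup_{t \in [t_0,T]} \norm{\rho_t^n - \rho_t}_{(\mathcal{C}^2_2)^*} = 0,
\end{equation*}
which is \eqref{eq:lem:A.8:3}. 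Finally, $\bd{\rho}$ inherits the two defining bounds of $\mathcal{R}(t_0)$ by lower semicontinuity: for fixed $\phi \in \mathcal{C}^2_2$ with $\norm{\phi}_{\mathcal{C}^1_3} \leq 1$ and fixed $t$, $\langle \phi, \rho_t \rangle = \lim_n \langle \phi, \rho_t^n \rangle$; and for fixed $t_1 < t_2$, $\norm{\rho_{t_2} - \rho_{t_1}}_{(\mathcal{C}^2_2)^*} \leq \liminf_n \norm{\rho_{t_2}^n - \rho_{t_1}^n}_{(\mathcal{C}^2_2)^*}$. Taking the suprema defining $\norm{\cdot}_{\mathcal{R}(t_0)}$ and then the liminf in $n$ yields \eqref{eq:lem:A.8:12} and, in particular, $\bd{\rho} \in \mathcal{R}(t_0)$.
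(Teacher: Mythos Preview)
Your proof is correct and takes a genuinely different route from the paper's. Both arguments rest on the same two ingredients --- the gain of one derivative (local Arzel\`a--Ascoli) and the gain of one power in the weight (tail control) --- but you package them into a single compact embedding $\mathcal{C}^2_2 \hookrightarrow \mathcal{C}^1_3$ and then invoke Schauder's theorem (together with an implicit Hahn--Banach extension of each $\rho_t^n$ to $(\mathcal{C}^1_3)^*$) to obtain pointwise relative compactness in $(\mathcal{C}^2_2)^*$ in one stroke. The paper, by contrast, proceeds in two stages: it first extracts a limit in the weak-$*$ topology of $(\mathcal{C}^2_0)^*$ via Banach--Alaoglu and a metrizability argument, and only then upgrades to norm convergence in $(\mathcal{C}^2_2)^*$ by a cutoff decomposition $\rho = \chi_R \rho + (1-\chi_R)\rho$, handling the compact part through the local embedding $\mathcal{C}^2(\bar B_{R+1}) \hookrightarrow \mathcal{C}^1(\bar B_{R+1})$ and the tail through the auxiliary Lemma~\ref{lem:whywetakeC13star} (which is precisely your weight-gain estimate). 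Your approach is more conceptual and streamlined; the paper's is more hands-on and makes the role of the cutoff explicit, which can be useful when one needs to track the argument in variants of the function spaces.
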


We need a preliminary result. To state it properly, we introduce, for any $R>0$, some cut-off function $\chi_R : \R^d \times \R^d \rightarrow [0,1]$ such that
\begin{equation}
\label{eq:chi_R:appendix}
\chi_R =
\left \{
\begin{array}{ll}
1  \quad \mbox{\rm in} & B(0,R) \\
0  \quad \mbox{\rm outside} & B(0,R+1)
\end{array}
\right.,
\end{equation}
with $\norm{ \chi_R}_{\mathcal{C}^2} \leq c$ for some constant $c>0$ independent of $R$.
Then, we claim:

 \begin{lem}
     Consider the functions $(\chi_R)_{ R >0}$ as above. There is $C>0$ such that, for all $\rho \in (\mathcal{C}^2_2)^*$ and all $R  >0$
 $$ \sup_{ \varphi \in \mathcal{C}^2_2, \norm{ \varphi}_{\mathcal{C}^1_2} \leq 1} \langle \varphi; (1-\chi_R) \rho \rangle \leq \frac{C}{R+1} \sup_{\varphi \in \mathcal{C}^2_2, \norm{\varphi}_{\mathcal{C}^1_3} \leq 1 } \langle \varphi; \rho \rangle. $$    
\label{lem:whywetakeC13star}
 \end{lem}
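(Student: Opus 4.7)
The strategy is to move the cutoff $(1-\chi_R)$ onto the test function, and then exploit the fact that the product $(1-\chi_R)\varphi$ vanishes on $B(0,R)$ to ``upgrade'' its weighted growth from $\mathcal{C}^1_2$ to $\mathcal{C}^1_3$, paying a gain of order $1/R$ in the process. Concretely, for $\varphi \in \mathcal{C}^2_2$ with $\|\varphi\|_{\mathcal{C}^1_2} \leq 1$, I would first write $\langle \varphi; (1-\chi_R)\rho \rangle = \langle (1-\chi_R)\varphi; \rho \rangle$, observing that $(1-\chi_R)\varphi$ still lies in $\mathcal{C}^2_2$ because $\chi_R \in \mathcal{C}^2_b$ with a bound independent of $R$.

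The key elementary estimate is then the following weight comparison: there is an absolute constant $C$ such that, for every $R>0$ and every $(x,y) \in \mathbb{R}^{d_1}\times\mathbb{R}^{d_2}$ with $|x|^2+|y|^2 \geq R^2$,
$$\frac{1+|x|^2+|y|^2}{1+|x|^3+|y|^3} \leq \frac{C}{R+1}.$$
This is shown by assuming without loss of generality $|x|\geq|y|$, so $|x|\geq R/\sqrt 2$ and $|x|^3+|y|^3 \geq |x|^3 \geq \tfrac{|x|}{2}(|x|^2+|y|^2) \geq \tfrac{R}{2\sqrt 2}(|x|^2+|y|^2)$; for $R \geq 1$ this gives the decay $\tfrac{4\sqrt 2}{R+1}$, while for $R \leq 1$ the ratio is trivially bounded by the absolute constant that governs $\sup_{s\geq 0}\tfrac{1+s}{1+s^{3/2}/\sqrt 2}$, which absorbs into $C/(R+1)$.

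Next, using Leibniz, $\nabla[(1-\chi_R)\varphi] = -(\nabla\chi_R)\varphi + (1-\chi_R)\nabla\varphi$, I would note that both $(1-\chi_R)\varphi$ and $(1-\chi_R)\nabla\varphi$ are supported in $\{|x|^2+|y|^2 \geq R^2\}$, while $\nabla\chi_R$ is supported in the annulus $B(0,R+1)\setminus B(0,R)$, which is also contained in $\{|x|^2+|y|^2 \geq R^2\}$. Combining the pointwise bounds $|\varphi|,|\nabla\varphi|\leq (1+|x|^2+|y|^2)\|\varphi\|_{\mathcal{C}^1_2}$ with the uniform control $\|\chi_R\|_{\mathcal{C}^2}\leq c$ and the weight estimate above yields
$$\bigl\|(1-\chi_R)\varphi\bigr\|_{\mathcal{C}^1_3} \leq \frac{C'}{R+1}\,\|\varphi\|_{\mathcal{C}^1_2},$$
for a constant $C'$ independent of $R$ and $\varphi$.

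Finally, setting $\psi := \tfrac{R+1}{C'}(1-\chi_R)\varphi \in \mathcal{C}^2_2$, one has $\|\psi\|_{\mathcal{C}^1_3}\leq 1$, so
$$\langle (1-\chi_R)\varphi;\rho\rangle = \frac{C'}{R+1}\,\langle \psi;\rho\rangle \leq \frac{C'}{R+1}\sup_{\psi\in\mathcal{C}^2_2,\,\|\psi\|_{\mathcal{C}^1_3}\leq 1}\langle \psi;\rho\rangle,$$
and taking the supremum over $\varphi$ finishes the proof. There is no serious obstacle here: the argument is purely algebraic once the weight inequality is in hand, and the only thing that requires attention is the bookkeeping at small $R$ (where the $1/(R+1)$ bound must be absorbed into an absolute constant rather than a genuine decay).
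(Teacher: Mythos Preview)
Your proof is correct and follows exactly the same approach as the paper: the paper's proof consists of the single claim $\|(1-\chi_R)\varphi\|_{\mathcal{C}^1_3} \leq \frac{C}{1+R}\|\varphi\|_{\mathcal{C}^1_2}$ (described as ``a simple calculation'') followed by ``the result easily follows,'' and you have supplied precisely that calculation in full detail, including the weight comparison on the support of $1-\chi_R$ and the Leibniz-rule bookkeeping.
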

 \begin{proof}
     A simple calculation shows that, for any $\varphi \in \mathcal{C}^2_2$,
     $$ \bigl\| (1 - \chi_R) \varphi \bigr\|_{\mathcal{C}^1_3} \leq \frac{C}{1+R} \norm{ \varphi}_{\mathcal{C}^1_2},$$
    for some $C>0$ independent from $\varphi$ and $R >0$. The result easily follows.
 \end{proof}

 We go on with the proof of Lemma \ref{lem:StrongCompactnessCurves}. Be aware that throughout the proof, 
 $\mathcal{C}^2_0$ is the space of twice continuously differentiable functions $\varphi$ that vanish at infinity together with their first and second derivatives, i.e., 
 $\lim_{R \rightarrow \infty} \sup_{\vert x \vert \geq R} \bigl( \vert \varphi(x) \vert 
 + \vert \nabla_x \varphi(x) \vert 
 + \vert \nabla^2_x \varphi(x) \vert) =0$. The space $\mathcal{C}^2_0$ is equipped with $\| \cdot \|_{{\mathcal C}^2_b}$.

\begin{proof}
The proof is divided in four steps.

\vskip 4pt

\textit{Step 1.} We first claim that there exists 
${\boldsymbol \rho} \in \mathcal{C}([t_0,T], (\mathcal{C}^{2}_0)^*, w-*)$ such that, up to a subsequence, 
$(\bd{\rho}^n)_{ n \in \N}$ converges to $\bd\rho$ in 
the following sense: for all test function $\varphi \in {\mathcal C}_0^2$, 
\begin{equation}
    \label{eq:main:assumption:lem:compactness of curves:2}
\lim_{n \rightarrow \infty}
\sup_{t \in [t_0,T]}
\Bigl\vert 
\langle \rho_t^n ;\varphi \rangle -
\langle \rho_t ; \varphi \rangle
\Bigr\vert =0, 
\end{equation}
with $\langle \cdot ; \cdot \rangle$
standing for the duality bracket between ${\mathcal C}^2_0$ and $({\mathcal C}^2_0)^*$. Indeed, by Banach-Alaoglu theorem,
the ball ${\mathcal B}^*$ of center $0$ and radius $\sup_{n \in \mathbb{N}^*} \norm{ \bd{\rho}^n}_{\mathcal{R}(t_0)}$ of $(\mathcal{C}^2_0)^*$ is 
compact for the weak-$*$ topology. Moreover, 
${\mathcal B}^*$ equipped with 
the weak-$*$ topology 
is 
metrizable, with the metric 
$d$ defined by 
$$d(\rho, \rho') := \sum_{ k \in \mathbb{N}} 2^{-k} |\langle \varphi_k ; \rho' - \rho \rangle|, \qquad \rho,\rho' \in {\mathcal B}^*,$$
where $(\varphi_k)_{ k \in {\mathbb N}}$ is a dense family of the unit ball
${\mathcal B}$ of the separable Banach space $\mathcal{C}^2_0$. 

Since we obviously have $d(\rho, \rho') \leq 2 \norm{ \rho' - \rho}_{(\mathcal{C}^2_0)^*} 
\leq 2 \norm{ \rho' - \rho}_{(\mathcal{C}^2_2)^*}$, we deduce from
Definition 
\ref{def:R(t_0)}
and the boundedness of $(\bd{\rho}^n)_{n \geq 1}$ in $\mathcal{R}(t_0)$ 
that the family $({\boldsymbol \rho}^n)_{n \geq 1}$  is uniformly continuous in time with respect to $d$, uniformly in 
$n \in {\mathbb N}^*$. We can then use Arzelà-Ascoli theorem (which requires the target space to be a metric space) to infer the existence of ${\boldsymbol \rho} \in \mathcal{C}([t_0,T], {\mathcal B}^*)$ 
(with ${\mathcal B}^*$ being equipped with $d$)
such that
$$\lim_{n \rightarrow \infty} \sup_{t \in [t_0,T]} d(\rho_t^n , \rho_t)
= 0,$$
which implies 
\eqref{eq:main:assumption:lem:compactness of curves:2}. 
\vskip 4pt

\textit{Step 2.} 
We now prove that $\boldsymbol{\rho}$ satisfies
\begin{equation} 
\label{eq:lem:A.8:11}
\sup_{t \in [t_0,T]} \sup_{ \varphi \in \mathcal{C}^2_0, \norm{\varphi}_{\mathcal{C}^1_{3}} \leq 1} \bigl |\langle \varphi; \rho_t \bigr \rangle |  + \sup_{t_2 > t_1  \in [t_0,T]} \sup_{\varphi \in \mathcal{C}^2_0, \norm{\varphi}_{\mathcal{C}^2_{2}} \leq 1} \frac{ | \bigl \langle \varphi; \rho_{t_2} - \rho_{t_1} \bigr \rangle |}{\sqrt{t_2 -t_1}} 
\leq \liminf_{n \rightarrow +\infty} \norm{\bd{\rho}^n}_{\mathcal{R}(t_0)}.
\end{equation}
Indeed, for all $t \in [t_0,T]$, $t_2 >t_1 \in [t_0,T]$, and $\varphi^1,\varphi^2 \in \mathcal{C}^2_0$ with $\norm{ \varphi^1}_{\mathcal{C}^1_{3}} \leq 1$ and $\norm{\varphi^2}_{\mathcal{C}^2_2} \leq 1$, we get from \eqref{eq:main:assumption:lem:compactness of curves:2} and the definition of $\norm{ \cdot }_{\mathcal{R}(t_0)}$ that
\begin{align*}
    \bigl| \bigl \langle \varphi^1; \rho_t \bigr\rangle \bigr| + \frac{|\bigr \langle \varphi^2; \rho_{t_2} - \rho_{t_1} \bigr \rangle | }{ \sqrt{t_2 -t_1}} &= \lim_{n \rightarrow +\infty} \left \{ | \bigl \langle \varphi^1; \rho^n_t \bigr\rangle | + \frac{|\bigr \langle \varphi^2; \rho^n_{t_2} - \rho^n_{t_1} \bigr \rangle | }{ \sqrt{t_2 -t_1}} \right \} \\
    &\leq \liminf_{ n \rightarrow +\infty} \norm{ \bd{\rho}^n}_{\mathcal{R}(t_0)}
\end{align*}
and then we take the supremum over $\varphi^1, \varphi^2$, $t,t_1,t_2$.

\vskip 4pt

\textit{Step 3.}
We then justify that for all $R >0$,
\begin{equation} 
\lim_{n \rightarrow +\infty} \sup_{t \in [t_0,T]} \bigl\| \chi_R (\rho_t^n - \rho_t) \bigr\|_{(\mathcal{C}^2_{2})^*} =0,
\label{eq:resultofstep3}
\end{equation}
with 
$\chi_R$ as in \eqref{eq:chi_R:appendix}.

To this end we fix $R>0$ and take $\epsilon >0$. 
Given the compact embedding $\mathcal{C}^2(\bar{B}(0,R+1)) \hookrightarrow\mathcal{C}^1(\bar{B}(0,R+1))$
(with $\bar{B}(0,R+1)$ denoting the $d_1+d_2$-dimensional ball of center $0$ and radius 
$R+1$), 
we can find 
$\varphi^1,\dots,\varphi^N \in \bar{B}_{\mathcal{C}^2(\bar{B}(0,R+1))}(0,1)$ such that
\begin{equation}
 \label{eq:lem:A.8:5} 
 \bar{\mathcal B}_{\mathcal{C}^2(\bar{B}(0,R+1))}(0,1) \subset \bigcup_{i=1}^N  
 \bar{\mathcal B}_{\mathcal{C}^1(\bar{B}(0,R+1))}(\varphi^{i},\epsilon),
 \end{equation}
where 
$\bar{\mathcal B}_{\mathcal{C}^2(\bar{B}(0,R+1))}(0,1)$
is the unit ball of 
$\mathcal{C}^2(\bar{B}(0,R+1))$
and, for all integer $i=1,\dots,N$, 
$\bar{\mathcal B}_{\mathcal{C}^1(\bar{B}(0,R+1))}(\varphi^{i},\epsilon)$
is the ball of 
$\mathcal{C}^1(\bar{B}(0,R+1))$ of center $\varphi^i$ and radius $\epsilon.$

Let $\varphi \in \mathcal{C}^2_2$ with $\norm{\varphi}_{\mathcal{C}^2_2} \leq 1$. We let $c_R := (1+(R+1)^2)^{-1}$ so that $c_R \varphi$ belongs to $\bar{\mathcal{B}}_{\mathcal{C}^2(\bar{B}(0,R+1))}(0,1)$. For all 
$1 \leq i \leq N$, we have
\begin{align}
\notag    \sup_{t \in [t_0,T]} \biggl| \Bigl \langle c_R \varphi; \chi_R (\rho_t^n - \rho_t) \Bigr \rangle \biggr|  & \leq \sup_{t \in [t_0,T]} \biggl| \Bigl \langle \bigl( c_R\varphi -\varphi^{i} \bigr) \chi_R ;  \rho_t^n - \rho_t \Bigr \rangle \biggr| + \sup_{t \in [t_0,T]} \biggl| \Bigl \langle \varphi^{i}  \chi_R ;\rho_t^n -\rho_{t} \Bigr \rangle \biggr| \\
    &\leq 2 \sup_{ n \in \mathbb{N}^* } \norm{ \bd \rho^n}_{\mathcal{R}(t_0)} \norm{(c_R\varphi - \varphi^{i}) \chi_R }_{\mathcal{C}^1_{3}} 
    + \sup_{t \in [t_0,T]} \biggl| \Bigl \langle \varphi^{i} \chi_R ;\rho_{t}^n - \rho_{t} \Bigr \rangle \biggr| \label{eq:lem:A:8:10}
\end{align}
with the first  term on the second line following from 
\eqref{eq:lem:A.8:11}.

Now, by 
\eqref{eq:lem:A.8:5} (and with $c$ as in \eqref{eq:chi_R:appendix}), we can always find an integer $1 \leq i \leq N$ such that 
$$ \norm{(c_R\varphi - \varphi^{i}) \chi_R }_{\mathcal{C}^1_{3}} \leq \frac{2c}{1+R^3} \norm{ (c_R\varphi - \varphi^{i})  }_{\mathcal{C}^1(\bar{B}(0,R+1))}  \leq C \epsilon,$$
for a constant $C$ independent of $\epsilon$. Moreover, by 
\eqref{eq:main:assumption:lem:compactness of curves:2}, we can find $n_0 \in \mathbb{N}^*$ such that, for all $n \geq n_0$, 
$$ \max_{ 1 \leq i \leq N} \sup_{t \in [t_0,T]}\biggl| \Bigl \langle \varphi^{i}   \chi_R; (\rho_{t}^n - \rho_{t}) \Bigr \rangle \biggr| \leq \epsilon.  $$
Inserting the latter two displays in 
\eqref{eq:lem:A:8:10}, taking the supremum over all $\varphi \in \mathcal{C}^2_{2}$ with $\norm{\varphi}_{\mathcal{C}^2_{2}} \leq 1$, dividing by $c_R$ and letting $\epsilon \rightarrow 0^+$, this shows that \eqref{eq:resultofstep3} holds.

\vskip 4pt

\textit{Step 4}.
Finally we use Lemma \ref{lem:whywetakeC13star}
to conclude.
Precisely, we show that the sequence $({\boldsymbol \rho}^n)_{n \in \mathbb{N}^*}$ is Cauchy in $\mathcal{C}([t_0,T], 
(\mathcal{C}^2_{2})^*)$.
To do so, we take $\epsilon >0$. By Lemma \ref{lem:whywetakeC13star},
we can find $R >0$ such that, for all $m,n \in \mathbb{N}^*$,
\begin{align*} 
&\sup_{t \in [t_0,T]} \bigl\| (1-\chi_R)(\rho_t^n - \rho_t^m) \bigr\|_{(\mathcal{C}_{2}^2)^*} \\
&\leq \sup_{t \in [t_0,T]} \sup_{\varphi \in \mathcal{C}^2_2, \norm{ \varphi}_{\mathcal{C}^1_2} \leq 1} \langle \varphi; (1-\chi_R) \rho_t^n \rangle + \sup_{t \in [t_0,T]} \sup_{\varphi \in \mathcal{C}^2_2, \norm{ \varphi}_{\mathcal{C}^1_2} \leq 1} \langle \varphi; (1-\chi_R) \rho_t^n \rangle \leq \frac{\epsilon}{2}.
\end{align*}
Since the sequence $(\chi_R \rho^n)_{ n \in \mathbb{N}}$ converges in $\mathcal{C}([t_0,T], (\mathcal{C}^2_{2})^*)$ (this is the result of \textit{Step 3}), we can find $n_0 \in \mathbb{N}^*$ such that for all $n,m \geq n_0$,
we have
$$ \sup_{t \in [t_0,T]} \bigl\| \chi_R(\rho_t^n - \rho_t^m)\bigr\|_{(\mathcal{C}^2_{2})^*} \leq \frac{\epsilon}{2}.$$
As consequence of the last two displays, we have for all $m,n \geq n_0$:
\begin{align*} 
\sup_{t \in [t_0,T]} \bigl\| \rho_t^n - \rho_t^m
\bigr\|_{(\mathcal{C}^2_{2})^*} &\leq \sup_{t \in [t_0,T]} \bigl\| \chi_R(\rho_t^n - \rho_t^m) \bigr\|_{(\mathcal{C}^2_{2})^*}  +\sup_{t \in [t_0,T]} \bigl\| 
(1-\chi_R)( \rho_t^n - \rho_t^m) \bigr\|_{(\mathcal{C}^2_{2})^*} \leq \epsilon.
\end{align*}
By completeness, the sequence converges toward some $\tilde{\boldsymbol \rho} \in \mathcal{C}([t_0,T], (\mathcal{C}^2_{2})^*)$. Of course, for all $t \in [t_0,T]$, 
the action of 
$\tilde{\rho}_t$ on the elements of 
$\mathcal{C}^2_0$ coincides
with the action of $\rho_t$. In a sense, 
$\tilde{\rho}_t$ extends $\rho_t$ to 
$({\mathcal C}^2_{2})^*$. The bound 
\eqref{eq:lem:A.8:12} follows by the same argument as in \textit{Step 2} of this proof.
\end{proof}

\subsection{Pinsker Inequalities}

\label{sec:PinskerAppendix}

The following result is taken from \cite[Theorem 2.1]{BolleyVillani}. 
We feel better to restate it here as it is used repeatedly throughout the text. 
\begin{thm}
\label{thm:pinsker}
Let $\mu$ and $\nu$ be two probability measures on a measurable space 
${\mathcal X}$ and $\varphi$ be a non-negative valued measurable function 
defined on ${\mathcal X}$. Then, 
\begin{align}
    \label{eq:pinsker:i}
&\bigl\| \varphi \bigl( \mu - \nu \bigr) 
\bigr\|_{\rm TV}
\leq \biggl( \frac32 + \log \int_{\mathcal X}
e^{2 \varphi(x)} d \nu (x) 
\biggr) \biggl( 
\sqrt{ {\mathcal E}(\mu \vert \nu)}
+ \frac12 {\mathcal E}(\mu \vert \nu) \biggr),
\\
\label{eq:pinsker:ii}
&\bigl\| \varphi \bigl( \mu - \nu \bigr) 
\bigr\|_{\rm TV}
\leq \sqrt{2} \biggl( 1 + \log \int_{\mathcal X}
e^{2 \varphi^2(x)} d \nu (x) 
\biggr)^{1/2} 
\sqrt{ {\mathcal E}(\mu \vert \nu)}.
\end{align}
Above, $\varphi(\mu-\nu)$ is a shorthand notation for the signed measure 
$\varphi\mu - \varphi \nu$, $\| \varphi \bigl( \mu - \nu \bigr) 
\|_{\rm TV}$ is its total variation norm and 
${\mathcal E}(\mu \vert \nu)$ denotes the relative entropy 
of $\mu$ with respect to $\nu$. 
\end{thm}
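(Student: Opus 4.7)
The plan is to derive both inequalities from the combination of two standard tools: the dual representation of the weighted total variation norm,
\begin{equation*}
\bigl\| \varphi(\mu-\nu) \bigr\|_{\rm TV} = \sup_{|g| \leq 1} \int_{\mathcal X} g \varphi \, d(\mu - \nu),
\end{equation*}
and the Donsker--Varadhan variational formula: for any measurable $h$ for which the right-hand side makes sense,
\begin{equation*}
\int_{\mathcal X} h \, d\mu \leq \mathcal{E}(\mu \vert \nu) + \log \int_{\mathcal X} e^h d\nu.
\end{equation*}
I may assume $\mathcal{E}(\mu|\nu) < +\infty$ (so $\mu \ll \nu$), as otherwise both inequalities are trivial. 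For a parameter $\lambda > 0$ and a test function $g$ with $|g| \leq 1$, taking $h = \lambda g \varphi$ in Donsker--Varadhan and rearranging yields
\begin{equation*}
\lambda \int_{\mathcal X} g \varphi \, d(\mu-\nu) \leq \mathcal{E}(\mu|\nu) + \Bigl(\log \int_{\mathcal X} e^{\lambda g \varphi} d\nu - \lambda \int_{\mathcal X} g\varphi \, d\nu \Bigr),
\end{equation*}
so the whole work reduces to bounding the centered log-Laplace functional on the right in terms of an exponential moment of $\varphi$, followed by optimizing in $\lambda$.

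For inequality \eqref{eq:pinsker:i}, I would use the second-order Taylor expansion $e^t \leq 1 + t + \tfrac{t^2}{2} e^{|t|}$ applied to $t = \lambda g\varphi$. Together with $|g| \leq 1$, $\varphi \geq 0$ and $\log(1+x) \leq x$, this produces the bound
\begin{equation*}
\log \int_{\mathcal X} e^{\lambda g \varphi} d\nu - \lambda \int_{\mathcal X} g\varphi \, d\nu \leq \frac{\lambda^2}{2} \int_{\mathcal X} \varphi^2 e^{\lambda\varphi} d\nu.
\end{equation*}
The next step is to trade $\int \varphi^2 e^{\lambda \varphi} d\nu$ for $\int e^{2\varphi} d\nu$ via a Young-type inequality of the form $\lambda \varphi^2 \leq \alpha \, e^{(2-\lambda)\varphi} + \beta$, valid for $\lambda < 2$, then to balance $\tfrac{\mathcal{E}}{\lambda}$ against $\tfrac{\lambda}{2}\int \varphi^2 e^{\lambda\varphi} d\nu$ by choosing $\lambda$ proportional to $\sqrt{\mathcal{E}}$. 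This should produce both the $\sqrt{\mathcal{E}}$ and the $\tfrac12 \mathcal{E}$ contributions in the target bound and the prefactor involving $\log \int e^{2\varphi} d\nu$.

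For inequality \eqref{eq:pinsker:ii}, I would take the same starting identity but estimate the log-Laplace through a sub-Gaussian bound. A convenient way is to split the phase space into the region where $\lambda|g\varphi|$ is dominated by $\lambda^2\varphi^2/2 + C$ (Fenchel-type) and its complement, so that $\int e^{\lambda g\varphi} d\nu \leq e^{C} \int e^{\lambda^2 \varphi^2/2} d\nu$. Choosing $\lambda^2 = 4$ (i.e.\ $\lambda = 2$) would replace the exponential moment by $\int e^{2\varphi^2} d\nu$, and a minimization in $\lambda$ then yields the pure $\sqrt{\mathcal{E}}$ bound with the prefactor $\sqrt{2}\,(1+\log\int e^{2\varphi^2} d\nu)^{1/2}$.

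The main obstacle in this programme is recovering the \emph{exact} numerical constants $3/2$, $1/2$, and $\sqrt{2}$: the structural shape of both inequalities follows fairly directly from Donsker--Varadhan combined with elementary exponential estimates, but sharpening the constants requires careful calibration of the Young-inequality threshold in (i) and of the splitting in (ii), as well as a non-closed-form optimization in $\lambda$—in particular, the combination $\sqrt{\mathcal{E}} + \tfrac12 \mathcal{E}$ on the right of \eqref{eq:pinsker:i} suggests that the optimal $\lambda$ is defined implicitly, so the final step will likely require comparing the small- and large-entropy regimes to conclude.
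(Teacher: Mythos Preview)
The paper does not actually prove this theorem: it is stated in the appendix purely for reference and attributed to \cite[Theorem 2.1]{BolleyVillani}, with no proof given. So there is nothing to compare against in the paper itself.

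Your outline is the right architecture and is essentially the Bolley--Villani argument: dual representation of the weighted TV norm, Donsker--Varadhan applied to $h=\lambda g\varphi$, an elementary bound on the centered log-Laplace, and optimization in $\lambda$. For \eqref{eq:pinsker:ii} in particular, the sub-Gaussian route you describe is exactly how they proceed. For \eqref{eq:pinsker:i}, your Taylor bound $\log\int e^{\lambda g\varphi}d\nu - \lambda\int g\varphi\,d\nu \le \tfrac{\lambda^2}{2}\int \varphi^2 e^{\lambda\varphi}d\nu$ is correct, but the subsequent step---trading $\int\varphi^2 e^{\lambda\varphi}d\nu$ for $\int e^{2\varphi}d\nu$ via a Young inequality and then optimizing---is where the exact constants $3/2$ and $1/2$ come from, and your sketch is vague here. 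Bolley--Villani handle this by a direct pointwise inequality of the type $\varphi e^{\lambda\varphi}\le C_\lambda e^{2\varphi}$ followed by an explicit choice $\lambda=\min(1,\mathcal E^{-1/2})$ rather than a true optimization; the two regimes $\mathcal E\le 1$ and $\mathcal E>1$ are treated separately, which is what produces the additive combination $\sqrt{\mathcal E}+\tfrac12\mathcal E$. If you want the stated constants, you should follow that route rather than attempt an implicit optimization.
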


\bibliographystyle{plain} 
\bibliography{Biblio}

\begin{thebibliography}{10}

\bibitem{AgrachevSarychev}
Andrei Agrachev and Andrey Sarychev.
\newblock Control on the manifolds of mappings with a view to the deep learning.
\newblock {\em J. Dyn. Control Syst.}, 28(4):989--1008, 2022.

\bibitem{AGS}
Luigi Ambrosio, Nicola Gigli, and Giuseppe Savar\'{e}.
\newblock {\em Gradient flows in metric spaces and in the space of probability measures}.
\newblock Lectures in Mathematics ETH Z\"{u}rich. Birkh\"{a}user Verlag, Basel, second edition, 2008.

\bibitem{BakryGentilLedoux}
Dominique Bakry, Ivan Gentil, and Michel Ledoux.
\newblock {\em Analysis and geometry of {M}arkov diffusion operators}, volume 348 of {\em Grundlehren der mathematischen Wissenschaften [Fundamental Principles of Mathematical Sciences]}.
\newblock Springer, Cham, 2014.

\bibitem{barboni2024understandingtraininginfinitelydeep}
Raphaël Barboni, Gabriel Peyré, and François-Xavier Vialard.
\newblock Understanding the training of infinitely deep and wide resnets with conditional optimal transport, 2024.

\bibitem{BertsekasShreve}
Dimitri~P. Bertsekas and Steven~E. Shreve.
\newblock {\em Stochastic optimal control}, volume 139 of {\em Mathematics in Science and Engineering}.
\newblock Academic Press, Inc. [Harcourt Brace Jovanovich, Publishers], New York-London, 1978.
\newblock The discrete time case.

\bibitem{BolleyVillani}
Fran\c{c}ois Bolley and C\'edric Villani.
\newblock Weighted {Csisz\'ar-Kullback-Pinsker} inequalities and applications to transportation inequalities.
\newblock {\em Annales de la Facult\'e des sciences de Toulouse : Math\'ematiques}, Ser. 6, 14(3):331--352, 2005.

\bibitem{BonnetCiprianiFornasierHuang}
Beno\^it Bonnet, Cristina Cipriani, Massimo Fornasier, and Hui Huang.
\newblock A measure theoretical approach to the mean-field maximum principle for training {N}eur{ODE}s.
\newblock {\em Nonlinear Anal.}, 227:Paper No. 113161, 55, 2023.

\bibitem{BrianiCardaliaguet}
Ariela Briani and Pierre Cardaliaguet.
\newblock Stable solutions in potential mean field game systems.
\newblock {\em NoDEA Nonlinear Differential Equations Appl.}, 25(1):Paper No. 1, 26, 2018.

\bibitem{BudhirajaDupuisbook}
Amarjit Budhiraja and Paul Dupuis.
\newblock {\em Analysis and approximation of rare events}, volume~94 of {\em Probability Theory and Stochastic Modelling}.
\newblock Springer, New York, 2019.
\newblock Representations and weak convergence methods.

\bibitem{cannarsa}
Piermarco Cannarsa and Carlo Sinestrari.
\newblock {\em Semiconcave functions, {H}amilton-{J}acobi equations, and optimal control}, volume~58 of {\em Progress in Nonlinear Differential Equations and their Applications}.
\newblock Birkh\"{a}user Boston, Inc., Boston, MA, 2004.

\bibitem{CardaliaguetDelarueLasryLions}
Pierre Cardaliaguet, Fran\c{c}ois Delarue, Jean-Michel Lasry, and Pierre-Louis Lions.
\newblock {\em The master equation and the convergence problem in mean field games}, volume 201 of {\em Annals of Mathematics Studies}.
\newblock Princeton University Press, Princeton, NJ, 2019.

\bibitem{cjms2023}
Pierre Cardaliaguet, Joe Jackson, Nikiforos Mimikos-Stamatopoulos, and Panagiotis~E. Souganidis.
\newblock Sharp convergence rates for mean field control in the region of strong regularity.
\newblock {\em arXiv}, 2312.11373, 2023.

\bibitem{cardaliaguet-souganidis:2}
Pierre Cardaliaguet and Panagiotis~E. Souganidis.
\newblock Regularity of the value function and quantitative propagation of chaos for mean field control problems.
\newblock {\em Nonlinear Differential Equations and Applications}, 30, 3 2023.

\bibitem{CarmonaDelarue_book_I}
Ren\'e Carmona and Fran{\c c}ois Delarue.
\newblock {\em Probabilistic Theory of Mean Field Games with Applications {I} : Mean Field {FBSDE}s, Control, and Games}.
\newblock Springer, 2018.

\bibitem{BachChizat}
L\'{e}na\"{\i}c Chizat and Francis Bach.
\newblock On the global convergence of gradient descent for over-parameterized models using optimal transport.
\newblock In {\em Proceedings of the 32nd International Conference on Neural Information Processing Systems}, NIPS'18, page 3040–3050, Red Hook, NY, USA, 2018. Curran Associates Inc.

\bibitem{NEURIPS2024_720e7ebc}
L\'{e}na\"{\i}c Chizat and Praneeth Netrapalli.
\newblock The feature speed formula: a flexible approach to scale hyper-parameters of deep neural networks.
\newblock In A.~Globerson, L.~Mackey, D.~Belgrave, A.~Fan, U.~Paquet, J.~Tomczak, and C.~Zhang, editors, {\em Advances in Neural Information Processing Systems}, volume~37, pages 62362--62383. Curran Associates, Inc., 2024.

\bibitem{cont2023asymptoticanalysisdeepresidual}
Rama Cont, Alain Rossier, and Renyuan Xu.
\newblock Asymptotic analysis of deep residual networks.
\newblock {\em ArXiv e-prints}, 2212.08199, 2023.

\bibitem{Cruchieroetal}
Christa Cuchiero, Martin Larsson, and Josef Teichmann.
\newblock Deep neural networks, generic universal interpolation, and controlled odes.
\newblock {\em SIAM Journal on Mathematics of Data Science}, 2(3):901--919, 2020.

\bibitem{ding2022overparameterization}
Zhiyan Ding, Shi Chen, Qin Li, and Stephen~J Wright.
\newblock Overparameterization of deep resnet: zero loss and mean-field analysis.
\newblock {\em Journal of machine learning research}, 23(48):1--65, 2022.

\bibitem{E2017}
Weinan E.
\newblock A proposal on machine learning via dynamical systems.
\newblock {\em Commun. Math. Stat.}, 5(1):1--11, 2017.

\bibitem{EHanLi2019}
Weinan E, Jiequn Han, and Qianxiao Li.
\newblock A mean-field optimal control formulation of deep learning.
\newblock {\em Res. Math. Sci.}, 6(1):Paper No. 10, 41, 2019.

\bibitem{gassiat2024gradientflowcontrolspace}
Paul Gassiat and Florin Suciu.
\newblock A gradient flow on control space with rough initial condition, 2024.

\bibitem{HaberRuthotto}
Eldad Haber and Lars Ruthotto.
\newblock Stable architectures for deep neural networks.
\newblock {\em Inverse Problems}, 34(1):014004, 22, 2018.

\bibitem{he2016deep}
Kaiming He, Xiangyu Zhang, Shaoqing Ren, and Jian Sun.
\newblock Deep residual learning for image recognition.
\newblock In {\em Proceedings of the IEEE conference on computer vision and pattern recognition}, pages 770--778, 2016.

\bibitem{hu2019meanfieldlangevinsystemoptimal}
Kaitong Hu, Anna Kazeykina, and Zhenjie Ren.
\newblock Mean-field langevin system, optimal control and deep neural networks.
\newblock {\em arXiv}, 1909.07278, 2019.

\bibitem{isobe2023convergence}
Noboru Isobe.
\newblock A convergence result of a continuous model of deep learning via lojasiewicz--simon inequality.
\newblock {\em arXiv preprint arXiv:2311.15365}, 2023.

\bibitem{ITO1992105}
Yoshifusa Ito.
\newblock Approximation of continuous functions on rd by linear combinations of shifted rotations of a sigmoid function with and without scaling.
\newblock {\em Neural Networks}, 5(1):105--115, 1992.

\bibitem{jabir2021meanfieldneuralodesrelaxed}
Jean-François Jabir, David Šiška, and Lukasz Szpruch.
\newblock Mean-field neural odes via relaxed optimal control.
\newblock {\em arXiv}, 1912.05475, 2021.

\bibitem{Lacker_2016_convergence}
Daniel Lacker.
\newblock A general characterization of the mean field limit for stochastic differential games.
\newblock {\em Probab. Theory Related Fields}, 165(3-4):581--648, 2016.

\bibitem{LiLinShen2023}
Qianxiao Li, Ting Lin, and Zuowei Shen.
\newblock Deep learning via dynamical systems: an approximation perspective.
\newblock {\em J. Eur. Math. Soc. (JEMS)}, 25(5):1671--1709, 2023.

\bibitem{Lionscollege2}
Pierre-Louis Lions.
\newblock Cours au coll\`ege de france, equations aux d\'eriv\'ees partielles et applications.
\newblock https://www.college-de-france.fr/site/pierre-louis-lions/course-2010-2011.htm, 2010-11.

\bibitem{lu2020mean}
Yiping Lu, Chao Ma, Yulong Lu, Jianfeng Lu, and Lexing Ying.
\newblock A mean field analysis of deep resnet and beyond: Towards provably optimization via overparameterization from depth.
\newblock In {\em International Conference on Machine Learning}, pages 6426--6436. PMLR, 2020.

\bibitem{meiMontanariNguyen2018MFNN}
Song Mei, Andrea Montanari, and Phan-Minh Nguyen.
\newblock A mean field view of the landscape of two-layer neural networks.
\newblock {\em Proceedings of the National Academy of Sciences}, 115(33):E7665--E7671, 2018.

\bibitem{monmarché2024localconvergencerateswasserstein}
Pierre Monmarché and Julien Reygner.
\newblock Local convergence rates for wasserstein gradient flows and mckean-vlasov equations with multiple stationary solutions, 2024.

\bibitem{RotskoffVanDenEijnden}
Grant~M. Rotskoff and Eric Vanden{-}Eijnden.
\newblock Neural networks as interacting particle systems: Asymptotic convexity of the loss landscape and universal scaling of the approximation error.
\newblock {\em CoRR}, abs/1805.00915, 2018.

\bibitem{Ruiz-Balet-Zuazua}
Dom\`{e}nec Ruiz-Balet and Enrique Zuazua.
\newblock Neural ode control for classification, approximation, and transport.
\newblock {\em SIAM Review}, 65(3):735--773, 2023.

\bibitem{AlessandroScagliotti2023}
Alessandro Scagliotti.
\newblock Deep learning approximation of diffeomorphisms via linear-control systems.
\newblock {\em Mathematical Control and Related Fields}, 13(3):1226--1257, 2023.

\end{thebibliography}

\end{document}